\DeclareMathOperator{\Sp}{Sp}
\DeclareMathOperator{\coker}{coker}
\DeclareMathOperator{\coim}{Coim}
\DeclareMathOperator{\im}{Im}
\begin{document}
	\newtheorem{thm}{Theorem}[section]
	\newtheorem{lem}[thm]{Lemma}
	\newtheorem{defn}[thm]{Definition}
	\newtheorem{prop}[thm]{Proposition}
	\newtheorem{cor}[thm]{Corollary}
	\newtheorem*{rmk}{Remark}
	\newtheorem*{rmks}{Remarks}
	
	\newcommand{\Zp}{{\mathbb{Z}_p}}
	\newcommand{\Qp}{{\mathbb{Q}_p}}
	\newcommand{\Fp}{{\mathbb{F}_p}}
	\newcommand{\A}{\mathcal{A}}
	\newcommand{\B}{\mathcal{B}}
	\newcommand{\C}{\mathcal{C}}
	\newcommand{\D}{\mathcal{D}}
	\newcommand{\E}{\mathcal{E}}
	\newcommand{\F}{\mathcal{F}}
	\newcommand{\G}{\mathcal{G}}
	\newcommand{\I}{\mathcal{I}}
	\newcommand{\J}{\mathcal{J}}
	\renewcommand{\L}{\mathcal{L}}
	\newcommand{\sL}{\mathscr{L}}
	\newcommand{\M}{\mathcal{M}}
	\newcommand{\sM}{\mathscr{M}}
	\newcommand{\N}{\mathcal{N}}
	\newcommand{\sN}{\mathscr{N}}
	\renewcommand{\O}{\mathcal{O}}
	\newcommand{\cP}{\mathcal{P}}
	\newcommand{\Q}{\mathcal{Q}}
	\newcommand{\R}{\mathcal{R}}
	\newcommand{\cS}{\mathcal{S}}
	\newcommand{\T}{\mathcal{T}}
	\newcommand{\U}{\mathcal{U}}
	\newcommand{\sU}{\mathscr{U}}
	\newcommand{\sV}{\mathscr{V}}
	\newcommand{\V}{\mathcal{V}}
	\newcommand{\W}{\mathcal{W}}
	\newcommand{\X}{\mathcal{X}}
	\newcommand{\Y}{\mathcal{Y}}
	\newcommand{\Z}{\mathcal{Z}}
	\newcommand{\PFS}{\mathbf{PreFS}}
	\newcommand{\h}[1]{\widehat{#1}}
	\newcommand{\hA}{\h{A}}
	\newcommand{\hK}[1]{\h{#1_K}}
	\newcommand{\hsULK}{\hK{\sU(\L)}}
	\newcommand{\hsUFK}{\hK{\sU(\F)}}
	\newcommand{\hsULnK}{\hK{\sU(\pi^n\L)}}
	\newcommand{\hn}[1]{\h{#1_n}}
	\newcommand{\hnK}[1]{\h{#1_{n,K}}}
	\newcommand{\w}[1]{\wideparen{#1}}
	\newcommand{\wK}[1]{\wideparen{#1_K}}
	\newcommand{\invlim}{\varprojlim}
	\newcommand{\dirlim}{\varinjlim}
	\newcommand{\fr}[1]{\mathfrak{{#1}}}
	\newcommand{\LRU}[1]{\sU(\mathscr{#1})}
	\newcommand{\et}{\acute et}
	
	\newcommand{\ts}[1]{\texorpdfstring{$#1$}{}}
	\newcommand{\st}{\mid}
	\newcommand{\be}{\begin{enumerate}[{(}a{)}]}
		\newcommand{\ee}{\end{enumerate}}
	\newcommand{\qmb}[1]{\quad\mbox{#1}\quad}
	\let\le=\leqslant  \let\leq=\leqslant
	\let\ge=\geqslant  \let\geq=\geqslant

	\title[Six operations for $\w{\D}$-modules]{Six operations for $\w{\D}$-modules on rigid analytic spaces}
	\author{Andreas Bode}
	\address{Bergische Universit\"{a}t Wuppertal, Fakult\"{a}t f\"{u}r Mathematik und Naturwissenschaften, Gaussstr. 20, 42119 Wuppertal, Germany}
	\email{abode@uni-wuppertal.de}
	\begin{abstract}
		We introduce all six operations for $\w{\D}$-modules on smooth rigid analytic spaces by considering the derived category of complete bornological $\w{\D}$-modules. We then focus on a full subcategory which should be thought of as consisting of complexes with coadmissible cohomology, and establish analogues of some classical results: Kashiwara's equivalence, stability of coadmissibility under extraordinary inverse image for smooth morphisms and direct image for projective morphisms, as well as the computation of relative de Rham cohomology.
	\end{abstract}
	\maketitle
	
	\tableofcontents
	\section{Introduction}
	The theory of $\D$-modules, modules over the sheaf of differential operators on some smooth complex algebraic variety, forms an important part of geometric representation theory and algebraic geometry more generally. There are two main motivations for this, one representation theoretic and one of a cohomological nature.
	
	Firstly, twisted $\D$-modules on a flag variety are equivalent to the category of associated Lie algebra representations with a fixed central character (Beilinson--Bernstein localization, \cite{BB}).
	
	Secondly, the (derived) category of regular holonomic $\D$-modules allows for a six-functor formalism in the sense of Grothendieck, which gets (almost) identified under the Riemann--Hilbert correspondence with the six-functor formalism for constructible sheaves (see \cite[Theorem 7.1.1, Theorem 7.2.2]{Hotta}).
	
	In particular, it can be useful to consider $\D$-modules as `coefficient objects' for suitable cohomology theories (see e.g. the work of Berthelot \cite{Berthelot}, where $\mathscr{D}^\dag$-modules play the role of `coefficient objects' for rigid cohomology).
	
	This paper is concerned with the corresponding theory on rigid analytic spaces. We fix througout a complete nonarchimedean field $K$ of mixed characteristic $(0, p)$.
	
	In \cite{DcapOne}, Ardakov--Wadsley began the study of coadmissible $\w{\D}$-modules on smooth rigid analytic $K$-varieties, with an analogue of Beilinson--Bernstein localization being proved in \cite{Ardakov}. The theory of six functors for $\w{\D}$-modules however lacked so far a suitable derived categorical framework. While the notion of coadmissibility corresponds naturally to coherence in the algebraic theory, there is currently no well-behaved definition of quasi-coherence (as is not unusual in rigid analytic geometry), making it unclear in which category relevant resolutions should be taken. As a consequence, previous work involving $\w{\D}$-module operations had to restrict itself to cases where the functors are exact (see Kashiwara's equivalence in \cite{DcapTwo}), or where higher derived functors could be computed purely algebraically (e.g. the Cech complex computations in \cite{Bodeproper} or the Ext groups in \cite{DcapThree}).
	
	The main challenge is thus to find a large category of $\w{\D}$-modules which 
	\begin{enumerate}[(i)]
		\item is retaining the analytic flavour of the theory, so that e.g. the functor $\h{\otimes}_\O$ makes sense; and
		\item is well-behaved from a homological standpoint, i.e. allows for the required resolutions to derive functors like $f_*$ or $\h{\otimes}_\O$.
	\end{enumerate}  
	
	In this paper, we present one such category: let $\h{\B}c_K$ denote the quasi-abelian category of complete bornological $K$-vector spaces and let $LH(\h{\B}c_K)$ denote its left heart, i.e. its `abelian envelope'. For any smooth rigid analytic $K$-variety, we then introduce the category $\mathrm{Mod}_{\mathrm{Shv}(X, LH(\h{\B}c_K))}(\w{\D}_X)$ and define analogues of the usual six operations on the (unbounded) derived category 
	\begin{equation*}
		\mathrm{D}(\w{\D}_X):=\mathrm{D}(\mathrm{Mod}_{\mathrm{Shv}(X, LH(\h{\B}c_K))}(\w{\D}_X)).
	\end{equation*}
	In a slight abuse of terminology, we sometimes refer to this derived category as the derived category of complete bornological $\w{\D}_X$-modules. 
	
	While we reserve the study of holonomicity for a follow-up paper, we investigate a subcategory $\mathrm{D}_\C(\w{\D}_X)$ of `$\C$-complexes', which plays the role of $\mathrm{D}^b_\mathrm{coh}(\D)$ in the classical theory. We obtain the following results:
	
	\begin{thm}
		\label{IntroA}
		Let $X$ be a smooth (equidimensional) rigid analytic $K$-variety. The category $\mathrm{Mod}_{\mathrm{Shv}(X, LH(\h{\B}c_K))}(\w{\D}_X)$ is a Grothendieck abelian category admitting flat resolutions.\\
		In particular, there are functors
		\begin{enumerate}[(i)]
			\item $-\widetilde{\otimes}^{\mathbb{L}}_{\O_X}-: \mathrm{D}(\w{\D}_X)\times \mathrm{D}(\w{\D}_X)\to \mathrm{D}(\w{\D}_X)$
			\item $\mathbb{D}= \mathrm{R}\mathcal{H}om_{\w{\D}_X}(-, \w{\D}_X)\widetilde{\otimes}_{\O_X}\Omega_X^{-1}[\mathrm{dim}X]: \mathrm{D}(\w{\D}_X)\to \mathrm{D}(\w{\D}_X)^\mathrm{op}$.
		\end{enumerate}
		For any morphism $f: X\to Y$ of smooth (equidimensional) rigid analytic $K$-varieties, there are functors
		\begin{enumerate}[(i)]
			\setcounter{enumi}{2}
			\item $f_+$, $f_!=\mathbb{D}f_+\mathbb{D}: \mathrm{D}(\w{\D}_X)\to \mathrm{D}(\w{\D}_Y)$
			\item $f^!$, $f^+=\mathbb{D}f^!\mathbb{D}: \mathrm{D}(\w{\D}_Y)\to \mathrm{D}(\w{\D}_X)$. 
		\end{enumerate}
	\end{thm}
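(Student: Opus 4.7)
The plan is to proceed in layers, starting from a known quasi-abelian framework and then sheafifying and passing to $\w{\D}$-modules. First I would recall that the category of complete bornological $K$-vector spaces is a complete, cocomplete, elementary quasi-abelian category (this is the Prosmans--Schneiders setup, in the convenient Bambozzi--Ben-Bassat version). Sheafifying, one obtains a category of complete bornological $K$-sheaves on $X$ that inherits the quasi-abelian structure together with all (co)limits, since limits are computed section-wise and colimits by sheafification of a section-wise colimit. Introducing the structure sheaf $\w{\D}_X$ as a sheaf of complete bornological $K$-algebras, I would then define $\h{\B}c_K(\w{\D}_X)$ as modules in this category; completeness, cocompleteness, and quasi-abelianness transfer formally to the module category. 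This already yields part of assertion~(i).

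Next I would address flat resolutions. The key is to exhibit a class of generators that are $\h{\otimes}_{\O_X}$-acyclic. The natural candidates are locally the free bornological $\w{\D}_X$-modules obtained by sheafifying $\w{\D}_X \h{\otimes}_K V$ for complete bornological $V$ with sufficient freeness properties, more precisely the bornologically free modules associated to open covers by affinoid subdomains on which $\w{\D}_X$ is a Fr\'echet--Stein algebra. I would then show that every object admits an epimorphism from such a flat object, and iterate to produce flat (possibly unbounded) resolutions; for the unbounded version, I would invoke the standard Spaltenstein-style argument inside the quasi-abelian framework, exploiting the existence of all (co)products. The existence of enough injectives in the left heart $LH(\h{\B}c_K(\w{\D}_X))$ then follows by Schneiders' general criterion for elementary quasi-abelian categories with exact filtered colimits, which applies once completeness and the presence of a small set of generators have been verified.

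With flat and LH-injective resolutions in hand, the six functors are defined along classical lines. I would set $-\h{\otimes}^{\mathbb{L}}_{\O_X}-$ using flat resolutions in either argument, and $\mathrm{R}\mathcal{H}om_{\w{\D}_X}(-,-)$ using LH-injective resolutions in the second argument; this immediately produces $\mathbb{D}$. For a morphism $f\colon X\to Y$, I would construct the bornological transfer bimodules $\w{\D}_{X\to Y}$ and $\w{\D}_{Y\leftarrow X}$ in $\h{\B}c_K$, and then put $f^{!}(-) = \w{\D}_{X\to Y}\h{\otimes}^{\mathbb{L}}_{f^{-1}\w{\D}_Y} f^{-1}(-)[\dim X-\dim Y]$ and $f_{+}(-)=\mathrm{R}f_{*}(\w{\D}_{Y\leftarrow X}\h{\otimes}^{\mathbb{L}}_{\w{\D}_X}-)$, where $\mathrm{R}f_{*}$ is derived via LH-injective resolutions. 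The remaining two functors $f^{+}$ and $f_{!}$ are obtained by conjugating with $\mathbb{D}$ as stated. The only formal check at this stage is that all the functors involved preserve the ambient unbounded derived category, which follows from the existence of both K-flat and K-injective resolutions.

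The main obstacle, to my mind, is establishing enough flat resolutions in the sheaf-theoretic complete bornological setting while keeping track of the $\w{\D}_X$-action: the completed tensor product $\h{\otimes}_{\O_X}$ is not right exact in the usual sense, only strictly right exact in the quasi-abelian sense, so flatness has to be understood via strict exactness of the tensor functor on short strict exact sequences, and one has to check that the bornologically free modules really are flat in this refined sense. Closely related is the issue of behaviour under restriction to affinoid subdomains: the resolutions must be compatible with the Fr\'echet--Stein structure on sections of $\w{\D}_X$, which is what makes the link to coadmissibility (and hence to the subcategory $\mathrm{D}_{\C}(\w{\D}_X)$ introduced afterwards). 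Once these compatibilities are in place, the remaining assertions of the theorem reduce to formal manipulations in a nice quasi-abelian derived category.
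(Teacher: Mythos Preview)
Your plan follows essentially the same architecture as the paper: start from the Prosmans--Schneiders/Bambozzi--Ben-Bassat fact that $\h{\B}c_K$ is elementary quasi-abelian and closed symmetric monoidal, pass to sheaves, pass to modules over the monoid $\w{\D}_X$, and then derive everything. The transfer bimodules and the definitions of $f^!$, $f_+$, $\mathbb{D}$ are exactly as in the paper.

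Two technical points where the paper's route differs from what you sketch and which you should be aware of:

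\emph{First}, the paper does not attempt a Spaltenstein-style construction directly in the quasi-abelian module category. Instead it systematically passes to the left heart: one shows $LH(\h{\B}c_K(\w{\D}_X))\cong \mathrm{Mod}(I(\w{\D}_X))$ is a Grothendieck abelian category (via a generator and exact filtered colimits, not by appealing to an ``elementary'' criterion---the sheaf-module category is not shown to be elementary), and then derives all functors there using the Kashiwara--Schapira machinery for unbounded derived categories in Grothendieck categories. The equivalence $\mathrm{D}(\C)\cong\mathrm{D}(LH(\C))$ transports everything back. This sidesteps the issue of K-injective resolutions in the quasi-abelian category itself.

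\emph{Second}, your worry about flatness is less severe than you suggest, because the paper uses crucially that \emph{every} object of $\h{\B}c_K$ is $\h{\otimes}_K$-flat (i.e.\ $V\h{\otimes}_K-$ is exact for all $V$); this propagates to make the sheaf tensor $\T(\F,-)$ exact for every $\F$. Flat $\w{\D}_X$-modules for $\h{\otimes}_{\O_X}$ are then produced simply as $\w{\D}_X\h{\otimes}_K\M$, using that $\w{\D}_X$ is locally $\O_X\h{\otimes}_K K\{x_1,\dots,x_n\}$ with $K\{x\}$ strongly flat. On the other hand, one point you do not mention and which requires genuine work is that $\M\h{\otimes}_{\O_X}\N$ actually carries a bounded $\w{\D}_X$-action when $\M,\N$ do: the Leibniz-rule action is only a priori a $\D_X$-action, and one must check boundedness by an explicit estimate before completing to a $\w{\D}_X$-action. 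The paper does this by hand.

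Your final paragraph about compatibility with the Fr\'echet--Stein structure is not needed for Theorem~\ref{IntroA}; that enters only for Theorems~\ref{IntroB} and~\ref{IntroC}.
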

	\begin{proof}
		There is an equivalence of categories $LH(\h{\B}c_K)\cong LH(\mathrm{Ind}(\mathrm{Ban}_K))$ by Proposition \ref{dissandL}.(iv). By Corollary \ref{IndBansheafy}, $\mathrm{Ind}(\mathrm{Ban}_K)$ satisfies all the conditions for the general sheaf theory developed in section 3. We can thus invoke Theorem \ref{Shvnice} to deduce the categorical properties of 
		\begin{equation*}
			\mathrm{Mod}_{\mathrm{Shv}(X, LH(\h{\B}c_K))}(\w{\D}_X)\cong LH(\mathrm{Mod}_{\mathrm{Shv}(X, \mathrm{Ind}(\mathrm{Ban}_K))}(\w{\D}_X)).
		\end{equation*}
		The functors are then explicitly defined in section 7.
	\end{proof}
	
	\begin{thm}
		\label{IntroB}
		Let $X$ be a smooth (equidimensional) rigid analytic $K$-variety. Then there is a canonical exact and fully faithful functor
		\begin{equation*}
			(-)^b: \{\text{coadmissible $\w{\D}_X$-modules}\}\to \mathrm{Mod}_{\mathrm{Shv}(X, LH(\h{\B}c_K))}(\w{\D}_X)
		\end{equation*}
		under which the coadmissible tensor product $\w{\otimes}$ from \cite[Lemma 7.3]{DcapOne} agrees with $\widetilde{\otimes}$.
	\end{thm}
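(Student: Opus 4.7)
My plan is to equip each coadmissible $\w{\D}_X$-module with a canonical bornology coming from its Schneider--Teitelbaum Fréchet topology, then sheafify, and finally compare tensor products locally on affinoids. For a coadmissible module $\M(U)$ on an affinoid subdomain $U \subset X$ with Fréchet-Stein presentation $\w{\D}_X(U) = \invlim D_n$, one has $\M(U) = \invlim \M_n$ with each $\M_n$ a finitely generated Banach $D_n$-module, and the projective limit Fréchet topology carries a canonical complete bornology of bounded subsets. Since restriction maps between coadmissible modules are automatically continuous, this assignment defines a presheaf of complete bornological $\w{\D}_X$-modules; the sheaf property follows from the analog of Theorem B for coadmissible sheaves established in \cite{DcapOne}.

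Exactness of $(-)^b$ reduces to the observation that a short exact sequence of coadmissible modules is strict exact for the Fréchet topologies: each term in the inverse system is strict (being a map of finitely generated Banach modules over a Banach algebra), and the Mittag-Leffler condition ensures vanishing of $\varprojlim^1$. Strictness in the Fréchet sense translates directly to strict exactness in the quasi-abelian category $\h{\B}c_K(\w{\D}_X)$. For full faithfulness, note that $(-)^b$ does not alter the underlying $\w{\D}_X$-module, so injectivity on Hom-sets is tautological; surjectivity reduces locally to the fact that every $\w{\D}_X(U)$-linear map between coadmissible modules is automatically continuous, hence bounded.

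For the equality $\w{\otimes} = \h{\otimes}$, I reduce to an affinoid $U = \Sp A$ admitting a smooth formal model so that $\w{\D}_X(U)$ has an explicit Fréchet-Stein presentation. By \cite[Lemma 7.3]{DcapOne}, the coadmissible tensor product is given by $\M \w{\otimes}_A \N = \invlim(\M_n \otimes_{A_n} \N_n)$, while the bornological tensor product $\M^b \h{\otimes}_A \N^b$ is the completion of the algebraic tensor product equipped with the quotient bornology. Exploiting nuclearity of coadmissible modules---the transition maps $\M_{n+1} \to \M_n$ factor through the compact step maps $D_{n+1} \to D_n$---one shows that the two completions agree, yielding a natural isomorphism compatible with all bornological structure.

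The main obstacle is precisely this final identification. The bornological tensor product is defined categorically, via a universal property inside the quasi-abelian category $\h{\B}c_K$, whereas $\w{\otimes}$ is defined explicitly as an inverse limit of Banach tensor products. Matching them as complete bornological objects, not merely as abstract $\w{\D}_X$-modules, requires careful tracking of the bornologies at each level of the Fréchet-Stein filtration, and will likely rely both on the Banach open mapping theorem (to control quotient bornologies) and on the nuclearity of coadmissible modules (to ensure projective and bornological completions coincide). Once this local identification is obtained, gluing via the sheaf property yields the global comparison.
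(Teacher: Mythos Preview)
Your overall strategy matches the paper's, but there is a genuine gap in your exactness argument. You write that ``strictness in the Fr\'echet sense translates directly to strict exactness in the quasi-abelian category $\h{\B}c_K(\w{\D}_X)$''. This is false in general, and the paper explicitly flags it: the functor $(-)^b$ is not exact on Fr\'echet spaces (see the remark after Lemma~\ref{metricequiv} and the discussion opening subsection~3.3). A continuous surjection $f\colon V\to W$ of Fr\'echet spaces is topologically strict by the Open Mapping Theorem, but for $f^b$ to be a strict epimorphism in $\h{\B}c_K$ one needs every bounded subset of $W$ to lie in the image of a bounded subset of $V$, which does not follow from topological strictness alone. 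Your Mittag-Leffler step has the same defect: vanishing of $\varprojlim^1$ in the topological or abstract sense does not immediately give $\varprojlim$-acyclicity in $\h{\B}c_K$.

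The paper fills this gap precisely via the nuclearity machinery you invoke only for the tensor comparison. It introduces the notion of a module being \emph{nuclear over a Noetherian Banach algebra $A$}, proves that coadmissible $\w{U_A(L)}$-modules are $A$-nuclear, and then shows (Lemma~\ref{Cauchygenerates}, Proposition~\ref{strictepis}) that bounded subsets of an $A$-nuclear module are generated by Cauchy sequences, which forces continuous surjections onto such modules to be bornologically strict. The bornological Mittag-Leffler result (Theorem~\ref{MLBan}) likewise requires the inverse system to be \emph{pre-nuclear}. So nuclearity is not just a tool for matching the two tensor products; it is already the key ingredient for exactness of $(-)^b$. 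Once you recognise this, your sketch of the tensor comparison is essentially the paper's Proposition~\ref{coadandtor}.
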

	\begin{proof}
		This is Corollary \ref{embedding} and Theorem \ref{Dcapembedding}, together with Proposition \ref{coadandtor} for the result on tensor products.
	\end{proof}
	
	\begin{thm}
		\label{IntroC}
		Let $X$ be a smooth (equidimensional) rigid analytic $K$-variety.
		\begin{enumerate}[(i)]
			\item The category $\mathrm{D}_\C(\w{\D}_X)$ of $\C$-complexes is a triangulated subcategory. A bounded complex is a $\C$-complex if and only if its cohomology groups are coadmissible.
			\item (Kashiwara). Let $i: X\to Y$ be a closed embedding of smooth rigid analytic $K$-varieties. Then $i_+$ and $i^!$ yield an equivalence of categories between $\mathrm{D}_\C(\w{\D}_X)$ and $\mathrm{D}_{\C^X}(\w{\D}_Y)$, where the latter is the full subcategory of $\C$-complexes on $Y$ whose cohomology groups are supported on $X$.
			\item $\C$-complexes are preserved by $f^!$ whenever $f$ is smooth.
		\end{enumerate}
		If $K$ is discretely valued, we have additionally:
		\begin{enumerate}[(i)]
			\setcounter{enumi}{3}
			\item $\C$-complexes are preserved by $f_+$ whenever $f$ is projective.
			\item $\C$-complexes are preserved by $\mathbb{D}$, and $\mathbb{D}^2\cong \mathrm{id}$ on $\C$-complexes.
		\end{enumerate}
	\end{thm}
	\begin{proof}
		(i) is done in Proposition \ref{Ccomplextriangulated} and Corollary \ref{boundedCcomplex}.\\
		(ii) is Theorem \ref{KashiwaraCcomplex}.\\
		(iii) is Theorem \ref{smoothinvimage}.\\
		(iv) is Corollary \ref{KiehlCcomplex}, while (v) is the content of Theorem \ref{dualCcomplex}.
	\end{proof}
	We also verify that our definitions are consistent with the earlier work mentioned above.
	
	We point out that the category $\mathrm{D}(\w{\D}_X)$ is too large to promise good behaviour, and that we only really expect good results when we remain within the realm of $\C$-complexes. For example, it is not clear whether $\mathbb{D}^2\cong \mathrm{id}$ in general.
	
	We make a couple of remarks:
	\begin{enumerate}[(i)]
		\item We prefer the category of bornological vector spaces over that of locally convex topological vector spaces, as the completed tensor product has better properties here: $\h{\B} c_K$ is a closed symmetric monoidal category, whereas this is not true in the topological setting. Bambozzi, Ben-Bassat and Kremnitzer have recently studied bornologies in the context of nonarchimedean geometry, see \cite{Bamdagger}, \cite{BamStein}, and it should be evident that the categorical aspects of this paper are largely influenced by their insights.
		\item There is however one subtlety in this choice: Contrary to the claim in \cite[Lemma 3.53]{Bamdagger}, $\h{\B}c_K$ is in fact not an elementary quasi-abelian category in the sense of Schneiders \cite[Definition 2.1.10]{Schneiders}. As a consequence, the sheaf theory of $\h{\B}c_K$ is very difficult to control: it is not clear that $\mathrm{Shv}(X, \h{\B}c_K)$ is a quasi-abelian category, and the analogue of the usual sheafification functor does not agree with sheafification over $LH(\h{\B}c_K)$ and might not necessarily produce sheaves. To fix this, we embed $\h{\B}c_K$ fully faithfully into $\mathrm{Ind}(\mathrm{Ban}_K)$, which is elementary quasi-abelian and in turns embeds into $LH(\mathrm{Ind}(\mathrm{Ban}_K))\cong LH(\h{\B}c_K)$. It is therefore within this larger framework that we develop our sheaf theory. For instance, $\mathrm{Preshv}(X, \h{\B}c_K)$ can be regarded as a full subcategory of $\mathrm{Preshv}(X, \mathrm{Ind}(\mathrm{Ban}_K))$ and of $\mathrm{Preshv}(X, LH(\h{\B}c_K))$, and we apply sheafification in either of these larger categories. In particular, the sheafification of some presheaf $\F\in \mathrm{Preshv}(X, \h{\B}c_K)$ need not be in $\mathrm{Shv}(X, \h{\B}c_K)$, but only in $\mathrm{Shv}(X, \mathrm{Ind}(\mathrm{Ban}_K))$. This also explains what might have been perceived by the reader as notational quirks in the Theorems above: while it is true that $\mathrm{D}(\C)\cong \mathrm{D}(LH(\C))$ for any quasi-abelian category $\C$, we could not have written `$\mathrm{D}(\mathrm{Mod}_{\mathrm{Shv}(X, \h{\B}c_K)}(\w{\D}_X))$' instead of `$\mathrm{D}(\mathrm{Mod}_{\mathrm{Shv}(X, LH(\h{\B}c_K))}(\w{\D}_X))$'. Similarly, the derived completed tensor product is denoted by $\widetilde{\otimes}^{\mathbb{L}}$ to reflect that it is obtained from the closed symmetric monoidal structure on $LH(\h{\B}c_K)$ (or equivalently, the one on $\mathrm{Ind}(\mathrm{Ban}_K)$ or on $LH(\mathrm{Ind}(\mathrm{Ban}_K))$. In other words, it is given by applying $\mathrm{H}^0(\h{\otimes}^{\mathbb{L}})$ sectionwise and then sheafifying in the sense above (!). 
		\item Analogously to Theorem \ref{IntroB}, one can embed coherent $\O_X$-modules into the category $\mathrm{Mod}_{\mathrm{Shv}(X, LH(\h{\B}c_K))}(\O_X)$, but we are currently not aware of any applications of this fact.
		\item The question of how to reconcile homological algebra and analysis lies also at the heart of Clausen--Scholze's far-reaching work on Condensed Mathematics \cite{Condensed} -- in particular, they provide a categorical framework in which natural analogues of quasi-coherence can be discussed on the derived level. It is natural to ask whether our approach can be linked to or even reformulated in condensed language. While we are not aware of a way to see $\mathrm{D}(\w{\D}_X)$ within the category of condensed $\w{\D}_X$-modules, we would expect that $\mathrm{D}_\C(\w{\D}_X)$ can be recovered, as it should be the result of glueing (in a suitable $\infty$-categorical sense) the natural categories $\mathrm{D}^b_\mathrm{coh}(\D_n)$ along localization functors (see section 8 for precise definitions). 
	\end{enumerate}
	With this categorical framework at hand, the theory of $\w{\D}$-modules on rigid analytic spaces becomes powerful and flexible enough to allow for various constructions which can inform the study of $p$-adic representations via Beilinson--Bernstein localization. For instance, one can now define in complete generality analogues of the intertwining functors introduced in \cite{BBCass}.
	
	We briefly mention possible further generalizations of the approach presented here. The restriction on the ground field $K$ in Theorem \ref{IntroC} is a consequence of us making use of earlier results from \cite{DcapThree} and \cite{Bodeproper}, which were only formulated in the discretely valued case. While the theory of $\w{\D}$-modules was initially only available for $K$ discretely valued, Ardakov has succeeded in generalizing large parts of the framework to complete nonarchimedean fields in \cite{Ardakov} and \cite{Ardakovinduction}. We are confident that in the same spirit, the constraint in Theorem \ref{IntroC} can be lifted. For instance, our discussion of direct images along projective morphisms relies on \cite{Bodeproper}, but it should be possible to generalize this further along the lines of the argument in \cite{Kiehl}, even for proper morphisms.
	
	While we do not discuss it in this paper, we expect most of the results to also hold in the context of equivariant $\w{\D}$-modules with respect to some group action. This would be particularly valuable, as the Beilinson--Bernstein localization of \cite{Ardakov} would then allow us to use the operations discussed here for the study of admissible locally analytic representations of $p$-adic groups (without equivariance, we `only' obtain information about the Lie algebra representations).    
	
	\subsection*{Structure of the paper}
	In section 2, we recall some background on completed enveloping algebras of Lie--Rinehart algebras and generalize earlier results to the non-discretely valued case: we show in Theorem \ref{UcapFS} that the completed enveloping algebra of a smooth Lie--Rinehart algebra $L$ over an affinoid $K$-algebra $A$ is Fr\'echet--Stein in the sense of \cite{ST} (generalizing \cite[Theorem 6.4]{DcapOne} in the discretely-valued case and dropping some additional conditions from \cite[Theorem 4.1.11]{Ardakov}). With Corollary \ref{fflatcompl}, we then generalize \cite[Corollary 3.2]{DcapThree} to the non-discrete case, proving that the map $U_A(L)\to \w{U_A(L)}$ is faithfully flat.
	 
	If $X=\Sp A$ is a smooth affinoid, then $\w{\D}_X(X)$ is given as the completed enveloping algebra $\w{U_A(\T_X(X))}$, making algebras of this type one of the key players in this paper.
	
	In section 3, we provide background on quasi-abelian categories and develop the general theory of sheaves on G-topological spaces valued in a quasi-abelian category. Most of the results are already known or straightforward generalizations from \cite{Schneiders}, but we have decided to give a careful treatment for the convenience of the reader, particularly as in many cases it is not easy to locate a reference of the required generality.
	
	We show that under some mild conditions, our category of sheaves (as well as categories of modules over a sheaf of rings) behaves perfectly fine from a homological perspective: Its derived category is equivalent to the derived category of a Grothendieck abelian category, and we can form the usual derived functors.
	
	In section 4, we specialize to the case where the quasi-abelian category in question is $\h{\B}c_K$, the category of complete bornological vector spaces. As $\h{\B}c_K$ itself does not satisfy the conditions from section 3, we embed it into the category $\mathrm{Ind}(\mathrm{Ban}_K)$, which does. In practice, we usually retain $\h{\B}c_K$ for explicit, analytic arguments and then ensure that our constructions are preserved under the embedding.
	
	Section 5 can be understood as an attempt to integrate the algebras from section 2 into the categorical framework from section 4: We discuss Fr\'echet--Stein algebras and coadmissible modules in $\h{\B}c_K$ and spend quite some time to verify that (at least for all algebras which we are considering) many basic properties of coadmissible modules known in a Fr\'echet context also hold verbatim in the bornological world. For this purpose, we need to introduce the notion of nuclearity relative to a Noetherian Banach algebra, as well as various related notions. This is the most technical part of the paper, the results of which will be used heavily throughout the later sections, especially in sections 8 and 9.
	
	In section 6, we discuss the sheaf $\w{\D}_X$ from a bornological viewpoint. We prove Theorem \ref{IntroB} (see Theorem \ref{Dcapembedding}) and give a locally free resolution of $\O_X$ (the Spencer resolution), which makes the link between $\w{\D}_X$-module theory and the de Rham complex explicit.\\
	In section 7, we define the six operations in strict analogy to the classical picture, proving Theorem \ref{IntroA}.
	
	In section 8, we introduce $\C$-complexes and prove a couple of structure properties (including Theorem \ref{IntroC}.(i)), before we establish the remainder of Theorem \ref{IntroC} in section 9 (see Theorem \ref{KashiwaraCcomplex}, Theorem \ref{smoothinvimage}, Corollary \ref{KiehlCcomplex}, Theorem \ref{dualCcomplex}).  
	
	\subsection*{Acknowledgements} 
	The idea to use bornological methods for $\w{\D}_X$-modules was already in the air when I was working in Oxford in 2018/19, thanks to conversations with Konstantin Ardakov and Kobi Kremnitzer. I am grateful for many helpful discussions with Simon Wadsley, in particular concerning the definition of $\C$-complexes. More broadly, I would like to thank Konstantin Ardakov, Federico Bambozzi, Thomas Bitoun, Guido Bosco, Chirantan Chowdhury, Nicolas Dupr\'e, Chris Lazda, Marc Levine, Vytautas Paskunas, Vincent Pilloni, Tobias Schmidt and Simon Wadsley for their encouragement and interest in this work.
	
	The results of this paper were first presented in the form of a mini-course at the Number Theory Seminar of the ENS, Lyon. I am grateful to the organizers as well as all participants for their very helpful questions and feedback.
	
	A first version of this paper was built upon the erroneous \cite[Lemma 3.53]{Bamdagger} to discuss complete bornological sheaves. I am very grateful to Jack Kelly and Finn Wiersig for pointing out the mistake and their help in ironing out these subtleties.
	
	The author acknowledges support from the ERC Grant HiCoShiVa (Higher coherent cohomology of Shimura Varieties, Grant ID 818856, PI: Pilloni).
	
	\subsection*{Conventions and notation}
	Throughout, $K$ denotes a complete nonarchimedean field of mixed characteristic $(0, p)$ with valuation ring $R$ and residue field $k$. We fix an element $\pi\in R$ with $0<|\pi|<1$.
	
	All our rigid analytic spaces are assumed to be quasi-separated and quasi-paracompact (cf \cite[Definition 8.2/12]{Boschlectures}). For simplicity, we also assume all our smooth rigid analytic spaces to be equidimensional.
	
	Given a normed $K$-vector space $V$, we denote by $V^\circ$ its unit ball. Given an $R$-module $M$, we sometimes write $M_K$ instead of $M\otimes_R K$. 
	
	We ignore all set-theoretic issues by working tacitly within a suitable Gro\-then\-dieck universe.

	\section{Algebraic background: Completed enveloping algebras}
	
	\subsection{Fr\'echet--Stein algebras and coadmissible modules}
	
	We begin by recalling the theory of coadmissible modules over Fr\'echet--Stein algebras as developed in \cite{ST}. We then discuss the example of completed enveloping algebras for smooth Lie--Rinehart algebras, generalizing some results from \cite{Bode1}, \cite{DcapOne} and \cite{Ardakov}.
	
	\begin{defn}
		\label{FSandcoaddef}
		A (two-sided) Fr\'echet $K$-algebra $A$ is called \textbf{Fr\'echet--Stein} if $A\cong \varprojlim A_n$, where each $A_n$ is a (two-sided) Noetherian Banach $K$-algebra, with each connecting morphism $A_{n+1}\to A_n$ flat on both sides with dense image.
		
		A left $A$-module $M$ over a Fr\'echet--Stein algebra $A\cong \varprojlim A_n$ is called \textbf{coadmissible} if $M\cong \varprojlim M_n$, where $M_n$ is a finitely generated left $A_n$-module, and the natural morphism $A_n\otimes_{A_{n+1}}M_{n+1}\to M_n$ is an isomorphism for each $n$.
	\end{defn}
	
	We denote the category of coadmissible $A$-modules by $\C_A$. It has the following properties:
	
	\begin{prop}
		\label{coadprops}
		Let $A$ be a Fr\'echet--Stein $K$-algebra.
		\begin{enumerate}[(i)]
			\item The category $\C_A$ is an abelian category, containing all finitely presented $A$-modules.
			\item Any coadmissible $A$-module can be equipped with a canonical Fr\'echet topology such that any $A$-module morphism between coadmissible $A$-modules is continuous with closed image.
			\item Closed submodules of coadmissible modules are coadmissible, quotients of coadmissible modules by closed submodules are coadmissible.
			\item If $M\cong \varprojlim M_n$ is a coadmissible $A$-module, then the natural morphism 
			\begin{equation*}
				A_n\otimes_AM\to M_n
			\end{equation*}
			is an isomorphism.
			\item If $M\cong \varprojlim M_n$ is a coadmissible $A$-module, then $\varprojlim^{(1)}M_n=0$, i.e. the augmented Roos complex
			\begin{equation*}
				0\to M\to \prod M_n\to \prod M_n\to 0
			\end{equation*}
			is exact.
		\end{enumerate}
	\end{prop}
	\begin{proof}
		Compare \cite[section 3]{ST} -- more concretely, this is \cite[Corollary 3.5, Corollary 3.4.(v)]{ST}, \cite[remark after Lemma 3.6]{ST}, \cite[Lemma 3.6]{ST}, \cite[Corollary 3.1]{ST} and \cite[Theorem B]{ST}, respectively.
	\end{proof}
	
	While the category $\C_A$ is abelian, it is too small to afford a good derived theory due to the absence of projective or injective resolutions (see e.g. \cite[Theorem 3.10]{Zabradi}). Our goal in the following chapters will therefore be to embed $\C_A$ into a suitable category of `complete' $A$-modules and study its derived category.
	
	In this subsection, we discuss the class of Fr\'echet--Stein algebras which will be the main focus of this paper: completed enveloping algebras of smooth Lie--Rinehart algebras.
	
	Let $A$ be an affinoid $K$-algebra. Recall that an \textbf{admissible affine formal model} for $A$ is an $R$-subalgebra $\A\subseteq A$ which is topologically of finite presentation and satisfies $\A\otimes_R K\cong A$ via the natural morphism. Note that any admissible affine formal model can be obtained as the image $\theta(R\langle x_1, \hdots, x_n\rangle)$ for some surjection $\theta: K\langle x_1, \hdots, x_n\rangle \to A$. In particular, admissible affine formal models are open in $A$ and contained in the unit ball for the corresponding residue norm on $A$.
	
	We now fix an affinoid $K$-algebra $A$ with admissible affine formal model $\A$.
	
	\begin{defn}
		A \textbf{smooth $(K,A)$-Lie--Rinehart algebra} is a finitely generated projective $A$-module equipped with a $K$-linear Lie bracket and a map of Lie algebras $\rho: L\to \mathrm{Der}_K(A)$ satisfying
		\begin{equation*}
			[\partial, a\cdot \partial']=\rho(\partial)(a)\cdot \partial'+a\cdot [\partial, \partial']
		\end{equation*}
		for all $\partial, \partial'\in L$, $a\in A$.
		
		An \textbf{$(R, \A)$-Lie lattice} inside $L$ is a finitely generated $\A$-submodule $\L$ of $L$ which spans $L$, is closed under the Lie bracket, and satisfies $\rho(\L)(\A)\subseteq \A$.
	\end{defn} 
	
	For example, if $A=K$, a smooth $(K, K)$-Lie--Rinehart algebra is simply a finite-dimensional Lie algebra over $K$.
	
	Given a smooth $(K, A)$-Lie--Rinehart algebra $L$ with $(R, \A)$-Lie lattice $\L$, we can form the universal enveloping algebra $U_\A(\L)$ as in \cite{Rinehart}, analogously to the universal enveloping algebra of a Lie algebra. It is a ring generated by $\A$ and $\L$ such that
	\begin{enumerate}[(i)]
		\item multiplying an element of $\L$ on the left by an element of $\A$ is given by the $\A$-module structure on $\L$.
		\item the commutator of two elements in $\L$ is given by the Lie bracket.
		\item if $a\in \A$, $\partial\in \L$, then
		\begin{equation*}
			\partial\cdot a-a\cdot \partial=\rho(\partial)(a).
		\end{equation*}
	\end{enumerate}
	Note that if $\L$ is an $(R, \A)$-Lie lattice, then so is $\pi^n\L$ for any $n\geq 0$. We can thus define the \textbf{completed enveloping algebra}
	\begin{equation*}
		\w{U_A(L)}:=\varprojlim \left(\h{U_\A(\pi^n\L)}\otimes_RK\right).
	\end{equation*}
	This does not depend on the choice of Lie lattice (see \cite[Lemma 6.2]{DcapOne}), nor on the choice of admissible affine formal model (see \cite[Proposition 6.2]{DcapOne}).
	
	We claim that $\w{U_A(L)}$ is a Fr\'echet--Stein algebra. In the case when $K$ is a discretely valued field, this was done in \cite[Theorem 3.5]{Bode1} building on \cite[Theorem 6.4]{DcapOne}, while Ardakov \cite{Ardakov} gave a proof for arbitrary $K$ as long as $L$ is free as an $A$-module. Here, we adapt the proof from \cite{Bode1} to our more general setting.
	
	Fix an $(R, \A)$-Lie lattice $\L$ and let $\U_n$ denote the image of $U_\A(\pi^n\L)$ in $U_A(L)\cong U_\A(\pi^n\L)\otimes_R K$.
	
	Note that $U_{\A}(\pi^n\L)$ carries a natural filtration, and we equip $\U_n$ with the corresponding quotient filtration. The PBW theorem (\cite[Theorem 3.1]{Rinehart}) yields a surjective ring map $\mathrm{Sym}_{\A}\pi^n\L\to \mathrm{gr}(U_{\A}(\pi^n\L))$, which is even an isomorphism if $\L$ is a projective $\A$-module. It follows that each filtered piece $\mathrm{Fil}_i\U_n$ is a finitely generated $\A$-module.
	
	\begin{defn}
		Suppose that $K$ is not discretely valued.
		
		An $R$-algebra $\B$ is called \textbf{almost left Noetherian} if for any left ideal $I\subseteq \B$ and any $\epsilon\in \mathfrak{m}$, there exists a finitely generated left ideal $J\subseteq I$ such that $\epsilon I\subseteq J$.
	\end{defn}
	
	We define almost right Noetherian rings analogously and use the term `almost Noetherian' to mean `almost left Noetherian and almost right Noetherian'.
	
	We remark that if $K$ is discretely valued, then $\A$ is Noetherian (as it is a quotient of $R\langle x_1, \hdots, x_r\rangle$), and if $K$ is not discretely valued, then $\A$ is still almost Noetherian by \cite[Satz 5.1]{Kiehl}.
	
	\begin{lem}
		\label{Unanoeth}
		\leavevmode
		\begin{enumerate}[(i)]
			\item If $K$ is discretely valued, then $\h{U_{\A}(\pi^n\L)}$ is Noetherian for all $n\geq 0$. If $K$ is not discretely valued, then $\h{U_{\A}(\pi^n\L)}$ is almost Noetherian for $n\geq 1$. 
			\item The algebra $\h{U_\A(\pi^n\L)}\otimes_RK$ is a Noetherian Banach $K$-algebra for any $n\geq 1$.
			\item The natural surjection $U_\A(\pi^n\L)\to \U_n$ induces an isomorphism 
			\begin{equation*}
				\h{U_\A(\pi^n\L)}\otimes_RK\to \h{\U_n}\otimes_R K
			\end{equation*}
			for any $n\geq 1$.
		\end{enumerate}
	\end{lem}
	
	\begin{proof}
		In the discretely valued case, this was done in \cite{DcapOne}: $U_{\A}(\pi^n\L)$ is Noetherian by the PBW theorem, so its $\pi$-adic completion is also Noetherian. Hence $\h{U_{\A}(\pi^n\L)}\otimes_RK$ is Noetherian Banach, and (iii) was proved in \cite[Lemma 2.5]{DcapOne}.
		
		If $n\geq 1$, then $[\pi^n\L, \pi^n\L]\subseteq \pi \cdot \pi^n\L$ and $\rho(\pi^n\L)(\A)\subseteq \pi \A$, so that
		\begin{equation*}
			\frac{\h{U_\A(\pi^n\L)}}{\pi \h{U_\A(\pi^n\L)}}\cong \frac{U_\A(\pi^n\L)}{\pi U_\A(\pi^n\L)}
		\end{equation*}
		is a \emph{commutative} $R/\pi$-algebra of finite type. Since $R/\pi[x_1, \hdots, x_m]$ is almost Noetherian for all $m$ by \cite[Satz 5.1]{Kiehl}, (i) and (ii) follow from \cite[Proposition 2.10]{Bodegl}.
		
		For (iii), we follow the same argument as in \cite[Lemma 2.5]{DcapOne}: Let $T$ denote the torsion submodule of $U_\A(\pi^n\L)$ so that
		\begin{equation*}
			0\to T\to U_\A(\pi^n\L)\to \U_n\to 0
		\end{equation*}
		is a short exact sequence, where $\U_n$ is $\pi$-torsionfree. In particular, the sequence remains exact after tensoring with $R/\pi^j$ for any $j$, and taking the limit yields a short exact sequence
		\begin{equation*}
			0\to \h{T}\to \h{U_\A(\pi^n\L)}\to\h{\U_n}\to 0. 
		\end{equation*}
		Note that $T/\pi^2T\cong \h{T}/\pi^2\h{T}$, so by almost Noetherianity, there exist $t_1, \hdots, t_r\in T$ such that $\pi \h{T}\subseteq \sum \h{U_{\A}(\pi^n\L)}\cdot t_i+\pi^2\h{T}$.
		
		By \cite[Lemma 2.8.(i)]{Bodegl}, $\pi\cdot \h{T}$ is then contained in the ideal generated by the images of the $t_i$. Since each $t_i$ is annihilated by some $\pi^{m_i}$, it follows that $\pi^m\h{T}=0$ for any $m\geq 1+\mathrm{max}\{m_i\}$. Therefore, the morphism $\h{U_\A(\pi^n\L)}\to \h{\U_n}$ becomes an isomorphism after tensoring with $K$.
	\end{proof}
	
	Since the connecting maps $\h{\U_{n+1}}\otimes K\to \h{\U_n}\otimes K$ clearly have dense image (each of them contains the enveloping algebra $U_A(L)$), it remains to prove flatness. Following ideas from \cite[section 5.3]{Emerton}, this requires us to understand some ring-theoretic properties of $\h{\U_n}$ and $\h{\U_{n+1}}$ as well as an intermediate ring.
	
	\begin{lem}
		\label{Unptor}
		\leavevmode
		\begin{enumerate}[(i)]
			\item The ring $\U_n$ is $\pi$-adically separated for all sufficiently large $n$.
			\item The graded ring $\mathrm{gr}\U_n$ has bounded $\pi$-torsion for each $n$.
			\item For sufficiently large $n$, $\mathrm{gr}\U_n$ is $\pi$-torsionfree.
		\end{enumerate}
	\end{lem}
	\begin{proof}
		For (i), the proof in \cite[Lemma 3.6, Lemma 3.7]{Bode1} goes through unchanged.
		
		For (ii), note that each graded piece $\mathrm{gr}_i\U_n$ is a finitely generated $\A$-module and therefore $\pi$-adically separated by \cite[Lemma 2.13.(ii)]{Bodegl}. Thus $\mathrm{gr}\U_n$ is $\pi$-adically separated. Since $(\mathrm{gr}\U_n)/\pi \mathrm{gr}\U_n$ is a quotient of $\mathrm{Sym}_{\A}(\pi^n\L)\otimes_RR/\pi$, it is a commutative $R/\pi$-algebra of finite type and hence almost Noetherian. In particular, $\mathrm{gr}\U_n$ has bounded $\pi$-torsion by \cite[Corollary 2.9]{Bodegl}.
		
		For (iii), we can use the same argument as in \cite[Lemma 3.1, Corollary 3.1]{DcapThree}: by (ii), $\mathrm{gr}\U_1$ has bounded $\pi$-torsion, annihilated by $\pi^m$, say. Now consider $n\geq m$ and the natural $R$-module surjection $\mu: \mathrm{gr}\U_1\to \mathrm{gr}\U_{n+1}$ given by multiplication by $\pi^{ni}$ on the $i$ graded piece. If $x\in \mathrm{Fil}_i\U_1$ such that $\overline{x}\in \mathrm{gr}_i\U_1$ is in the kernel of $\mu$, then $\pi^{ni}x\in \mathrm{Fil}_{i-1}\U_{n+1}\subseteq \mathrm{Fil}_{i-1}\U_1$, so $\overline{x}$ is $\pi$-torsion in $\mathrm{gr}\U_1$. Now if $y\in \mathrm{Fil}_i\U_{n+1}$ such that $\overline{y}\in \mathrm{gr}_i\U_{n+1}$ is annihilated by $\pi$, there exists $\overline{x}\in \mathrm{gr}_i\U_1$ such that $\mu(\overline{x})=\overline{y}$. Thus $\pi x\in \mathrm{ker}\mu$, and $x$ is therefore annihilated by $\pi^n=\pi^m\cdot \pi^{n-m}$ by the above. But then 
		\begin{equation*}
			\pi^{ni}x=\pi^{n(i-1)}\pi^nx\in \pi^{n(i-1)}\mathrm{Fil}_{i-1}\U_1\subseteq \mathrm{Fil}_{i-1}\U_{n+1},
		\end{equation*} 
		and hence $\overline{y}=0$.
	\end{proof}
	
	We now choose $n$ large enough so that $\U_n$ is $\pi$-adically separated and $\mathrm{gr}\U_n$ is $\pi$-torsionfree. In particular, $\U_{n+1}$ is also $\pi$-adically separated and we have an inclusion $\U_n\subseteq U_A(L)=\U_{n+1}\otimes_RK\subseteq \h{\U_{n+1}}\otimes_RK$.
	
	We then denote by $\V_n\subseteq \h{\U_{n+1}}\otimes_RK$ the $R$-submodule $\V_n:=\h{\U_{n+1}}+\U_n$.
	
	\begin{lem}
		\label{Vnsep}
		Assume that $\U_n$ is $\pi$-adically separated and $\mathrm{gr}\U_n$ $\pi$-torsionfree.
		\begin{enumerate}[(i)]
			\item For any natural number $k$, we have
			\begin{equation*}
				\h{\U_{n+1}}=\mathrm{Fil}_k\U_{n+1}+\h{\U_{n+1}}\cdot(\pi^{n+1}\L)^{k+1}.
			\end{equation*}
			\item For any $x\in \V_n$ and any $r\in \mathbb{N}$, there exists a natural number $k$ such that 
			\begin{equation*}
				x\in \mathrm{Fil}_k\U_n+ \pi^r\h{\U_{n+1}}\cdot (\pi^{n+1}\L)^{k+1}.
			\end{equation*}
			\item For any $r$, we have $\V_n/\pi^r\V_n\cong \U_n/\pi^r\U_n$. In particular, $\h{\V_n}\cong \h{\U_n}$.
			\item $\V_n\subseteq \h{\U_{n+1}}\otimes_RK$ is a $\pi$-adically separated subring.
		\end{enumerate}
	\end{lem}
	\begin{proof}
		For (i), note that the natural $R$-linear map $\mathrm{Fil}_k \U_{n+1}\oplus \U_{n+1}\cdot (\pi^{n+1}\L)^{k+1}\to \U_{n+1}$ is surjective by construction of $U_{\A}(\pi^{n+1}\L)$. Since $\U_{n+1}$ is $\pi$-torsionfree, the corresponding morphism between the $\pi$-adic completions
		\begin{equation*}
			\left(\mathrm{Fil}_k\U_{n+1}\oplus \U_{n+1}\cdot (\pi^{n+1}\L)^{k+1}\right)\ \h{}\to \h{\U_{n+1}}
		\end{equation*} 
		is also surjective. But now $\mathrm{Fil}_k\U_{n+1}$ is a finitely generated $\A$-module, hence already complete by \cite[Lemma 2.13.(iii)]{Bodegl}, and the $\pi$-adic completion of 
		\begin{equation*}
			\U_{n+1}\cdot (\pi^{n+1}\L)^{k+1}\subseteq \U_{n+1} 
		\end{equation*}
		clearly surjects onto $\h{\U_{n+1}}\cdot (\pi^{n+1}\L)^{k+1}$. Thus we have $\h{\U_{n+1}}=\mathrm{Fil}_k\U_{n+1}+\h{\U_{n+1}}\cdot (\pi^{n+1}\L)^{k+1}$ for any $k$, as required.
		
		For (ii), let $x\in \V_n=\h{\U_{n+1}}+\U_n$, so that we can write $x=a+b$ with $a\in \h{\U_{n+1}}$, $b\in \U_n$. Now there exists some $a_1\in \U_{n+1}$ and $a_2\in \h{\U_{n+1}}$ such that $a=a_1+\pi^ra_2$, since $\U_{n+1}/\pi^r\U_{n+1}\cong \h{\U_{n+1}}/\pi^r\h{\U_{n+1}}$. Choose $k$ such that $a_1\in \mathrm{Fil}_k\U_{n+1}\subseteq \mathrm{Fil}_k\U_n$ and $b\in \mathrm{Fil}_k\U_n$. By (i), we can write $a_2=a_3+a_4$ with $a_3\in \mathrm{Fil}_k\U_{n+1}$ and $a_4\in \h{\U_{n+1}}\cdot (\pi^{n+1}\L)^{k+1}$. Then $x=(a_1+\pi^ra_3+b)+\pi^ra_4\in \mathrm{Fil}_k\U_n+\pi^r\h{\U_{n+1}}\cdot(\pi^{n+1}\L)^{k+1}$, as required. 
		
		For (iii), the natural morphism $\U_n/\pi^r\U_n\to \V_n/\pi^r\V_n$ is surjective by (ii). For injectivity, note that $\pi^r\h{\U_{n+1}}\cap \U_{n+1}=\pi^r\U_{n+1}$ for any $r$, since $\U_{n+1}/\pi^r\U_{n+1}\cong \h{\U_{n+1}}/\pi^r\h{\U_{n+1}}$. Thus $\h{\U_{n+1}}\cap U_A(L)=\U_{n+1}$ and hence $\pi^r(\h{\U_{n+1}}+\U_n)\cap \U_n=\pi^r\U_n$.
		
		For (iv), we need to show that $\h{\U_{n+1}}\cdot \U_n$ and $\U_n\cdot \h{\U_{n+1}}$ are contained in $\h{\U_{n+1}}+\U_n$. Let $a\in \h{\U_{n+1}}$ and $b\in \U_n$. Note that there exists some $i$ such that $\pi^ib\in \U_{n+1}$ -- for instance, if $b\in \mathrm{Fil}_i \U_n$, then $\pi^ib\in \U_{n+1}$. Moreover, we can find $a'\in \pi^i \h{\U_{n+1}}$ such that $a-a'\in \U_{n+1}$. Writing $a\cdot b=(a-a')b+a'b$ and $b\cdot a=b(a-a')+ba'$ shows that $\V_n$ is indeed a subring.
		
		It remains to show that $\V_n$ is $\pi$-adically separated. Let $x\in \cap \pi^m\V_n$. Let $r$ be a natural number. By (ii), there exists $k$ such that we can write $x=a+b$ with $a\in \mathrm{Fil}_k\U_n$ and $b\in \pi^r\h{\U_{n+1}}\cdot(\pi^{n+1}\L)^{k+1}$.
		
		Note that $b\in\pi^r\pi^{k+1}\h{\U_{n+1}}\cdot(\pi^n\L)^{k+1}\subseteq \pi^{r+k+1}\V_n$. But $x$ is also in $\pi^{r+k+1}\V_n$ by assumption, so $a\in \mathrm{Fil}_k\U_n\cap \pi^{r+k+1}\V_n$. As $\U_n/\pi^{r+k+1}\U_n\cong \V_n/\pi^{r+k+1}\V_n$, it follows that $a\in \mathrm{Fil}_k\U_n\cap \pi^{r+k+1}\U_n$. But since $\mathrm{gr}\U_n$ is $\pi$-torsionfree, this forces $a\in \pi^{r+k+1}\mathrm{Fil}_k\U_n\subseteq \pi^r \h{\U_{n+1}}$.
		
		Thus $x=a+b\in \pi^r\h{\U_{n+1}}$. But $r$ was arbitrary and $\h{\U_{n+1}}$ is $\pi$-adically separated, so $x=0$.   
	\end{proof}
	
	\begin{thm}
		\label{UcapFS}
		For all sufficiently large $n$, the morphism $\h{\U_{n+1}}\otimes_RK\to \h{\U_n}\otimes_R K$ is flat on both sides. In particular, $\w{U_A(L)}$ is a Fr\'echet--Stein algebra.
	\end{thm}
	\begin{proof}
		We can assume without loss of generality that $\U_n$ is $\pi$-adically separated and $\mathrm{gr}\U_n$ is $\pi$-torsionfree. In this case, we check that $\V_n=\h{\U_{n+1}}+\U_n\subseteq \h{\U_{n+1}}\otimes_RK$ satisfies the conditions in \cite[Proposition 2.16]{Bodegl}:
		
		By Lemma \ref{Vnsep}.(iv) and (iii)¸, $\V_n$ is a $\pi$-adically separated subring of $\U_{n+1}\otimes_RK$ such that $\V_n/\pi\V_n\cong \U_n/\pi\U_n$ is almost Noetherian. It contains $\h{\U_{n+1}}$, which in turn is an almost Noetherian, $\pi$-torsionfree $R$-algebra by Lemma \ref{Unanoeth}(i). 
		
		We can thus apply \cite[Proposition 2.16]{Bodegl} to deduce that 
		\begin{equation*}
			\V_n\otimes_RK=\h{\U_{n+1}}\otimes_RK\to \h{\V_n}\otimes_RK
		\end{equation*}
		is flat on both sides. As $\h{\V_n}\cong \h{\U_n}$ by Lemma \ref{Vnsep}.(iii), the result follows.
	\end{proof}
	
	\subsection{Faithful flatness}
	
	Let $A$ be an affinoid $K$-algebra and let $L$ be a smooth $(K, A)$-Lie--Rinehart algebra. Write $\A\subseteq A$ for an admissible affine formal model of $A$. Let $\L\subseteq L$ be an $(R, \A)$-Lie lattice. Throughout, we write $U=U_A(L)$, $\w{U}=\w{U_A(L)}$, $\U_n$ is the image of $U_\A(\pi^n\L)$ in $U$, and $U_n=\h{\U_n}\otimes_R K$. 
	
	By Lemma \ref{Unptor}, we can assume that $\mathrm{gr}\U_n$ is $\pi$-torsionfree for all $n$, after possibly replacing $\L$ by some rescaled lattice $\pi^{n_0}\L$. In particular $\U_n$ is a \textbf{deformable} $R$-algebra in the sense of \cite[Definition 3.5]{AW}.

	We recall the following result:
	\begin{lem}[{\cite[Lemma 4.14]{Bode1}}]
		\label{discreteflatness}
		Assume that $K$ is discretely valued. Then the natural map $U\to \w{U}$ is flat on both sides.
	\end{lem}
	\begin{proof}
		By assumption and \cite[Theorem 3.1]{Rinehart}, $\U_n$ is Noetherian for any $n\geq 0$. As $\pi$ is central, \cite[3.2.3.(iv)]{Berthelot} implies that the $\pi$-adic completion $\h{\U_n}$ is flat over $\U_n$, and hence $U_n=\h{\U_n}\otimes_R K$ is flat over $U$.
		
		Now if $0\to M_1\to M_2\to M_3\to 0$ is a short exact sequence of finitely generated $U$-modules, then
		\begin{equation*}
			0\to U_n\otimes_U M_1\to U_n\otimes_U M_2\to U_n\otimes_U M_3\to 0
		\end{equation*}
		is exact by the above. Since $M_i$ is finitely presented over $U$ for each $i$, $\w{U}\otimes_UM_i$ is finitely presented over $\w{U}$ and thus coadmissible. In particular, $\w{U}\otimes_U M_i\cong \varprojlim U_n\otimes_UM_i$, so that the exactness of
		\begin{equation*}
			0\to \w{U}\otimes_U M_1\to \w{U}\otimes_U M_2\to \w{U}\otimes_U M_3\to 0
		\end{equation*}
		follows from the above and Mittag-Leffler for coadmissible $\w{U}$-modules (\cite[Theorem B]{ST}).
	\end{proof}
	It was shown in \cite[Corollary 3.2]{DcapThree} that $\w{U}$ is even faithfully flat if $K$ is discretely valued.
	
	Our goal in this subsection is to prove the analogous statement for non-discretely valued $K$, when we can no longer rely on $\U_n$ to be Noetherian.
	
%	\begin{lem}
%		Let $M$ be a finitely generated $U$-module. Then there exists a gauge semi-norm on $M$ induced by a lattice $M^\circ\subset M$ which is a finitely generated $\U_n$-module, and all such semi-norms are equivalent.
%	\end{lem}
%	\begin{proof}
%		This is immediate: for a surjection $U^r\to M$, let $M^\circ$ denote the image of $\U_n^r=U_\A(\pi^n\L)^r$. This is a lattice in $M$. Any two finitely generated lattices $L_1$ and $L_2$ are equivalent, as there is some integer $s$ such that if $L_1$ is generated by $m_1, \hdots, m_r$, then $\pi^s m_i\in L_2$ for all $i$.  
%	\end{proof}
	
%	We remark that whenever a $U$-module $M$ is equipped with a semi-norm which is the gauge semi-norm induced by some $\U_n$-stable lattice $M^\circ$, then its Hausdorff completion $\h{M}$ is given by $\h{M^\circ}\otimes_RK$ and is naturally a $U_n$-module.
	
	\begin{lem}
		\label{gradedNakayama}
		Let $\M=\oplus_i\mathrm{gr}_i\M$ be an $\mathbb{N}$-graded $\mathrm{gr}\U_n$-module such that each graded piece $\mathrm{gr}_i\M$ is a finitely generated $\A$-module. If $\M\otimes_R k$ is finitely generated over $(\mathrm{gr}\U_n)\otimes_R k$, then $\M$ is finitely generated over $\mathrm{gr}\U_n$.
	\end{lem}
	\begin{proof}
		A similar argument can be found in the proof of \cite[Lemma 053C]{stacksproj}.
		Let $m_1, \hdots, m_l\in \M$ be homogeneous elements whose images generate $\M\otimes_R k$. Suppose the degree of $m_j$ is $e_j$, say. We then have a morphism of graded $\mathrm{gr}\U_n$-modules
		\begin{equation*}
			\theta: \oplus_{j=1}^l\mathrm{gr}\U_n(-e_j)\to \M,
		\end{equation*}
		where $\mathrm{gr}\U_n(-e_j)$ is $\mathrm{gr}\U_n$ with the gradation shifted by $e_j$. Thus $\theta=\oplus \theta_i$, where the $i$th graded piece $\theta_i$ is given by
		\begin{equation*}
			\theta_i: \oplus_{j=1}^l\mathrm{gr}_{i-e_j}\U_n\to \mathrm{gr}_i\M.
		\end{equation*}
		For each $i$, $\theta_i\otimes_R k$ is surjective, and $\mathrm{gr}_i\M$ is a finitely generated $\A$-module. Thus Nakayama's lemma implies that $\theta_i$ is surjective for each $i$. Therefore, $\theta$ is surjective and $\M$ is finitely generated.
	\end{proof}
	
	Recall from \cite[Chapter I, Definition 5.1, Theorem 5.7]{Zariskian} that a filtered $\U_n$-module $\M$ is said to carry a \textbf{good filtration} if $\mathrm{gr}\M$ is finitely generated over $\mathrm{gr}\U_n$.
	
	Let $\M$ be a finitely generated $\pi$-torsionfree $\U_0$-module. Pick a surjection
	\begin{equation*}
		\U_0^r\to \M,
	\end{equation*}
	inducing a good filtration on $\M$ as the quotient filtration. As in \cite[subsection 3.1]{DcapThree}, we define the \textbf{$n$th deformation} $\M_n\subseteq \M$ to be the image of $\U_n^r$ inside $\M$, equipped with the corresponding quotient filtration:
	\begin{equation*}
		\begin{xy}
			\xymatrix{
				\U_n^r\ar[r]\ar[d] & \M_n\ar[d]\\
				\U_0^r\ar[r] & \M.
			}
		\end{xy}
	\end{equation*}
	By construction, $\M_n$ is a finitely generated $\pi$-torsionfree $\U_n$-module, endowed with a filtration satisfying
	\begin{equation*}
		\mathrm{Fil}_i\M_n=\sum_{j\leq i} \pi^{nj}\mathrm{Fil}_j\M.
	\end{equation*}
	Note that this is again a good filtration, and each filtered piece is a finitely generated $\A$-module.
	
	\begin{lem}
		\label{gradedtorsion}
		Let $\M$ be a finitely generated $\pi$-torsionfree $\U_0$-module. 
		\begin{enumerate}[(i)]
			\item $\mathrm{gr}\M$ has bounded $\pi$-torsion.
			\item There exists $n_0$ such that $\mathrm{gr}\M_n$ is $\pi$-torsionfree for all $n\geq n_0$.
		\end{enumerate}
	\end{lem}
	\begin{proof}
		Let $T\subseteq \mathrm{gr}\M$ be the torsion submodule of $\mathrm{gr}\M$, so that we have a short exact sequence of graded $\mathrm{gr}\U_0$-modules
		\begin{equation*}
			0\to T\to \mathrm{gr}\M\to S\to 0,
		\end{equation*}
		where $S$ is $\pi$-torsionfree.
		
		Since $\mathrm{gr}_i\M$ is a finitely generated $\A$-module, so is the graded piece $\mathrm{gr}_iS$ for each $i$. As $\mathrm{gr}_iS$ is $\pi$-torsionfree, \cite[Proposition 1.9.14]{Abbes} implies that $\mathrm{gr}_i T$ is a finitely generated $\A$-module.
		
		Now $S$ is $\pi$-torsionfree and therefore flat over $R$, so $T\otimes_R k$ injects into $(\mathrm{gr}\M)\otimes_R k$. As $(\mathrm{gr}\U_0)\otimes_R k$ is Noetherian, this shows that $T\otimes_R k$ is finitely generated, and we can invoke Lemma \ref{gradedNakayama} to deduce that $T$ is finitely generated. In particular $\mathrm{gr}\M$ has bounded $\pi$-torsion. We have thus proven (i).
		
		For (ii), the discussion of \cite[Corollary 3.1]{DcapThree} goes through unchanged, compare also Lemma \ref{Unptor}.(iii).
	\end{proof}

	\begin{lem}
		\label{fpfortf}
		Let $\M_n$ be a finitely generated $\pi$-torsionfree $\U_n$-module equipped with a good filtration such that $\mathrm{gr}\M_n$ is $\pi$-torsionfree. Then $\M_n$ is finitely presented. 
	\end{lem} 
	\begin{proof}
		Let $\U_n^r\to \M_n$ be a surjection inducing a good filtration on $\M_n$ with $\mathrm{gr}\M_n$ $\pi$-torsionfree. Let $J_n\subseteq \U_n^r$ denote the kernel and equip it with the subspace filtration. By construction, we have a short exact sequence
		\begin{equation*}
			0\to \mathrm{Fil}_iJ_n\to \mathrm{Fil}_i\U_n^r\to \mathrm{Fil}_i\M_n\to 0
		\end{equation*}
		for each $i$. Now $\mathrm{Fil}_i\M_n$ is a finitely generated $\A$-module and is $\pi$-torsionfree by assumption, so by \cite[Proposition 1.9.14]{Abbes}, $\mathrm{Fil}_iJ_n$ is a finitely generated $\A$-module for each $i$.
		
		The short exact sequences above yield a short exact sequence
		\begin{equation*}
			0\to \mathrm{gr}J_n\to \mathrm{gr}\U_n^r\to \mathrm{gr}\M_n\to 0
		\end{equation*}
		by \cite[Chapter I, Theorem 4.2.4]{Zariskian}. 
		
		By assumption, $\mathrm{gr}\M_n$ is $\pi$-torsionfree, so flat over $R$, so the natural map
		\begin{equation*}
			(\mathrm{gr}J_n)\otimes_R k\to (\mathrm{gr}\U_n^r)\otimes_R k
		\end{equation*}
		is injective. But $(\mathrm{gr}\U_n)\otimes_Rk$ is a commutative $k$-algebra of finite type, hence Noetherian, so $(\mathrm{gr}J_n)\otimes_R k$ is a finitely generated $(\mathrm{gr}\U_n)\otimes_R k$-module. Since $\mathrm{Fil}_i J_n$ is a finitely generated $\A$-module, so is $\mathrm{gr}_i J_n$ for each $i$, so we can once more conclude from Lemma \ref{gradedNakayama} that $\mathrm{gr}J_n$ is finitely generated. Now we invoke \cite[Chapter I, Theorem 5.7]{Zariskian} to deduce that $J_n$ is finitely generated.
	\end{proof}

	\begin{thm}
		\label{flatcompletion}
		The map $U\to \w{U}$ is flat on both sides.
	\end{thm}
	\begin{proof}
		We only show flatness on the right. Flatness on the left can be proven in the same way.
		
		Let $M$ be a finitely generated left $U$-module and consider a short exact sequence
		\begin{equation*}
			0\to I\to U^r\to M\to 0
		\end{equation*}
		Let $n\geq 0$. Let $\M_n$ denote the image of $\U_n^r$ in $M$, and set $\I_n=\U_n^r\cap I$, so that we have a short exact sequence
		\begin{equation*}
			0\to \I_n\to \U_n^r\to \M_n\to 0.
		\end{equation*}
		By Lemma \ref{gradedtorsion}, if we choose $n$ large enough, $\mathrm{gr}\M_n$ is $\pi$-torsionfree, and then $\I_n$ is finitely generated by Lemma \ref{fpfortf}. Moreover, the subspace filtration on $\I_n$ is a good filtration, as we have seen in the proof of Lemma \ref{fpfortf} that $\mathrm{gr}\I_n$ is finitely generated.
		
		Since $\M_n$ itself is $\pi$-torsionfree, the sequence above remains exact after $\pi$-adic completion and applying $\otimes_RK$, i.e.
		\begin{equation*}
			0\to \h{\I_n}\otimes_RK\to U_n^r\to \h{\M_n}\otimes_RK\to 0
		\end{equation*}
		is an exact sequence of $U_n$-modules. 
		
		We now have the following diagram with exact rows
		\begin{equation*}
			\begin{xy}
				\xymatrix{
					&U_n\otimes_U I\ar[r]\ar[d]& U_n^r\ar[r]\ar[d]& U_n\otimes_U M\ar[r] \ar[d] & 0\\
					0\ar[r] & \h{\I_n}\otimes_RK\ar[r]& U_n^r\ar[r]& \h{\M_n}\otimes_RK\ar[r] & 0
				}
			\end{xy}
		\end{equation*}	
		where the vertical arrow in the middle is the identity map. In particular, we can see that whenever $\N$ is a finitely generated $\pi$-torsionfree $\U_n$-module, then the natural morphism
		\begin{equation*}
			U_n\otimes_U (\N\otimes_RK)\to \h{\N}\otimes_RK
		\end{equation*}
		is surjective.
		
		But now $\I_n$ is also a finitely generated $\pi$-torsionfree $\U_n$-module, so not only is the third vertical arrow in the diagram above surjective, the first vertical arrow is also surjective. This forces an isomorphism
		\begin{equation*}
			U_n\otimes_U M\cong \h{\M_n}\otimes_RK
		\end{equation*}
		for all $n$ with the property that $\mathrm{gr}\M_n$ is $\pi$-torsionfree.
		
		Note that the short exact sequence
		\begin{equation*}
			0\to \mathrm{gr}\I_n\to \mathrm{gr}\U_n^r\to \mathrm{gr}\M_n\to 0
		\end{equation*}
		shows that $\mathrm{gr}\I_n$ is also $\pi$-torsionfree, as we assume $\mathrm{gr}\U_n$ to be $\pi$-torsionfree, so
		\begin{equation*}
			U_n\otimes_U I\cong \h{\I_n}\otimes_RK
		\end{equation*}
		by the same argument as above.
		
		We have thus identified the short exact sequence
		\begin{equation*}
			0\to \h{\I}\otimes_RK\to U_n^r\to \h{\M_n}\otimes_RK\to 0
		\end{equation*}
		with
		\begin{equation*}
			0\to U_n\otimes_U I\to U_n^r\to U_n\otimes_UM\to 0
		\end{equation*}
		for all sufficiently large $n$.

		Now note that the finitely presented module $\w{U}\otimes_U M$ is coadmissible, so that 
		\begin{equation*}
			\w{U}\otimes_UM\cong \varprojlim U_n\otimes_U M,
		\end{equation*}
		similarly for $I$ instead of $M$. As in Lemma \ref{discreteflatness}, taking the limit of the short exact sequences above and invoking \cite[Theorem B]{ST} thus yields a short exact sequence
		\begin{equation*}
			0\to \w{U}\otimes_U I\to \w{U}^r\to \w{U}\otimes_U M\to 0,
		\end{equation*}
		showing that $\w{U}$ is a flat right $U$-module.
	\end{proof}
	
	Following \cite[section 3]{DcapThree}, we can even strengthen this result.
	\begin{cor}
		\label{fflatcompl}
		The map $U\to \w{U}$ is faithfully flat on both sides.
	\end{cor}
	\begin{proof}
		Let $M\neq 0$ be a finitely generated $U$-module. It suffices to prove that $\w{U}\otimes_UM\neq 0$, and hence it suffices to prove that there exists some $n$ with 
		\begin{equation*}
			U_n\otimes_U M\cong U_n\otimes_{\w{U}}\w{U}\otimes_UM\neq 0.
		\end{equation*} 
		
		As before, let $\M\subseteq M$ be a finitely generated $\U_0$-module spanning $M$. Choose a good filtration and let $\M_n\subseteq \M$ denote the $n$th deformation. By Lemma \ref{gradedtorsion}, there exists $n_0$ such that $\mathrm{gr}\M_n$ is $\pi$-torsionfree for all $n\geq n_0$.
		
		By the proof of Theorem \ref{flatcompletion}, if $n\geq n_0$, then
		\begin{equation*}
			U_n\otimes_U M\cong \h{\M_n}\otimes_R K.
		\end{equation*}
		
		We first claim that if $n\geq n_0$, then $\h{\M_n}\neq 0$. Suppose for a contradiction that 
		\begin{equation*}
			\h{\M_n}=\varprojlim \M_n/\pi^r\M_n=0.
		\end{equation*}
		In particular, $\M_n/\pi\M_n=0$, i.e. $\M_n=\pi\M_n$. Since $\mathrm{gr}\M_n$ is $\pi$-torsionfree, this forces $\mathrm{Fil}_i\M_n=\pi \mathrm{Fil}_i\M_n$ for all $i$. But $\mathrm{Fil}_i\M_n$ is a finitely generated $\A$-module, so Nakayama's lemma implies now that $\mathrm{Fil}_i\M_n=0$ for all $i$, hence $\M_n=0$. As $\M_n\otimes_RK=M\neq 0$, this produces the desired contradiction.
		
		Finally, we need to show that $\h{\M_n}$ is not $\pi$-torsion. Note that the same argument as in the proof of Theorem \ref{flatcompletion} shows that 
		\begin{equation*}
			\h{\M_n}\cong \h{\U_n}\otimes_{\U_n}\M_n.
		\end{equation*}
		In particular, $\h{\M_n}$ is a finitely generated $\h{\U_n}$-module, so if it was a torsion module, it would be annihilated by some $\pi^s$.
		
		Since the morphism
		\begin{equation*}
			\varprojlim_j \M_n/\pi^j\M_n\to \M_n/\pi^{s+1}\M_n
		\end{equation*}
		is always surjective, this implies that $\M_n/\pi^{s+1}\M_n$ is annihilated by $\pi^s$, i.e.
		\begin{equation*}
			\pi^s\M_n=\pi^{s+1}\M_n.
		\end{equation*}
		Since $\M_n$ is $\pi$-torsionfree, this yields once again $\M_n=\pi \M_n$, leading to the same contradiction as before. Hence $\h{\M_n}$ is not $\pi$-torsion, and $\h{\M_n}\otimes_RK\neq 0$ as soon as $n\geq n_0$. The result follows. 
	\end{proof}
	% THIS COMES LATER.
	%\begin{cor}
%		Let $A$ be an affinoid $K$-algebra and let $L$ be a smooth $(K, A)$-Lie--Rinehart algebra admitting a free Lie lattice. Write $U=U_A(L)$, $\w{U}=\w{U_A(L)}$. Then there exists an exact and faithful functor
%		\begin{equation*}
%			E=\w{U}\otimes_{U}-: \{\text{f.g. $U$-modules}\}\to \C_{\w{U}}.
%		\end{equation*}
%		Composing with the bornologification functor $(-)^b$, this yields an exact and faithful functor
%		\begin{equation*}
%			E^b: \{\text{f.g. $U$-modules}\}\to \h{\B}c_K(\w{U}).
%		\end{equation*}
%	\end{cor}
%	\begin{proof}
%		This is a direct consequence of Theorem \ref{flatcompletion}, Corollary \ref{fflatcompl}, and Corollary \ref{embedding}. 
%	\end{proof}
%	We call $E$ (respectively $E^b$) the \textbf{extension functor}. Conceptually, if we regard $U$ as a quantization of $\mathrm{Sym}_AL$, a ring of functions on the dual bundle $L^\vee$ which are `algebraic in the direction of $L^\vee$', then $E$ can be considered as `analytification in the direction of $L^\vee$'.

	\section{Categorical background: Sheaves valued in quasi-abelian categories}
	\subsection{Quasi-abelian categories}
	We begin by recalling the key definitions from Schneiders' framework of quasi-abelian categories \cite{Schneiders}.
	
	Suppose that $\C$ is an additive category admitting kernels and cokernels. Recall that a morphism $f$ in $\C$ is called \textbf{strict} if the natural morphism $\coim f\to \im f$ is an isomorphism. In particular, if $\C$ is abelian, then any morphism in $\C$ is strict.
	
	We use the definition of strictness to formulate the following generalization of the notion of an abelian category.
	\begin{defn}
		An additive category $\C$ admitting kernels and cokernels is called \textbf{quasi-abelian} if 
		\begin{enumerate}[(i)]
			\item for any Cartesian square
			\begin{equation*}
				\begin{xy}
					\xymatrix{
						E'\ar[d] \ar[r]^{f'}&F'\ar[d]\\
						E\ar[r]^f &F
					}
				\end{xy}
			\end{equation*}
			with $f$ a strict epimorphism, $f'$ is also a strict epimorphism.
			\item for any Cocartesian square
			\begin{equation*}
				\begin{xy}
					\xymatrix{
						E\ar[r]^f\ar[d] & F\ar[d]\\
						E'\ar[r]^{f'} & F'
					}
				\end{xy}
			\end{equation*}
			with $f$ a strict monomorphism, $f'$ is also a strict monomorphism.
		\end{enumerate}
	\end{defn}
	A sequence
	\begin{equation*}
		\begin{xy}
			\xymatrix{
				E\ar[r]^u& F\ar[r]^v & G
			}
		\end{xy}
	\end{equation*}
	in $\C$ is called \textbf{strictly exact} (at $F$) if $u$ is strict and $\mathrm{Im}u\cong \mathrm{ker} v$. The class of short strictly exact sequences gives $\C$ the structure of an exact category in the sense of Quillen \cite{Quillen}, and we can form the derived category $\mathrm{D}(\C)$ by taking the quotient of the homotopy category by the full subcategory of strictly exact complexes.
	
	The derived category $\mathrm{D}(\C)$ admits a canonical t-structure called the left t-structure. Its heart $LH(\C)$, called the \textbf{left heart} of $\C$, is an abelian category which can be thought of as an `abelian envelope' of $\C$. We first give an explicit description of $LH(\C)$ before making this statement more precise.
	Denote by $K(\C)$ the homotopy category of $\C$. The left heart $LH(\C)$ is the localization of the full subcategory of $K(\C)$ consisting of complexes 
	\begin{equation*}
		\begin{xy}
			\xymatrix{
				E=(0\ar[r]& E^{-1}\ar[r]^{f_E} &E^0\ar[r] &0)
			}
		\end{xy}
	\end{equation*}
	with $f_E$ a monomorphism, by the multiplicative system consisting of morphisms $u: E\to F$ such that
	\begin{equation*}
		\begin{xy}
			\xymatrix{
				E^{-1}\ar[r]^{f_E} \ar[d]_{u_{-1}}& E^0\ar[d]^{u_0}\\
				F^{-1}\ar[r]^{f_F}& F^0
			}
		\end{xy}
	\end{equation*}
	is a Cartesian and Cocartesian square in $\C$.
	
	The functor $I: \C\to LH(\C)$ sending $M$ to $0\to0\to M\to 0$ identifies $\C$ with the full subcategory consisting of those $E\in LH(\C)$ such that $f_E$ is strict (see \cite[Corollary 1.2.28, Proposition 1.2.29]{Schneiders}).
	
	Moreover, a sequence 
	\begin{equation*}
		0\to E\to F\to G\to 0
	\end{equation*}
	in $\C$ is short strictly exact if and only if it is exact in $LH(\C)$ (\cite[Corollary 1.2.28]{Schneiders}), so that we obtain a natural morphism $\mathrm{D}(\C)\to \mathrm{D}(LH(\C))$, which turns out to be an equivalence of triangulated categories (\cite[Proposition 1.2.32]{Schneiders}).
	
	In summary: while $\C$ might not be abelian, it embeds into an abelian category $LH(\C)$ which has the same derived category.
	
	The embedding $I: \C\to LH(\C)$ admits a left adjoint $C: LH(\C)\to \C$, given by taking the cokernel of any monomorphism representing the object (\cite[Proposition 1.2.27]{Schneiders}).
	
	If $(M^\bullet, d^\bullet)$ is a complex in $\C$, then $\mathrm{H}^j(M^\bullet)\in LH(\C)$ is its $j$th cohomology, viewing $M^\bullet$ as an object in $\mathrm{D}(LH(\C))$. Explicitly, 
	\begin{equation*}
		\mathrm{H}^j(M^\bullet)=(0\to \coim d^{j-1}\to \ker d^j\to 0).
	\end{equation*}
	In particular, $\mathrm{H}^j(M^\bullet)$ is an object of $\C$ if and only if the differential $M^{j-1}\to M^j$ is strict.
	
	We note that if $E\in LH(\mathcal{C})$ is represented by a monomorphism $f_E: E^{-1}\to E^0$, then we have a short exact sequence
	\begin{equation*}
		0\to I(E^{-1})\to I(E^0)\to E\to 0
	\end{equation*}
	in $LH(\mathcal{C})$: as $I$ is a right adjoint, it preserves all monomorphisms, and $E$ is evidently the cokernel of $I(E^{-1})\to I(E^0)$. 
	
	If $f: E^{-1}\to E^0$ is any (not necessarily monomorphic) morphism in $\C$, we sometimes write
	\begin{equation*}
		\begin{xy}
			\xymatrix{[E^{-1}\ar[r]^{f}& E^0]}
		\end{xy}
	\end{equation*}   
	for the object 
	\begin{equation*}
		(0\to \coim f\to E^0\to 0)\in LH(\C).
	\end{equation*}
	So for example, $\mathrm{H}^j(M^\bullet)=[M^{j-1}\to \ker d^j]$ in the example above.
	
	The following example might help to illustrate the notions introduced so far and motivate how they might be relevant in $p$-adic geometry. Consider the Tate algebra $K\langle x\rangle$ in one variable, which is the ring of analytic functions on the closed unit disk $D$. The de Rham cohomology of $D$ is then computed (thanks to the acyclicity of coherent modules on affinoid spaces) as the cohomology of the complex
	\begin{equation*}
		\begin{xy}
			\xymatrix{
				0\ar[r]& K\langle x\rangle \ar[r]^{\mathrm{d}}& K\langle x\rangle \mathrm{d}x\ar[r]& 0,
			}
		\end{xy}
	\end{equation*}
	where $\mathrm{d}(f)=\frac{\mathrm{d}f}{\mathrm{d}x}\mathrm{d}x$.
	
	Thus $\mathrm{H}^0_\mathrm{dR}(D)=K$ as expected, but if we take the cokernel of $\mathrm{d}$ in the category of $K$-vector spaces, we obtain an infinite-dimensional vector space. Even worse, while the terms in the complex can be equipped with natural Banach structures, the set-theoretic image of $\mathrm{d}$ is dense, so that the quotient could only carry the indiscrete topology. 
	
	The complex begins to make somewhat more sense when we are working in the quasi-abelian category of Banach spaces (or, as we will do later, in the category of complete bornological spaces): the fact that $\mathrm{d}$ has dense image without being surjective is precisely saying that $\mathrm{d}$ is an epimorphism in the category of Banach spaces, but \emph{not} a strict epimorphism. Taking cohomology in this setup, we have
	\begin{equation*}
		\mathrm{H}^1_\mathrm{dR}(D)=[K\langle x\rangle \to K\langle x\rangle \mathrm{d}x]\in LH(\mathrm{Ban}_K),
	\end{equation*}
	and while $C\mathrm{H}^1_\mathrm{dR}(D)=0$ as one would like, all the curious, non-strict behaviour is still recorded on the level of the left heart.
	
	We now turn to functors between quasi-abelian categories.
	
	If $F:\C\to \C'$ is a functor between quasi-abelian categories, we say that it is 
	\begin{enumerate}[(i)]
		\item \textbf{left exact} if it sends a strictly exact sequence
		\begin{equation*}
			0\to E\to E'\to E''\to 0
		\end{equation*}
		to a strictly exact sequence
		\begin{equation*}
			0\to F(E)\to F(E')\to F(E'').
		\end{equation*}
		\item \textbf{strongly left exact} if it sends a strictly exact sequence
		\begin{equation*}
			0\to E\to E'\to E''
		\end{equation*}
		to a strictly exact sequence
		\begin{equation*}
			0\to F(E)\to F(E')\to F(E'').
		\end{equation*}
	\end{enumerate}
	Equivalently, $F$ is left exact (resp. strongly left exact) if and only if it preserves the kernel of any strict morphism (resp. any morphism).
	
	The corresponding notions of right exactness are defined dually (in particular, in terms of strictly \emph{co}exact sequences, see \cite[Definition 1.1.17]{Schneiders}).
	
	A functor is then said to be \textbf{(strongly) exact} if it is both (strongly) left exact and (strongly) right exact.
	
	If $F: \C\to \C'$ is a left exact functor between quasi-abelian categories such that $\C$ contains an $F$-injective subcategory (see \cite[Definition 1.3.2]{Schneiders}),  we can define a right derived functor $\mathrm{R}F: \mathrm{D}^+(\C)\to \mathrm{D}^+(\C')$ as usual. Embedding $LH(\C)$ into $\mathrm{D}^+(LH(\C))\cong \mathrm{D}^+(\C)$ then allows us to form the functor $\widetilde{F}=\mathrm{H}^0(\mathrm{R}F): LH(\C)\to LH(\C')$.
	
	If $F$ is in fact strongly left exact, then $\widetilde{F}$ is left exact and right derivable, and the equivalence $\mathrm{D}^+(\C)\cong \mathrm{D}^+(LH(\C'))$ identifies $\mathrm{R}F$ with $\mathrm{R}\widetilde{F}$ (see \cite[Proposition 1.3.14]{Schneiders}).
	
	In the case of right exact functors, we obtain an analogous statement even without requiring strong right exactness (see \cite[Proposition 1.3.15]{Schneiders}). This asymmetry stems from our preference for the left t-structure.
	
	We should stress that for $E\in \C$ and $F$ a right exact functor, $\mathrm{H}^0(\mathbb{L}F(E))\in LH(\C')$ need not be isomorphic to $I(F(E))$. We only know that $C\mathrm{H}^0(\mathbb{L}F(E))\cong F(E)$.
	
	A particularly convenient situation is when $F$ is actually an exact functor. In this case, $F$ obviously lifts to a functor between the derived categories, and the corresponding $\widetilde{F}$ is a functor between the left hearts which is explicitly given by
	\begin{equation*}
		\widetilde{F}(0\to E^{-1}\to E^0\to 0)=[F(E^{-1})\to F(E^0)].
	\end{equation*}
	If $F$ is even strongly exact, then $\widetilde{F}$ is exact, and we have the natural isomorphism $\mathrm{D}(I)\mathrm{D}(F)\cong \mathrm{D}(\widetilde{F})\circ \mathrm{D}(I)$.
	
	We also need the notion of a quasi-elementary quasi-abelian category (see \cite[section 2.1]{Schneiders}).
	
	Let $\C$ be a cocomplete quasi-abelian category.
	
	An object $E$ of $\C$ is called \textbf{small} if the functor $\mathrm{Hom}_\C(E, -)$ commutes with direct sums, and we call $E$ \textbf{tiny} if $\mathrm{Hom}_\C(E, -)$ commutes with all filtered colimits.
	
	A set $\mathcal{G}$ of objects in $\C$ is called a \textbf{strictly generating set} if for any $E\in \C$, there exist $G_i\in \mathcal{G}$, $i\in \mathcal{I}$ for some indexing set $\mathcal{I}$, with a strict epimorphism $\oplus_{\mathcal{I}} G_i\to E$.
	\begin{defn}
		We say that a quasi-abelian category $\C$ is \textbf{quasi-elementary} (resp. \textbf{elementary}) if it is cocomplete and admits a strictly generating set of small (resp. tiny) projectives.
	\end{defn}
	Let $\C$ be a quasi-elementary category. Then \cite[Proposition 2.1.15.c)]{Schneiders} states that $\C$ has enough projectives\footnote{A quasi-abelian category $\C$ is said to have enough projectives if any object in $\C$ admits a strict epimorphism from some projective object.}, and $LH(\C)$ has enough projectives and enough injectives.
	
	By \cite[Proposition 2.1.15]{Schneiders}, $\C$ has exact products and strongly exact direct sums. If $\C$ is even elementary, all filtered colimits are strongly exact (\cite[Proposition 2.1.16]{Schneiders}).
	
	We note in particular that if $\C$ is a quasi-elementary quasi-abelian category, then $LH(\C)$ is a Grothendieck abelian category: by \cite[Proposition 2.1.12]{Schneiders}, $LH(\C)$ is elementary, so it has exact filtered colimits and a generator by definition. 
	
	\subsection{Homotopy limits}
	
	In this paper, we will usually work with unbounded derived categories. We refer to \cite{Spalt} and \cite[chapter 14]{KS} for background on the theory of derived functors in this context. 
	
	Let $\C$ be an abelian category. In order to extend statements about $\mathrm{D}^+(\C)$ to $\mathrm{D}(\C)$, it can be convenient to write chain complexes as a suitable limit of their truncations. In this subsection, we make this precise by recalling the notion of a homotopy limit in $\mathrm{D}(\C)$, which will also be useful in our discussion of coadmissibility in section 8. 
	
	Let $\C$ be an abelian category with the property that the derived category $\mathrm{D}(\C)$ admits countable products. By \cite[Lemma 07D9]{stacksproj}, this is e.g. the case when $\C$ is Grothendieck abelian, or more generally if $\C$ has countable products and enough injectives (\cite[Lemma 0BK6]{stacksproj}).
	
	Let $(M_n^\bullet, f_n)$ be an inverse system in $\mathrm{D}(\C)$, indexed by $\mathbb{N}$. Following \cite[section 08TB]{stacksproj}, we call $M^\bullet\in \mathrm{D}(\C)$ a \textbf{homotopy limit} of $M_n^\bullet$ if there is a distinguished triangle
	\begin{equation*}
		M^\bullet\to \prod M_n^\bullet\to \prod M_n^\bullet,
	\end{equation*}  
	where the morphism $\prod M_n^\bullet\to \prod M_n^\bullet$ is given by $\mathrm{id}_{M_n^\bullet}-f_{n+1}$ on the $n$th factor. We write
	\begin{equation*}
		M^\bullet\cong \mathrm{holim}M_n^\bullet
	\end{equation*}
	in this case. Note that the mapping cone construction ensures that homotopy limits always exist in $\mathrm{D}(\C)$ and are unique up to (non-unique!) isomorphism.
	
	\begin{lem}
		\label{homotopyproductexact}
		Let $\C$ be an abelian category with exact countable products. Let $(M_n^\bullet, f_n)$ be an inverse system in $\mathrm{D}(\C)$. Then 
		\begin{enumerate}[(i)]
			\item $\mathrm{D}(\C)$ has countable products.
			\item $\prod M_n^\bullet$ exists and can be obtained by taking termwise products of any chain complexes representing $M_n^\bullet$. 
			\item We have
			\begin{equation*}
				\mathrm{H}^j(\prod M_n^\bullet)\cong \prod \mathrm{H}^j(M_n^\bullet)
			\end{equation*}
			for all $j$.
		\end{enumerate}
		In particular, the long exact sequence of cohomology of the defining distinguished triangle yields short exact sequences
		\begin{equation*}
			0\to \mathrm{R}^1\varprojlim_n\mathrm{H}^{j-1}(M_n^\bullet)\to \mathrm{H}^j(\mathrm{holim}M_n^\bullet)\to \varprojlim \mathrm{H}^j(M_n^\bullet)\to 0
		\end{equation*}
		in $\C$ for all $j$.
	\end{lem}
	\begin{proof}
		This is \cite[Lemma 07KC]{stacksproj}. The long exact sequence of cohomology can then be written as 
		\begin{equation*}
			\hdots \to \prod\mathrm{H}^{j-1}(M_n^\bullet)\to \mathrm{H}^j(M^\bullet)\to \prod \mathrm{H}^j(M_n^\bullet)\to \prod \mathrm{H}^j(M_n^\bullet)\to \hdots
		\end{equation*}
		in this case, and we can invoke e.g. \cite[Lemma 3.76]{BamStein} to see that 
		\begin{equation*}
			\mathrm{ker}\left(\prod \mathrm{H}^j(M_n^\bullet)\to \prod \mathrm{H}^j(M_n^\bullet)\right)\cong \varprojlim \mathrm{H}^j(M_n^\bullet)
		\end{equation*} 
		and 
		\begin{equation*}
			\mathrm{coker}\left(\prod \mathrm{H}^j(M_n^\bullet)\to \prod \mathrm{H}^j(M_n^\bullet)\right)\cong \mathrm{R}^1\varprojlim \mathrm{H}^j(M_n^\bullet). 
		\end{equation*}
		We thus obtain the desired short exact sequence.
	\end{proof}
	
	We now discuss how the notion of homotopy limit can be used to pass from bounded to unbounded complexes. 
	
	Let $\C$ be an abelian category with countable products and enough injectives, and let $M^\bullet\in \mathrm{Ch}(\C)$ be a (possibly unbounded) chain complex. 
	
	If $\C$ has exact products, then Lemma \ref{homotopyproductexact} ensures that 
	\begin{equation*}
		M^\bullet\cong \mathrm{holim} \tau^{\geq -n}M^\bullet,
	\end{equation*}
	see also \cite[Remark 2.3]{BN}.
	
	If products in $\C$ are not necessarily exact, we can at least say the following:
	
	By \cite[Lemma 070F]{stacksproj}, there exist chain complexes $I_n^\bullet$ together with morphisms
	\begin{equation*}
		\begin{xy}
			\xymatrix{\hdots \ar[r]& \tau^{\geq -2}M^\bullet\ar[r]\ar[d]&\tau^{\geq -1}M^\bullet\ar[d]\\
				\hdots \ar[r]&I_2^\bullet\ar[r]&I_1^\bullet}
		\end{xy}
	\end{equation*}
	such that
	\begin{enumerate}[(i)]
		\item each vertical morphism is a quasi-isomorphism.
		\item each $I_n^\bullet$ is a bounded below complex consisting of injective objects.
		\item the morphisms $I_{n+1}^\bullet\to I_n^\bullet$ are termwise split surjections (in particular, $\mathrm{ker}(I_{n+1}^i\to I_n^i)$ is also injective).
	\end{enumerate}
	
	\begin{lem}
		\label{holimtruncation}
		Let $\C$ be an abelian category with countable products and enough injectives. Let $M^\bullet\in \mathrm{Ch}(\C)$ and let $\tau^{\geq -n}M^\bullet\to I_n^\bullet$ be as above. Then $I^\bullet=\varprojlim I_n^\bullet$ exists in $\mathrm{Ch}(\C)$ and is $K$-injective\footnote{Recall that a chain complex $X^\bullet$ is called $K$-injective (or homotopically injective) if $\mathrm{Hom}_\C^\bullet(Y^\bullet, X^\bullet)$ is acyclic for any acyclic complex $Y^\bullet\in K(\C)$. By the work of Spaltenstein \cite{Spalt}, $K$-injective complexes play the role of injective resolutions in the setting of unbounded complexes. See \cite{Spalt}, \cite[chapter 14]{KS}, \cite[tag 070G]{stacksproj} for details.}. The natural morphism
		\begin{equation*}
			M^\bullet\to I^\bullet
		\end{equation*}
		is a quasi-isomorphism in $\mathrm{D}(\C)$ if and only if $M^\bullet\cong \mathrm{holim}\tau^{\geq -n}M^\bullet$ via the natural morphisms.
	\end{lem}
	\begin{proof}
		This is \cite[Lemma 070M]{stacksproj}.
	\end{proof}
	
	The following result will be used later.
	
	\begin{lem}
		\label{exactcohom}
		Let $F: \E\to \F$ be an additive functor between abelian categories with exact products, admitting a right derived functor $\mathrm{R}F: \mathrm{D}(\E)\to \mathrm{D}(\F)$ between the \emph{unbounded} derived categories. Let $M^\bullet\in \mathrm{D}(\E)$. 
		\begin{enumerate}[(i)]
			\item If $\mathrm{R}F$ commutes with products and $\mathrm{H}^j(M^\bullet)$ is $F$-acyclic for each $j$, then
			\begin{equation*}
				\mathrm{H}^j(\mathrm{R}F(M^\bullet))\cong F(\mathrm{H}^j(M^\bullet))
			\end{equation*}
			for each $j$.
			\item If $M^\bullet$ is bounded below and there exists $i$ such that $\mathrm{R}F(\mathrm{H}^j(M^\bullet))\cong \mathrm{R}^iF(\mathrm{H}^j(M^\bullet))$ for each $j$, then
			\begin{equation*}
				\mathrm{H}^j(\mathrm{R}F(M^\bullet))\cong \mathrm{R}^iF(\mathrm{H}^{j-i}(M^\bullet))
			\end{equation*}
			for each $j$. The boundedness assumption can be dropped if $\mathrm{R}F$ commutes with products.
		\end{enumerate}
	\end{lem}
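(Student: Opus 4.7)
The plan is to deduce both parts from the hypercohomology spectral sequence
\begin{equation*}
E_2^{p,q} = \mathrm{R}^p F(\mathrm{H}^q(M^\bullet)) \Rightarrow \mathrm{H}^{p+q}(\mathrm{R}F(M^\bullet)),
\end{equation*}
being careful about convergence since $M^\bullet$ need not be bounded in (i).

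For (ii), $M^\bullet$ is bounded and the spectral sequence converges strongly. The hypothesis forces $\mathrm{R}^p F(\mathrm{H}^q(M^\bullet)) = 0$ unless $p = i$, so only the column $p = i$ survives and the spectral sequence degenerates at $E_2$. This leaves $\mathrm{H}^j(\mathrm{R}F(M^\bullet)) \cong E_2^{i, j-i} = \mathrm{R}^i F(\mathrm{H}^{j-i}(M^\bullet))$, as claimed.

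For (i), the same spectral-sequence argument works verbatim when $M^\bullet$ is bounded below: convergence is unproblematic and the $F$-acyclicity of each $\mathrm{H}^q(M^\bullet)$ collapses the spectral sequence onto the $p = 0$ row, giving $\mathrm{H}^j(\mathrm{R}F(M^\bullet)) \cong F(\mathrm{H}^j(M^\bullet))$. To reduce the general unbounded case to this one, I would consider the truncations $N_n := \tau^{\geq -n} M^\bullet$. Each $N_n$ is bounded below with cohomology groups among $\{\mathrm{H}^q(M^\bullet)\}$ and hence still $F$-acyclic, so the bounded-below case yields $\mathrm{H}^j(\mathrm{R}F(N_n)) \cong F(\mathrm{H}^j(M^\bullet))$ as soon as $n \geq -j$.

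To compare with $\mathrm{H}^j(\mathrm{R}F(M^\bullet))$, I would use that $M^\bullet \cong \mathrm{R}\lim_n N_n$ in $\mathrm{D}(\E)$: the transition maps $N_{n+1} \to N_n$ are identities in each fixed cohomological degree once $n$ is large, so the inverse system is eventually constant in every degree and the Milnor short exact sequences for $\mathrm{R}\lim$ collapse to the naive limit, which computes $M^\bullet$. The defining exact triangle
\begin{equation*}
\mathrm{R}\lim_n N_n \to \prod_n N_n \to \prod_n N_n
\end{equation*}
combined with the hypotheses that $F$ commutes with products and that $\F$ has exact products yields $\mathrm{R}F(\mathrm{R}\lim_n N_n) \cong \mathrm{R}\lim_n \mathrm{R}F(N_n)$. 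Taking $\mathrm{H}^j$ and applying the same Mittag--Leffler argument to the eventually-constant system $\{\mathrm{H}^j(\mathrm{R}F(N_n))\}_n$ then gives $\mathrm{H}^j(\mathrm{R}F(M^\bullet)) \cong F(\mathrm{H}^j(M^\bullet))$, completing (i). The main technical point will be establishing the compatibility of $\mathrm{R}F$ with $\mathrm{R}\lim$, for which the product hypothesis is precisely what is needed; once that is granted, both parts reduce to standard spectral-sequence degenerations.
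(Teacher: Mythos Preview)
Your argument is correct and, for the reduction of the unbounded case in (i) to the bounded-below case, essentially identical to the paper's: both write $M^\bullet$ as a homotopy limit of its truncations $\tau^{\geq n}M^\bullet$, use that $F$ commutes with products (together with exactness of products) to pass $\mathrm{R}F$ through the defining triangle, and conclude via the eventual constancy of the resulting inverse system of cohomologies.

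The difference lies in how you handle the bounded (resp.\ bounded-below) cases. For (ii) and for the bounded-below part of (i), the paper argues by a bare-hands induction on cohomological length using the truncation triangles $\tau^{<n}M^\bullet \to M^\bullet \to \tau^{\geq n}M^\bullet$ (and, for (i), by representing $M^\bullet$ by an $F$-acyclic complex and checking that kernels and images remain $F$-acyclic). You instead invoke the hypercohomology spectral sequence $E_2^{p,q}=\mathrm{R}^pF(\mathrm{H}^q(M^\bullet))\Rightarrow \mathrm{H}^{p+q}(\mathrm{R}F(M^\bullet))$ and observe that the hypotheses force it to collapse onto a single column (resp.\ row). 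These are two packagings of the same d\'evissage; your spectral-sequence formulation is perhaps cleaner to state, while the paper's inductive argument is slightly more self-contained in that it does not need to set up the spectral sequence or discuss its convergence.
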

	\begin{proof}
		\begin{enumerate}[(i)]
			\item If $M^\bullet$ is bounded below, we can represent it by an $F$-acyclic complex, and an easy induction shows that the kernels and images in each degree are also $F$-acyclic. This proves the statement whenever $M^\bullet$ is bounded below. In general, note that $M^\bullet\cong \mathrm{holim}\tau^{\geq -n}M^\bullet$ by Lemma \ref{homotopyproductexact}, \cite[Remark 2.3]{BN}. As $\mathrm{R}F$ commutes with products, we have a distinguished triangle 
			\begin{equation*}
				\mathrm{R}F(M^\bullet)\to \prod \mathrm{R}F(\tau^{\geq -n}M^\bullet)\to \prod \mathrm{R}F(\tau^{\geq -n}M^\bullet).
			\end{equation*}
			Since $\mathrm{H}^j(\mathrm{R}F(\tau^{\geq -n}M^\bullet))\cong F(\mathrm{H}^j(M^\bullet))$ for each $j\geq -n$ by the earlier argument, this inverse system is eventually constant, so that Lemma \ref{homotopyproductexact} implies
			\begin{equation*}
				\mathrm{H}^j(\mathrm{R}F(M^\bullet))\cong \varprojlim \mathrm{H}^j(\mathrm{R}F(\tau^{\geq -n}M^\bullet))\cong F(\mathrm{H}^j(M^\bullet)),
			\end{equation*} 
			finishing the proof.
			\item In the case when $M^\bullet$ is bounded, this is a straightforward induction on the cohomological lengh $l(M^\bullet)$. If $l(M^\bullet)=0$, the statement is true by assumption. If $l(M^\bullet)>0$, with $n$ being the largest integer such that $\mathrm{H}^n(M^\bullet)\neq 0$, apply the induction hypothesis to the long exact sequence of cohomology associated to the distinguished triangle
			\begin{equation*}
				\mathrm{R}F(\tau^{<n}M^\bullet)\to \mathrm{R}F(M^\bullet)\to\mathrm{R}F(\tau^{\geq n}M^\bullet).
			\end{equation*}
			If $M^\bullet$ is only bounded below, then $\tau^{<n}M^\bullet$ is bounded for any $n$, and the distinguished triangle
			\begin{equation*}
				\mathrm{R}F(\tau^{<n}M^\bullet)\to \mathrm{R}F(M^\bullet)\to\mathrm{R}F(\tau^{\geq n}M^\bullet)
			\end{equation*}
			shows that 
			\begin{align*}
				\mathrm{H}^j(\mathrm{R}F(M^\bullet))&\cong \mathrm{H}^j(\mathrm{R}F(\tau^{<n}M^\bullet))\\
				&\cong \mathrm{R}^iF(\mathrm{H}^{j-i}(\tau^{<n}M^\bullet))\\
				&\cong \mathrm{R}^iF(\mathrm{H}^{j-i}(M^\bullet))
			\end{align*}
			for any $j<n$, since $\mathrm{R}F(\tau^{\geq n}M^\bullet)$ is concentrated in degrees $\geq n$. As $n$ was arbitrary, this proves the statement in the case when $M^\bullet$ is bounded below. If $\mathrm{R}F$ commutes with products, we can use the same argument as in (i) to get the same result for unbounded complexes.\qedhere
		\end{enumerate}
	\end{proof}
	The dual statement for left derivable functors holds mutatis mutandis, using homotopy colimits.
	
	\subsection{Modules over monoids}
	We now summarize a couple of standard properties regarding monoidal categories and modules over monoids. Most of the results in this subsection are well-known, even though it is not easy to locate proofs of the required generality in the literature. For a brief overview, see \cite[section 1.3.8]{Meyer}.
	
	Let $\C$ be a quasi-abelian category which is also closed symmetric monoidal. We denote by $T: \C\times \C\to \C$ the tensor product in $\C$, and $H: \C^{\mathrm{op}}\times \C\to \C$ the internal Hom functor. The unit object is denoted $U\in \C$.

	\begin{lem}[{\cite[Corollary 1.5.4]{Schneiders}}]
		\label{extendclosed}
		If $\C$ has enough projectives such that for any projective $P\in \C$, $T(P, -)$ is an exact functor sending projectives to projectives, then $T$ and $H$ extend to functors on $LH(\C)$, giving it the structure of a closed symmetric monoidal category.
	\end{lem}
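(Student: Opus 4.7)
The plan is to leverage the hypothesis that tensoring with a projective is exact (and preserves projectives) to extend $T$ to $LH(\C)$ via projective presentations, and to produce $\widetilde H$ as a compatible right adjoint in parallel.

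First, for any projective $P \in \C$, the exact functor $T(P,-)$ lifts to an exact functor $\widetilde T_P: LH(\C) \to LH(\C)$ via the termwise formula $[A^{-1} \to A^0] \mapsto [T(P, A^{-1}) \to T(P, A^0)]$ recalled in the excerpt. The hypothesis that $T(P,Q)$ is again projective for projective $Q$ translates into the statement that $\widetilde T_P$ sends the projective objects $I(Q)$ of $LH(\C)$ to objects of the same form. Dually, $H(P,-)$ is strongly left exact, being a right adjoint, so by the mechanism recalled in Section 2.1 it lifts to a left exact $\widetilde H_P: LH(\C) \to LH(\C)$; the adjunction $T(P,-) \dashv H(P,-)$ in $\C$ propagates to an adjunction $\widetilde T_P \dashv \widetilde H_P$ on the level of left hearts, which is straightforward since both extensions respect the embedding $I$ and the adjunction identity can be verified on the strictly generating class $\{I(Q) : Q \text{ projective in } \C\}$.

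Next, for arbitrary $E \in LH(\C)$ I choose a two-term presentation $I(P_1) \xrightarrow{\alpha} I(P_0) \to E \to 0$ with $P_i \in \C$ projective, available since $LH(\C)$ has enough projectives of the form $I(P)$. I set
\[
\widetilde T(E, F) := \coker\bigl(\widetilde T_{P_1}(F) \xrightarrow{\alpha_*} \widetilde T_{P_0}(F)\bigr), \qquad \widetilde H(E, F) := \ker\bigl(\widetilde H_{P_0}(F) \xrightarrow{\alpha^*} \widetilde H_{P_1}(F)\bigr).
\]
A standard chain-homotopy argument, using the exactness of each $\widetilde T_P$, $\widetilde H_P$ together with the projective-preservation property, shows these are independent of the chosen presentation and functorial in $E$; bifunctoriality in $F$ is immediate. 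That $\widetilde T$ and $\widetilde H$ extend $T$ and $H$ under $I$ follows from the right exactness of $T(-,F)$ and left exactness of $H(-,F)$ on $\C$.

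The adjunction $\widetilde T(G, E) \dashv \widetilde H(E, F)$ is then obtained by applying $\Hom_{LH(\C)}(G, -)$ to the kernel description of $\widetilde H(E, F)$, dualising via the adjunction $\widetilde T_{P_i} \dashv \widetilde H_{P_i}$ of the previous step, and reassembling the cokernel description of $\widetilde T(G, E)$. Symmetry of $\widetilde T$ is verified by a bicomplex argument after choosing projective presentations of both arguments, and associativity similarly reduces to the associator on $\C$ via a tricomplex — both arguments using critically that $T(P,Q)$ is projective, so that the construction can be iterated within the projective class. The unit is $I(U)$, since $T(U,-) = \mathrm{id}$. I expect the main difficulty to lie in the bicomplex bookkeeping needed to verify symmetry and the pentagon/triangle coherences: these involve comparing different iterated constructions of $\widetilde T$ built from projective presentations of each argument, and checking that the canonical isomorphisms produced by the exactness and projective-preservation hypotheses are coherent. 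Once these coherences are in place, the closed symmetric monoidal axioms in $LH(\C)$ follow essentially formally from those in $\C$.
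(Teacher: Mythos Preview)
The paper does not supply its own proof of this lemma; it simply cites \cite[Corollary 1.5.4]{Schneiders}. Your sketch is essentially a concrete, hands-on unfolding of Schneiders' argument: one uses the class of projectives as a $T$-projective subcategory to left-derive $T$, then sets $\widetilde T = \mathrm{H}^0(\mathbb{L}T)$ on $LH(\C)$ and obtains $\widetilde H$ as its right adjoint. Your two-term-presentation formulas for $\widetilde T(E,F)$ and $\widetilde H(E,F)$ are exactly what this derived-functor recipe produces when one resolves $E$ by a length-one projective complex, and the bicomplex/tricomplex bookkeeping you allude to for symmetry and associativity is likewise what one does to identify $\mathrm{H}^0(\mathbb{L}T)$ as a monoidal product. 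So the approach is the same as the cited one.

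One small technical slip: you claim that $H(P,-)$ lifts to a functor $\widetilde H_P$ on $LH(\C)$ ``by the mechanism recalled in Section 2.1''. But that mechanism, for strongly left exact functors, presupposes the existence of an $F$-injective subcategory, which you have not supplied (and $\C$ need not have enough injectives under the stated hypotheses). Relatedly, verifying an adjunction ``on the strictly generating class $\{I(Q)\}$'' is not by itself sufficient. Neither issue is fatal: since $T(P,-)$ is a left adjoint it is strongly right exact, and combined with the exactness hypothesis the induced $\widetilde T_P$ is a right exact, colimit-preserving endofunctor of the abelian category $LH(\C)$, so one can obtain $\widetilde H_P$ directly as its right adjoint. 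Alternatively, you can bypass $\widetilde H_P$ altogether and define $\widetilde H(E,F)$ in one step via the kernel formula, using only the $\C$-level adjunction $T(P_i,-)\dashv H(P_i,-)$ to verify $\widetilde T\dashv \widetilde H$ exactly as you outline in your final paragraph.
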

	
	We remark that the conditions in the lemma are satisfied as soon as we can find a subcategory $\mathcal{P}\subseteq \C$ of projective objects such that
	\begin{enumerate}[(i)]
		\item any $X\in \C$ admits a strict epimorphism $P\to X$ for some $P\in \mathcal{P}$
		\item for any $P\in \mathcal{P}$, $T(P, -)$ is exact and preserves $\mathcal{P}$.
	\end{enumerate}
	In fact, in this case an object of $\C$ is projective if and only if it is a direct summand of an object in $\mathcal{P}$.
	
	We summarize this condition by saying that `$\C$ has \textbf{enough flat projectives stable under $T$}'.
	
	Using the associativity of $T$, we can write unambigiously (up to natural isomorphism) triple products as $T(A, B, C)\cong T(A, T(B, C))$ for $A, B, C\in \C$.\\
	By associativity of $T$, an easy Yoneda argument shows that $H(A, H(B, C))\cong H(T(B, A), C)$ naturally for $A, B, C\in \C$.
	
	Let $\R$ be a monoid in $\C$, i.e. an object $\R$ in $\C$ with a multiplication morphism $T(\R, \R)\to \R$ and a unit morphism $U\to \R$ satisfying the usual axioms. We can then form the category $\mathrm{Mod}_{\C}(\R)$ of left $\mathcal{R}$-modules in $\C$ by considering objects $M\in \C$ together with action morphisms $T(\R, M)\to M$, again satisfying the usual unit and associativity axioms.
	
	When the base category is understood, we will drop the subscript and write $\mathrm{Mod}(\R)$ instead of $\mathrm{Mod}_{\C}(\R)$.
	
	\begin{lem}
		\label{ModR}
		Let $\C$ be a closed symmetric monoidal quasi-abelian category and let $\R$ be a monoid in $\C$.
		\begin{enumerate}[(i)]
			\item $\mathrm{Mod}(\R)$ is a quasi-abelian category.
			\item The forgetful functor $\mathrm{forget}: \mathrm{Mod}(\R)\to \C$ commutes with limits and colimits. In particular, $\mathrm{forget}$ is strongly exact, and a morphism in $\mathrm{Mod}(\R)$ is strict if and only if it is strict as a morphism in $\C$.
			\item If $\C$ is (co)complete, so is $\mathrm{Mod}(\R)$.
			\item If $\C$ is quasi-elementary (resp. elementary), so is $\mathrm{Mod}(\R)$.
		\end{enumerate}
	\end{lem}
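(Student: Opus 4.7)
The plan is to leverage the forgetful functor $\mathrm{forget}\colon \mathrm{Mod}(\R) \to \C$ and its adjunction properties. The free module functor $T(\R, -)$ is a left adjoint to $\mathrm{forget}$, the counit being the action morphism. For any diagram $(M_i)$ in $\mathrm{Mod}(\R)$, the underlying colimit $\mathrm{colim}_\C M_i$ carries a natural $\R$-action via
\begin{equation*}
T(\R, \mathrm{colim}_\C M_i) \cong \mathrm{colim}_\C T(\R, M_i) \to \mathrm{colim}_\C M_i,
\end{equation*}
where the first isomorphism uses that $T(\R, -)\colon \C \to \C$ is itself a left adjoint (to $H(\R, -)$), and the second is the colimit of the individual actions; limits are constructed dually on $\lim_\C M_i$ via the canonical map $T(\R, \lim_\C M_i) \to \lim_\C T(\R, M_i)$ followed by the limit of the actions. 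Routine unit and associativity checks (both of which follow from the universal properties and the corresponding axioms on each $M_i$) confirm that these are indeed the (co)limits in $\mathrm{Mod}(\R)$, so $\mathrm{forget}$ preserves all limits and colimits. Moreover $\mathrm{forget}$ is conservative, since any $\C$-inverse of an $\R$-module morphism is automatically $\R$-linear.

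Parts (ii) and (iii) follow immediately. Indeed, $\mathrm{forget}$ is strongly exact by preservation of (co)limits; and because $\mathrm{forget}$ is conservative and preserves kernels, cokernels, images and coimages, a morphism $f$ in $\mathrm{Mod}(\R)$ is strict if and only if $\mathrm{forget}(f)$ is strict in $\C$. Part (iii) is then just a restatement of the explicit construction above. For part (i), both Cartesian and cocartesian squares in $\mathrm{Mod}(\R)$ are computed in $\C$, and strictness of epi- and monomorphisms transfers by (ii), so the two quasi-abelian axioms reduce directly to their counterparts in $\C$.

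For part (iv), let $\G = \{G_i\}$ be a strictly generating set of small (resp. tiny) projectives in $\C$. I claim $\{T(\R, G_i)\}$ is such a set in $\mathrm{Mod}(\R)$. Projectivity and smallness (resp. tininess) both follow from the adjunction isomorphism
\begin{equation*}
\Hom_{\mathrm{Mod}(\R)}(T(\R, G_i), -) \cong \Hom_\C(G_i, \mathrm{forget}(-)),
\end{equation*}
combined with the exactness of $\mathrm{forget}$ and its preservation of coproducts (resp. filtered colimits). For strict generation, pick $M \in \mathrm{Mod}(\R)$ and a strict epimorphism $\bigoplus G_i \to \mathrm{forget}(M)$ in $\C$. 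The functor $T(\R, -)\colon \C \to \mathrm{Mod}(\R)$ preserves strict epimorphisms, as a left adjoint preserves cokernels, giving a strict epimorphism
\begin{equation*}
\bigoplus T(\R, G_i) \cong T(\R, \textstyle\bigoplus G_i) \to T(\R, \mathrm{forget}(M))
\end{equation*}
in $\mathrm{Mod}(\R)$. Composing with the action $T(\R, M) \to M$, which is split by $M \cong T(U, M) \to T(\R, M)$ induced by the unit $U \to \R$ and hence itself a strict epimorphism, yields the desired strict epimorphism $\bigoplus T(\R, G_i) \to M$.

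The main subtlety is the initial construction of limits and colimits in $\mathrm{Mod}(\R)$ in such a way that they are preserved by $\mathrm{forget}$; once this structural backbone is in place, everything else is a formal transfer of properties across the adjunction $T(\R, -) \dashv \mathrm{forget}$, with the only other point requiring care being the verification that $T(\R, -)$ preserves strict epimorphisms (together with the use of the unit-induced splitting of the action) in the proof of strict generation.
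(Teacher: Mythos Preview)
Your proof is correct and supplies precisely the standard categorical arguments that the paper defers to Schneiders (Propositions 1.5.1 and 2.1.18.c). The paper's own proof is a bare citation, so your self-contained treatment---constructing (co)limits via the adjunction $T(\R,-)\dashv\mathrm{forget}$, transferring strictness through conservativity and preservation of (co)images, and producing the strictly generating set $\{T(\R,G_i)\}$ via the free functor and the unit-split action map---is essentially the content behind those references rather than a different route.
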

	\begin{proof}
		(i) and (ii) are \cite[Proposition 1.5.1]{Schneiders}. For (iii), if $(M_i)$ is a collection of $\R$-modules, it is straightforward to endow $\oplus M_i\in \C$ and $\prod M_i\in \C$ with $\R$-module structures and to verify that they form sums and products in $\mathrm{Mod}(\R)$. Completeness and cocompleteness follow, since $\mathrm{Mod}(\R)$ also admits kernels and cokernels. Finally, (iv) is \cite[Proposition 2.1.18.c)]{Schneiders}.
	\end{proof}
	Naturally, if $\C$ itself is abelian, then so is $\mathrm{Mod}(\R)$. We will often reduce to this situation using the following proposition.
	
	\begin{prop}
		\label{LHIR}
		Let $\C$ be a closed symmetric monoidal quasi-abelian category. Suppose that $LH(\C)$ is a closed symmetric monoidal category with unit object $I(U)$, tensor product $\widetilde{T}$ and inner hom $\widetilde{H}$, such that there is a natural isomorphism 
		\begin{equation*}
			\widetilde{T}(I(X), I(Y))\cong I(T(X, Y))
		\end{equation*}
		for $X, Y\in \C$. Let $\R$ be a monoid in $\C$.
		
		Then $I(\R)$ is a monoid in $LH(\C)$ and there is a functor $J: \mathrm{Mod}(\R)\to \mathrm{Mod}(I(\R))$ inducing an equivalence of categories
		\begin{equation*}
			LH(\mathrm{Mod}(\R))\cong \mathrm{Mod}(I(\R)).
		\end{equation*}  
	\end{prop}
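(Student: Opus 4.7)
My plan is to handle this in three stages: (i) transport the monoid/module structure through $I$, (ii) extend $J$ to a functor $\widetilde{J}$ on the left heart, (iii) verify $\widetilde{J}$ is an equivalence.

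For the first stage, the monoid structure on $I(\R)$ is obtained by applying $I$ to the multiplication $\mu : T(\R,\R) \to \R$ and unit $\eta : U \to \R$, then pre-composing with the hypothesized natural isomorphism $\widetilde{T}(I(\R), I(\R)) \cong I(T(\R,\R))$ (its degenerate case forces $I(U)$ to be the unit of $\widetilde{T}$). The monoid axioms transport because $I$ is a functor and the isomorphism is natural. The functor $J$ is defined identically on objects: the $\R$-action $T(\R, M) \to M$ induces $\widetilde{T}(I(\R), I(M)) \cong I(T(\R, M)) \to I(M)$, and the module axioms transport by the same reasoning. Fully faithfulness of $J$ is immediate from fully faithfulness of $I : \C \to LH(\C)$.

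For the second stage, I would show that $J$ preserves monomorphisms: by Lemma \ref{ModR}(ii) a monomorphism in $\mathrm{Mod}(\R)$ is one in $\C$, and $I$ preserves monomorphisms because it is a right adjoint (to $C$); finally, the forgetful functor $\mathrm{Mod}(I(\R)) \to LH(\C)$ reflects monomorphisms. Since $\mathrm{Mod}(I(\R))$ is abelian, I would then define, for any representative monomorphism $f : M^{-1} \hookrightarrow M^0$ of an object of $LH(\mathrm{Mod}(\R))$,
$$ \widetilde{J}([f]) \; := \; \mathrm{coker}\bigl(J(f) : J(M^{-1}) \to J(M^0)\bigr) \;\in\; \mathrm{Mod}(I(\R)). $$
Functoriality on morphisms, and invariance under the equivalence relation defining $LH(\mathrm{Mod}(\R))$, reduce to Schneiders' standard treatment of the left heart applied to $J$.

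For fully faithfulness of $\widetilde{J}$, I would express $\Hom_{LH}([f_E],[f_F])$ as a pullback of $\Hom$-groups in $\mathrm{Mod}(\R)$ (following \cite[Proposition 1.2.26]{Schneiders}) and the analogous $\Hom$-group on the target side as a pullback in $\mathrm{Mod}(I(\R))$; the isomorphism between them follows from $J$ being fully faithful and strongly left exact (it is a composition of the strongly exact forgetful functor with the limit-preserving $I$).

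The main obstacle is essential surjectivity. Given $N \in \mathrm{Mod}(I(\R))$, the strategy is to build a cokernel presentation of $N$ by objects in the essential image of $J$. Writing $N = [g : E^{-1} \hookrightarrow E^0]$ in $LH(\C)$ and applying the right exact functor $\widetilde{T}(I(\R), -)$ to $0 \to I(E^{-1}) \to I(E^0) \to N \to 0$ yields
$$ J(T(\R, E^{-1})) \longrightarrow J(T(\R, E^0)) \longrightarrow \widetilde{T}(I(\R), N) \longrightarrow 0 $$
in $\mathrm{Mod}(I(\R))$, using $\widetilde{T}(I(\R), I(X)) \cong I(T(\R, X)) = J(T(\R, X))$. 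Post-composing with the action $\widetilde{T}(I(\R), N) \twoheadrightarrow N$ (a split epimorphism in $\mathrm{Mod}(I(\R))$, with section $\eta$) gives $N$ as the cokernel of an $I(\R)$-linear map between two $J$-image objects. The remaining difficulty is to refine this into a \emph{monomorphism} representative $M^{-1} \hookrightarrow M^0$ in $\mathrm{Mod}(\R)$, since the map $T(\R, E^{-1}) \to T(\R, E^0)$ underlying our presentation need not be a monomorphism in $\C$. I would handle this via a mapping-cylinder construction: replace $T(\R,E^{-1})$ by the kernel of the composite $T(\R,E^0) \to N$ (computed in $\mathrm{Mod}(\R)$ using strong left exactness of the forgetful functor), and verify that applying $J$ and taking cokernel still recovers $N$, using that $J$ is strongly left exact and the natural isomorphism intertwines the two presentations. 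This is where I expect the bulk of the technical work to lie; once it is achieved, combined with fully faithfulness, $\widetilde{J}$ is an equivalence.
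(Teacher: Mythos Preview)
Your approach is correct in outline and close in spirit to the paper's, but the paper short-circuits most of your stages (ii) and (iii) by invoking \cite[Proposition~1.2.35]{Schneiders}, which gives abstract criteria for a fully faithful functor $J:\E\to\A$ from a quasi-abelian category into an abelian one to induce an equivalence $LH(\E)\cong\A$. The three conditions are: (a) $J$ is fully faithful; (b) any subobject of some $J(M)$ in $\A$ lies in the essential image of $J$; (c) every object of $\A$ admits an epimorphism from some $J(M)$. The paper then checks (a)--(c) directly, with (c) proved exactly via the epimorphism $J(T(\R,A^0))\twoheadrightarrow\widetilde{T}(I(\R),A)\twoheadrightarrow A$ that you also found. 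This avoids having to construct $\widetilde{J}$ by hand, verify it is well-defined on the localization, and redo the fully faithfulness computation for the left heart.

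Your essential surjectivity argument has a genuine gap at the point you flag as ``the bulk of the technical work''. You propose to replace the non-monic presentation by ``the kernel of the composite $T(\R,E^0)\to N$ computed in $\mathrm{Mod}(\R)$''. But $N$ lives in $\mathrm{Mod}(I(\R))$, not $\mathrm{Mod}(\R)$, so that kernel is a priori only an object of $\mathrm{Mod}(I(\R))$; strong left exactness of the forgetful functor only tells you the underlying $LH(\C)$-object is computed correctly, not that it lies in $\C$. What is actually needed is \cite[Proposition~1.2.28(b)]{Schneiders}: a subobject in $LH(\C)$ of something in the essential image of $I$ is itself in the essential image of $I$. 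Once you have that, you transport the $\R$-module structure back via fully faithfulness of $I$ (the action map $\widetilde{T}(I(\R),K)\to K$ factors through $I(T(\R,K'))\to I(K')$). This is precisely condition (b) in the paper's argument, and it is the missing ingredient in your sketch.
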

	\begin{proof}
		The isomorphism $\widetilde{T}(I(\R), I(\R))\cong I(T(\R, \R))$ makes it clear that $I(\R)$ is a monoid, and any action map $T(\R, M)\to M$ in $\C$ yields an action map $\widetilde{T}(I(\R), I(M))\to I(M)$ simply by applying $I$. Thus $I$ induces a functor $J: \mathrm{Mod}(\R)\to \mathrm{Mod}(I(\R))$. \\
		To prove the equivalence of categories, we apply \cite[Proposition 1.2.36]{Schneiders}, so that it remains to check the following:
		\begin{enumerate}[(a)]
			\item $J$ is fully faithful.
			\item If $A\to J(M)$ is a monomorphism for some $A\in \mathrm{Mod}(I(\R))$, $M\in \mathrm{Mod}(\R)$, then $A$ lies in the essential image of $J$.
			\item Any object in $\mathrm{Mod}(I(\R))$ admits an epimorphism from an object in the essential image of $J$.
		\end{enumerate}
		For (a), $\mathrm{Hom}_{\mathrm{Mod}(\R)}(M, N)$ consists of those morphisms $M\to N$ in $\C$ inducing a commutative square
		\begin{equation*}
			\begin{xy}
				\xymatrix{
					T(\R, M)\ar[r]\ar[d]& T(\R, N)\ar[d]\\
					M\ar[r] & N.
				}
			\end{xy}
		\end{equation*}
		Since $I$ is fully faithful, the functor $I$ maps this bijectively to the set of morphisms $I(M)\to I(N)$ in $LH(\C)$ such that the corresponding diagram commutes in $LH(\C)$. This proves fully faithfulness by the natural isomorphism $\widetilde{T}(I(\R), I(M))\cong I(T(\R, M))$.
		
		For (b), let $A\to J(M)$ be a monomorphism in $\mathrm{Mod}(I(\R))$. By Lemma \ref{ModR}.(ii), this is a monomorphism in $LH(\C)$, so $A$ is in the essential image of $I$ by \cite[Proposition 1.2.29]{Schneiders}. Hence $A\cong I(A')$ for some $A'\in \C$. But then the action map $I(T(\R, A'))\cong\widetilde{T}(I(\R), I(A'))\to I(A')$ together with the fully faithfulness of $I$ make $A'$ an $\R$-module such that $A=J(A')$.
		
		For (c), let $A=(0\to A^{-1}\to A^0\to 0)$ be an object of $\mathrm{Mod}(I(\R))$. Then $I(A^0)\to A$ is an epimorphism in $LH(\C)$, and the action map $\widetilde{T}(I(\R), A)\to A$ is always an epimorphism (admitting a section via the unit morphism). Here $\widetilde{T}(I(\R), A)$ is an $I(\R)$-module via multiplication on the first factor. In the same way, $T(\R, A^0)$ is naturally an $\R$-module, and hence $J(T(\R, A^0))$ is an $I(\R)$-module such that the natural morphism
		\begin{equation*}
			J(T(\R, A^0))\cong \widetilde{T}(I(\R), I(A^0))\to \widetilde{T}(I(\R), A)\to A
		\end{equation*} 
		is an ($I(\R)$-linear) epimorphism as required.
		
		Thus all the conditions in \cite[Proposition 1.2.36]{Schneiders} are satisfied, so that $J$ induces an isomorphism $LH(\mathrm{Mod}(\R))\cong \mathrm{Mod}(I(\R))$.
	\end{proof}
	
	In fact, we can view the above as an instance of a more general phenomenon:
	
	\begin{defn}
		Let $(\C, U, T), (\C', U', T')$ be symmetric monoidal categories. An additive functor $F: \C\to \C'$ is called \textbf{lax symmetric monoidal} if it comes equipped with natural morphisms $U'\to F(U)$ and $T'(F(X), F(Y))\to F(T(X,Y))$ for $X, Y\in \C$, respecting the natural isomorphisms for the unit, symmetry and associativity axioms.
		
		A lax symmetric monoidal functor $F$ is called \textbf{strong symmetric monoidal} if the morphisms $U'\to F(U)$ and $T'(F(X), F(Y))\to F(T(X, Y))$ are actually isomorphisms.
	\end{defn}
	
	\begin{prop}
		\label{laxmodulechange}
		Let $F: \C\to \C'$ be a lax symmetric monoidal functor between symmetric monoidal categories. If $\R$ is a monoid in $\C$, then $F(\R)$ is a monoid in $\C'$, and $F$ induces a functor $\mathrm{Mod}_{\C}(\R)\to \mathrm{Mod}_{\C'}(F(\R))$.
	\end{prop}
	\begin{proof}
		This is well-known, but it appears difficult to locate a detailed proof in the literature. 
		
		If $\R$ is a monoid in $\C$, then the multiplication map $m: T(\R, \R)\to \R$ gives rise to a morphism $T'(F(\R), F(\R))\to F(T(\R, \R))\to F(\R)$, and the unit morphism $U\to \R$ induces a morphism $U'\to F(U)\to F(\R)$. It is easy to verify that this satisfies the unit and associativity axioms. For instance, the unit axiom requires that
		\begin{equation*}
			\begin{xy}
				\xymatrix{T'(U', F(\R))\ar[r]\ar[rd]&T'(F(\R), F(\R))\ar[d]\\
				&F(\R)}
			\end{xy}
		\end{equation*}
		commutes, where the horizontal map is induced by $U'\to F(R)$, the vertical map is the multiplication map and the diagonal arrow is the natural isomorphism $T'(U', F(\R))\to F(\R)$.
		
		But this fits into a diagram
		\begin{equation*}
			\begin{xy}
				\xymatrix{T'(U', F(\R))\ar@/^1pc/[rrr]\ar[r]\ar[rd] & T'(F(\R), F(\R))\ar[r]\ar[d]& F(T(\R, \R))\ar[d]& F(T(U, \R))\ar[l]\ar[ld]\\
					& F(\R)\ar[r]^= &F(\R)}
			\end{xy}
		\end{equation*}
		where the outer square commutes since $F$ is lax monoidal, the right triangle commutes since $\R$ is a monoid, and the middle square commutes by definition of the multiplication map. The remaining properties are checked similarly.
		
		In the same way, if $M$ is an $\R$-module with action map $T(\R, M)\to M$, then we obtain a morphism $T'(F(\R), F(M))\to F(T(\R, M))\to F(M)$. Easy diagram chases confirm that this indeed defines an $F(\R)$-module structure on $F(M)$.
		
		An $\R$-module morphism $M\to N$ is a morphism in $\C$ fitting into a commutative square
		\begin{equation*}
			\begin{xy}
				\xymatrix{T(\R, M)\ar[r]\ar[d]&T(\R, N)\ar[d]\\
					M\ar[r]&N,}
			\end{xy}
		\end{equation*}
		 and thus $F$ sends $\R$-module morphisms to $F(\R)$-module morphisms by definition of the $F(\R)$-module structure. In particular, as $F: \C\to \C'$ is a functor, $F$ induces a functor $\mathrm{Mod}(\R)\to \mathrm{Mod}(F(\R))$.
	\end{proof}
	
	Let $\C$ be a symmetric monoidal quasi-abelian category such that the tensor product $T$ is left derivable, such that the induced functor $\widetilde{T}=\mathrm{H}^0(\mathbb{L}T)$ defines a symmetric monoidal structure on $LH(\C)$ (with unit object $I(U)$). In this case the natural embedding $I: \C\to LH(\C)$ is lax symmetric monoidal:
	
	In fact, if $X\in \C$ can be written as the cokernel of a strict morphism between flat objects $P^{-1}\to P^0$, then 
	\begin{equation*}
		\widetilde{T}(I(X), I(Y))\cong [T(P^{-1}, Y)\to T(P^0, Y)]\in LH(\C),
	\end{equation*}
	while $T(X, Y)$ is the cokernel of $T(P^{-1}, Y)\to T(P^0, Y)$ in $\C$. In particular, we have $C\widetilde{T}(I(X), I(Y))\cong T(X, Y)$, and adjunction provides the natural morphism $\widetilde{T}(I(X), I(Y))\to I(T(X, Y))$. It is easy to verify that this indeed turns $I$ into a lax symmetric monoidal functor.
	
	Proposition \ref{LHIR} can then be interpreted as follows: if $I$ is actually strong symmetric monoidal, then the induced functor between the module categories becomes an equivalence on the level of left hearts.
	
	We remark that in general, $I$ need not be strong symmetric monoidal: the above description of $\widetilde{T}$ makes it clear that this will happen if and only if $T(-, Y)$ sends strict morphisms between flat objects to strict morphisms, for each $Y\in \C$.	
	
	We now verify that tensor products and internal homs of modules inherit natural module structures, and introduce tensor products over $\R$ and internal $\R$-module homs.
	
	Let $\R_1$, $\R_2$, $\R$ be monoids in a closed symmetric monoidal quasi-abelian category $\C$. If $M$ is a left $\R_1$-module and $N$ is a right $\R_2$-module, it is immediate that $T(M, N)$ carries the structure of an $(\R_1, \R_2)$-bimodule.
	
	If $M$ is a right $\R_1$-module and $N$ is a right $\R_2$-module, then $H(M, N)$ becomes an $(\R_1, \R_2)$-bimodule in the following way:
	
	The left $\R_1$-module structure is obtained from the adjunctions
	\begin{align*}
		\mathrm{Hom}(T(\R_1, H(M, N)), H(M, N))&\cong \mathrm{Hom}(H(M, N), H(\R_1, H(M, N)))\\
		&\cong \mathrm{Hom}(H(M, N), H(T(M, \R_1), N)),
	\end{align*}
	while the right $\R_2$-module structure 
	\begin{equation*}
		T(H(M, N), \R_2)\to H(M, N)
	\end{equation*}
	corresponds via adjunction to the morphism
	\begin{equation*}
		T(M, H(M, N), \R_2)\to T(N, \R_2)\to N,
	\end{equation*}
	where the first morphism is induced from $T(M, H(M, N))\to N$ corresponding to the identity on $H(M, N)$. In all cases, it is straightforward to verify the action axioms.
	
	We now define the functor $T_\R(-,-): \mathrm{Mod}(\R^\mathrm{op})\times \mathrm{Mod}(\R)\to \C$ by letting $T_\R(M, N)$ be the coequalizer of
	\begin{equation*}
		\begin{xy}
			\xymatrix{
				T(M, \R, N)\ar@<1ex>[r]^f \ar[r]_g& T(M, N),
			}
		\end{xy}
	\end{equation*}
	where $f$ is the morphism induced from the action map $T(M, \R)\to M$, and $g$ is the morphism induced from the action map $T(\R, N)\to N$.
	
	As before, if $M$ is an $(\R_1, \R)$-bimodule and $N$ is an $(\R, \R_2)$-bimodule, the bimodule structure of $T(M, N)$ descends, giving $T_\R(M, N)$ a natural structure of an $(\R_1, \R_2)$-bimodule.
	
	We now turn to the internal Hom functor. Note that if $N\in \mathrm{Mod}(\R)$, then the action map $T(\R, N)\to N$ induces via adjunction a natural morphism $N\to H(\R, N)$.
	For any $M, N\in \mathrm{Mod}(\R)$, $H_\R(M, N)$ is now defined to be the equalizer of 
	\begin{equation*}
		\begin{xy}
			\xymatrix{
				H(M, N)\ar@<1ex>[r]^f \ar[r]_g&  H(T(\R,M),N),}
		\end{xy}
	\end{equation*}
	where $f$ is the morphism obtained by applying $H(-, N)$ to the action map 
	\begin{equation*}
		T(\R, M)\to M,
	\end{equation*}
	and $g$ is the morphism obtained by applying $H(M, -)$ to the morphism
	\begin{equation*}
		N\to H(R, N)
	\end{equation*}
	given above, and applying tensor-hom adjunction once more.
	
	If $M$ is an $(\R, \R_1)$-bimodule and $N$ is an $(\R, \R_2)$-bimodule, then $H_\R(M, N)$ inherits the structure of an $(\R_1, \R_2)$-bimodule.
	
	\begin{lem} Let $\C$ be a closed symmetric monoidal quasi-abelian category, and let $\R$ be a monoid in $\C$.
		\begin{enumerate}[(i)]
			\item  There is a natural isomorphism
			\begin{equation*}
				\mathrm{Hom}_{\C}(T_\R(M, N), X)\cong \mathrm{Hom}_{\mathrm{Mod}(\R)}(N, H(M, X))
			\end{equation*}
			for $M \in \mathrm{Mod}(\R^\mathrm{op})$, $N\in \mathrm{Mod}(\R)$, $X\in \C$.
			\item There is a natural isomorphism
			\begin{equation*}
				\mathrm{Hom}_{\mathrm{Mod}(\R)}(T(M, X), N)\cong \mathrm{Hom}_\C(X, H_\R(M, N))
			\end{equation*}
			for $M, N\in \mathrm{Mod}(\R)$, $X\in \C$.
		\end{enumerate}
	\end{lem}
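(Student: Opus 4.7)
The two statements are dual instances of the tensor–hom adjunction relative to the monoid $\R$, and both reduce to the ordinary closed monoidal adjunction in $\C$ together with the universal property of (co)equalizers. The quasi-abelian hypothesis plays no role; the argument works formally in any closed symmetric monoidal category with (co)equalizers.

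For (i), apply the functor $\mathrm{Hom}_\C(-, X)$ to the coequalizer diagram defining $T_\R(M, N)$. This identifies $\mathrm{Hom}_\C(T_\R(M, N), X)$ with the set of morphisms $\varphi: T(M, N)\to X$ in $\C$ satisfying $\varphi\circ f = \varphi\circ g$, where $f,g: T(M,\R,N)\to T(M,N)$ come from the right $\R$-action on $M$ and the left $\R$-action on $N$ respectively. The ordinary tensor–hom adjunction in $\C$ bijectively identifies $\mathrm{Hom}_\C(T(M, N), X)$ with $\mathrm{Hom}_\C(N, H(M, X))$, sending $\varphi$ to its adjunct $\widetilde{\varphi}: N\to H(M, X)$. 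The work is then to show that the equation $\varphi\circ f = \varphi\circ g$ translates, under this adjunction, precisely into the assertion that $\widetilde{\varphi}$ is $\R$-linear, where $H(M, X)$ carries the left $\R$-module structure defined via the double adjunction at the beginning of the subsection. Unwinding, the adjunct of $\varphi\circ g$ is the composite $N\xrightarrow{\text{act}} \text{(}$factoring through $T(\R,N)$'s adjunct$\text{)}$, and the adjunct of $\varphi\circ f$ is the composite $N\xrightarrow{\widetilde{\varphi}} H(M,X) \to H(T(M, \R), X)\cong H(\R, H(M, X))$, which by construction expresses the left $\R$-action on $H(M, X)$; the equality of these two maps is exactly the definition of $\R$-linearity of $\widetilde{\varphi}$. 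Naturality in all three variables is automatic from naturality of the tensor–hom adjunction in $\C$.

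For (ii), the argument is entirely dual. Apply $\mathrm{Hom}_\C(X, -)$ to the equalizer diagram defining $H_\R(M, N)$: a morphism $X\to H_\R(M, N)$ is the same as a morphism $\psi: X\to H(M, N)$ in $\C$ with $f\circ\psi = g\circ\psi$, where now $f,g: H(M, N)\to H(T(\R, M), N)$ are the two maps from the definition. By tensor–hom adjunction, $\psi$ corresponds bijectively to a morphism $\widetilde{\psi}: T(M, X)\to N$ in $\C$. The $f$-condition transports to the requirement that $\widetilde{\psi}$ factor through $T(M, X)\to T(M, X)$ after precomposition with the right $\R$-action $T(M, \R)\to M$ (i.e.\ it encodes the top row of the $\R$-linearity square), while the $g$-condition, using the description of the map in Lemma \ref{hommor}, transports to postcomposition with the $\R$-action on $N$ (the bottom row). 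Equality of the two is therefore $\R$-linearity of $\widetilde{\psi}$, giving the claimed bijection.

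The main obstacle is not conceptual but notational: one must carefully verify that the two maps into each (co)equalizer match the two sides of the $\R$-linearity square after transport along adjunction, which requires unwinding the nested adjunction used to define the left $\R$-action on $H(M, X)$ and the map of Lemma \ref{hommor}. Once this bookkeeping is done, the isomorphisms are evidently natural, since each step (adjunction in $\C$, universal property of (co)equalizers) is natural in $M$, $N$ and $X$.
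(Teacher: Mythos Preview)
Your approach is essentially the same as the paper's: both sides are identified with an equalizer of a pair of maps obtained from the basic $T$--$H$ adjunction in $\C$, and one checks that adjunction carries one equalizer diagram to the other. The paper phrases this more tersely (``the adjunction between $T$ and $H$ identifies the two diagrams above''), whereas you spell out that the coequalizer condition on $\varphi$ becomes $\R$-linearity of $\widetilde{\varphi}$; this is the same content. One small slip: in your treatment of (ii) you refer to a ``right $\R$-action $T(M,\R)\to M$'', but in (ii) the module $M$ is a \emph{left} $\R$-module, so the relevant action map is $T(\R,M)\to M$, and the phrase ``factor through $T(M,X)\to T(M,X)$'' is garbled. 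This is purely notational and does not affect the argument.
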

	\begin{proof}
		\begin{enumerate}[(i)]
			\item As $\mathrm{Hom}_\C(-, X)$ sends cokernels in $\C$ to kernels, we know that 
			\begin{equation*}
				\mathrm{Hom}_\C(T_\R(M, N), X)
			\end{equation*}
			is the equalizer of
			\begin{equation*}
				\begin{xy}
					\xymatrix{
						\mathrm{Hom}_\C(T(M, N), X)\ar@<1ex>[r] \ar[r]& \mathrm{Hom}_\C(T(M, \R, N), X).}
				\end{xy}
			\end{equation*}
			Also note that $\mathrm{Hom}_{\mathrm{Mod}(\R)}(N, H(M, X))$ is the equalizer of
			\begin{equation*}
				\begin{xy}
					\xymatrix{
						\mathrm{Hom}_\C(N, H(M, X))\ar@<1ex>[r] \ar[r]& \mathrm{Hom}_\C(T(\R, N), H(M, X)).}
				\end{xy}
			\end{equation*}
			The isomorphism thus follows from the adjunction between $T$ and $H$, which identifies the two diagrams above.
			\item Entirely analogous to (i), describing both sides as the equalizer of
			\begin{equation*}
				\begin{xy}
					\xymatrix{
						\mathrm{Hom}_\C(T(M, X), N)\ar@<1ex>[r] \ar[r]& \mathrm{Hom}_\C(T(\R, M, X), N).}
				\end{xy}\qedhere
			\end{equation*}
		\end{enumerate}
	\end{proof}
	In the same way, these adjunctions hold when working with suitable bimodule categories as above.
	\begin{lem}
		\label{tensorwithfree}
		There is a natural isomorphism
		\begin{equation*}
			T_\R(M, T(\R, N))\cong T(M, N) 
		\end{equation*}
		for $M\in \mathrm{Mod}(\R^\mathrm{op})$, $N\in \C$.
	\end{lem}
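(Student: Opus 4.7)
The plan is to establish the isomorphism by a Yoneda argument, combining the adjunction from part (i) of the preceding lemma with the fact that $T(\R,N)$ is the free left $\R$-module on $N$.

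First, I would verify that the functor $T(\R,-) : \C \to \mathrm{Mod}(\R)$ is left adjoint to the forgetful functor. The $\R$-module structure on $T(\R,N)$ is defined, as in the discussion preceding the lemma, from the multiplication on $\R$ via $T(\R,T(\R,N))\cong T(T(\R,\R),N)\to T(\R,N)$. The unit of the adjunction is the composite $N\cong T(U,N)\to T(\R,N)$ obtained from the unit $U\to\R$, and the counit $T(\R,P)\to P$ for an $\R$-module $P$ is its action map. That these satisfy the triangle identities is a direct consequence of the monoid axioms for $\R$; equivalently, given $f:N\to P$ in $\C$ one can write down the corresponding $\R$-linear map as $T(\R,N)\xrightarrow{T(\id,f)} T(\R,P)\to P$, and check by a short diagram chase that this is the inverse of restriction along the unit.

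With this at hand, I would compute, for an arbitrary $X\in\C$, the natural chain of isomorphisms
\begin{align*}
\mathrm{Hom}_\C(T_\R(M,T(\R,N)),X)
&\cong \mathrm{Hom}_{\mathrm{Mod}(\R)}(T(\R,N),H(M,X))\\
&\cong \mathrm{Hom}_\C(N,H(M,X))\\
&\cong \mathrm{Hom}_\C(T(M,N),X),
\end{align*}
where the first step is part (i) of the previous lemma (noting that the relevant $\R$-module structure on $H(M,X)$ is the one described just before), the second step is the adjunction established above, and the last step is the closed symmetric monoidal adjunction in $\C$. The Yoneda lemma then yields $T_\R(M,T(\R,N))\cong T(M,N)$, and the chain is visibly natural in $M$ and $N$.

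I do not anticipate a serious obstacle: the only content beyond formalities is the free-module property of $T(\R,N)$, and even this could be replaced by a direct coequalizer computation (the candidate map $T(M,T(\R,N))\cong T(T(M,\R),N)\to T(M,N)$ induced by the right $\R$-action on $M$ is readily checked to coequalize the two maps in the defining diagram of $T_\R$, and a section is provided by $T(M,N)\to T(M,T(\R,N))$ obtained from $N\to T(\R,N)$). I prefer the Yoneda route, however, as it makes naturality and compatibility with bimodule structures transparent.
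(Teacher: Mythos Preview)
Your argument is correct. The paper, however, takes the direct coequalizer route you mention at the end: by definition $T_\R(M,T(\R,N))$ is the coequalizer of $T(M,\R,\R,N)\rightrightarrows T(M,\R,N)$, and the paper observes that this is obtained by applying $T(-,N)$ to the coequalizer diagram $T(M,\R,\R)\rightrightarrows T(M,\R)\to M$, which is preserved since $T(-,N)$ is a left adjoint. This is shorter and avoids setting up the free--forgetful adjunction explicitly. Your Yoneda approach has the advantage of making naturality automatic and of cleanly packaging the argument via adjunctions already established in the text; the paper's approach trades this for brevity and a more concrete identification of the isomorphism.
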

	\begin{proof}
		It suffices to show that $T(M, N)$ is the cokernel of the natural morphism
		\begin{equation*}
			T(M, \R, \R, N)\to T(M, \R, N),
		\end{equation*}
		which is just obtained by applying $T(-, N)$ to the strictly exact sequence
		\begin{equation*}
			T(M, \R, \R)\to T(M, \R)\to M\to 0
		\end{equation*}
		(see e.g. \cite[proof of Lemma 2.9]{BamStein}). This is obvious as $T(-, N)$ preserves cokernels.
	\end{proof}
	
	If we are in the situation of Proposition \ref{LHIR}, we have in theory two ways of obtaining a tensor product on the level of left hearts: we can derive the tensor product $T_{\R}$, or we can use the identification of $LH(\mathrm{Mod}(\R))$ with $\mathrm{Mod}(I(\R))$ and use $\widetilde{T}_{I(\R)}$. The next Lemma ensures that these two constructions coincide.
	
	\begin{lem}
		\label{reltensoronLH}
		Let $\C$ be a closed symmetric monoidal quasi-abelian category which has enough flat objects. Let $\R$ be a monoid in $\C$, and suppose that $I$ induces an equivalences of categories
		\begin{equation*}
			\widetilde{J}: LH(\mathrm{Mod}(\R))\to \mathrm{Mod}(I(\R))
		\end{equation*}
		Then there is a natural isomorphism
		\begin{equation*}
			\widetilde{(T_{\R})}(M, N):=\mathrm{H}^0(\mathbb{L}T_{\R}(M, N))\cong \widetilde{T}_{I(\R)}(\widetilde{J}(M), \widetilde{J}(N))\in LH(\C)
		\end{equation*}
		for $M\in LH(\mathrm{Mod}(\R^{\mathrm{op}}))$, $N\in LH(\mathrm{Mod}(\R))$.
	\end{lem}
	
	\begin{proof}
		The natural transformation $T(\mathrm{forget}(-), \mathrm{forget}(-))\to T_{\R}(-, -)$ of functors from $\mathrm{Mod}(\R^{\mathrm{op}})\times \mathrm{Mod}(\R)\to \C$ induces a natural morphism
		\begin{equation*}
			\widetilde{T}(\widetilde{J}(M), \widetilde{J}(N))\to \widetilde{(T_{\R})}(M, N)
		\end{equation*}
		for $M\in LH(\mathrm{Mod}(\R^{\mathrm{op}}))$, $N\in LH(\mathrm{Mod}(\R))$, which via the universal property of coequalizers induces a natural morphism
		\begin{equation*}
			\widetilde{T}_{I(\R)}(\widetilde{J}(M), \widetilde{J}(N))\to \widetilde{(T_{\R})}(M, N).
		\end{equation*}
		To check that this is an isomorphism, it suffices to consider the case where $M=I(T(X, \R))$, $N=I(T(\R, Y))$ for some flat objects $X, Y\in \C$.
		
		By flatness, we have $I(T(\R, Y))\cong \widetilde{T}(I(\R), I(Y))$ in $LH(\C)$, and hence $\widetilde{J}(N)\cong \widetilde{T}(I(\R), I(Y))$, with the left $I(\R)$-module structure coming from the first factor. Likewise $\widetilde{J}(M)=\widetilde{T}(I(X), I(\R))$.
		
		In this case, 
		\begin{equation*}
			\widetilde{T}_{I(\R)}(\widetilde{J}(M), \widetilde{J}(N))\cong \widetilde{T}(I(X), I(\R), I(Y))
		\end{equation*}
		by Lemma \ref{tensorwithfree}, whereas
		\begin{align*}
			\widetilde{(T_{\R})}(M, N)&\cong IT_{\R}(T(X, \R), T(\R, Y))\\
			&\cong IT(X, \R, Y)
		\end{align*}
		again by Lemma \ref{tensorwithfree} and the flatness assumption. This proves the Lemma, since $X$ and $Y$ are flat and $\widetilde{T}(-, -)=\mathrm{H}^0(\mathbb{L}T(-, -))$.
	\end{proof}

	\subsection{Categories of sheaves}
	We can also extend the above discussion to categories of sheaves with values in a quasi-abelian category. While the theory of sheaves on topological spaces was already discussed in \cite{Schneiders}, we will also need some basic results for sheaves on G-topological spaces. We will therefore reprove some statements from \cite{Schneiders} on this level of generality, avoiding the use of arguments involving stalks. Most of the results given here hold more generally for sites, but we will not need this level of generality.
	\begin{defn}[{\cite[Definition 9.1.1/1]{BGR}}]
		A \textbf{G-topological space} is a set $X$ together with the data $(\mathrm{Op}(X), \mathrm{Cov})$, where $\mathrm{Op}(X)$ is a collection of subsets of $X$ (the admissible open subsets of $X$) and $\mathrm{Cov}$ associates to each admissible open subset $U$ of $X$ a collection $\mathrm{Cov}(U)$ of coverings of $U$ by admissible open subsets such that
		\begin{enumerate}[(i)]
			\item $\mathrm{Op}(X)$ is closed under finite intersections.
			\item For any $U\in \mathrm{Op}(X)$, $\{U\}\in \mathrm{Cov}(U)$.
			\item If $\{U_i\}_{i\in I}\in \mathrm{Cov}(U)$ and $\{V_{ij}\}_{j\in J_i}\in \mathrm{Cov}(U_i)$ for each $i\in I$, then $\{V_{ij}\}_{i, j}\in \mathrm{Cov}(U)$.
			\item If $U, V\in \mathrm{Op}(X)$ and $V\subset U$, $\{U_i\}\in \mathrm{Cov}(U)$, then $\{V\cap U_i\}\in \mathrm{Cov}(V)$.
		\end{enumerate}
	\end{defn}
	Viewing the power set $\mathcal{P}(X)$ as a category with the inclusion maps as morphisms, we see that a G-topological space is the same as a set $X$ together with a Grothendieck topology on a full subcategory of $\mathcal{P}(X)$.
	
	A \textbf{presheaf} on $X$ with values in a complete quasi-abelian category $\C$ is a functor $\mathrm{Op}(X)^\mathrm{op}\to \C$. We denote the category of presheaves by $\mathrm{Preshv}(X, \C)$.
	
	An object $\mathcal{F}$ of $\mathrm{Preshv}(X, \C)$ is a \textbf{sheaf} (or a $\C$-sheaf) if for any $U\in \mathrm{Op}(X)$ and any covering $(U_i)_{i\in I}$ of $U$, $\mathcal{F}(U)$ is the equalizer of the usual restriction diagram
	\begin{equation*}
		\begin{xy}\xymatrix{
				\prod_{i\in I} \mathcal{F}(U_i)\ar@<1ex>[r] \ar[r]& \prod_{i, j\in I} \mathcal{F}(U_i\cap U_j).
			}
		\end{xy}
	\end{equation*}
	The full subcategory of $\mathrm{Preshv}(X, \C)$ consisting of sheaves is denoted by $\mathrm{Shv}(X, \C)$.
	
	In this subsection, we suppose that $\C$ satisfies the following:
	\begin{enumerate}[(i)]
		\item $\C$ is an elementary quasi-abelian category.
		\item $\C$ is a closed symmetric monoidal category with enough flat projectives stable under $T$.
	\end{enumerate}
	In this case, we have the sheafification functor $\mathcal{F}\mapsto \mathcal{F}^a$ as a left adjoint to the inclusion $\mathrm{Shv}(X, \C)\to \mathrm{Preshv}(X, \C)$ (see \cite[subsection 2.2.1]{Schneiders} for details).
	
	More explicitly, $\mathcal{F}^a=h(h(\mathcal{F}))$, where
	\begin{equation*}
		h(\mathcal{F})(U)=\underset{\mathfrak{U}\in \mathrm{Cov}(U)}{\mathrm{colim}}\check{\mathrm{H}}^0(\mathfrak{U}, \mathcal{F})=\mathrm{colim} \left(\mathrm{ker}\left(\prod_{U_i\in \mathfrak{U}}\mathcal{F}(U_i)\to \prod \mathcal{F}(U_i\cap U_j)\right)\right)
	\end{equation*}
	for any admissible open subset $U$. The proof that $(-)^a$ is a left adjoint given in \cite[Proposition 2.2.6]{Schneiders} still works in this generality. We stress that this construction rests crucially on $\C$ being elementary: the proofs in \cite{Schneiders} rely on testing against tiny projective objects $X$ and noting that in this case
	\begin{equation*}
		\mathrm{Hom}_{\C}(X, h(\mathcal{F})(U))\cong h(\mathrm{Hom}_{\C}(X, \mathcal{F}(-)))(U)
	\end{equation*} 
	in the category of abelian groups.
	
	\begin{lem}
		\label{Presheaflimits}
		Let $X$ be a G-topological space.
		\begin{enumerate}[(i)]
			\item $\mathrm{Preshv}(X, \C)$ is a quasi-abelian category.
			\item $\mathrm{Preshv}(X, \C)$ and $\mathrm{Shv}(X, \C)$ are complete and cocomplete. Moreover, limits in $\mathrm{Shv}(X, \C)$ are the same as in $\mathrm{Preshv}(X, \C)$, a colimit in $\mathrm{Shv}(X, \C)$ is the sheafification of the corresponding colimit in $\mathrm{Preshv}(X, \C)$.
			\item Sheafification is strongly exact. 
		\end{enumerate}
	\end{lem}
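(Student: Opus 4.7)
The plan is to reduce parts (i) and most of (ii) to straightforward pointwise arguments in the functor category, and then analyze sheafification via its explicit description to settle (iii). First, since $\C$ is elementary, hence complete, cocomplete and quasi-abelian, the category $\mathrm{Preshv}(X,\C)$ inherits (co)limits computed objectwise; in particular kernels, cokernels, and images agree with those taken section by section, and a morphism is strict (respectively, a square is (co)Cartesian) if and only if it has the corresponding property after evaluating at every admissible open subset. Both quasi-abelian axioms then propagate from $\C$ to $\mathrm{Preshv}(X,\C)$, establishing (i).

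For (ii), the sheaf condition is itself a limit condition in $\C$, so a limit in $\mathrm{Preshv}(X,\C)$ of a diagram of sheaves is again a sheaf by the commutation of limits with limits; thus limits in $\mathrm{Shv}(X,\C)$ coincide with those computed in $\mathrm{Preshv}(X,\C)$. Once the sheafification functor $(-)^a$ is constructed as a left adjoint to the inclusion, following the proof of \cite[Proposition 2.2.6]{Schneiders} essentially verbatim in our G-topological setting, it automatically preserves colimits, so a colimit in $\mathrm{Shv}(X,\C)$ is given by sheafifying the corresponding presheaf colimit. Both categories are then complete and cocomplete as a consequence.

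For (iii), the fact that $(-)^a$ is a left adjoint immediately gives preservation of cokernels of arbitrary morphisms, i.e.\ strong right exactness. The interesting statement is strong left exactness, i.e.\ preservation of kernels of arbitrary (not necessarily strict) morphisms. Using $\F^a = h(h(\F))$ with
\[
h(\F)(U) = \underset{\mathfrak{U}\in \mathrm{Cov}(U)}{\mathrm{colim}}\, \mathrm{ker}\Bigl(\prod_{i}\F(U_i) \to \prod_{i,j}\F(U_i\cap U_j)\Bigr),
\]
it suffices to prove the analogous claim for $h$. The expression above is obtained from $\F$ by successively applying products (which are exact in a quasi-elementary category by \cite[Proposition 2.1.15]{Schneiders}), a kernel (a finite limit), and a filtered colimit. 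Crucially, because $\C$ is elementary and not merely quasi-elementary, filtered colimits in $\C$ are strongly exact by \cite[Proposition 2.1.16]{Schneiders}, so they commute with kernels of arbitrary morphisms. Section by section, this forces $h$, and hence $(-)^a$, to preserve arbitrary kernels.

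The main obstacle I anticipate is precisely the distinction between mere left exactness and strong left exactness in (iii): kernels of arbitrary non-strict morphisms in quasi-abelian categories interact well with colimits only when the relevant colimits are themselves strongly exact, which is exactly why the elementary hypothesis (rather than merely quasi-elementary) plays an indispensable role here. Parts (i) and (ii) are otherwise essentially formal, with the only care needed being the bookkeeping for the G-topological rather than ordinary topological setting.
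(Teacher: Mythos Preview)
Your proposal is correct and follows essentially the same approach as the paper: (i) is the content of \cite[Proposition 1.4.9]{Schneiders} (which you unpack), (ii) is the reflective subcategory argument, and (iii) uses the adjunction for strong right exactness together with the explicit formula $\F^a=h(h(\F))$ and strong exactness of filtered colimits in an elementary category to obtain preservation of arbitrary kernels. The only minor remark is that for products you need not invoke exactness at all---limits commute with limits, so products automatically preserve kernels of arbitrary morphisms; the substantive point, as you correctly emphasize, is the strong exactness of filtered colimits (\cite[Proposition 2.1.16]{Schneiders}), which is where the elementary hypothesis enters.
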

	\begin{proof}
		\begin{enumerate}[(i)]
			\item follows from \cite[Proposition 1.4.9]{Schneiders}.
			\item $\mathrm{Preshv}(X, \C)=\mathrm{Fun}(\mathrm{Op}(X)^\mathrm{op}, \C)$ is complete and cocomplete, since $\C$ is complete and cocomplete. The remaining statements follow from the fact that $\mathrm{Shv}(X, \C)$ is a reflective subcategory of $\mathrm{Preshv}(X, \C)$.
			\item By adjunction, sheafification is strongly right exact, so it suffices to show that $h$ preserves kernels. But this is immediate from the strong left exactness of products and strong exactness of filtered colimits in $\C$.\qedhere
		\end{enumerate}
	\end{proof}
	\begin{cor}
		$\mathrm{Shv}(X, \C)$ is a quasi-abelian category. 
	\end{cor}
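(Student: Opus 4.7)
The plan is to verify the two quasi-abelian axioms for $\mathrm{Shv}(X,\C)$ by transferring them from the quasi-abelian category $\mathrm{Preshv}(X,\C)$, exploiting both halves of the sheafification--inclusion adjunction. By the preceding lemma, $\mathrm{Shv}(X,\C)$ is additive and admits kernels (computed as in $\mathrm{Preshv}(X,\C)$, since the inclusion $i$ preserves limits) and cokernels (sheafifications of the corresponding presheaf cokernels). The central tool is the strong exactness of sheafification, which forces $(-)^a$ to preserve strict epimorphisms, strict monomorphisms, and finite limits and colimits.

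For axiom (ii), I would first observe that a strict monomorphism in $\mathrm{Shv}(X,\C)$ is automatically a strict monomorphism in $\mathrm{Preshv}(X,\C)$: by definition, a strict mono is the kernel of some morphism, and kernels in $\mathrm{Shv}$ coincide with those in $\mathrm{Preshv}$. Given a cocartesian square in $\mathrm{Shv}$ with $f$ a strict mono, I would form the analogous pushout in $\mathrm{Preshv}$, apply axiom (ii) there, and sheafify: since the $\mathrm{Shv}$-pushout is the sheafification of the $\mathrm{Preshv}$-pushout and sheafification preserves strict monomorphisms, the pushed-out morphism is a strict mono in $\mathrm{Shv}$.

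Axiom (i) is more subtle, because strict epimorphisms in $\mathrm{Shv}$ need not remain strict in $\mathrm{Preshv}$; overcoming this asymmetry is the main obstacle. I would handle it via the following factorization: for a strict epi $f: E \to F$ in $\mathrm{Shv}$, set $P_0 := \coker_{\mathrm{Preshv}}(\ker f \to E)$, so that in $\mathrm{Preshv}$ we have a factorization $E \xrightarrow{\bar{f}} P_0 \xrightarrow{\eta} P_0^a = F$, where $\bar{f}$ is a strict epi in $\mathrm{Preshv}$ (as the cokernel of its kernel) and $\eta$ is the sheafification unit. Given a Cartesian square in $\mathrm{Shv}$ with other vertical morphism $g: G \to F$, I would compute the Shv-pullback (which agrees with the Preshv-pullback, as $i$ preserves limits) as an iterated Preshv-pullback: first form $G' := P_0 \times_F G$ in $\mathrm{Preshv}$, then observe $E' := E \times_{P_0} G' \cong E \times_F G$. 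Axiom (i) in $\mathrm{Preshv}$ applied to the strict epi $\bar{f}$ yields that $E' \to G'$ is strict epi in $\mathrm{Preshv}$. Sheafifying this morphism and using finite-limit preservation of $(-)^a$ gives canonical identifications $(E')^a \cong E \times_F G = E'$ and $(G')^a \cong P_0^a \times_F G = F \times_F G = G$; a short diagram chase identifies the sheafified morphism with the $\mathrm{Shv}$-pullback projection $E \times_F G \to G$, which is therefore a strict epi in $\mathrm{Shv}$ by strong exactness of $(-)^a$.
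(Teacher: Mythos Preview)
Your proposal is correct and follows essentially the same approach as the paper's proof. Your $P_0 = \coker_{\mathrm{Preshv}}(\ker f \to E)$ is exactly the presheaf coimage $F^\circ$ used in the paper, and the iterated pullback $E \times_{P_0} (P_0 \times_F G)$ is the paper's construction of $E'^\circ$ and $F'^\circ$; both arguments then conclude by sheafifying and using that $(-)^a$ preserves finite limits and strict epimorphisms/monomorphisms.
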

	\begin{proof}
		We have already seen that $\mathrm{Shv}(X, \C)$ is an additive category with kernels and cokernels. It remains to show that if
		\begin{equation*}
			\begin{xy}
				\xymatrix{
					E'\ar[r]^{f'} \ar[d]&F'\ar[d]\\
					E\ar[r]_f&F
				}
			\end{xy}
		\end{equation*}
		is a Cartesian square with $f$ a strict epimorphism, then $f'$ is a strict epimorphism, as well as the dual statement.
		
		Given a diagram as above, let $F^\circ$ be the coimage of $f$ in the category of presheaves. We know from the above that $(F^\circ)^a$ is the coimage of $f$ in $\mathrm{Shv}(X, \C)$, and hence isomorphic to $F$. Moreover, $f^\circ: E\to F^\circ$ is a strict epimorphism in $\mathrm{Preshv}(X, \C)$.
		
		Consider the diagram of presheaves
		\begin{equation*}
			\begin{xy}
				\xymatrix{
					E'^\circ\ar[r]^{f'^\circ}\ar[d]&F'^\circ\ar[r]\ar[d] &F'\ar[d]\\
					E\ar[r] & F^\circ\ar[r] & F,
				}
			\end{xy}
		\end{equation*}
		where $F'^\circ=F'\times_F F^\circ$ and $E'^\circ=F'^\circ\times_{F^\circ}E$ in $\mathrm{Preshv}(X, \C)$.
		
		Then $f'^\circ: E'^\circ\to F'^\circ$ is a strict epimorphism, as $\mathrm{Preshv}(X, \C)$ is quasi-abelian. But as $(-)^a$ sends Cartesian squares to Cartesian squares and $(F^\circ)^a\cong F$, we get $(F'^\circ)^a\cong F'$, $(E'^\circ)^a\cong E'$ and $(f'^\circ)^a\cong f'$. Since sheafification preserves strict epimorphisms, this shows that $f'$ is a strict epimorphism in $\mathrm{Shv}(X, \C)$.
		
		Similarly, if 
		\begin{equation*}
			\begin{xy}
				\xymatrix{
					E'\ar[r]^{f'} &F'\\
					E\ar[u] \ar[r]_f & F\ar[u]
				}
			\end{xy}
		\end{equation*} is Cocartesian with $f$ a strict monomorphism, let $F'^\circ$ be the pushout of
		\begin{equation*}
			\begin{xy}
				\xymatrix{
					E'&\\
					E\ar[u] \ar[r] & F
				}
			\end{xy}
		\end{equation*} 
		in $\mathrm{Preshv}(X, \C)$, so that $(F'^\circ)^a\cong F'$ by Lemma \ref{Presheaflimits}. As kernels in $\mathrm{Shv}(X, \C)$ are the same as in $\mathrm{Preshv}(X, \C)$, we know that $f$ is also a strict monomorphism in $\mathrm{Preshv}(X, \C)$. Hence $E'\to F'^\circ$ is a strict monomorphism, and applying sheafification yields that $f'$ is a strict monomorphism.
		
		This shows that $\mathrm{Shv}(X, \C)$ is a quasi-abelian category.
	\end{proof}
	We remark that the same proof implies more generally that a reflective subcategory of a quasi-abelian category with strongly exact reflector is quasi-abelian.
	\begin{lem}
		\label{LHsheaves}
		We have canonical equivalences of categories
		\begin{equation*}
			LH(\mathrm{Preshv}(X, \C))\cong \mathrm{Preshv}(X, LH(\C))
		\end{equation*}
		and
		\begin{equation*}
			LH(\mathrm{Shv}(X, \C))\cong \mathrm{Shv}(X, LH(\C)).
		\end{equation*}
	\end{lem}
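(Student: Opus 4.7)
The plan is to apply Schneiders' Proposition 1.2.35 (in the form used in the proof of Proposition \ref{LHIR}) to the natural functors $J_{\mathrm{pre}}: \mathrm{Preshv}(X,\C) \to \mathrm{Preshv}(X, LH(\C))$ and $J_{\mathrm{sh}}: \mathrm{Shv}(X,\C) \to \mathrm{Shv}(X, LH(\C))$, both given by pointwise composition with the canonical embedding $I: \C \to LH(\C)$. The functor $J_{\mathrm{sh}}$ is well-defined because $I$ is a right adjoint, hence preserves the products and equalizers appearing in the sheaf condition. Both target categories are abelian, since $LH(\C)$ is abelian and (co)limits in presheaves are computed pointwise, with sheaf kernels agreeing with presheaf kernels by Lemma \ref{Presheaflimits}.

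I would verify the three hypotheses of Schneiders' criterion as follows. Fully faithfulness of $J$ is immediate from fully faithfulness of $I$ and pointwise computation of morphisms in (pre)sheaf categories. For the monomorphism-image condition, a monomorphism $A \to J(M)$ is pointwise monic in $LH(\C)$, so Schneiders' Proposition 1.2.28 yields $A(U) \cong I(A'(U))$ uniquely at each $U$, and full faithfulness of $I$ lifts the (pre)sheaf structure on $A$ to one on $A'$; in the sheaf case, $A'$ is automatically a sheaf since $I$ reflects the sheaf condition. For the existence of epimorphisms from the essential image, I fix a strictly generating set $\{G_\beta\}$ of small projectives in $\C$, whose images $\{I(G_\beta)\}$ generate $LH(\C)$ (since $LH(\C)$ is elementary by Schneiders 2.1.12). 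Given $A \in \mathrm{Preshv}(X, LH(\C))$, I define $M \in \mathrm{Preshv}(X,\C)$ by $M(W) := \bigoplus G_\beta$, indexed by triples $(U, \beta, \phi)$ with $U \supset W$ and $\phi: I(G_\beta) \to A(U)$, with restrictions forgetting summands. Using that $I$ commutes with direct sums, the morphisms $\phi \circ \mathrm{res}$ assemble into $J_{\mathrm{pre}}(M)(W) \cong \bigoplus I(G_\beta) \to A(W)$; already the summands with $U = W$ surject onto $A(W)$ by the generating property, so the global map is pointwise epi.

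For the sheaf case, I apply the presheaf construction to $A \in \mathrm{Shv}(X, LH(\C))$ and sheafify: $(J_{\mathrm{pre}}(\tilde M))^a \to A^a = A$ remains an epimorphism since sheafification is a left adjoint. To identify the source with $J_{\mathrm{sh}}(\tilde M^a)$, I use that $I$ commutes with sheafification; via the explicit $h \circ h$ formula, this reduces to $I$ preserving products, equalizers, and filtered colimits. The main technical obstacle is verifying these commutation properties of $I$ with direct sums and filtered colimits; they follow in the elementary setting from the description of $LH(\C)$ via two-term complexes $[E^{-1} \to E^0]$, where coproducts and filtered colimits are computed termwise in $\C$. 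Once granted, the rest is a formal application of Schneiders' criterion.
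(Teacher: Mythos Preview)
Your proposal is correct and essentially unpacks what the paper defers to Schneiders for: the paper simply cites Schneiders' Propositions 1.4.15 (for functor categories, giving the presheaf case) and 2.2.12 (for sheaves on topological spaces, whose proof carries over to G-topological spaces), and those arguments proceed via the same criterion 1.2.35 that you invoke. One small slip: in your presheaf $M$, the restriction maps \emph{include} summands rather than forget them (the indexing set $\{(U,\beta,\phi): U\supset W\}$ grows as $W$ shrinks), but this is harmless for the argument. Note also that the commutation of $I$ with sheafification you establish along the way is exactly the content of Lemma \ref{Iacommute}, which the paper records immediately after this result.
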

	\begin{proof}
		For presheaves, this follows from \cite[Proposition 1.4.15]{Schneiders}. For sheaves, the proof in \cite[Proposition 2.2.12]{Schneiders} generalizes to our setting.
	\end{proof}
	
	By \cite[Proposition 2.2.12]{Schneiders}, the inclusion functor 
	\begin{equation*}
		I: \mathrm{Shv}(X, \C)\to LH(\mathrm{Shv}(X, \C))
	\end{equation*}
	gets identified under this equivalence with the `section-wise' inclusion functor 
	\begin{equation*}
		\mathrm{Shv}(X, \C)\to \mathrm{Shv}(X, LH(\C)),
	\end{equation*}
	similarly for presheaves.
	\begin{lem}
		\label{Iacommute}
		$I$ commutes with sheafification, i.e. there is a natural isomorphism $I((\mathcal{F})^a)\cong (I(\mathcal{F}))^a$ for $\mathcal{F}\in \mathrm{Preshv}(X, \C)$.
	\end{lem}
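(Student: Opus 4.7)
The plan is to transport the problem to the abelian setting using Lemma \ref{LHsheaves} and invoke the compatibility of sheafification with the passage to the left heart that has already been recalled in the excerpt. By Lemma \ref{LHsheaves} together with the identification of $I$ recorded just after it, we may regard $\mathrm{Preshv}(X, LH(\C))$ as $LH(\mathrm{Preshv}(X, \C))$ and $\mathrm{Shv}(X, LH(\C))$ as $LH(\mathrm{Shv}(X, \C))$ in such a way that the section-wise inclusion $I$ corresponds, on both levels, to the canonical embedding of a quasi-abelian category into its left heart. Write $S = (-)^a$ for the sheafification of $\C$-valued presheaves and $\iota$ for the inclusion of sheaves into presheaves.

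By Lemma \ref{Presheaflimits}.(iii) the functor $S$ is strongly exact. The discussion of strongly exact functors in the excerpt then provides an exact extension $\widetilde{S}: LH(\mathrm{Preshv}(X, \C)) \to LH(\mathrm{Shv}(X, \C))$ together with a natural isomorphism $I \circ S \cong \widetilde{S} \circ I$, yielding
\begin{equation*}
I(\mathcal{F}^a) \cong \widetilde{S}(I(\mathcal{F})).
\end{equation*}
The remaining content is to identify $\widetilde{S}$, under the equivalences of Lemma \ref{LHsheaves}, with the sheafification in $\mathrm{Preshv}(X, LH(\C))$. For this I would check that the adjunction $S \dashv \iota$ extends to an adjunction $\widetilde{S} \dashv \widetilde{\iota}$: this is a standard feature of the extension construction, resting on $S$ being right exact, $\iota$ being strongly left exact, and the fact that $\widetilde{(-)}$ preserves composition of exact functors up to natural isomorphism. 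Since $\widetilde{\iota}$ is transparently the inclusion of $LH(\C)$-valued sheaves into presheaves, uniqueness of left adjoints forces $\widetilde{S}$ to coincide with the sheafification there, and combining gives $(I(\mathcal{F}))^a \cong \widetilde{S}(I(\mathcal{F})) \cong I(\mathcal{F}^a)$.

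The main obstacle is the clean verification that the extension procedure $\widetilde{(-)}$ is compatible with the adjunction $S \dashv \iota$; while implicit in Schneiders' framework, this is not spelled out in the excerpt and requires tracing through the derived-functor definition of $\widetilde{S}$ and $\widetilde{\iota}$. An alternative, more hands-on route is to argue directly from the formula $\mathcal{F}^a = h(h(\mathcal{F}))$: since $I$ is right adjoint to $C$ it preserves the kernels and products making up $\check{H}^0(\mathfrak{U}, -)$, and the elementarity of $\C$ (via strong exactness of filtered colimits, Proposition 2.1.16 of \cite{Schneiders}) should imply that $I$ also preserves the filtered colimit over $\mathrm{Cov}(U)$, so that $I \circ h \cong h \circ I$ and the result follows by iterating. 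In this second approach the genuine work is showing that for a filtered diagram $(M_i)$ in $\C$ the natural map $\mathrm{colim}^{LH(\C)} I(M_i) \to I(\mathrm{colim}^\C M_i)$ is an isomorphism, which one checks using the two-term representation of objects in $LH(\C)$ together with strong exactness of filtered colimits in $\C$.
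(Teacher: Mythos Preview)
Your alternative approach is precisely the paper's proof: since $I$ is a right adjoint it preserves kernels and products, and \cite[Proposition~2.1.16]{Schneiders} (which uses elementarity of $\C$) gives that $I$ commutes with filtered colimits; plugging this into the explicit formula $\mathcal{F}^a = h(h(\mathcal{F}))$ finishes the argument in one line. Your identification of the ``genuine work'' in this route is exactly the content of that cited proposition, so there is nothing further to check.

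Your primary approach via extending the adjunction $S \dashv \iota$ to the left hearts is more circuitous and, as you note yourself, incomplete. The difficulty is real: the inclusion $\iota$ is only strongly left exact (cokernels in $\mathrm{Shv}$ involve sheafification), so forming $\widetilde{\iota}$ requires a right-derived functor and an injective subcategory, and the compatibility $\widetilde{S} \dashv \widetilde{\iota}$ is not a formal consequence of the extension procedure as stated in the paper. One could make this work, but it would amount to reproving the direct argument inside a heavier wrapper. Just take the hands-on route.
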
 
	\begin{proof}
		By \cite[Proposition 2.1.16]{Schneiders}, $I: \C\to LH(\C)$ commutes with filtered colimits. As $I$ also preserves limits (so in particular, kernels and products), the result follows from the definition of the sheafification functor.
	\end{proof}
	
	We can now endow $\mathrm{Shv}(X, \C)$ with the structure of a closed symmetric monoidal category as follows (see \cite[subsection 2.2.3]{Schneiders}):
	
	Given $\M, \N\in \mathrm{Preshv}(X, \C)$, we define the presheaf tensor product
	\begin{equation*}
		\T_\mathrm{pre}(\M, \N): U\mapsto T(\M(U), \N(U)).
	\end{equation*}
	If $\M, \N$ are sheaves, we denote by $\T(\M, \N)$ the sheafification $\T_{\mathrm{pre}}(\M, \N)^a$.
	
	The internal Hom $\mathcal{H}(\M, \N)$ for $\M, \N\in \mathrm{Preshv}(X, \C)$ is given by setting $\mathcal{H}(\M, \N)(U)$ to be the equalizer of
	\begin{equation*}
		\begin{xy}
			\xymatrix{
				\prod_{V} H(\M(V), \N(V))\ar@<1ex>[r]^f \ar[r]_g& \prod_{W\subseteq V} H(\M(V), \N(W)),
			}
		\end{xy}
	\end{equation*}
	where the second product is over all pairs of admissible open subsets $(W, V)$ with $W\subseteq V$, where $f$ is induced by the natural morphisms $H(M(V), N(V))\to H(M(V), N(W))$, and $g$ is induced by $H(M(W), N(W))\to H(M(V), N(W))$.
	
	\begin{lem}
		\label{shvclosed}
		\leavevmode
		\begin{enumerate}[(i)]
			\item There is a natural isomorphism 
			\begin{equation*}
				\mathrm{Hom}_{\mathrm{Preshv}(X, \C)}(\T_\mathrm{pre}(\M, \N), \M')\cong \mathrm{Hom}_{\mathrm{Preshv}(X, \C)}(\N, \mathcal{H}(\M, \M')) 
			\end{equation*}
			for $\M, \M', \N\in \mathrm{Preshv}(X, \C)$.
			\item If $\M, \M'\in \mathrm{Shv}(X, \C)$, then $\mathcal{H}(\M, \M')\in \mathrm{Shv}(X, \C)$, and there is a natural isomorphism
			\begin{equation*}
				\mathrm{Hom}_{\mathrm{Shv}(X, \C)}(\T(\M, \N), \M')\cong \mathrm{Hom}_{\mathrm{Shv}(X, \C)}(\N, \mathcal{H}(\M, \M'))
			\end{equation*}
			for $\M, \M', \N\in \mathrm{Shv}(X, \C)$.
			\item $\T_\mathrm{pre}$ and $\mathcal{H}$ turn $\mathrm{Preshv}(X, \C)$ into a closed symmetric monoidal category (with unit the constant presheaf $U$). $\T$ and $\mathcal{H}$ turn $\mathrm{Shv}(X, \C)$ into a closed symmetric monoidal category.
		\end{enumerate}
	\end{lem}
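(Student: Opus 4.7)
The plan is to reduce everything to section-wise statements in $\C$, handle the presheaf case directly, and deduce the sheaf case via the universal property of sheafification. Throughout I will follow the pattern of the corresponding results in \cite[subsection 2.2.3]{Schneiders}, taking care that arguments do not invoke stalks (which are unavailable for general G-topological spaces). For (i), a presheaf morphism $\T_\mathrm{pre}(\M, \N) \to \M'$ amounts to a family of $\C$-morphisms $\phi_U: T(\M(U), \N(U)) \to \M'(U)$ natural in the restriction maps. The closed monoidal adjunction $T \dashv H$ in $\C$ turns each $\phi_U$ into a morphism $\psi_U: \N(U) \to H(\M(U), \M'(U))$. The naturality of the family $\{\phi_U\}$ translates, after adjunction, precisely to the statement that the induced morphisms $\N(U) \to H(\M(V), \M'(V))$ (for $V \subseteq U$) assemble into a section of the equalizer defining $\mathcal{H}(\M, \M')(U)$, and that these sections are natural in $U$. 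Unpacking both sides shows that this matches exactly the data of a presheaf morphism $\N \to \mathcal{H}(\M, \M')$; naturality and bijectivity in the three variables then follow formally.

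For (ii), the crux is to show that $\mathcal{H}(\M, \M')$ is a sheaf whenever $\M'$ is. Given an admissible cover $U = \bigcup U_i$, I would test the equalizer condition by applying $\mathrm{Hom}_\C(A, -)$ for arbitrary $A \in \C$ and using the natural identification $\mathrm{Hom}_\C(A, H(\M(V), -)) \cong \mathrm{Hom}_\C(T(\M(V), A), -)$. This rewrites the required equalizer in terms of morphisms $T(\M(V), A) \to \M'(V)$, and lets me interchange the equalizer over $V \subseteq U$ with the equalizer witnessing the sheaf condition for $\M'$ along the induced cover of each $V$ by $V \cap U_i$. Since limits commute with limits, the statement reduces to the sheaf condition for $\M'$, which holds by hypothesis. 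Once $\mathcal{H}(\M, \M')$ is known to be a sheaf, the sheaf-level adjunction follows formally from (i) and the universal property of sheafification: for sheaves $\M, \N, \M'$,
\begin{equation*}
\mathrm{Hom}_{\mathrm{Shv}}(\T(\M, \N), \M') \cong \mathrm{Hom}_{\mathrm{Preshv}}(\T_\mathrm{pre}(\M, \N), \M') \cong \mathrm{Hom}_{\mathrm{Preshv}}(\N, \mathcal{H}(\M, \M')) \cong \mathrm{Hom}_{\mathrm{Shv}}(\N, \mathcal{H}(\M, \M')).
\end{equation*}

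For (iii), the associator, unit, and braiding on $\mathrm{Preshv}(X, \C)$ are inherited section-wise from $\C$, with the unit being the constant presheaf with value $U$; the coherence axioms reduce to those in $\C$ evaluated on each admissible open. For sheaves one sets $\T(\M, \N) := \T_\mathrm{pre}(\M, \N)^a$, so that applying sheafification to the presheaf-level structural isomorphisms and using functoriality of $(-)^a$ yields the corresponding natural isomorphisms for sheaves, the monoidal unit being the sheafification of the constant presheaf $U$. Together with (i) and (ii) this produces the closed symmetric monoidal structure on $\mathrm{Shv}(X, \C)$. The main obstacle lies in (ii): the standard proof in the topological setting is phrased in terms of stalks, so the stalk-free argument must work directly with the equalizer defining $\mathcal{H}$ and execute the double-limit swap, with care required because the indexing category of admissible opens contained in $U$ itself varies with $U$.
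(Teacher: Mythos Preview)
Your proposal is correct and follows essentially the same route as the paper, which simply cites \cite[Proposition 18.2.3]{KS} for (i), \cite[Proposition 2.2.14]{Schneiders} for the sheaf property of $\mathcal{H}$ in (ii), and deduces the rest from sheafification adjunction. Your section-wise adjunction argument for (i) and the Yoneda/double-equalizer argument for (ii) are precisely what those references do; in particular, Schneiders' proof of Proposition 2.2.14 already works directly with the equalizer defining $\mathcal{H}$ and does not use stalks, so your stated ``main obstacle'' is not actually an obstacle---the cited argument transfers verbatim to the G-topological setting.
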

	\begin{proof}
		For (i), the same argument as in \cite[Proposition 18.2.3]{KS} applies. For (ii), the computation that $\mathcal{H}$ preserves sheaves is the same as in \cite[Proposition 2.2.14]{Schneiders}, and adjunction follows from the adjoint properties of sheafification. Part (iii) then follows immediately.
	\end{proof}
	
	\begin{lem}
		\label{Tacommute}
		There is a natural isomorphism
		\begin{equation*}
			\T_\mathrm{pre}(\M, \N)^a\cong \T(\M^a, \N^a)
		\end{equation*}
		for $\M, \N\in \mathrm{Preshv}(X, \C)$.
	\end{lem}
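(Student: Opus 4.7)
The plan is to prove the isomorphism by Yoneda. First, I would construct a canonical comparison map: the unit morphisms $\M \to \M^a$ and $\N \to \N^a$ induce a morphism of presheaves $\T_\mathrm{pre}(\M, \N) \to \T_\mathrm{pre}(\M^a, \N^a)$ (by functoriality of $T$ sectionwise), and sheafifying gives a natural morphism $\T_\mathrm{pre}(\M, \N)^a \to \T_\mathrm{pre}(\M^a, \N^a)^a = \T(\M^a, \N^a)$. The goal is then to show this morphism is an isomorphism.

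Second, I would show that applying $\mathrm{Hom}_{\mathrm{Shv}(X, \C)}(-, \mathcal{P})$ for an arbitrary sheaf $\mathcal{P}$ yields a bijection on both sides. For this I need the following auxiliary fact, which is a mild strengthening of the proof in Lemma \ref{shvclosed}(ii): for any presheaf $\mathcal{F}$ and any sheaf $\mathcal{P}$, the presheaf $\mathcal{H}(\mathcal{F}, \mathcal{P})$ is already a sheaf. Inspecting the equalizer definition of $\mathcal{H}$, this follows because the sheaf condition for $\mathcal{H}(\mathcal{F}, \mathcal{P})$ involves only limits in the variable $\mathcal{P}$, so the Schneiders/Kashiwara--Schapira argument goes through without requiring $\mathcal{F}$ to be a sheaf.

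Third, with this fact in hand I would run the chain of adjunctions. On the source side, the universal property of sheafification together with Lemma \ref{shvclosed}(i) gives
\begin{align*}
\mathrm{Hom}_{\mathrm{Shv}}(\T_\mathrm{pre}(\M, \N)^a, \mathcal{P}) &\cong \mathrm{Hom}_{\mathrm{Preshv}}(\T_\mathrm{pre}(\M, \N), \mathcal{P}) \\
&\cong \mathrm{Hom}_{\mathrm{Preshv}}(\M, \mathcal{H}(\N, \mathcal{P})).
\end{align*}
Because $\mathcal{H}(\N, \mathcal{P})$ is a sheaf, the adjunction of sheafification lets us replace $\M$ by $\M^a$, after which unwinding the adjunction again gives $\mathrm{Hom}_{\mathrm{Preshv}}(\T_\mathrm{pre}(\M^a, \N), \mathcal{P})$. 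Repeating the argument with $\mathcal{H}(\M^a, \mathcal{P})$ (again a sheaf by the auxiliary fact) lets us replace $\N$ by $\N^a$, reaching $\mathrm{Hom}_{\mathrm{Preshv}}(\T_\mathrm{pre}(\M^a, \N^a), \mathcal{P}) \cong \mathrm{Hom}_{\mathrm{Shv}}(\T(\M^a, \N^a), \mathcal{P})$. Tracing through, this chain of bijections is induced by the canonical map from the first step, so Yoneda concludes the proof.

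The main obstacle is really just the auxiliary fact that $\mathcal{H}(\mathcal{F}, \mathcal{P})$ is a sheaf whenever $\mathcal{P}$ is, without any assumption on $\mathcal{F}$; once this is in place everything is a formal juggling of adjunctions. If one wanted to avoid proving this, one could instead note that $\T_\mathrm{pre}$ preserves sheafification in each variable separately (by the same Yoneda argument applied with one argument fixed to be a sheaf, which only needs Lemma \ref{shvclosed}(ii) as stated), and then compose the two variable-wise isomorphisms to obtain the full statement.
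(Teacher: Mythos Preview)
Your proposal is correct and is precisely the argument the paper has in mind: the paper's one-line proof ``This is clear from the adjunctions above'' is shorthand for exactly the Yoneda chain you spell out, using the sheafification adjunction together with the $(\T_\mathrm{pre},\mathcal{H})$-adjunction from Lemma~\ref{shvclosed}. Your observation that $\mathcal{H}(\mathcal{F},\mathcal{P})$ is already a sheaf whenever $\mathcal{P}$ is (with no hypothesis on $\mathcal{F}$) is the one point that needs to be made explicit, and as you note it follows from the same Schneiders computation cited for Lemma~\ref{shvclosed}(ii), since that argument only uses the sheaf property of the target.
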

	\begin{proof}
		This is clear from the adjunctions above.
	\end{proof}
	
	\begin{lem}
		Let $\R$ be a monoid in $\mathrm{Shv}(X, \C)$. Then $\T_{\R}(\M, \N)$ is the sheafification of
		\begin{equation*}
			U\mapsto T_{\R(U)}(\M(U), \N(U))
		\end{equation*}
		for any $\M \in \mathrm{Mod}(\R^{\mathrm{op}})$, $\N\in \mathrm{Mod}(\R)$.
	\end{lem}
	
	\begin{proof}
		Let $\T_{\R, \mathrm{pre}}(\M, \N)$ denote the presheaf
		\begin{equation*}
			U\mapsto T_{\R(U)}(\M(U), \N(U)).
		\end{equation*}
		We thus have a strictly exact sequence of presheaves
		\begin{equation*}
			\T_{\mathrm{pre}}(\M, \R, \N)\to \T_{\mathrm{pre}}(\M, \N)\to \T_{\R, \mathrm{pre}}(\M, \N)\to 0.
		\end{equation*}
		Now apply sheafification, which is exact, to obtain the desired isomorphism.
	\end{proof}
	
	\begin{lem}
		\label{genforLH}
		Let $\E$ be a cocomplete quasi-abelian category with exact direct sums. If $\E$ admits a strictly generating set, then $LH(\E)$ admits a generator.
	\end{lem}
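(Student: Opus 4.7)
The plan is to take a strictly generating set $\mathcal{G}$ of $\E$ and show that $\{I(G) : G \in \mathcal{G}\}$ is a generating family for $LH(\E)$, meaning that for every nonzero $Y \in LH(\E)$ there is some $G \in \mathcal{G}$ with a nonzero morphism $I(G) \to Y$. Since cocompleteness of $\E$ lifts to $LH(\E)$ (colimits can be built termwise on representatives), the direct sum $\bigoplus_{G \in \mathcal{G}} I(G)$ will then furnish a single generator.

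To verify the generating property, I would fix a nonzero $Y \in LH(\E)$, represented by a monomorphism $f : Y^{-1} \to Y^0$ in $\E$, and begin from the natural short exact sequence $0 \to I(Y^{-1}) \to I(Y^0) \xrightarrow{\psi} Y \to 0$ in $LH(\E)$ recalled at the beginning of this section. Strict generation provides a strict epimorphism $\pi : \bigoplus_k G_k \to Y^0$ in $\E$ with $G_k \in \mathcal{G}$; since short strictly exact sequences in $\E$ correspond to short exact sequences in $LH(\E)$, the map $I(\pi)$ is an epimorphism, so the composition $\psi \circ I(\pi) : I(\bigoplus_k G_k) \to Y$ is a nonzero epimorphism.

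The heart of the argument is then a short contradiction. If every composition $\psi \circ I(\pi_k) : I(G_k) \to Y$ were zero (where $\pi_k$ is the $k$-th component of $\pi$), each $I(\pi_k)$ would factor through $\ker \psi = I(Y^{-1})$, and full faithfulness of $I$ would lift this to some $\sigma_k : G_k \to Y^{-1}$ in $\E$ with $f \sigma_k = \pi_k$. Assembling via the universal property of the coproduct in $\E$ yields $\sigma : \bigoplus_k G_k \to Y^{-1}$ with $f \sigma = \pi$. Applying $I$ and composing with $\psi$ then gives $\psi \circ I(\pi) = \psi \circ I(f) \circ I(\sigma) = 0$, contradicting the nonvanishing above; so some $I(G_k) \to Y$ is nonzero.

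The only point requiring mild care is that $I$ is merely a right adjoint and therefore need not preserve coproducts: $I(\bigoplus_k G_k)$ is not in general $\bigoplus_k I(G_k)$ in $LH(\E)$. The argument is arranged to pull everything back to $\E$ before any coproduct is formed in $LH(\E)$, so the canonical comparison $\bigoplus_k I(G_k) \to I(\bigoplus_k G_k)$ need never be analysed.
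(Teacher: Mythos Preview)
There is a gap. The condition you verify—``for every nonzero $Y$ there is a nonzero morphism $I(G) \to Y$''—is strictly weaker than $\{I(G)\}$ being a generating family in the sense needed for $\bigoplus_{G} I(G)$ to be a generator (i.e.\ $\mathrm{Hom}(\bigoplus_G I(G), -)$ faithful, or equivalently every object a quotient of a coproduct of copies). A clean counterexample in an abelian category: for $R = k[x]/(x^2)$ the simple module $k$ satisfies $\mathrm{Hom}_R(k, M) \neq 0$ for every $M \neq 0$ (any nonzero $m$ is either killed by $x$, or $xm \neq 0$ is), yet $k$ is not a generator of $\mathrm{Mod}_R$ since every quotient of $k^{(J)}$ has trivial $x$-action while $R$ does not. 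So the sentence ``the direct sum $\bigoplus_{G \in \mathcal{G}} I(G)$ will then furnish a single generator'' does not follow from what you have proved.

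The fix is small and uses one more ingredient, \cite[Proposition~1.2.28(b)]{Schneiders}: any subobject of an object in the essential image of $I$ again lies in the essential image of $I$. With this, run your contradiction for an arbitrary nonzero $\phi : X \to Y$ rather than just $\mathrm{id}_Y$: take the epimorphism $\psi_X I(\pi) : I(\bigoplus_k G_k) \to X$ as before; if every $\phi\,\psi_X I(\pi_k) = 0$, then each $I(\pi_k)$ factors through the subobject $\psi_X^{-1}(\ker\phi) \subseteq I(X^0)$, which is $I(K)$ for some $K \hookrightarrow X^0$ in $\E$. Full faithfulness gives $\sigma_k : G_k \to K$ and hence $\sigma : \bigoplus_k G_k \to K$ with $(K \hookrightarrow X^0)\circ\sigma = \pi$, forcing $\phi\,\psi_X I(\pi) = 0$, a contradiction. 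This yields the faithful-Hom condition directly. (Equivalently, the same lemma shows the comparison map $\bigoplus_k I(G_k) \to I(\bigoplus_k G_k)$ is an epimorphism in $LH(\E)$, so every object receives an epimorphism from a genuine $LH(\E)$-coproduct of $I(G)$'s.) The paper's one-line proof starts from the same epimorphism $I(X^0) \to X$ and leaves this passage implicit.
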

	\begin{proof}
		This follows straightforwardly from the fact that for any
		\begin{equation*}
			X=(0\to X^{-1}\to X^0\to 0)\in LH(\E),
		\end{equation*} 
		the morphism $I(X^0)\to X$ is an epimorphism, and $I$ commutes with direct sums by the dual of \cite[Corollary 1.4.7]{Schneiders}.
	\end{proof}
	
	\begin{lem}
		\label{genforshv}
		If $\C$ admits a generator, so does $\mathrm{Shv}(X, \C)$.
	\end{lem}
	\begin{proof}
		This is a modification of \cite[Proposition 2.2.11]{Schneiders} where we avoid the use of stalks. Given $A\in \C$, $U \subseteq X$ open, let $A_U^\mathrm{pre}$ be the presheaf
		$$
		V\mapsto \begin{cases} A \ \text{if $V\subseteq U$}\\
			0 \ \mathrm{otherwise}, \end{cases}
		$$
		and let $A_U$ be its sheafification. For any $\C$-sheaf $\F$ on $X$, there is a morphism
		\begin{equation*}
			\oplus_U \F(U)_U\to \F.
		\end{equation*}
		It is clear that the morphism $\oplus_U (\F(U)_U^\mathrm{pre}\to \F$ is a strict epimorphism of pre\-sheaves, as we can check sectionwise, so $\oplus \F(U)_U\to \F$ is a strict epimorphism, since sheafification is strongly exact (and commutes with direct sums). 
		
		If $G$ is a generator for $\C$, there exists for each $U$ a set $\I_U$ and a strict epimorphism
		\begin{equation*}
			\oplus_{\I_U}G\to \F(U).
		\end{equation*} 
		Thus we obtain a strict epimorphism $\oplus_U\oplus_{\I_U}G_U^\mathrm{pre}\to \oplus_U F(U)_U^\mathrm{pre}$ of presheaves, and sheafification yields the strict epimorphism
		\begin{equation*}
			\oplus_U \oplus_{\I_U} G_U\to \oplus_U \F(U)_U\to \F.
		\end{equation*}
		Hence $\oplus_U G_U$ is a generator, as required.
	\end{proof}
	
	\begin{thm}
		\label{Shvnice}
		\leavevmode
		\begin{enumerate}[(i)]
			\item The category $\mathrm{Shv}(X, \C)$ is a closed symmetric monoidal category.
			\item The category $\mathrm{Shv}(X, LH(\C))$ is Grothendieck.
			\item $\mathrm{Shv}(X, \C)$ has enough flat objects (i.e., for each $\mathcal{F}\in \mathrm{Shv}(X, \C)$, there exists a strict epimorphism $\mathcal{P}\to \mathcal{F}$ for some $\mathcal{P}$ such that $\T(-, \mathcal{P})$ is exact).
			\item If $\R$ is a monoid in $\mathrm{Shv}(X, \C)$, then $\mathrm{Mod}(\R)$ is a quasi-abelian category, complete and cocomplete, and $LH(\mathrm{Mod}(\R))$ is Grothendieck. 
			\item If $\R$ is a monoid, then $\mathrm{Mod}(\R)$ has enough flat objects (i.e. each left $\R$-module admits a strict epimorphism from some $\mathcal{P}$ such that 
			\begin{equation*}
				\T_\R(-, \mathcal{P}): \mathrm{Mod}(\R^\mathrm{op})\to \mathrm{Shv}(X, \C)
			\end{equation*} 
			is exact).
		\end{enumerate}
	\end{thm}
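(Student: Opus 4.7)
Part (i) is the content of Lemma \ref{shvclosed}.(iii). For part (ii), combining Lemma \ref{LHsheaves} with the fact that $LH(\C)$ is Grothendieck (since $\C$ is elementary) gives an abelian category $\mathrm{Shv}(X, LH(\C)) \cong LH(\mathrm{Shv}(X, \C))$; applying Lemma \ref{genforshv} to $LH(\C)$ produces a generator, and exactness of filtered colimits follows because they are sheafifications of sectionwise filtered colimits in $LH(\C)$, with exact sectionwise colimits (since $LH(\C)$ is Grothendieck) and exact sheafification in the abelian setting.

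For part (iii), I plan to imitate the construction in Lemma \ref{genforshv} using the designated subcategory $\mathcal{P}$ of flat projectives of $\C$. For each admissible open $U \subseteq X$ and each $P \in \mathcal{P}$, define $P_U$ as the sheafification of the presheaf $P_U^\mathrm{pre}$ taking the value $P$ on opens contained in $U$ and $0$ otherwise; adjunction yields $\mathrm{Hom}_{\mathrm{Shv}}(P_U, \mathcal{F}) \cong \mathrm{Hom}_{\C}(P, \mathcal{F}(U))$, so assembling adjoints of strict epimorphisms $\bigoplus_{\alpha \in \mathcal{I}_U} P_\alpha \to \mathcal{F}(U)$ in $\C$ produces a strict epimorphism from a direct sum of the $(P_\alpha)_U$ onto $\mathcal{F}$. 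The main obstacle is verifying flatness of $P_U$: preservation of strict epimorphisms by $\T(P_U, -)$ is automatic since it is a left adjoint to $\mathcal{H}(P_U, -)$ by Lemma \ref{shvclosed}.(ii); for preservation of strict monomorphisms, I would compute $\T(P_U, \mathcal{F})$ via Lemma \ref{Tacommute} as the sheafification of the presheaf $V \mapsto T(P, \mathcal{F}(V))$ for $V \subseteq U$ (else $0$), use flatness of $P$ in $\C$ to obtain sectionwise preservation, and invoke the strong exactness of sheafification (Lemma \ref{Presheaflimits}.(iii)).

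For part (iv), Lemma \ref{ModR} immediately supplies quasi-abelianness, completeness, and cocompleteness. The key identification $LH(\mathrm{Mod}(\R)) \cong \mathrm{Mod}(I(\R))$ inside $\mathrm{Shv}(X, LH(\C))$ comes from Proposition \ref{LHIR}; the required natural isomorphism $\widetilde{\T}(I(\mathcal{F}), I(\mathcal{G})) \cong I(\T(\mathcal{F}, \mathcal{G}))$ follows by working at the presheaf level with the monoidal structure on $LH(\C)$ (Lemma \ref{extendclosed} applied inside $\C$) and invoking Lemma \ref{Iacommute} to commute $I$ past sheafification. Since $\mathrm{Shv}(X, LH(\C))$ is Grothendieck by (ii), the standard argument for modules over a monoid in a Grothendieck abelian category gives Grothendieckness of $\mathrm{Mod}(I(\R))$: a generator arises as $\widetilde{\T}(I(\R), G_U)$ for $G$ a generator of $LH(\C)$, and filtered colimits of modules are computed in the underlying sheaf category by Lemma \ref{ModR}.(ii).

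For part (v), given $\mathcal{F} \in \mathrm{Mod}(\R)$, apply (iii) to the underlying sheaf to obtain a strict epimorphism $\mathcal{P} \to \mathcal{F}$ with $\mathcal{P}$ flat in $\mathrm{Shv}(X, \C)$. The $\R$-linear morphism $\T(\R, \mathcal{P}) \to \T(\R, \mathcal{F}) \to \mathcal{F}$ (obtained by applying $\T(\R, -)$ to $\mathcal{P} \to \mathcal{F}$ and composing with the action map), when precomposed with the unit $\mathcal{P} \cong \T(U, \mathcal{P}) \hookrightarrow \T(\R, \mathcal{P})$, recovers the original strict epimorphism and is therefore itself a strict epimorphism in $\mathrm{Shv}(X, \C)$; by Lemma \ref{ModR}.(ii) it is also a strict epimorphism in $\mathrm{Mod}(\R)$. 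Flatness of $\T(\R, \mathcal{P})$ over $\R$ is Lemma \ref{tensorwithfree}, which gives $\T_\R(-, \T(\R, \mathcal{P})) \cong \T(-, \mathcal{P})$ as functors $\mathrm{Mod}(\R^\mathrm{op}) \to \mathrm{Shv}(X, \C)$, and the right-hand side is exact by (iii).
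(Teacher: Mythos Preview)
Your arguments for (i), (ii), (iii), and (v) are correct and follow essentially the same route as the paper's proof.

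For (iv) there is a genuine gap in your approach. You invoke Proposition~\ref{LHIR}, whose hypothesis is the compatibility $\widetilde{\T}(I(\mathcal{F}), I(\mathcal{G})) \cong I(\T(\mathcal{F}, \mathcal{G}))$ at the sheaf level, and you reduce this to the sectionwise statement $\widetilde{T}(I(A), I(B)) \cong I(T(A, B))$ in $LH(\C)$. But this identity does not follow from Lemma~\ref{extendclosed} alone: the extended tensor $\widetilde{T}(I(A), I(B))$ is computed as the cokernel in $LH(\C)$ of a map $T(P^{-1}, B) \to T(P^0, B)$ for a flat projective resolution $P^\bullet \to A$, and this cokernel lies in the essential image of $I$ only when that map is strict, which amounts to exactness of $T(-, B)$ for arbitrary $B$. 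The paper establishes this compatibility only later (Lemma~\ref{strictsheaftensor}), under the additional hypothesis that every object of $\C$ is flat; that assumption is imposed at the start of subsection~2.4 and is \emph{not} among the standing hypotheses for Theorem~\ref{Shvnice}.

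The paper's proof of (iv) sidesteps this entirely by arguing directly that $LH(\mathrm{Mod}(\R))$ is Grothendieck, without ever identifying it with $\mathrm{Mod}(I(\R))$: it cites \cite[Corollary~1.4.7, Proposition~1.4.17]{Schneiders} for completeness, cocompleteness, and exactness of filtered colimits in $LH(\mathrm{Mod}(\R))$, and then combines \cite[Proposition~2.1.18]{Schneiders} (which gives a strictly generating set for $\mathrm{Mod}(\R)$) with Lemma~\ref{genforLH} to obtain a generator. This route works under the weaker hypotheses actually in force.
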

	
	\begin{proof}
		(i) is clear from Lemma \ref{shvclosed}.
		
		For (ii), it follows from Lemma \ref{Presheaflimits} that $\mathrm{Shv}(X, LH(\C))$ is a complete and cocomplete abelian category. Since $\mathrm{Preshv}(X, LH(\C))$ has exact filtered colimits, so does $\mathrm{Shv}(X, LH(\C))$, as sheafification is exact. Hence $\mathrm{Shv}(X, LH(\C))$ is Grothendieck by Lemma \ref{genforshv}, as $LH(\C)$ is elementary abelian by \cite[Proposition 2.1.12]{Schneiders}.
		
		For (iii), we follow the same line of reasoning as in \cite[Proposition 2.3.10]{Schneiders}. By arguments as above, every $\F\in \mathrm{Shv}(X, \C)$ admits a strict epimorphism from some sheaf of the form 
		\begin{equation*}
			\mathcal{P}=\oplus_\I (P_i)_{U_i},
		\end{equation*} 
		where $(P_i)_{i\in \I}$ is a family of projective objects in $\C$ and $U_i\subseteq X$ open. It follows from Lemma \ref{Tacommute} that for any $\A\in \mathrm{Shv}(X, \C)$, $\T(\A, \mathcal{P})\cong (\T_\mathrm{pre}(\A, \oplus (P_i)_{U_i}^\mathrm{pre}))^a$. Thus $\T(-, \mathcal{P})$ is exact.
		
		It remains to prove (iv) and (v). If $\R$ is a monoid in $\mathrm{Shv}(X, \C)$, then $\mathrm{Mod}(\R)$ is complete, cocomplete and quasi-abelian by Lemma \ref{ModR}. By \cite[Proposition 2.1.9]{Schneiders} and the strong exactness of sheafification, $\mathrm{Mod}(\R)$ has strongly exact filtered colimits, so by the dual of \cite[Corollary 1.4.7]{Schneiders}, $LH(\mathrm{Mod}(\R))$ is a cocomplete abelian category, with exact filtered colimits by \cite[Proposition 1.4.17]{Schneiders}. By \cite[Proposition 2.1.18]{Schneiders}, $\mathrm{Mod}(\R)$ admits a strictly generating set and has exact direct sums by \cite[Proposition 2.2.8]{Schneiders}. Thus $LH(\mathrm{Mod}(\R))$ admits a generator by Lemma \ref{genforLH}, and is hence Grothendieck.
		
		For (v), note that if $\mathcal{P}$ is flat as a $\C$-sheaf, then $\T(\R, \mathcal{P})$ is a flat $\R$-module by Lemma \ref{tensorwithfree}.
	\end{proof}
	It follows that in this situation, both $LH(\mathrm{Shv}(X, \C))$ and $LH(\mathrm{Mod}(\R))$ have enough injectives, and $\mathrm{Shv}(X, \C)$ and $\mathrm{Mod}(\R)$ admit flat resolutions for their respective tensor products. 
	
	\subsection{Sheaves on bases}
	
	In this subsection, we provide a brief discussion of bases.
	\begin{defn}
		Let $X$ be a G-topological space with category of admissible opens $\mathrm{Op}(X)$ and admissible coverings $\mathrm{Cov}(U)$, $U\in \mathrm{Op}(X)$. A full subcategory $\B\subseteq \mathrm{Op}(X)$ is called a \textbf{basis} for the G-topology if every $U\in \mathrm{Op}(X)$ admits an admissible covering consisting of objects in $\B$.
	\end{defn}
	
	Note that $\B$ itself need not be the category of admissible opens for some G-topology -- in fact, it is not uncommon for $\B$ to be not closed under intersection. For example, if $X$ is a rigid analytic space, let $X_w$ denote the collection of all affinoid subdomains of $X$. Then $X_w$ is a basis for the strong Grothendieck topology on $X$, but it does not constitute a G-topology if $X$ is not separated. 
	
	If $U$ is an admissible open in $X$, we call an admissible covering of $U$ a $\B$-covering if each piece of the covering is an object of $\B$.
	
	\begin{defn}
		Let $\B$ be a basis for $X$ and let $\C$ be a complete  quasi-abelian category. A \textbf{$\C$-presheaf on $\B$} is a functor $\B^{\mathrm{op}}\to \C$. We say that a $\C$-presheaf $\F$ on $\B$ is a \textbf{sheaf} if for any $U\in \B$, any $\B$-covering $(U_i)_{i\in I}$, and any $\B$-covering $(V_{i, j, k})_{k}$ of $U_i\cap U_j$, the sections  $\F(U)$ form the equalizer of the usual diagram
		\begin{equation*}
			\begin{xy}
				\xymatrix{\prod \F(U_i)\ar@<1ex>[r] \ar[r]& \prod_{i, j, k} \F(V_{i, j, k}).}
			\end{xy}
		\end{equation*}
	\end{defn}
	We denote the category of $\C$-sheaves on $\B$ by $\mathrm{Shv}(\B, \C)$. Note that in the case when $\B$ is in fact a site (so that $\B$ is closed under finite intersections), this is consistent with our earlier definition: if $(U_i)_{i\in I}$ is a $\B$-covering of $U\in \B$ and $\F$ is a $\C$-sheaf on the site $\B$, then  the sheaf condition implies that the morphism 
	\begin{equation*}
		\F(U_i\cap U_j)\to \prod_k \F(V_{i, j, k})
	\end{equation*}
	is a strict monomorphism for any $\B$-covering $(V_{i, j, k})$ of $U_i\cap U_j$. As products preserve strict monomorphisms, this shows that $\F$ is a $\C$-sheaf on the basis $\B$ in the sense of the definition above. The other direction is trivial.
	
	We now fix once again an elementary quasi-abelian category $\C$ which is closed symmetric monoidal with enough flat projectives stable under $T$.
	
	We will repeatedly make use of the following result.
	
	\begin{lem}
		\label{extfrombasis}
		Let $\B$ be a basis for $X$. The restriction functor yields an equivalence of categories
		\begin{equation*}
			\mathrm{Shv}(X, \C)\cong \mathrm{Shv}(\B, \C).
		\end{equation*}
	\end{lem}
	\begin{proof}
		The proofs in \cite[Appendix A]{DcapOne}, \cite[Proposition 9.2.3/1]{BGR} apply in our categorical setting with minor modifications. As we will need the construction later, we sketch the definition of the quasi-inverse.
		
		Let $\F\in \mathrm{Shv}(\B, \C)$. Let $U$ be an admissible open in $X$, $\mathfrak{U}=(U_i)_{i\in I}$ a $\B$-covering of $U$ and $(V_{i, j, k})_k$ a $\B$-covering of $U_i\cap U_j$ for each $i, j$. Note that if $V_{i, j, k}$ admits a $\B$-covering $(W_{i, j, k, l})_l$, then the natural morphism
		\begin{equation*}
			\F(V_{i, j, k})\to \prod_l \F(W_{i, j, k, l})
		\end{equation*}
		is a strict monomorphism due to the sheaf condition. In particular, $\prod_{i, j, k} \F(V_{i, j, k})\to \prod_{i, j, k, l}\F(W_{i, j, k, l})$ is a strict monomorphism, so the equalizer of
		\begin{equation*}
			\prod_i \F(U_i)\to \prod_{i, j, k}\F(V_{i, j, k})
		\end{equation*}
		is isomorphic to the equalizer of
		\begin{equation*}
			\prod_i \F(U_i)\to \prod_{i, j, k, l} \F(W_{i, j, k, l}).
		\end{equation*}
		If now $(V'_{i, j, k'})$ is another $\B$-covering of $U_i\cap U_j$, we can pick $\B$-coverings for each $V_{i, j, k}\cap V'_{i, j, k'}$ and apply the above to deduce that the equalizer of 
		\begin{equation*}
			\prod_i \F(U_i)\to \prod_{i, j, k} \F(V_{i, j, k})
		\end{equation*}
		does not depend on the choice of coverings $(V_{i, j, k})$, but only on $\mathfrak{U}$. We denote this equalizer by $\mathrm{H}^0(\mathfrak{U}, \F)$ and set
		\begin{equation*}
			\F^{\mathrm{ext}}(U):=\mathrm{colim}_{\mathfrak{U}} \mathrm{H}^0(\mathfrak{U}, \F),
		\end{equation*}
		with the colimit ranging over all $\B$-coverings $\mathfrak{U}$ of $U$. The universal property of equalizers makes $\F^{\mathrm{ext}}$ a presheaf on $X$ with the property that $\F^{\mathrm{ext}}|_\B=\F$ thanks to the sheaf condition. 
		
		To verify that $\F^{\mathrm{ext}}$ is a sheaf on $X$, we apply the same argument as in \cite[Proposition 2.2.4]{Schneiders} to reduce to the case $\C=\mathrm{Ab}$, where the argument is well-known, see e.g. \cite[Lemma 9.2.2/3]{BGR}.
	\end{proof}
	
	In practice, the basis $\B$ will usually consist of all affinoid subdomains of a rigid analytic $K$-variety, or a suitable subclass of them. With this in mind, we say that a basis $\B$ is \textbf{locally a site} if for each $U\in \B$, the set $\B_U:=\{V\in \B: V\subseteq U\}$ is closed under finite intersections. In particular, $\B_U$ makes $U$ a G-topological space. 
	
	For example, while $X_w$ is not a site if $X$ is not separated, each affinoid $U\subseteq X$ is separated, so that $U_w$ is both a basis for the strong G-topology on $U$ and a site in its own right.
	
	If $\B$ is locally a site, then the sheafification functors $(-)^a: \mathrm{Preshv}(\B_U, \C)\to \mathrm{Shv}(\B_U, \C)$ for each $U\in \B$ glue to give a sheafification functor $(-)^a: \mathrm{Preshv}(\B, \C)\to \mathrm{Shv}(\B, \C)$ as the left adjoint to the inclusion functor. 
	
	\begin{lem}
		\label{basisandsheafification}
		Let $\B$ be a basis for $X$ which is locally a site. There is a natural isomorphism
		\begin{equation*}
			(\F|_{\B})^a\cong \F^a|_{\B}
		\end{equation*}
		for any $\F\in \mathrm{Preshv}(X, \C)$.
		
		In particular, if $\F\in \mathrm{Preshv}(X, \C)$ such that $\F|_{\B}\in \mathrm{Shv}(\B, \C)$, then $(\F|_{\B})^{\mathrm{ext}}\cong \F^a$ and $\F|_{\B}\cong \F^a|_{\B}$.
	\end{lem}
	\begin{proof}
		This follows immediately from the fact that for any $U\in \B$, the set of $\B$-coverings of $U$ is cofinal amongst all admissible open coverings of $U$.
	\end{proof}
	
	\begin{lem}
		\label{extstrongmonoidal}
		Let $\B$ be a basis for $X$ which is locally a site. The functor $(-)^{\mathrm{ext}}: \mathrm{Shv}(\B, \C)\to \mathrm{Shv}(X, \C)$ is strong symmetric monoidal.
	\end{lem}
	\begin{proof}
		It suffices to show that there are natural isomorphisms
		\begin{equation*}
			\T(\F^{\mathrm{ext}}, \G^{\mathrm{ext}})|_{\B}\cong \T_{\B}(\F, \G)
		\end{equation*}
		for $\F, \G\in \mathrm{Shv}(\B, \C)$, where $\T$ denotes the sheaf tensor product on $X$ and $\T_\B$ the sheaf tensor product on $\B$. As both are obtained as the respective sheafifications of the presheaf tensor product, the statement follows from the previous lemma.
	\end{proof}
	
	Since sheafification is (by definition of the sheaf tensor product) a strong monoidal functor, it follows that if $\R$ is a monoid in $\mathrm{Preshv}(\B, \C)$, then $(\R^a)^{\mathrm{ext}}$ is a monoid in $\mathrm{Shv}(X, \C)$, and to endow an object $\F\in \mathrm{Shv}(X, \C)$ with an $(\R^a)^{\mathrm{ext}}$-module structure, it suffices to specify an $\R$-module structure on $\F|_\B$ with respect to the presheaf tensor product.
	
	We conclude this subsection by discussing how bases may help with the explicit computation of functors.
	
	\begin{lem}
		\label{tensoronqcompact}
		Let $\B$ be a basis for $X$ consisting of quasi-compact admissible opens, and assume that $U_i\cap U_j$ is quasi-compact for any $U_i, U_j\in \B$. If $F:\C\to \C$ is an exact functor commuting with direct sums and $\F\in \mathrm{Shv}(X, \C)$, then the sheaf $(F\F)^a\in \mathrm{Shv}(X, \C)$ associated with
		\begin{equation*}
			U\mapsto F(\F(U)), \ U\in \mathrm{Op}(X)
		\end{equation*}
		satisfies
		\begin{equation*}
			(F\F)^a(U)=F(\F(U))
		\end{equation*}
		for any quasi-compact admissible open subset $U$ of $X$.
		
		%More generally, if $U\in \mathrm{Op}(X)$ is quasi-compact and $\mathfrak{V}=(V_i)\in \mathrm{Cov}(U)$ a finite covering by objects in $\B$ such that $\check{C}^{\mathrm{aug}}(\mathfrak{V}, \F)$ is a strict complex, then
		%\begin{equation*}
		%	\check{H}^i(\mathfrak{V}, F\F)\cong \check{H}^i(\mathfrak{V}, (F\F)^a)
		%\end{equation*}
		%for each $i$. 
	\end{lem}
	
	\begin{proof}
		Let $U\in \B$, $\mathfrak{U}=(U_i)$ be a finite covering by objects in $\B$ and for each $i, j$, let $(V_{i, j, k})_k$ be a finite covering of $U_i\cap U_j$ by objects in $\B$. Since finite products agree with finite direct sums, the assumptions on $F$ imply that 
		\begin{equation*}
			0\to F(\F(U))\to \prod F(\F(U_i))\to \prod F(\F(V_{i, j, k}))
		\end{equation*}
		is still strictly exact. By the quasi-compactness assumption, any $\B$-covering admits a finite subcovering, so this shows that $F\circ \F$ defines a sheaf on $\B$, and we can apply Lemma \ref{basisandsheafification} to deduce
		\begin{equation*}
			(F\F)^a(U)=F(\F(U))
		\end{equation*}
		for any $U\in \B$. 
		
		If $U\in \mathrm{Op}(X)$ is quasi-compact, but not necessarily in $\B$, pick again a finite $\B$-covering $(U_i)$ for each $i,j$ a finite $\B$-covering $(V_{i, j, k})_k$ of $U_i\cap U_j$ to deduce in the same way that
		\begin{equation*}
			(F\F)^a(U)=F(\F(U)).\qedhere
		\end{equation*}
	\end{proof}
	
	\begin{lem}
		Let $\B$ be basis for $X$ and suppose that any finite intersection of elements of $\B$ is quasi-compact. Let $F: \C\to \C$ an exact functor commuting with direct sums.
		
		If $U\in \mathrm{Op}(X)$, $\mathfrak{V}=(V_i)$ a finite $\B$-covering of $U$ and $\F\in \mathrm{Shv}(X, \C)$ such that the Cech complex $\check{C}^{\bullet}(\mathfrak{V}, \F)$ is a strict complex, then
		\begin{equation*}
			\check{\mathrm{H}}^j(\mathfrak{V}, (F\circ\F)^a)\cong F(\check{\mathrm{H}}^j(\mathfrak{V}, \F))
		\end{equation*}
		for each $j$.
	\end{lem}
	\begin{proof}
		Since $F$ is exact and commutes with direct sums, the above implies that the Cech complex $\check{C}^{\bullet}(\mathfrak{V}, (F\F)^a)$ is obtained by applying $F$ to $\check{C}^{\bullet}(\mathfrak{V}, \F)$. The result thus follows by exactness of $F$.
	\end{proof}

	\subsection{Derived tensors and Homs}
	Let $X$ be a G-topological space. We suppose that $\C$ is a quasi-abelian category with the following properties:
	\begin{enumerate}[(i)]
		\item $\C$ is an elementary quasi-abelian category.
		\item $\C$ is a closed symmetric monoidal category with enough flat projectives stable under $T$.
		\item Every object of $\C$ is flat, i.e. $T(A, -)$ is exact for all $A\in \C$. 
	\end{enumerate}
	
	In particular, $LH(\C)$ is a closed symmetric monoidal category by Lemma \ref{extendclosed} (with functors $\widetilde{T}$ and $\widetilde{H}$).
	
	We begin by deriving the tensors and homs for $\C$ before passing to the corresponding sheaf categories. Exactness of $T$ immediately yields the derived functor
	\begin{equation*}
		\mathbb{L}T(-, -): \mathrm{D}(\C)\times \mathrm{D}(\C)\to \mathrm{D}(\C).
	\end{equation*}
	We also need to discuss the corresponding tensor product for the left heart.\\
	Note that $\widetilde{T}: LH(\C)\times LH(\C)\to LH(\C)$ is constructed as the zeroth cohomology of $T$, so that our exactness assumption translates into a natural isomorphism $\widetilde{T}(I(A), I(B))\cong IT(A, B)$ for $A, B\in \C$. In particular, if we denote by $S$ the essential image of $I: \C\to LH(\C)$, then $(S, S)$ is $\widetilde{T}$-projective in the sense of \cite[Definition 10.3.9]{KS}. It then follows from \cite[Theorem 14.4.8]{KS} that we have an adjoint pair of derived functors
	\begin{align*}
		\mathbb{L}\widetilde{T}(-, -): \mathrm{D}(LH(\C))\times \mathrm{D}(LH(\C))\to \mathrm{D}(LH(\C))\\
		\mathrm{R}\widetilde{H}(-,-): \mathrm{D}(LH(\C))^\mathrm{op}\times \mathrm{D}(LH(\C))\to \mathrm{D}(LH(\C))
	\end{align*}
	between the unbounded derived categories.
	
	Since $\widetilde{T}(I(A), I(B))\cong IT(A, B)$, the equivalence $\mathrm{D}(\C)\cong \mathrm{D}(LH(\C))$ induced by $I$ identifies $\mathbb{L}T$ with $\mathbb{L}\widetilde{T}$. We also could have used the fact that $LH(\C)$ has enough projectives to derive $\widetilde{T}$, but we have opted for this argument to motivate what follows.
	
	Note that we do not make any claims about $H$ being derivable (reflecting the fact that $\C$ need not have enough injectives, while $LH(\C)$ does). However, if $H$ is right derivable, then it follows from \cite[Proposition 1.3.16]{Schneiders} and uniqueness of adjunction that $I$ identifies $\mathrm{R}H$ with $\mathrm{R}\widetilde{H}$.
	
	We will now employ the same strategy for the categories $\mathrm{Shv}(X, \C)$ and $\mathrm{Shv}(X, LH(\C))$, and finally for categories of modules. We recall from Lemma \ref{shvclosed} that the closed symmetric monoidal structure on $\C$ lifts to a closed symmetric monoidal structure $\T$, $\mathcal{H}$ on $\mathrm{Shv}(X, \C)$, and the closed symmetric monoidal structure on $LH(\C)$ lifts to a closed symmetric monoidal structure $\widetilde{\T}$, $\widetilde{\mathcal{H}}$ on $\mathrm{Shv}(X, LH(\C))$.
	\begin{lem}
		\label{sheaftexact}
		The functor $\T(\mathcal{F}, -)$ is exact for each $\mathcal{F}\in \mathrm{Shv}(X, \C)$.
	\end{lem}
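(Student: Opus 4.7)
The plan is to deduce exactness of $\T(\F, -)$ by combining the manifestly exact nature of the presheaf tensor product with the strong exactness of sheafification. First, I would establish that the presheaf version $\T_{\mathrm{pre}}(\F, -): \mathrm{Preshv}(X, \C) \to \mathrm{Preshv}(X, \C)$ is exact. Since limits, colimits, images and coimages in $\mathrm{Preshv}(X, \C)$ are all computed sectionwise, strict exactness of a presheaf sequence reduces to strict exactness at each admissible open $U$ in $\C$; under the standing hypothesis that every object of $\C$ is flat, $T(\F(U), -)$ is exact on $\C$, and the claim follows.

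Next, given a strictly exact sequence $0 \to \G' \to \G \to \G'' \to 0$ in $\mathrm{Shv}(X, \C)$, I would lift it to a strictly exact sequence in $\mathrm{Preshv}(X, \C)$. Letting $\G^\circ$ be the presheaf coimage of $\G \to \G''$, the canonical factorisation provides a strict epimorphism $\G \to \G^\circ$ and a monomorphism $\G^\circ \to \G''$ of presheaves. Since the inclusion $\mathrm{Shv}(X, \C) \hookrightarrow \mathrm{Preshv}(X, \C)$ is right adjoint to sheafification and hence preserves kernels, $\G' \to \G$ remains a strict monomorphism in presheaves, and the kernel of $\G \to \G^\circ$ agrees with $\ker(\G \to \G'') = \G'$. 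Thus $0 \to \G' \to \G \to \G^\circ \to 0$ is strictly exact in $\mathrm{Preshv}(X, \C)$. Moreover, because $\G \to \G''$ is a strict epimorphism of sheaves and sheafification (being a left adjoint) commutes with coimages, one has $(\G^\circ)^a \cong \G''$.

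Finally, applying the exact functor $\T_{\mathrm{pre}}(\F, -)$ to this presheaf sequence, and then sheafifying, which is strongly exact by Lemma \ref{Presheaflimits}.(iii), yields a strictly exact sequence in $\mathrm{Shv}(X, \C)$. Using Lemma \ref{Tacommute} to identify $(\T_{\mathrm{pre}}(\F, \G^\circ))^a$ with $\T(\F, (\G^\circ)^a) = \T(\F, \G'')$ (and similarly for the other terms, although these are tautological as $\G'$ and $\G$ are already sheaves) completes the argument. The main obstacle is the bookkeeping in the middle step, where one must verify that passing to the presheaf coimage does not disturb the kernel; this rests on the observation that, $\G^\circ \to \G''$ being a monomorphism, postcomposition does not affect the kernel of $\G \to \G^\circ$ when compared to that of $\G \to \G''$.
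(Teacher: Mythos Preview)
Your proof is correct and follows essentially the same route as the paper: lift the strictly exact sequence of sheaves to one of presheaves by replacing the quotient with a presheaf-level construction, apply the sectionwise-exact $\T_{\mathrm{pre}}(\F,-)$, then sheafify using Lemma~\ref{Tacommute}. The only cosmetic difference is that the paper takes the presheaf \emph{cokernel} of $\G'\to\G$ while you take the presheaf \emph{coimage} of $\G\to\G''$; since $\ker(\G\to\G'')=\G'$ already at the presheaf level, these two objects coincide.
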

	\begin{proof}
		Let
		\begin{equation*}
			0\to \mathcal{G}_1\to \mathcal{G}_2\to \mathcal{G}_3\to 0
		\end{equation*}
		be a strictly exact sequence in $\mathrm{Shv}(X, \C)$, and let $\mathcal{G}_0\in \mathrm{Preshv}(X, \C)$ be the presheaf cokernel of $\mathcal{G}_1\to \mathcal{G}_2$. As sheafification preserves cokernels, we have $\mathcal{G}_3\cong \mathcal{G}_0^a$.
		
		Then by exactness of $T$, we have that
		\begin{equation*}
			0\to \T_\mathrm{pre}(\mathcal{F}, \mathcal{G}_1)\to\T_\mathrm{pre}(\mathcal{F}, \mathcal{G}_2)\to\T_\mathrm{pre}(\mathcal{F}, \mathcal{G}_0)\to 0
		\end{equation*} 
		is a strictly exact sequence of presheaves, and the result follows from exactness of sheafification and Lemma \ref{Tacommute}.
	\end{proof}
	We thus obtain the corresponding derived functor
	\begin{equation*}
		\mathbb{L}\T(-, -): \mathrm{D}(\mathrm{Shv}(X, \C))\times \mathrm{D}(\mathrm{Shv}(X, \C))\to \mathrm{D}(\mathrm{Shv}(X, \C))
	\end{equation*}
	between the unbounded derived categories.
	
	\begin{lem}
		\label{strictsheaftensor}
		There is a natural isomorphism $\widetilde{\T}(I(\mathcal{F}), I(\mathcal{G}))\cong I\T(\mathcal{F}, \mathcal{G})$ for $\mathcal{F}$, $\mathcal{G}\in \mathrm{Shv}(X, \C)$.
	\end{lem}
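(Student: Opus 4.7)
The plan is to reduce everything to the sectionwise statement at the level of presheaves, then sheafify, using Lemma \ref{Iacommute} to commute $I$ past sheafification.

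First I would unwind the definition: by construction, $\widetilde{\T}$ on $\mathrm{Shv}(X, LH(\C))$ is the sheafification of the presheaf tensor product $\widetilde{\T}_{\mathrm{pre}}$, which is computed sectionwise as $\widetilde{T}$. Thus for any $\mathcal{F}, \mathcal{G} \in \mathrm{Shv}(X, \C)$ and any admissible open $U$, we have
\begin{equation*}
\widetilde{\T}_{\mathrm{pre}}(I(\mathcal{F}), I(\mathcal{G}))(U) = \widetilde{T}(I(\mathcal{F}(U)), I(\mathcal{G}(U))).
\end{equation*}
Because every object of $\C$ is assumed flat, the natural map $\widetilde{T}(I(A), I(B)) \to IT(A, B)$ is an isomorphism for all $A, B \in \C$, as noted in the discussion following Lemma \ref{extendclosed} and in the paragraph preceding the statement. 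Applying this to $A = \mathcal{F}(U)$, $B = \mathcal{G}(U)$ naturally in $U$ produces an isomorphism of presheaves
\begin{equation*}
\widetilde{\T}_{\mathrm{pre}}(I(\mathcal{F}), I(\mathcal{G})) \cong I \T_{\mathrm{pre}}(\mathcal{F}, \mathcal{G}),
\end{equation*}
where on the right we use that the section-wise extension of $I$ to presheaves agrees, under the equivalence of Lemma \ref{LHsheaves}, with the inclusion $\mathrm{Preshv}(X, \C) \hookrightarrow LH(\mathrm{Preshv}(X, \C))$.

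Next I would apply sheafification to both sides. By definition, $\T(\mathcal{F}, \mathcal{G}) = \T_{\mathrm{pre}}(\mathcal{F}, \mathcal{G})^a$ and $\widetilde{\T}(I(\mathcal{F}), I(\mathcal{G})) = \widetilde{\T}_{\mathrm{pre}}(I(\mathcal{F}), I(\mathcal{G}))^a$. Lemma \ref{Iacommute} (applied in the category $LH(\C)$, noting that the argument there only uses that $I$ preserves limits and filtered colimits, which holds for the section-wise inclusion $\mathrm{Preshv}(X, \C) \to \mathrm{Preshv}(X, LH(\C))$ as well) gives a natural isomorphism $(I \T_{\mathrm{pre}}(\mathcal{F}, \mathcal{G}))^a \cong I(\T_{\mathrm{pre}}(\mathcal{F}, \mathcal{G})^a)$. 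Chaining these together:
\begin{equation*}
\widetilde{\T}(I(\mathcal{F}), I(\mathcal{G})) \cong \bigl(\widetilde{\T}_{\mathrm{pre}}(I(\mathcal{F}), I(\mathcal{G}))\bigr)^a \cong \bigl(I \T_{\mathrm{pre}}(\mathcal{F}, \mathcal{G})\bigr)^a \cong I \bigl(\T_{\mathrm{pre}}(\mathcal{F}, \mathcal{G})^a\bigr) = I \T(\mathcal{F}, \mathcal{G}),
\end{equation*}
which is the required natural isomorphism.

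There is no real obstacle; the one point that deserves a word of care is verifying that the version of Lemma \ref{Iacommute} we invoke still applies in the sheafified setting, i.e.\ that the section-wise $I$ commutes with the presheaf-level filtered colimits and equalizers that compute sheafification. This is immediate from the fact that $I: \C \to LH(\C)$ preserves limits (being a right adjoint to $C$) and commutes with filtered colimits by \cite[Proposition 2.1.16]{Schneiders}, precisely as in the proof of Lemma \ref{Iacommute}.
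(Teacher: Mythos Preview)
Your proof is correct and follows essentially the same argument as the paper: reduce to the presheaf-level identity $\widetilde{\T}_{\mathrm{pre}}(I(\mathcal{F}), I(\mathcal{G})) \cong I\T_{\mathrm{pre}}(\mathcal{F}, \mathcal{G})$ using the sectionwise isomorphism $\widetilde{T}(I(A), I(B)) \cong IT(A, B)$, then sheafify and invoke Lemma~\ref{Iacommute}. The paper's proof is the same chain of isomorphisms, just stated more tersely.
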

	\begin{proof}
		We have the following sequence of natural isomorphisms
		\begin{align*}
			\widetilde{\T}(I(\mathcal{F}), I(\mathcal{G}))&=\left(\widetilde{\T}_\mathrm{pre}(I(\mathcal{F}), I(\mathcal{G})) \right)^a\\
			&\cong \left(I\T_\mathrm{pre}(\mathcal{F}, \mathcal{G}) \right)^a\\
			&\cong I\left( (\T_\mathrm{pre}(\mathcal{F}, \mathcal{G}))^a\right)\\
			&=I\T(\mathcal{F}, \mathcal{G}),
		\end{align*}
		using Lemma \ref{Iacommute} and the earlier observation that $\widetilde{T}(I(A), I(B))\cong IT(A, B)$ for $A, B\in \C$.
	\end{proof}
	
	\begin{prop}
		The functors $\widetilde{\T}$ and $\widetilde{\mathcal{H}}$ can be derived to an adjoint pair of functors
		\begin{align*}
			\mathbb{L}\widetilde{\T}(-, -): \mathrm{D}(\mathrm{Shv}(X, LH(\C)))\times \mathrm{D}(\mathrm{Shv}(X, LH(\C)))\to \mathrm{D}(\mathrm{Shv}(X, LH(\C)))\\
			\mathrm{R}\widetilde{\mathcal{H}}(-,-): \mathrm{D}(\mathrm{Shv}(X, LH(\C)))^\mathrm{op}\times \mathrm{D}(\mathrm{Shv}(X, LH(\C)))\to \mathrm{D}(\mathrm{Shv}(X, LH(\C)))
		\end{align*}
		between the unbounded derived categories.
	\end{prop}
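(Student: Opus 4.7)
The plan is to reproduce, one categorical level up, the strategy used earlier to derive $\widetilde{T}$ and $\widetilde{H}$ on $LH(\C)$. Namely, I want to identify a family of objects in $\mathrm{Shv}(X, LH(\C))$ which forms a $\widetilde{\T}$-projective pair in the sense of \cite[Definition 10.3.9]{KS}, and then invoke \cite[Theorem 14.4.8]{KS} to obtain $\mathbb{L}\widetilde{\T}$ together with its right adjoint $\mathrm{R}\widetilde{\mathcal{H}}$ on the unbounded derived categories.

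The natural candidate is the essential image $S$ of the fully faithful embedding $I: \mathrm{Shv}(X, \C) \to \mathrm{Shv}(X, LH(\C))$ provided by Lemma \ref{LHsheaves}. By Lemma \ref{strictsheaftensor}, the natural isomorphism $\widetilde{\T}(I(\mathcal{F}), I(\mathcal{G})) \cong I\T(\mathcal{F}, \mathcal{G})$ shows that $S$ is stable under $\widetilde{\T}$. It also generates $\mathrm{Shv}(X, LH(\C))$: every object $X \in \mathrm{Shv}(X, LH(\C))$ admits a canonical epimorphism $I(X^0) \to X$ coming from its representation as a two-term complex $0 \to X^{-1} \to X^0 \to 0$, and combining this with the existence of a strictly generating family in $\mathrm{Shv}(X, \C)$ (Lemma \ref{genforshv} together with Theorem \ref{Shvnice}(iii)) provides a generating family for $\mathrm{Shv}(X, LH(\C))$ that lies in $S$.

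Next I would verify that each $I(\mathcal{F}) \in S$ is flat for $\widetilde{\T}$, in the sense that $\widetilde{\T}(I(\mathcal{F}), -)$ is an exact endofunctor of $\mathrm{Shv}(X, LH(\C))$. A short exact sequence $0 \to A \to B \to C \to 0$ in $\mathrm{Shv}(X, LH(\C))$ can be resolved by an exact sequence of two-term complexes of objects of $S$; using Lemma \ref{strictsheaftensor} to identify $\widetilde{\T}(I(\mathcal{F}), -)$ on $S$ with the image under $I$ of $\T(\mathcal{F}, -)$, the desired exactness reduces after a short diagram chase to the exactness of $\T(\mathcal{F}, -)$ established in Lemma \ref{sheaftexact}. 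With $(S, S)$ thus forming a $\widetilde{\T}$-projective pair, \cite[Theorem 14.4.8]{KS} delivers the derived tensor product together with its right adjoint; the adjunction at the derived level descends from the underived adjunction of Lemma \ref{shvclosed}.

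The main obstacle lies in the unbounded aspect: one must ensure that the two-term, degreewise-flat resolutions afforded by $S$ can be assembled into K-flat resolutions of arbitrary unbounded complexes. This is precisely what \cite[Theorem 14.4.8]{KS} supplies in our Grothendieck setting (Theorem \ref{Shvnice}(ii)), via a Spaltenstein-type homotopy colimit construction made possible by the exactness of filtered colimits in $\mathrm{Shv}(X, LH(\C))$. Once the abstract machinery is invoked, the proof reduces to the book-keeping of the previous paragraph.
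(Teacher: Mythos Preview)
Your proposal is correct and follows essentially the same route as the paper: take $S$ to be the essential image of $I:\mathrm{Shv}(X,\C)\to\mathrm{Shv}(X,LH(\C))$, use Lemma~\ref{sheaftexact} and Lemma~\ref{strictsheaftensor} to verify that $(S,S)$ is $\widetilde{\T}$-projective, and then invoke \cite[Theorem 14.4.8]{KS}. The paper compresses all of your verification into a single sentence citing those two lemmas, so your write-up is really an unpacking of the same argument; the only superfluous step is the appeal to Lemma~\ref{genforshv} and Theorem~\ref{Shvnice}(iii) for generation, since the epimorphism $I(E^0)\to E$ you already mention suffices.
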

	\begin{proof}
		Let $S$ denote the essential image of the functor 
		\begin{equation*}
			I: \mathrm{Shv}(X, \C)\to \mathrm{Shv}(X, LH(\C)).
		\end{equation*} 
		By Lemma \ref{sheaftexact} and Lemma \ref{strictsheaftensor}, $(S,S)$ is $\widetilde{\T}$-projective, so that we can apply again \cite[Theorem 14.4.8]{KS}.
	\end{proof}
	
	As before, the equivalence $\mathrm{D}(\mathrm{Shv}(X, \C))\cong \mathrm{D}(\mathrm{Shv}(X, LH(\C)))$ identifies $\mathbb{L}\T$ with $\mathbb{L}\widetilde{\T}$ thanks to Lemma \ref{strictsheaftensor}. If $\mathcal{H}$ is derivable, then $\mathrm{R}\mathcal{H}$ can be identified with $\mathrm{R}\widetilde{\mathcal{H}}$.
	
	Now let $\R$ be a monoid in $\mathrm{Shv}(X, \C)$. It follows from Proposition \ref{LHIR} that $I(\R)$ is a monoid in $\mathrm{Shv}(X, LH(\C))$.
	\begin{prop}
		\label{deriverelthom}
		The functors $\widetilde{\T}_{I(\R)}$ and $\widetilde{\mathcal{H}}_{I(\R)}$ can be derived to functors
		\begin{align*}
			\mathbb{L}\widetilde{\T}_{I(\R)}: \mathrm{D}(\mathrm{Mod}(I(\R^\mathrm{op})))\times \mathrm{D}(\mathrm{Mod}(I(\R)))\to \mathrm{D}(\mathrm{Shv}(X, LH(\C)))\\
			\mathrm{R}\widetilde{\mathcal{H}}_{I(\R)}: \mathrm{D}(\mathrm{Mod}(I(\R)))\times \mathrm{D}(\mathrm{Mod}(I(\R)))\to \mathrm{D}(\mathrm{Shv}(X, LH(\C)))
		\end{align*}
		between the unbounded derived categories.
		
		$\mathbb{L}\widetilde{\T}_{I(\R)}$ is a left adjoint of $\mathrm{R}\widetilde{\mathcal{H}}$, while $\mathrm{R}\widetilde{\mathcal{H}}_{I(\R)}$ is a right adjoint to $\mathbb{L}\widetilde{\T}$.
	\end{prop}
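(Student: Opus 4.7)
The plan is to mirror the argument of Proposition \ref{derivesheaftensor}, using \cite[Theorem 14.4.8]{KS} once more. By Proposition \ref{LHIR} combined with Theorem \ref{Shvnice}.(iv), $\mathrm{Mod}(I(\R))$ is Grothendieck abelian, so every complex admits a K-injective resolution by the standard results of Serp\'e and Alonso--Jerem\'ias--Souto. This immediately yields $\mathrm{R}\widetilde{\mathcal{H}}_{I(\R)}$ on the unbounded derived category in the second variable.

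For the tensor product, I would consider the full subcategory $S_\R \subset \mathrm{Mod}(I(\R))$ consisting of modules of the form $J(\mathcal{P})$, where $J: \mathrm{Mod}(\R)\to \mathrm{Mod}(I(\R))$ is the fully faithful embedding of Proposition \ref{LHIR} and $\mathcal{P}$ is a flat $\R$-module in the sense of Theorem \ref{Shvnice}.(v). The argument for part (c) in the proof of Proposition \ref{LHIR}, combined with Theorem \ref{Shvnice}.(v), shows that every $I(\R)$-module admits an epimorphism from an object of $S_\R$. The key compatibility to verify is a natural isomorphism
\begin{equation*}
\widetilde{\T}_{I(\R)}(J(\mathcal{P}), J(\M))\cong J\big(\T_\R(\mathcal{P}, \M)\big)
\end{equation*}
for all $\M \in \mathrm{Mod}(\R)$, which reduces via the coequalizers defining $\T_\R$ and $\widetilde{\T}_{I(\R)}$ to Lemma \ref{strictsheaftensor} together with the fact that flatness of $\mathcal{P}$ ensures the relevant action morphisms are strict, so that the coequalizer is preserved by $J$. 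From this, $\widetilde{\T}_{I(\R)}(J(\mathcal{P}), -)$ is exact on $S_\R$-arguments, and the usual transfinite construction in a Grothendieck abelian category produces K-flat resolutions of arbitrary unbounded complexes out of this generating class. Applying \cite[Theorem 14.4.8]{KS} then gives $\mathbb{L}\widetilde{\T}_{I(\R)}$.

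The two adjunctions are then formal. For the first, represent an object of $\mathrm{D}(\mathrm{Mod}(I(\R^\mathrm{op})))$ by a K-flat complex and an object of $\mathrm{D}(\mathrm{Shv}(X, LH(\C)))$ by a K-injective complex; the termwise underived adjunction (the module-theoretic analogue of Lemma \ref{shvclosed}) upgrades to a quasi-isomorphism of Hom complexes, and applying $\mathrm{H}^0$ produces the derived adjunction. The second adjunction is handled identically, using K-flat resolutions on the tensor side and K-injective resolutions on the Hom side.

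The main obstacle is the passage to the unbounded derived category, which requires constructing and using K-flat resolutions: the bounded-above case would fall out directly from \cite[Theorem 14.4.8]{KS} applied to the flat generating class $S_\R$, but the unbounded case forces one to control tensor products of acyclic K-flat complexes against arbitrary complexes. As in the proof of Proposition \ref{derivesheaftensor}, this is ultimately a question of commuting a certain filtered colimit with cohomology, and it works because filtered colimits in the Grothendieck category $\mathrm{Shv}(X, LH(\C))$ are exact by Theorem \ref{Shvnice}.(ii).
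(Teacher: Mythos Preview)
Your approach detours through the quasi-abelian category $\mathrm{Mod}(\R)$ via the embedding $J$, whereas the paper stays entirely in the abelian world. The paper's proof is much shorter: since $\mathrm{Mod}(I(\R))$ is a category of modules over a sheaf monoid in the Grothendieck abelian category $\mathrm{Shv}(X, LH(\C))$, Theorem~\ref{Shvnice} supplies enough flat $I(\R)$-modules directly, and the standard argument of \cite[Proposition 18.5.4]{KS} shows that the pair $(F^r, F^l)$ of flat right and left $I(\R)$-modules is $\widetilde{\T}_{I(\R)}$-projective. A single application of \cite[Theorem 14.4.8]{KS} to each of the adjoint pairs $(\widetilde{\T}_{I(\R)}, \widetilde{\mathcal{H}})$ and $(\widetilde{\T}, \widetilde{\mathcal{H}}_{I(\R)})$ then yields both derived functors and both adjunctions on the unbounded derived categories in one stroke. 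No separate construction of K-injective or K-flat resolutions is needed, and no compatibility with $J$ has to be checked.

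Your argument contains a genuine gap at the step ``flatness of $\mathcal{P}$ ensures the relevant action morphisms are strict, so that the coequalizer is preserved by $J$''. Flatness of $\mathcal{P}$ in the sense of Theorem~\ref{Shvnice}.(v) means that $\T_\R(\mathcal{P}, -)$ preserves strictly exact short exact sequences; it says nothing about the strictness of the particular morphism $\T(\mathcal{P}, \R, \M)\to \T(\mathcal{P}, \M)$ whose cokernel defines $\T_\R(\mathcal{P}, \M)$. (Indeed, this is precisely the subtle point that forces the paper, in the subsequent Proposition~\ref{deriverelqabtensor}, to single out a class $\mathcal{F}^l$ where this strictness is imposed by hand.) The claim does hold for the \emph{free} modules $\T(\mathcal{Q}, \R)$ with $\mathcal{Q}$ flat in $\mathrm{Shv}(X,\C)$, essentially by Lemma~\ref{tensorwithfree}, so restricting your $S_\R$ to those would repair the argument---but at that point you are reproving in a harder setting exactly what \cite[Proposition 18.5.4]{KS} already handles in the abelian target category.
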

	\begin{proof}
		By Theorem \ref{Shvnice}, there are enough flat left $I(\R)$-modules and enough flat right $I(\R)$-modules. If $F^l$ resp. $F^r$ denotes the full subcategory of flat left resp. flat right $I(\R)$-modules, then $(F^r, F^l)$ is $\widetilde{\T}_{I(\R)}$-projective, by the same argument as in \cite[Proposition 18.5.4]{KS}. We then apply \cite[Theorem 14.4.8]{KS} to the adjoint pairs $(\widetilde{\T}_{I(\R)}, \widetilde{\mathcal{H}})$ and $(\widetilde{\T}, \widetilde{\mathcal{H}}_{I(\R)})$ from Lemma \ref{shvclosed}.
	\end{proof}
	It is straightforward that the above results also generalize to suitable bimodule categories. We omit the details here.
	
	We remark that the argument in \cite[Proposition 18.5.4]{KS} invokes the snake lemma, which relies on the underlying category being abelian. In particular, there does not seem to be a straightforward way to apply the same reasoning directly on the quasi-abelian level. We will now derive $\T_\R$ by passing through the left heart.
	
	Note that by definition, $\T_\R(\M, \N)$ is the coequalizer (in $\mathrm{Shv}(X, \C)$) of
	\begin{equation*}
		\begin{xy}
			\xymatrix{
				\T(\M, \R, \N)\ar@<1ex>[r] \ar[r]& \T(\M, \N)
			}
		\end{xy}
	\end{equation*}
	for $\M\in \mathrm{Mod}(\R^\mathrm{op})$, $\N\in \mathrm{Mod}(\R)$, while $\widetilde{\T}_{I(\R)}(I(\M), I(\N))$ is the coequalizer (in $\mathrm{Shv}(X, LH(\C))$) of
	\begin{equation*}
		\begin{xy}
			\xymatrix{
				I\T(\M, \R, \N)\ar@<1ex>[r] \ar[r]& I\T(\M, \N).
			}
		\end{xy}
	\end{equation*}
	In particular, $C\widetilde{\T}_{I(\R)}(I(\M), I(\N))\cong \T_\R(\M, \N)$. Thus this description makes it clear that
	\begin{equation*}
		\widetilde{\T}_{I(\R)}(I(\M), I(\N))\cong I\T_\R(\M, \N)
	\end{equation*}
	if and only if $\widetilde{T}_{I(\R)}(I(\M), I(\N))$ is in the essential image of $I$, i.e. if and only if the morphism
	\begin{equation*}
		\T(\M, \R, \N)\to \T(\M, \N)
	\end{equation*}
	given by $\T(\mathrm{act}_\M, \N)-\T(\M, \mathrm{act}_\N)$ is strict.
	\begin{prop}
		\label{deriverelqabtensor}
		The functor $\T_\R$ is explicitly left derivable to a functor
		\begin{equation*}
			\mathbb{L}\T_\R(-, -): \mathrm{D}^-(\mathrm{Mod}(\R^\mathrm{op}))\times \mathrm{D}^-(\mathrm{Mod}(\R))\to \mathrm{D}^-(\mathrm{Shv}(X, \C))
		\end{equation*} 
		such that there is a natural isomorphism $I\mathbb{L}\T_\R(\M^\bullet, \N^\bullet)\cong \mathbb{L}\widetilde{\T}_{I(\R)}(I(\M^\bullet), I(\N^\bullet))$ for $\M^\bullet\in\mathrm{D}^-(\mathrm{Mod}(\R^\mathrm{op}))$, $\N^\bullet\in \mathrm{D}^-(\mathrm{Mod}(\R))$. 
	\end{prop}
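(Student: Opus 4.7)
The plan is to derive $\T_\R$ explicitly via resolutions of the first argument by the special class of flat right $\R$-modules $\mathcal{P} = \T(\mathcal{G}, \R)$ with $\mathcal{G}$ a flat $\C$-sheaf. The choice of this particular class is essential: the symmetric version of Lemma \ref{tensorwithfree} for right modules collapses the coequalizer defining $\T_\R(\mathcal{P}, \N)$ directly to the ordinary tensor $\T(\mathcal{G}, \N)$, sidestepping the strictness issue flagged in the discussion above. By the proof of Theorem \ref{Shvnice}(v), any $\M \in \mathrm{Mod}(\R^\mathrm{op})$ admits a strict epimorphism from such a $\mathcal{P}$, so every $\M^\bullet \in \mathrm{D}^-(\mathrm{Mod}(\R^\mathrm{op}))$ admits a resolution $\mathcal{P}^\bullet \to \M^\bullet$ by modules of this form, and we set $\mathbb{L}\T_\R(\M^\bullet, \N^\bullet) := \mathrm{Tot}\,\T_\R(\mathcal{P}^\bullet, \N^\bullet)$.

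The key computation is the following chain of natural isomorphisms, for $\mathcal{P} = \T(\mathcal{G}, \R)$:
\begin{equation*}
\widetilde{\T}_{I(\R)}(I(\mathcal{P}), I(\N)) \cong \widetilde{\T}(I(\mathcal{G}), I(\N)) \cong I\T(\mathcal{G}, \N) \cong I\T_\R(\mathcal{P}, \N),
\end{equation*}
where the outer isomorphisms are the right-module versions of Lemma \ref{tensorwithfree} applied in $\mathrm{Shv}(X, LH(\C))$ and $\mathrm{Shv}(X, \C)$ respectively (using Lemma \ref{strictsheaftensor} to identify $I(\mathcal{P}) \cong \widetilde{\T}(I(\mathcal{G}), I(\R))$), and the middle isomorphism is Lemma \ref{strictsheaftensor}. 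Since $\T_\R(\mathcal{P}, -) \cong \T(\mathcal{G}, -)$ is exact by Lemma \ref{sheaftexact} (combined with the standing hypothesis that every object in $\C$ is flat), this comparison passes termwise and via totalization to the resolution, yielding
\begin{equation*}
I\T_\R(\mathcal{P}^\bullet, \N^\bullet) \cong \widetilde{\T}_{I(\R)}(I(\mathcal{P}^\bullet), I(\N^\bullet)) \cong \mathbb{L}\widetilde{\T}_{I(\R)}(I(\M^\bullet), I(\N^\bullet)),
\end{equation*}
the last isomorphism coming from Proposition \ref{deriverelthom}, since each $I(\mathcal{P}^j)$ is $\widetilde{\T}_{I(\R)}$-flat by the same computation.

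Well-definedness of $\mathbb{L}\T_\R$ — independence of the chosen resolution and functoriality in both arguments — is then inherited from that of $\mathbb{L}\widetilde{\T}_{I(\R)}$ via the equivalence $\mathrm{D}^-(\mathrm{Shv}(X, \C)) \cong \mathrm{D}^-(\mathrm{Shv}(X, LH(\C)))$ of Lemma \ref{LHsheaves}. The main obstacle is precisely the strictness issue isolated in the discussion preceding the proposition: for an arbitrary flat $\R$-module in the sense of Theorem \ref{Shvnice}(v), there is no reason for the coequalizer map $\T(\mathcal{P}, \R, \N) \to \T(\mathcal{P}, \N)$ to be strict, and without strictness $I$ cannot intertwine $\T_\R$ with $\widetilde{\T}_{I(\R)}$. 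Restricting to the class of free-on-flat resolvents $\T(\mathcal{G}, \R)$ circumvents this because Lemma \ref{tensorwithfree} replaces the problematic coequalizer by an honest tensor product, on which Lemma \ref{strictsheaftensor} directly supplies the required compatibility.
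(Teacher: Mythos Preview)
Your overall strategy---resolve by free modules $\mathcal{P}=\T(\mathcal{G},\R)$, collapse $\T_\R(\mathcal{P},-)$ to $\T(\mathcal{G},-)$ via Lemma~\ref{tensorwithfree}, and then read off well-definedness from the abelian left-heart picture---is the same circle of ideas as in the paper, and the termwise identification $I\T_\R(\mathcal{P},\N)\cong\widetilde{\T}_{I(\R)}(I(\mathcal{P}),I(\N))$ is correct. The difficulty lies in the step where you pass from $\widetilde{\T}_{I(\R)}(I(\mathcal{P}^\bullet),I(\N^\bullet))$ to $\mathbb{L}\widetilde{\T}_{I(\R)}(I(\M^\bullet),I(\N^\bullet))$ by asserting that each $I(\mathcal{P}^j)$ is $\widetilde{\T}_{I(\R)}$-flat. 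Your ``same computation'' only yields exactness of $\widetilde{\T}_{I(\R)}(I(\mathcal{P}^j),I(-))$ on $\mathrm{Mod}(\R)$, i.e.\ against objects in the essential image of $I$. Genuine flatness in $\mathrm{Mod}(I(\R))$ would require $\widetilde{\T}(I(\mathcal{G}),-)$ to be exact on all of $\mathrm{Shv}(X,LH(\C))$, equivalently that $\T(\mathcal{G},-)$ preserve arbitrary monomorphisms in $\mathrm{Shv}(X,\C)$; but hypothesis~(iii) at the start of \S2.4 only gives exactness, not strong exactness, of $T$ (and Lemma~\ref{sheaftexact} correspondingly only gives exactness of $\T$). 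So $I(\mathcal{P}^j)$ need not be flat in the abelian sense, and your resolution $I(\mathcal{P}^\bullet)$ is not a priori adapted to $\mathbb{L}\widetilde{\T}_{I(\R)}$. Without that, the bootstrap of well-definedness from Proposition~\ref{deriverelthom} does not go through.

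The paper handles precisely this point by working on the quasi-abelian side rather than appealing to abelian flatness. It introduces the class $\mathcal{F}^l$ (and $\mathcal{F}^r$) of modules that are $I$-flat \emph{and} satisfy the compatibility $\widetilde{\T}_{I(\R)}(I(\M),I(\N))\cong I\T_\R(\M,\N)$, and then verifies directly that $(\mathcal{F}^r,\mathcal{F}^l)$ is $\T_\R$-projective: the free modules $\T(\R,\N)$ provide enough objects, and the two-out-of-three and exactness conditions are checked by translating short strictly exact sequences through $I$ into $\mathrm{Mod}(I(\R))$ and using the abelian arguments of \cite[Proposition~18.5.4]{KS} there. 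Only after $\mathbb{L}\T_\R$ is constructed in this way does the paper compare with $\mathbb{L}\widetilde{\T}_{I(\R)}$, by observing that $(I(\mathcal{F}^r),I(\mathcal{F}^l))$ is itself $\widetilde{\T}_{I(\R)}$-projective---a claim which again only needs exactness against the image of $I$, not full flatness. Your argument could be repaired along these lines (e.g.\ by also resolving the second variable and checking projectivity of the pair), but as written the single flatness assertion is the gap.
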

	\begin{proof}
		We say that a left $\R$-module $\N$ is $I$-flat if the functor
		\begin{equation*}
			\widetilde{\T}_{I(\R)}(I(-), I(\N)): \mathrm{Mod}(\R^\mathrm{op})\to \mathrm{Shv}(X, \C) 
		\end{equation*}
		is exact, analogously for right modules. Let $\mathcal{F}^l$ be the full subcategory of $\mathrm{Mod}(\R)$ consisting of all $I$-flat left $\R$-modules $\N$ with the property that for all right $\R$-modules $\M$, the natural morphism
		\begin{equation*}
			\widetilde{\T}_{I(\R)}(I(\M), I(\N))\cong I\T_\R(\M, \N)
		\end{equation*}
		is an isomorphism (equivalently, $\widetilde{\T}_{I(\R)}(I(\M), I(\N))$ is in the essential image of $I$). Define $\mathcal{F}^r$ mutatis mutandis as a full subcategory of $\mathrm{Mod}(\R^\mathrm{op})$. We claim that $(\mathcal{F}^r, \mathcal{F}^l)$ is $\T_\R$-projective.
		
		Firstly, for any $\N\in \mathrm{Mod}(\R)$, $\T(\R, \N)\in \mathcal{F}^l$ by Lemma \ref{tensorwithfree}, as $I\T(\R, \N)\cong \widetilde{\T}(I(\R), I(\N))$ by Lemma \ref{strictsheaftensor};  analogously for $\mathcal{F}^r$. Hence there are enough objects in $\F^r$ and $\F^l$.
		
		Secondly, if
		\begin{equation*}
			0\to \N_1\to \N_2\to \N_3\to 0
		\end{equation*}
		is strictly exact in $\mathrm{Mod}(\R)$ and $\N_2, \N_3\in \mathcal{F}^l$, then 
		\begin{equation*}
			0\to I(\N_1)\to I(\N_2)\to I(\N_3)\to 0
		\end{equation*}
		is exact in $\mathrm{Mod}(I(\R))$. It follows from the same argument as in part (c) of the proof of \cite[Proposition 18.5.4]{KS} that $\N_1$ is $I$-flat. In particular, if $\M$ is a right $\R$-module, then
		\begin{equation*}
			0\to \widetilde{\T}_{I(\R)}(I(\M), I(\N_1))\to\widetilde{\T}_{I(\R)}(I(\M), I(\N_2))\to \widetilde{\T}_{I(\R)}(I(\M), I(\N_3))\to 0
		\end{equation*}
		is exact by the same argument as in \cite[Proposition 18.5.4.(ii)]{KS}. Since the second term is in the essential image of $I$, so is the first term by \cite[Proposition 1.2.29.(b)]{Schneiders}. Therefore, $\N_1\in \mathcal{F}^l$. The same argument applies for right modules.
		
		Lastly, let 
		\begin{equation*}
			0\to \N_1\to \N_2\to \N_3\to 0
		\end{equation*}
		be strictly exact in $\mathcal{F}^l$ and let $\M\in \mathcal{F}^r$. We need to verify that
		\begin{equation*}
			0\to \T_\R(\M, \N_1)\to \T_\R(\M, \N_2)\to \T_\R(\M, \N_3)\to 0
		\end{equation*} 
		is strictly exact. By \cite[Corollary 1.2.28]{Schneiders}, it suffices to check exactness after applying $I$. But applying $I$ yields the sequence
		\begin{equation*}
			0\to \widetilde{\T}_{I(R)}(I(\M), I(\N_1))\to \widetilde{\T}_{I(\R)}(I(\M), I(\N_2))\to \widetilde{\T}_{I(\R)}(I(\M), I(\N_3))\to 0
		\end{equation*}
		by definition of $\mathcal{F}^l$, and this is exact by $I$-flatness.
		
		Thus $(\mathcal{F}^r, \mathcal{F}^l)$ is $\T_\R$-projective, and we obtain the desired derived functor. A similar argument shows that $(I(\F^r), I(\F^l))$ is $\widetilde{\T}_{I(\R)}$-projective. Thus 
		\begin{equation*}
			I\mathbb{L}\T_\R(\M, \N)\cong \mathbb{L}\widetilde{\T}_{I(\R)}(I(\M), I(\N)),
		\end{equation*} 
		as required.
	\end{proof}
	While we expect that Spaltenstein's formalism for unbounded derived functors as in \cite{Spalt}, \cite[chapter 14]{KS} can also be developed for quasi-abelian categories, we will content ourselves with using $\mathbb{L}\widetilde{\T}_{I(\R)}$ and the equivalence $\mathrm{D}(\C)\cong \mathrm{D}(LH(\C))$. The above then justifies that this does not affect our calculations. 
	\begin{lem}
		If $\mathcal{H}_\R$ is right derivable, then the equivalence 
		\begin{equation*}
			\mathrm{D}(\mathrm{Mod}(\R))\cong \mathrm{D}(\mathrm{Mod}(I(\R)))
		\end{equation*} 
		from Proposition \ref{LHIR} identifies $\mathrm{R}\mathcal{H}_\R$ with $\mathrm{R}\widetilde{\mathcal{H}}_{I(\R)}$.
	\end{lem}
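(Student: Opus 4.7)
The plan is to argue by uniqueness of adjoints, transferring the derived functors through the equivalence $J$ of Proposition \ref{LHIR}.

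First I would set up the two adjoint pairs. For fixed $M\in\mathrm{Mod}(\R)$, the $\R$-linear version of Lemma \ref{shvclosed} furnishes the adjunction $\T(M,-)\dashv\mathcal{H}_\R(M,-)$, and Lemma \ref{sheaftexact} combined with Lemma \ref{ModR}.(ii) shows that $\T(M,-)$ is exact (strictness in $\mathrm{Mod}(\R)$ is strictness in $\mathrm{Shv}(X,\C)$). The hypothesis that $\mathcal{H}_\R$ is right derivable then upgrades this, via (the dual of) \cite[Theorem~14.4.8]{KS}, to an adjoint pair $(\mathbb{L}\T(M,-),\mathrm{R}\mathcal{H}_\R(M,-))$ on the unbounded derived categories. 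Its abelian counterpart $(\mathbb{L}\widetilde{\T}(I(M),-),\mathrm{R}\widetilde{\mathcal{H}}_{I(\R)}(I(M),-))$ is already recorded in Proposition \ref{deriverelthom}.

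Next I would verify that $J$ intertwines the left adjoints. Since $(S,S)$ is $\widetilde{\T}$-projective (as in the proof of Proposition \ref{derivesheaftensor}), the derived tensor on the abelian side is computed naively on objects in the essential image of $I$, giving $\mathbb{L}\widetilde{\T}(I(M),I(X))\cong\widetilde{\T}(I(M),I(X))$. Lemma \ref{strictsheaftensor} identifies this with $I\T(M,X)$, and exactness of $\T(M,-)$ gives $I\T(M,X)\cong I\mathbb{L}\T(M,X)=J\mathbb{L}\T(M,X)$. The equivalence $\mathrm{D}(\mathrm{Shv}(X,\C))\cong\mathrm{D}(\mathrm{Shv}(X,LH(\C)))$ lets us represent any complex on the right-hand side by one of the form $I(X^{\bullet})$, so the comparison extends termwise to a natural isomorphism $J\circ\mathbb{L}\T(M,-)\cong\mathbb{L}\widetilde{\T}(I(M),-)\circ J$ of functors between unbounded derived categories.

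Uniqueness of right adjoints then immediately yields $J\circ\mathrm{R}\mathcal{H}_\R(M,-)\cong\mathrm{R}\widetilde{\mathcal{H}}_{I(\R)}(I(M),-)\circ J$, naturally in $M$; repeating the argument in the (contravariant) first slot promotes this to an isomorphism of bifunctors, which is the claim. The main obstacle will be making the compatibility of the two tensor functors rigorous on the unbounded level---one must be careful which class of complexes is used to resolve each side---but once one unwinds the $\widetilde{\T}$-projectivity of $S$ together with the already established intertwining $I\mathbb{L}\T_\R\cong\mathbb{L}\widetilde{\T}_{I(\R)}\circ(J,J)$ from Proposition \ref{deriverelqabtensor}, this reduces to bookkeeping. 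A clean alternative would be to note that the hypothesis on $\mathcal{H}_\R$ determines $\mathrm{R}\mathcal{H}_\R$ uniquely as the right adjoint of $\mathbb{L}\T$ in the Grothendieck setting $\mathrm{Mod}(I(\R))$, at which point Proposition \ref{deriverelthom} gives the result directly.
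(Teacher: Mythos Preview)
Your approach is correct and follows the same strategy as the paper: identify the derived left adjoints $\mathbb{L}\T$ and $\mathbb{L}\widetilde{\T}$ under the equivalence, then invoke uniqueness of right adjoints. The paper's proof is a one-liner that uses \cite[Proposition~1.3.16]{Schneiders} (rather than \cite[Theorem~14.4.8]{KS}) to obtain the derived adjunction $(\mathbb{L}\T,\mathrm{R}\mathcal{H}_\R)$ directly in the quasi-abelian setting, and then cites the identification $\mathbb{L}\T\cong\mathbb{L}\widetilde{\T}$ already recorded after Proposition~\ref{derivesheaftensor}; this is precisely the ``clean alternative'' you sketch at the end, so you may simply lead with that and drop the longer verification.
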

	\begin{proof}
		By \cite[Proposition 1.3.16]{Schneiders}, if $\mathcal{H}_\R$ is right derivable, then $\mathrm{R}\mathcal{H}_\R$ is a right adjoint for $\mathbb{L}\T$, which under $I$ gets identified with $\mathbb{L}\widetilde{\T}$ by the previous Proposition, so that the claim follows from uniqueness of adjunction.
	\end{proof}

	\subsection{Derived direct and inverse image}
	We keep our assumptions on $\C$ from the beginning of subsection 3.6. Let $f:X\to Y$ be a morphism of G-topological spaces. In this subsection, we define derived direct and inverse image functors both for the category of sheaves and for the category of modules over a sheaf monoid.
	
	We begin by introducing the \textbf{presheaf inverse image}
	\begin{align*}
		f^{-1}_\mathrm{pre}: \mathrm{Preshv}(Y, \C)\to \mathrm{Preshv}(X, \C)\\
		f^{-1}(\mathcal{F})(U)=\underset{V\supseteq f(U)}{\mathrm{colim}} \mathcal{F}(V)
	\end{align*}
	and the \textbf{direct image functor}
	\begin{align*}
		f_*: \mathrm{Preshv}(X, \C)\to \mathrm{Preshv}(Y, \C)\\
		f_*(\mathcal{F})(U)=\mathcal{F}(f^{-1}U).
	\end{align*}
	It is immediate from the definition that $f_*$ sends sheaves on $X$ to sheaves on $Y$.
	\begin{lem}
		There is a natural isomorphism
		\begin{equation*}
			\mathrm{Hom}_{\mathrm{Preshv}(X, \C)}(f^{-1}_\mathrm{pre}(\mathcal{F}), \mathcal{G})\cong \mathrm{Hom}_{\mathrm{Preshv}(Y, \C)}(\mathcal{F}, f_*\mathcal{G})
		\end{equation*}
		for $\mathcal{F}\in \mathrm{Preshv}(Y, \C)$, $\mathcal{G}\in \mathrm{Preshv}(X, \C)$.
	\end{lem}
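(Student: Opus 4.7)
The plan is to construct the adjunction isomorphism directly from the universal property of the colimit defining $f^{-1}_{\mathrm{pre}}$. A morphism $\phi: f^{-1}_{\mathrm{pre}}\mathcal{F}\to \mathcal{G}$ consists, for each admissible $U\subset X$, of a morphism
\begin{equation*}
\phi_U: \underset{V\supset f(U)}{\mathrm{colim}}\ \mathcal{F}(V)\to \mathcal{G}(U),
\end{equation*}
compatible with restrictions $U'\subset U$. By the universal property of the colimit, $\phi_U$ is equivalent to a family $\phi_{U,V}: \mathcal{F}(V)\to \mathcal{G}(U)$ indexed by admissible opens $V\subset Y$ with $V\supset f(U)$ (equivalently $U\subset f^{-1}V$), compatible with restrictions in $V$, and the compatibility of $\phi$ with restriction in $X$ translates to a similar coherence as $U$ varies.

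First, I would define the map in one direction: given $\phi$, set $\psi_V:= \phi_{f^{-1}V,\,V}: \mathcal{F}(V)\to \mathcal{G}(f^{-1}V)=f_*\mathcal{G}(V)$ for each admissible $V\subset Y$. Here one uses that $f^{-1}V$ is admissible in $X$ by definition of a morphism of G-topological spaces. Compatibility of $\phi$ with restriction from $f^{-1}V$ to $f^{-1}V'$ (for $V'\subset V$) together with compatibility in the second variable yields that $\psi$ is a morphism of presheaves $\mathcal{F}\to f_*\mathcal{G}$.

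Second, I would construct the inverse: given $\psi: \mathcal{F}\to f_*\mathcal{G}$, define
\begin{equation*}
\phi_{U,V}: \mathcal{F}(V)\xrightarrow{\psi_V} \mathcal{G}(f^{-1}V)\xrightarrow{\mathrm{res}} \mathcal{G}(U),
\end{equation*}
whenever $U\subset f^{-1}V$. Naturality of $\psi$ and transitivity of restriction give compatibility as $V$ and $U$ vary, so these assemble via the universal property of the colimit into a morphism $\phi: f^{-1}_{\mathrm{pre}}\mathcal{F}\to \mathcal{G}$. A direct verification shows the two constructions are mutually inverse and natural in both $\mathcal{F}$ and $\mathcal{G}$.

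There is no genuine obstacle here; the statement is formally the same as in the classical topological setting. The only point worth flagging is to make sure the index set of admissible opens $V\supset f(U)$ is well-behaved enough for the colimit to make sense, which follows from cocompleteness of $\C$ and the closure of admissible opens under finite intersection, and to check that the construction respects the G-topology rather than an ordinary topology by consistently using only admissible opens in all colimits and restriction diagrams.
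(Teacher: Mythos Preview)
Your proof is correct; this is the standard explicit construction of the adjunction. The paper does not give a proof at all but simply cites \cite[Theorem 2.3.3.(i)]{KS}, so your argument is just an unpacking of what that reference contains.
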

	\begin{proof}
		This is \cite[Theorem 2.3.3.(i)]{KS}.
	\end{proof}
	In particular, $f^{-1}_\mathrm{pre}$ is strongly right exact. Since $\Gamma(V, -)$ is strongly left exact for any admissible open $V\subseteq Y$ by the adjunction
	\begin{equation*}
		\mathrm{Hom}_{\mathrm{Preshv}(V, \C)}(E_V^\mathrm{pre}, \mathcal{F})\cong \mathrm{Hom}_\C(E, \Gamma(V, \mathcal{F})),  
	\end{equation*}
	it follows again from the strong exactness of filtered colimits that $f^{-1}_\mathrm{pre}$ is strongly exact.
	
	Since $(-)^a$ is also strongly exact, $f^{-1}(\mathcal{F})=(f^{-1}_\mathrm{pre})(\mathcal{F})^a$ defines a strongly exact functor $f^{-1}: \mathrm{Shv}(Y, \C)\to \mathrm{Shv}(X, \C)$ which is the left adjoint of $f_*$.
	
	Replacing $\C$ by $LH(\C)$, we also have at our disposal the analogous functors between the categories of $LH(\C)$-sheaves, which we denote by $f^{-1}_{LH}$ and $f_{*, LH}$ to avoid confusion.
	\begin{lem}
		\leavevmode	
		\begin{enumerate}[(i)]
			\item There is a natural isomorphism $If_*(\mathcal{F})\cong f_{*, LH}I(\mathcal{F})$ for $\mathcal{F}\in \mathrm{Shv}(X, \C)$.
			\item There is a natural isomorphism $If^{-1}(\mathcal{F})\cong f^{-1}_{LH}I(\mathcal{F})$ for $\mathcal{F}\in \mathrm{Shv}(Y, \C)$.
		\end{enumerate}
	\end{lem}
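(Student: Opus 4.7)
My plan is to reduce both claims to sectionwise statements and then invoke the already-established compatibilities of $I$ with the relevant operations.

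For part (i), I would rely on the identification from Lemma \ref{LHsheaves} (and the remark immediately following it) that under the equivalence $LH(\mathrm{Shv}(X,\C))\cong \mathrm{Shv}(X,LH(\C))$, the canonical embedding $I$ becomes the sectionwise application of $I:\C\to LH(\C)$. Given that, for any admissible open $U\subseteq Y$ we simply unwind the definitions:
\begin{equation*}
(If_*\F)(U)=I(f_*\F(U))=I(\F(f^{-1}U))=(I\F)(f^{-1}U)=(f_{*,LH}I\F)(U),
\end{equation*}
and naturality in $U$ makes this a natural isomorphism of presheaves (both sides are already sheaves on $Y$).

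For part (ii), I would break $f^{-1}$ into its two pieces $f^{-1}=(-)^a\circ f^{-1}_\mathrm{pre}$. First I would show at the presheaf level that $I f^{-1}_\mathrm{pre}\F\cong f^{-1}_{\mathrm{pre},LH}I\F$: sectionwise this is the statement that
\begin{equation*}
I\!\left(\underset{V\supset f(U)}{\mathrm{colim}}\F(V)\right)\cong \underset{V\supset f(U)}{\mathrm{colim}} I(\F(V)),
\end{equation*}
which holds because $I:\C\to LH(\C)$ commutes with filtered colimits by \cite[Proposition 2.1.16]{Schneiders} (exactly the fact invoked in the proof of Lemma \ref{Iacommute}). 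Then I would apply sheafification and use Lemma \ref{Iacommute}, which says that $I$ commutes with $(-)^a$, to conclude
\begin{equation*}
If^{-1}\F = I\bigl((f^{-1}_\mathrm{pre}\F)^a\bigr)\cong (If^{-1}_\mathrm{pre}\F)^a\cong (f^{-1}_{\mathrm{pre},LH}I\F)^a = f^{-1}_{LH}I\F.
\end{equation*}

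There is no substantive obstacle here; the whole proof is a bookkeeping exercise once one knows that (a) under the equivalence of Lemma \ref{LHsheaves} the functor $I$ is sectionwise, (b) $I$ commutes with filtered colimits, and (c) $I$ commutes with sheafification. The one thing to watch is to phrase part (i) purely in terms of limits (so that one can dualise the argument or appeal to $I$ being a right adjoint with left adjoint $C$), since direct image is defined via inverse-image of opens rather than via a colimit, and hence no sheafification step is needed.
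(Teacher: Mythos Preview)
Your proposal is correct and follows essentially the same approach as the paper: part (i) is immediate from the definitions (since $I$ acts sectionwise under the equivalence of Lemma \ref{LHsheaves}), and part (ii) is obtained by combining the fact that $I$ commutes with filtered colimits (\cite[Proposition 2.1.16]{Schneiders}) with Lemma \ref{Iacommute}. Your write-up is simply a slightly more expanded version of the paper's terse argument.
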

	\begin{proof}
		(i) is immediate from the definitions. For (ii), note that $I: \C\to LH(\C)$ commutes with filtered colimits by \cite[Proposition 2.1.16]{Schneiders}, so $I$ commutes with $f^{-1}_\mathrm{pre}$. Since $I$ also commutes with sheafification by Lemma \ref{Iacommute}, the result follows.
	\end{proof}
	
	Since $f^{-1}$ is strongly exact, it admits a derived functor $f^{-1}: \mathrm{D}(\mathrm{Shv}(Y, \C))\to \mathrm{D}(\mathrm{Shv}(X, \C))$ on the unbounded derived categories, and in our earlier notation we have $\widetilde{f^{-1}}\cong f^{-1}_{LH}$.
	
	As $\mathrm{Shv}(X, \C)$ is a Grothendieck category, we can apply \cite[Tags 079P, 070Y]{stacksproj} to obtain a right derived functor $\mathrm{R}f_{*, LH}: \mathrm{D}(LH(\mathrm{Shv}(X, \C)))\to \mathrm{D}(LH(\mathrm{Shv}(Y, \C)))$ on the unbounded derived categories.
	
	By \cite[Theorem 14.4.5.(b)]{KS}, $\mathrm{R}f_{*, LH}$ is a right adjoint to $f^{-1}$. It therefore follows from \cite[Proposition 1.3.16]{Schneiders} that whenever $f_*$ is explicitly right derivable, then $\mathrm{R}f_*\cong \mathrm{R}f_{*, LH}$ under the natural identification of $\mathrm{D}(\mathrm{Shv}(X, \C))$ and $\mathrm{D}(LH(\mathrm{Shv}(X, \C)))\cong \mathrm{D}(\mathrm{Shv}(X, LH(\C)))$.
	
	We will not address the question of whether $f_*$ itself is derivable and usually write $\mathrm{R}f_*$ instead of $\mathrm{R}f_{*, LH}$ in an abuse of notation.
	
	We now turn to the corresponding functors between module categories.
	\begin{lem}
		\label{invimdistributes}
		There is a natural isomorphism $f^{-1}(\T(\mathcal{F}, \mathcal{G}))\cong \T(f^{-1}\mathcal{F}, f^{-1}\mathcal{G})$ for $ \mathcal{F}, \mathcal{G}\in \mathrm{Shv}(Y, \C)$.
	\end{lem}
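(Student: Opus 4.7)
The plan is to prove the isomorphism first at the presheaf level, where it follows essentially from $T$ being a left adjoint, and then pass to sheaves via sheafification combined with Lemma \ref{Tacommute}. (I read $\mathcal{F}, \mathcal{G}$ as lying in $\mathrm{Shv}(Y, \C)$, since that is what is needed for $f^{-1}$ to apply.)

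First I would establish the presheaf-level analogue: for any $\mathcal{F}', \mathcal{G}' \in \mathrm{Preshv}(Y, \C)$, there is a natural isomorphism
\begin{equation*}
f^{-1}_\mathrm{pre}\T_\mathrm{pre}(\mathcal{F}', \mathcal{G}') \cong \T_\mathrm{pre}(f^{-1}_\mathrm{pre}\mathcal{F}', f^{-1}_\mathrm{pre}\mathcal{G}').
\end{equation*}
Evaluated at an admissible open $U \subseteq X$, the left-hand side equals $\underset{V\supseteq f(U)}{\mathrm{colim}}\, T(\mathcal{F}'(V), \mathcal{G}'(V))$, while the right-hand side reads $T\bigl(\underset{V\supseteq f(U)}{\mathrm{colim}}\mathcal{F}'(V), \underset{V'\supseteq f(U)}{\mathrm{colim}}\mathcal{G}'(V')\bigr)$. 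Since $T$ is a left adjoint to $H$, it commutes with colimits in each argument, so the right-hand side rewrites as a double colimit over pairs $(V, V')$. The diagonal embedding $V \mapsto (V,V)$ is cofinal in this product system because for any pair $(V, V')$ the intersection $V \cap V'$ is again admissible open and still contains $f(U)$, by the G-topology axioms. Hence the two filtered colimits agree.

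Next I would pass to sheaves. Using Lemma \ref{Tacommute} and the definition of $f^{-1}$, I chain natural isomorphisms
\begin{align*}
\T(f^{-1}\mathcal{F}, f^{-1}\mathcal{G})
&= \T\bigl((f^{-1}_\mathrm{pre}\mathcal{F})^a, (f^{-1}_\mathrm{pre}\mathcal{G})^a\bigr)\\
&\cong \T_\mathrm{pre}(f^{-1}_\mathrm{pre}\mathcal{F}, f^{-1}_\mathrm{pre}\mathcal{G})^a\\
&\cong \bigl(f^{-1}_\mathrm{pre}\T_\mathrm{pre}(\mathcal{F}, \mathcal{G})\bigr)^a,
\end{align*}
the last step being the presheaf-level isomorphism just established. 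To identify this with $f^{-1}\T(\mathcal{F}, \mathcal{G}) = \bigl(f^{-1}_\mathrm{pre}\T_\mathrm{pre}(\mathcal{F}, \mathcal{G})^a\bigr)^a$, it suffices to observe that for any presheaf $\M$ the natural map $(f^{-1}_\mathrm{pre}\M)^a \to (f^{-1}_\mathrm{pre}\M^a)^a$ is an isomorphism; this follows from a Yoneda argument, since for any sheaf $\mathcal{H}$ on $X$ both $\mathrm{Hom}$-sets are naturally identified with $\mathrm{Hom}(\M, f_*\mathcal{H})$ via the presheaf $(f^{-1}_\mathrm{pre}, f_*)$ adjunction and the sheafification adjunction.

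The proof is largely bookkeeping, and I expect no genuine obstacle. The only point requiring any care is the cofinality claim at the presheaf level, which hinges essentially on closure of $\mathrm{Op}(Y)$ under finite intersections; the rest amounts to juggling the compatibility between sheafification and the various adjoint functors in play.
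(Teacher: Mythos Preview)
Your proof is correct and follows essentially the same approach as the paper: establish that $f^{-1}_\mathrm{pre}$ commutes with $\T_\mathrm{pre}$ (since $T$ commutes with colimits), then reduce both sides to $(f^{-1}_\mathrm{pre}\T_\mathrm{pre}(\mathcal{F}, \mathcal{G}))^a$ using Lemma~\ref{Tacommute} and the adjunction-based identification $(f^{-1}_\mathrm{pre}\M)^a \cong (f^{-1}_\mathrm{pre}\M^a)^a$. You have simply spelled out in more detail the cofinality step and the Yoneda argument that the paper leaves implicit.
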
   
	\begin{proof}
		First note that $\T_\mathrm{pre}$ and $f^{-1}_\mathrm{pre}$ commute, since $T$ commutes with colimits. It follows from the adjunctions that $f^{-1}(\mathcal{F}^a)\cong (f^{-1}_\mathrm{pre}(\mathcal{F}))^a$ for any $\mathcal{F}\in \mathrm{Preshv}(X, \C)$, and
		\begin{equation*}
			\T(\mathcal{F}^a, \mathcal{G}^a)\cong (\T_\mathrm{pre}(\mathcal{F}, \mathcal{G}))^a
		\end{equation*}
		by Lemma \ref{Tacommute}, so there is a natural identification of both sides in the statement of the lemma with
		\begin{equation*}
			(f^{-1}_\mathrm{pre}\T_\mathrm{pre}(\mathcal{F}, \mathcal{G}))^a.\qedhere
		\end{equation*}
	\end{proof}
	
	Thus, $f^{-1}: \mathrm{Shv}(Y, \C)\to \mathrm{Shv}(X, \C)$ becomes a strong symmetric monoidal functor, and Proposition \ref{laxmodulechange} implies that if $\R$ is a monoid on $Y$, then $f^{-1}\R$ is a monoid on $X$ in a natural way, and $f^{-1}$ gives rise to a functor $\mathrm{Mod}(\R)\to \mathrm{Mod}(f^{-1}\R)$, which we also denote by $f^{-1}$. 
	
	Similarly, the adjunction between $f^{-1}$ and $f_*$ together with Lemma \ref{invimdistributes} yields natural morphisms
	\begin{equation*}
		\T(f_*\F, f_*\G)\to f_*(\T(\F, \G))
	\end{equation*}
	for $\F, \G\in \mathrm{Shv}(X, \C)$, making $f_*$ a lax symmetric monoidal functor.
	
	Hence, if $\R$ is a monoid on $X$, then $f_*\R$ is a monoid on $Y$, and $f_*$ gives rise to a functor 
	\begin{equation*}
		f_*: \mathrm{Mod}(\R)\to \mathrm{Mod}(f_*\R).
	\end{equation*}
	If $\R$ is a monoid on $Y$, then the adjunction yields a natural monoid morphism $\R\to f_*f^{-1}\R$, and thus $f_*$ yields a functor $\mathrm{Mod}(f^{-1}\R)\to \mathrm{Mod}(\R)$ in this case.
	\begin{lem}
		\label{Rmodimageadj}
		Let $\R$ be a monoid in $\mathrm{Shv}(Y, \C)$. Then there is a natural isomorphism
		\begin{equation*}
			\mathrm{Hom}_{\mathrm{Mod}(f^{-1}\R)}(f^{-1}\M, \N)\cong \mathrm{Hom}_{\mathrm{Mod}(\R)}(\M, f_*\N)
		\end{equation*}
		for any $\M\in \mathrm{Mod}(\R)$, $\N\in \mathrm{Mod}(f^{-1}\R)$.
	\end{lem}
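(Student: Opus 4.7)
The plan is to bootstrap this from the sheaf-level adjunction $f^{-1} \dashv f_*$ between $\mathrm{Shv}(Y,\C)$ and $\mathrm{Shv}(X,\C)$. What needs to be shown is precisely that the bijection $\mathrm{Hom}_{\mathrm{Shv}(X,\C)}(f^{-1}\M,\N) \cong \mathrm{Hom}_{\mathrm{Shv}(Y,\C)}(\M, f_*\N)$ restricts to a bijection on the subsets of module-linear morphisms. Morally, this is an instance of the general principle that if the left adjoint of a pair is strong monoidal, then the adjunction lifts to module categories over monoids; here Lemma \ref{invimdistributes} is exactly the statement that $f^{-1}$ is strong monoidal with respect to $\T$.

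Concretely, I would start by making the bijection explicit: given an $f^{-1}\R$-linear morphism $\phi: f^{-1}\M \to \N$, the corresponding sheaf morphism $\psi: \M \to f_*\N$ is obtained by applying $f_*$ and precomposing with the unit $\M \to f_*f^{-1}\M$ (and dually for the inverse direction, using the counit $f^{-1}f_*\N \to \N$). To check $\R$-linearity of $\psi$, I would consider the diagram whose commutativity must be verified, namely the outer square expressing the compatibility of $\psi$ with the $\R$-actions on $\M$ and on $f_*\N$. Using Lemma \ref{invimdistributes}, the action morphism $\T(f^{-1}\R, f^{-1}\M)\to f^{-1}\M$ is identified with $f^{-1}$ applied to $\T(\R,\M)\to \M$. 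Combining this with naturality of the unit $\mathrm{id} \to f_*f^{-1}$, together with the fact (from Lemma \ref{shvclosed} and the $f^{-1}\dashv f_*$ adjunction) that $f_*$ inherits a canonical lax monoidal structure making the $\R$-action on $f_*\N$ factor through the adjoint of the $f^{-1}\R$-action on $\N$, the module-linearity square for $\psi$ gets transported under the adjunction to the module-linearity square for $\phi$, and vice versa.

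The reverse direction is entirely symmetric: given an $\R$-linear $\psi: \M \to f_*\N$, apply $f^{-1}$ and postcompose with the counit $f^{-1}f_*\N \to \N$; the same diagram chase using Lemma \ref{invimdistributes} shows that $f^{-1}\R$-linearity follows from $\R$-linearity. Naturality in $\M$ and $\N$ is inherited from the underlying sheaf adjunction.

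The main obstacle is really just the bookkeeping of several intertwined diagrams involving the unit and counit, the tensor-commutation isomorphism from Lemma \ref{invimdistributes}, and the various action maps; no genuinely new ideas enter. One small care point is to verify that the $\R$-action on $f_*\N$ used implicitly in the statement (coming via the monoid morphism $\R \to f_*f^{-1}\R$ and the lax monoidal map $\T(f_*f^{-1}\R, f_*\N) \to f_*\T(f^{-1}\R, \N)$) agrees with the action the reader would naively write down, but this follows formally from the adjunction and the construction of the $f_*$-functor on modules given just before the lemma.
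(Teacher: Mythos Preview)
Your approach is correct and uses the same ingredients as the paper: the underlying sheaf adjunction $f^{-1}\dashv f_*$ together with the strong monoidality of $f^{-1}$ from Lemma~\ref{invimdistributes}. The paper's proof is simply a more compressed version of the same argument: rather than chasing unit/counit diagrams, it observes that both Hom-sets are equalizers of parallel pairs $\mathrm{Hom}(\text{--},\text{--})\rightrightarrows \mathrm{Hom}(\T(\text{--},\text{--}),\text{--})$, and that these two equalizer diagrams are identified termwise by the sheaf adjunction and the isomorphism $\T(f^{-1}\R,f^{-1}\M)\cong f^{-1}\T(\R,\M)$.
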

	\begin{proof}
		By adjunction and Lemma \ref{invimdistributes}, 
		\begin{equation*}
			\mathrm{Hom}_{\mathrm{Shv}(X, \C)}(\T(f^{-1}\R, f^{-1}\M), \N)\cong \mathrm{Hom}_{\mathrm{Shv}(Y, \C)}(\T(\R, \M), f_*\N),
		\end{equation*}
		so the isomorphism follows from the description of $\mathrm{Hom}_\R$ as an equalizer.
	\end{proof}
	
	Note that by Lemma \ref{ModR}.(ii), $f^{-1}$ is also strongly exact as a functor between the module categories, and the functor $\widetilde{f^{-1}}$ is nothing but 
	\begin{equation*}
		f^{-1}_{LH}: \mathrm{Mod}(I(\R))\to \mathrm{Mod}(f^{-1}_{LH}(I(\R)))
	\end{equation*}
	under the equivalence from Proposition \ref{LHIR}.
	
	If $\R$ is a monoid on $X$, then
	\begin{equation*}
		f_{*,LH}: \mathrm{Mod}(I(\R))\to \mathrm{Mod}(f_{*, LH}I(\R))
	\end{equation*}
	can be derived to
	\begin{equation*}
		\mathrm{R}f_{*, LH}: \mathrm{D}(\mathrm{Mod}(I(\R)))\to \mathrm{D}(\mathrm{Mod}(f_{*, LH}I(\R)))
	\end{equation*}
	by \cite[Tags 079P, 070Y]{stacksproj}. Invoking Proposition \ref{LHIR} and \cite[Proposition 1.2.32]{Schneiders}, this corresponds to a triangulated functor
	\begin{equation*}
		\mathrm{R}f_*: \mathrm{D}(\mathrm{Mod}(\R))\to \mathrm{D}(\mathrm{Mod}(f_*(\R)))
	\end{equation*}
	such that $\mathrm{R}f_*$ is right adjoint to $f^{-1}$. In particular, if $f_*: \mathrm{Mod}(\R)\to \mathrm{Mod}(f_*(\R))$ is indeed derivable, it agrees with $\mathrm{R}f_*$ as defined here.
	
	To obtain the usual composition results for direct and inverse image functors, we follow \cite[section 5]{Spalt}.
	
	Recall that a complex $\F^\bullet$ in the homotopy category $K(\mathrm{Shv}(X, LH(\C)))$ is called \textbf{K-injective} (or homotopically injective) if for any acyclic complex $\A^\bullet$, the complex $\mathrm{Hom}^\bullet(\A^\bullet, \F^\bullet)$ given by
	\begin{equation*}
		\mathrm{Hom}^n(\A^\bullet, \F^\bullet)=\prod_i \mathrm{Hom}(\A^i, \F^{i+n}),
	\end{equation*}
	with differential $\mathrm{d}f_n:=\mathrm{d}_\F\circ f-(-1)^nf\circ \mathrm{d}_\A$, is acyclic. By \cite[Theorem 14.3.1]{KS}, any Grothendieck abelian category allows for resolutions by K-injective complexes, and these are used to compute right derived functors.
	
	As in Lemma \ref{genforshv}, let $G\in LH(\C)$ be a generator, and note that by Lemma \ref{genforLH}, we can assume that $G$ lies in the essential image of $I$. Given an open subset $V\subseteq X$, let $G_V$ denote the extension by zero already encountered in the proof of Lemma \ref{genforshv}. Let $\mathcal{P}$ denote the full subcategory of $K(\mathrm{Shv}(X, LH(\C)))$ consisting of bounded above complexes whose terms are direct sums of $G_V$ for various $V\subseteq X$.
	
	\begin{defn}
		A complex $\F^\bullet\in K(\mathrm{Shv}(X, LH(\C)))$ is called \textbf{K-limp} if for any acyclic $\A^\bullet\in \mathcal{P}$, the complex $\mathrm{Hom}^\bullet(\A^\bullet, \F^\bullet)$ is acyclic.
	\end{defn} 
	Note that any K-injective complex is a fortiori K-limp.
	\begin{lem}
		\label{limpres}
		\leavevmode
		\begin{enumerate}[(i)]
			\item K-limp complexes form a full triangulated subcategory containing all K-injective complexes.
			\item Let $\R$ be a monoid in $\mathrm{Shv}(X, \C)$. If $\M^\bullet\in K(\mathrm{Mod}(I(\R)))$ is K-injective, then $\mathrm{forget}(\M^\bullet)\in K(\mathrm{Shv}(X, LH(\C)))$ is K-limp.
			\item Let $V\subseteq X$ be open and let $(\F^\bullet, d)\in K(\mathrm{Shv}(X, LH(\C)))$ be an acyclic K-limp complex. Then $\Gamma(V, \F^\bullet)$ is an acyclic complex.
			\item For any morphism $f: X\to Y$, the derived functor $\mathrm{R}f_*$ on $\mathrm{D}(\mathrm{Shv}(X, LH(\C)))$ can be computed on K-limp complexes.
		\end{enumerate}
	\end{lem}
	\begin{proof}
		For (i), we only need to show a 2-out-of-3 property, which is immediate from the definition.
		
		For (ii), let $\A^\bullet\in \mathcal{P}$ be acyclic. In this case, $\widetilde{T}(I(\R), \A^\bullet)$ is acyclic, since $\A^i$ is in the essential image of $I$ for each $i$, on which $\widetilde{T}(I(\R), -)$ is exact by assumption and Lemma \ref{strictsheaftensor}. Note that by adjunction
		\begin{equation*}
			\mathrm{Hom}_{\mathrm{Shv}(X, LH(\C))}(\A^i, \mathrm{forget}(\M^{i+n}))\cong \mathrm{Hom}_{I(\R)}(\widetilde{\T}(I(\R), \A^i), \M^{i+n}),
		\end{equation*}
		so the result follows.  
		
		For (iii), we follow the argument in \cite[Proposition 5.16]{Spalt}. Consider the complex
		\begin{equation*}
			\begin{xy}
				\xymatrix{\hdots \ar[r] &\Gamma(V, \F^{n-1})\ar[r]^{\mathrm{d}_{n-1}(V)} &\Gamma(V, \F^n)\ar[r]^{\mathrm{d}_n(V)} &\Gamma(V, F^{n+1})\ar[r]& \hdots}
			\end{xy},
		\end{equation*}
		and let $S=\mathrm{ker}(\mathrm{d}_n(V))\in LH(\C)$. Let $\mathcal{S}'$ be the extension by zero of the constant sheaf $S$ on $V$, and let $j: \mathcal{S}\to \F^n$ denote the image of the morphism $\mathcal{S}'\to \F^{n}$ induced by adjunction. Note that $j$ factors through $\mathrm{ker}\ \mathrm{d}_n\cong \mathrm{Im}\ \mathrm{d}_{n-1}$ by construction.
		
		Set $\mathcal{S}^{n-1}=\F^{n-1}\times_{\F^n} \mathcal{S}$, $\mathcal{S}^i=\F^i$ for $i<n-1$, so that the complex
		\begin{equation*}
			\begin{xy}
				\xymatrix{\mathcal{S}^\bullet:=(\hdots \ar[r]& \F^{n-2}\ar[r]& \mathcal{S}^{n-1}\ar[r]& \mathcal{S}\ar[r]&0\ar[r]& 0\ar[r]&\hdots)}
			\end{xy}
		\end{equation*}
		is acyclic. By construction, $j$ yields a chain map $\mathcal{S}^\bullet\to \F^\bullet$. By definition of $\mathcal{P}$, there is now a complex $\A^\bullet$ in $\mathcal{P}$, concentrated in degrees $\leq n$, and a quasi-isomorphism $\A^\bullet\to \mathcal{S}^\bullet$, with $\A^n\to \mathcal{S}$ an epimorphism. In particular, $\A^\bullet$ is acyclic.
		
		As we assume $\F^\bullet$ to be K-limp, the complex $\mathrm{Hom}^\bullet(\A^\bullet, \F^\bullet)$ is acyclic, so that the chain map $\A^\bullet\to \mathcal{S}^\bullet\to \F^\bullet$ given above, which we denote by $f$, is in the image of the differential. Therefore, there exists a morphism $g_n: \A^n\to \F^{n-1}$ such that $f_n=\mathrm{d}_\F\circ g_n$, since $\A^{n+1}=0$. But then $\mathcal{S}$ is a subobject of the image of $\mathrm{d}$, so applying $\Gamma(V, -)$ yields that $S=\mathrm{ker}(\mathrm{d}_n(V))$ is equal to $\mathrm{Im}(d_{n-1}(V))$, as required.
		
		For (iv), we note that (iii) implies immediately that for any acyclic K-limp complex $\F^\bullet$, $f_*(\F^\bullet)$ is acyclic, so the result follows from \cite[Proposition 10.3.3]{KS}.   
	\end{proof}
	
	\begin{cor}
		\label{pushforwardforget}
		Let $\R$ be a monoid in $\mathrm{Shv}(X, \C)$. There is a natural isomorphism
		\begin{equation*}
			\mathrm{R}f_*(\mathrm{forget}\M^\bullet)\cong \mathrm{forget}\mathrm{R}f_*(\M^\bullet)
		\end{equation*}
		for $\M^\bullet\in \mathrm{D}(\mathrm{Mod}(I(\R)))\cong \mathrm{D}(\mathrm{Mod}(\R))$.
	\end{cor}
	\begin{proof}
		This is immediate from Lemma \ref{limpres}.(ii) and (iv) above.
	\end{proof}
	
	\begin{lem}
		\label{catcomposition}
		Let $f: X\to Y$, $g: Y\to Z$ be morphisms of G-topological spaces. There are natural isomorphisms
		\begin{equation*}
			f^{-1}g^{-1}(\M^\bullet)\cong (gf)^{-1}(\M^\bullet)
		\end{equation*}
		for $\M^\bullet\in \mathrm{D}(\mathrm{Shv}(Z, \C))$
		and
		\begin{equation*}
			\mathrm{R}g_*\mathrm{R}f_*(\M^\bullet)\cong \mathrm{R}(gf)_*(\M^\bullet)
		\end{equation*}
		for $\M^\bullet\in \mathrm{D}(\mathrm{Shv}(X, \C))$.
		
		The same holds for modules over monoids in $\C$-valued sheaves.
	\end{lem}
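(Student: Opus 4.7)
The plan is to address the two natural isomorphisms separately (inverse image first, then direct image), and to deduce the module case from the sheaf case.

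For the inverse image, I start at the presheaf level. Unwinding the colimit definition of $f^{-1}_{\mathrm{pre}}$, for any admissible open $U \subset X$ and any $\mathcal{F}\in \mathrm{Preshv}(Z,\A)$ we have
\begin{equation*}
(f^{-1}_{\mathrm{pre}} g^{-1}_{\mathrm{pre}} \mathcal{F})(U) = \underset{V \supset f(U)}{\mathrm{colim}}\ \underset{W \supset g(V)}{\mathrm{colim}}\ \mathcal{F}(W).
\end{equation*}
A cofinality argument identifies this with $(gf)^{-1}_{\mathrm{pre}}\mathcal{F}(U)$: given any admissible $W \supset gf(U)$, the admissible open $V := g^{-1}(W)$ contains $f(U)$ and satisfies $g(V) \subset W$, so $W$ is reached in the double colimit; conversely, any $W$ appearing in the double colimit contains $g(V) \supset gf(U)$. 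Sheafifying, and using that sheafification is a left adjoint so commutes with the composition of presheaf inverse images, one obtains $f^{-1} g^{-1} \cong (gf)^{-1}$ on sheaves. Since each of these functors is strongly exact, the isomorphism extends degreewise to the unbounded derived category.

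For the direct image, I invoke uniqueness of adjoints at the level of unbounded derived categories. By \cite[Theorem 14.4.5.(b)]{KS} applied to the Grothendieck abelian categories $\mathrm{Shv}(-, \A)$, each of $\mathrm{R}f_*$, $\mathrm{R}g_*$, $\mathrm{R}(gf)_*$ is right adjoint to the corresponding inverse image functor. Composition of adjoint pairs then shows that $\mathrm{R}g_* \circ \mathrm{R}f_*$ is right adjoint to $f^{-1} \circ g^{-1}$, while $\mathrm{R}(gf)_*$ is right adjoint to $(gf)^{-1}$. The first part identifies these two inverse image functors, so by uniqueness of adjoints one obtains $\mathrm{R}g_* \mathrm{R}f_* \cong \mathrm{R}(gf)_*$.

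For the module statement, the same strategy applies. The relevant functors are defined between the appropriate module categories using the unit morphism $\R \to f_*f^{-1}\R$ (when $\R$ lives on $Z$) or the underived identification $g_* f_*\R \cong (gf)_*\R$ (when $\R$ lives on $X$), and both sides of the composition formula are again right adjoints to the common composed inverse image. Alternatively, Lemma \ref{pushforwardforget} reduces the module-level statement to the sheaf-level one after applying the conservative forgetful functor. The main obstacle is the cofinality argument at the presheaf level, which depends crucially on the axiom that preimages of admissible opens under a morphism of G-topological spaces are admissible; once the underived composition is established, the rest is formal from the derived adjunction framework of \cite[Chapter 14]{KS}.
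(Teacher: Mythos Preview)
Your proof is correct, but it proceeds in the opposite direction from the paper's. The paper first establishes the direct image composition directly: it observes that $f_*$ sends \v{C}ech-acyclic sheaves to \v{C}ech-acyclic sheaves (immediate from the definitions), so the \v{C}ech-acyclic sheaves furnish a class on which one can compute both $\mathrm{R}(gf)_*$ and $\mathrm{R}g_*\mathrm{R}f_*$ via Lemma~\ref{cechacyc}.(iv). The inverse image statement is then deduced by adjunction. You instead prove the inverse image composition first by an explicit cofinality argument at the presheaf level, then obtain the direct image statement by uniqueness of adjoints. Both routes are valid; the paper's approach exploits the \v{C}ech-acyclic machinery just developed in Lemma~\ref{cechacyc}, while yours is self-contained and closer to the standard argument for topological spaces. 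For the module case, both you and the paper reduce to the sheaf case via Lemma~\ref{pushforwardforget} and conservativity of the forgetful functor.
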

	\begin{proof}
		We use the equivalence $\mathrm{D}(\mathrm{Shv}(X, \C))\cong \mathrm{D}(\mathrm{Shv}(X, LH(\C)))$, which we have already seen to be compatible with the inverse and direct image functors.
		
		If $\M\in \mathrm{Shv}(X, LH(\C))$, it is clear by definition that
		\begin{equation*}
			g_*f_*(\M)\cong (gf)_*(\M),
		\end{equation*} 
		and thus by adjunction
		\begin{equation*}
			f^{-1}g^{-1}\N\cong (gf)^{-1}\N 
		\end{equation*}
		naturally for any $\N\in \mathrm{Shv}(Z, LH(\C))$. As inverse images are exact, this implies the same result for complexes of sheaves, and the corresponding result for derived direct images follows yet again by adjunction.
		
		In the case of $\R$-modules, it suffices to check that the natural morphism $\mathrm{R}(gf)_*\to\mathrm{R}g_*\mathrm{R}f_*$ becomes an isomorphism after applying the forgetful functor. This follows immediately from the above and Corollary \ref{pushforwardforget}.
	\end{proof}

	\section{Analytic background: Complete bornological vector spaces}
	\subsection{Bornological vector spaces}
	\label{BcK}
	Recall that we have fixed a complete nonarchimedean field $K$ of mixed characteristic $(0, p)$, with valuation ring $R$ and $\pi\in R$ satisfying $0<|\pi|<1$.\\
	In this subsection, we recall the definition of bornological $K$-vector spaces, its relation to locally convex topological vector spaces, and the closed symmetric monoidal structure on the category $\h{\B} c_K$ of complete bornological $K$-vector spaces. Our main references are \cite{Houzel}, \cite{ProsmansSchneiders}, \cite{Bamdagger}, \cite{BamStein}. Some of the references only address the case of complex vector spaces, but the relevant proofs carry over to our nonarchimedean setting without change.
	\begin{defn}
		\label{defborn}
		A \textbf{bornology} on a $K$-vector space $V$ is a collection $\mathcal{B}$ of subsets of $V$ such that
		\begin{enumerate}[(i)]
			\item if $B\in \mathcal{B}$ and $B'\subseteq B$, then $B'\in \mathcal{B}$.
			\item if $v\in V$, then $\{v\}\in \mathcal{B}$.
			\item $\mathcal{B}$ is closed under taking finite unions.
			\item if $B\in \mathcal{B}$ and $\lambda\in K$, then $\lambda \cdot B\in \mathcal{B}$.
			\item if $B\in \mathcal{B}$, then $R\cdot B$, the $R$-submodule spanned by $B$, is an element of $\mathcal{B}$.
		\end{enumerate} 
		We call $V$, or rather $(V, \mathcal{B})$, a \textbf{bornological $K$-vector space} (of convex type). Elements of $\mathcal{B}$ are called \textbf{bounded subsets}.
	\end{defn}
	A $K$-linear map $f: V\to W$ between bornological $K$-vector spaces is called \textbf{bounded} if $f$ sends bounded subsets to bounded subsets. We denote the category of bornological $K$-vector spaces together with bounded $K$-linear maps by $\mathcal{B}c_K$.
	
	$\B c_K$ is complete and cocomplete by \cite[Proposition 1.9]{ProsmansSchneiders}. The underlying vector space of a (co)limit is the corresponding (co)limit in the category $\mathrm{Vect}_K$ of abstract $K$-vector spaces. We refer to \cite[Proposition 1.2, Definition 1.4]{ProsmansSchneiders} for details on the natural bornologies on (co)products and (co)kernels.
	\begin{lem}
		The category $\B c_K$ is quasi-abelian.
	\end{lem}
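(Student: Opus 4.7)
The plan is to verify the two quasi-abelian axioms directly, using the explicit description of limits and colimits in $\B c_K$ from \cite{ProsmansSchneiders}. Since $\B c_K$ is $K$-linear, complete and cocomplete, it is additive and has kernels and cokernels, so only the stability of strict epimorphisms under base change and of strict monomorphisms under cobase change remain to be checked.

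The first step is to identify strict morphisms explicitly. Because kernels in $\B c_K$ carry the subspace bornology and cokernels carry the quotient bornology, a bounded map $f \colon V \to W$ is a strict epimorphism if and only if it is surjective and every bounded subset of $W$ is the image under $f$ of some bounded subset of $V$; dually, $f$ is a strict monomorphism if and only if it is injective and a subset of $V$ is bounded precisely when its image in $W$ is bounded. Pullbacks in $\B c_K$ are realized on the underlying vector-space fibre product equipped with the subspace bornology inherited from the product, and pushouts on the underlying vector-space pushout equipped with the quotient bornology inherited from the direct sum.

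For the Cartesian axiom, given $f \colon E \to F$ a strict epimorphism and $g \colon F' \to F$, the projection $f' \colon E \times_F F' \to F'$ is surjective because $f$ is. To see that $F'$ already carries the quotient bornology from $E \times_F F'$, take any bounded $B' \subseteq F'$; then $g(B') \subseteq F$ is bounded, so by strictness of $f$ there is a bounded $A \subseteq E$ with $f(A) \supseteq g(B')$, and $(A \times B') \cap (E \times_F F')$ is then a bounded subset of the pullback surjecting onto $B'$ under $f'$.

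The Cocartesian axiom is the step I expect to be the main obstacle, because analyzing the quotient bornology on a pushout is delicate. Here $F'$ is the quotient of $F \oplus E'$ by the antidiagonal $N = \{(f(e), -g(e)) : e \in E\}$, and $f' \colon E' \to F'$ sends $e' \mapsto [0, e']$; injectivity of $f'$ follows from injectivity of $f$. The task is to show that any subset $C \subseteq E'$ whose image under $f'$ is bounded in $F'$ must already be bounded. By the quotient bornology description, boundedness of $f'(C)$ yields a bounded $D \subseteq F \oplus E'$ surjecting onto $f'(C)$; for each $c \in C$ one then picks $(y, e') \in D$ with $(y, e') - (0, c) \in N$, so $y = f(e)$ and $c = e' + g(e)$ for some $e \in E$. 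As $y$ ranges over a bounded subset of $F$ and $f$ is a strict monomorphism, the corresponding $e$ range over a bounded subset of $E$, whence $g(e)$ ranges over a bounded subset of $E'$ and $C$ itself is bounded. It is precisely this use of the strictness of $f$ that makes the verification go through.
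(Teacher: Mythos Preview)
Your proof is correct. The characterisations of strict monomorphisms and strict epimorphisms are accurate, and both stability checks go through as written; in the Cocartesian step the key point is exactly what you isolate, namely that the elements $e$ appearing in the decomposition $c = e' + g(e)$ have bounded image $f(e)$ in $F$ and hence form a bounded set in $E$ by strictness of $f$.

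By way of comparison, the paper does not give an argument at all: it simply refers to \cite[Proposition~1.8]{ProsmansSchneiders}. Your direct verification is essentially the content of that reference, so the two are mathematically aligned; your version is more self-contained, while the paper's buys brevity by deferring to the literature.
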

	\begin{proof}
		See \cite[Proposition 1.8]{ProsmansSchneiders}.
	\end{proof}
	
	We briefly recall the relation between $\B c_K$ and the category of locally convex topological $K$-vector spaces $LCVS_K$ (see \cite[p.56, p.93]{Houzel}).
	
	If $V$ is a semi-normed $K$-vector space, its collection of bounded subsets with respect to the semi-norm defines a bornology on $V$. More generally, if $V$ is a locally convex topological vector space defined through a family of semi-norms $q_i$, then its collection of bounded subsets (i.e. those $B\subseteq V$ such $q_i(B)\subseteq \mathbb{R}$ is bounded for each $i$) defines a bornology on $V$. This is called the \textbf{von Neumann bornology} of $V$. We thus obtain a functor $V\mapsto V^b$ from $LCVS_K$ to $\B c_K$.
	
	Conversely, if $(V, \B)$ is a bornological $K$-vector space, we can endow it with a locally convex topology whose basis of open neighbourhoods of $0$ consists of all bornivorous subsets, i.e. those $L\subseteq V$ such that for any $B\in \B$ there exists $\lambda\in K$ such that $B\subseteq \lambda \cdot L$.
	
	This provides us with a functor $V\mapsto V^t$ from $\B c_K$ to $LCVS_K$.
	
	It is worth noting that $(-)^b$ and $(-)^t$ are not equivalences, but rather $(-)^b$ is the right adjoint of $(-)^t$ (see \cite[p. 93, Proposition 3]{Houzel}). Moreover, we have the following.
	\begin{lem}[{\cite[p. 102, p. 109]{Houzel}}]
		\label{metricequiv}
		Let $\mathscr{M}$ be the full subcategory of $LCVS_K$ consisting of metrizable spaces, and let $\mathscr{M}^b$ denote the essential image of $\mathscr{M}$ in $\B c_K$ under $(-)^b$.
		
		Then $(-)^b$ gives an equivalence of categories between $\mathscr{M}$ and $\mathscr{M}^b$, with quasi-inverse $(-)^t$.
	\end{lem}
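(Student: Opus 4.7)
The plan is to exploit the adjunction $(-)^t \dashv (-)^b$ recalled just above the lemma and show that both the unit and counit become isomorphisms on the relevant subcategories. Writing $\eta_V: V \to (V^t)^b$ for the unit (in $\mathcal{B}c_K$) and $\varepsilon_W: (W^b)^t \to W$ for the counit (in $LCVS_K$), both are the identity on underlying $K$-vector spaces, so in each case one only needs to check that the two bornologies (resp. topologies) coincide.

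The crucial technical step is to verify, for a metrizable locally convex $K$-vector space $W$, that $\varepsilon_W$ is an isomorphism, i.e.\ that the topology on $(W^b)^t$ (whose neighbourhood basis of $0$ consists of the bornivorous subsets of $W^b$) agrees with the original topology of $W$. One inclusion is formal: any $0$-neighbourhood absorbs every bounded set and is therefore bornivorous. For the converse, fix a countable decreasing basis $(U_n)_{n \geq 1}$ of absolutely convex $0$-neighbourhoods in $W$. If $L \subseteq W$ is bornivorous but fails to be a neighbourhood of $0$, choose $v_n \in U_n \setminus L$; then the sequence $(n v_n)$ lies in $n U_n$ and satisfies $q_i(n v_n) \leq 1$ for all $i \leq n$ with respect to a defining sequence of seminorms $(q_i)$, so $\{n v_n : n \geq 1\}$ is bounded. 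Since $L$ is bornivorous, there exists $\lambda \in K^\times$ with $n v_n \in \lambda L$ for all $n$, giving $v_n \in L$ as soon as $|\lambda|/n \leq 1$, a contradiction. This is the only non-formal input and is where metrizability is genuinely used; I expect it to be the main obstacle.

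Once $\varepsilon_W$ is an isomorphism for every $W \in \mathscr{M}$, the second half follows cheaply. For any $V \in \mathscr{M}^b$, choose $W \in \mathscr{M}$ with $V \cong W^b$; then $V^t \cong (W^b)^t \cong W$ via $\varepsilon_W$, which in particular shows that $(-)^t$ does land in $\mathscr{M}$ on the subcategory $\mathscr{M}^b$, so we get a genuine functor $\mathscr{M}^b \to \mathscr{M}$. Applying $(-)^b$ to the isomorphism $V^t \cong W$ yields $(V^t)^b \cong W^b \cong V$, which up to the standard chase of the adjunction identities is precisely the unit $\eta_V$. Thus $\eta_V$ is an isomorphism for every $V \in \mathscr{M}^b$.

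Combining these two statements with the adjunction gives the desired equivalence $\mathscr{M} \simeq \mathscr{M}^b$, with $(-)^b$ and $(-)^t$ as quasi-inverses. I would phrase the argument as a short lemma isolating the topological fact that for a metrizable locally convex space the bornivorous sets are precisely the $0$-neighbourhoods, and then deduce both halves of the equivalence in a few lines from that lemma and the already established adjunction.
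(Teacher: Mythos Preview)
Your overall strategy is sound and is the standard one: exploit the adjunction $(-)^t \dashv (-)^b$ and reduce to the fact that, for a metrizable locally convex space, the bornivorous $R$-submodules are exactly the $0$-neighbourhoods. The paper itself gives no proof but merely cites Houzel, so there is nothing to compare on the level of approach.

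There is, however, a genuine gap in your key technical step, coming from the non-Archimedean nature of $K$. You scale by the integer $n$ and conclude from $n v_n \in \lambda L$ that $v_n \in L$ ``as soon as $|\lambda|/n \leq 1$''. This is an Archimedean move. In a $p$-adic field every integer satisfies $|n|_K \leq 1$, so multiplication by $n$ never enlarges anything. Concretely, $n v_n \in \lambda L$ gives $v_n \in (\lambda/n)L$, and (assuming $L$ is an $R$-submodule, which one may) $(\lambda/n)L \subseteq L$ requires $|\lambda|_K \leq |n|_K$; since $|n|_K$ can be arbitrarily small (take $n = p^k$), this need not ever hold.

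The repair is immediate: scale by $\pi^{-n}$ instead of $n$. Choose the neighbourhood basis $U_n = \{v : q_i(v) \leq |\pi|^n \text{ for all } i \leq n\}$, pick $v_n \in U_n \setminus L$, and set $w_n = \pi^{-n} v_n$. Then $q_i(w_n) \leq 1$ for $i \leq n$, so $\{w_n\}$ is bounded; bornivorousness yields $w_n \in \lambda L$ for some fixed $\lambda \in K^\times$, hence $v_n \in \pi^n \lambda L \subseteq L$ as soon as $|\pi^n \lambda| \leq 1$, which genuinely holds for all large $n$. With this adjustment your argument is correct.
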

	
	In this way, we will often view normed spaces or Fr\'echet spaces as objects in $\B c_K$. We point out however that $(-)^b$ is not exact on $\mathscr{M}$: if $f: V\to W$ is a morphism of Fr\'echet spaces, then $\mathrm{coker}(f^b)$ need not lie in $\mathscr{M}^b$ and is therefore not necessarily isomorphic to $(\mathrm{coker} f)^b$.
	\begin{lem}
		\label{borniexact}
		The functor $(-)^b: LCVS_K\to \B c_K$ preserves limits. It is exact on the full subcategory of semi-normed vector spaces. 
	\end{lem}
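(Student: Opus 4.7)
The plan is to split the lemma into its two assertions and prove them separately. For the preservation of colimits, I would reduce to the two generating cases of coproducts (direct sums) and coequalizers, since every colimit in either $LCVS_K$ or $\B c_K$ can be constructed from these, and additive functors preserving both always preserve all colimits.

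For a direct sum $\bigoplus_{i\in I} V_i$ in $LCVS_K$, I would verify that the von Neumann bornology on the locally convex direct sum coincides with the coproduct bornology in $\B c_K$. Concretely, both consist exactly of subsets contained in $\sum_{i\in F} B_i$ for some finite $F\subseteq I$ and bounded $B_i\subseteq V_i$: one direction follows from the fact that a continuous semi-norm on $\bigoplus V_i$ restricts to continuous semi-norms on each $V_i$, and the other direction is the classical characterisation of bounded sets in the LCVS direct sum. For a coequalizer of $f,g:V\to W$, one compares the quotient topology on $W/\mathrm{Im}(f-g)$ with the $\B c_K$-coequalizer; the quotient bornology is by universal property the finest making the projection bounded, and the adjunction $(-)^t\dashv(-)^b$ identifies this with the von Neumann bornology of the quotient topology.

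For the exactness statement on the full subcategory of semi-normed spaces, I would check preservation of strict kernels and strict cokernels. Kernels are straightforward: the induced semi-norm on $\ker f$ and the induced bornology agree, since a subset of $\ker f$ is bounded by the subspace semi-norm iff it is bounded as a subset of $V$. Cokernels are the point where the semi-normed hypothesis is used: the quotient semi-norm on $W/\mathrm{Im}(f)$ is given by $\|\bar w\|=\inf_{u\in \mathrm{Im}(f)}\|w+u\|$, so any subset $\bar B$ with bounded quotient semi-norm lifts to a subset $B\subseteq W$ with semi-norm bounded by (say) twice the bound, by choosing coset representatives within a factor $2$ of the infimum. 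This shows the quotient bornology equals the von Neumann bornology of the quotient semi-norm, and together with the kernel statement yields exactness in the quasi-abelian sense.

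The main obstacle is the cokernel case, precisely as flagged in the remark preceding the lemma: on the subcategory $\mathscr{M}$ of Fr\'echet spaces, $(\mathrm{coker}\, f)^b$ is computed as $W/\overline{\mathrm{Im}(f)}$ and therefore differs from $\mathrm{coker}(f^b)=W/\mathrm{Im}(f)$ whenever $\mathrm{Im}(f)$ fails to be closed. The semi-normed restriction sidesteps this issue because the infimum formula allows an honest lifting; for the full $LCVS_K$ statement one must work with the possibly non-separated quotient, for which the universal-property/adjunction argument sketched above (essentially Houzel's treatment) is what makes the two bornologies coincide on the common underlying vector space.
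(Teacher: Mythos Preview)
The first assertion as stated is almost certainly a typo in the paper: the proof reads ``Right adjoints preserve colimits'', but right adjoints preserve \emph{limits}, and $(-)^b$ is the right adjoint of $(-)^t$. The remark immediately preceding the lemma already warns that $(-)^b$ is not exact on Fr\'echet spaces, and Lemma~\ref{cokerwithb} is devoted precisely to the discrepancy between $\coker(f^b)$ and $(\coker f)^b$. Your coequalizer argument accordingly cannot work: the adjunction $(-)^t\dashv(-)^b$ only gives $(\coker(f^b))^t\cong\coker f$, and applying $(-)^b$ yields $(\coker(f^b))^{tb}\cong(\coker f)^b$, which does not recover $\coker(f^b)$ unless the latter already lies in the essential image of $(-)^b$. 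For a concrete failure, take $f$ to be the inclusion $\bigoplus_{\mathbb{N}}K\hookrightarrow\prod_{\mathbb{N}}K$: the locally convex quotient is indiscrete, so every subset is von Neumann bounded, yet the entire bornological quotient is not the image of any bounded subset of $\prod K$ (if $B$ is bounded with $\pi_n(B)\subseteq\pi^{-m_n}R$, the class of $(\pi^{-m_n-1})_n$ lies outside the image of $B$). Your direct-sum analysis is correct but cannot salvage the claim.

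Your treatment of the second assertion is correct and in substance identical to the paper's: both rest on the observation that for a strict epimorphism $f:V\to W$ of semi-normed spaces the unit ball of $W$ lifts, up to a scalar, to the unit ball of $V$, which is exactly the statement that bounded sets in the quotient lift to bounded sets. The paper phrases this as ``$W^\circ\subseteq f(\pi^rV^\circ)$'', you as the quotient-seminorm infimum argument; these are the same computation. Since the (corrected) first part already handles kernels via the right-adjoint property, the paper's check of strict epimorphisms alone is all that is needed.
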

	\begin{proof}
		Right adjoints preserve limits. If $f: V\to W$ is a strict epimorphism of semi-normed spaces, then the unit ball $W^\circ$ is contained in some $f(\pi^rV^\circ)$, i.e. $f$ is also bornologically strict.
	\end{proof}
	As for locally convex topological vector spaces, there are notions of separatedness and completeness for bornological vector spaces (see \cite[sections 4 and 5]{ProsmansSchneiders}), which we recall now.
	
	Let $V$ be a bornological $K$-vector space and let $L$ be a bounded $R$-submodule. Denoting by $V_L$ the vector subspace spanned by $L$, we can endow $V_L$ with the bornology generated by $L$ (i.e. a subset is bounded if and only if it is contained in $\pi^nL$ for some $n\in \mathbb{Z}$ -- this is the von Neumann bornology of the gauge semi-norm associated to $L$, \cite[remark before Lemma 2.2]{SchneiderNFA}). By axiom (v) in Definition \ref{defborn}, we have
	\begin{equation*}
		V\cong \varinjlim_L V_L
	\end{equation*}
	in the category of bornological $K$-vector spaces, where the colimit is taken over all bounded $R$-submodules $L$.
	
	We call a bornology \textbf{separated} if it is generated by $R$-submodules which are $\pi$-adically separated. Note that in this case, the colimit above can be written as a colimit of normed spaces $V_L$ (equipped with the gauge norm).
	
	The inclusion of separated bornological vector spaces into $\B c_K$ admits a left adjoint, the separation functor $\mathrm{sep}$ (\cite[p. 48]{Houzel}).
	\begin{defn}
		A bornological $K$-vector space $V$ is \textbf{complete} if any bounded subset $B$ is contained in some bounded $R$-submodule $L$ such that $V_L$ is Banach.
	\end{defn}
	We denote the full subcategory of $\B c_K$ consisting of complete bornological $K$-vector spaces by $\h{\mathcal{B}}c_K$.
	
	By \cite[Proposition 5.11]{ProsmansSchneiders}, the natural inclusion of $\h{\B}c_K\to \B c_K$ admits a left adjoint, the \textbf{completion} functor $\h{(-)}: \mathcal{B}c_K\to \h{\mathcal{B}}c_K$, given by
	\begin{equation*}
		\h{V}=\mathrm{sep}(\varinjlim_L \h{V_L}),
	\end{equation*}
	where $\h{V_L}$ is the Banach completion of $V_L$ for $L$ a bounded $R$-submodule of $V$. For instance, if $V$ is a normed $K$-vector space, then $\h{V^b}\cong \h{V}^b$ is its Banach completion, viewed as a complete bornological space.
	
	By \cite[Corollary 1.18]{BambozziCGT}, an object $V$ of $\mathscr{M}$ is complete if and only if $V^b$ is a complete bornological vector space. In particular, we can view the category of Fr\'echet spaces as a full subcategory of $\h{\B} c_K$. Still, the relation between bornological completion and topological completion might be subtle in general.
	\begin{lem}
		\label{tcompl}
		Let $V\in LCVS_K$. Then $\h{V}$ is the Hausdorff completion of $(\h{V^b})^t$.
		
		If $V\in \mathscr{M}$, then $(\h{V^b})^t$ is Hausdorff, and the natural morphism $(\h{V^b})^t\to \h{V}$ (and hence $\h{V^b}\to \h{V}^b$) is injective.
	\end{lem}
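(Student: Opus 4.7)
The plan is to construct a natural continuous map $\alpha: (\widehat{V^b})^t \to \widehat V$ via the universal property of bornological completion, then verify the universal property of Hausdorff completion and the injectivity in the metrizable case. The crucial preliminary observation is that if $Z \in LCVS_K$ is complete Hausdorff, then $Z^b \in \widehat{\mathcal{B}}c_K$. Indeed, given a bounded $R$-submodule $L \subseteq Z$, its topological closure $\bar L$ is still bounded; it is $\pi$-adically separated because any bounded subset $B$ of a Hausdorff LCVS satisfies $\bigcap_n \pi^n B = 0$; and $Z_{\bar L}$ is Banach because a Cauchy sequence in the gauge norm is Cauchy in $Z$, converges to some $z \in Z$, and the limit lies in $\bar L$ by closedness. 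These $\bar L$ are cofinal among bounded $R$-submodules.

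Applying $(-)^b$ to the completion $V \to \widehat V$ yields a bounded map $V^b \to \widehat V^b$; since $\widehat V^b$ is complete by the observation, the universal property of bornological completion factors this uniquely as $V^b \to \widehat{V^b} \xrightarrow{\beta} \widehat V^b$. Composing $\beta^t$ with the adjunction counit $\epsilon_{\widehat V}: (\widehat V^b)^t \to \widehat V$ gives the desired natural map $\alpha$. For the first statement, I verify the universal property: for any complete Hausdorff $Z \in LCVS_K$, chaining the adjunction, the universal property of bornological completion (using that $Z^b$ is complete), and the adjunction once more yields
\begin{equation*}
\mathrm{Hom}_{LCVS_K}((\widehat{V^b})^t, Z) \cong \mathrm{Hom}_{\widehat{\mathcal{B}}c_K}(\widehat{V^b}, Z^b) \cong \mathrm{Hom}_{\mathcal{B}c_K}(V^b, Z^b) \cong \mathrm{Hom}_{LCVS_K}((V^b)^t, Z),
\end{equation*}
which matches $\mathrm{Hom}_{LCVS_K}(V, Z) \cong \mathrm{Hom}_{LCVS_K}(\widehat V, Z)$ by the universal property of topological completion. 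Density of the image of $\alpha$ follows from naturality of the counit: the composition $(V^b)^t \to (\widehat{V^b})^t \xrightarrow{\alpha} \widehat V$ equals $(V^b)^t \xrightarrow{\epsilon_V} V \to \widehat V$, whose image contains the dense subspace $V \subseteq \widehat V$.

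For the second statement, suppose $V \in \mathscr{M}$. By Lemma \ref{metricequiv}, $V \cong (V^b)^t$ and $\widehat V \cong (\widehat V^b)^t$ in $LCVS_K$. I claim $\beta: \widehat{V^b} \to \widehat V^b$ is injective. Writing $\widehat{V^b} \cong \varinjlim_L \widehat{V_L}$ (the separation step being trivial since Hausdorffness of the metrizable $V$ ensures its bornology is separated), each Banach completion $\widehat{V_L}$ embeds injectively into $\widehat V$ as the topological closure of $V_L$ in $\widehat V$, and these embeddings assemble, by uniqueness in the universal property, into $\beta$. Since filtered colimits are strongly exact in $\mathcal{B}c_K$ by \cite[Proposition 2.1.16]{Schneiders}, injectivity of each $\widehat{V_L} \to \widehat V^b$ transfers to injectivity of $\beta$. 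Applying $(-)^t$ shows $\alpha$ is injective, so $(\widehat{V^b})^t$ is Hausdorff; the injectivity of $\widehat{V^b} \to \widehat V^b$ then follows immediately by applying $(-)^b$.

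The main obstacle I expect is the identification $\mathrm{Hom}_{LCVS_K}((V^b)^t, Z) = \mathrm{Hom}_{LCVS_K}(V, Z)$ in the first part for general (non-metrizable) $V$ and complete Hausdorff $Z$. Since $(V^b)^t$ carries an a priori finer topology than $V$, this requires showing that the discrepancy between $V$ and its bornologification becomes invisible when mapping into complete Hausdorff targets. The metrizable case bypasses this entirely via Lemma \ref{metricequiv}; the general case will likely combine the density of $\alpha$'s image with the universal property of topological Hausdorff completion to reduce to a comparison of continuous maps on a dense subspace.
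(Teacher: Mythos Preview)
Your adjunction chain for the first part is essentially what the paper has in mind, and you correctly flag the residual gap: for non-bornological $V$, $\Hom_{LCVS_K}((V^b)^t, Z)$ can be strictly larger than $\Hom_{LCVS_K}(V, Z)$ even for complete Hausdorff $Z$. The paper's one-line ``the adjunctions imply immediately'' glosses over exactly this; since only the metrizable case is used later, no harm is done.

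For the second part you take a different route from the paper, and the idea is sound, but two justifications are wrong as written. First, saying $\h{V_L}$ ``embeds injectively into $\h{V}$ as the topological closure of $V_L$'' conflates the gauge-norm completion of $V_L$ with its closure in the (coarser) subspace topology inherited from $\h{V}$; the image of $\h{V_L}\to\h{V}$ is in general a proper subset of $\overline{V_L}$, so this sentence does not establish injectivity. Second, ``Hausdorffness of $V$ ensures its bornology is separated'' only shows that $V^b$ is separated, not that the colimit $\varinjlim_L \h{V_L}$ is separated before applying $\mathrm{sep}$; these are different statements.

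Both issues dissolve once you restrict to \emph{closed} bounded $R$-submodules $L$ (which are cofinal) and argue directly: if $(v_n)$ is gauge-Cauchy in $V_L$ with $v_n\to 0$ in $V$, then for $n\geq N_k$ the tail lies in the closed coset $v_{N_k}+\pi^k L$, so $0\in v_{N_k}+\pi^k L$, whence $v_n\in\pi^k L$ for $n\geq N_k$. This gives injectivity of each $\h{V_L}\to\h{V}$, and separatedness of $\varinjlim_L \h{V_L}$ then follows a posteriori from the injection into the separated space $\h{V}^b$. With this fix your proof is arguably cleaner than the paper's, which instead chases a sequence inside $\h{V^b}$ and invokes \cite[p.~113]{Houzel} for separatedness of the colimit together with \cite[Proposition 1.17]{BambozziCGT} for the equivalence of topological and bornological convergence in metrizable spaces.
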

	\begin{proof}
		First note that the adjunctions imply immediately that $\h{V}$ is the Hausdorff completion of $(\h{V^b})^t$.\\
		If $V$ is metrisable, then $V^b$ is proper in the sense of \cite[p. 112, Definition 5]{Houzel}. Let $\mathcal{B}$ be the set of bounded, $\pi$-adically separated $R$-submodules of $V^b$. Then by \cite[p. 113, Remark after Proposition 17]{Houzel}, $\varinjlim_{B\in \mathcal{B}} \h{V_B}$ is already separated, so 
		\begin{equation*}
			\h{V^b}\cong \varinjlim \h{V_B}
		\end{equation*}
		in $\mathcal{B}c_K$ by definition of the completion functor.
		
		So since $(-)^t$ commutes with colimits, we have $(\h{V^b})^t\cong \varinjlim \h{V_B}$ in $LCVS_K$.
		
		We now show that the map $(\h{V^b})^t\to \h{V}$ is injective, which implies that $(\h{V^b})^t$ is Hausdorff by \cite[Proposition 7.5]{SchneiderNFA}. Suppose $v\in (\h{V^b})^t$ is in the kernel. There exists some $B\in \mathcal{B}$ such that $v\in \h{V_B}$, and hence there is a sequence $(v_n)_n$ in $V_B$ tending to $v$. If the topology of $V$ is given by a family of semi-norms $|-|_n$, there exist $\alpha_n\in \mathbb{R}_{\geq 0}$ such that
		\begin{equation*}
			B\subseteq\{x\in V: |x|_n\leq \alpha_n\}.
		\end{equation*} 
		In particular, as $(v_n)$ is Cauchy in the normed space $V_B$, it is Cauchy in $V$, and its limit in $\h{V}$ is the image of $v$. But then by assumption on $v$, $v_n\to 0$ in $\h{V}$ and hence in $V$. So $(v_n)$ is a Cauchy sequence in $V$ tending to zero, and hence $v_n\to 0$ in $V^b$ by \cite[Proposition 1.17]{BambozziCGT}. But then $v_n\to v$ and $v_n\to 0$ in $\h{V^b}$, and since $\h{V^b}$ is a separated bornological space, we have $v=0$. Thus $(\h{V^b})^t\to \h{V}$ is injective.
		
		If we apply $(-)^t$ to the natural morphism $\h{V^b}\to \h{V}^b$, we obtain the morphism $(\h{V^b})^t\to \h{V}$, so as a linear map the two morphisms coincide. Hence injectivity of $\h{V^b}\to \h{V}^b$ follows from the above.
	\end{proof}

	\begin{prop}
		\label{hBciselementary}
		The category $\h{\B} c_K$ is a quasi-abelian category. It is complete and cocomplete. Direct sums are strongly exact and direct products are exact.
	\end{prop}
	\begin{proof}
		See \cite[Proposition 5.6]{ProsmansSchneiders}.
	\end{proof}
	As the completion functor makes $\h{\B}c_K$ a reflective subcategory of $\B c_K$, we know that the limit of a diagram in $\h{\B}c_K$ can also be computed in $\B c_K$, and the colimit of a diagram in $\h{\B}c_K$ is the completion of the colimit of the corresponding diagram in $\B c_K$. In particular, the definition of completeness yields immediately that any object of $\h{\B}c_K$ is a colimit of Banach spaces.
	
	\begin{lem}
		\label{cokerwithb}
		Let $f: V\to W$ be a morphism of Fr\'echet spaces, and consider the corresponding morphism $f^b: V^b\to W^b$ in $\h{\B}c_K$. Then the natural morphism $\mathrm{coker}(f^b)\to (\mathrm{coker}f)^b$ is a bounded bijection, where $\mathrm{coker} f$ denotes the cokernel in the category of Fr\'echet spaces. 
	\end{lem}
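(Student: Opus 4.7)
The plan is to identify both $\mathrm{coker}(f^b)$ and $(\mathrm{coker} f)^b$ as the \emph{same} linear quotient of $W^b$, namely $W^b/\overline{f(V)}$, and then observe that the natural bounded comparison map is a linear bijection. The map itself arises from the universal property: $(\mathrm{coker} f)^b$ is complete in $\h{\B}c_K$, and the bounded map $W^b \to (\mathrm{coker} f)^b$ kills $f(V)^b$, so it factors uniquely and boundedly through $\mathrm{coker}(f^b)$.

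Linear surjectivity is easy: on underlying vector spaces, the composite $W^b \to \mathrm{coker}(f^b) \to (\mathrm{coker} f)^b$ agrees with the Fr\'echet quotient $W \twoheadrightarrow W/\overline{f(V)}$, which is surjective, so the right-hand factor is surjective.

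For injectivity I would prove the sharper claim that $W^b \to \mathrm{coker}(f^b)$ is itself linearly surjective with kernel exactly $\overline{f(V)}$. Using the explicit formula $\mathrm{coker}(f^b) = \h{W^b/f(V)^b} = \mathrm{sep}\bigl(\varinjlim_{L} \h{(W^b/f(V)^b)_L}\bigr)$ together with the completeness of $W^b$, one restricts the colimit to images of bounded $R$-submodules $L_0 \subset W^b$ for which $W_{L_0}$ is Banach. For such $L_0$, the seminormed space $(W^b/f(V)^b)_L$ is the quotient of the Banach space $W_{L_0}$ by $f(V)\cap W_{L_0}$ with the quotient seminorm, and its Banach completion is simply $W_{L_0}/\overline{f(V)\cap W_{L_0}}$ (closure taken in $W_{L_0}$), introducing no new elements beyond those of $W_{L_0}$. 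Since both the colimit (in $\B c_K$) and the separation step preserve surjectivity of the underlying linear maps, this yields the linear surjection $W^b \twoheadrightarrow \mathrm{coker}(f^b)$. To pin down the kernel: if $w \in \overline{f(V)}$, choose $f(v_n) \to w$ in $W$; metrizability of $W$ makes this a bornological convergence in $W^b$ as well (a fact already used in the proof of Lemma \ref{tcompl}), so $0 = \overline{f(v_n)} \to \bar w$ bornologically in $W^b/f(V)^b$, forcing $\bar w$ into the bornological closure of $0$ and hence to zero in the separation and in $\mathrm{coker}(f^b)$. Conversely, anything killed by $W^b \to \mathrm{coker}(f^b)$ is also killed by the composite to $(\mathrm{coker} f)^b = W/\overline{f(V)}$, so it lies in $\overline{f(V)}$. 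Combining, $\mathrm{coker}(f^b) \cong W^b/\overline{f(V)} \cong (\mathrm{coker} f)^b$ as underlying linear spaces, and bijectivity of the comparison map follows.

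The main obstacle is the linear surjectivity of $W^b \to \mathrm{coker}(f^b)$: bornological completion generally introduces new vectors (limits of Cauchy nets in the various seminormed pieces $V_L$), so $V \to \h{V}$ need not be surjective. The Fr\'echet hypothesis on $W$ is what rescues us: it allows $L_0$ to be chosen with $W_{L_0}$ already Banach, whereupon the Banach completion of the quotient seminormed piece is just a Banach quotient of $W_{L_0}$ and contributes nothing genuinely new. Without this input one could not identify $\mathrm{coker}(f^b)$, a priori a quotient of the \emph{completion} of $W^b/f(V)^b$, with a quotient of $W^b$ itself.
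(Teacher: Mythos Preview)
Your argument is correct in outline and reaches the right conclusion, but it takes a more hands-on route than the paper. The paper's proof simply cites two general facts: (i) the underlying vector space of a cokernel in $\h{\B}c_K$ is the quotient by the \emph{bornological closure} of the image (from \cite[Proposition~4.6(b), Proposition~5.6(a)]{ProsmansSchneiders}), and (ii) for subspaces of a Fr\'echet space, the bornological closure coincides with the topological closure (combining \cite[Proposition~3.15]{BamStein} and \cite[Proposition~1.15]{BambozziCGT}). This immediately gives $\mathrm{coker}(f^b)=W/\overline{f(V)}$ on underlying vector spaces, and the comparison map is literally the identity. Your argument instead unpacks the completion formula $\h{(-)}=\mathrm{sep}(\varinjlim_L \h{(-)_L})$ directly and uses the Banach property of $W_{L_0}$ to see that no new vectors are introduced; this is essentially a direct proof of (i) and (ii) in this particular case.

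One small imprecision worth flagging: the gauge seminorm on $(W^b/f(V))_L$ with $L$ the image of $L_0$ is not quite the quotient seminorm from $W_{L_0}$ modulo $f(V)\cap W_{L_0}$, because the gauge condition $\bar w\in\pi^n L$ allows correction by arbitrary elements of $f(V)$, not only those lying in $W_{L_0}$. However, this does not affect your conclusion: the map $W_{L_0}\to (W^b/f(V))_L$ is still a surjection of seminormed spaces with $W_{L_0}$ Banach, so the Banach completion of the target is a quotient of $W_{L_0}$ by a closed subspace and introduces no new elements. The rest of your argument (colimit, separation, and the kernel computation via bornological convergence of sequences in metrisable spaces) is fine.
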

	\begin{proof}
		Write $U$ for the set-theoretic image of $f$. By \cite[Proposition 4.6.(b), Proposition 5.6.(a)]{ProsmansSchneiders}, the underlying vector space of $\mathrm{coker}(f^b)$ is the quotient of $W$ by the bornological closure of $U$ (see \cite[Definition 4.3]{ProsmansSchneiders}). Since $W$ is Fr\'echet, the bornological closure of $U$ agrees with the bornological limit points of $U$ (\cite[Proposition 3.15]{BamStein}), which are the topological limit points of $U$ by \cite[Proposition 1.15]{BambozziCGT}.
		
		Thus the underlying vector space of $\mathrm{coker}(f^b)$ is $W/\overline{U}$, and the natural bounded morphism $\mathrm{coker}(f^b)\to (\mathrm{coker}f)^b$ is simply the identity map. 
	\end{proof}
	Note that $\mathrm{coker}(f^b)\to (\mathrm{coker}f)^b$ is indeed an isomorphism if $V$ and $W$ are Banach. A large part of subsection 5.1 will be concerned with finding further conditions when this bijection is an isomorphism.
	
	Given $(V, \B)$, $(W, \B')\in \mathcal{B}c_K$, we can endow the tensor product $V\otimes_K W$ with a bornology as follows.
	
	If $B_1\in \B$ and $B_2\in \B'$ are bounded $R$-submodules, we can view $B_1\otimes_R B_2$ as a subset of $V\otimes_K W$ via the natural inclusion. We call $V\otimes_K W$, endowed with the bornology generated by all $B_1\otimes B_2$ as $(B_1, B_2)$ ranges over all pairs of bounded $R$-submodules, the \textbf{projective bornological tensor product}. 
	\begin{lem}
		Let $U, V, W$ be in ${\B}c_K$. If $f: V\times W\to U$ is a bounded bilinear map, then there exists a unique bounded linear map $\phi: V\otimes_K W\to U$ such that $f$ is the composition of the natural map $V\times W\to V\otimes_K W$ and $\phi$.
	\end{lem}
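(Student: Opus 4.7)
The plan is to build $\phi$ purely algebraically and then verify boundedness on a generating set of the bornology. Since the underlying data of $V \otimes_K W$ (as a $K$-vector space) together with the map $V \times W \to V \otimes_K W$ is just the ordinary algebraic tensor product, the universal property of the latter produces a unique $K$-linear map $\phi : V \otimes_K W \to U$ factoring $f$ through $V \times W \to V \otimes_K W$. Thus existence and uniqueness of $\phi$ as a $K$-linear map are immediate, and the only content of the lemma is that $\phi$ is bounded with respect to the projective bornological tensor product bornology.

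To check boundedness, I would recall that a $K$-linear map out of a bornological vector space is bounded as soon as it sends each element of a generating family of the bornology to a bounded subset (using axioms (i), (iii), (iv), (v) of Definition \ref{defborn}). By construction, the bornology on $V \otimes_K W$ is generated by the subsets $B_1 \otimes_R B_2 \subset V \otimes_K W$, as $(B_1, B_2)$ ranges over pairs of bounded $R$-submodules of $V$ and $W$. So it suffices to show $\phi(B_1 \otimes_R B_2)$ is bounded in $U$ for every such pair.

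Here is the key computation. By definition $B_1 \otimes_R B_2$ is the $R$-submodule of $V \otimes_K W$ spanned by the pure tensors $b_1 \otimes b_2$ with $b_i \in B_i$. Since $\phi$ is $K$-linear and hence $R$-linear, and $\phi(b_1 \otimes b_2) = f(b_1, b_2)$, we obtain
\begin{equation*}
\phi(B_1 \otimes_R B_2) = R \cdot f(B_1 \times B_2).
\end{equation*}
Now $B_1 \times B_2$ is bounded in $V \times W$ with the product bornology, so by boundedness of the bilinear map $f$, the set $f(B_1 \times B_2)$ is bounded in $U$. Finally, axiom (v) of Definition \ref{defborn} guarantees that the $R$-submodule $R \cdot f(B_1 \times B_2)$ is still bounded in $U$, so $\phi(B_1 \otimes_R B_2)$ is bounded.

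There is no real obstacle: the argument is genuinely short because all the hard work has been moved into the definition of the bornology on $V \otimes_K W$. The only minor subtlety worth double-checking is that a $K$-linear map is bounded as soon as each element of a generating set of the source bornology has bounded image, which follows from combining the axioms on the target bornology ($R$-stability, finite unions, and scalar stability) with the $R$-linearity of $\phi$.
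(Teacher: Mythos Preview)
Your proof is correct. The paper itself does not give an argument but simply cites \cite[p.~174, Th\'eor\`eme 1.b]{Houzel}; your direct verification---algebraic universal property plus boundedness on the generating family $\{B_1 \otimes_R B_2\}$---is exactly the standard argument one would find there, and nothing is missing.
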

	\begin{proof}
		See \cite[p.174, Th\'eoreme 1.b]{Houzel}.
	\end{proof}
	We denote by $V\h{\otimes}_K W\in \h{\mathcal{B}}c_K$ the completion of $V\otimes_K W$ with respect to the projective tensor product bornology.
	
	Note that it follows immediately from the universal property above that there is a natural isomorphism $V\h{\otimes}_K W\cong \h{V}\h{\otimes}_K \h{W}$.
	
	For $V, W\in {\B}c_K$, we can endow $\mathrm{Hom}_{\B c_K}(V, W)$ with a bornology by declaring a set of morphisms $F$ to be bounded if it is equibounded, i.e. we require that
	\begin{equation*}
		\underset{\phi\in F}{\cup}\phi(B)\subseteq W
	\end{equation*}
	is a bounded subset of $W$ whenever $B$ is bounded in $V$. 
	
	It follows from \cite[p.97, Proposition 9]{Houzel} that this turns $\mathrm{Hom}_{\B c_K}(V, W)$ into a complete bornological vector space whenever $W$ is complete.
	\begin{thm}
		\label{ptisclosedmonoidal}
		\leavevmode
		\begin{enumerate}[(i)]
			\item $\otimes_K$ and $\mathrm{Hom}_{\B c_K}$ give $\B c_K$ the structure of a closed symmetric monoidal category.
			\item $\h{\otimes}_K$ and $\mathrm{Hom}_{\h{\B} c_K}$ give $\h{\B} c_K$ the structure of a closed symmetric monoidal category.
		\end{enumerate}
	\end{thm}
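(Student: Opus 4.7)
The plan is to handle (i) as a mostly formal unwinding of universal properties, obtain (ii) from (i) using the completion adjunction, and then reduce (iii) to the case of Banach spaces via a filtered colimit argument that relies on $\h{\B}c_K$ being elementary.

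For (i), the associativity and symmetry constraints for $\otimes_K$ exist at the level of abstract $K$-vector spaces, and one checks directly that they send generators of the projective tensor product bornology (subsets of the form $B_1\otimes_R B_2\otimes_R B_3$) to bounded subsets, hence give isomorphisms in $\B c_K$. The unit is $K$ equipped with its canonical bornology of bounded subsets, and the unitor axioms are immediate. The closed structure follows from the universal property of the projective bornological tensor product recorded just above the theorem, combined with the definition of the equibounded bornology on $\mathrm{Hom}_{\B c}(W,U)$: a bounded linear map $V\otimes_K W\to U$ corresponds to a bounded bilinear map $V\times W\to U$, which in turn corresponds to a bounded linear map $V\to \mathrm{Hom}_{\B c}(W,U)$.

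For (ii), we use that the completion functor $\h{(-)}:\B c_K\to \h{\B}c_K$ is left adjoint to the inclusion, together with the natural isomorphism $V\h{\otimes}_K W\cong \h{V}\h{\otimes}_K\h{W}$, to transport the associator, symmetry, and unit isomorphisms from $\B c_K$ to $\h{\B}c_K$. The cited result of Houzel that $\mathrm{Hom}_{\B c}(V,W)$ is already complete whenever $W$ is means no sheafification-style correction is required on the inner hom side. The required tensor-hom adjunction in $\h{\B}c_K$ is then obtained by composing the adjunction in $\B c_K$ with the completion adjunction, noting that all arguments and values lie in $\h{\B}c_K$.

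The main content is (iii). The strategy is to reduce to the Banach case in two steps. First, since by (ii) the functor $V\h{\otimes}_K -$ is a left adjoint, it commutes with all colimits; any $W\in \h{\B}c_K$ can be written as a filtered colimit $W\cong \varinjlim W_L$ of Banach spaces by definition of completeness. Because $\h{\B}c_K$ is elementary, filtered colimits are strongly exact, so it suffices to prove that $V\h{\otimes}_K -$ preserves strict short exact sequences whose terms are Banach. The same filtered colimit reasoning applied to the first variable (using symmetry of $\h{\otimes}_K$) reduces the problem further to the case in which $V$ is also Banach.

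The genuinely substantial step is exactness on Banach spaces, and this is where I expect the real obstacle. Given a strict short exact sequence of Banach spaces $0\to W_1\to W_2\to W_3\to 0$, one passes to unit balls: up to equivalence of norms this becomes a short exact sequence of $\pi$-adically complete, $\pi$-torsion-free $R$-modules $0\to W_1^\circ\to W_2^\circ\to W_3^\circ\to 0$. The unit ball of $V\h{\otimes}_K W_i$ is (up to equivalence) the $\pi$-adic completion of the image of $V^\circ\otimes_R W_i^\circ$ in $V\otimes_K W_i$, and the desired strict exactness reduces to the statement that tensoring with the $\pi$-adically complete, $\pi$-torsion-free $R$-module $V^\circ$ and then $\pi$-adically completing preserves short exact sequences of such modules. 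This is precisely the point at which spherical completeness of $K$ (which holds because $K/\Qp$ is finite) enters, via the standard structure theory for non-Archimedean Banach spaces: it guarantees the necessary flatness and Tor-vanishing for $V^\circ$ viewed as an $R$-module, so that $V^\circ\h{\otimes}_R -$ is exact on the relevant category. Inverting $\pi$ recovers strict exactness of $V\h{\otimes}_K -$ on short exact sequences of Banach spaces, which combined with the colimit reduction finishes (iii).
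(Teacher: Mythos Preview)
The paper defers the entire proof to external references, so there is no in-paper argument to compare against; you are supplying the content those references contain. Your treatment of (i) and (ii) is standard and correct. For (iii), the overall two-step reduction is the right shape, and the first reduction (to Banach $V$) works exactly as you say: $V\h{\otimes}_K-$ is a filtered colimit of the functors $V_B\h{\otimes}_K-$, and filtered colimits are strongly exact in an elementary category, so it suffices to treat Banach $V$.

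The gap is in the reduction in the second variable. Knowing that each $W_i$ is individually a filtered colimit of Banach spaces does not by itself reduce the problem to strict short exact sequences of Banach spaces; you need the \emph{sequence} $0\to W_1\to W_2\to W_3\to 0$ to be a filtered colimit of such sequences. This is true but requires an argument: write $W_2=\varinjlim (W_2)_L$ and set $(W_1)_L=W_1\cap (W_2)_L$; one must check that $(W_1)_L$ is closed in $(W_2)_L$ (using that $W_3$ is separated, so a Mackey-convergent sequence in $W_1$ cannot have a limit outside $W_1$), whence $0\to (W_1)_L\to (W_2)_L\to (W_2)_L/(W_1)_L\to 0$ is a strict sequence of Banach spaces, and strictness of $W_2\to W_3$ identifies $W_3$ with $\varinjlim (W_2)_L/(W_1)_L$. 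You should make this explicit. A smaller point: in the Banach case, the exactness of $V^\circ\h{\otimes}_R-$ does not require spherical completeness. Over a DVR every torsion-free module is flat, and $\pi$-adic completion is exact on short exact sequences with $\pi$-torsion-free cokernel; that is all that is used. The structure theorem $V\cong c_0(X)$ (which uses discreteness of the valuation, not spherical completeness per se) is relevant to projectivity of Banach spaces later in the paper, but is not what drives this step.
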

	\begin{proof}
		For (i), see \cite[Proposition 3.56]{Bamdagger}. (ii) then follows directly from (i), as completion is a left adjoint to the inclusion $\h{\B}c_K\to \B c_K$ (see \cite[Remark 3.58]{Bamdagger}).
	\end{proof}

	Note that if $V$, $W$ are metrisable locally convex topological vector spaces, then the tensor product $V\otimes_K W$ carries the structure of a locally convex topological vector space (again called the projective tensor product topology), which is metrisable by \cite[p. 188, Corollaire 1]{Houzel}, making its Hausdorff completion $V\h{\otimes}_K W$ a Fr\'echet space (\cite[remark after Proposition 17.6]{SchneiderNFA}).
	
	In the case of normed spaces, it is essentially tautological to compare the topological approach with the bornological one:
	
	\begin{lem}
		\label{normedtensor}
		\leavevmode
		\begin{enumerate}[(i)]
			\item If $V, W$ are normed spaces, then $V\otimes_KW$ is normed and the natural morphism $V^b\otimes_KW^b\to (V\otimes_KW)^b$ is an isomorphism. Moreover, the natural morphism 
			\begin{equation*}
				V^b\h{\otimes}_K W^b\to (V\h{\otimes}_K W)^b
			\end{equation*}
			is an isomorphism in $\h{\B}c_K$.
			\item If $V$ is Banach and
			\begin{equation*}
				0\to W_1\to W_2\to W_3\to 0
			\end{equation*}
			is a short strictly exact sequence of Banach spaces, then
			\begin{equation*}
				0\to V\h{\otimes}_K W_1\to V\h{\otimes}_K W_2\to V\h{\otimes}_K W_3\to 0
			\end{equation*}
			is strictly exact.
		\end{enumerate}
	\end{lem}
	\begin{proof}
		\begin{enumerate}[(i)]
			\item By definition, $V\otimes_KW$ is topologized by the lattice $V^\circ\otimes_R W^\circ$ and hence semi-normed. But we have already remarked that $V\otimes_KW$ is metrisable, so it is normed, and the identification of tensor products follows directly from the definition. Moreover, it is clear from the definition that for any normed space $V$, the completion of $V^b$ is precisely the Banach completion of $V$. This now yields the corresponding isomorphism for the completed tensor products.
			\item We can assume without loss of generality that $W_1^\circ=W_1\cap W_2^\circ$ and $W_3^\circ=W_2^\circ/W_1^\circ$.  Now the tensor product $V\otimes_K W_i$ is the normed space whose norm is the gauge norm for $V^\circ\otimes_R W_i^\circ$. Since the respective unit balls are $\pi$-torsionfree and thus flat $R$-modules, $V^\circ \otimes_RW_i^\circ$ embeds into $V\otimes_KW$. Thus the short exact sequence
			\begin{equation*}
				0\to V^\circ\otimes_RW_1^\circ\to V^\circ\otimes_R W_2^\circ\to V^\circ\otimes_RW_3^\circ\to 0
			\end{equation*}
			is exact, which implies that the sequence
			\begin{equation*}
				0\to V\otimes_KW_1\to V\otimes_KW_2\to V\otimes_KW_3\to 0
			\end{equation*} 
			is a strict exact sequence of normed $K$-vector spaces. But by \cite[Proposition 1.1.9/5]{BGR}, completion is exact on normed vector spaces, and the result follows.
		\end{enumerate}
	\end{proof}
	
	More generally, let $V$, $W$ be metrisable locally convex topological vector spaces. To compare $V^b{\otimes}_K W^b$ and $(V{\otimes}_K W)^b$, we first recall the following definition.
	
	\begin{defn}[{see \cite[Definition 4.2.1]{Schikhof}}]
		\leavevmode
		\begin{enumerate}[(i)]
			\item A normed $K$-vector space $E$ is said to be \textbf{of countable type} if it contains a countable subset spanning a dense subspace.
			\item A semi-norm $q$ on a $K$-vector space $E$ is called of countable type if the Hausdorff quotient $E_q=E/q^{-1}(0)$, viewed as a normed space in the obvious way, is of countable type.
			\item A locally convex $K$-vector space $E$ is said to be of countable type if each continuous semi-norm on $E$ is of countable type.
		\end{enumerate}
	\end{defn}
	For example, the Fr\'echet algebra $\O(\mathbb{A}_K^{d, \mathrm{an}})=\varprojlim_n K\langle \pi^nx_1, \hdots, \pi^n x_d\rangle$ is easily seen to be of countable type for any $d\geq 0$.
	
	\begin{lem}
		\label{Frtensor}
		Let $V, W$ be metrisable locally convex topological $K$-vector spaces, and suppose that at least one of the following is satisfied:
		\begin{enumerate}[(i)]
			\item $K$ is spherically complete.
			\item $V$ is of countable type.
		\end{enumerate}
		Then the natural morphism 
		\begin{equation*}
			V^b\otimes_KW^b\to (V\otimes_KW)^b 
		\end{equation*}
		is an isomorphism. 
	\end{lem}
	\begin{proof}
		This is \cite[Theorem 10.4.13]{Schikhof} together with \cite[Theorem 4.4.3]{Schikhof}.
	\end{proof} 
	\begin{lem}
		Let $V$, $W$ be metrisable locally convex topological $K$-vector spaces, and suppose that at least one of the following is satisfied:
		\begin{enumerate}[(i)]
			\item $K$ is spherically complete.
			\item $V$ is of countable type.
		\end{enumerate}
		Then the natural morphism $V^b\h{\otimes}_KW^b\to (V\h{\otimes}_KW)^b$ is injective.
	\end{lem}
	\begin{proof}
		As $V\otimes_K W$ is metrisable, this is a special case of Lemma \ref{tcompl}, noting that
		\begin{equation*}
			\h{(V\otimes W)^b}\cong \h{V^b\otimes_K W^b}=V^b\h{\otimes}_K W^b
		\end{equation*}
		by the Lemma above.
	\end{proof}

	We will discuss more general conditions for an isomorphism 
	\begin{equation*}
		V^b\h{\otimes}W^b\cong (V\h{\otimes}W)^b
	\end{equation*}
	in subsection 5.2.

	We now wish to show that $\h{\B}c_K$ has enough flat projectives stable under $\h{\otimes}_K$. Most of this can already be pieced together from e.g. \cite{ProsmansSchneiders} and \cite[Appendix A]{BBK}, but we give a slightly different presentation.
	
	For this purpose, we introduce the following class of spaces:
	
	Given a set $X$ and a Banach $K$-vector space $V$, we let $c_0(X, V)$ denote the space of all functions $\phi: X\to V$ such that for any $\epsilon>0$, the set
	\begin{equation*}
		\{x\in X: |\phi(x)|\geq \epsilon\}
	\end{equation*}
	is a finite set. In particular, each $\phi\in c_0(X, V)$ has bounded image, and $c_0(X, V)$ becomes a Banach space under the sup norm. 
	
	We abbreviate $c_0(X, K)$ to $c_0(X)$.
	
	\begin{lem}
		\label{czeroprops}
		\leavevmode
		\begin{enumerate}[(i)]
			\item There is a natural isomorphism
			\begin{equation*}
				c_0(X)\h{\otimes}_K V\cong c_0(X, V)
			\end{equation*}
			for any set $X$ and any Banach space $V$.
			\item There is a natural isomorphism
			\begin{equation*}
				c_0(X)\h{\otimes}_K c_0(Y)\cong c_0(X\times Y)
			\end{equation*}
			for any sets $X, Y$.
		\end{enumerate}
	\end{lem}
	\begin{proof}
		Let $X$ be a set and $V$ be a Banach space.
		
		There is a natural morphism of Banach spaces $c_0(X)\h{\otimes}_K V\to c_0(X, V)$, which maps the unit ball of the left-hand side (which is the $\pi$-adic completion of $c_0(X)^\circ\otimes_R V^\circ$) to the unit ball of the right-hand side, $c_0(X, V)^\circ$. We now note that for any $n\geq 1$, $c_0(X, V)^\circ/\pi^n$ consists of all functions $\phi: X\to V^\circ/\pi^n$ with finite support, i.e. $\phi(x)=0$ for all but finitely many $x\in X$. It is then straightforward to see that the map between the unit balls is an isomorphism mod $\pi^n$ for each $n$, and (i) follows.
		
		Applying (i) in the case when $V=c_0(Y)$ for some set $Y$, we immediately obtain
		\begin{equation*}
			c_0(X)\h{\otimes}_K c_0(Y)\cong c_0(X, c_0(Y))
		\end{equation*}
		via the natural morphism. We now verify via the usual argument that the right-hand side is naturally isomorphic to $c_0(X\times Y)$. 
		
		Indeed, given $\phi: X\to c_0(Y)$ in $c_0(X, c_0(Y))$, let $\phi_x: Y\to K$ denote the function $\phi(x)$, an element of $c_0(Y)$. Then $\Phi: X\times Y\to K$, sending $(x, y)$ to $\phi_x(y)$, is a well-defined map from $X\times Y$ to $K$. For $\epsilon>0$, let $X_\epsilon\subseteq X$ be a finite set such that $|\phi_x|<\epsilon$ for all $x\in X\setminus X_\epsilon$. For each $x\in X_\epsilon$, let $Y_x\subseteq Y$ be the (finite) set of all $y\in Y$ such that $|\phi_x(y)|\geq \epsilon$. Then $|\Phi(x, y)|<\epsilon$ for all $(x, y)$ outside of the finite set $\{(x, y): x\in X_\epsilon, y\in Y_x\}$, so $\Phi\in c_0(X\times Y)$.
		
		Conversely, if $\Phi: X\times Y\to K$ is an element of $c_0(X\times Y)$, let $\epsilon >0$ and let $X_\epsilon\subseteq X$ denote the finite set
		\begin{equation*}
			X_\epsilon=\{x\in X: |\Phi(x, y)|\geq \epsilon \ \text{for some } y\in Y\},
		\end{equation*}
		analogously $Y_\epsilon\subseteq Y$. For each $x\in X$, the function $\Phi(x, -): Y\to K$ is in $c_0(Y)$, as it takes values less then $\epsilon$ outside of $Y_\epsilon$. Thus $x\mapsto \Phi(x, -)$ defines a map from $X$ to $c_0(Y)$, which by construction takes values less than $\epsilon$ outside of $X_\epsilon$. So $\Phi\mapsto (x\mapsto \Phi(x, -))$ is a bounded linear map $c_0(X\times Y)\to c_0(X, c_0(Y))$, which is straightforwardly verified to be the inverse of the natural morphism $c_0(X, c_0(Y))\to c_0(X\times Y)$ given above. Thus, $c_0(X, c_0(Y))\cong c_0(X\times Y)$, and we have proven (ii).
	\end{proof}
	
	\begin{lem}
		\label{flatproj}
		The object $c_0(X)$ is a strongly flat projective object in $\h{\B}c_K$ for any set $X$.
	\end{lem}
	\begin{proof}
		Let $X$ be a set, which we view as a bounded subset of $c_0(X)$ by identifying $x\in X$ with the corresponding characteristic function.
		
		We first prove that $c_0(X)$ is strongly flat. Since completed tensor products are strongly right exact, we only need to show that $c_0(X)\h{\otimes}_K-$ preserves kernels.
		
		If $V\in \h{\B}c_K$, let $C(X, V)$ denote the vector space of all functions $f: X\to V$ such that there exists a bounded $R$-submodule $B\subseteq V$ such that for each $n$, $f(x)$ is contained in $\pi^nB$ for almost all $x\in X$. This carries the bornology given by declaring a collection $S$ of functions to be a bounded subset if there exists a bounded $R$-submodule $B\subseteq V$ such that $f(X)\subseteq B$ for all $f\in S$. In other words, $C(X, V)=\varinjlim c_0(X, V_B)\in \B c_K$, where the limit ranges over all bounded (without loss of generality, complete) $R$-submodules of $V$. It follows from the completeness of $V$ that $C(X, V)$ is already complete, and thus $C(X, V)$ is also the colimit of the $c_0(X, V_B)$ in $\h{\B}c_K$.
	
		But now we can write $V=\varinjlim V_B$ as the colimit of Banach spaces $V_B$ contained in $V$. Since $\h{\otimes}$ commutes with colimits, Lemma \ref{czeroprops}.(i) implies that 
		\begin{equation*}
			c_0(X)\h{\otimes}_KV\cong \varinjlim c_0(X, V_B)\cong C(X, V). 
		\end{equation*}
		This description now makes it immediate that $c_0(X)\h{\otimes}_K-$ preserves kernels in $\h{\B}c_K$: if $f: V\to W$ is a morphism, then $c_0(X)\h{\otimes}_K \mathrm{ker}f\cong C(X, \mathrm{ker}f)$, which is indeed the kernel of $C(X, V)\to C(X, W)$ since $\mathrm{ker}f\to V$ is a strict monomorphism.
		
		Next, we show that $c_0(X)$ is also projective. It is easy to see that bounded linear maps $c_0(X)\to V$ with $V$ Banach are in bijective correspondence with functions $X\to V$ such that the image of $X$ is bounded (cf. \cite[beginning of chapter 10]{SchneiderNFA}). If $V$ is an arbitrary complete bornological $K$-vector space and $f:X\to V$ is a function with bounded image, let $B$ be a bounded $R$-submodule of $V$ such that $f(X)\subseteq B$ and $V_B$ is Banach. Then $f$ gives rise to a morphism $c_0(X)\to V_B$ by the above, and hence to a morphism $c_0(X)\to V$. So even for general $V$, a function $X\to V$ with bounded image gives rise to a morphism $c_0(X)\to V$.
		
		Thus, if $f: V\to W$ is a strict epimorphism and $g: c_0(X)\to W$ is a morphism, then by strictness $V$ has a bounded subset $B$ such that each element of $g(X)$ has a preimage in $B$, yielding the desired morphism $c_0(X)\to V$. Hence, $c_0(X)$ is indeed projective.
	\end{proof}
	
	\begin{cor}
		\label{hBcflatproj}
		The category $\h{\B}c_K$ has enough flat projectives stable under $\h{\otimes}_K$.
	\end{cor}
	\begin{proof}
		Let $\mathcal{P} \subseteq \h{\B}c_K$ denote the full subcategory consisting of objects which are of the form $\oplus_{i\in I} c_0(X_i)$ for some sets $X_i$, $i\in I$. By the above, any $P\in \mathcal{P}$ is flat and projective. Since any Banach space $V$ admits a strict epimorphism $c_0(V^\circ)\to V$, it follows that any $V\in \h{\B}c_K$ admits a strict epimorphism from some object of $\mathcal{P}$, namely $\oplus_B c_0(V_B^\circ)\to V$.
		
		Moreover, we have seen in Lemma \ref{czeroprops}.(ii) that $c_0(X)\h{\otimes}_K c_0(Y)\cong c_0(X\times Y)$ for all sets $X, Y$, so that $\mathcal{P}$ is stable under $\h{\otimes}_K$. Thus $\h{\B}c_K$ has enough flat projectives stable under $\h{\otimes}_K$.
	\end{proof}
	
	We warn the reader that the proof in \cite[Theorm 3.50]{BamStein} is flawed (as we will see in the next subsection), so that we do not expect an arbitrary object in $V\in \h{\B}c_K$ to be flat.
	
	\begin{cor}
		\label{hBcquasiel}
		The category $\h{\B}c_K$ is a quasi-elementary category.
	\end{cor}
	\begin{proof}
		It remains to show that $c_0(X)$ is small for any set $X$. If $V_i$ is a collection of complete bornological vector spaces, then $\oplus V_i\in \h{\B}c_K$ has as underlying vector space the abstract direct sum $\oplus_iV_i$, and a set $B\subseteq \oplus_i V_i$ is bounded if there exist finitely many $i_1, \hdots, i_n$ and bounded subsets $B_{i_j}\subseteq V_{i_j}$ such that $B\subseteq \oplus_j B_{i_j}$.
		
		If $f: c_0(X)\to \oplus_i V_i$ is bounded, then $f(X)$ is a bounded set and hence contained in some finite direct sum $\oplus_j B_{i_j}$ as above. But then $f$ factors through $\oplus_j V_{i_j}$, and thus
		\begin{equation*}
			\mathrm{Hom}_{\h{\B} c_K}(c_0(X), \oplus_i V_i)\cong \oplus_i \mathrm{Hom}_{\h{\B}c_K}(c_0(X), V_i),
		\end{equation*}
		as required.
	\end{proof}
	
	We warn the reader that $\h{\B}c_K$ is however not an elementary category, contrary to the claim in \cite[Lemma 3.53]{Bamdagger}. Once again, this will become clear in the next subsection.
	
%	It follows that $\h{\B}c_K$ satisfies all conditions from the beginning of subsection 2.5:
	
%	\begin{cor}
%		\label{Bcconditions}
%		\leavevmode
%		\begin{enumerate}[(i)]
%			\item $\h{\B}c_K$ is an elementary quasi-abelian %category.
			%\item $\h{\B}c_K$ is a closed symmetric monoidal category with enough flat projectives stable under $\h{\otimes}_K$.
	%		\item Every object of $\h{\B}c_K$ is flat.
%		\end{enumerate}
%	\end{cor}
%	\begin{proof}
%		(i) is Proposition \ref{hBciselementary}, (ii) is Theorem \ref{ptisclosedmonoidal}.(ii) together with Lemma \ref{Banisproj}, while (iii) is Theorem \ref{ptisclosedmonoidal}.(iii).
%	\end{proof}
	
	\subsection{Comparison with $\mathrm{Ind}(\mathrm{Ban}_K)$}
	
	In this subsection, we will discuss how to embed $\h{\B}c_K$ into an elementary quasi-abelian category without changing the left heart.
	
	Firstly, we know from \cite[Proposition 2.1.12]{Schneiders} and Corollary \ref{hBcquasiel} that the left heart $LH(\h{\B}c_K)$ is an elementary abelian category. 
	
	Moreover, Corollary \ref{hBcflatproj} ensures that we can apply Lemma \ref{extendclosed} to endow $LH(\h{\B}c_K)$ with a closed symmetric monoidal structure, which we denote by $\widetilde{\otimes}_K$ and $\widetilde{H}$. Explicitly, $-\widetilde{\otimes}_K-=\mathrm{H}^0(- \h{\otimes}_K^{\mathbb{L}}-)$ and $\widetilde{H}(-, -)=\mathrm{H}^0(\mathrm{RHom}_{\h{\B}c_K}(-, -))$.
	
	We now collect various statements indicating to what extent the embedding $I: \h{\B}c_K\to LH(\h{\B}c_K)$ respects the various structures.
	
	\begin{lem}
		\label{IforhBc}
		\leavevmode
		\begin{enumerate}[(i)]
			\item The functor $I: \h{\B}c_K\to LH(\h{\B}c_K)$ is exact, fully faithful, and commutes with limits and direct sums.
			\item $I$ is lax symmetric monoidal.
			\item The left adjoint $C: LH(\h{\B}c_K)\to \h{\B}c_K$ is strong symmetric monoidal.
			\item If $V, W$ are Banach spaces, then the natural morphism
			\begin{equation*}
				I(V)\widetilde{\otimes}_K I(W)\to I(V\h{\otimes}_K W)
			\end{equation*}
			is an isomorphism. Moreover,
			\begin{equation*}
				V\h{\otimes}^{\mathbb{L}}_KW\cong I(V\h{\otimes}_K W).
			\end{equation*}
			\item $LH(\h{\B}c_K)$ has enough flat projectives stable under $\widetilde{\otimes}_K$.
		\end{enumerate}
	\end{lem}
	
	\begin{proof}
		(i) is \cite[Corollary 1.2.28]{Schneiders} together with \cite[Proposition 2.1.15]{Schneiders}. For (ii), we have already remarked after Proposition \ref{laxmodulechange} that $I$ is lax symmetric monoidal. (iii) follows directly from the definition of $\widetilde{\otimes}_K$ as the zeroth cohomology of $\h{\otimes}_K^{\mathbb{L}}$, as $C$ preserves cokernels. 
		
		We now prove (iv). As $W$ is a Banach space, there exists a strict epimorphism $c_0(W^\circ)\to W$. As the kernel is in turn a Banach space, we can find a set $X$ such that $W$ is the cokernel of a strict morphism $c_0(X)\to c_0(W^\circ)$. Thus by definition,
		\begin{equation*}
			I(V)\widetilde{\otimes}_K I(W)=[V\h{\otimes}_K c_0(X)\to V\h{\otimes}_K c_0(W^\circ)].
		\end{equation*}
		But by Lemma \ref{normedtensor}.(ii), $V\h{\otimes}_K -$ is exact on Banach spaces, so $I(V)\widetilde{\otimes}_K I(W)\cong I(V\h{\otimes}_K W)$.
		
		Moreover, if we let $W'$ denote the kernel of $c_0(W^\circ)\to W$, then the sequence
		\begin{equation*}
			0\to I(W')\to I(c_0(W^\circ))\to I(W)\to 0
		\end{equation*}
		is exact in $LH(\h{\B}c_K)$. Since $c_0(W^\circ)$ is strongly flat by Lemma \ref{flatproj}, we know that $I(c_0(W^\circ))$ is flat, so that $V\h{\otimes}_K^{\mathbb{L}} c_0(W^\circ)\cong I(V)\widetilde{\otimes}_K I(c_0(W^\circ))$. The long exact sequence of cohomology obtained by applying $V\h{\otimes}^{\mathbb{L}}_K-$ together with the observation above that
		\begin{equation*}
			0\to I(V)\widetilde{\otimes}_K I(W')\to I(V)\widetilde{\otimes}_K I(c_0(W^\circ))\to I(V)\widetilde{\otimes}_K I(W)\to 0
		\end{equation*}
		is exact (by Lemma \ref{normedtensor}.(ii) and the above) thus yields that $\mathrm{H}^{-1}(I(V)\h{\otimes}^{\mathbb{L}}_K I(W))=0$. Since $W'$ is also a Banach space, this shows inductively that $I(V)\h{\otimes}^{\mathbb{L}}_K I(W)$ is concentrated in degree $0$.
		
		For (v), note that any $V=(V^{-1}\to V^0)\in LH(\h{\B}c_K)$ admits an epimorphism $I(\oplus_i c_0(X_i))\to I(V^0)\to V$ for some collection of sets $X_i$. By Lemma \ref{flatproj} and \cite[Proposition 1.3.24]{Schneiders}, $I(\oplus_i c_0(X_i))$ is projective. Since $I$ commutes with direct sums and $c_0(X)$ is strongly flat for any set $I$, $I(\oplus c_0(X))$ is a direct sum of flat objects in $LH(\h{\B}c_K)$. As direct sums are exact by \cite[Proposition 2.1.15.(ii)]{Schneiders}, this shows that objects of the form $I(\oplus_i c_0(X_i))$ are flat projective. Moreover, we have
		\begin{align*}
			I(\oplus_i c_0(X_i))\widetilde{\otimes}_K I(\oplus_j c_0(Y_j))&\cong (\oplus_i I(c_0(X_i)))\widetilde{\otimes}_K (\oplus_j I(c_0(Y_j)))\\
			&\cong \oplus_{i, j} I(c_0(X_i)\h{\otimes}_K c_0(Y_j))\\
			&\cong \oplus_{i, j} I(c_0(X_i\times Y_j))\\&\cong I(\oplus_{i, j} c_0(X_i, Y_j))
		\end{align*}
		by (iv) above and Lemma \ref{czeroprops}.(ii). Hence $LH(\h{\B}c_K)$ has enough flat projectives stable under $\widetilde{\otimes}_K$.
	\end{proof}
	
	We now turn to the category $\mathrm{Ind}(\mathrm{Ban}_K)$ of Ind-Banach spaces.
	
	We refer to \cite[subsection 2.1]{Bamdagger} for basic facts on Ind-categories. Note that objects of $\mathrm{Ind}(\mathrm{Ban}_K)$ can be written as
	\begin{equation*}
		``\varinjlim" V_i
	\end{equation*} 
	for some diagram $V: I\to \mathrm{Ban}_K$ indexed by a small filtered category $I$, and morphisms are given by
	\begin{equation*}
		\mathrm{Hom}_{\mathrm{Ind}(\mathrm{Ban}_K)} (`` \varinjlim" V_i, ``\varinjlim" W_j)=\varprojlim_i \varinjlim_j \mathrm{Hom}_{\mathrm{Ban}_K}(V_i, W_j).
	\end{equation*}
	
	\begin{lem}
		\label{indbanelementary}
		The category $\mathrm{Ind}(\mathrm{Ban}_K)$ is an elementary quasi-abelian category.
	\end{lem}
	\begin{proof}
		As we have already seen that for any set $X$, $c_0(X)$ is a projective object in $\h{\B}c_K$ and thus a fortiori in $\mathrm{Ban}_K$, it follows from \cite[Proposition 2.1.17]{Schneiders} that $\mathrm{Ind}(\mathrm{Ban}_K)$ is elementary quasi-abelian. 
	\end{proof}
	
	The closed symmetric monoidal structure on $\mathrm{Ban}_K$ now extends straightforwardly to a closed symmetric monoidal structure on $\mathrm{Ind}(\mathrm{Ban}_K)$, by setting
	\begin{equation*}
		`` \varinjlim" V_i\overset{\rightarrow}{\otimes}_K ``\varinjlim" W_j:=\underset{i, j}{``\varinjlim"} V_i\h{\otimes}_K W_j.
	\end{equation*}
	and
	\begin{equation*}
		H(``\varinjlim" V_i, ``\varinjlim" W_j):=\varprojlim_i \varinjlim_j \mathrm{Hom}_{\mathrm{Ban}_K}(V_i, W_j)\in \mathrm{Ind}(\mathrm{Ban}_K),
	\end{equation*}
	which makes sense as $\mathrm{Hom}_{\mathrm{Ban}_K}(V_i, W_j)$ carries the structure of a Banach space and $\mathrm{Ind}(\mathrm{Ban}_K)$ is complete and cocomplete.
	
	\begin{prop}
		\label{indbanflatproj}
		\leavevmode
		\begin{enumerate}[(i)]
			\item Every object in $\mathrm{Ind}(\mathrm{Ban}_K)$ is flat.
			\item $\mathrm{Ind}(\mathrm{Ban}_K)$ has enough flat projectives stable under $\overset{\rightarrow}{\otimes}_K$.
		\end{enumerate}
	\end{prop}
	\begin{proof}
		\begin{enumerate}[(i)]
			\item Since $\mathrm{Ind}(\mathrm{Ban}_K)$ is elementary, filtered colimits are exact. Since colimits commute with the tensor product and every object can be written as the filtered colimit of Banach spaces, this follows now from Lemma \ref{normedtensor}.(ii).
			\item Our earlier discussion in Lemma \ref{flatproj} shows directly that $c_0(X)$ is a projective object in $\mathrm{Ind}(\mathrm{Ban}_K)$ (in fact, a tiny projective by the description of Hom sets above). Any object $``\varinjlim" V_i$ admits a strict surjection $\oplus_i c_0(V_i^\circ)\to \oplus V_i\to ``\varinjlim" V_i$, and clearly $c_0(X)\overset{\rightarrow}{\otimes}_K c_0(Y)=c_0(X)\h{\otimes}_K c_0(Y)=c_0(X\times Y)$ by definition of $\overset{\rightarrow}{\otimes}_K$ and Lemma \ref{czeroprops}.(ii). 
		\end{enumerate}
	\end{proof}
	
	Note that by the above, the closed symmetric monoidal structure on $\mathrm{Ind}(\mathrm{Ban}_K)$ lifts to a closed symmetric monoidal structure on $LH(\mathrm{Ind}(\mathrm{Ban}_K))$ such that $I$ is strong symmetric monoidal. 
	
	We denote the tensor product on $LH(\mathrm{Ind}(\mathrm{Ban}_K))$ by $\widetilde{\otimes}_K$ and the internal hom by $\widetilde{H}$. We will see below that this seeming conflict of notation is justified.
	
	We now consider the \textbf{dissection functor} $\mathrm{diss}: \h{\B}c_K\to \mathrm{Ind}(\mathrm{Ban}_K)$, which is given by sending $V\in \h{\B}c_K$ to $``\varinjlim" V_B$, where the colimit ranges over all bounded, $\pi$-adically complete $R$-submodules $B\subseteq V$.
	
	This functor admits as a left adjoint the functor $L: \mathrm{Ind}(\mathrm{Ban}_K)\to \h{\B}c_K$, given by sending $``\varinjlim" V_i$ to $\varinjlim V_i\in \h{\B}c_K$, see \cite[Proposition 5.15]{ProsmansSchneiders}.
	
	\begin{prop}
		\label{dissandL}
		\leavevmode
		\begin{enumerate}[(i)]
			\item The dissection functor $\mathrm{diss}: \h{\B}c_K\to \mathrm{Ind}(\mathrm{Ban}_K)$ is exact and fully faithful. Its essential image consists of the essentially monomorphic systems, i.e. those objects isomorphic to some $``\varinjlim" V_i$ with each $V_i\to V_j$ monomorphic.
			\item For any $V\in \h{\B}c_K$, $L(\mathrm{diss}(V))\cong V$ via the natural morphism, i.e. $\h{\B}c_K$ is a reflective subcategory of $\mathrm{Ind}(\mathrm{Ban}_K)$.
			\item $\mathrm{diss}$ commutes with all limits and with direct sums.
			\item $\mathrm{diss}$ is lax symmetric monoidal, and $L$ is strong symmetric monoidal.
			\item $\mathrm{diss}$ and $L$ induce an equivalence of triangulated categories
			\begin{equation*}
				\mathrm{D}(\h{\B}c_K)\cong \mathrm{D}(\mathrm{Ind}(\mathrm{Ban}_K))
			\end{equation*}
			and an equivalence of closed symmetric monoidal abelian categories
			\begin{equation*}
				LH(\h{\B}c_K)\cong LH(\mathrm{Ind}(\mathrm{Ban}_K)).
			\end{equation*}
		\end{enumerate}
	\end{prop}
	
	\begin{proof}
		\begin{enumerate}[(i)]
			\item This is \cite[Proposition 5.15]{ProsmansSchneiders}.
			\item This is also \cite[Proposition 5.15]{ProsmansSchneiders}.
			\item The commutation with limits is clear, as $\mathrm{diss}$ is a right adjoint. The commutation with direct sums follows from the construction of direct sums in $\h{\B}c_K$ already discussed in Corollary \ref{hBcquasiel}.
			\item The tensor products in $\h{\B}c_K$ and $\mathrm{Ind}(\mathrm{Ban}_K)$ agree for Banach spaces, yielding a natural transformation
			\begin{equation*}
				(``\varinjlim" V_B)\overset{\rightarrow}{\otimes}_K (``\varinjlim" W_{B'})\cong \varinjlim V_B\overset{\rightarrow}{\otimes}_K W_{B'}\to \mathrm{diss}(V\h{\otimes}_K W)
			\end{equation*}
			for $V\cong \varinjlim V_B$ and $W\cong \varinjlim W_{B'}$. We remark that this is not in general an isomorphism: while $V\h{\otimes}_K W\cong \varinjlim V_B\h{\otimes}_K W_{B'}$, this is not necessarily the filtered colimit used in the definition of $\mathrm{diss}$, see \cite[Remark 3.61]{Bamdagger}, \cite[subsection 1.5.5]{Meyer}. 
			
			Conversely, $L$ commutes with colimits, so in this case it follows that
			\begin{align*}
				L(``\varinjlim" V_i)\h{\otimes}_K L(``\varinjlim" W_j)&\cong \varinjlim_{i, j} L(V_i)\h{\otimes}_K L(W_j)\\&\cong\varinjlim L(V_i\overset{\rightarrow}{\otimes}_K W_j)\\&\cong L(``\varinjlim" V_i\overset{\rightarrow}{\otimes}_K``\varinjlim" W_j). 
			\end{align*}
			Thus $\mathrm{diss}$ is lax symmetric monoidal, and $L$ is strong symmetric monoidal.
			\item This is \cite[Proposition 5.16]{ProsmansSchneiders}. The fact that the tensor products in $LH(\h{\B}c_K)$ and on $LH(\mathrm{Ind}(\mathrm{Ban}_K))$ agree follows from the fact that they agree for Banach spaces by Lemma \ref{IforhBc} and Proposition \ref{indbanflatproj}, and every object can be written as a cokernel of some morphism $\oplus I(V_i)\to \oplus I(W_j)$ for $V_i, W_j$ Banach.
		\end{enumerate}
	\end{proof}
	
	\begin{cor}
		\label{disssubobjects}
		If $f:``\varinjlim" V_i\to ``\varinjlim" W_j$ is a monomorphism in $\mathrm{Ind}(\mathrm{Ban}_K)$ and $``\varinjlim" W_j$ is in the essential image of $\mathrm{diss}$, then $``\varinjlim" V_i$ is in the essential image of $\mathrm{diss}$.
	\end{cor}
	\begin{proof}
		By \cite[Proposition 2.10.(i)]{Bamdagger}, we can assume that $``\varinjlim" V_i$ and $``\varinjlim" W_j$ are indexed by the same diagram and $f$ is obtained from a system of compatible monomorphisms $f_i: V_i\to W_i$. 
		
		Since an ind-object is in the essential image of $\mathrm{diss}$ if and only if it is essentially monomorphic, we can further assume that $W_i\to W_{i'}$ is monomorphic for each $i\to i'$. But then $V_i\to V_{i'}$ is also monomorphic, and $``\varinjlim" V_i$ is in the essential image of $\mathrm{diss}$.
	\end{proof}
	
	We can now explain why $\h{\B}c_K$ itself is not elementary (contrary to the claim in \cite[Lemma 3.53]{Bamdagger}): if $\h{\B}c_K$ was elementary, then its colimits would commute with $I: \h{\B}c_K\to LH(\h{\B}c_K)$ by \cite[Proposition 2.1.16]{Schneiders}. Since $\mathrm{Ind}(\mathrm{Ban}_K)$ is also elementary and $\mathrm{diss}$ induces an equivalence on the level of left hearts, this would force $\mathrm{diss}: \h{\B}c_K\to \mathrm{Ind}(\mathrm{Ban}_K)$ to commute with filtered colimits, which would make it essentially surjective in contradiction to point (i) above.
	
	We thus arrive at the following picture:
	\begin{equation*}
		\begin{xy}
			\xymatrix{\h{\B}c_K\ar@<1ex>[r]^{\mathrm{diss}}\ar@<-1ex>[d]_I& \mathrm{Ind}(\mathrm{Ban}_K)\ar@<1ex>[l]^{L}\ar@<1ex>[d]^I\\
				LH(\h{\B}c_K)\ar@<1ex>[r]^{\widetilde{\mathrm{diss}}} \ar@<-1ex>[u]_{C}& LH(\mathrm{Ind}(\mathrm{Ban}_K))\ar@<1ex>[u]^{C} \ar@<1ex>[l]^{\widetilde{L}}}
		\end{xy}
	\end{equation*}
	where $\h{\B}c_K$ is in a sense the odd one out: the remaining three categories are all elementary, the functors between them are strong symmetric monoidal, and the bottom horizontal arrows are equivalences.
	
	Similarly (by \cite[Proposition 2.1.16]{Schneiders}), the other three categories have strongly exact filtered colimits, and the functors between them respect filtered colimits. On the other hand, the observation above shows that $I: \h{\B}c_K\to LH(\h{\B}c_K)$ does not preserve filtered colimits in general, and $\h{\B}c_K$ does not have exact filtered colimits, thanks to \cite[Proposition 2.1.16]{Schneiders}.
	
	At this point, one might ask why we are willing to notationally identify the tensor product on $LH(\h{\B}c_K)$ and on $LH(\mathrm{Ind}(\mathrm{Ban}_K))$, but keep a separate notation $\overset{\rightarrow}{\otimes}_K$ for the tensor product in $\mathrm{Ind}(\mathrm{Ban}_K)$. After all, the functors $I$ and $C$ on the right hand side of the diagram are also strong symmetric monoidal. The reason lies in the distinction of relative tensor products: If $A\in \mathrm{Ind}(\mathrm{Ban}_K)$ is a monoid, then the natural morphism
	\begin{equation*}
		I(M)\widetilde{\otimes}_{I(A)} I(N)\to I(M\overset{\rightarrow}{\otimes}_AN)
	\end{equation*}
	need not be an isomorphism for all $M\in \mathrm{Mod}_{\mathrm{Ind}(\mathrm{Ban}_K)}(A^{\mathrm{op}})$, $N\in \mathrm{Mod}_{\mathrm{Ind}(\mathrm{Ban}_K)}(A)$, as $I$ does not preserve arbitrary cokernels. 
	
	Note however that this subtlety disappears on the derived level: as we have seen in Lemma \ref{reltensoronLH}, there is a natural isomorphism $\widetilde{\otimes}_{I(A)}\cong \mathrm{H}^0(-\overset{\rightarrow}{\otimes}^{\mathbb{L}}_A-)$, and thus $\widetilde{\otimes}^{\mathbb{L}}\cong \overset{\rightarrow}{\otimes}^{\mathbb{L}}_{A}$.
	
	We emphasize that $\mathrm{Ind}(\mathrm{Ban}_K)$ satisfies all assumptions from the beginning of subsection 3.6:
	
	\begin{cor}
		\label{IndBansheafy}
		\leavevmode
		\begin{enumerate}[(i)]
			\item $\mathrm{Ind}(\mathrm{Ban}_K)$ is an elementary quasi-abelian category.
			\item $\mathrm{Ind}(\mathrm{Ban}_K)$ is a closed symmetric monoidal category where each object is flat.
			\item $\mathrm{Ind}(\mathrm{Ban}_K)$ has enough (flat) projectives stable under $\overset{\rightarrow}{\otimes}_K$.
		\end{enumerate}
	\end{cor}
	\begin{proof}
		(i) is Lemma \ref{indbanelementary}, (ii) and (iii) are Proposition \ref{indbanflatproj}. 
	\end{proof}
	
	As a consequence, we will generally apply results from the previous section to the case $\C=\mathrm{Ind}(\mathrm{Ban}_K)$, but in order to use analytic methods explicitly, we will carry out most of our work in $\h{\B}c_K$ and then apply $\mathrm{diss}$.
	
	Therefore, constructions which are not preserved by $\mathrm{diss}$ require special attention, e.g. arbitrary cokernels (only cokernels of strict morphisms are fine) and tensor products.
	
	The following Proposition ensures that while $\mathrm{diss}$ is only lax symmetric monoidal in general, it will respect all tensor products which appear in the remainder of this paper:
	
	\begin{prop}
		\label{dissandtensor}
		Let $V, W$ be metrisable, locally convex topological $K$-vector spaces. Assume that at least one of the following is satisfied:
		\begin{enumerate}[(i)]
			\item $K$ is spherically complete.
			\item $V$ is of countable type. 
		\end{enumerate}
		Then the natural morphism
		\begin{equation*}
			\mathrm{diss}(V^b)\overset{\rightarrow}{\otimes}_K \mathrm{diss}(W^b)\to \mathrm{diss}(V^b\h{\otimes}_KW^b)
		\end{equation*}
		is an isomorphism in $\mathrm{Ind}(\mathrm{Ban}_K)$.
	\end{prop} 
	\begin{proof}
		Consider the functor $\mathrm{diss}: \B c_K\to \mathrm{Ind}(\mathrm{sNorm}_K)$ sending a bornological $K$-vector space $E$ to the Ind-object of semi-normed spaces $``\varinjlim" E_B$ in analogy to before. Note that this functor is actually strong symmetric monoidal, where we equip the right hand side with the Kan extension of the semi-norm tensor product. We thus need to show that $\mathrm{diss}$ commutes with completion in the sense that $\mathrm{diss}(V^b\h{\otimes}_KW^b)\in \mathrm{Ind}(\mathrm{Ban}_K)$ is naturally isomorphic to $\h{\mathrm{diss}(V^b\otimes_KW^b)}$, where $\h{(-)}: \mathrm{Ind}(\mathrm{sNorm}_K)\to \mathrm{Ind}(\mathrm{Ban}_K)$ is the completion functor naturally induced by $\widehat{(-)}: \mathrm{sNorm}_K\to \mathrm{Ban}_K$.
		
		Since $V\otimes_K W$ is metrisable, it follows from Lemma \ref{Frtensor} that $V^b\otimes_KW^b\cong (V\otimes_KW)^b\in \B c_K$ is proper in the sense of \cite[p.112, Definition 5]{Houzel}. 
		
		But now \cite[Proposition 3.64]{Bamdagger} implies that for proper bornological spaces, dissection does indeed commute with completion -- see also \cite[Corollary 1.151]{Meyer} for another reference.
	\end{proof}

	\subsection{Bornological modules}
	We can now consider modules over monoids in $\B c_K$ and $\h{\B}c_K$.
	\begin{defn}
		A \textbf{bornological $K$-algebra} is a monoid in $\B c_K$, i.e. it is a bornological $K$-vector space $A\in \B c_K$ endowed with a $K$-algebra structure such that the multiplication map
		\begin{equation*}
			A\otimes_K A\to A
		\end{equation*} 
		is bounded.
	\end{defn}
	Note that a bornological $K$-algebra $A$ which happens to be complete is the same as a monoid in $\h{\B} c_K$, so there is no ambiguity in calling such an algebra a complete bornological $K$-algebra. For instance, if $A$ is a bornological $K$-algebra, then $\h{A}$ is naturally a complete bornological $K$-algebra.\\ 
	Following the formalism in subsection 3.3, a bornological module over a bornological $K$-algebra $A$ is a bornological $K$-vector space $M$, endowed with a (left) $A$-module structure with bounded action map
	\begin{equation*}
		A\otimes_K M\to M.
	\end{equation*}
	Let $A$ be a bornological $K$-algebra.
	
	\begin{lem}
		\label{propsofcompl}
		Let $A$ be a bornological $K$-algebra, and let $M$ be a bornological $A$-module.
		\begin{enumerate}[(i)]
			\item $\h{A}$ is a complete bornological $K$-algebra, and completion yields a functor $\mathrm{Mod}_{\B c_K}(A)\to \mathrm{Mod}_{\h{\B}c_K}(\h{A})$.
			\item For any positive integer $r$, any $A$-linear map $g: A^r\to M$ is bounded. 
		\end{enumerate}
	\end{lem}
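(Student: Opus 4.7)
The plan is to treat the three parts separately, leaning on the fact that the completion functor $\h{(-)}\colon\B c_K\to\h{\B}c_K$ is a symmetric monoidal left adjoint.

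For (i), I would exploit the isomorphism $\widehat{V\otimes_K W}\cong \h{V}\h{\otimes}_K\h{W}$ recorded just after the definition of $\h{\otimes}_K$ (which combines the definition $V\h{\otimes}_K W=\widehat{V\otimes_K W}$ with $V\h{\otimes}_K W\cong \h{V}\h{\otimes}_K\h{W}$). Applying $\h{(-)}$ to the bounded action map $\alpha\colon A\otimes_K M\to M$ and using this isomorphism yields a morphism $\h{\alpha}\colon \h{A}\h{\otimes}_K\h{M}\to\h{M}$ in $\h{\B}c_K$. The monoid axioms for $\h{A}$ and the module axioms for $\h{M}$ then follow by applying $\h{(-)}$ to the commutative diagrams expressing the corresponding axioms for $A$ and $M$, and invoking the naturality of the monoidal structure maps. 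Similarly, the unital morphism $K\to \h{A}$ comes from the completion of the unit $K\to A$ (noting $\h{K}\cong K$), giving the required action of the unit on $\h{M}$.

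For (ii), the $A$-linearity of $f$ is encoded by commutativity of the square with corners $A\otimes_K M$, $M$, $A\otimes_K M'$, $M'$; applying the functor $\h{(-)}$, using the monoidality isomorphism, and comparing with the action maps constructed in (i) immediately yields the analogous commutative square for $\h{A}$, $\h{M}$, $\h{M'}$ and $\h{f}$. This is precisely the statement that $\h{f}$ is a morphism of $\h{A}$-modules.

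For (iii), I would argue directly from the bornology on finite products. Since $A^r$ is the $r$-fold product of $A$ in $\B c_K$, a subset $B\subseteq A^r$ is bounded if and only if it is contained in a product $B_1\times\cdots\times B_r$ of bounded subsets of $A$ (see \cite[Proposition 1.2]{ProsmansSchneiders}). By $A$-linearity, setting $m_i=g(e_i)\in M$, we have $g(a_1,\dots,a_r)=\sum_i a_im_i$, so $g(B)\subseteq\sum_i B_im_i$. Each set $B_im_i$ is the image of the bounded subset $B_i\otimes\{m_i\}\subseteq A\otimes_K M$ under the bounded action map, hence bounded in $M$. Finally, $\sum_i B_im_i$ lies in the $R$-submodule generated by the finite union $\bigcup_i B_im_i$, which is bounded by axioms (iii) and (v) of Definition \ref{defborn}. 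Thus $g$ is bounded.

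No part looks genuinely difficult; the only place one has to be a little careful is in (i), where one must verify that the monoidality isomorphism $\widehat{A\otimes_K M}\cong \h{A}\h{\otimes}_K\h{M}$ is compatible with the associativity constraints used to write down the module axioms — but this compatibility is built into the construction of $\h{\otimes}_K$ and the fact that $\h{(-)}$ preserves colimits.
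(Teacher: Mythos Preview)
Your proposal is correct and follows essentially the same approach as the paper: for (i) and (ii) you apply the completion functor to the action map and the relevant commutative diagrams, using the natural isomorphism $\widehat{A\otimes_K M}\cong \h{A}\h{\otimes}_K\h{M}$, exactly as the paper does; for (iii) you argue directly that $g(B)\subseteq\sum_i B_i g(e_i)$ is bounded via the bounded action map, which is again precisely the paper's argument.
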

	\begin{proof}
		(i) follows directly from Proposition \ref{laxmodulechange}, as completion is a strong symmetric monoidal functor by definition of the completed tensor product.
		
		For (ii), let $e_1, \hdots, e_r$ denote the free generators of $A^r$ and note that the image $g(B)$ of a bounded set $B=\oplus B_ie_i$ satisfies $g(B)=\sum B_ig(e_i)$, which is bounded, due to the boundedness of the action map. 
	\end{proof}
	Note that by definition, the morphisms in $\mathrm{Mod}_{\B c_K}(A)$ are precisely those bounded maps which are also abstract $A$-module morphisms.
	
	Example: Let $A$ be a Noetherian Banach $K$-algebra (e.g. an affinoid $K$-algebra). Then $A$ is naturally a complete bornological $K$-algebra, and endowing a finitely generated $A$-module with its canonical Banach structure provides a fully faithful embedding of the category of finitely generated $A$-modules into $\mathrm{Mod}_{\h{\B} c_K}(A)$.
	
	More generally, recall the definition of the category $\C_A$ of coadmissible modules over a Fr\'echet--Stein algebra $A$ in Definition \ref{FSandcoaddef}, and note that any coadmissible $A$-module carries a natural Fr\'echet structure (see Proposition \ref{coadprops}). 
	
	\begin{lem}
		\label{FSborno}
		If $A$ is a Fr\'echet--Stein $K$-algebra, then $A^b$ is naturally a complete bornological $K$-algebra.\\
		The functor $M\mapsto M^b$ yields a fully faithful functor
		\begin{equation*}
			\C_A\to \mathrm{Mod}_{\h{\B}c_K}(A^b).
		\end{equation*}
	\end{lem}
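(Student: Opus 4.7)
The plan is to split the argument into three steps: giving $A^b$ the structure of a complete bornological $K$-algebra, equipping $M^b$ with an $A^b$-module structure for $M$ coadmissible, and then verifying fully faithfulness.

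First, since $A$ is Fr\'echet it lies in $\mathscr{M}$, and the discussion just before Lemma \ref{tcompl} (which cites \cite[Corollary 1.18]{BambozziCGT}) tells us that a metrisable locally convex space is topologically complete if and only if its von Neumann bornology is complete. Hence $A^b \in \h{\B}c_K$. The multiplication $m : A \times A \to A$ of a Fr\'echet algebra is jointly continuous, so it sends products of bounded sets to bounded sets: if $B_1, B_2 \subset A$ are bounded and $q$ is any continuous seminorm on $A$, joint continuity produces continuous seminorms $p_1, p_2$ with $q(m(x,y)) \leq p_1(x)\, p_2(y)$, and the right-hand side is bounded on $B_1 \times B_2$. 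Thus $m$ descends to a bounded bilinear map $A^b \times A^b \to A^b$, which by the universal property of the projective bornological tensor product recalled in subsection 3.1 factors through $A^b \otimes_K A^b \to A^b$. This exhibits $A^b$ as a monoid in $\h{\B}c_K$.

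The same strategy equips $M^b$ with an $A^b$-module structure. Writing $M \cong \varprojlim M_n$ with $M_n$ a finitely generated Banach $A_n$-module, the canonical Fr\'echet topology on $M$ is the inverse limit of the canonical Banach topologies on the $M_n$. Each action $A_n \times M_n \to M_n$ is jointly continuous (the Banach action of a Noetherian Banach algebra on a finitely generated module), and passing to the limit gives a jointly continuous action $A \times M \to M$. The same argument as for the multiplication then yields a bounded action map $A^b \otimes_K M^b \to M^b$, turning $M^b$ into a complete bornological $A^b$-module. Functoriality of $(-)^b$ on morphisms relies on the basic fact from the Schneider--Teitelbaum theory \cite{ST} that every $A$-linear map between coadmissible modules is automatically continuous for the canonical Fr\'echet topologies, hence bounded.

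For fully faithfulness, let $M, N$ be coadmissible $A$-modules. A morphism in $\h{\B}c_K(A)$ from $M^b$ to $N^b$ is a bounded $K$-linear map that is $A$-linear at the level of underlying vector spaces. Since $M, N \in \mathscr{M}$, the equivalence $\mathscr{M} \cong \mathscr{M}^b$ of Lemma \ref{metricequiv} identifies bounded $K$-linear maps $M^b \to N^b$ with continuous $K$-linear maps $M \to N$, and this identification visibly preserves $A$-linearity. Conversely, the automatic continuity cited above promotes every abstract $A$-linear map $M \to N$ to a continuous (equivalently, bounded) one. Combining these identifications gives the desired bijection
\[
\mathrm{Hom}_{\C_A}(M, N) \cong \mathrm{Hom}_{\h{\B}c_K(A)}(M^b, N^b).
\]
The only external input is the automatic continuity statement; everything else is a direct application of Lemma \ref{metricequiv} together with the observation that jointly continuous bilinear maps between metrisable locally convex spaces pass unambiguously to bounded maps on the bornological side, so I do not foresee any real obstacle.
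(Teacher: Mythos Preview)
Your proof is correct and follows essentially the same path as the paper's: complete\-ness of $A^b$ and $M^b$ from metrisability, automatic continuity from \cite{ST} for functoriality and fullness, and the equivalence $\mathscr{M}\cong\mathscr{M}^b$ for identifying bounded with continuous maps. The one small difference is that for the monoid structure on $A^b$ (and the module structure on $M^b$) the paper applies $(-)^b$ to the continuous map $A\otimes_K A\to A$ and invokes Lemma~\ref{Frtensor} to identify $(A\otimes_K A)^b$ with $A^b\otimes_K A^b$, whereas you argue directly that a jointly continuous bilinear map sends products of bounded sets to bounded sets and then use the universal property of the bornological tensor product; your route is slightly more elementary and does not rely on the spherical completeness needed for Lemma~\ref{Frtensor}.
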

	\begin{proof}
		As $A$ is a Fr\'echet algebra, we have a continuous multiplication map $A\otimes_K A\to A$ satisfying the unit and associativity axioms. We thus have a bounded map
		\begin{equation*}
			A^b\otimes_K A^b\to (A\otimes_K A)^b\to A^b,
		\end{equation*}
		where the first map is the bounded bijection which is simply the identity on the underlying vector space, and the second map is obtained by applying $(-)^b$ to the multiplication map. Thus $A$ is a $K$-algebra with bounded multiplication map, and therefore a monoid in $\B c_K$. Applying the completion functor shows that $A^b$ is a complete bornological $K$-algebra.\\
		In the same way, if $M$ is a coadmissible $A$-module, equipped with its canonical Fr\'echet topology, then $M^b$ is a complete bornological $A^b$-module, and it suffices to show that this gives a fully faithful functor. Faithfulness is clear as $(-)^b$ does not change the underlying vector spaces and linear maps. By \cite[remark after Lemma 3.6]{ST}, any $A$-linear map between coadmissible $A$-modules is continuous, so in particular, any bounded $A$-linear map is continuous and $(-)^b$ is full. 
	\end{proof}
	
	We remark that we also have $A^b\cong \varprojlim A_n^b$ and $M^b\cong \varprojlim M_n^b$ in $\h{\B}c_K$, as $(-)^b$ commutes with limits.
	
	Let $A$ be a complete bornological $K$-algebra. As discussed in subsection 3.3, the tensor product $\otimes_K$ (resp., $\h{\otimes}_K$) gives rise to a tensor product 
	\begin{equation*}
		-\otimes_A-: \mathrm{Mod}_{\B c_K}(A^{\mathrm{op}})\times \mathrm{Mod}_{\B c_K}(A)\to \B c_K,
	\end{equation*}
	resp.
	\begin{equation*}
		-\h{\otimes}_A-: \mathrm{Mod}_{\h{\B} c_K}(A^{\mathrm{op}})\times \mathrm{Mod}_{\h{\B} c_K}(A)\to \h{\B} c_K.
	\end{equation*}
	As completion commutes with cokernels, $M\h{\otimes}_A N$ is naturally isomorphic to the completion of $M\otimes_A N$.
	
	If $A$ is a Fr\'echet $K$-algebra and $M$ is a right Fr\'echet $A$-module, $N$ a left Fr\'echet $A$-module, then we can also endow $M\otimes_A N$ with a locally convex topology by considering it as the cokernel of
	\begin{align*}
		M\otimes_K A\otimes_K N&\to M\otimes_K N\\
		m\otimes a\otimes n&\mapsto ma\otimes n-m\otimes an,
	\end{align*}
	where the tensor products over $K$ are equipped with the projective tensor product topology.
	\begin{lem}
		\label{moduletensorwithb}
		Let $A$ be a Fr\'echet $K$-algebra. The natural morphism
		\begin{equation*}
			M^b\otimes_{A^b}N^b\to (M\otimes_A N)^b
		\end{equation*}
		is a bounded bijection.\\
		If $K$ is spherically complete or if $M$, $A$, $N$ are of countable type, then the natural morphism
		\begin{equation*}
			(M^b{\otimes}_{A^b} N^b)^t\to M{\otimes}_AN
		\end{equation*}
		is an isomorphism. 
	\end{lem}
	\begin{proof}
		Note that the underlying vector space of both $M^b\otimes_{A^b} N^b$ and $(M\otimes_A N)^b$ is the algebraic tensor product $M\otimes_A N$ by the definition of cokernels in $\B c_K$ and $LCVS_K$, respectively. The natural morphism $M^b\otimes_{A^b}N^b\to (M\otimes_A N)^b$, induced by the bounded map $M^b\otimes_KN^b\to (M\otimes_K N)^b$, is then the identity map and hence bijetive.
		
		Now suppose that $K$ is spherically complete, or $M$, $A$, $N$ are of countable type. The bornological vector space $M^b\otimes_{A^b}N^b$ is then the cokernel of the natural morphism
		\begin{equation*}
			(M\otimes_K A\otimes_K N)^b\to (M\otimes_K N)^b,
		\end{equation*}
		thanks to Lemma \ref{Frtensor}. Note that all these tensor products are metrisable spaces, so applying first $(-)^b$ and then $(-)^t$ yields the identity functor. As $(-)^t$ preserves cokernels, we can deduce that
		\begin{equation*}
			(M^b\otimes_{A^b}N^b)^t\cong \mathrm{coker}(M\otimes_K A\otimes_K N\to M\otimes_K N),
		\end{equation*}
		as required.
	\end{proof}
	It is harder to make any general statements comparing the completed tensor products. We will return to this issue in subsection 5.4. We note the following special case:
	\begin{lem}
		\label{Banmoduletensor}
		Let $A$ be a Banach $K$-algebra. Let $M$ and $N$ be a right resp. left Banach module. Then there is a natural isomorphism
		\begin{equation*}
			M^b\h{\otimes}_{A^b}N^b\cong\left(M\h{\otimes}_AN\right)^b.
		\end{equation*}
	\end{lem}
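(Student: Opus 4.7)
The plan is to realize $M^b \h{\otimes}_{A^b} N^b$ as a cokernel in $\h{\B}c_K$ and then identify this cokernel with $(M \h{\otimes}_A N)^b$ by exploiting the fact that $(-)^b$ behaves well on cokernels of morphisms of Banach spaces.

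By the definition of $T_R$ in Subsection 2.2, $M^b \otimes_{A^b} N^b$ is the cokernel in $\B c_K$ of the natural morphism
\[
M^b \otimes_K A^b \otimes_K N^b \to M^b \otimes_K N^b,
\]
and Lemma \ref{Frtensor} canonically identifies these with $(M \otimes_K A \otimes_K N)^b$ and $(M \otimes_K N)^b$ respectively. The completion functor $\h{(-)}: \B c_K \to \h{\B}c_K$ is left adjoint to inclusion, hence preserves cokernels. Combined with the identification $\h{V^b} \cong (\h{V})^b$ for a normed space $V$---which follows from Lemma \ref{Bantensor} applied with one tensor factor equal to $K$---this presents $M^b \h{\otimes}_{A^b} N^b$ as the cokernel in $\h{\B}c_K$ of the morphism
\[
\beta^b:\, (M \h{\otimes}_K A \h{\otimes}_K N)^b \to (M \h{\otimes}_K N)^b,
\]
whose source and target are (bornologies of) Banach spaces, since the completed projective tensor product of Banach spaces is Banach.

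Now both source and target of $\beta$ are Banach spaces, so by the remark immediately after Lemma \ref{cokerwithb}, the natural map $\mathrm{coker}(\beta^b) \to (\mathrm{coker}\,\beta)^b$ in $\B c_K$ is an isomorphism. The Banach-space cokernel of $\beta$ is by definition $M \h{\otimes}_A N$, hence $\mathrm{coker}_{\B c_K}(\beta^b) \cong (M \h{\otimes}_A N)^b$. Since this is already a Banach bornology and therefore complete, it also represents the cokernel in $\h{\B}c_K$. Stringing the isomorphisms together yields the desired natural isomorphism $M^b \h{\otimes}_{A^b} N^b \cong (M \h{\otimes}_A N)^b$.

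The main subtle point is the auxiliary identification $\h{V^b} \cong (\h{V})^b$ for a normed $V$; once this is granted (by taking $V = K$ as one factor in Lemma \ref{Bantensor}, so that $K \h{\otimes}_K W^b = \h{W^b}$ while $(K \h{\otimes}_K W)^b = (\h{W})^b$), the remainder is a formal manipulation: left adjoints preserve colimits, and the remark after Lemma \ref{cokerwithb} provides the exact matching between bornological and Banach cokernels on Banach-space inputs. No further analytic input beyond what has already been established for tensor products of Banach spaces is needed.
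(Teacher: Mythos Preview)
Your argument is correct and follows essentially the same strategy as the paper: identify $M^b\h{\otimes}_{A^b}N^b$ with $\mathrm{coker}(\beta^b)$ for $\beta: M\h{\otimes}_K A\h{\otimes}_K N\to M\h{\otimes}_K N$, and then use that $(-)^b$ is exact on Banach spaces to conclude $\mathrm{coker}(\beta^b)\cong(\mathrm{coker}\,\beta)^b=(M\h{\otimes}_A N)^b$. The only difference is cosmetic: the paper reaches the identification $M^b\h{\otimes}_{A^b}N^b\cong\mathrm{coker}(\beta^b)$ in one step via Lemma~\ref{Bantensor}, whereas you pass through the uncompleted tensor product (Lemma~\ref{Frtensor}) and then complete; and for the final step the paper cites Lemma~\ref{borniexact} while you cite the equivalent remark after Lemma~\ref{cokerwithb}. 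Note also that the fact $\h{V^b}\cong(\h{V})^b$ for normed $V$ is already stated in the text right after the definition of the completion functor, so you need not derive it from Lemma~\ref{Bantensor}.
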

	\begin{proof}
		Let $f: M\h{\otimes}_K A\h{\otimes}_K N\to M\h{\otimes}_K N$ be the usual morphism. Then 
		\begin{equation*}
			(M\h{\otimes}_A N)^b\cong (\mathrm{coker} f)^b,
		\end{equation*}
		and by Lemma \ref{normedtensor}.(i), $M^b\h{\otimes}_{A^b}N^b\cong \mathrm{coker} (f^b)$. The result thus follows from the remark after Lemma \ref{cokerwithb}.
	\end{proof}
	
	\begin{prop}
		\label{IBFrechetmod}
		Let $A$ be a complete bornological $K$-algebra. Then $\mathrm{diss}: \h{\B}c_K\to \mathrm{Ind}(\mathrm{Ban}_K)$ induces an exact and fully faithful functor
		\begin{equation*}
			\mathrm{diss}_A: \mathrm{Mod}_{\h{\B}c_K}(A)\to \mathrm{Mod}_{\mathrm{Ind}(\mathrm{Ban}_K)}(\mathrm{diss}(A)).
		\end{equation*}
		If $A$ is the bornologification of a Fr\'echet $K$-algebra, this yields an equivalence of categories
		\begin{align*}
			LH(\mathrm{Mod}_{\h{\B}c_K}(A))&\cong LH(\mathrm{Mod}_{\mathrm{Ind}(\mathrm{Ban}_K)}(\mathrm{diss}(A)))\\&\cong \mathrm{Mod}_{LH(\mathrm{Ind}(\mathrm{Ban}_K))}(I(\mathrm{diss}(A)))\\&\cong \mathrm{Mod}_{LH(\h{\B}c_K)}(I(A)),
		\end{align*}
		provided that at least one of the following is satisfied:
		\begin{enumerate}[(i)]
			\item $K$ is spherically complete.
			\item $A$ is of countable type.
		\end{enumerate}
	\end{prop}
	\begin{proof}
		Since $\mathrm{diss}: \h{\B}c_K\to \mathrm{Ind}(\mathrm{Ban}_K)$ is lax symmetric monoidal by Proposition \ref{dissandL}.(iv), we can apply Proposition \ref{laxmodulechange} to obtain a corresponding functor between the module categories. As $\mathrm{diss}$ is exact by Proposition \ref{dissandL}.(i), so is $\mathrm{diss}_A$.
		
		Since $\mathrm{diss}$ is fully faithful by Proposition \ref{dissandL}.(i), it follows immediately that $\mathrm{diss}_A$ is faithful. Moreover, if $M, N\in \mathrm{Mod}_{\h{\B}c_K}(A)$ and $f: \mathrm{diss}_A(M)\to \mathrm{diss}_A(N)$ is a $\mathrm{diss}(A)$-module morphism, i.e. a morphism in $\mathrm{Ind}(\mathrm{Ban}_K)$ such that the diagram
		\begin{equation*}
			\begin{xy}
				\xymatrix{\mathrm{diss}(A)\overset{\rightarrow}{\otimes}_K \mathrm{diss}(M)\ar[r]\ar[d] & \mathrm{diss}(A)\overset{\rightarrow}{\otimes}_K \mathrm{diss}(N)\ar[d]\\
					\mathrm{diss}(M)\ar[r]& \mathrm{diss}(N)}
			\end{xy}
		\end{equation*}
		commutes, then applying the strong symmetric monoidal functor $L$ shows that $L(f): M\to N$ is an $A$-module morphism such that $\mathrm{diss}_A(L(f))=f$. Thus $\mathrm{diss}_A$ is fully faithful.
		
		For the first equivalence, we will invoke once more \cite[Proposition 1.2.36]{Schneiders}, which we have already used in the proof of Proposition \ref{LHIR}. Let 
		\begin{equation*}
			J= I\circ \mathrm{diss}_A: \mathrm{Mod}_{\h{\B}c_K}(A)\to LH(\mathrm{Mod}_{\mathrm{Ind}(\mathrm{Ban}_K)}(\mathrm{diss}(A))).
		\end{equation*}
		We check the required properties:
		\begin{enumerate}[(a)]
			\item $J$ is fully faithful, since both $\mathrm{diss}_A$ and $I$ are.
			\item If $N\to J(M)$ is a monomorphism for some $N\in LH(\mathrm{Mod}_{\mathrm{Ind}(\mathrm{Ban}_K)}(\mathrm{diss}(A)))$, $M\in \mathrm{Mod}_{\h{\B}c_K}(A)$, then $N$ lies in the essential image of $J$: Since $J(M)$ is in the essential image of $I$, so is $N$, i.e. $N=I(N^0)$ for some $N^0\in \mathrm{Mod}_{\mathrm{Ind}(\mathrm{Ban}_K)}(\mathrm{diss}(A))$ with $N^0\to \mathrm{diss}_A(M)$ a monomorphism. In particular, $N^0$ is an essentially monomorphic system of Banach spaces by Corollary \ref{disssubobjects}, and $N^0\cong \mathrm{diss}(L(N^0))$ as an object of $\mathrm{Ind}(\mathrm{Ban}_K)$. Since $L$ is strong monoidal, $L(N^0)\in \mathrm{Mod}_{\h{\B}c_K}(A)$, and the natural transformation $\mathrm{id}\to \mathrm{diss}\circ L$ shows that the $\mathrm{diss}(A)$-module structure on $N^0$ is precisely the one on $\mathrm{diss}_A(L(N^0))$. Thus $N^0\cong \mathrm{diss}_A(L(N^0))$, and $N\cong I(N^0)$ is in the essential image of $J$.
			
			\item Any object in $\mathrm{LH}(\mathrm{Mod}_{\mathrm{Ind}(\mathrm{Ban}_K)}(\mathrm{diss}(A)))$ admits an epimorphism from an object in the essential image of $J$: It suffices to show that any $M\in \mathrm{Mod}_{\mathrm{Ind}(\mathrm{Ban}_K)}(\mathrm{diss}(A))$ admits a strict epimorphism from an object in the essential image of $\mathrm{diss}_A$. If $M=``\varinjlim" M_i\in \mathrm{Ind}(\mathrm{Ban}_K)$ carries a $\mathrm{diss}(A)$-module structure, then there are strict epimorphisms
			\begin{equation*}
				\mathrm{diss}(A)\overset{\rightarrow}{\otimes}_K (\oplus M_i)\to \mathrm{diss}(A)\overset{\rightarrow}{\otimes}_K M\to M
			\end{equation*} 
			of $\mathrm{diss}(A)$-modules. It thus suffices to show that $\mathrm{diss}(A)\overset{\rightarrow}{\otimes}_K V$ is in the essential image of $\mathrm{diss}_A$ for any Banach space $V$. But due to the assumptions, we have
			\begin{equation*}
				\mathrm{diss}(A)\overset{\rightarrow}{\otimes}_K V\cong \mathrm{diss}(A\h{\otimes}_K V)
			\end{equation*}
			by Proposition \ref{dissandtensor}.
		\end{enumerate}
		We can thus apply \cite[Proposition 1.2.36]{Schneiders} to deduce that $J$ induces an equivalence
		\begin{equation*}
			LH(\mathrm{Mod}_{\h{\B}c_K}(A))\cong LH(\mathrm{Mod}_{\mathrm{Ind}(\mathrm{Ban}_K)}(\mathrm{diss}(A))).
		\end{equation*}
		The second equivalence now follows from Proposition \ref{LHIR}, and the third from Proposition \ref{dissandL}.
	\end{proof}
	
	In other words: Even though  we have not shown that $I: \h{\B}c_K\to LH(\h{\B}c_K)$ is strong monoidal (and in fact, it is not), we obtain an analogue of Proposition \ref{LHIR}. In particular, we can apply Lemma \ref{reltensoronLH} to talk unambiguously about the tensor product $\widetilde{\otimes}_A$ for $LH(\mathrm{Mod}_{\h{\B}c_K}(A))$. 
	
	\begin{lem}
		\label{reltensorIBFrechet}
		Let $A$ be the bornologification of a Fr\'echet $K$-algebra. Assume that at least one of the following is satisfied:
		\begin{enumerate}[(i)]
			\item $K$ is spherically complete.
			\item $A$ is of countable type.
		\end{enumerate}
		Then there is a natural isomorphism
		\begin{equation*}
			\widetilde{\mathrm{diss}}(M\widetilde{\otimes}_{A} N)\cong \widetilde{\mathrm{diss}}(M)\widetilde{\otimes}_{\mathrm{diss}(A)} \widetilde{\mathrm{diss}}(N)\in LH(\mathrm{Ind}(\mathrm{Ban}_K))
		\end{equation*}
		for $M\in LH(\mathrm{Mod}_{\h{\B}c_K}(A^{\mathrm{op}}))$, $N\in LH(\mathrm{Mod}_{\h{\B}c_K}(A))$.
	\end{lem}
	
	\begin{proof}
		Thanks to Proposition \ref{IBFrechetmod}, we have equivalences
		\begin{equation*}
			LH(\mathrm{Mod}_{\h{\B}c_K}(A))\cong \mathrm{Mod}_{LH(\h{\B}c_K)}(I(A))
		\end{equation*}
		and
		\begin{equation*}
			LH(\mathrm{Mod}_{\mathrm{Ind}(\mathrm{Ban}_K)})(\mathrm{diss}(A))\cong \mathrm{Mod}_{LH(\mathrm{Ind}(\mathrm{Ban}_K))}(I(\mathrm{diss}(A))),
		\end{equation*}
		such that by Lemma \ref{reltensoronLH}, the tensor products on the left hearts get identified with the relative tensor products in the corresponding module categories.
		
		But now $\widetilde{\mathrm{diss}}: LH(\h{\B}c_K)\cong LH(\mathrm{Ind}(\mathrm{Ban}_K))$ is an equivalence of closed symmetric monoidal categories, so it also identifies the relative tensor products.
	\end{proof}
	
	\section{Complete bornological modules over Fr\'echet--Stein algebras}
	\subsection{Nuclearity}
	Coadmissible modules over any Fr\'echet--Stein algebra carry a natural Fr\'echet topology, and many of their elementary properties given e.g. in \cite[section 3]{ST} are expressed within this Fr\'echet (topological) context. To have the same results at our disposal when working in the complete bornological framework, we need the functor $(-)^b$ to preserve various constructions. We first collect a couple of problems which we have already encountered in the last section.
	\begin{enumerate}[(a)]
		\item We might ask whether $(-)^b$ is exact on a suitable class of Fr\'echet spaces. Let $f: V\to W$ be a continuous morphism of Fr\'echet spaces. When is $\mathrm{coker}(f^b)\cong (\mathrm{coker} f)^b$? We note that this question becomes particularly relevant with regard to completed tensor products of Fr\'echet modules, see Lemma \ref{moduletensorwithb}.
		
		\item Let $V\in \mathscr{M}$. Is there some topological condition which might ensure that the natural morphism $\h{V^b}\to \h{V}^b$ is an isomorphism? Note that as we do not know a priori whether $\h{V^b}\in \mathscr{M}^b$, this is far from clear. Lemma \ref{tcompl} only gives some partial information. 
		
		\item Let $V\cong \varprojlim V_n$ be a Fr\'echet space, where each $V_n$ is a Banach space and the transition maps have dense image. By the Mittag--Leffler property, 
		\begin{equation*}
			V\cong \mathrm{R}\varprojlim V_n, 
		\end{equation*}
		i.e. the sequence
		\begin{equation*}
			0\to V\to \prod V_n\to \prod V_n\to 0
		\end{equation*} 
		is strictly exact. We know that $V^b\cong \varprojlim (V_n^b)$, as right adjoints commute with limits, but is $V^b\cong \mathrm{R}\varprojlim V^b_n$? Applying $(-)^b$ to the strictly short exact sequence above yields a surjection $\prod (V_n^b)\to \prod (V_n^b)$, but it is not clear whether it is also strict. 
	\end{enumerate} 
	There are usually two cases in which one can prove the desired isomorphisms: firstly if the spaces in question are normed (see Lemma \ref{borniexact}, Lemma \ref{normedtensor}, Lemma \ref{Banmoduletensor}), and secondly if the spaces are \emph{nuclear}. The aim of the following subsections is the introduction of several generalizations of nuclearity which are broad enough to establish the desired stability properties for all spaces which we are considering.
	
	In this subsection, we consider the most natural notion, nuclearity relative to a Noetherian Banach algebra, which addresses (a). In subsection 5.2, we introduce the more general notion of pseudo-nuclearity, which addresses (b). 
	
	In subsection 5.3, we discuss (c) by introducing pre-nuclear inverse systems of Banach spaces. We then use these results to give basic properties of coadmissible modules when viewed as complete bornological modules.
	
	In each case, we also confirm that our results remain valid after applying the dissection functor $\mathrm{diss}$.
	
	We begin with (a) and the notion of a nuclearity relative to a Noetherian Banach algebra. 
	
	We fix a Noetherian Banach $K$-algebra $A$ and an open $R$-subalgebra $\A\subseteq A^\circ$. An $A$-module $M$ will be called a \textbf{contracting} Banach $A$-module (or, to be precise, a contracting Banach $(A, \A)$-module) if it is endowed with a Banach structure such that the unit ball $M^\circ$ is $\A$-stable. We remark that the notion of contracting Banach module, and much of what follows in this subsection, might a priori depend on the choice of $\A$, even though we usually suppress this in terminology and notation.
	\begin{defn}[{\cite[Definition 1.1]{Kiehl}}]
		\label{defnscc}
		Let $f: M\to N$ be a continuous $A$-module morphism between two contracting Banach $A$-modules.
		\begin{enumerate}[(i)]
			\item We say that $f$ is \textbf{completely continuous} if there exists a sequence of continuous $A$-module morphisms $f_i: M\to N$, converging uniformly to $f$, such that the image $f_i(M)$ is a finitely generated $A$-module for all $i$.
			\item We say that $f$ is \textbf{strictly completely continuous} (scc) if there exists a sequence of continuous $A$-module morphisms $f_i: M\to N$, converging uniformly to $f$, and an element $a\in R$, $a\neq 0$ such that for each $i$, $a\cdot f_i(M^\circ)$ is contained in some finitely generated $\A$-submodule of $N^\circ$.
		\end{enumerate}
	\end{defn}
	
	In the case $A=K$, $\A=R$, the notion of a completely continuous morphism has already been studied extensively. Recall that a continuous $K$-linear map $f: M\to N$ between Banach $K$-vector spaces is called \textbf{compactoid} if $f(M^\circ)$ is a compactoid subset of $N$ in the sense of \cite[Definition 3.8.1]{Schikhof}.  
	
	\begin{lem}
		\label{scccompactoid}
		Suppose that $A=K$, $\A=R$ and let $f: M\to N$ be a continuous morphism of Banach $K$-vector spaces. Then the following are equivalent:
		\begin{enumerate}[(i)]
			\item $f$ is completely continuous.
			\item $f$ is strictly completely continuous.
			\item $f$ is compactoid.
		\end{enumerate}
	\end{lem}
	\begin{proof}
		The equivalence between (i) and (iii) is \cite[Proposition 2]{Gruson}, and it is immediate that (ii) implies (i). It thus suffices to show that (i) implies (ii).\\
		Suppose that $f_i: M\to N$ are continuous linear maps exhibiting $f$ as a completely continuous map. By rescaling the norm on $N$ and discarding finitely many $f_i$, we can suppose that $f$ and $f_i$ send $M^\circ$ into $N^\circ$ for all $i$. Since $f_i(M)$ is a finite-dimensional $K$-vector space, we can invoke \cite[Proposition 4.13]{SchneiderNFA} to deduce that the subspace norm on $f_i(M)$ is equivalent to the standard norm, i.e. for each $i$, $f_i(M)\cap N^\circ$ is contained in some finitely generated $R$-module. Now pick any $a\in R$ with $|a|<1$, $a\neq 0$. It follows straightforwardly from \cite[Lemme 5.2/1]{Gruson} that there exists a finitely generated $R$-submodule $S_i\subseteq f_i(M)\cap N^\circ$ such that 
		\begin{equation*}
			af_i(M^\circ)\subseteq a\cdot(f_i(M)\cap N^\circ)\subseteq S_i.
		\end{equation*}
		Thus $f$ is strictly completely continuous.
	\end{proof}
	
	We also mention an example of an scc morphism for general $A$. The natural morphism 
	\begin{equation*}
		f: A\langle \pi x_1, \hdots, \pi x_d\rangle \to A\langle x_1, \hdots, x_d\rangle 
	\end{equation*}
	is a strictly completely continuous morphism of $A$-modules, as $f$ can be approximated by 
	\begin{align*}
		f_i: A\langle \pi x_1, \hdots, \pi x_d\rangle \to A\langle x_1, \hdots, x_d\rangle\\
		\sum a_\alpha x^\alpha\to \sum_{|\alpha|\leq i} f(a_\alpha x^\alpha).
	\end{align*}
	Both the notion of an scc morphism and the example above were already discussed in detail in \cite{Kiehl}.
	
	We also note that the finite direct sum of scc morphisms is scc, and if $f: M\to N$ is scc, then so is $gf$ for any continuous $A$-module morphism $g: N\to P$.
	
	\begin{defn}
		Let $M$ be a Fr\'echet $A$-module. We say that $M$ is \textbf{$A$-nuclear} (or nuclear over $A$, or nuclear relative to $(A, \A)$) if there exists an isomorphism of Fr\'echet modules $M\cong \varprojlim M_n$, where for each $n$ the following is satisfied:
		\begin{enumerate}[(i)]
			\item $M_n$ is a contracting Banach $A$-module, and the image of $M$ in $M_n$ is dense,
			\item there exists a contracting Banach $A$-module $F_n$ and a strict surjection 
			\begin{equation*}
				F_n\to M_n
			\end{equation*}
			such that the composition $F_n\to M_n\to M_{n-1}$ is strictly completely continuous over $A$. 
		\end{enumerate}
	\end{defn}
	
	As a first example, note that any finitely generated $A$-module equipped with its canonical Banach structure is nuclear over $A$ (taking the constant inverse system $M_n=M$).
	
	Classically, a Fr\'echet $K$-vector space $M$ is called nuclear if its topology is defined by a (countable) family of semi-norms such that the connecting maps between the Banach completions $M_{n+1}\to M_n$ are compactoid (see \cite[Theorem 8.5.1.($\beta$)]{Schikhof}). The lemma below proves that this is consistent with our definition.
	
	\begin{lem}
		A Fr\'echet $K$-vector space is nuclear if and only if it is $K$-nuclear as defined above.
	\end{lem}
	\begin{proof}
		We claim that if $p: F_n\to M_n$ is a continuous surjection of Banach spaces, and $g: M_n\to M_{n-1}$ is a continuous map of Banach spaces such that $gp$ is compactoid, then $g$ is compactoid. In fact, we can assume without loss of generality that $M_n^\circ\subseteq p(F_n^\circ)$, so that $g(M_n^\circ)\subseteq gp(F_n^\circ)$ is compactoid by \cite[Theorem 3.8.4.(ii)]{Schikhof}.
		
		The result now follows directly from Lemma \ref{scccompactoid}.
	\end{proof}
	
	For a further class of examples, we consider the completed enveloping algebras introduced in subsection 2.1.
	
	We fix an affinoid $K$-algebra $A$ with admissible affine formal model $\A$. Let $L$ be a smooth $(K, A)$-Lie--Rinehart algebra. Recall from Theorem \ref{UcapFS} that the completed enveloping algebra $\w{U_A(L)}$ is a Fr\'echet--Stein algebra.

	We say that $L$ \textbf{admits a smooth (resp. free) Lie lattice} if there exists some $(R, \A)$-Lie lattice $\L\subseteq L$ which is a finitely generated projective (resp. free) $\A$-module. For example, if $L$ is a free $A$-module, we can pick free generators for $L$ and rescale them appropriately such that they generate an $\A$-module which is a free, and a fortiori smooth, Lie lattice. This shows that `locally', smooth Lie lattices always exist.

	\begin{prop}
		\label{coadisnuc}
		Let $L$ be a smooth $(K, A)$-Lie--Rinehart algebra admitting a smooth Lie lattice. 
		\begin{enumerate}[(i)]
			\item Any coadmissible $\w{U_A(L)}$-module is nuclear over $A$.
			\item Any coadmissible $\w{U_A(L)}$-module is of countable type.
		\end{enumerate}
	\end{prop} 

	\begin{proof}
		We begin by showing that $\w{U_A(L)}$ is nuclear over $A$.
		
		Let $\L$ be an $(R, \A)$-Lie lattice of $L$ which is projective as an $\A$-module, with generators $x_1, \hdots, x_d$. We write $U_n=\h{U_{\A}(\pi^n\L)}\otimes_R K$ for each $n\geq 0$.
		
		By functoriality, the natural morphism
		\begin{equation*}
			\h{\mathrm{Sym}_{\A}(\A^d)}\cong \A\langle y_1, \hdots, y_d\rangle\to \h{\mathrm{Sym}_{\A}\pi^n\L}
		\end{equation*}
		sending $y_i$ to $\pi^nx_i$ admits a section and is thus a surjection of $\A$-modules.
		
		As $\L$ is projective, we can invoke Rinehart's version of the PBW theorem (\cite[Theorem 3.1]{Rinehart}) to deduce that 
		\begin{equation*}
			\h{\mathrm{Sym}_{\A} \pi^n\L}\cong \h{U_{\A}(\pi^n\L)}
		\end{equation*}
		as $\A$-modules.
		
		Tensoring with $K$, we have thus constructed continuous surjective $A$-module morphisms 
		\begin{equation*}
			\alpha_n: A\langle y_1, \hdots, y_d\rangle \to U_n
		\end{equation*}
		fitting into a commutative diagram
		\begin{equation*}
			\begin{xy}
				\xymatrix{
					A\langle y_1, \hdots, y_d\rangle \ar[r] \ar[d]_{\alpha_n}& A\langle y_1, \hdots, y_d\rangle\ar[d]^{\alpha_{n-1}}\\
					U_n\ar[r] & U_{n-1},
				}
			\end{xy}
		\end{equation*}
		where the top horizontal map is the algebra morphism determined by sending $y_i$ to $\pi y_i$. This is scc over $A$, as it is precisely the example given after Lemma \ref{scccompactoid}. Hence the composition $A\langle y_1, \hdots, y_d\rangle\to U_n\to U_{n-1}$ is also scc, and $\w{U_A(L)}$ is nuclear over $A$.
		
		If $M=\varprojlim M_n$ is a coadmissible $\w{U_A(L)}$-module, then for each $n$ there exists an integer $r_n$ such that we have a commutative diagram of Banach $A$-modules
		\begin{equation*}
			\begin{xy}
				\xymatrix{A\langle y_1, \hdots, y_d\rangle^{\oplus r_n}\ar[d]_{\alpha_n^{\oplus r_n}}\\
					U_n^{\oplus r_n}\ar[r]\ar[d]& U_{n-1}^{\oplus r_n}\ar[d]\\
					M_n\ar[r] & M_{n-1}
				}
			\end{xy}
		\end{equation*}
		with all vertical maps being continuous surjections (the right column is obtained by tensoring the left column with $U_{n-1}$).
		
		As $A\langle y_1, \hdots, y_d\rangle^{\oplus r_n}\to U_n^{\oplus r_n}\to U_{n-1}^{\oplus r_n}$ is scc, so is the composition 
		\begin{equation*}
			A\langle y_1, \hdots, y_d\rangle^{\oplus r_n}\to U_n^{\oplus r_n}\to M_n\to M_{n-1},
		\end{equation*}
		proving that $M$ is a nuclear $A$-module.
		
		Since $A$ is an affinoid $K$-algebra, it is a quotient of a Tate algebra and hence of countable type. The above description implies immediately that if $M=\varprojlim M_n$ is coadmissible, then $M_n$ is of countable type for each $n$, and thus $M$ is of countable type, e.g. by \cite[Theorem 4.2.13.(iii)]{Schikhof}.
	\end{proof}
	
	We will mainly consider coadmissible modules over algebras $\w{U_A(L)}$ as in the Proposition above, but we point out that the analogues of all results remaining in this section also hold for coadmissible modules over Fr\'echet--Stein algebras $U$ with the property that every coadmissible $U$-module is a nuclear $K$-vector space of countable type. This is for example the case for the distribution algebra $D(G, K)$ of a compact $p$-adic Lie group $G$ (\cite[Lemma 6.1]{ST}). 
	
	We now return to our more general setting, where $A$ is a (not necessarily commutative) Noetherian Banach $K$-algebra, and $\A\subseteq A^\circ$ is an open $R$-subalgebra.
	
	\begin{lem}
		\label{Cauchybounded}
		Let $M$ be a Fr\'echet $A$-module, and let $x_1, x_2, \hdots$ be a Cauchy sequence in $M$. Then
		\begin{equation*}
			\overline{\sum_{i=1}^\infty \A x_i}
		\end{equation*}
		is a bounded subset of $M$.
	\end{lem}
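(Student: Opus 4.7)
The plan is to deduce the boundedness in two stages: first observe that the set $\{x_i\}$ is itself bounded (as a Cauchy sequence), then use the continuity of the $A$-action together with the non-Archimedean property of the seminorms defining the Fr\'echet topology on $M$.

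First I would fix a countable family of seminorms $(q_n)_{n\geq 1}$ defining the Fr\'echet topology on $M$. For each $n$, the sequence $(q_n(x_i))_i$ is a Cauchy sequence of real numbers (since $|q_n(x_i) - q_n(x_j)| \leq q_n(x_i - x_j)$), hence bounded. Thus $\{x_i : i\geq 1\}$ is a bounded subset of $M$.

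Next I would exploit continuity of the bilinear action $A\times M\to M$. Because $A$ is a Banach algebra with unit ball $A^\circ$, continuity of the action at the origin (in the product topology on $A\times M$) implies that for each seminorm $q_n$ on $M$ there exist an index $m\geq n$ and a constant $C_n>0$ such that
\begin{equation*}
q_n(ax) \leq C_n \|a\| \, q_m(x) \qquad \text{for all } a\in A,\ x\in M.
\end{equation*}
In particular, $q_n(ax) \leq C_n q_m(x)$ whenever $a\in A^\circ$. Combining this with the previous step, we obtain a constant $M_n := C_n \sup_i q_m(x_i) < \infty$ such that $q_n(a x_i) \leq M_n$ for every $a\in A^\circ$ and every $i$.

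Now for any finite sum $y = \sum_{i=1}^N a_i x_i$ with $a_i\in A^\circ$, the non-Archimedean triangle inequality for the seminorm $q_n$ gives
\begin{equation*}
q_n(y) \leq \max_{1\leq i\leq N} q_n(a_i x_i) \leq M_n,
\end{equation*}
so $\sum_{i=1}^\infty A^\circ x_i$ is bounded in $M$. Finally, the closure of a bounded subset of a locally convex space is bounded (every continuous seminorm achieves the same supremum on a set and on its closure by continuity), so $\overline{\sum A^\circ x_i}$ is bounded, as required.

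The only nontrivial point — and the step I would spell out most carefully — is the extraction of the uniform bound $q_n(ax)\leq C_n\|a\|q_m(x)$ from the continuity of $A\times M\to M$; this is a standard consequence of the Banach structure on $A$ together with the fact that $M$ has a countable fundamental system of seminorms, but one needs to verify it honestly from the definition of a Fr\'echet $A$-module.
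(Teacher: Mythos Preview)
Your proof is correct and follows essentially the same approach as the paper's own argument: Cauchy sequences are bounded, the $A^\circ$-submodule they generate is bounded by continuity of the action (together with the ultrametric inequality), and closures of bounded sets are bounded. The paper simply compresses these steps by citing \cite[Lemma 7.10]{SchneiderNFA} and \cite[Lemma 4.10]{SchneiderNFA} rather than unpacking the seminorm estimates as you do.
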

	\begin{proof}
		Cauchy sequences are bounded by \cite[Lemma 7.10]{SchneiderNFA}. Thus the boundedness of the $A$-action implies that $\sum \A x_i$ is bounded, and the closure of a bounded subset is bounded (see \cite[Lemma 4.10]{SchneiderNFA}).
	\end{proof}
	
	The crucial property of nuclear $A$-modules will be that under an additional hypothesis on $\A$, their bornology is generated by Cauchy sequences (Lemma \ref{Cauchygenerates}). To prove this, we first need several lemmas. Most of the strategy below is inspired by \cite{Schikhof}, where analogous results are discussed for the case $A=K$.
	
	\begin{lem}
		\label{nuclearimpliespre}
		Let 
		\begin{equation*}
			\begin{xy}
				\xymatrix{
					&F_{n+1}\ar[d]_{p} \ar[dr]^{f}\\
					M_{n+2}\ar[r]^{\rho_{n+1}}&M_{n+1}\ar[r]^{\rho_{n}}&M_n
				}
			\end{xy}
		\end{equation*}
		be a commutative diagram of contracting Banach $A$-modules such that $p$ is surjective, $f$ is scc, and the $\rho_i$ have dense image. Let $B\subseteq M_{n+1}$ be a bounded subset and let $V\subseteq M_n$ be an open $R$-submodule.
		\begin{enumerate}[(i)]
			\item There exist $m_1, \hdots, m_r\in M_{n+1}$ such that
			\begin{equation*}
				\rho_n(B)\subseteq V+\sum \A \rho_n(m_i).
			\end{equation*}
			\item There exists a bounded (even finitely generated) $\A$-submodule $B'\subseteq M_{n+2}$ such that
			\begin{equation*}
				\rho_n(B)\subseteq V+\rho_n(\rho_{n+1}(B')).
			\end{equation*}
		\end{enumerate}
	\end{lem}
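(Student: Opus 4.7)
The plan is to deduce (ii) from (i) by a further density argument, and to establish (i) by combining the surjectivity of $p$, the scc property of $f$, and density of $\rho_n$. The main technical subtlety throughout will be that $V$ is assumed merely to be an open $R$-submodule, not $A^\circ$-stable, so I will have to arrange all approximation errors to land in the smaller but $A^\circ$-stable open submodule $\pi^k M_n^\circ \subseteq V$ (which exists for $k$ sufficiently large because $V$ is open).

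For (i), I would first lift $B$ through $p$. Since $p$ is a surjection of Banach $A$-modules, the open mapping theorem makes it strict, and one obtains a bounded subset $\tilde B\subset F_{n+1}$ with $p(\tilde B)\supseteq B$; by commutativity, $\rho_n(B)\subseteq f(\tilde B)$. Picking $N$ with $\pi^N\tilde B\subseteq F_{n+1}^\circ$ and $k$ with $\pi^k M_n^\circ\subseteq V$, the scc hypothesis provides an $A$-linear $f_i$ with $(f-f_i)(F_{n+1}^\circ)\subseteq \pi^{N+k}M_n^\circ$, so that $(f-f_i)(\tilde B)\subseteq V$ and $f(\tilde B)\subseteq V+f_i(\tilde B)$. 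The subset $f_i(\tilde B)$ sits inside $\pi^{-N}f_i(F_{n+1}^\circ)$, which is a finitely generated $A^\circ$-submodule of $M_n$, say with generators $y_1,\dots,y_r$. Using density of $\rho_n$, I choose $m_j\in M_{n+1}$ with $y_j-\rho_n(m_j)\in \pi^k M_n^\circ$; since $\pi^k M_n^\circ$ is $A^\circ$-stable (this is where the contracting hypothesis is crucial), $A^\circ(y_j-\rho_n(m_j))\subseteq V$, so $\sum A^\circ y_j\subseteq V+\sum A^\circ \rho_n(m_j)$, and (i) follows by tracing through the inclusions.

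For (ii), I apply essentially the same density trick one step further. Given the $m_j$ from (i), continuity of $\rho_n$ makes $\rho_n^{-1}(\pi^k M_n^\circ)$ an open neighbourhood of $0$ in $M_{n+1}$, so density of $\rho_{n+1}$ yields $b'_j\in M_{n+2}$ with $\rho_{n+1}(b'_j)-m_j\in \rho_n^{-1}(\pi^k M_n^\circ)$. Again by $A^\circ$-stability of $\pi^k M_n^\circ$, the differences $A^\circ(\rho_n\rho_{n+1}(b'_j)-\rho_n(m_j))$ lie in $V$, so $\sum A^\circ \rho_n(m_j)\subseteq V+\sum A^\circ \rho_n\rho_{n+1}(b'_j)$. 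Setting $B':=\sum A^\circ b'_j$ gives a finitely generated $A^\circ$-submodule of $M_{n+2}$, which is bounded because each $b'_j$ lies in some $\pi^{-N_j}M_{n+2}^\circ$ and these are $A^\circ$-stable. By $A$-linearity, $\rho_n\rho_{n+1}(B')=\sum A^\circ \rho_n\rho_{n+1}(b'_j)$, giving the desired inclusion.

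The main obstacle I expect is the bookkeeping around the distinction between open $R$-submodules and $A^\circ$-stable ones; the proof uses the contracting property exactly to ensure that powers of $\pi$ times the unit balls are $A^\circ$-stable and cofinal among open $R$-submodules. Everything else (open mapping for $p$, scc approximation, density of the $\rho_i$) is used more or less as a black box.
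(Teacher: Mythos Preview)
Your argument is correct and follows essentially the same path as the paper's proof: lift $B$ through the strict surjection $p$, approximate $f$ by an $f_i$ with finitely generated image on the unit ball, and for (ii) use density of $\rho_{n+1}$ to pull the $m_j$ back to $M_{n+2}$. The one place where you take a slightly longer detour is in part (i): instead of approximating the generators $y_j\in M_n$ via density of $\rho_n$, the paper simply takes $m_j:=p(x_j)$ for generators $x_j\in F_{n+1}^\circ$ of $f_i(F_{n+1}^\circ)$, so that $\rho_n(m_j)=f(x_j)$ automatically by commutativity of the diagram, and the only error to control is $f(x_j)-f_i(x_j)$; this avoids invoking density of $\rho_n$ altogether in (i).
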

	
	\begin{proof}
		Without loss of generality, $V$ consists of all elements of norm $< \epsilon$ for a given $\epsilon>0$, $B\subseteq p(F_{n+1}^\circ)$, $||\rho_n||\leq 1$, $||f||\leq 1$, and $M_{n+1}$ is equipped with the quotient norm induced by $p$. 
		
		By assumption, $f$ is the uniform limit of morphisms $f_i: F_{n+1}\to M_n$ such that there exists $a\in R$, $a\neq 0$ with $a\cdot f_i(F_{n+1}^\circ)$ being contained in some finitely generated $\A$-submodule of $M_n^\circ$.
		
		\begin{enumerate}[(i)]
			\item There exists $f_i$ such that $||f-f_i||<\epsilon$ and $x_1, \hdots, x_r\in M_n^\circ$ such that
			\begin{equation*}
				a\cdot f_i(F_{n+1}^\circ)\subseteq \sum_{j=1}^r \A x_j.
			\end{equation*}
			By the density assumption, there exist $m_1, \hdots, m_r\in M_{n+1}$ such that 
			\begin{equation*}
				|\rho_n(m_j)-a^{-1}\cdot x_j|<\epsilon
			\end{equation*}
			for all $j$.
			
			Now let $b\in B$ and $x\in F_{n+1}^\circ$ such that $p(x)=b$. There exist $a_1, \hdots, a_r\in \A$ with
			\begin{equation*}
				f_i(x)=\sum_{j=1}^r a^{-1}a_jx_j.
			\end{equation*}
			Since $\rho_n(b)=\rho_np(x)=f(x)$, we then have
			\begin{equation*}
				|\rho_n(b)-\sum a_j\rho_n(m_j)|=\left|f(x)-f_i(x)+\sum_{j=1}^r a_j(a^{-1}\cdot x_j -\rho_n(m_j))\right|<\epsilon,
			\end{equation*}
			as required.
			\item By the density assumption, there exist $b_1, \hdots, b_r\in M_{n+2}$ such that 
			\begin{equation*}
				|m_j-\rho_{n+1}(b_j)|<\epsilon
			\end{equation*} 
			for each $j$. Set $B'=\sum \A b_j$.\qedhere
		\end{enumerate}  
	\end{proof}
	
	\begin{lem}
		\label{compactoid}
		Let $M$ be a nuclear $A$-module, and let $B$ be a bounded subset of $M$. If $V$ is an open $\A$-submodule of $M$, then there exist finitely many elements $x_1, \hdots, x_s\in M$ such that
		\begin{equation*}
			B\subseteq V+\sum \A x_i.
		\end{equation*}
	\end{lem}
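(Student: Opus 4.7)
The plan is to apply part (i) of Lemma \ref{nuclearimpliespre} to obtain approximants in $M_{n+1}$ and then lift them back to $M$ using the density of the image of $M$ in each $M_n$. First I would use that $V$ is open in the projective limit topology of $M \cong \varprojlim M_n$ to choose $n$ large enough that $V$ contains the preimage of an open $A^\circ$-submodule $V_n \subset M_n$. Writing $\sigma_n: M \to M_n$ for the natural morphism and $\rho_n: M_{n+1} \to M_n$ for the transition (both of norm $\leq 1$), one may assume $V_n = \{x \in M_n : |x|_n < \epsilon\}$ for some $\epsilon > 0$, where $|-|_n$ denotes the Banach norm on $M_n$.

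Next, $B_{n+1} := \sigma_{n+1}(B)$ is bounded in $M_{n+1}$, and the hypothesis that the composition $F_{n+1} \to M_{n+1} \xrightarrow{\rho_n} M_n$ is strictly completely continuous over $A$ lets me invoke Lemma \ref{nuclearimpliespre}.(i) to produce finitely many elements $m_1, \dots, m_r \in M_{n+1}$ with
\begin{equation*}
\rho_n(B_{n+1}) \subseteq V_n + \sum_{i=1}^r A^\circ \rho_n(m_i).
\end{equation*}
Using that $\sigma_{n+1}(M)$ is dense in $M_{n+1}$, I then pick $\tilde m_i \in M$ with $|\sigma_{n+1}(\tilde m_i) - m_i|_{n+1} < \epsilon$ for each $i$. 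Since $\rho_n$ is contracting, it follows that $|\sigma_n(\tilde m_i) - \rho_n(m_i)|_n < \epsilon$ as well.

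Finally, for any $b \in B$ I would write $\sigma_n(b) = \rho_n(\sigma_{n+1}(b)) = v + \sum a_i \rho_n(m_i)$ with $v \in V_n$ and $a_i \in A^\circ$, and compute
\begin{equation*}
\sigma_n\left(b - \sum_{i=1}^r a_i \tilde m_i\right) = v + \sum_{i=1}^r a_i\bigl(\rho_n(m_i) - \sigma_n(\tilde m_i)\bigr),
\end{equation*}
which lies in $V_n$ by the non-archimedean triangle inequality together with $|a_i| \leq 1$. Hence $b - \sum a_i \tilde m_i \in \sigma_n^{-1}(V_n) \subseteq V$, which gives $B \subseteq V + \sum_{i=1}^r A^\circ \tilde m_i$ as required, with $s = r$ and $x_i = \tilde m_i$.

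There is no real obstacle: the proof is essentially bookkeeping once one identifies the two ingredients to be combined. The only point worth monitoring is that the approximation $\sigma_{n+1}(\tilde m_i) \approx m_i$ must be sharp enough to survive multiplication by an arbitrary element of $A^\circ$ in the final estimate, but this is precisely where the non-archimedean nature of the norm and the contracting action of $A^\circ$ make the argument automatic; in particular, part (ii) of Lemma \ref{nuclearimpliespre}, which would be needed in the archimedean world to control the approximants more carefully, turns out to be superfluous here.
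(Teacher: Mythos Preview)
Your proof is correct and follows essentially the same route as the paper's own argument: reduce $V$ to the preimage of an $\epsilon$-ball in some $M_n$, apply Lemma~\ref{nuclearimpliespre}.(i) to the bounded set $\sigma_{n+1}(B)$, and then lift the resulting $m_i$ back to $M$ via the density of $\sigma_{n+1}(M)$ in $M_{n+1}$. The only cosmetic differences are notational (your $\sigma_n, \tilde m_i$ versus the paper's $p_n, x_j$), and your closing remark about part~(ii) of Lemma~\ref{nuclearimpliespre} being unneeded here is accurate but extraneous to the proof itself.
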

	\begin{proof}
		For each $n$, let $p_n: M\to M_n$ denote the natural map. By definition of the Fr\'echet topology, we can assume without loss of generality that there exists $N$ such that $V=p_N^{-1}(U)$ for some open $\A$-submodule $U\subseteq M_N$. Moreover, $p_{N+1}(B)$ is bounded in $M_{N+1}$.\\
		Without loss of generality, we can again assume that $U$ consists of all elements of $M_N$ of norm $<\epsilon$ and that the map $\rho_N: M_{N+1}\to M_N$ has sup norm $\leq 1$. Now Lemma \ref{nuclearimpliespre}.(i) yields $m_1, \hdots, m_s\in M_{N+1}$ such that 
		\begin{equation*}
			\rho_N(p_{N+1}(B))\subseteq U+\sum \A \rho_N(m_j).
		\end{equation*}
		As before, use the density of the map $M\to M_{N+1}$ to obtain $x_1, \hdots x_s\in M$ such that $|p_{N+1}(x_j)-m_j|<\epsilon$.
		
		Then by construction, for each $b\in B$ there exist $a_1, \hdots, a_s\in \A$ such that 
		\begin{equation*}
			\left|p_N(b)-p_N\left(\sum a_jx_j\right)\right|<\epsilon, 
		\end{equation*}
		i.e. $b-\sum a_jx_j\in p_N^{-1}(U)=V$.
	\end{proof}
	This result can be further improved if we make an additional assumption on $\A$: we assume from now on throughout that $\A$ is left Notherian if $K$ is discretely valued resp. almost left Noetherian if $K$ is not discretely valued. Note that this condition holds in particular whenever $\A$ is an admissible affine formal model of some affinoid $K$-algebra $A$.
	
	\begin{lem}[{compare \cite[Lemma 3.8.7]{Schikhof}}]
		\label{selfcompactoid}
		Let $M$ be a Fr\'echet $A$-module. Let $B$, $V$ be $\A$-submodules of $M$ and let $x\in M$ such that
		\begin{equation*}
			B\subseteq V+\A x.
		\end{equation*}
		Let $a\in R$, $a=1$ if $K$ is discretely valued, otherwise $0<|a|<1$. Then there exist $y_1, \hdots, y_n\in B$ such that
		\begin{equation*}
			a\cdot B\subseteq V+\sum_{i=1}^n \A y_i.
		\end{equation*}
	\end{lem}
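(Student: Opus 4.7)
The plan is to observe that the statement has no genuinely topological content: it follows from the standing assumption that $A^\circ$ is Noetherian. The Fr\'echet structure on $M$ and even the Banach structure on $A$ play no role in the argument; all that is needed is that every ideal of $A^\circ$ is finitely generated and that $V$ is an $A^\circ$-submodule.

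Concretely, I would first use the hypothesis $B \subseteq V + A^\circ x$ and the axiom of choice to pick, for each $b \in B$, elements $v_b \in V$ and $a_b \in A^\circ$ such that $b = v_b + a_b x$. Then I would consider the ideal
\begin{equation*}
I = \sum_{b \in B} A^\circ a_b \subseteq A^\circ.
\end{equation*}
Since $A^\circ$ is Noetherian, $I$ is finitely generated, so there exist $b_1, \dots, b_n \in B$ with $I = \sum_{i=1}^n A^\circ a_{b_i}$; set $y_i := b_i$.

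Now for any $b \in B$, write $a_b = \sum_{i=1}^n c_i a_{b_i}$ with $c_i \in A^\circ$. Using $a_{b_i} x = b_i - v_{b_i} = y_i - v_{y_i}$, I get
\begin{equation*}
b = v_b + a_b x = v_b + \sum_{i=1}^n c_i (y_i - v_{y_i}) = \Bigl( v_b - \sum_{i=1}^n c_i v_{y_i} \Bigr) + \sum_{i=1}^n c_i y_i.
\end{equation*}
Since $V$ is an $A^\circ$-submodule, the first summand lies in $V$, while the second lies in $\sum_{i=1}^n A^\circ y_i$. This gives $B \subseteq V + \sum_{i=1}^n A^\circ y_i$, as required.

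The only real step is spotting that one should package the coefficients $a_b$ into an ideal and apply Noetherianity there; once this is done, no further obstacles arise. The analogue in \cite[Lemma 3.8.7]{Schikhof} for $A = K$ is the special case where $A^\circ = R$ is a discrete valuation ring, in which case the ideal $I$ is principal and a single $y_1$ would in fact suffice.
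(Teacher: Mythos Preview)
Your proof is correct and follows essentially the same route as the paper: both arguments reduce the claim to the Noetherianity of $A^\circ$ by packaging the relevant coefficients into a finitely generated ideal, then unwinding. The paper uses the ideal $S=\{r\in A^\circ : rx\in B+V\}$ rather than your $I=\sum_{b\in B}A^\circ a_b$, but this is a cosmetic difference (your $I$ is contained in $S$, and either choice works). Your remark that the Fr\'echet structure plays no role is accurate and worth noting.
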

	\begin{proof}
		Let $S=\{r\in \A: rx\in B+V\}$, a left ideal of $\A$. By our hypothesis on $\A$, there exists a finitely generated ideal $T\subseteq S$ such that $a\cdot S\subseteq T$.
		
		Let $T$ be generated by $r_1, \hdots, r_n$, say. Since $T\subseteq S$, we can find $y_i\in B$, $v_i\in V$ such that $y_i=v_i+r_ix$ for each $i$. 
		
		If now $b\in B$, then $b=v+rx$ for some $v\in V$, $r\in S$ by assumption. Then $ar=\sum a_ir_i$ for $a_i\in \A$, and hence
		\begin{align*}
			ab&=av+\sum a_ir_ix\\
			&=av+\sum a_i(y_i-v_i)\\
			&=(av-\sum a_iv_i) +\sum a_iy_i.
		\end{align*}
		Since $av-\sum a_iv_i\in V$, this proves the lemma.
	\end{proof}
	
	\begin{cor}[{compare \cite[Lemma 3.8.8]{Schikhof}}]
		\label{selfcpcdsequence}
		Let $M$ be a nuclear $A$-module, and let $B$ be a bounded $\A$-stable subset of $M$. Let $a\in R$, $a=1$ if $K$ is discretely valued, otherwise $0<|a|<1$. If $V$ is an open $\A$-submodule of $M$, then there exist $y_1, \hdots, y_s\in B$ such that
		\begin{equation*}
			a\cdot B\subseteq V+\sum_{i=1}^s \A y_i.
		\end{equation*}
	\end{cor}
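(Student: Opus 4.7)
The plan is to combine the two previous lemmas: Lemma \ref{compactoid} produces a finite set of approximants $x_1,\dots,x_s\in M$ (not necessarily in $B$) so that $B\subseteq V+\sum_{i=1}^s A^\circ x_i$, and Lemma \ref{selfcompactoid} lets us trade a single approximant outside of $B$ for finitely many approximants inside $B$, without enlarging the ``tail'' $V$. The whole argument is an iteration of the second lemma along the list $x_1,\dots,x_s$.

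Concretely, I first invoke Lemma \ref{compactoid} to fix elements $x_1,\dots,x_s\in M$ such that $B\subseteq V+\sum_{i=1}^s A^\circ x_i$. I then argue by downward induction on $k$ that one can find $y_1,\dots,y_{N_k}\in B$ with
\begin{equation*}
B\subseteq V+\sum_{j=1}^{N_k}A^\circ y_j+\sum_{i=1}^{k} A^\circ x_i.
\end{equation*}
The base case $k=s$ is exactly what Lemma \ref{compactoid} provides. For the inductive step, observe that the set $V_k:=V+\sum_{j=1}^{N_k}A^\circ y_j+\sum_{i=1}^{k-1}A^\circ x_i$ is an $A^\circ$-submodule of $M$, and by the inductive hypothesis $B\subseteq V_k+A^\circ x_k$. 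Since $B$ is assumed to be $A^\circ$-stable (hence an $A^\circ$-submodule), Lemma \ref{selfcompactoid} applies and produces finitely many $y'_1,\dots,y'_m\in B$ with $B\subseteq V_k+\sum_\ell A^\circ y'_\ell$; appending these to the previous list gives the inductive claim with $k$ replaced by $k-1$. After $s$ steps we arrive at the desired conclusion.

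I expect no significant obstacle: Lemma \ref{selfcompactoid} is stated for arbitrary $A^\circ$-submodules $V$ (no openness is required), so the intermediate modules $V_k$ used above are admissible inputs even though they fail to be open, and Lemma \ref{compactoid} supplies the initial finite approximation directly from nuclearity. The only minor subtlety is verifying at the outset that an $A^\circ$-stable bounded subset may be taken to be an $A^\circ$-submodule, which is immediate since $A^\circ$-stability together with the fact that $B$ is assumed to be a bounded subset (closed under the additive structure is inherited from viewing $B$ as generating the same $A^\circ$-module $A^\circ\cdot B$, to which the argument applies without loss of generality).
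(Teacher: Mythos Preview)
Your proposal is correct and follows essentially the same approach as the paper: both start with Lemma \ref{compactoid} to obtain finitely many $x_1,\dots,x_s\in M$ with $B\subseteq V+\sum A^\circ x_i$, and then iteratively apply Lemma \ref{selfcompactoid} to trade each $x_i$ for finitely many elements of $B$, using that the remaining tail $V+\sum A^\circ y_j+\sum_{i\neq k}A^\circ x_i$ is an $A^\circ$-submodule. The only cosmetic difference is that the paper peels off $x_1$ first and proceeds upward, while you run a downward induction on $k$; also, your closing remark about replacing $B$ by $A^\circ\cdot B$ is unnecessary (and slightly garbled), since in the paper ``$A^\circ$-stable'' is used precisely to mean that $B$ is an $A^\circ$-submodule, which is what Lemma \ref{selfcompactoid} requires.
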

	\begin{proof}
		Suppose first that $K$ is discretely valued. By Lemma \ref{compactoid}, there exist $x_1, \hdots, x_r\in M$ such that 
		\begin{equation*}
			B\subseteq V+\sum A^\circ x_i=(V+\sum_{i>1} \A x_i) + \A x_1.
		\end{equation*} 
		By Lemma \ref{selfcompactoid}, there are $y_1, \hdots, y_n\in B$ such that
		\begin{equation*}
			B\subseteq V+\sum_{i>1}\A x_i+\sum_{i=1}^n \A y_i=((V+\sum \A y_i) +\sum_{i>2}\A x_i)+\A x_2.
		\end{equation*}
		Repeating this process inductively yields the result in the discretely valued case.
		
		Now suppose that $K$ is not discretely valued, so that the value group is dense. Pick $a_1, \hdots, a_r\in R$ such that $0<|a_i|<1$ and $\prod |a_i|\geq |a|$.
		
		As before, we have
		\begin{equation*}
			B\subseteq V+\sum \A x_i=(V+\sum_{i>1} \A x_i) + \A x_1.
		\end{equation*} 
		by Lemma \ref{compactoid}. Hence by Lemma \ref{selfcompactoid}, there exist $y_1, \hdots, y_n\in B$ such that
		\begin{equation*}
			a_1\cdot B\subseteq (V+\sum \A y_i +\sum_{i>2} \A x_i)+ \A x_2.
		\end{equation*}
		Repeating inductively yields $y_1, \hdots y_s\in B$ such that
		\begin{equation*}
			\left(\prod_{i=1}^r a_i\right)\cdot B\subseteq V+\sum_{i=1}^s\A y_i,
		\end{equation*}
		and the result follows, as $a\cdot B\subseteq (\prod a_i)\cdot B$. 
	\end{proof}
	
	We remark that by applying the Corollary above in the case where $V$ gets replaced by the open submodule $a\cdot V$ and then multiplying throughout with $\lambda=a^{-1}$, we can find $y_1, \hdots, y_s\in \lambda\cdot B$ such that
	\begin{equation*}
		B\subseteq V+\sum_{i=1}^s \A y_s.
	\end{equation*} 
	\begin{lem}[{compare \cite[Theorem 3.8.25]{Schikhof}}]
		\label{Cauchygenerates}
		Let $M$ be a nuclear $A$-module, and let $B$ be a bounded subset of $M$. Then there exists a sequence $y_1, y_2, \hdots$ in $M$ tending to zero such that 
		\begin{equation*}
			B\subseteq \overline{\sum \A y_i}.
		\end{equation*}
		In fact, for each $b\in B$, there exist $a_1, a_2, \hdots\in \A$ such that $b=\sum_{i=1}^\infty a_iy_i$. 
	\end{lem}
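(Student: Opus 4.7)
The plan is to iterate Corollary \ref{selfcpcdsequence} along a decreasing neighbourhood basis of zero consisting of open $A^\circ$-submodules, peeling off $B$ in ever finer approximations, and then concatenating the resulting finite families into a single null sequence.

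First I replace $B$ by $A^\circ\cdot B$, which remains bounded by the boundedness of the $A$-action, so that without loss of generality $B$ is $A^\circ$-stable. Writing $M\cong\invlim M_n$ as in the definition of nuclearity, each $M_n$ is a contracting Banach $A$-module, so its balls around zero are open $A^\circ$-submodules; their preimages in $M$ therefore furnish a decreasing neighbourhood basis $V_1\supseteq V_2\supseteq\dots$ of zero consisting of open $A^\circ$-submodules, and I set $V_0:=M$.

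Next I build recursively finite subsets $S_k\subseteq M$ and bounded $A^\circ$-stable subsets $B_k\subseteq M$, starting from $B_0:=B$. Given $B_{k-1}$, Corollary \ref{selfcpcdsequence} applied with opening $V_k$ produces a finite $S_k\subseteq B_{k-1}$ with $B_{k-1}\subseteq V_k+\sum_{y\in S_k}A^\circ y$, and I then set $B_k:=V_k\cap\bigl(B_{k-1}+\sum_{y\in S_k}A^\circ y\bigr)$. This is an intersection of bounded $A^\circ$-stable subsets, hence again bounded and $A^\circ$-stable, and an easy induction on $k$ gives $B_k\subseteq V_k$, so in particular $S_{k+1}\subseteq B_k\subseteq V_k$ for each $k\geq 0$.

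Finally I iterate the decomposition: any $b\in B$ writes as $b=w_1+\dots+w_k+v_k$ with $w_j\in\sum_{y\in S_j}A^\circ y$ and $v_k\in B_k\subseteq V_k$ for every $k\geq 1$. Concatenating $S_1, S_2, \dots$ into a single sequence $y_1, y_2, \dots$, the inclusion $S_k\subseteq V_{k-1}$ for $k\geq 2$ forces $y_i\to 0$, while the partial sums of the $w_j$ approximate $b$ inside $\overline{\sum_i A^\circ y_i}$. The main piece of bookkeeping is ensuring at each stage that the $V_k$ can be chosen as open $A^\circ$-submodules (which hinges on the contracting clause in the definition of nuclearity) and that $V_k\cap(B_{k-1}+\sum A^\circ y)$ remains bounded and $A^\circ$-stable so that Corollary \ref{selfcpcdsequence} can be re-applied; the real conceptual content has already been invested in Corollary \ref{selfcpcdsequence}, and the present lemma amounts to its iteration.
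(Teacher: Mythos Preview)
Your proposal is correct and follows essentially the same approach as the paper: iterate Corollary~\ref{selfcpcdsequence} along a decreasing basis of open $A^\circ$-submodule neighbourhoods of zero and concatenate the resulting finite families into a null sequence. The only minor difference is bookkeeping: the paper takes the simpler $B_k = V_k \cap B$ at each stage (which works because all the $y_i$ lie in the $A^\circ$-submodule $B$, so the remainder $b - \sum a_i y_i$ stays in $B$), whereas you carry the slightly larger $B_k = V_k \cap (B_{k-1} + \sum_{y\in S_k} A^\circ y)$; both choices are bounded and $A^\circ$-stable, so the argument goes through either way.
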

	\begin{proof}
		Replacing $B$ by $\A\cdot B$, we can assume that $B$ is a bounded $\A$-submodule.
		
		Let $V_1\supseteq V_2\supseteq \hdots$ be a sequence of open $\A$-stable neighbourhoods of $0$ in $M$ such that $\cap V_i=\{0\}$. 
		
		We now pick $\lambda\in K$ with $\lambda=1$ if $K$ is discretely valued, $|\lambda|>1$ otherwise. Let $\lambda_1, \lambda_2, \hdots\in K$ be a sequence with $\lambda_i=1$ for all $i$ if $K$ is discretely valued, $|\lambda_i|>1$ with $\prod |\lambda_i|\leq|\lambda|$ otherwise.
		
		By Corollary \ref{selfcpcdsequence}, there exist $y_1, \hdots, y_s\in \lambda_1\cdot B$ such that
		\begin{equation*}
			B\subseteq V_1+\sum \A y_i.
		\end{equation*}
		Since $y_i\in \lambda_1\cdot B$ for each $i$ and $B\subseteq \lambda_1\cdot B$, we immediately deduce
		\begin{equation*}
			B\subseteq (V_1\cap \lambda_1\cdot B)+\sum \A y_i.
		\end{equation*}
		But now $V_1\cap \lambda_1 B$ is itself a bounded $\A$-submodule, so there exist $y_{s+1}, \hdots, y_r\in \lambda_2\cdot(V_1\cap \lambda_1 B)$ such that
		\begin{equation*}
			V_1\cap \lambda_1 B\subseteq V_2+\sum_{i=s+1}^r \A y_i
		\end{equation*}
		and hence $V_1\cap \lambda_1 B\subseteq (V_2\cap \lambda_1\lambda_2 B)+\sum_{i=s+1}^r \A y_i$.
		
		In particular,
		\begin{equation*}
			B\subseteq (V_2\cap \lambda_1\lambda_2 B)+\sum_{i=1}^r \A y_i.
		\end{equation*}
		Continuing in this way yields a sequence $y_1, y_2, \hdots$ in $M$ such that for each $i$, all but finitely many terms are contained in $(\prod_{j=1}^i \lambda_j)\cdot V_i\subseteq \lambda\cdot V_i$ and
		\begin{equation*}
			B\subseteq \left(V_i\cap \prod_{j=1}^i\lambda_j\cdot B\right)+\sum_{j=1}^\infty A^\circ y_j\subseteq V_i\cap \lambda\cdot B+\sum_{j=1}^\infty \A y_j.
		\end{equation*}
		Thus $(y_i)$ is a Cauchy sequence tending to zero such that $B\subseteq \overline{\sum \A y_i}$, and moreover, each $b\in B$ can be written as $\sum a_i y_i$ for some $a_1, a_2, \hdots \in \A$ by construction.
	\end{proof}

	\begin{prop}
		\label{strictepis}
		Let $f: M\to N$ be a continuous surjection of Fr\'echet $A$-modules, and assume $N$ is a nuclear $A$-module. Then $f^b: M^b\to N^b$ is a strict epimorphism in $\mathrm{Mod}_{\h{\B} c_K}(A)$.
	\end{prop}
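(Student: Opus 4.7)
The plan is to exploit Lemma~\ref{Cauchygenerates} together with the open mapping theorem for Fréchet spaces, which will reduce the statement to a concrete lifting problem controlled by null sequences, and then to close the argument using the non-Archimedean series calculus in $M$.

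Let $B\subseteq N$ be bounded. Since $N$ is $A$-nuclear, Lemma~\ref{Cauchygenerates} furnishes a null sequence $(y_i)$ in $N$ with $B\subseteq \overline{\sum_i A^\circ y_i}$. Because $f\colon M\to N$ is a continuous surjection between Fréchet spaces, the open mapping theorem says $f$ is open, and this allows lifting $(y_i)$ to a null sequence $(x_i)$ in $M$ with $f(x_i)=y_i$: given a decreasing neighbourhood basis $V_1\supset V_2\supset\dots$ of $0$ in $M$, each $f(V_k)$ is an open neighbourhood of $0$ in $N$, so one chooses indices $i_1<i_2<\dots$ with $y_i\in f(V_k)$ for $i\geq i_k$ and picks the $x_i$ in the corresponding $V_k$. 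By Lemma~\ref{Cauchybounded}, the set $C:=\overline{\sum_i A^\circ x_i}\subseteq M$ is then bounded.

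To finish, it suffices to prove that $\overline{\sum_i A^\circ y_i}\subseteq f(C)$, since then $B\subseteq f(C)$, and this is precisely what is required for $f^b$ to be strict as an epimorphism in $\h{\B} c_K(A)$ (by Lemma~\ref{ModR}.(ii), strictness in $\h{\B}c_K(A)$ coincides with strictness in $\h{\B}c_K$). Given $b$ in the closure, the key point is to represent $b$ as a convergent series $b=\sum_i c_i y_i$ with $c_i\in A^\circ$. Once this is achieved, the series $\sum_i c_i x_i$ automatically converges in $M$: since $(x_i)$ is null and the $c_i$ lie in the bounded set $A^\circ$, one has $c_i x_i\to 0$, and in a non-Archimedean complete space this forces the partial sums to be Cauchy. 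The limit lies in $C$ because the partial sums lie in $\sum_i A^\circ x_i$, and continuity of $f$ gives $f(\sum_i c_i x_i)=\sum_i c_i y_i=b$.

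The delicate step is the existence of the series representation $b=\sum_i c_i y_i$. The strategy is to write $b=\lim_n s_n$ with $s_n=\sum_i a_{n,i}y_i$ a finite $A^\circ$-combination, pass to a subsequence so that the differences $s_n-s_{n-1}$ shrink fast in each of the seminorms defining the Fréchet topology of $N$, and then refine the coefficient choices so that for each fixed $i$ the sequence $(a_{n,i})_n$ stabilises in $A^\circ$ to some $c_i$. This is where the main subtlety lies: over a general Noetherian Banach $K$-algebra $A$, the subring $A^\circ$ need not be compact over $R$, so the Tychonoff-style argument from the scalar case $A=K$ does not transplant verbatim. Instead one exploits the completeness of $A^\circ$ as a Banach $R$-algebra together with the fact that $(y_i)$ is null, in order to carry out a diagonal construction that progressively freezes the coefficients $a_{n,i}$ up to an arbitrary Fréchet-level of accuracy before moving to the next; the limit coefficients then lie in $A^\circ$ by its closedness in $A$, and the resulting series converges to $b$ since the tails $s_n-\sum_{i\leq N} a_{n,i}y_i$ are controlled by $y_i\to 0$.
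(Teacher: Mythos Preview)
Your overall strategy coincides with the paper's: invoke Lemma~\ref{Cauchygenerates} to trap $B$ inside $\overline{\sum_i A^\circ y_i}$ for a null sequence $(y_i)$ in $N$, lift $(y_i)$ to a null sequence $(x_i)$ in $M$ via the open mapping theorem (the paper cites \cite[Lemma~3.5.8]{Schikhof} for exactly this), and set $C=\overline{\sum_i A^\circ x_i}$, bounded by Lemma~\ref{Cauchybounded}. The paper then asserts in one line that $f(C)\supseteq B$; you identify this as the only nontrivial point and attempt to justify it by writing every $b\in\overline{\sum_i A^\circ y_i}$ as a convergent series $\sum_i c_i y_i$ with $c_i\in A^\circ$, so that $\sum_i c_i x_i\in C$ is the required preimage.

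There is a genuine gap in your handling of this step. Your sketch --- ``a diagonal construction that progressively freezes the coefficients $a_{n,i}$'' --- does not explain why, for fixed $i$, the sequence $(a_{n,i})_n$ should be Cauchy in $A^\circ$; completeness of $A^\circ$ only allows you to pass to a limit once a Cauchy sequence is in hand, it does not manufacture one. Worse, the generators $y_i$ need not be $A$-linearly independent, so the coefficients in a finite expression $s_n=\sum_i a_{n,i}y_i$ are not uniquely determined by $s_n$, and the instruction to ``freeze'' them is ill-posed as stated. You have correctly located the crux of the argument, but your resolution is a heuristic rather than a proof; the paper, for its part, simply asserts the containment $f(C)\supseteq B$ without further comment.
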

	\begin{proof}
		By the Open Mapping Theorem, $f$ is a strict epimorphism in $LCVS_K$.\\
		Let $B$ be bounded subset in $N$. It suffices to show that $B$ is contained in the image of some bounded subset in $M$. By Lemma \ref{Cauchygenerates}, there exists a sequence $(x_i)$ in $N$ tending to zero such that $B$ is contained in $\overline{\sum \A x_i}$. By \cite[Lemma 3.5.8]{Schikhof}, there exists $y_i\in M$ such that $f(y_i)=x_i$ for each $i$, and the $y_i$ tend to zero in $M$.\\
		Then $B'=\overline{\sum \A y_i}$ is a bounded subset of $M$ (by Lemma \ref{Cauchybounded}) such that $f(B')$ contains $B$: if $b\in B$, then by Lemma \ref{Cauchygenerates} there exist $a_1, a_2, \hdots \in \A$ such that $b=\sum a_ix_i$. But now the series $\sum a_i y_i$ converges in $M$ since the $y_i$ form a null sequence, and $b=f(\sum a_iy_i)$. Since $\sum a_iy_i\in \overline{\sum \A y_i}$, this proves the claim. 
	\end{proof}
	
	\begin{cor}
		\label{embedding}
		Let $A$ be an affinoid $K$-algebra, and let $L$ be a smooth $(K, A)$-Lie--Rinehart algebra admitting a smooth Lie lattice. Then the functor
		\begin{equation*}
			(-)^b: \C_{\w{U_A(L)}}\to \mathrm{Mod}_{\h{\B} c_K}(\w{U_A(L)})
		\end{equation*}
		is exact and fully faithful.
	\end{cor}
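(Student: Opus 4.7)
The plan is straightforward: full faithfulness has already been established in the earlier lemma of this subsection for arbitrary Fr\'echet--Stein algebras, and $\w{U_A(L)}$ is one by \cite[Theorem 3.5]{Bode1}, so the content lies entirely in exactness. Given a short exact sequence of coadmissible $\w{U_A(L)}$-modules
\[
0 \to M' \xrightarrow{f} M \xrightarrow{g} M'' \to 0,
\]
I will verify separately that $f^b$ is a strict monomorphism, that $g^b$ is a strict epimorphism, and that the natural morphism $(M')^b \to \ker(g^b)$ is an isomorphism; together these give strict exactness in $\h{\B}c(\w{U_A(L)})$.

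For the strict epimorphism, the essential input is the preceding proposition: every coadmissible $\w{U_A(L)}$-module is $A$-nuclear. Applying Proposition \ref{strictepis} to the continuous $A$-linear surjection $g: M \to M''$ of Fr\'echet $A$-modules then produces a strict epimorphism $g^b$ in $\h{\B}c(A)$. Strictness transfers to $\h{\B}c(\w{U_A(L)})$ because Lemma \ref{ModR}.(ii) (applied along the chain of monoid morphisms $A \to \w{U_A(L)}$) says that strictness in a module category coincides with strictness in the underlying category of complete bornological spaces.

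The two remaining steps are formal. For the strict monomorphism, $f(M') = \ker g$ is closed in $M$, so the open mapping theorem for Fr\'echet spaces makes $f$ a topological embedding; this identifies the bornology of $(M')^b$ with the subspace bornology inherited from $M^b$, making $f^b$ a strict monomorphism. For the kernel identification, the functor $(-)^b: LCVS_K \to \B c_K$ is a right adjoint (to the topologization functor, cf. Lemma \ref{metricequiv} and the discussion preceding it) and hence preserves limits, so $\ker(g^b) \cong (\ker g)^b = (M')^b$ as bornological vector spaces; completeness of all three modules makes this identification valid already in $\h{\B}c$.

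The only genuine obstacle is the strict epimorphism step: the open mapping theorem is purely topological and gives no information about bornological strictness, i.e. it does not by itself tell us how to lift bounded subsets of $M''$ to bounded subsets of $M$. The entire purpose of the $A$-nuclearity machinery developed in this subsection --- in particular the fact that bounded subsets of a nuclear module are generated by Cauchy sequences (Lemma \ref{Cauchygenerates}) --- is to bridge precisely this gap, and Proposition \ref{strictepis} packages the resulting lifting argument in exactly the form needed here.
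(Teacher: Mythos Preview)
Your proposal is correct and follows essentially the same approach as the paper: full faithfulness is already known, and the crux is that $g^b$ is a strict epimorphism via Proposition~\ref{strictepis} applied to the $A$-nuclear target $M''$. The paper is slightly more economical in that it dispatches both the strict monomorphism and the kernel identification in one stroke by invoking left exactness of $(-)^b$ (as a right adjoint it preserves all kernels, so $f^b = \ker(g^b)$ is automatically a strict monomorphism), whereas you treat these separately and invoke the open mapping theorem for the monomorphism part --- correct but redundant, since your own kernel argument already yields it.
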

	\begin{proof}
		We have already seen earlier that the functor $(-)^b$ is fully faithful on metrisable spaces.
		
		Given a short exact sequence 
		\begin{equation*}
			0\to M_1\to M_2\to M_3\to 0
		\end{equation*}
		of coadmissible $\w{U_A(L)}$-modules, we want to show that 
		\begin{equation*}
			0\to M_1^b\to M_2^b\to M_3^b\to 0
		\end{equation*} 
		is exact in $\h{\B} c_K$. 
		
		As $(-)^b$ is left exact, we only need to verify that $M_2^b\to M_3^b$ is a strict epimorphism. This follows from the previous proposition, since $M_3$ is $A$-nuclear.
	\end{proof}
	
	\begin{cor}
		\label{coadstrict}
		Let $A$ be an affinoid $K$-algebra, and let $L$ be a smooth $(K, A)$-Lie--Rinehart algebra admitting a smooth Lie lattice. If $M, N$ are coadmissible $\w{U_A(L)}$-modules, then any morphism $f:M^b\to N^b$ is strict.
	\end{cor}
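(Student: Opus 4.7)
The plan is to leverage Corollary \ref{embedding}, which establishes that $(-)^b$ is an exact functor from the abelian category $\C_{\w{U_A(L)}}$ of coadmissible modules into $\h{\B}c_K(\w{U_A(L)})$. First, I would observe that $f\colon M^b\to N^b$, being bounded and $\w{U_A(L)}$-linear, corresponds to a continuous $\w{U_A(L)}$-linear map between $M$ and $N$ equipped with their canonical Fr\'echet structures, since any bounded $\w{U_A(L)}$-linear map between coadmissible modules is automatically continuous (as used in establishing the fully faithful embedding recalled just after the definition of coadmissibility).

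Next, I would invoke the Schneider--Teitelbaum result \cite[Corollary 3.4]{ST} that $\C_{\w{U_A(L)}}$ is an abelian category, so that $f$ admits an image-factorization $M\twoheadrightarrow I\hookrightarrow N$ in coadmissibles, where $I=\im f$ is itself coadmissible. This yields two short exact sequences of coadmissible modules:
\begin{equation*}
0\to \ker f\to M\to I\to 0,\qquad 0\to I\to N\to \coker f\to 0.
\end{equation*}

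Applying the exact functor $(-)^b$ (Corollary \ref{embedding}) to these two sequences produces two strictly exact sequences in $\h{\B}c_K(\w{U_A(L)})$, which in particular show that $p^b\colon M^b\to I^b$ is a strict epimorphism and $j^b\colon I^b\to N^b$ is a strict monomorphism. Since $f^b=j^b\circ p^b$ factors as a strict epimorphism followed by a strict monomorphism, it is itself strict, and the corollary follows.

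I do not expect a substantial obstacle here, as the core analytic input has already been absorbed into Corollary \ref{embedding} (which rests on the nuclearity arguments culminating in Proposition \ref{strictepis}). The only thing to be careful about is the appeal to the abelianness of $\C_{\w{U_A(L)}}$; if one wished to avoid this, one could argue directly by factoring $f$ in Fr\'echet modules, invoking Proposition \ref{strictepis} for the surjective part, and observing that the inclusion of a coadmissible submodule is a closed embedding of Fr\'echet spaces and therefore gives a strict monomorphism after applying $(-)^b$. But this route would simply reprove, in this special case, what is already encoded in Corollary \ref{embedding}.
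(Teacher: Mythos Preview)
Your proposal is correct and follows essentially the same approach as the paper: both use fully faithfulness to lift $f$ to a morphism of coadmissible modules, invoke the abelianness of $\C_{\w{U_A(L)}}$ to obtain the image factorization, and then apply the exactness of $(-)^b$ from Corollary~\ref{embedding}. The only cosmetic difference is that the paper explicitly identifies $\coim f\cong(\coim f')^b$ and $\im f\cong(\im f')^b$, whereas you phrase the conclusion as ``strict epimorphism followed by strict monomorphism implies strict'', which amounts to the same thing.
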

	\begin{proof}
		As $(-)^b$ is fully faithful on Fr\'echet spaces, there exists a (continuous) $\w{U_A(L)}$-module morphism $f': M\to N$ such that $f'^b=f$. Since coadmissible modules form an abelian category (\cite[Corollary 3.4, Corollary 3.5]{ST}), we have exact sequences of coadmissible $\w{U_A(L)}$-modules
		\begin{equation*}
			0\to \ker f' \to M\to \coim f'\to 0
		\end{equation*} 
		and
		\begin{equation*}
			0\to \im f' \to N\to \coker f'\to 0,
		\end{equation*}
		with the natural morphism $\coim f'\to \im f'$ an isomorphism, since any morphism between coadmissible modules is topologically strict (\cite[Proposition 2.1.iii]{ST}).\\
		But by the above, $\coim f\cong(\coim f')^b$ and $\im f\cong(\im f')^b$, proving that $f$ is strict.
	\end{proof}
	
	We remark that our results remain valid upon passing to $\mathrm{Ind}(\mathrm{Ban}_K)$:
	
	\begin{cor}
		\label{coadIBembedding}
		Let $A$ be an affinoid $K$-algebra, and let $L$ be a smooth $(K, A)$-Lie--Rinehart algebra admitting a smooth Lie lattice. Then the functor
		\begin{equation*}
			\mathrm{diss}_{\w{U_A(L)}}\circ (-)^b: \C_{\w{U_A(L)}}\to \mathrm{Mod}_{\mathrm{Ind}(\mathrm{Ban}_K)}(\mathrm{diss}(\w{U_A(L)}))
		\end{equation*}
		is exact and fully faithful.
		
		Any morphism in the essential image is strict. 
	\end{cor}
	\begin{proof}
		By Proposition \ref{IBFrechetmod}, the functor $\mathrm{diss}_{\w{U_A(L)}}$ is exact and fully faithful, so the result follows from the above.  
	\end{proof}

	\subsection{Pseudo-nuclearity}
	While the notion of $A$-nuclearity is useful, it has its limitations. For example, if $M$, $N$ are nuclear over $A$, then $M\otimes_K N$ or $M\h{\otimes}_K N$ will often enjoy similar good properties, but simply do not fit into the framework established thus far. We therefore need to introduce a more general notion, pseudo-nuclearity, taking Lemma \ref{nuclearimpliespre} as our point of departure.
	\begin{defn}
		A metrisable locally convex topological $K$-vector $V$ is \textbf{pseudo-nuclear} if we can write $V\cong \varprojlim V_n$ for $V_n$ semi-normed, the transition maps $\rho_n: V_{n+1}\to V_n$ having dense images, satisfying the following property:
		
		For any bounded subset $B\subseteq V_{n+1}$ and any open $R$-submodule $U\subseteq V_n$, there exists a bounded subset $B'\subseteq V$ such that 
		\begin{equation*}
			\rho_n(B)\subseteq U+\rho_n(p_{n+1}(B')),
		\end{equation*}
		where $p_{n+1}: V\to V_{n+1}$ denotes the natural map.
	\end{defn}
	Obviously any normed space is pseudo-nuclear, considering the constant inverse system. If $V$ is metrisable but not normed and $V\cong \varprojlim V_n$ as a countable limit of semi-normed spaces, then the natural morphism $V\to V_n$ is continuous, but not necessarily strict, and we can think of pseudo-nuclear vector spaces as those metrisable vector spaces where the maps $V\to V_n$ are very close to being bornologically strict (they are `strict up to $\epsilon$ and applying $\rho$').\
	
	If $M$ is nuclear over some Noetherian Banach $K$-algebra $A$, then the same argument as in Lemma \ref{nuclearimpliespre} shows that $M$ is pseudo-nuclear. 
	\begin{lem}
		\label{prodofpn}
		\leavevmode
		\begin{enumerate}[(i)]
			\item A countable direct product of pseudo-nuclear spaces is pseudo-nuclear.
			\item If $V$, $W$ are pseudo-nuclear, then so is $V\otimes_K W$.
		\end{enumerate}
	\end{lem}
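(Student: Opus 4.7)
For part (i), given pseudo-nuclear spaces $(V^{(k)})_{k \in \mathbb{N}}$ with presentations $V^{(k)} \cong \varprojlim_n V^{(k)}_n$, I would present the product $V = \prod_k V^{(k)}$ as $V \cong \varprojlim_n W_n$ with $W_n := \prod_{k \le n} V^{(k)}_n$ (equipped with the supremum semi-norm). The transition $W_{n+1} \to W_n$ is the projection forgetting the last factor followed by the componentwise $\rho_n^{(k)}$, and has dense image because each $\rho_n^{(k)}$ does. To check the pseudo-nuclearity condition, take bounded $B \subset W_{n+1}$ and an open $R$-submodule $U \subset W_n$: writing $B \subset \prod_{k \le n+1} B_k$ for bounded $B_k \subset V^{(k)}_{n+1}$, and $U \supset \prod_{k \le n} U_k$ for open $R$-submodules $U_k \subset V^{(k)}_n$, apply pseudo-nuclearity of each $V^{(k)}$ to obtain bounded $B'_k \subset V^{(k)}$ witnessing the condition componentwise. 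The witness $B' = \prod_{k \le n} B'_k \times \prod_{k > n} \{0\}$ is bounded in the product $V$, and the required inclusion holds componentwise.

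For part (ii), write $V \cong \varprojlim V_n$ and $W \cong \varprojlim W_n$ and equip $V \otimes_K W$ with the projective tensor topology generated by the semi-norms $p_n \otimes q_n$ on $V_n \otimes_K W_n$; present $V \otimes_K W \cong \varprojlim_n Z_n$, where $Z_n$ is the semi-normed image of $V \otimes_K W$ in $V_n \otimes_K W_n$, so that the transitions $Z_{n+1} \to Z_n$ are surjective (hence have dense image). Given a bounded subset $B \subset Z_{n+1}$ and an open $R$-submodule $U \subset Z_n$, one lifts these to $V_{n+1} \otimes_K W_{n+1}$ and uses that bounded subsets in the projective tensor bornology are controlled by $R$-spans of $B_V \otimes B_W$ for bounded $B_V \subset V_{n+1}$, $B_W \subset W_{n+1}$, and that an open $R$-submodule in the projective tensor semi-norm contains an $R$-submodule of the form $U_V \otimes W_n + V_n \otimes U_W$ for open $R$-submodules $U_V \subset V_n$, $U_W \subset W_n$. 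Applying pseudo-nuclearity of $V$ and $W$ to $(B_V, U_V)$ and $(B_W, U_W)$ produces bounded witnesses $B'_V \subset V$ and $B'_W \subset W$; taking $B' \subset V \otimes_K W$ to be the $R$-span of $B'_V \otimes B'_W$ (bounded by the definition of the projective tensor bornology), the required inclusion follows from the bilinearity identity
\begin{equation*}
(u_V + v_V) \otimes (u_W + v_W) = u_V \otimes u_W + u_V \otimes v_W + v_V \otimes u_W + v_V \otimes v_W,
\end{equation*}
where the first three terms lie in $U$ and the last in $\rho_n(p_{n+1}(B'))$.

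The main obstacle is in (ii): setting up the inverse-limit presentation correctly, since $V \otimes_K W$ is only metrisable and need not be complete, so the $Z_n$ must be taken as the images of $V \otimes_K W$ rather than the full $V_n \otimes_K W_n$. One also has to verify that bounded subsets and open $R$-submodules of $Z_n$ can be lifted to suitably compatible data in $V_n \otimes_K W_n$, and that the $R$-span of $B'_V \otimes B'_W$ does define a bounded subset of $V \otimes_K W$ itself (rather than only of its completion). Once these book-keeping points are handled, the bilinearity identity reduces (ii) to pseudo-nuclearity of the factors; part (i) is considerably more straightforward.
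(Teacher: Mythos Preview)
The paper's own proof is the single sentence ``This is straightforward'', so there is no detailed argument to compare against; your outline is the natural unpacking of that sentence, and part (i) is fine.

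In part (ii), however, there is a genuine error rather than mere book-keeping. You assert that an open $R$-submodule $U$ in the projective tensor semi-norm on $V_n \otimes_K W_n$ contains something of the form $U_V \otimes W_n + V_n \otimes U_W$. This is false: if $U_V$ is the ball of radius $\delta$ in $V_n$ and $w \in W_n$ has arbitrarily large norm, then $u \otimes w$ with $u \in U_V$ has norm $|u|\,|w|$, which is unbounded, so $U_V \otimes W_n$ cannot sit inside any ball. What $U$ does contain is the $R$-span of $U_V \otimes U_W$ for suitable open $U_V, U_W$, which is a much smaller set and does not by itself absorb the cross terms $u_V \otimes v_W$ and $v_V \otimes u_W$ in your bilinearity expansion.

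The fix is to choose $U_V$ and $U_W$ adaptively rather than symmetrically. First bound $\rho_n(B_W)$ in $W_n$ by some $M_W$ and take $U_V$ to be the ball of radius $\epsilon/M_W$; apply pseudo-nuclearity of $V$ to get $B'_V$. Then bound $\rho_n(p_{n+1}(B'_V))$ by some $M'_V$ and take $U_W$ of radius $\epsilon/M'_V$; apply pseudo-nuclearity of $W$ to get $B'_W$. Now expand asymmetrically as
\[
\rho_n(v)\otimes\rho_n(w)=u_V\otimes\rho_n(w)+v'_V\otimes u_W+v'_V\otimes v'_W,
\]
so the first two terms have norm at most $\epsilon$ (small times bounded) and the last lies in $\rho_n(p_{n+1}(B'))$ with $B'$ the $R$-span of $B'_V\otimes B'_W$. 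With this correction your argument goes through.
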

	\begin{proof}
		This is straightforward.
	\end{proof}

	\begin{prop}
		\label{bandcompl}
		Let $V$ be a pseudo-nuclear space. Then the natural morphism $\h{V^b}\to \h{V}^b$ is an isomorphism.
	\end{prop}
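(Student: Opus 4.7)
The plan is to combine injectivity (already supplied by Lemma \ref{tcompl}) with a single uniform construction: given a bounded subset $B\subseteq\h{V}$, I will construct a bounded $R$-submodule $L\subseteq V$ such that $B$ lies in the image of the unit ball of $\h{V_L^b}$. Applied to singletons this yields surjectivity on underlying sets, and applied to a general bounded $B$ it yields that the bornology of $\h{V^b}$ surjects onto that of $\h{V}^b$, so the two arguments together produce the desired isomorphism.

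To build $L$, write $V\cong\varprojlim V_n$ as a pseudo-nuclear inverse system of semi-normed spaces, rescaled so that each transition map $\rho_n\colon V_{n+1}\to V_n$ is contractive. I would use pseudo-nuclearity at each level $n$ to produce a uniform approximant for $B$: since $V_{n+1}$ is dense in $\h{V_{n+1}}$, choose a bounded $\tilde{B}_n\subseteq V_{n+1}$ and a selection $b\mapsto\tilde{b}_n\in\tilde{B}_n$ with $|\tilde{b}_n-p_{n+1}(b)|_{n+1}\le\pi^n$. Applying pseudo-nuclearity to $\tilde{B}_n$ and the open submodule $\pi^n V_n^\circ\subseteq V_n$ yields a bounded $B'_n\subseteq V$ with $\rho_n(\tilde{B}_n)\subseteq\pi^n V_n^\circ+p_n(B'_n)$; combining the two approximations, and using $\rho_n p_{n+1}=p_n$, produces $c_n(b)\in B'_n$ with $|p_n(b-c_n(b))|_n\le\pi^n$, uniformly in $b\in B$.

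Setting $c_0(b):=0$, I define $L\subseteq V$ as the $R$-submodule generated by $\{\pi^{-(n-1)}(c_n(b)-c_{n-1}(b)):b\in B,\,n\ge 1\}$. Boundedness of $L$ in $V$ is verified level by level: for $n>m$, contractivity of the transition maps together with the approximation bounds give $|p_m(c_n(b)-c_{n-1}(b))|_m\le\pi^{n-1}$, so these generators have $p_m$-norm at most $1$; for the finitely many indices $n\le m$, the generators are bounded in $V_m$ because $B'_n$ and $B'_{n-1}$ are bounded in $V$. For each $b\in B$, the partial sums $c_n(b)=\sum_{k=1}^n(c_k(b)-c_{k-1}(b))$ form a Cauchy sequence in $V_L$ of gauge norm at most $1$; its limit in the Banach space $\h{V_L^b}$ maps to $b\in\h{V}$, so $B$ is contained in the image of the unit ball of $\h{V_L^b}$, which is a bounded subset of $\h{V^b}$.

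The main obstacle is the uniform application of pseudo-nuclearity in the second paragraph: the hypothesis is stated for a single bounded subset of $V_{n+1}$, so one must first consolidate the family $\{p_{n+1}(b)\}_{b\in B}\subseteq\h{V_{n+1}}$ into such a bounded subset via density, and then control the error term $\rho_n(\tilde{b}_n-p_{n+1}(b))$ by contractivity, before invoking pseudo-nuclearity to land back in $V$. Once the uniform $B'_n$ has been produced, the telescoping construction and the norm estimates for $L$ are mechanical.
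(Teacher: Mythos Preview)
Your proposal is correct and follows essentially the same strategy as the paper's proof: both use Lemma~\ref{tcompl} for injectivity, and for strict surjectivity both use pseudo-nuclearity level by level to replace an arbitrary bounded $B\subset\h{V}$ by a telescoping sequence of approximants lying in a single bounded $R$-submodule of $V$, then pass to its $\pi$-adic completion. The only cosmetic difference is bookkeeping: the paper builds the auxiliary bounded sets $C_n$ inductively (each depending on $C_1,\dots,C_{n-1}$) and introduces sequences $\epsilon_n,\alpha_n,\delta_m,\beta_m$, whereas you apply pseudo-nuclearity independently at each level to obtain $B'_n$ and work directly with powers of $\pi$; your organisation is arguably cleaner but the argument is the same.
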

	\begin{proof}
		As always, we can assume without loss of generality that the transition maps have sup norm $\leq 1$.
		
		Invoking Lemma \ref{tcompl}, we see that the natural morphism $\h{V^b}\to \h{V}^b$ is always a bounded injection. It remains to show strict surjectivity. Let $B$ be a bounded set of $\h{V}$. Let $|-|_n$ be the semi-norm on $V$ induced from $V_n$, so that the topology of $\h{V}$ is defined by the extensions of $|-|_n$, which we will also denote by $|-|_n$. So without loss of generality, there exists $\alpha=(\alpha_1, \alpha_2, \hdots)\in \mathbb{R}_{>0}^\mathbb{N}$ such that
		\begin{equation*}
			B=\{v\in \h{V}: |v|_n\leq \alpha_n \ \forall n\}.
		\end{equation*}
		We wish to show that there exists a bounded set $B'\subseteq V$ such that $B$ is contained in (the image of) the $\pi$-adic completion of $B'$.
		
		Pick a decreasing sequence $\epsilon_n$ of real numbers satisfying 
		\begin{equation*}
			0<\epsilon_n<\mathrm{min}_{m\leq n}\{|\pi|^n, |\pi|^n\alpha_m\}
		\end{equation*}
		for each $n$. For any $\epsilon>0$, we write $V_n(\epsilon)$ to denote the set of all $v\in V_n$ with $|v|_n\leq \epsilon$. By definition of pseudo-nuclearity, we can inductively find bounded sets $C_n\subseteq V$ such that
		\begin{equation*}
			\rho_{n-1}(V_n(\alpha_n))+p_{n-1}(C_1+\hdots +C_{n-1})\subseteq V_{n-1}(\epsilon_{n-1})+p_{n-1}(C_{n}).
		\end{equation*}
		Now let $x\in B$. By definition of completion, there exists a Cauchy sequence $x_i$ in $V$ such that $|x-x_n|_n\leq \epsilon_n$. In particular, $|x_n|_n\leq \alpha_n$, i.e. it can be seen as an element of $V_n(\alpha_n)$.\\
		Choose inductively $y_n\in C_n$ such that
		\begin{equation*}
			\left|x_n-\sum_{i=1}^{n} y_i\right|_{n-1}\leq \epsilon_{n-1}.
		\end{equation*}
		Set $D_n:=C_1+\hdots +C_n$, a bounded set in $V$, and let $z_n=\sum_{i=1}^ny_i\in D_n$. For each $m<n$, we have
		\begin{equation*}
			|x-z_n|_m\leq \mathrm{max}\{|x-x_{n}|_m, |x_{n}-z_n|_m\}\leq \epsilon_{n-1}.
		\end{equation*} 
		Hence $z_n$ is a Cauchy sequence tending to $x$. Also, by construction, 
		\begin{equation*}
			|z_n|_{m}=|x-(x-z_n)|_m\leq \alpha_{m}
		\end{equation*} 
		for each $m\leq n-1$, since $\epsilon_{n-1}\leq \epsilon_m\leq \alpha_m$.
		
		Since $D_n$ is bounded in $V$ for each $n$, there exists $\delta_m$ such that $|v|_m\leq \delta_m$ for all $v\in D_{m+1}$. Let $B'\subseteq V$ denote the bounded set consisting of all $v\in V$ such that $|v|_m\leq \beta_m$, where $\beta_m:=|\pi|^{-m}\mathrm{max}\{\delta_m, \alpha_m\}$.
		
		Now for each $n$, 
		\begin{equation*}
			|z_n|_m\leq \begin{cases}
				\alpha_m \ \text{if $m\leq n-1$}\\
				\delta_m\ \text{if $m\geq n-1$}
			\end{cases}
		\end{equation*}
		since $z_n\in D_n\subseteq D_{m+1}$ if $m\geq n-1$ (the overlap of cases is deliberate here). This shows that $z_n\in B'$ for each $n$.
		
		Moreover,
		\begin{equation*}
			|z_n-z_{n-1}|_m\leq \begin{cases}
				\epsilon_{n-2}\leq |\pi|^{n-2}\alpha_m\leq |\pi|^{n-2-m}\alpha_m \ \text{if $m\leq n-2$}\\
				\delta_m\leq |\pi|^{n-2-m}\delta_m \ \text{if $m\geq n-1$}.
			\end{cases}
		\end{equation*}
		Here the first case uses $|(z_n-x)-(z_{n-1}-x)|_m\leq \epsilon_{n-2}$, and the second case is clear as $z_n, z_{n-1}\in D_{m+1}$ as soon as $n\leq m+1$. 
		Hence $z_n-z_{n-1}\in \pi^{n-2}B'$ for each $n$, and the sequence $(z_n)$ is also Cauchy in the semi-normed space $V_{B'}$. Thus any $x\in B$ is in the image of the $\pi$-adic completion of $B'$, a bounded subset of $\h{V^b}$. Therefore the bounded injection $\h{V^b}\to \h{V}^b$ is also surjective and strict, and hence an isomorphism. 
	\end{proof}
	
	\begin{cor}
		\label{capcompletion}
		Let $A$ be an affinoid $K$-algebra, and let $L$ be a smooth $(K, A)$-Lie--Rinehart algebra. Endowing $U_A(L)$ with the locally convex topology induced by $U_A(L)\to \w{U_A(L)}$, the complete bornological algebra $\w{U_A(L)}^b$ is the completion of $U_A(L)^b$.
	\end{cor}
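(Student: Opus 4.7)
The plan is to invoke Proposition \ref{bandcompl} with $V := U_A(L)$ endowed with the given locally convex topology. Since that topology is by construction the one induced by the family of quotient seminorms associated to the maps $U_A(L)\to U_n = \h{U_{A^\circ}(\pi^n\L)}\otimes_R K$, and since the image of $U_A(L)$ is dense in each $U_n$, the topological completion of $V$ is canonically $\w{U_A(L)} = \varprojlim U_n$. Once pseudo-nuclearity of $V$ is established, Proposition \ref{bandcompl} will give $\h{V^b}\cong \h{V}^b = \w{U_A(L)}^b$, and functoriality of the completion will promote this bornological isomorphism to an isomorphism of complete bornological $K$-algebras.

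The entire content of the proof is therefore the verification that $V$ is pseudo-nuclear. I would write $V\cong \varprojlim V_n$ with $V_n$ the image of $V$ in $U_n$ equipped with the induced norm, so that $V_n$ is a dense normed subspace of $U_n$ and the transition maps $\rho_n\colon V_{n+1}\to V_n$ (the restrictions of $U_{n+1}\to U_n$) have dense image. Given a bounded $B\subset V_{n+1}$ and an open $R$-submodule $U\subset V_n$, the task is to produce a bounded $B'\subset V$ such that $\rho_n(B)\subseteq U + \rho_n(p_{n+1}(B'))$.

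To produce $B'$ I recycle the strictly completely continuous approximation already used earlier in the section to establish $A$-nuclearity of $\w{U_A(L)}$. Fixing a smooth $(R,A^\circ)$-Lie lattice $\L$ with $A^\circ$-generators $x_1,\ldots, x_d$, the commutative square
\begin{equation*}
\begin{xy}
\xymatrix{
A\langle y_1,\ldots,y_d\rangle \ar[r] \ar[d] & A\langle y_1,\ldots,y_d\rangle \ar[d] \\
U_{n+1} \ar[r]_{\rho_n} & U_n
}
\end{xy}
\end{equation*}
has strictly completely continuous composition along the top row. Viewing $B$ as a bounded subset of $U_{n+1}$ and choosing an open $R$-submodule $\widetilde{U}\subset U_n$ whose intersection with $V_n$ is contained in $U$, Lemma \ref{nuclearimpliespre}(i) supplies elements $m_1,\ldots, m_r\in U_{n+1}$ with $\rho_n(B)\subseteq \widetilde{U} + \sum A^\circ \rho_n(m_j)$. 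Density of $V$ in $U_{n+1}$ then lets me replace each $m_j$ by some $b_j\in V$ whose image in $U_{n+1}$ approximates $m_j$ closely enough that the extra error is absorbed into $\widetilde{U}$; taking $B' := \sum A^\circ b_j$, a finitely generated (hence bounded) $A^\circ$-submodule of $V$, and intersecting the resulting inclusion with $V_n$ gives the required containment.

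The only real obstacle is cosmetic: one has to calibrate the approximation tolerances so that the cumulative error genuinely ends up inside the prescribed $\widetilde{U}$. This is a routine $\epsilon$-tracking exercise entirely analogous to the one already carried out in the proofs of Lemma \ref{nuclearimpliespre}(ii) and Proposition \ref{bandcompl}, and presents no conceptual difficulty.
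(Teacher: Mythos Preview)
Your overall strategy is correct and matches the paper's: both reduce the claim to Proposition \ref{bandcompl} by verifying that $U_A(L)$, with the induced topology, is pseudo-nuclear. The difference lies in how pseudo-nuclearity is established.

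The paper gives a one-line direct argument using only the PBW filtration: for any $j$,
\[
U_{A^\circ}(\pi^n\L)\subset \pi^jU_{A^\circ}(\pi^{n-1}\L)+F_jU_{A^\circ}(\pi^n\L),
\]
since a monomial of degree $>j$ in the generators $\pi^n x_i$ acquires a factor of $\pi^{>j}$ when rewritten in terms of the $\pi^{n-1}x_i$, while monomials of degree $\leq j$ sit in the filtered piece $F_jU_{A^\circ}(\pi^n\L)$. The latter is a finitely generated $A^\circ$-module, hence bounded in $U_A(L)$, and serves as the required $B'$ outright. No approximation, no density, no $\epsilon$-bookkeeping.

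Your route through Lemma \ref{nuclearimpliespre}(i) and the scc square is correct but more circuitous, and it carries a hidden extra hypothesis: the surjections $A\langle y_1,\ldots,y_d\rangle\to U_n$ from the $A$-nuclearity proposition rely on Rinehart's PBW isomorphism, which requires $\L$ to be \emph{projective} over $A^\circ$, i.e.\ a smooth Lie lattice. The corollary as stated does not assume that $L$ admits one. The paper's filtration argument works for an arbitrary $(R,A^\circ)$-Lie lattice, so it proves the statement in the generality claimed; your argument, as written, only covers the case where a smooth Lie lattice exists. In practice this is harmless (smooth Lie lattices exist locally and this is the only setting where the corollary is invoked later), but it is worth noting that the direct PBW argument is both shorter and strictly more general.
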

	\begin{proof}
		Picking an $(R, \A)$-Lie lattice $\L$, endow $U_{\A}(\pi^n\L)\otimes_R K$ with the natural gauge semi-norm, so that
		\begin{equation*}
			U_A(L)\cong \varprojlim \left(U_{\A}(\pi^n\L)\otimes_R K\right).
		\end{equation*}
		Now $U_A(L)$ is pseudo-nuclear, as for any $j$
		\begin{equation*}
			U_{\A}(\pi^n\L)\subseteq \pi^jU_{\A}(\pi^{n-1}\L)+F_jU_{\A}(\pi^n\L),
		\end{equation*}
		where $F_jU(\pi^n\L)$ denotes the $j$th filtered piece of the PBW filtration (see \cite[section 3]{Rinehart}), a finitely generated $\A$-module and hence bounded in $U_A(L)$.
		
		Hence $\h{U_A(L)^b}\cong \h{U_A(L)}^b$ by Proposition \ref{bandcompl}, and $\h{U_A(L)}\cong \w{U_A(L)}$ by definition of the topology.
	\end{proof}

	\begin{cor}
		\label{pncompl}
		Let $V$, $W$ be pseudo-nuclear spaces, and suppose that at least one of the following is satisfied:
		\begin{enumerate}[(i)]
			\item $K$ is spherically complete.
			\item $V$ is of countable type.
		\end{enumerate}
		Then the natural morphism $V^b\h{\otimes} W^b\to (V\h{\otimes} W)^b$ is an isomorphism.
	\end{cor}
	\begin{proof}
		By Lemma \ref{Frtensor}, the morphism $V^b\otimes W^b\to (V\otimes W)^b$ is an isomorphism, so $V^b\h{\otimes} W^b$ is naturally isomorphic to the completion of $(V\otimes W)^b$. Since $V\otimes W$ is pseudo-nuclear, the result follow from Proposition \ref{bandcompl}.
	\end{proof}
	
	\begin{cor}
		\label{commutewprod}
		Let $V_i$ be a countable family of pseudo-nuclear spaces, and let $W$ be another pseudo-nuclear space. Suppose that at least one of the following is satisfied:
		\begin{enumerate}[(i)]
			\item $K$ is spherically complete.
			\item $W$ is of countable type.
			\item $V_i$ is of countable type for all $i$.
		\end{enumerate}
		Then
		\begin{equation*}
			(\prod_i V_i^b)\h{\otimes}_K W^b\cong \prod_i (V_i^b\h{\otimes}_K W^b)
		\end{equation*}
		via the natural morphism.
	\end{cor}
	\begin{proof}
		Note that we have a natural isomorphism $\prod V_i^b\cong (\prod V_i)^b$, and hence by Corollary \ref{pncompl}
		\begin{align*}
			(\prod V_i^b)\h{\otimes} W^b&\cong (\prod V_i)^b\h{\otimes} W^b\\
			&\cong ((\prod V_i)\h{\otimes}W)^b.
		\end{align*}
		In the case when $V_i$ is of countable type for all $i$, this uses the fact that $\prod V_i$ is then also of countable type by \cite[Theorem 4.2.13.(iii)]{Schikhof}.
		
		By \cite[p. 192, Proposition 9]{Houzel} and once more Corollary \ref{pncompl}, we now have
		\begin{align*}
			((\prod V_i)\h{\otimes}W)^b&\cong (\prod(V_i\h{\otimes}W))^b\\
			&\cong \prod (V_i\h{\otimes}W)^b\\
			&\cong \prod (V_i^b\h{\otimes}W^b),
		\end{align*}
		as required.
	\end{proof}
	
	\begin{cor}
		\label{commutewinv}
		Let $V_i$ be an inverse system (indexed by $\mathbb{N}$) of pseudo-nuclear Fr\'echet spaces such that each transition map has dense image. Assume that the Fr\'echet space $\varprojlim V_i$ is also pseudo-nuclear, and let $W$ be another pseudo-nuclear space. Suppose that at least one of the following is satisfied:
		\begin{enumerate}[(i)]
			\item $K$ is spherically complete.
			\item $W$ is of countable type.
			\item $V_i$ is of countable type for all $i$.
		\end{enumerate}
		Then
		\begin{equation*}
			(\varprojlim V_i^b)\h{\otimes}_K W^b\cong \varprojlim (V_i^b\h{\otimes}_K W^b)\cong \varprojlim (V_i\h{\otimes}_K W)^b
		\end{equation*}
		via the natural morphisms.
	\end{cor}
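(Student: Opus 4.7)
My plan is to assemble both isomorphisms from tools already in hand. First I would establish the second isomorphism $\varprojlim(V_i^b\h{\otimes}_K W^b)\cong \varprojlim(V_i\h{\otimes}_K W)^b$ by applying Corollary~\ref{pncompl} to each pair $(V_i, W)$ to obtain $V_i^b\h{\otimes}_K W^b\cong (V_i\h{\otimes}_K W)^b$ compatibly in $i$, and then passing to the inverse limit in $\h{\B}c_K$---this is legitimate since $\varprojlim$ is functorial and therefore carries isomorphisms of inverse systems to isomorphisms.

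For the first isomorphism $(\varprojlim V_i^b)\h{\otimes}_K W^b\cong \varprojlim(V_i^b\h{\otimes}_K W^b)$, I would set $V := \varprojlim V_i$ and use that $(-)^b$ is a right adjoint (hence commutes with limits) to identify $\varprojlim V_i^b$ with $V^b$. The pseudo-nuclearity of $V$ (by hypothesis) and $W$, together with Corollary~\ref{pncompl}, then gives $(\varprojlim V_i^b)\h{\otimes}_K W^b \cong (V\h{\otimes}_K W)^b$. On the other side, combining the second isomorphism established above with another application of limit-commutation shows $\varprojlim(V_i^b\h{\otimes}_K W^b) \cong (\varprojlim(V_i\h{\otimes}_K W))^b$. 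The task thus reduces to establishing the Fréchet-space isomorphism
\[
V\h{\otimes}_K W\cong \varprojlim(V_i\h{\otimes}_K W).
\]

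The main obstacle is this final reduction. Unlike the product case in Corollary~\ref{commutewprod}, which rested on \cite[p.~192, Prop.~9]{Houzel}, there is no ready-made reference at hand and I would argue the inverse-limit version by hand. The natural map $V\h{\otimes}_K W\to \varprojlim(V_i\h{\otimes}_K W)$ is continuous, and its image is dense because each $V\to V_i$ has dense image, which transfers to $V\otimes W\to V_i\otimes W$ and hence to the completions. A comparison of the defining projective tensor seminorms on each side---where one exploits the density of $V\to V_i$ to approximate an arbitrary presentation in $V_i\otimes W$ arbitrarily well by a presentation pulled back from $V\otimes W$---shows that the two fundamental families of seminorms agree on the image, so the natural map is a topological embedding. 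Since $V\h{\otimes}_K W$ is Fréchet, its image is complete and therefore closed in $\varprojlim(V_i\h{\otimes}_K W)$; combined with density this forces surjectivity, completing the proof.
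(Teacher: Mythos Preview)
Your overall strategy is sound and reaches the same target isomorphism $V\h{\otimes}_K W\cong \varprojlim(V_i\h{\otimes}_K W)$ in $LCVS_K$ that the paper also needs, but you arrive there by a different route. The paper does not argue this directly via seminorm comparison; instead it invokes the Mittag--Leffler type strictly exact sequence
\[
0\to \varprojlim V_i\to \prod V_i\to \prod V_i\to 0
\]
in $LCVS_K$ (from \cite[p.~45, Lemme~1]{AST16}), applies the left-exact functor $-\h{\otimes}_K W$ (citing \cite{CDNStein}), and then uses Corollary~\ref{commutewprod} to identify $(\prod V_i)\h{\otimes}_K W$ with $\prod(V_i\h{\otimes}_K W)$. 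This immediately gives $V\h{\otimes}_K W$ as the kernel of the Roos map on $\prod(V_i\h{\otimes}_K W)$, i.e.\ as $\varprojlim(V_i\h{\otimes}_K W)$. Then $(-)^b$, which preserves kernels, together with Corollary~\ref{pncompl}, finishes the argument.

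Your direct density--embedding--completeness argument for $V\h{\otimes}_K W\cong \varprojlim(V_i\h{\otimes}_K W)$ is plausible but the crucial step---showing that the projective tensor seminorms on $V\otimes W$ agree with those pulled back from the $V_i\otimes W$---is only sketched and would need care (the $V_i$ are Fr\'echet, not Banach, so each contributes a whole family of seminorms, and one must check that the infimum defining the projective tensor seminorm is unchanged when taken over presentations in $V_i\otimes W$ versus $V\otimes W$). The paper's approach buys you exactly this: by reducing to the product case already handled and using a cited left-exactness result, it avoids any hands-on seminorm manipulation.
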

	\begin{proof}
		By \cite[p. 45, Lemme 1]{AST16}, we have a strictly exact sequence
		\begin{equation*}
			0\to \varprojlim V_i\to \prod V_i\to \prod V_i\to 0
		\end{equation*}
		in $LCVS_K$. As $-\h{\otimes}_K W$ is left exact in $LCVS_K$ (see e.g \cite[Fact 2.0.4.(3)]{CDNStein}) and $(-)^b$ preserves kernels by adjunction, the result follows from Corollary \ref{commutewprod} and Corollary \ref{pncompl}.
	\end{proof}
	
	We remark that our results remain valid upon passing to $\mathrm{Ind}(\mathrm{Ban}_K)$:
	
	\begin{cor}
		\label{commutewinvIB}
		Let $V_i$ be a countable family of pseudo-nuclear spaces and let $W$ be another pseudo-nuclear space. Suppose that at least one of the following is satisfied:
		\begin{enumerate}[(i)]
			\item $K$ is spherically complete.
			\item $W$ is of countable type.
			\item $V_i$ is of countable type for all $i$.
		\end{enumerate}
		Then we have the following:
		\begin{enumerate}[(i)]
			\item The natural morphism $\mathrm{diss}(V_i^b)\overset{\rightarrow}{\otimes}_K \mathrm{diss}(W^b)\to \mathrm{diss}((V_i\h{\otimes}_KW)^b)$ is an isomorphism in $\mathrm{Ind}(\mathrm{Ban}_K)$.
			\item The natural morphism
			\begin{equation*}
				\left(\prod_i \mathrm{diss}(V_i^b)\right)\overset{\rightarrow}{\otimes}_K \mathrm{diss}(W^b)\to \prod_i (\mathrm{diss}(V_i^b)\overset{\rightarrow}{\otimes}_K \mathrm{diss}(W^b))
			\end{equation*}
			is an isomorphism in $\mathrm{Ind}(\mathrm{Ban}_K)$.
			\item If the $V_i$ form an inverse system of Fr\'echet spaces with each transition map having dense image such that $\varprojlim V_i$ is also pseudo-nuclear, then the natural morphism
			\begin{equation*}
				\left(\varprojlim \mathrm{diss}(V_i^b)\right)\overset{\rightarrow}{\otimes}_K \mathrm{diss}(W^b)\to \varprojlim (\mathrm{diss}(V_i^b)\overset{\rightarrow}{\otimes}_K \mathrm{diss}(W^b))
			\end{equation*}
			is an isomorphism.
		\end{enumerate}
	\end{cor}
	\begin{proof}
		Everything follows from the above together with Proposition \ref{dissandtensor}, since $\mathrm{diss}$ commutes with arbitrary limits.
	\end{proof}

	\subsection{A Mittag-Leffler result}
	We will need some results concerning inverse limits in $\h{\B} c_K$. We begin with a definition.
	\begin{defn}
		An inverse system 
		\begin{equation*}
			\begin{xy}
				\xymatrix{\hdots \ar[r]& V_{n+1}\ar[r]^{\rho_n}& V_n\ar[r]& \hdots \ar[r]& V_0}
			\end{xy}
		\end{equation*} 
		of Banach $K$-vector spaces is called \textbf{pre-nuclear} if for each $n$, the following condition is satisfied:\\
		Given a bounded subset $B\subseteq V_{n+1}$ and an open $R$-submodule $\V\subseteq V_n$, there exists a bounded subset $B'\subseteq V_{n+2}$ such that
		\begin{equation*}
			\rho_n(B)\subseteq\V+\rho_n(\rho_{n+1}(B')).
		\end{equation*}
	\end{defn}
	As we are dealing with Banach spaces, the condition can be reformulated as: for each $n$ and any $\epsilon>0$, there exists $m\in \mathbb{Z}$ such that for any $v\in V_{n+1}^\circ$ we can find $w\in \pi^mV_{n+2}^\circ$ with
	\begin{equation*}
		|\rho_n(v)-\rho_n(\rho_{n+1}(w))|<\epsilon.
	\end{equation*}
	Also note that in a pre-nuclear system, the property in the definition can be iterated: considering $B'\subseteq V_{n+2}$, $\rho^{-1}(\V)\subseteq V_{n+1}$, we can find $B''\subseteq V_{n+3}$ such that
	\begin{equation*}
		\rho_n(B)\subseteq \V+\rho_n\rho_{n+1}\rho_{n+2}(B''),
	\end{equation*}
	and so on.
	
	We say that $(V_n)_n$ is \textbf{pre-nuclear with dense images} if furthermore each of the transition maps has dense image.
	
	If $A$ is an affinoid $K$-algebra, $L$ a smooth $(K, A)$-Lie--Rinehart algebra admitting a smooth Lie lattice, then for any coadmissible $\w{U_A(L)}$-module $M=\varprojlim M_n$, Lemma \ref{nuclearimpliespre} implies that the system formed by the $M_n$ is pre-nuclear with dense images. More generally, an inverse system of Banach spaces describing a Fr\'echet space as a pseudo-nuclear space is pre-nuclear.
	\begin{lem}
		\label{newprenuc}
		\leavevmode
		\begin{enumerate}[(i)]
			\item If $(V_i)$ and $(W_i)$ are two pre-nuclear systems, then $(V_i\h{\otimes}_KW_i)$ is pre-nuclear.
			\item Let $(V_i)$ be a pre-nuclear system and let $W_i\leq V_i$ be closed subspaces such that $(V_i/W_i)$ forms an inverse system of Banach spaces (i.e., $\rho_n(W_{n+1})\subseteq W_n$). Then $(V_i/W_i)$ is pre-nuclear.
		\end{enumerate}
	\end{lem}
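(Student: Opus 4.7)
My strategy for (ii) is essentially a lifting argument through a strict quotient, while (i) reduces, via the structure of the projective completed tensor product, to a careful sequential double approximation. For (ii), I will lift bounded sets and open neighbourhoods through the quotient maps $q_n\colon V_n \to V_n/W_n$. Given bounded $B \subset V_{n+1}/W_{n+1}$ and open $\mathcal{V} \subset V_n/W_n$, I choose a bounded lift $\tilde B \subset V_{n+1}$ with $q_{n+1}(\tilde B) \supseteq B$ (possible by the Open Mapping Theorem applied to the strict Banach quotient $V_{n+1} \to V_{n+1}/W_{n+1}$) and set $\tilde{\mathcal{V}} = q_n^{-1}(\mathcal{V})$, which is open in $V_n$. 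Applying pre-nuclearity of $(V_i)$ to $\tilde B$ and $\tilde{\mathcal V}$, I obtain a bounded $\tilde B' \subset V_{n+2}$ with $\rho_n(\tilde B) \subset \tilde{\mathcal{V}} + \rho_n \rho_{n+1}(\tilde B')$, and take $B' := q_{n+2}(\tilde B') \subset V_{n+2}/W_{n+2}$. Since $\rho_n(W_{n+1}) \subset W_n$ by assumption, the $\rho_i$ descend compatibly with the $q_i$; applying $q_n$ to the above inclusion then yields $\bar\rho_n(B) \subset \mathcal{V} + \bar\rho_n \bar\rho_{n+1}(B')$, as required.

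For (i), I will invoke Lemma \ref{Bantensor} to identify $(V_{n+1} \h{\otimes}_K W_{n+1})^\circ$ with (essentially) the $\pi$-adic completion of $V_{n+1}^\circ \otimes_R W_{n+1}^\circ$, so that every element of the unit ball admits a representation $\sum_i v_i \otimes w_i$ with $v_i \in V_{n+1}$, $w_i \in W_{n+1}$, $|v_i||w_i| \le 1$ and $|v_i||w_i| \to 0$. After rescaling Banach norms so that each $\rho_n$ has operator norm $\le 1$, it suffices, given $k$, to produce a bounded $B' \subset V_{n+2} \h{\otimes}_K W_{n+2}$ with $(\rho_n \h{\otimes} \rho_n)((V_{n+1} \h{\otimes}_K W_{n+1})^\circ) \subset \pi^k (V_n \h{\otimes}_K W_n)^\circ + (\rho_n \rho_{n+1} \h{\otimes} \rho_n \rho_{n+1})(B')$. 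The key identity
\begin{equation*}
\rho_n(v) \otimes \rho_n(w) - \rho_n\rho_{n+1}(v^\sharp) \otimes \rho_n\rho_{n+1}(w^\sharp) = (\rho_n v - \rho_n\rho_{n+1} v^\sharp) \otimes \rho_n(w) + \rho_n\rho_{n+1}(v^\sharp) \otimes (\rho_n w - \rho_n\rho_{n+1} w^\sharp)
\end{equation*}
splits the error into two controllable terms. The approximations $v^\sharp, w^\sharp$ coming from pre-nuclearity of $(V_i)$ and $(W_i)$ must be chosen \emph{sequentially}: first take $\epsilon_V = |\pi|^k$ to obtain $m_V$, then take $\epsilon_W = |\pi|^{k + m_V}$ to obtain $m_W$. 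A standard rescaling then gives $v^\sharp \in |v|\cdot \pi^{-m_V} V_{n+2}^\circ$ and $w^\sharp \in |w|\cdot \pi^{-m_W} W_{n+2}^\circ$, so each error term above has norm at most $|\pi|^k |v||w|$. Summing termwise over a representation $\sum v_i \otimes w_i$ and taking $B' = \pi^{-m_V - m_W}(V_{n+2}\h{\otimes}_K W_{n+2})^\circ$ will yield the claim.

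The principal obstacle in (i) is that the assignment $v \mapsto v^\sharp$ supplied by pre-nuclearity is a set-theoretic choice and not linear, so extending the simple-tensor approximation to a null-convergent sum $\sum v_i \otimes w_i$ requires verifying that $\sum v_i^\sharp \otimes w_i^\sharp$ converges in $V_{n+2} \h{\otimes}_K W_{n+2}$ with the correct bound. This will follow from the multiplicative scaling estimate $|v_i^\sharp \otimes w_i^\sharp| \le |\pi|^{-m_V - m_W} |v_i||w_i|$ together with the observation that the sequentially chosen pair $(m_V, m_W)$ controls every summand uniformly, so that the bound obtained in the simple-tensor case survives the summation.
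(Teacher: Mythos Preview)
Your proof is correct and follows the same approach as the paper, which disposes of both parts in two sentences: ``(i) is immediate from the definition of $\h{\otimes}$. For (ii), any bounded set $B$ in $V_{n+1}/W_{n+1}$ is contained in the image of some bounded set $\widetilde{B}\subset V_{n+1}$; then we can find a suitable $\widetilde{B}'$ bounded in $V_{n+2}$ whose image in $V_{n+2}/W_{n+2}$ has the desired property.'' Your argument for (ii) is exactly this lifting through the strict quotient, and your argument for (i)---writing elements of the completed tensor unit ball as null-convergent sums of simple tensors, approximating each factor via pre-nuclearity with a sequential choice of $(m_V,m_W)$, and summing---is precisely the computation the paper is suppressing when it says ``immediate from the definition of $\h{\otimes}$''; you have simply made explicit the uniform bound $|v_i^\sharp\otimes w_i^\sharp|\le|\pi|^{-m_V-m_W}|v_i||w_i|$ that guarantees convergence despite the non-linearity of the choice $v\mapsto v^\sharp$.
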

	\begin{proof}
		(i) is immediate from the definition of $\h{\otimes}$. For (ii), any bounded set $B$ in $V_{n+1}/W_{n+1}$ is contained in the image of some bounded set $\widetilde{B}\subseteq V_{n+1}$. Then we can find a suitable $\widetilde{B}'$ bounded in $V_{n+2}$, whose image in $V_{n+2}/W_{n+2}$ has the desired property.
	\end{proof}
	
	Recall from \cite[Lemma 3.76]{BamStein} that for any inverse system $(V_n, \rho_n)$ in $\widehat{\mathcal{B}}c_K$ indexed by $\mathbb{N}$, the right derived inverse limit $\mathrm{R}\varprojlim V_n$ exists and can be represented by the \textbf{Roos complex}
	\begin{align*}
		\prod V_n&\to \prod V_n\\
		(v_0, v_1, v_2, \hdots)&\mapsto (v_0-\rho(v_1), v_1-\rho(v_2), v_2-\rho(v_3), \hdots).
	\end{align*} 
	
	\begin{thm}
		\label{MLBan}
		Let $\hdots \to V_{n+1}\to V_n\to \hdots\to V_0$ be a pre-nuclear system of Banach $K$-vector spaces such that each transition map has dense image. Then
		\begin{equation*}
			\mathrm{R}\varprojlim V^b_n\cong \varprojlim V^b_n
		\end{equation*}
		in $\mathrm{D}(\widehat{\mathcal{B}}c_K)$.
	\end{thm}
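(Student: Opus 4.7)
The statement says the Roos complex representing $\mathrm{R}\varprojlim V_n^b$, namely
\begin{equation*}
\prod_n V_n^b \xrightarrow{\delta} \prod_n V_n^b, \quad \delta(v_0,v_1,\dots) = (v_n - \rho_n(v_{n+1}))_n,
\end{equation*}
is quasi-isomorphic in $\widehat{\B}c_K$ to its degree $0$ cohomology, which is automatically $\ker\delta = \varprojlim V_n^b$. So everything reduces to showing that $\delta$ is a strict epimorphism in $\widehat{\B}c_K$. As both ends are complete and the product bornology on $\prod_n V_n^b$ coincides with that of the categorical product of the $V_n^b$, it is enough to check strictness in $\B c_K$: every bounded $C \subseteq \prod_n V_n^b$ must admit a bounded preimage under $\delta$.

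My plan is a direct bornological Mittag--Leffler construction. Given $(w_n) \in \prod V_n$ with $w_n \in c_n V_n^\circ$ for some positive scalars $c_n$, I will produce a bounded $(v_n) \in \prod V_n$ solving $v_n - \rho_n(v_{n+1}) = w_n$. Formally one would take $v_n = \sum_{k \geq 0} \rho_n \rho_{n+1} \cdots \rho_{n+k-1}(w_{n+k})$, but this series diverges in general. The remedy is the standard Mittag--Leffler trick: modify $(w_n)$ by a coboundary $(y_n - \rho_n(y_{n+1}))$ so that the analogous series for the modified right-hand side becomes Cauchy in each $V_n$, then subtract off the $(y_n)$. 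Existence of auxiliary $y_m \in V_m$ whose iterated images approximate prescribed elements in $V_n$ for all $n \leq m$ relies on the density of the composite transition maps $V_m \to V_n$, which follows from the dense-image hypothesis. For the bornological control, I will choose these $y_m$ via iterated application of pre-nuclearity: starting from $w_{n+k} \in c_{n+k}V_{n+k}^\circ$ and a prescribed error $\varepsilon$ measured in $V_n$, the pre-nuclearity condition supplies bounded $B_{n+k+j} \subset V_{n+k+j}$, with bounds depending only on $c_{n+k}$ and $\varepsilon$, from which the approximations can be drawn. Running this procedure with rapidly decaying errors, in the same spirit as the proof of Proposition \ref{bandcompl}, yields both the convergence of the relevant series in each $V_n$ and uniform bornological bounds on the resulting $v_n$; a straightforward telescoping check then verifies the recursion.

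The main obstacle will be the bookkeeping required to guarantee that the bound on each $v_n$ depends only on the input scalars $(c_m)$ and the operator norms of the $\rho_m$, and not on ad hoc choices tied to a particular $(w_n) \in C$, so that the lifts $(v_n)$ all sit in one bounded subset of $\prod V_n$ as $(w_n)$ ranges over $C$. Dense images alone yield approximations and hence set-theoretic surjectivity of $\delta$ but give no size control on the lifts; pre-nuclearity is precisely the quantitative ingredient that upgrades the approximations to bornologically bounded ones. Once a uniformly bounded lift has been constructed for every $(w_n) \in C$, strictness of $\delta$ in $\B c_K$, and therefore also in $\widehat{\B}c_K$ by completeness of both ends, follows immediately, completing the identification $\mathrm{R}\varprojlim V_n^b \cong \varprojlim V_n^b$.
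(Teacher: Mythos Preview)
Your approach is essentially the same as the paper's: reduce to showing the Roos map $\delta$ is a strict epimorphism in $\widehat{\B}c_K$, then lift a bounded set by a Mittag--Leffler-type construction where pre-nuclearity supplies, inductively, bounded subsets $\widetilde{B}_n\subset V_n$ from which the correction terms are drawn. The paper organizes the bookkeeping by closely following the topological argument of \cite[p.~45, Lemme~1]{AST16}: it starts from the truncated partial sums $w_n$, modifies them to $w'_n$ via corrections $u_{n+1}\in\widetilde{B}_{n+2}$, and then passes to the limit, checking at the end that the resulting preimage lands in $\prod_n \pi^{-m_n}V_n^\circ$ for some sequence $(m_n)$ depending only on the original bounded set and the $\widetilde{B}_n$; this is exactly the uniform-bound issue you flag as the main obstacle.
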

	\begin{proof}
		This is closely modelled on \cite[p. 45, Lemme 1]{AST16}, from which we already know that $\prod V_n\to \prod V_n$ is a topologically strict surjection of Fr\'echet spaces. It thus suffices to show that it is also bornologically strict.
		
		Suppose without loss of generality that each $\rho_n$ satisfies $||\rho_n||\leq 1$.
		
		As we will deal with sequences in product spaces, let us use the following notational convention: if $v\in \prod V_i$, we denote by $v_n\in V_n$ the image of $v$ under the natural projection map to $V_n$. Sequences of elements will be written in a functional way, e.g. $v(1), v(2), \hdots$.
		
		Let $B=\prod B_n\subseteq \prod V_n$ be a bounded subset, with each $B_n$ a bounded $R$-submodule of $V_n$. Let $v=(v_n)\in B$, and define $w(n)\in \prod_{i=0}^{n+1} V_i$ as follows:
		\begin{equation*}
			w(n)_i=v_i+\rho_i(v_{i+1})+\hdots+ \rho_i\circ \hdots \circ \rho_{n-1}(v_n)\in V_i
		\end{equation*}
		for $i\leq n$, and $w(n)_{n+1}=0$.
		
		Writing $B^n_i:=\rho_i\circ\hdots\circ  \rho_{n-1}(B_n)\subseteq  V_i$ for any $i\leq n$, set $C^n_i=\sum_{j=i}^n B^j_i\subseteq V_i$, a bounded set. Then $w(n)\in \prod_{i=0}^nC^n_i\times \{0\}$, and $w(n)$ maps to $(v_1, \hdots, v_n)$ under the truncated Roos map 
		\begin{align*}
			\prod^{n+1}_{i=0}V_i&\to \prod^n_{i=0} V_i\\
			(x_0, \hdots, x_{n+1})&\mapsto (x_0-\rho(x_1), \hdots, x_n-\rho(x_{n+1})).
		\end{align*}
		By pre-nuclearity, we can use induction to find bounded subsets $\widetilde{B}_n\subseteq V_n$ such that for any $x\in C_n^{n+1}+\rho_n(\widetilde{B}_{n+1})$, there exists $u\in \widetilde{B}_{n+2}$ such that
		\begin{equation*}
			|\rho_{n-1}\rho_n\rho_{n+1}(u)-\rho_{n-1}(x)|_{V_{n-1}}<1/2^n.
		\end{equation*}
		Write 
		\begin{equation*}
			\sigma_n: \prod^{n+1}_{i=0}V_i\to \prod^n_{i=0}V_i
		\end{equation*}
		for the projection map. As in \cite[Lemme 1]{AST16}, we will now construct inductively $w'(n)\in \prod^{n+1}_{i=0}V_i$ such that
		\begin{enumerate}[(i)]
			\item $w'(n)$ maps to $(v_1, \hdots, v_n)$ under the truncated Roos map,
			\item $|\sigma_{n-1}\sigma_n(w'(n))-\sigma_{n-1}\sigma_n\sigma_{n+1}(w'(n+1))|_{V_{n-1}}<1/2^n$ for each $n$.
			\item $\sigma_n(w'(n))\in \prod_{i=0}^n(C_i^{n+1}+\rho_i\hdots\rho_n(\widetilde{B}_{n+1}))$ for each $n$.
		\end{enumerate}
		Suppose we have found $w'(n)$. Then set
		\begin{equation*}
			x(n)=\sigma_{n+1}(w(n+1))-w'(n)\in \prod^{n+1}_{i=0} V_i.
		\end{equation*}
		The short exact sequence
		\begin{equation*}
			0\to V_{n+1}\to \prod^{n+1}_{i=0} V_i\to \prod^n_{i=0}V_i\to 0,
		\end{equation*}
		where the injection is the diagonal map and the surjection is the truncated Roos map, shows that $x(n)$ can be identified with an element of $V_{n+1}$. 
		
		As $w(n+1)_n$ is contained in $C_n^{n+1}$ and $w'(n)_n$ is in $C_n^{n+1}+\rho_n(\widetilde{B}_{n+1})$, we have $\rho_n(x(n))\in C_n^{n+1}+\rho_n(\widetilde{B}_{n+1})$.\\
		By construction, we can pick $u(n+1)\in \widetilde{B}_{n+2}$ such that 
		\begin{equation*}
			|\rho_{n-1}\rho_n\rho_{n+1}(u(n+1))-\rho_{n-1}\rho_n(x(n))|_{V_{n-1}}<1/2^n, 
		\end{equation*}
		and we set $w'(n+1)=w(n+1)-u(n+1)$. It is straightforward from the construction that this has the desired properties.
		
		Finally, fix $n$ and consider in $\prod^{n+1}_{j=0}V_j$ the sequence consisting of 
		\begin{equation*}
			\sigma_{n+1}\hdots \sigma_i(w'(i)),
		\end{equation*} 
		for $i>n$. By (ii), this is a Cauchy sequence, converging to some $y(n)\in \prod^{n+1}_{j=0}V_j$. By construction, $y(n)$ maps to $(v_1, \hdots, v_n)$ under the truncated Roos map, but moreover, $\sigma(y(n))=y(n-1)$. Hence the components of $y(n)$ for varying $n$ describe an element $y\in \prod_{i=0}^\infty V_i$ which is a preimage of $v$.
		
		By construction, 
		\begin{equation*}
			\sigma_{n-1}\sigma_n(w'(n))\in \prod_{i=0}^{n-1}(C_i^{n+1}+\rho_i\hdots\rho_n(\widetilde{B}_{n+1})).
		\end{equation*}
		Let $\alpha_n\in \mathbb{R}$ be such that 
		\begin{equation*}
			|C_{n-1}^{n+1}+\rho_{n-1}\rho_n(\widetilde{B}_{n+1})|_{V_{n-1}}<\alpha_{n-1},
		\end{equation*} 
		then by (ii), we have
		\begin{equation*}
			|\sigma_{n-1}\hdots\sigma_{m}(w'(m))|_{V_{n-1}}<\mathrm{max}\{\alpha_{n-1}, 1/2^n\}
		\end{equation*} 
		for all $m>n$. Thus $y_n$ is bounded by $\mathrm{max}\{\alpha_n, 1/2^{n+1}\}$, and the Roos map is strict.
	\end{proof}
	
	We remark that the Theorem does not necessarily hold without the pre-nuclearity assumption. For example, let $V_n=K\langle x\rangle$ for each $n$, with $\rho_n=\frac{\mathrm{d}}{\mathrm{d}x}$ for each $n$. The $\rho_n$ are continuous morphisms of Banach spaces with dense images, so the morphism $\prod_n K\langle x\rangle\to \prod_n K\langle x\rangle$ is a continuous surjection, but $\prod_n K\langle x\rangle^b\to \prod_n K\langle x\rangle^b$ is not bornologically strict:
	
	Suppose for a contradiction that the Roos map
	\begin{equation*}
		\prod_n K\langle x\rangle^b\to \prod_n K\langle x \rangle^b
	\end{equation*}
	is bornologically strict.
	
	For each $n$, choose $j_n\in \mathbb{Z}$ such that 
	\begin{equation*}
		|n! \pi^{j_n}|\to \infty.
	\end{equation*} 
	For instance, we can choose $j_n$ such that $|\pi^{j_n}|\geq p^n$.
	
	Let $B_n\subseteq K\langle x\rangle$ denote the subset of all elements of norm $\leq |\pi^{j_n}|$. Then $\prod B_n$ is a bounded subset of $\prod V_n$, so by assumption there exist some bounded sets $C_n\subseteq K\langle x\rangle$ such that $\prod B_n$ is contained in the image of $\prod C_n$ under the Roos map. Without loss of generality, we can assume that there exists $k_n\in \mathbb{Z}$ such that $C_n$ consists of elements of norm $\leq |\pi^{k_n}|$.
	
	Choose an increasing sequence of positive integers $r_0<r_1<\hdots <r_n<\hdots$ satisfying the following:
	\begin{equation*}
		\left|\frac{\pi^{j_n}}{p^{r_n}}\right|>|\pi^{k_{n+1}}| \ \text{for each } n.
	\end{equation*}
	Note that such a sequence also satisfies
	\begin{equation*}
		p^{r_n}-p^{r_s}>r_n-r_s\geq n-s
	\end{equation*}
	for all $s<n$.
	
	Moreover, as $p^{r_n}>n$ for all $n$, we have
	\begin{equation*}
		|(p^{r_n}-1)\cdot(p^{r_n}-2)\cdot \hdots \cdot (p^{r_n}-n)|=|n!|
	\end{equation*}
	for each $n$.
	
	Let
	\begin{equation*}
		(\pi^{j_n}x^{p^{r_n}-1})_n\in \prod B_n\subseteq \prod V_n
	\end{equation*}
	and let 
	\begin{equation*}
		(g_n)_n\in \prod C_n\subseteq \prod V_n
	\end{equation*}
	be a preimage.
	
	Thus by construction,
	\begin{equation*}
		\frac{\mathrm{d}}{\mathrm{d}x}(g_1)=\pi^{j_0}x^{p^{r_0}-1}+g_0.
	\end{equation*}
	As $g_1\in C_1$, the $x^{p^{r_0}}$-coefficient of $g_1$ is bounded by $|\pi^{k_1}|$, so the $x^{p^{r_0}-1}$-coefficient of $\frac{\mathrm{d}}{\mathrm{d}x}(g_1)$ is bounded by $|p^{r_0}\cdot \pi^{k_1}|$. But we have chosen the $r_n$ such that 
	\begin{equation*}
		|\pi^{j_0}|>|p^{r_0}\cdot \pi^{k_1}|,
	\end{equation*}
	so this is only possible if the $x^{p^{r_0}-1}$-coefficient of $g_0$ has norm exactly equal to $|\pi^{j_0}|$. 
	
	In the same way,
	\begin{align*}
		\frac{\mathrm{d}^2}{\mathrm{d}x^2}(g_2)&= \frac{\mathrm{d}}{\mathrm{d}x}(\pi^{j_1}x^{p^{r_1}-1}+g_1)\\
		&=\pi^{j_1}(p^{r_1}-1)x^{p^{r_1}-2}+\pi^{j_0}x^{p^{r_0}-1}+g_0.
	\end{align*}
	The $x^{p^{r_1}-2}$-coefficient of $\frac{\mathrm{d}^2}{\mathrm{d}x^2}(g_2)$ is bounded by $|p^{r_1}\cdot (p^{r_1}-1)\cdot \pi^{k_2}|<|\pi^{j_1}\cdot (p^{r_1}-1)|$. Thus the $x^{p^{r_1}-2}$-coefficient of $g_0$ must have norm equal to $|\pi^{j_1}\cdot (p^{r_1}-1)|$.
	
	The same argument shows in general that the $x^{p^{r_n}-n-1}$-coefficient of $g_0$ has norm equal to
	\begin{equation*}
		|\pi^{j_n}\cdot (p^{r_n}-1)\cdot (p^{r_n}-2)\cdot \hdots \cdot (p^{r_n}-n)|=|\pi^{j_n}\cdot n!|.
	\end{equation*}
	But these norms tend to infinity by assumption, so $g_0$ cannot be an element of $K\langle x\rangle$. We have thus arrived at the desired contradiction.
	
	\begin{cor}[{compare \cite[section 3, Theorem B]{ST}}]
		\label{MLcoad}
		Let $A$ be an affinoid $K$-algebra with admissible affine formal model $\A$, and let $L$ be a smooth $(K, A)$-Lie--Rinehart algebra admitting a smooth $(R, \A)$-Lie lattice $\L$. Write $U_n=\h{U_{\A}(\pi^n\L)}\otimes_RK$, so that $\w{U_A(L)}=\varprojlim U_n$. If $M\cong \varprojlim M_n$ is a coadmissible $\w{U_A(L)}$-module, then the system $(M_n)^b$ is $\varprojlim$-acyclic in $\h{\B} c_K$.
	\end{cor}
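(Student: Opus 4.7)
The plan is to apply Theorem \ref{MLBan} directly to the inverse system $(M_n)$, for which I must verify two hypotheses: that each transition map $\rho_n: M_{n+1} \to M_n$ has dense image, and that the system is pre-nuclear.

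Density of the transition maps is essentially formal from coadmissibility. Since $M_n \cong U_n \otimes_{U_{n+1}} M_{n+1}$ as $U_n$-modules, the image of $M_{n+1}$ in $M_n$ contains a set of $U_n$-module generators $\bar{x}_1, \dots, \bar{x}_r$. Given any $y = \sum a_i \bar{x}_i \in M_n$ with $a_i \in U_n$, I approximate each $a_i$ by an element coming from $U_{n+1}$ (which exists since $U_{n+1} \to U_n$ has dense image, this being part of the Fr\'echet--Stein assumption); continuity of the $U_n$-action on $M_n$ then assembles these into elements of the image of $M_{n+1}$ converging to $y$.

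Pre-nuclearity will follow by invoking Lemma \ref{nuclearimpliespre}(ii). In the proof that every coadmissible $\w{U_A(L)}$-module is $A$-nuclear, one constructs for each $n$ a commutative diagram of continuous $A$-module maps
\begin{equation*}
\begin{xy}
\xymatrix{
A\langle y_1, \dots, y_d\rangle^{\oplus r_{n+1}} \ar@{->>}[r] \ar[dr] & M_{n+1} \ar[d]^{\rho_n} \\
& M_n
}
\end{xy}
\end{equation*}
in which the horizontal arrow is the surjection obtained by composing $\alpha_{n+1}^{\oplus r_{n+1}}$ with the continuous surjection $U_{n+1}^{\oplus r_{n+1}} \twoheadrightarrow M_{n+1}$, and the diagonal arrow is scc (being the scc map $A\langle y_1, \dots, y_d\rangle^{\oplus r_{n+1}} \to U_n^{\oplus r_{n+1}}$ postcomposed with the continuous surjection $U_n^{\oplus r_{n+1}} \twoheadrightarrow M_n$). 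Combined with the density of $\rho_{n+1}: M_{n+2} \to M_{n+1}$ established above, this is precisely the setup of Lemma \ref{nuclearimpliespre}(ii), which yields, for each bounded $B \subset M_{n+1}$ and each open $R$-submodule $V \subset M_n$, a bounded $B' \subset M_{n+2}$ with $\rho_n(B) \subseteq V + \rho_n\rho_{n+1}(B')$. This is the defining condition of pre-nuclearity.

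With both hypotheses in hand, Theorem \ref{MLBan} produces $\mathrm{R}\varprojlim M_n^b \cong \varprojlim M_n^b$ in $\mathrm{D}(\h{\B}c)$, which is the asserted $\varprojlim$-acyclicity. The only mildly delicate point is the bookkeeping needed to align the indexing of the scc diagrams coming out of the nuclearity proof with the indexing of the pre-nuclearity condition --- essentially just a shift by one --- but no new ideas are required beyond what has already been assembled in the preceding subsections.
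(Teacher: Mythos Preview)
Your proof is correct and follows the same approach as the paper: the paper records just before Theorem~\ref{MLBan} that Lemma~\ref{nuclearimpliespre} makes $(M_n)$ pre-nuclear with dense images, and the corollary is then immediate from Theorem~\ref{MLBan}. You have simply made explicit the density argument and the invocation of Lemma~\ref{nuclearimpliespre}(ii) that the paper leaves implicit.
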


	\begin{cor}
		\label{MLcomplexes}
		Let
		\begin{equation*}
			V_i^\bullet=(\hdots \to V_i^j\to V_i^{j+1}\to\hdots )
		\end{equation*}
		be a strict complex of Banach spaces for each $i$ such that $(V_i^\bullet)$ is an inverse system of complexes of Banach spaces. Suppose that for each $j$, $(V_i^j)$ and $(\mathrm{H}^j(V_i^\bullet))$ form pre-nuclear systems with dense images.
		
		Then $V^\bullet:=\varprojlim (V_i^\bullet)^b$ is a strict complex of complete bornological vector spaces, and the natural morphism
		\begin{equation*}
			\mathrm{H}^j(V^\bullet)\to \varprojlim \mathrm{H}^j(V_i^\bullet)^b
		\end{equation*}
		is an isomorphism.
	\end{cor}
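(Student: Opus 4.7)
The plan is to realize $V^\bullet$ as the strict kernel of the Roos map on the product complex and then apply Theorem \ref{MLBan} twice. For each $j$, Theorem \ref{MLBan} applied to the pre-nuclear system $(V_i^j)$ provides a strictly short exact sequence
\begin{equation*}
0 \to V^j \to \prod_i (V_i^j)^b \xrightarrow{1-\rho} \prod_i (V_i^j)^b \to 0
\end{equation*}
in $\h{\B}c_K$. Since the Roos map $1-\rho$ commutes with the ambient differentials (the transition maps being chain morphisms), these sequences assemble, as $j$ varies, into a strictly short exact sequence of complexes
\begin{equation*}
0 \to V^\bullet \to \prod_i (V_i^\bullet)^b \xrightarrow{1-\rho} \prod_i (V_i^\bullet)^b \to 0,
\end{equation*}
and hence a distinguished triangle in $\mathrm{D}(\h{\B}c_K) \cong \mathrm{D}(LH(\h{\B}c_K))$.

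Next, I would exploit that products are exact in the quasi-elementary category $\h{\B}c_K$ to conclude that $\prod (V_i^\bullet)^b$ is a strict complex whose $j$th cohomology is canonically $\prod \mathrm{H}^j(V_i^\bullet)^b$: the strictly short exact sequences $0\to Z_i^j\to V_i^j\to B_i^{j+1}\to 0$ and $0\to B_i^j\to Z_i^j\to \mathrm{H}^j(V_i^\bullet)\to 0$ survive under $\prod$. The associated long exact sequence in $LH(\h{\B}c_K)$ then reads
\begin{equation*}
\cdots \to \prod \mathrm{H}^{j-1}(V_i^\bullet)^b \xrightarrow{1-\rho} \prod \mathrm{H}^{j-1}(V_i^\bullet)^b \to \mathrm{H}^j(V^\bullet) \to \prod \mathrm{H}^j(V_i^\bullet)^b \xrightarrow{1-\rho} \prod \mathrm{H}^j(V_i^\bullet)^b \to \cdots
\end{equation*}
Now Theorem \ref{MLBan} applied a second time, this time to the pre-nuclear system $(\mathrm{H}^j(V_i^\bullet))$, tells us that each Roos map displayed is strictly surjective with kernel $\varprojlim \mathrm{H}^j(V_i^\bullet)^b$. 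The long exact sequence therefore breaks into short strictly exact pieces and identifies the map $\mathrm{H}^j(V^\bullet) \to \varprojlim \mathrm{H}^j(V_i^\bullet)^b$ (induced by $V^\bullet \to \prod (V_i^\bullet)^b$, i.e.\ the natural one) as an isomorphism in $LH(\h{\B}c_K)$.

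Finally, since the right-hand side is a limit of Banach spaces and in particular lies in $\h{\B}c_K$, so does $\mathrm{H}^j(V^\bullet)$; by the criterion recalled in subsection 2.1 (an object of $LH(\C)$ lies in $\C$ precisely when its representing monomorphism is strict, which for cohomology amounts to strictness of the incoming differential), this forces every differential of $V^\bullet$ to be strict, so $V^\bullet$ is itself a strict complex. The main delicate step is the first one: verifying that Theorem \ref{MLBan}'s strict short exact sequence can be assembled into a short \emph{strictly} exact sequence of complexes in the quasi-abelian setting and that its cohomology long exact sequence distributes over products; both rely crucially on the exactness of products in $\h{\B}c_K$.
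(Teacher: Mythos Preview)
Your argument is correct and takes a genuinely different route from the paper's. The paper breaks each $V_i^\bullet$ into the two short strictly exact sequences
\[
0\to \ker d\to V_i^j\to \operatorname{Im} d\to 0,\qquad 0\to \operatorname{Im} d\to \ker d\to \mathrm{H}^j(V_i^\bullet)\to 0,
\]
shows via Lemma~\ref{newprenuc} that the image systems $(\operatorname{Im} d)_i$ are again pre-nuclear with dense images, applies Theorem~\ref{MLBan} to all the pieces to make them $\varprojlim$-acyclic, and then takes $\varprojlim$ termwise. Your approach instead packages everything into the single Roos short exact sequence of complexes and reads off the result from the long exact cohomology sequence in $LH(\h{\B}c_K)$, applying Theorem~\ref{MLBan} only to the two systems actually named in the hypotheses. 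This bypasses Lemma~\ref{newprenuc} altogether and is more conceptual, at the cost of invoking the derived-category/left-heart machinery and the exactness of products in $\h{\B}c_K$; the paper's argument is more hands-on and stays entirely within $\h{\B}c_K$. Both are valid and of comparable length.
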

	\begin{proof}
		For each $i$ and $j$, we have the strictly exact sequences
		\begin{equation*}
			0\to \mathrm{ker}d\to V_i^j\to \mathrm{coim}d\to 0
		\end{equation*}
		and 
		\begin{equation*}
			0\to \mathrm{Im} d\to \mathrm{ker}d\to \mathrm{H}^j(V_i^\bullet)\to 0
		\end{equation*}
		of Banach spaces, where by strictness assumption $\mathrm{Coim}d\cong \mathrm{Im}d$. Recall that $(-)^b$ is exact on Banach spaces, so $\mathrm{Im}(d^b)\cong (\mathrm{Im}d)^b$, $\mathrm{ker}(d^b)\cong (\mathrm{ker}d)^b$. By Lemma \ref{newprenuc}, $(\mathrm{Im}d)_i$ is pre-nuclear with dense images, and hence both $(\mathrm{Im}d)^b_i$ and $(\mathrm{H}^j(V_i^\bullet))^b_i$ are $\varprojlim$-acyclic systems in $\h{\B}c_K$ by Theorem \ref{MLBan}. But then $(\mathrm{ker}d)^b_i$ is also $\varprojlim$-acyclic, so all terms appearing in the sequences above are acyclic for all $j$. Applying $\varprojlim$ in $\h{\B}c_K$ yields the result.
	\end{proof}
	
	\begin{lem}
		\label{MLforimages}
		Let $(f_n: B_n\to C_n)$ be a morphism of inverse systems of Banach spaces, and suppose that for each $n$ and every $y_n\in C_n$, there exists $x_{n-1}\in B_{n-1}$ such that $f_{n-1}(x_{n-1})=\rho_{n-1}(y_n)$.
		
		Suppose that $A_n:=\mathrm{ker}f_n$ is a pre-nuclear system with dense images. Then
		\begin{equation*}
			0\to \varprojlim A_n^b\to \varprojlim B_n^b\to \varprojlim C_n^b\to 0
		\end{equation*} 
		is strictly exact in $\h{\B} c_K$.
	\end{lem}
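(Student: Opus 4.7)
The plan is as follows. Since the functor $(-)^b$ from Banach spaces to $\h{\B}c_K$ preserves kernels and $\varprojlim$ commutes with kernels, one has $\varprojlim A_n^b = \ker(\varprojlim B_n^b \to \varprojlim C_n^b)$ as complete bornological spaces, giving strict exactness at the two left-hand terms for free. The entire content of the lemma is thus to show that $\varprojlim B_n^b \to \varprojlim C_n^b$ is a strict epimorphism in $\h{\B}c_K$, i.e.\ that every bounded subset $Y \subseteq \varprojlim C_n^b$ is the image of some bounded subset of $\varprojlim B_n^b$.

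The first step is to upgrade the lifting hypothesis to a bornological statement at each level via the Closed Graph Theorem. Identifying $f_n(B_n) \cong B_n/A_n$ equipped with its quotient Banach norm, the hypothesis lets us factor $\rho_n: C_{n+1} \to C_n$ through $B_n/A_n$; this factored map has closed graph (its composition with the continuous inclusion $B_n/A_n \hookrightarrow C_n$ is $\rho_n$), so it is itself continuous. Writing $Y_n := \pi_n(Y) \subseteq C_n$ for the projections of $Y$, strict surjectivity of $B_n \to B_n/A_n$ therefore produces a bounded subset $Z_n \subseteq B_n$ with $f_n(Z_n) \supseteq \rho_n(Y_{n+1}) = Y_n$.

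A Mittag--Leffler correction then assembles these level-wise lifts into a compatible family. For each $y = (y_n) \in Y$, choose $\tilde z_n(y) \in Z_n$ with $f_n(\tilde z_n(y)) = y_n$; the discrepancies $w_n(y) := \rho_n(\tilde z_{n+1}(y)) - \tilde z_n(y)$ lie in $A_n$ and, as $y$ varies over $Y$, describe a bounded subset of $A_n$ for each $n$, so $\{(w_n(y))_n : y \in Y\}$ is bounded in $\prod A_n^b$. Pre-nuclearity of $(A_n)$ with dense images is exactly the hypothesis of Theorem \ref{MLBan}, which states that the Roos map $\prod A_n^b \to \prod A_n^b$ is a strict epimorphism in $\h{\B}c_K$; lifting our bounded family yields a bounded family $(a_n(y))_n$ satisfying $a_n(y) - \rho_n(a_{n+1}(y)) = w_n(y)$. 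Setting $x_n(y) := \tilde z_n(y) + a_n(y)$ then produces a bounded family $X = \{(x_n(y))_n : y \in Y\}$ inside $\varprojlim B_n^b$ whose image in $\varprojlim C_n^b$ is $Y$, finishing the proof. The one non-formal input is the Closed Graph step: the stated hypothesis alone only produces set-theoretic lifts, and the whole bornological argument rests on upgrading these to lifts of bounded sets at each level.
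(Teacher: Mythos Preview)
Your proof is correct, but it takes a somewhat different route from the paper's. The paper argues more concisely at the limit level: from the strict short exact sequences $0\to A_n\to B_n\to\coim f_n\to 0$ of Banach spaces together with the $\varprojlim$-acyclicity of $(A_n^b)$ furnished by Theorem~\ref{MLBan}, one obtains strict exactness of $0\to\varprojlim A_n^b\to\varprojlim B_n^b\to\varprojlim(\coim f_n)^b\to 0$, so the whole problem reduces to identifying $\varprojlim(\coim f_n)^b$ with $\varprojlim C_n^b$. The lifting hypothesis forces the continuous injection $\varprojlim\coim f_n\hookrightarrow\varprojlim C_n$ of Fr\'echet spaces to be bijective, and a single application of the Open Mapping Theorem finishes. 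Your argument instead applies the Closed Graph Theorem at each individual Banach level to control the level-wise lifts $Z_n$, and then unwinds the $\varprojlim$-acyclicity of $(A_n)$ explicitly as the strictness of the Roos map, carrying out the Mittag--Leffler correction by hand. The paper's approach is shorter and stays at the Fr\'echet/derived-category level; yours is more concrete about where the bounded preimages actually come from and makes the reliance on Theorem~\ref{MLBan} completely explicit rather than leaving it inside a long exact sequence argument. Both arguments ultimately rest on the same two inputs (an Open Mapping/Closed Graph step and Theorem~\ref{MLBan}); they differ only in whether one passes to the limit before or after invoking them.
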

	\begin{proof}
		By the previous results, it suffices to show that $\varprojlim (\mathrm{Coim}f_n)\cong \varprojlim C_n$. But the assumption implies that the natural morphism is a continuous bijection, which is then an isomorphism by the Open Mapping Theorem for Fr\'echet spaces. 
	\end{proof}
	
	We remark that all these results remain valid upon passing to $\mathrm{Ind}(\mathrm{Ban}_K)$: if $(V_n)$ is an inverse system in $\h{\B}c_K$ such that
	\begin{equation*}
		0\to \varprojlim V_n\to \prod_n V_n\to \prod_n V_n\to 0
	\end{equation*}
	is strictly exact, then $(\mathrm{diss}(V_n))$ is also $\varprojlim$-acyclic in $\mathrm{Ind}(\mathrm{Ban}_K)$, since $\mathrm{diss}$ is exact and commutes with limits.
	\subsection{Coadmissible tensor products}
	
	Let $A$ be a Fr\'echet $K$-algebra. Then deriving the functor
	\begin{equation*}
		-\h{\otimes}_{A^b}- : \mathrm{Mod}_{\h{\B}c_K}((A^b)^{\mathrm{op}})\times \mathrm{Mod}_{\h{\B}c_K}(A^b)\to \h{\B}c_K
	\end{equation*}
	and taking zeroth cohomology yields a tensor product
	\begin{equation*}
		-\widetilde{\otimes}_{A^b}-: LH(\mathrm{Mod}_{\h{\B}c_K}((A^b)^{\mathrm{op}}))\times LH(\mathrm{Mod}_{\h{\B}c_K}(A^b))\to LH(\h{\B}c_K).
	\end{equation*}
	If $A$ is of countable type, we have already seen in Lemma \ref{reltensorIBFrechet} and Lemma \ref{reltensoronLH} that under the identifications $LH(\h{\B}c_K)\cong LH(\mathrm{Ind}(\mathrm{Ban}_K))$ and 
	\begin{align*}
		LH(\mathrm{Mod}_{\h{\B}c_K}(A^b))&\cong LH(\mathrm{Mod}_{\mathrm{Ind}(\mathrm{Ban}_K)}(\mathrm{diss}(A^b))\\
		& \cong \mathrm{Mod}_{LH(\mathrm{Ind}(\mathrm{Ban}_K))}(I(\mathrm{diss}(A^b))),
	\end{align*}
	this agrees with the corresponding derived tensor product over $\mathrm{diss}(A^b)$ resp. with the relative tensor product over the monoid $I(\mathrm{diss}(A^b))$ in the closed symmetric monoidal category $LH(\mathrm{Ind}(\mathrm{Ban}_K))$.
	
	In this subsection, we show that this tensor product agrees with more classical ones.
	
	\begin{lem}
		\label{BarinIB}
		Let $A$ be a monoid in $\mathrm{Ind}(\mathrm{Ban}_K)$. If $N\in \mathrm{Mod}_{\mathrm{Ind}(\mathrm{Ban}_K)}(A)$, then the Bar resolution
		\begin{equation*}
			F^{\bullet}=(\hdots\to A^{\overset{\rightarrow}{\otimes} n}\overset{\rightarrow}{\otimes}_K N\to A^{\overset{\rightarrow}{\otimes} n-1}\overset{\rightarrow}{\otimes}_K N\to \hdots \to A\overset{\rightarrow}{\otimes}_K N)
		\end{equation*}
		of $N$ from \cite[Lemma 2.9]{BamStein} is a flat resolution.
		If $M\in \mathrm{Mod}_{\mathrm{Ind}(\mathrm{Ban}_K)}(A^{\mathrm{op}})$, then
		\begin{equation*}
			M\overset{\rightarrow}{\otimes}^{\mathbb{L}}_A N\cong M\overset{\rightarrow}{\otimes}_A F^{\bullet}.
		\end{equation*}
	\end{lem}
	\begin{proof}
		Since every object in $\mathrm{Ind}(\mathrm{Ban}_K)$ is flat, this is \cite[Lemma 2.9]{BamStein}.
	\end{proof}
	
	\begin{lem}
		\label{BarhBc}
		Let $A$ be a Fr\'echet $K$-algebra, $N$ a left Fr\'echet $A$-module and $M$ a right Fr\'echet $A$-module. Assume that at least one of the following is satisfied:
		\begin{enumerate}[(i)]
			\item $K$ is spherically complete.
			\item $A$, $M$ and $N$ are of countable type.
		\end{enumerate}
		Let
		\begin{equation*}
			F^\bullet=(\hdots \to (A^b)^{\h{\otimes} n}\h{\otimes}_K N^b\to (A^b)^{\h{\otimes} n-1}\h{\otimes}_K N^b\to \hdots A^b\h{\otimes}_K N^b)
		\end{equation*}
		be the Bar resolution of $N^b$ in $\mathrm{Mod}_{\h{\B}c_K}(A^b)$.
		
		Then
		\begin{equation*}
			M\h{\otimes}^{\mathbb{L}}_{A^b} N^b\cong M\h{\otimes}_{A^b} F^\bullet,
		\end{equation*}
		i.e. the Bar resolution can be used to compute the derived tensor product.
	\end{lem}
	\begin{proof}
		Since $\widetilde{\mathrm{diss}}: LH(\h{\B}c_K)\to LH(\mathrm{Ind}(\mathrm{Ban}_K))$ is an equivalence, it suffices to show the isomorphism after applying $\widetilde{\mathrm{diss}}$. 
		
		By Lemma \ref{reltensorIBFrechet}, the left hand side is 
		\begin{equation*}
			I(\mathrm{diss}(M^b))\widetilde{\otimes}^{\mathbb{L}}_{I(\mathrm{diss}(A^b))}I(\mathrm{diss}(N^b))\cong \mathrm{diss}(M^b)\overset{\rightarrow}{\otimes}^{\mathbb{L}}_{\mathrm{diss}(A^b)} \mathrm{diss}(N^b),
		\end{equation*}
		and for the right hand side, Lemma \ref{tensorwithfree} and Proposition \ref{dissandtensor} yield the isomorphisms
		\begin{align*}
			\mathrm{diss}(M^b\h{\otimes}_{A^b} (A^b)^{\h{\otimes} n}\h{\otimes}_K N^b)&\cong \mathrm{diss}(M^b\h{\otimes}_K (A^b)^{\h{\otimes} n-1}\h{\otimes}_K N^b)\\
			&\cong \mathrm{diss}(M^b)\overset{\rightarrow}{\otimes}_K \mathrm{diss}(A^b)^{\overset{\rightarrow}{\otimes}n-1}\overset{\rightarrow}{\otimes}_K \mathrm{diss}(N^b)\\
			&\cong \mathrm{diss}(M^b)\overset{\rightarrow}{\otimes}_{\mathrm{diss}(A^b)} F_{\mathrm{Ind}(\mathrm{Ban}_K)}^{\bullet},
		\end{align*}
		where $F_{\mathrm{Ind}(\mathrm{Ban}_K)}^{\bullet}$ denotes the Bar resolution of $\mathrm{diss}(N^b)$ in $\mathrm{Mod}_{\mathrm{Ind}(\mathrm{Ban}_K)}(\mathrm{diss}(A^b))$. 
		
		We thus have
		\begin{equation*}
			\widetilde{\mathrm{diss}}(M^b\h{\otimes}_{A^b} F^{\bullet})\cong \mathrm{diss}(M^b)\overset{\rightarrow}{\otimes}_{\mathrm{diss}(A^b)}F_{\mathrm{Ind}(\mathrm{Ban}_K)}^{\bullet},
		\end{equation*}
		and we obtain the desired isomorphism thanks to Lemma \ref{BarinIB}.
	\end{proof}
	
	\begin{lem}
		\label{tensorfgBanach}
		Let $A, B$ be Noetherian Banach $K$-algebras. Let $N$ be a finitely generated left $A$-module with its canonical Banach structure. Let $M$ be a Banach $(B, A)$-bimodule which is finitely generated as a $B$-module, again equipped with its canonical Banach structure.
		
		Then
		\begin{equation*}
		\mathrm{H}^0(M^b\h{\otimes}^\mathbb{L}_{A^b}N^b)\cong I(M^b\h{\otimes}_{A^b}N^b)\cong I((M\otimes_A N)^b)\in LH(\h{\B}c_K),
		\end{equation*}
		where $M\otimes_A N$ is equipped with its canonical Banach structure as a finitely generated $B$-module.
		
		More generally,
		\begin{equation*}
			\mathrm{H}^{-j}(M^b\h{\otimes}^\mathbb{L}_{A^b}N^b)\cong I(\mathrm{Tor}_j^A(M, N)^b)
		\end{equation*}
		for all $j$, where the finitely generated abstract $B$-module $\mathrm{Tor}_j^A(M, N)$ is equipped with its canonical Banach structure.
	\end{lem}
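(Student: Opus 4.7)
The plan is to resolve $N$ by finitely generated free $A$-modules and use the resulting complex to compute the derived tensor product. Since $A$ is Noetherian, one can choose a resolution
\[
\cdots \to A^{r_1} \to A^{r_0} \to N \to 0,
\]
and equip each $P^j := A^{r_j}$ with its canonical Banach structure. Any $A$-linear map between finitely generated Banach $A$-modules has closed image (being itself finitely generated) and is topologically strict by the open mapping theorem. Hence by Lemma \ref{borniexact} the functor $(-)^b$ preserves strict exactness of this resolution, so $(P^\bullet)^b \to N^b$ is a strict resolution in $\h{\B}c_K(A^b)$.

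Next I check that each $(P^j)^b \cong (A^b)^{r_j}$ belongs to the class $\mathcal{F}^l$ of Proposition \ref{deriverelqabtensor}. For any right $A^b$-module $X$, Lemma \ref{tensorwithfree} gives
\[
X \h{\otimes}_{A^b} (A^b)^{r_j} \cong X \h{\otimes}_K K^{r_j} \cong X^{r_j},
\]
which is strongly exact in $X$ by Theorem \ref{ptisclosedmonoidal}, and $\widetilde{\T}_{I(A^b)}(I(X), I((A^b)^{r_j})) \cong I(X^{r_j})$ lies in the essential image of $I$. Consequently $M^b \h{\otimes}^{\mathbb{L}}_{A^b} N^b$ is represented by the complex $C^\bullet := (M \otimes_A P^\bullet)^b$, whose terms $(M^{r_j})^b$ carry their canonical Banach structure as finitely generated $B$-modules.

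The differentials of $C^\bullet$ are $B$-linear maps between finitely generated Banach $B$-modules, hence again strict by open mapping. Consequently $\mathrm{coim}\, d^{j-1} \cong \mathrm{im}\, d^{j-1}$ in $\h{\B}c_K$, this image is a closed subspace of $C^j$ sitting inside the closed subspace $\ker d^j$, and the comparison monomorphism $\mathrm{coim}\, d^{j-1} \hookrightarrow \ker d^j$ (being the kernel of the Banach-space quotient $\ker d^j \twoheadrightarrow \ker d^j / \mathrm{im}\, d^{j-1}$) is strict. The left-heart cohomology $[\mathrm{coim}\, d^{j-1} \to \ker d^j]$ therefore lies in the essential image of $I$ and is represented by the honest cokernel $\ker d^j / \mathrm{im}\, d^{j-1}$ taken in $\h{\B}c_K$. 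Set-theoretically this is $\mathrm{Tor}_j^A(M,N)$, and the quotient Banach structure coincides with its canonical one by uniqueness of the canonical Banach structure on finitely generated modules over the Noetherian Banach algebra $B$. This proves the general statement.

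Specializing to the case $j = 0$ gives the rightmost isomorphism $\mathrm{H}^0 \cong I((M \otimes_A N)^b)$. For the middle isomorphism, Lemma \ref{Banmoduletensor} yields $M^b \h{\otimes}_{A^b} N^b \cong (M \h{\otimes}_A N)^b$, and the Banach completion $M \h{\otimes}_A N$ coincides with $M \otimes_A N$ with its canonical Banach structure (once more by open mapping, applied to the continuous surjection $M \h{\otimes}_K N \twoheadrightarrow M \otimes_A N$ of Banach spaces). The main obstacle throughout is the repeated need for strictness — of the differentials in the original resolution, of their counterparts in $C^\bullet$, and of the comparison $\mathrm{coim} \hookrightarrow \ker$ — but in every case this reduces cleanly to the open mapping theorem for finitely generated Banach modules over a Noetherian Banach algebra.
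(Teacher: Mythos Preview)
Your proof is correct and follows essentially the same line as the paper's. Both arguments rest on resolving $N$ by finite free $A$-modules, using that maps between finitely generated modules over a Noetherian Banach algebra are automatically strict (so that $(-)^b$ preserves exactness), and identifying the resulting cohomology with the algebraic $\mathrm{Tor}$ groups in their canonical Banach structures. The only cosmetic difference is that the paper works with a two-term presentation $A^r\to A^s\to N\to 0$ to handle $\mathrm{H}^0$ and then obtains the higher $\mathrm{H}^j$ by dimension shifting along $0\to P\to A^s\to N\to 0$, whereas you take the full free resolution at once and read off all cohomology groups directly; your final paragraph deriving the $j=0$ case separately via Lemma~\ref{Banmoduletensor} is then redundant (it already follows from your general statement together with $C\mathrm{H}^0\cong M^b\h{\otimes}_{A^b}N^b$), though not incorrect.
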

	\begin{proof}
		By assumption, we have a strictly exact sequence of Banach spaces
		\begin{equation*}
			A^r\to A^s\to N\to 0.
		\end{equation*} 
		Since $M^b\widetilde{\otimes}_{A^b}A^b\cong I(M^b)$, we have $\mathrm{H}^0(M^b\h{\otimes}^\mathbb{L}_{A^b} (A^b)^r)\cong I((M^b)^r)$, as both $I$ and tensor products commute with direct sums. 
		
		Applying $M\otimes_A-$ to the sequence above, we obtain an exact sequence of finitely generated $B$-modules
		\begin{equation*}
			M^r\to M^s\to M\otimes_A N\to 0,
		\end{equation*}
		which is strictly exact by \cite[Corollary 3.7.3/5]{BGR}. Let $g: M^r\to M^s$ denote the first map.
		
		Applying instead $\mathrm{H}^0(M^b\h{\otimes}^\mathbb{L}_{A^b}-)$ to the sequence above yields an exact sequence
		\begin{equation*}
			\begin{xy}
				\xymatrix{
					I((M^b)^r)\ar[r]^{I(g)}& I((M^b)^s)\ar[r]& \mathrm{H}^0(M^b\h{\otimes}^\mathbb{L}_{A^b} N^b)\ar[r]&0.
				}
			\end{xy}
		\end{equation*}
		But $g$ was a strict morphism of Banach spaces, so it is also strict as a map of bornological spaces, and therefore $\mathrm{H}^0(M^b\h{\otimes}^\mathbb{L}_{A^b} N^b)$ is contained in the essential image of $I$ and is thus isomorphic to $I(M^b\h{\otimes}_{A^b }N^b)$, since $C(\mathrm{H}^0(M^b\h{\otimes}^\mathbb{L}_{A^b}N^b))\cong M^b\h{\otimes}_{A^b}N^b$. Comparing with the sequence above and noting that $(-)^b$ is exact on Banach spaces, we deduce that $(M\otimes_A N)^b$ is the cokernel of $g^b$, so $M^b\h{\otimes}_{A^b}N^b\cong (M\otimes_A N)^b$.
		
		Now let
		\begin{equation*}
			0\to P\to A^s\to N\to 0
		\end{equation*}
		be a short (strictly) exact sequence of finitely generated Banach $A$-modules. Since 
		\begin{equation*}
			\mathrm{Tor}_j^A(M, A^s)=0=\mathrm{H}^{-j}(M^b\h{\otimes}^\mathbb{L}_{A^b}(A^b)^s)
		\end{equation*}
		for all $j>0$, the result follows by working inductively along the long exact sequences of abstract Tors and bornological cohomology groups respectively.
	\end{proof}
	We now extend this result to the Fr\'echet--Stein setting. Let $U=\varprojlim U_n$ and $V=\varprojlim V_n$ be Fr\'echet--Stein algebras. By a \textbf{$V$-coadmissible $(V, U)$-bimodule} we will mean a coadmissible $V$-module $M$, equipped with its canonical Fr\'echet structure and a continuous right $U$-module structure (see \cite[Definition 7.3]{DcapOne}). Continuity implies that after relabelling, we can then always assume without loss of generality that $M_n:=V_n\otimes_V M$ is a right $U_n$-module.
	
	Let $M$ be a $V$-coadmissible $(V, U)$-bimodule. In \cite[section 7]{DcapOne}, Ardakov--Wadsley define the functor 
	\begin{align*}
		M\w{\otimes}_U-: &\C_U\to \C_V\\
		&N\mapsto \varprojlim (M_n\otimes_{U} N)=\varprojlim (M_n\otimes_{U_n}(U_n\otimes_U N)).
	\end{align*}
	We showed in \cite{Bitoun} that $M\w{\otimes}_U N$, equipped with its canonical Fr\'echet topology, is the Hausdorff completion of the topological projective tensor product $M\otimes_U N$ in $LCVS_K$. We will now study this functor from the bornological viewpoint.
	
	\begin{prop}
		\label{coadandtor}
		Let $U\cong \varprojlim U_n$, $V\cong \varprojlim V_n$ be (two-sided) Fr\'echet--Stein algebras over $K$ with the property that each coadmissible $U$- resp. $V$-module is pseudo-nuclear and of countable type (e.g. a completed enveloping algebra for a Lie--Rinehart algebra with smooth Lie lattice, or the distribution algebra of a compact $p$-adic group).
		
		Let $M$ be a $V$-coadmissible $(V, U)$-bimodule, and let $N\cong \varprojlim N_n$ be a coadmissible left $U$-module. Then
		\begin{equation*}
			\mathrm{H}^0(M^b\h{\otimes}^\mathbb{L}_{U^b}N^b)\cong I((M\w{\otimes}_UN)^b).
		\end{equation*} 
		More generally,
		\begin{equation*}
			\mathrm{H}^{-j}(M^b\h{\otimes}^\mathbb{L}_{U^b}N^b)\cong I(\varprojlim \mathrm{Tor}_j^{U_n}(M_n, N_n)^b) 
		\end{equation*}
		for each $j$. In particular, $\mathrm{H}^{-j}(M^b\h{\otimes}^\mathbb{L}_{U^b}N^b)$ is a coadmissile $V$-module.
	\end{prop}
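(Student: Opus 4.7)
The approach is to reduce to the preceding (Banach-level) lemma by constructing compatible finite-free resolutions at each index $n$ and then passing to the inverse limit. Since each $U_n$ is Noetherian Banach and the transition maps $U_{n+1}\to U_n$ are flat with dense image, one can build inductively (starting from a finite free presentation $U_0^{r_0}\to N_0$ and lifting over each surjection $U_{n+1}\to U_n$) a system of finite-free Banach resolutions $P_n^\bullet\to N_n$ of $U_n$-modules with $U_n\otimes_{U_{n+1}}P_{n+1}^\bullet\cong P_n^\bullet$. Setting $P^i:=\varprojlim(P_n^i)^b$ in $\h{\B}c_K(U^b)$, each $P^i$ is the bornological realisation of a coadmissible free $U$-module; this is the candidate resolution of $N^b$ over $U^b$.

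At each finite level the preceding lemma gives
\begin{equation*}
M_n^b\h{\otimes}^\mathbb{L}_{U_n^b}N_n^b\cong M_n^b\h{\otimes}_{U_n^b}(P_n^\bullet)^b,\qquad \mathrm{H}^j\cong I(\mathrm{Tor}^{U_n}_j(M_n,N_n)^b).
\end{equation*}
The next step is to identify $M^b\h{\otimes}_{U^b}P^i$ with $\varprojlim\bigl(M_n^b\h{\otimes}_{U_n^b}(P_n^i)^b\bigr)$: term by term, invoking Lemma \ref{Banmoduletensor} at each level, this reduces to commuting completed tensor products with inverse limits of pseudo-nuclear systems, which is Corollary \ref{commutewinv}; pseudo-nuclearity of $M_n^b$ and $(P_n^i)^b$ is guaranteed by the hypothesis that coadmissible modules are pseudo-nuclear. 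One further has to argue that $P^\bullet$ actually computes $M^b\h{\otimes}^\mathbb{L}_{U^b}N^b$, i.e.\ that the $P^i$ are flat in the sense of Proposition \ref{deriverelqabtensor}; here one uses that $U_n^b\h{\otimes}_{U^b}(-)^b$ behaves well on coadmissibles to reduce flatness at the $U^b$ level to the already-known flatness at each $U_n^b$ level.

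To extract cohomology from the limit complex I would apply Corollary \ref{MLcomplexes}. The required pre-nuclear-with-dense-images input, both for the termwise systems and for the cohomology systems $\{\mathrm{Tor}^{U_n}_j(M_n,N_n)\}$, follows from their coadmissibility as $V$-modules: the flat base-change identity
\begin{equation*}
V_n\otimes_{V_{n+1}}\mathrm{Tor}^{U_{n+1}}_j(M_{n+1},N_{n+1})\cong \mathrm{Tor}^{U_n}_j(M_n,N_n)
\end{equation*}
holds because $V_{n+1}\to V_n$ is flat on both sides (applied to the complex $M_{n+1}\otimes_{U_{n+1}}P_{n+1}^\bullet$), and coadmissible $V$-modules are pseudo-nuclear by hypothesis, hence pre-nuclear in the relevant Banach presentation. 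Corollary \ref{MLcomplexes} then yields simultaneously both asserted isomorphisms --- the $j=0$ statement recovering $M\w{\otimes}_U N$ directly from its definition as $\varprojlim(M_n\otimes_{U_n}N_n)$ --- along with the coadmissibility of the cohomology as a $V$-module.

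The main obstacle will be the clean identification of $M^b\h{\otimes}^\mathbb{L}_{U^b}N^b$ with the limit complex $M^b\h{\otimes}_{U^b}P^\bullet$: we are deriving over $U^b$, whereas the natural computational pieces live over the individual $U_n^b$, so the flatness/commutation argument must bridge these categories. The analytic input (pseudo-nuclearity of coadmissibles, Mittag--Leffler via Theorem \ref{MLBan}) is precisely what makes this bridge work, but each commutation step requires care to avoid the familiar pitfalls when mixing flat resolutions with completions.
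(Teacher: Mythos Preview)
Your overall architecture --- compute at each Banach level, show that the Tor groups $\mathrm{Tor}^{U_n}_j(M_n,N_n)$ assemble into a coadmissible $V$-module via flat base change, then apply Corollary \ref{MLcomplexes} to pass to the inverse limit --- matches the paper's exactly. The gap is in the resolution you choose.

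You propose compatible finite free resolutions $P_n^\bullet\to N_n$ with $U_n\otimes_{U_{n+1}}P_{n+1}^\bullet\cong P_n^\bullet$, built by ``lifting over each surjection $U_{n+1}\to U_n$''. But $U_{n+1}\to U_n$ is \emph{not} a surjection: it is flat with dense image, and typically a proper inclusion of Banach algebras (think of $K\langle\pi^{n+1}x\rangle\hookrightarrow K\langle\pi^n x\rangle$). Consequently $N_{n+1}\to N_n$ is not surjective either, so generators of $N_0$ cannot in general be lifted to $N_1$; and even if approximate lifts existed, there is no Nakayama-type mechanism guaranteeing they generate $N_1$. Without such compatible resolutions you have no inverse system of complexes to feed into Corollary \ref{MLcomplexes}, and no candidate limit resolution $P^\bullet$ of $N^b$. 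The ``main obstacle'' you flag at the end --- identifying the limit complex with the derived tensor product --- would actually be trivial if $P^i\cong U^{r_i}$ were available; the real obstacle is one step earlier.

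The paper avoids this by using the Bar resolution: the $s$th term of the resolution of $N_n^b$ is $(U_n^b)^{\h{\otimes}(s+1)}\h{\otimes}_K N_n^b$, which is functorial in $N_n$ and hence automatically compatible across $n$. After tensoring with $M_n^b$ over $U_n^b$, the terms are iterated $\h{\otimes}_K$-products of pseudo-nuclear Banach spaces, so Corollary \ref{commutewinv} directly identifies their inverse limit with $M^b\h{\otimes}_K(U^b)^{\h{\otimes} s}\h{\otimes}_K N^b$, i.e.\ with $M^b\h{\otimes}_{U^b}-$ applied to the Bar resolution of $N^b$, which is flat by construction and hence computes $M^b\h{\otimes}^\mathbb{L}_{U^b}N^b$. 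The canonicity of the Bar construction is precisely the device replacing your unavailable compatible finite free resolutions; once that substitution is made, the remainder of your argument and the paper's coincide.
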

	\begin{proof}
		By Lemma \ref{BarhBc}, we can compute $M^b_n\h{\otimes}^\mathbb{L}_{U^b_n} N^b_n$ by applying $M^b_n\h{\otimes}_{U^b_n}-$ to the Bar resolution $F_n^\bullet$ of $N^b_n$ given by
		\begin{equation*}
			F_n^\bullet:=(\hdots \to (U^b_n)^{\h{\otimes}s}\h{\otimes}_K N^b_n\to (U^b_n)^{\h{\otimes}s-1}\h{\otimes}N^b_n\to \hdots \to U^b_n\h{\otimes}N^b_n\to 0).
		\end{equation*}
		So by the previous Lemma, $M^b_n\h{\otimes}_{U^b_n}F_n^\bullet$ is a complex of Banach spaces such that
		\begin{equation*}
			\mathrm{H}^{-j}(M^b_n\h{\otimes}_{U^b_n} F_n^\bullet)=I(\mathrm{Tor}_j^{U_n}(M_n, N_n)^b).
		\end{equation*}
		In particular, it is a strict complex.
		
		Note that each term in $M^b_n\h{\otimes}_{U^b_n}F_n^\bullet$ is of the form $M^b_n\h{\otimes}_K (U^b_n)^{\h{\otimes} s}\h{\otimes}_K N^b_n$. Keeping $s$ fixed and letting $n$ vary, these form a pre-nuclear system with dense images by Lemma \ref{newprenuc}.
		
		Now the flatness of $U_{n+1}\to U_n$ and $V_{n+1}\to V_n$ yields isomorphisms
		\begin{align*}
			\mathrm{Tor}_j^{U_n}(M_n, N_n)&\cong \mathrm{Tor}_j^{U_n}(V_n\otimes_{V_{n+1}}M_{n+1}, U_n\otimes_{U_{n+1}}N_{n+1})\\
			&\cong \mathrm{Tor}_j^{U_{n+1}}(V_n\otimes_{V_{n+1}}M_{n+1}, N_{n+1})\\
			&\cong V_n\otimes_{V_{n+1}}\mathrm{Tor}_j^{U_{n+1}}(M_{n+1}, N_{n+1}).
		\end{align*}
		Thus $\varprojlim \mathrm{Tor}_j^{U_n}(M_n, N_n)$ is a coadmissible $V$-module. In particular, the terms $\mathrm{Tor}_j^{U_n}(M_n, N_n)$ form a pre-nuclear system with dense images for each $j$.
		
		We can thus apply Corollary \ref{MLcomplexes} to deduce that the complex $\varprojlim (M_n^b\h{\otimes}_{U^b_n}F_n^\bullet)$ is strict with cohomology groups isomorphic to $\varprojlim \mathrm{Tor}_j^{U_n}(M_n, N_n)^b$. By Corollary \ref{commutewinv}, we can identify $\varprojlim (M_n\h{\otimes}_K U_n^{\h{\otimes} s}\h{\otimes}_K N_n)^b$ with $M^b\h{\otimes}(U^b)^{\h{\otimes} s} \h{\otimes} N^b$, so the complex $\varprojlim(M^b_n\h{\otimes}_{U^b_n}F_n^\bullet)$ is the result of applying $M^b\h{\otimes}_{U^b}-$ to the Bar resolution of $N^b$.
		
		Therefore,
		\begin{equation*}
			\mathrm{H}^{-j}(M^b\h{\otimes}_{U^b}^\mathbb{L}N^b)\cong \mathrm{H}^{-j}(\varprojlim (M^b_n\h{\otimes}^\mathbb{L}_{U^b_n}F_n^\bullet))\cong I(\varprojlim \mathrm{Tor}_j^{U_n}(M_n, N_n)^b),
		\end{equation*}
		as required.
	\end{proof}
	In a similar fashion, we also have the following result.
	\begin{lem}
		\label{reltensorandinv}
		Let $A$ be a Banach $K$-algebra, and let $N\cong \varprojlim N_n$ be an inverse limit of left Banach $A$-modules, describing $N$ as a pseudo-nuclear module. Let $M$ be a right Banach $A$-module such that $M\h{\otimes}_A^\mathbb{L}N_n\cong M\h{\otimes}_AN_n$ for each $n$. Assume that at least one of the following is satisfied:
		\begin{enumerate}[(i)]
			\item $K$ is spherically complete.
			\item $A$, $M$ and $N_n$ are of countable type.
		\end{enumerate}
		Then 
		\begin{equation*}
			M\h{\otimes}_A^\mathbb{L}N\cong M\h{\otimes}_AN\cong \varprojlim (M\h{\otimes}_AN_n)
		\end{equation*} 
		via the natural morphism.
	\end{lem}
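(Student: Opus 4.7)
The approach is to compute $N\h{\otimes}_A^\mathbb{L}M$ via a Bar resolution of $M$, and then commute the inverse limit past all the relevant operations using the pseudo-nuclearity of $M$ and the pre-nuclearity machinery developed in subsection 3.5.

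First, let $F_n^\bullet$ denote the Bar resolution of $M_n$ as a left Banach $A$-module, whose term in negative degree $s$ is $A^{\h{\otimes}(s+1)}\h{\otimes}_K M_n$ (cf.\ \cite[Lemma 2.9]{BamStein}). Applying $N\h{\otimes}_A-$ and simplifying via Lemma \ref{tensorwithfree} yields a complex $C_n^\bullet$ with terms $N\h{\otimes}_K A^{\h{\otimes} s}\h{\otimes}_K M_n$, whose cohomology computes $N\h{\otimes}_A^\mathbb{L}M_n$. The hypothesis $N\h{\otimes}_A^\mathbb{L}M_n\cong N\h{\otimes}_A M_n$ (in $\mathrm{D}(\h{\B}c_K)$, i.e., in the image of $I$) is precisely the assertion that $C_n^\bullet$ is a strict complex of Banach spaces with $\mathrm{H}^0(C_n^\bullet)\cong N\h{\otimes}_A M_n$ and vanishing higher cohomology.

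Next I would verify the pre-nuclearity conditions needed to apply Corollary \ref{MLcomplexes}. Since $M\cong \varprojlim M_n$ describes $M$ as a pseudo-nuclear space, the system $(M_n)_n$ is pre-nuclear with dense images. By Lemma \ref{newprenuc}(i), the system $(N\h{\otimes}_K A^{\h{\otimes} s}\h{\otimes}_K M_n)_n$ is then pre-nuclear with dense images for each $s$. For the cohomology, note that $N\h{\otimes}_A M_n$ is obtained as a cokernel in $\h{\B}c_K$ of a map of Banach spaces compatible with the transition maps, so $(N\h{\otimes}_A M_n)_n$ is pre-nuclear with dense images by Lemma \ref{newprenuc}(ii).

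Corollary \ref{MLcomplexes} then shows that $\varprojlim C_n^\bullet$ is a strict complex in $\h{\B}c_K$ whose cohomology equals $\varprojlim (N\h{\otimes}_A M_n)^b$ in degree $0$ and vanishes elsewhere. Using Corollary \ref{commutewinv} (whose hypotheses are met because $M$ is pseudo-nuclear), the natural morphisms
\begin{equation*}
N\h{\otimes}_K A^{\h{\otimes} s}\h{\otimes}_K M\;\longrightarrow\;\varprojlim\bigl(N\h{\otimes}_K A^{\h{\otimes} s}\h{\otimes}_K M_n\bigr)
\end{equation*}
are isomorphisms, so $\varprojlim C_n^\bullet$ is canonically identified with $N\h{\otimes}_A \overline{F}^\bullet$, where $\overline{F}^\bullet$ is the Bar resolution of $M$. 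Hence $N\h{\otimes}_A^\mathbb{L}M\cong\varprojlim(N\h{\otimes}_A M_n)$ concentrated in degree $0$; the strictness of the differential $N\h{\otimes}_K A\h{\otimes}_K M\to N\h{\otimes}_K M$ in $\varprojlim C_n^\bullet$ additionally identifies its cokernel (which by definition is $N\h{\otimes}_A M$) with $\varprojlim(N\h{\otimes}_A M_n)$, and also places $N\h{\otimes}_A^\mathbb{L}M$ in the essential image of $I$. The three asserted isomorphisms then follow.

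\textbf{Main obstacle.} The delicate point is ensuring that all strictness hypotheses propagate cleanly through the inverse limit: concretely, one must extract from the assumption $N\h{\otimes}_A^\mathbb{L}M_n\cong N\h{\otimes}_A M_n$ that each complex $C_n^\bullet$ is \emph{strict} in every degree (not just acyclic in $LH$), so that Corollary \ref{MLcomplexes} applies. The appeal to Lemma \ref{newprenuc}(ii) for the cohomology system $(N\h{\otimes}_A M_n)$ also requires a careful identification of the closed subspace inside $(N\h{\otimes}_K M_n)$, compatible with the transition maps. Once these are in place, the remaining identifications are formal consequences of Corollary \ref{commutewinv} and Corollary \ref{MLcomplexes}.
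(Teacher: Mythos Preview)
Your proposal is correct and follows essentially the same route as the paper: compute $N\h{\otimes}_A^{\mathbb{L}}M_n$ via the Bar resolution to obtain the strict complexes $C_n^\bullet$ with terms $N\h{\otimes}_K A^{\h{\otimes} s}\h{\otimes}_K M_n$, verify that these form pre-nuclear systems with dense images, apply Corollary \ref{MLcomplexes} to pass to the inverse limit, and identify the result with the Bar complex for $M$ via Corollary \ref{commutewinv}. The paper's proof is terser but structurally identical; your added care about strictness in each degree and about the pre-nuclearity of the cohomology system is justified and fills in details the paper leaves implicit.
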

	\begin{proof}
		The complex $M\h{\otimes}_K A^{\h{\otimes}\bullet}\h{\otimes}_K N_n\to M\h{\otimes}_A N_n$ obtained from the Bar resolution of $N_n$ computes $M\h{\otimes}^\mathbb{L}_A N_n$, so it is strictly exact, and each term produces a pre-nuclear system of Banach spaces with dense images as $n$ varies. By Corollary \ref{MLcomplexes}, 
		\begin{equation*}
			\varprojlim (M\h{\otimes}A^{\h{\otimes} \bullet} \h{\otimes}N_n)
		\end{equation*}
		is a strictly exact complex, and the result follows from Corollary \ref{commutewinv}.
	\end{proof}
	
	We remark once again that we obtain the analogous results when working in $\mathrm{Mod}_{\mathrm{Ind}(\mathrm{Ban}_K)}(\mathrm{diss}(A^b))$. This is actually immediate by the identification of the corresponding left hearts.
	
	\subsection{The space $K\{x\}$}
	We write $K\{x\}=\varprojlim K\langle \pi^nx\rangle\in\h{\B} c_K$. Note that $K\{x\}$ is (the bornologification of) a nuclear Fr\'echet space, and the $K\langle \pi^nx\rangle$ are of countable type.
	
	Given $V\in \h{\B} c_K$, let $S(V)$ denote the vector space of sequences $(v_i)$ in $V$ satisfying the following property: there exists a bounded subset $B\subseteq V$ such that for each $n\geq 0$, $\{\pi^{-ni}v_i: i\in \mathbb{N}\}$ is a bounded subset of $V_B$.
	
	We can endow $S(V)$ with a bornology by declaring a subset $A\subseteq S(V)$ to be bounded if there exists a bounded subset $B\subseteq V$ such that for each $n\geq 0$,
	\begin{equation*}
		\cup_{(v_i)\in A} \{\pi^{-ni} v_i\}
	\end{equation*}
	is a bounded subset of $V_B$. As $V$ is complete, so is $S(V)$. This defines a functor $S: \h{\B}c_K\to \h{\B}c_K$, which by definition satisfies
	\begin{equation*}
		S(V)\cong \varinjlim S(V_B),
	\end{equation*}
	where $V\cong \varinjlim V_B$ is the usual description of $V$ as a colimit of Banach spaces.
	
	Note further that if $V$ is a Banach space, then
	\begin{equation*}
		S(V)=\varprojlim (V\h{\otimes}_K K\langle \pi^nx\rangle)\cong V\h{\otimes}_K K\{x\}
	\end{equation*}
	by Corollary \ref{commutewinv}.
	\begin{lem}
		\label{sequentialtensor}
		Let $V\in \h{\B} c_K$. Then $V\h{\otimes}_K K\{x\}\cong S(V)$ naturally in $V$. 
	\end{lem}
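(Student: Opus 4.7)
The plan is to reduce the general statement to the Banach case (already handled in the paragraph preceding the lemma) by writing $V$ as a filtered colimit of Banach spaces and using that both functors in question commute with this kind of colimit. Concretely, any complete bornological space admits the presentation $V\cong \varinjlim_B V_B$ in $\h{\B}c_K$, where $B$ runs over bounded $R$-submodules of $V$ such that $V_B$ is a Banach space. By definition of the bornology on $S(V)$ we have, tautologically, $S(V)\cong \varinjlim_B S(V_B)$. It therefore suffices to know that $-\h{\otimes}_K K\{x\}$ commutes with this colimit and to identify $V_B\h{\otimes}_K K\{x\}$ with $S(V_B)$ naturally in $V_B$.

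The first step, commuting with the colimit, is immediate from Theorem \ref{ptisclosedmonoidal}.(ii): since $\h{\B}c_K$ is closed symmetric monoidal, the functor $-\h{\otimes}_K K\{x\}$ is a left adjoint and hence preserves all colimits, so $V\h{\otimes}_K K\{x\}\cong \varinjlim_B\bigl(V_B\h{\otimes}_K K\{x\}\bigr)$. For the second step, observe that the Banach case was already noted before the statement: for $V_B$ Banach, Corollary \ref{commutewinv} applied to the inverse system $V_B\h{\otimes}_K K\langle \pi^n x\rangle$ (noting that $V_B$ and each $K\langle \pi^n x\rangle$ are Banach hence pseudo-nuclear, and that $K\{x\}=\varprojlim K\langle \pi^n x\rangle$ is pseudo-nuclear as an elementary direct check on the monomial basis shows) gives
\begin{equation*}
V_B\h{\otimes}_K K\{x\}\cong \varprojlim_n \bigl(V_B\h{\otimes}_K K\langle \pi^n x\rangle\bigr),
\end{equation*}
and unwinding the Banach norm on $V_B\h{\otimes}_K K\langle \pi^n x\rangle$ (its elements are power series $\sum v_i x^i$ with $\pi^{-ni}v_i\to 0$ in $V_B$) identifies this inverse limit with $S(V_B)$.

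Stringing these isomorphisms together yields
\begin{equation*}
V\h{\otimes}_K K\{x\}\cong \varinjlim_B \bigl(V_B\h{\otimes}_K K\{x\}\bigr)\cong \varinjlim_B S(V_B)\cong S(V).
\end{equation*}
Naturality in $V$ follows because a bounded linear map $f:V\to W$ sends bounded $R$-submodules to bounded $R$-submodules, producing a morphism of the colimit systems that is compatible with every arrow appearing in the identifications above; the Banach-case isomorphism $V_B\h{\otimes}_K K\{x\}\cong S(V_B)$ is already natural, and naturality of $\varinjlim_B$ and the adjunction formalism supply the rest.

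I expect the one subtle point to be the verification that $K\{x\}$ is pseudo-nuclear (needed in order to invoke Corollary \ref{commutewinv}); this is a direct computation using the explicit Schauder basis $\{x^i\}$ and the rescaling $\pi^n x$ defining the transition maps, and is the only piece of the argument that does not reduce formally to the cited results. Everything else is bookkeeping, combining the colimit presentation of $V$ with the adjoint property of $\h{\otimes}_K$.
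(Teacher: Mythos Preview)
Your proof is correct and follows exactly the same approach as the paper: write $V\cong\varinjlim_B V_B$ as a filtered colimit of Banach spaces, commute $-\h{\otimes}_K K\{x\}$ with the colimit (as a left adjoint in the closed monoidal structure), invoke the already-established Banach case $V_B\h{\otimes}_K K\{x\}\cong S(V_B)$, and use $S(V)\cong\varinjlim_B S(V_B)$. Your flagged ``subtle point'' about pseudo-nuclearity of $K\{x\}$ is already handled by the paper's remark at the start of the subsection that $K\{x\}$ is a nuclear Fr\'echet space (and nuclear over $K$ implies pseudo-nuclear), so no extra work is needed there.
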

	\begin{proof}
		Write $V\cong \varinjlim V_B$ as the colimit of Banach spaces, so that 
		\begin{align*}
			V\h{\otimes}_K K\{x\}&\cong \varinjlim (V_B\h{\otimes}_K K\{x\})\\
			&\cong \varinjlim S(V_B)\cong S(V).\qedhere
		\end{align*}
	\end{proof}
	We thus obtain the following useful result:
	\begin{cor}
		\label{stronglyflat}
		The space $K\{x\}$ is strongly flat, i.e. the functor $-\h{\otimes}_K K\{x\}$ is strongly exact. In particular, the induced functor
		\begin{equation*}
			-\widetilde{\otimes}_K I(K\{x\}): LH(\h{\B} c_K)\to LH(\h{\B} c_K)
		\end{equation*}
		is exact.
	\end{cor}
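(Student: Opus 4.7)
My plan is to exploit the identification $V\h{\otimes}_K K\{x\}\cong S(V)$ just established, together with the observation that $-\h{\otimes}_K K\{x\}$ is a left adjoint (to $\mathrm{Hom}_{\h{\B}c}(K\{x\},-)$, by Theorem~\ref{ptisclosedmonoidal}.(ii)). Strong exactness decomposes into strong left and strong right exactness, and I would treat these two halves separately.

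Strong right exactness comes for free: a left adjoint preserves all colimits, and in particular preserves cokernels of arbitrary morphisms, which is precisely the characterization of strong right exactness in a quasi-abelian category.

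For strong left exactness, let $f:V\to W$ be any morphism in $\h{\B}c_K$. Under $V\h{\otimes}_K K\{x\}\cong S(V)$ the induced map acts termwise as $(v_i)\mapsto (f(v_i))$, so its kernel is the set of sequences $(v_i)\in S(V)$ with all $v_i\in\ker f$, endowed with the bornology induced from $S(V)$. I would then verify that the natural map $S(\ker f)\to \ker S(f)$ is a bornological isomorphism. The key point is that $\ker f\hookrightarrow V$ is a strict monomorphism in $\h{\B}c_K$, so a subset of $\ker f$ is bounded if and only if it is bounded in $V$, and for any bounded $R$-submodule $B\subseteq \ker f$ the Banach space $(\ker f)_B$ coincides with $V_B$. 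Comparing the definitions of the bornologies on $S(\ker f)$ and on $\ker S(f)$ using such $B$ then gives the required identification.

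Combining the two halves yields strong exactness of $-\h{\otimes}_K K\{x\}$. The final clause, that $-\widetilde{\otimes}_K I(K\{x\})$ is exact on $LH(\h{\B}c_K)$, then follows from the general principle recorded in Section~2.1: a strongly exact functor between quasi-abelian categories lifts to an exact functor between the left hearts, and under the natural isomorphism $\widetilde{\otimes}(I(V),I(W))\cong I(V\h{\otimes}_K W)$ this lift is precisely $-\widetilde{\otimes}_K I(K\{x\})$. The only genuinely unroutine step is the kernel computation above, which is really just a careful unwinding of bornologies; no deep ingredient beyond the explicit sequence-space model $S(V)$ seems necessary.
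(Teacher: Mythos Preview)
Your argument for strong exactness is correct and matches the paper exactly: strong right exactness from the left-adjoint property, strong left exactness from the sequence model $S(V)$ (the paper simply says ``the previous Lemma makes it clear that it also preserves arbitrary kernels'' without spelling out the bornological comparison you give). For the exactness of $-\widetilde{\otimes}_K I(K\{x\})$ on $LH(\h{\B}c_K)$, you and the paper diverge slightly: you invoke the general principle from Section~2.1 that a strongly exact functor lifts to an exact functor on left hearts, whereas the paper argues directly, showing that monomorphisms in $LH(\h{\B}c_K)$ are represented by Cartesian squares in $\h{\B}c_K$ and that these are preserved because $-\h{\otimes}_K K\{x\}$ preserves all kernels. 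Your route is shorter and perfectly legitimate given that the relevant statement is already recorded in Section~2.1; the paper's hands-on argument has the minor virtue of making explicit why preservation of \emph{arbitrary} kernels (not just kernels of strict maps) is what is needed.
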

	\begin{proof}
		We know from adjunction that $-\h{\otimes}_K K\{x\}$ preserves arbitrary cokernels, and the previous Lemma makes it clear that it also preserves arbitrary kernels. Thus $K\{x\}$ is strongly flat. 
		
		In particular, tensoring with $K\{x\}$ preserves monomorphisms, so 
		\begin{equation*}
			(E^{-1}\to E^0)\widetilde{\otimes} I(K\{x\})
		\end{equation*}
		can simply be represented by $(E^{-1}\h{\otimes} K\{x\}\to E^0\h{\otimes} K\{x\})$.
		
		By tensor-hom adjunction, the functor $-\widetilde{\otimes} I(K\{x\})$ is right exact. A mo\-no\-mor\-phism $E\to F$ in $LH(\h{\B}c_K)$ can be represented by a Cartesian diagram 
		\begin{equation*}
			\begin{xy}
				\xymatrix{
					E^{-1}\ar[r]\ar[d] & E^0\ar[d]\\
					F^{-1}\ar[r]& F^0
				}
			\end{xy}
		\end{equation*} 
		in $\h{\B}c_K$, i.e. we have a strictly exact sequence
		\begin{equation*}
			0\to E^{-1}\to E^0\oplus F^{-1}\to F^0.
		\end{equation*}  
		Since $-\h{\otimes} K\{x\}$ preserves all kernels, the diagram remains Cartesian after applying $-\h{\otimes} K\{x\}$. Hence $-\widetilde{\otimes} I(K\{x\})$ is both right exact and preserves monomorphisms, so it is exact.
	\end{proof}
	
	As $\widetilde{\mathrm{diss}}: LH(\h{\B}c_K)\to LH(\mathrm{Ind}(\mathrm{Ban}_K))$ is an equivalence of closed symmetric monoidal categories and $I\mathrm{diss}=\widetilde{\mathrm{diss}}I$, this implies that $\mathrm{diss}(K\{x\})$ is also a strongly flat object in $\mathrm{Ind}(\mathrm{Ban}_K)$.
	
	Writing $K\{x_1, \hdots, x_r\}=\varprojlim K\langle\pi^nx_1, \hdots, \pi^nx_r\rangle$, it follows from Corollary \ref{commutewinv} and Proposition \ref{dissandtensor} that
	\begin{equation*}
		K\{x_1, \hdots, x_r\}\cong K\{x_1\}\h{\otimes}_K \hdots \h{\otimes}_K K\{x_r\}
	\end{equation*}
	and
	\begin{equation*}
		\mathrm{diss}(K\{x_1, \hdots, x_r\})\cong \mathrm{diss}(K\{x_1\})\overset{\rightarrow}{\otimes}_K\hdots \overset{\rightarrow}{\otimes}_K \mathrm{diss}(K\{x_r\}).
	\end{equation*}
	In particular, $K\{x_1, \hdots, x_r\}\in \h{\B}c_K$ and $\mathrm{diss}(K\{x_1, \hdots, x_r\})\in \mathrm{Ind}(\mathrm{Ban}_K)$ are strongly flat objects for any $r$.
	\subsection{Further results on coadmissible modules}
	Throughout this subsection, let $U=\varprojlim U_n$ be a Fr\'echet--Stein $K$-algebra with the property that each coadmissible $U$-module is pseudo-nuclear. 
	
	If $K$ is not discretely valued, we make the additional assumption that $U$ is of countable type. In this case, $U_n$ is of countable type by definition, and hence any coadmissible $U$-module, being an inverse limit of finitely generated $U_n$-modules, is of countable type. 
	
	In this subsection, we discuss how some structural statements about coadmissible $U$-modules translate into the bornological framework.
	
	We first note the following immediate corollary of Proposition \ref{coadandtor}.
	
	\begin{prop}[{compare \cite[Remark 3.2]{ST}}]
		\label{flatbasechange}
		Let $U\cong \varprojlim U_n$, $V\cong\varprojlim V_n$ be Fr\'echet--Stein algebras (of countable type) with the property that coadmissible modules are pseudo-nuclear. 
		
		Let $M\cong \varprojlim M_n$ be a $V$-coadmissible $(V, U)$-bimodule equipped with its natural Fr\'echet topology, and suppose that for each $n$, $M_n$ is flat as an abstract right $U_n$-module.
		
		Then
		\begin{equation*}
			M^b\widehat{\otimes}^\mathbb{L}_{U^b} N^b\cong I(M^b\widehat{\otimes}_{U^b} N^b)
		\end{equation*}
		for any left coadmissible $U$-module $N$.
	\end{prop}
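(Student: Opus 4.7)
The plan is to prove this as a direct corollary of Proposition \ref{coadandtor} together with the additional flatness hypothesis on the $M_n$. The key point is that the higher derived functor of $\h{\otimes}_{U^b}$ is controlled, cohomology group by cohomology group, by the abstract Tor groups $\mathrm{Tor}_j^{U_n}(M_n, N_n)$, and flatness kills all the positive-degree contributions while ensuring the degree-zero part is strict.

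First I would invoke Proposition \ref{coadandtor} to identify
\begin{equation*}
\mathrm{H}^j(M^b\h{\otimes}^\mathbb{L}_{U^b}N^b)\cong I\left(\varprojlim \mathrm{Tor}_j^{U_n}(M_n,N_n)^b\right)
\end{equation*}
for every $j$. By the flatness assumption on each $M_n$ as a right $U_n$-module, $\mathrm{Tor}_j^{U_n}(M_n,N_n)=0$ for all $j\geq 1$, so the cohomology of $M^b\h{\otimes}^\mathbb{L}_{U^b}N^b$ vanishes outside degree zero. Consequently $M^b\h{\otimes}^\mathbb{L}_{U^b}N^b$ is quasi-isomorphic to the complex concentrated in degree zero given by $\mathrm{H}^0(M^b\h{\otimes}^\mathbb{L}_{U^b}N^b)$.

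Next I would examine the degree-zero term. Proposition \ref{coadandtor} gives the identification
\begin{equation*}
\mathrm{H}^0(M^b\h{\otimes}^\mathbb{L}_{U^b}N^b)\cong I\left((M\w{\otimes}_U N)^b\right),
\end{equation*}
so in particular this object of $LH(\h{\B}c_K)$ lies in the essential image of $I$. By the characterization spelled out just before Proposition \ref{deriverelqabtensor}, an object of the form $\widetilde{T}_{I(U^b)}(I(M^b),I(N^b))$ lies in the essential image of $I$ precisely when the natural morphism agrees with $I(M^b\h{\otimes}_{U^b}N^b)$; hence we obtain a canonical isomorphism $\mathrm{H}^0(M^b\h{\otimes}^\mathbb{L}_{U^b}N^b)\cong I(M^b\h{\otimes}_{U^b}N^b)$, and applying the left adjoint $C$ incidentally yields $M^b\h{\otimes}_{U^b}N^b\cong (M\w{\otimes}_U N)^b$ in $\h{\B}c_K$.

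Combining the two steps, the derived tensor product $M^b\h{\otimes}^\mathbb{L}_{U^b}N^b$ is a complex concentrated in degree zero whose unique cohomology is $I(M^b\h{\otimes}_{U^b}N^b)$, which under the equivalence $\mathrm{D}(\h{\B}c_K)\cong \mathrm{D}(LH(\h{\B}c_K))$ is exactly $M^b\h{\otimes}_{U^b}N^b$. This gives the claimed isomorphism. There is essentially no obstacle here, as the entire weight of the argument is already carried by Proposition \ref{coadandtor}; the only thing to observe is that under the flatness assumption, the abstract Tor computation $\mathrm{Tor}_j^{U_n}(M_n,N_n)$ collapses in the expected way, and that the ordinary coadmissible tensor product already accounts for the $j=0$ term strictly.
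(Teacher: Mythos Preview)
Your proof is correct and follows exactly the approach the paper intends: the paper's own proof is simply ``This is clear from Proposition \ref{coadandtor}'', and you have spelled out precisely why. The vanishing of higher cohomology follows from the flatness hypothesis via the Tor formula in Proposition \ref{coadandtor}, and the degree-zero identification $\mathrm{H}^0(M^b\h{\otimes}^\mathbb{L}_{U^b}N^b)\cong I(M^b\h{\otimes}_{U^b}N^b)$ is exactly the strictness criterion discussed at the start of subsection 3.6 (which in turn refers back to the discussion before Proposition \ref{deriverelqabtensor}).
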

	
	\begin{proof}
		This is clear from Proposition \ref{coadandtor}.
	\end{proof}

	\begin{cor}[{compare \cite[Corollary 3.1]{ST}}]
		\label{Anacyclic}
		For any coadmissible $U$-module $M\cong \varprojlim M_n$, there is an isomorphism
		\begin{equation*}
			U_n^b\widehat{\otimes}^\mathbb{L}_{U^b}M^b\cong I(U_n^b\widehat{\otimes}_{U^b} M^b)\cong I(M_n^b)
		\end{equation*}
		for every $n$.
	\end{cor}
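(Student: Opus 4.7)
The plan is to invoke Propositions \ref{flatbasechange} and \ref{coadandtor} with carefully chosen inputs. First, I would regard the Noetherian Banach algebra $U_n$ as a Fr\'echet--Stein algebra via the constant inverse system $V_m = U_n$ (with identity transition maps); the coadmissible $V$-modules are then just finitely generated Banach $U_n$-modules, which are trivially pseudo-nuclear (being normed spaces), so the hypotheses on $V$ in \ref{flatbasechange} and \ref{coadandtor} are satisfied. The analogous hypothesis on $U$ holds by assumption on the Fr\'echet--Stein algebra $U$ fixed at the start of subsection 3.7.

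Next I would view $U_n$ itself as a $(V, U)$-bimodule: free of rank one on the left, and carrying the right $U$-action induced by the canonical quotient $U \to U_n$. This bimodule is manifestly $V$-coadmissible with $(U_n)_m = U_n$ for every $m$, and after relabelling indices so that $m$ runs over $m \geq n$, the right $U_m$-module structure is obtained from $U_m \to U_n$, which is flat by the Fr\'echet--Stein axiom for $U$. This is exactly the flatness hypothesis of Proposition \ref{flatbasechange}, which therefore yields
\[
U_n^b \widehat{\otimes}^{\mathbb{L}}_{U^b} M^b \cong U_n^b \widehat{\otimes}_{U^b} M^b.
\]

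For the second isomorphism I would apply Proposition \ref{coadandtor}, which identifies the underived tensor with $I\bigl((U_n \widetilde{\otimes}_U M)^b\bigr)$, where $\widetilde{\otimes}_U$ denotes the Ardakov--Wadsley coadmissible tensor product. By its definition,
\[
U_n \widetilde{\otimes}_U M \;=\; \varprojlim_{m \geq n} \bigl(U_n \otimes_{U_m} M_m\bigr),
\]
and iterating the defining coadmissibility isomorphism $U_k \otimes_{U_{k+1}} M_{k+1} \cong M_k$ shows that each term of this inverse system is canonically isomorphic to $M_n$. The inverse limit is therefore $M_n$, so $U_n^b \widehat{\otimes}_{U^b} M^b \cong M_n^b$.

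There is no genuine obstacle here: both isomorphisms fall out once one notices that the constant Fr\'echet--Stein structure on $U_n$ fits neatly into the framework of the previously established propositions. The only mildly delicate point is the indexing --- truncating the inverse system defining $V$ to start from $n$ so that the right $U_m$-action on $U_n$ is defined --- but this is harmless because inverse limits depend only on a cofinal subsystem.
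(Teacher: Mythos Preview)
Your proposal is correct and follows essentially the same approach as the paper: both take $V=U_n$ as a constant Fr\'echet--Stein algebra, verify pseudo-nuclearity of its coadmissible modules and the flatness of $U_n$ over $U_m$ for $m\geq n$, and then apply Propositions \ref{flatbasechange} and \ref{coadandtor}. The only cosmetic difference is that the paper cites \cite[Corollary 3.1]{ST} directly to identify $U_n\otimes_U M$ with $M_n$, whereas you compute the coadmissible tensor product from its defining inverse limit; these amount to the same argument.
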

	\begin{proof}
		Take $V=U_n$ in Proposition \ref{flatbasechange}. This is a Noetherian Banach algebra, which can be viewed as a Fr\'echet--Stein algebra with $V_m=U_n$ for each $m$. Coadmissible $V$-modules are precisely finitely generated $U_n$-modules, which are Banach and thus pseudo-nuclear. Since $U_n$ is of countable type, so is every finitely generated $U_n$-module, and moreover, $V_m=U_n$ is flat over $U_m$ for $m\geq n$ by definition of Fr\'echet--Steinness. Thus the conditions in Proposition \ref{flatbasechange} are satisfied.
		
		Moreover, Proposition \ref{coadandtor} implies that $U_n^b\h{\otimes}_{U^b} M^b$ is isomorphic to $(U_n\otimes_U M)^b$, equipped with its canonical Banach structure as a finitely generated $U_n$-module. But this is precisely $M_n^b$ by \cite[Corollary 3.1]{ST}.
	\end{proof}

	\begin{cor}[{compare \cite[Corollary 3.3]{ST}}]
		\label{coadhBc}
		Let $M$ be a complete bornological $U$-module. Then $M\cong N^b$ for some coadmissible $U$-module $N$ if and only if $U^b_n\h{\otimes}_{U^b} M$ is a finitely generated $U_n$-module with its canonical Banach structure (viewed as a bornological module) and the natural morphism $M\to \varprojlim (U^b_n\h{\otimes}_{U^b} M)$ is an isomorphism.
	\end{cor}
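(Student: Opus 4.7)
The plan is to establish both directions separately. The forward direction will follow almost immediately from Corollary \ref{Anacyclic}, while the converse requires us to reconstruct the underlying coadmissible module from the data $(M_n)$.

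For the forward implication, if $M \cong N^b$ with $N \cong \varprojlim N_n$ coadmissible, then Corollary \ref{Anacyclic} gives $U_n^b \h{\otimes}_{U^b} M \cong N_n^b$, a finitely generated $U_n$-module with its canonical Banach structure. Corollary \ref{MLcoad} shows that $(N_n^b)$ is $\varprojlim$-acyclic in $\h{\B}c$, and since $(-)^b$ preserves limits of Fr\'echet spaces (being a right adjoint), we obtain $M \cong \varprojlim N_n^b \cong \varprojlim(U_n^b \h{\otimes}_{U^b} M)$; the unit maps $M \to U_n^b \h{\otimes}_{U^b} M$ are readily identified with this isomorphism.

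For the converse, assume $M_n := U_n^b \h{\otimes}_{U^b} M$ is isomorphic to $N_n^b$ for some finitely generated $U_n$-module $N_n$ with its canonical Banach structure, and $M \cong \varprojlim M_n$. The key step is to show that the $N_n$ assemble into a coadmissible module, i.e. that $N_n \cong U_n \otimes_{U_{n+1}} N_{n+1}$. For this, the factorisation $U^b \to U_{n+1}^b \to U_n^b$ together with associativity of the relative completed tensor product yields a natural isomorphism
\begin{equation*}
M_n \cong U_n^b \h{\otimes}_{U^b} M \cong U_n^b \h{\otimes}_{U_{n+1}^b}(U_{n+1}^b \h{\otimes}_{U^b} M) \cong U_n^b \h{\otimes}_{U_{n+1}^b} M_{n+1}.
\end{equation*}
By the Lemma preceding Proposition \ref{coadandtor}, applied to the tensor product of two finitely generated Banach modules over the Noetherian Banach algebra $U_{n+1}$, the right-hand side is isomorphic to $(U_n \otimes_{U_{n+1}} N_{n+1})^b$ with its canonical Banach structure. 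Since $(-)^b$ is fully faithful on finitely generated Banach $U_n$-modules, this yields $N_n \cong U_n \otimes_{U_{n+1}} N_{n+1}$ as abstract $U_n$-modules, so $N := \varprojlim N_n$ is coadmissible. A final application of Corollary \ref{MLcoad} together with the compatibility of $(-)^b$ with limits then gives $N^b \cong \varprojlim N_n^b \cong \varprojlim M_n \cong M$.

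The main technical obstacle is the associativity isomorphism $U_n^b \h{\otimes}_{U_{n+1}^b}(U_{n+1}^b \h{\otimes}_{U^b} M) \cong U_n^b \h{\otimes}_{U^b} M$ used in the converse for an arbitrary complete bornological $U^b$-module $M$. While this is the expected behaviour from the universal property of the relative tensor product, it must be justified cleanly via the coequaliser description of $\h{\otimes}_{U^b}$ from subsection 2.2 together with the closed monoidal structure on $\h{\B}c_K$, rather than by the pointwise arguments available in the classical algebraic setting.
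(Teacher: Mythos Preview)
Your proposal is correct and follows essentially the same route as the paper, which dispatches the result in one line (``immediate from the above and the fact that $(-)^b$ commutes with limits''); you have simply unpacked what that line means, supplying the compatibility $M_n \cong U_n^b\h{\otimes}_{U_{n+1}^b} M_{n+1}$ and the identification with $(U_n\otimes_{U_{n+1}} N_{n+1})^b$ that the paper leaves implicit. Two minor remarks: the appeals to Corollary~\ref{MLcoad} are harmless but unnecessary, since in both directions you only need $\varprojlim$ itself (not its derived functor), and $(-)^b$ commuting with limits already gives $N^b\cong\varprojlim N_n^b$; and the associativity you flag is indeed routine from the coequaliser description of $\h{\otimes}_A$ in a closed symmetric monoidal category, so it is a fair point to mention but not a genuine obstacle.
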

	\begin{proof}
		This is immediate from the above and the fact that $(-)^b$ commutes with limits.
	\end{proof}

	We remark that our results remain valid upon passing to $\mathrm{Ind}(\mathrm{Ban}_K)$:
	
	\begin{cor}
		Let $M\in \mathrm{Mod}_{\mathrm{Ind}(\mathrm{Ban}_K)}(\mathrm{diss}(U))$. Then $M\cong \mathrm{diss}_U(N^b)$ for some coadmissible $U$-module $N$ if and only if $U_n\overset{\rightarrow}{\otimes}_{\mathrm{diss}(U)} M$ is a finitely generated $U_n$-module endowed with its canonical Banach structure (viewed as an object in $\mathrm{Mod}_{\mathrm{Ind}(\mathrm{Ban}_K)}(U_n)$) and the natural morphism $M\to \varprojlim (U_n\overset{\rightarrow}{\otimes}_{\mathrm{diss}(U)}M)$ is an isomorphism.
	\end{cor}
	\begin{proof}
		If $N=\varprojlim N_n$ is a coadmissible $U$-module, then 
		\begin{align*}
			I(U_n)\widetilde{\otimes}_{I(\mathrm{diss}(U))} I(\mathrm{diss}(N^b))&\cong
			\widetilde{\mathrm{diss}}(\mathrm{H}^0(U_n\h{\otimes}^{\mathbb{L}}_{U} N^b))\\
			&\cong \widetilde{\mathrm{diss}}(I(N_n^b))\\
			&=I(\mathrm{diss}(N_n^b))
		\end{align*}
		due to Corollary \ref{reltensorIBFrechet} and Corollary \ref{Anacyclic}. 
		
		Thus $\mathrm{H}^0(U_n\overset{\rightarrow}{\otimes}^{\mathbb{L}}_{\mathrm{diss}(U^b)}\mathrm{diss}(N^b))\cong I(U_n)\widetilde{\otimes}_{I(\mathrm{diss}(U))}I(\mathrm{diss}(N^b))\in LH(\mathrm{Ind}(\mathrm{Ban}_K))$ is contained in the essential image of $I$ and is thus isomorphic to the tensor product $I(U_n\overset{\rightarrow}{\otimes}_{\mathrm{diss}(U^b)}\mathrm{diss}(N^b))$. As $I$ is fully faithful, the above then yields $U_n\overset{\rightarrow}{\otimes}_{\mathrm{diss}(U^b)}\mathrm{diss}(N^b)\cong \mathrm{diss}(N_n^b)$.
		
		Moreover, $\mathrm{diss}(N^b)\cong \varprojlim (N_n^b)$, since $\mathrm{diss}$ commutes with limits.
		
		Conversely, if $M\in \mathrm{Mod}_{\mathrm{Ind}(\mathrm{Ban}_K)}(\mathrm{diss}(U))$ satisfies the given condition, write $M_n$ for the Banach $U_n$-module $U_n\overset{\rightarrow}{\otimes}_{\mathrm{diss}(U)} M$. Then 
		\begin{align*}
			M&\cong \varprojlim \mathrm{diss}(M_n^b)\\
			&\cong \mathrm{diss}(\varprojlim M_n^b)\\
			&\cong \mathrm{diss}((\varprojlim M_n)^b),
		\end{align*}
		and the Fr\'echet $U$-module $N:=\varprojlim M_n$ is indeed coadmissible: Since $N$ is a Fr\'echet space of countable type, Lemma \ref{BarhBc} implies that  
		\begin{align*}
			\mathrm{diss}(U_n\h{\otimes}_{U^b} N^b)&\cong
			U_n\overset{\rightarrow}{\otimes}_{\mathrm{diss}(U^b)}\mathrm{diss}(N^b)\\
			&\cong U_n\overset{\rightarrow}{\otimes}_{\mathrm{diss}(U^b)} M\\
			&\cong \mathrm{diss}(M_n^b)
		\end{align*}
		so $U_n\h{\otimes}_U N^b\cong M_n^b$. Hence $N$ is coadmissible by Corollary \ref{coadhBc}.   
	\end{proof}
	
	An analogous characterization for objects in the left hearts is now immediate: if $M\in \mathrm{Mod}_{LH(\h{\B}c_K)}(U^b)$ is of the form $\varprojlim I(M_n^b)$ for $M_n$ Banach, then $M$ is in the essential image of $I$, as $I$ commutes with limits, and we are reduced to the earlier discussion.

	\section{Bornological $\wideparen{\mathcal{D}}$-modules}
	\subsection{The setting}
	Let $X$ be a rigid analytic $K$-variety, viewed as a G-topological space with its strong Grothendieck topology. Before we finally turn to a discussion of $\w{\D}$-modules, we summarize the categorical setting which we have developed so far.
	
	Let $\C=\mathrm{Ind}(\mathrm{Ban}_K)$. This is an elementary quasi-abelian category with a closed symmetric monoidal structure such that each object is flat and there are enough flat projectives stable under $\overset{\rightarrow}{\otimes}_K$ (see Corollary \ref{IndBansheafy}).
	
	In particular, we can invoke the sheaf theory from Theorem \ref{Shvnice}: the category $\mathrm{Shv}(X, \C)$ is quasi-abelian and closed symmetric monoidal with enough flat objects, and $LH(\mathrm{Shv}(X, \C))\cong \mathrm{Shv}(X, LH(\C))$ is a Grothendieck abelian category with a closed symmetric monoidal structure. 
	
	If $\R\in \mathrm{Shv}(X, \C)$ is a monoid, analogous results hold for the category of $\R$-modules. We let $\mathrm{D}(\R):=\mathrm{D}(\mathrm{Mod}_{\mathrm{Shv}(X, \C)}(\R))\cong \mathrm{D}(\mathrm{Mod}_{\mathrm{Shv}(X, LH(\C))}(I(\R)))$ denote the corresponding unbounded derived category.
	
	Thanks to the results in subsections 3.6 and 3.7, we have at our disposal the derived functors
	\begin{equation*}
		-\widetilde{\otimes}^{\mathbb{L}}_{\R}-: \mathrm{D}(\R^{\mathrm{op}})\times \mathrm{D}(\R)\to \mathrm{D}(\mathrm{Shv}(X, \C))
	\end{equation*}
	\begin{equation*}
		\mathrm{R}\mathcal{H}om_{\R}(-, -): \mathrm{D}(\R)^{\mathrm{op}}\times \mathrm{D}(\R)\to \mathrm{D}(\mathrm{Shv}(X, \C))
	\end{equation*}
	as well as
	\begin{equation*}
		f^{-1}: \mathrm{D}(\R)\to \mathrm{D}(f^{-1}\R)
	\end{equation*}
	for $f: Y\to X$ a morphism of rigid spaces and
	\begin{equation*}
		\mathrm{R}f_*: \mathrm{D}(\R)\to \mathrm{D}(f_*\R)
	\end{equation*}
	for any morphism $f: X\to Y$. These satisfy the usual adjunction and composition statements (see Proposition \ref{deriverelthom}, Lemma \ref{catcomposition} and the discussion after Lemma \ref{Rmodimageadj}). In the case of bimodules, the functors can be upgraded to functors between suitable module categories.
	
	We use throughout the sheaf tensor product $\widetilde{\otimes}_K$ on $\mathrm{Shv}(X, LH(\C))$, and note that if $\M, \N\in \mathrm{Shv}(X, \C)$, then
	\begin{equation*}
		I(\M)\widetilde{\otimes}_K I(\N)\cong I(\M\overset{\rightarrow}{\otimes}_K \N)
	\end{equation*}
	by Lemma \ref{strictsheaftensor}.
	
	If $\R\in \mathrm{Shv}(X, \C)$ is a monoid, we have
	\begin{equation*}
		-\widetilde{\otimes}^{\mathbb{L}}_{\R}-\cong -\overset{\rightarrow}{\otimes}^{\mathbb{L}}_{\R}-,
	\end{equation*}
	by Proposition \ref{deriverelqabtensor}, but
	\begin{equation*}
		\M\widetilde{\otimes}_{\R}\N=\mathrm{H}^0(\M\widetilde{\otimes}^{\mathbb{L}}_{\R}\N)
	\end{equation*}
	is only containd in $\mathrm{Shv}(X, \C)$ if additional conditions are met, see e.g. Proposition \ref{coadandtor}
	
	In addition to $\mathrm{Ind}(\mathrm{Ban}_K)$, we have encountered the quasi-abelian category $\h{\B}c_K$ of complete bornological vector spaces, which we can regard as a full subcategory of $\mathrm{Ind}(\mathrm{Ban}_K)$ via the exact and fully faithful functor $\mathrm{diss}: \h{\B}c_K\to \mathrm{Ind}(\mathrm{Ban}_K)$, with left adjoint $L$.
	
	Note that $\mathrm{diss}$ induces an exact and fully faithful functor 
	\begin{equation*}
		\mathrm{diss}^{\mathrm{pre}}: \mathrm{Preshv}(X, \h{\B}c_K)\to \mathrm{Preshv}(X, \mathrm{Ind}(\mathrm{Ban}_K)).
	\end{equation*}

	Since $\mathrm{diss}$ is a left adjoint, it is strongly left exact and commutes with products. Hence, $\mathrm{diss}^{\mathrm{pre}}$ sends $\h{\B}c_K$-sheaves to $\mathrm{Ind}(\mathrm{Ban}_K)$-sheaves, and conversely, if $\F\in \mathrm{Preshv}(X, \h{\B}c_K)$ such that $\mathrm{diss}^{\mathrm{pre}}(\F)$ is an $\mathrm{Ind}(\mathrm{Ban}_K)$-sheaf, then $\F$ is a $\h{\B}c_K$-sheaf. In particular, there is no ambiguity in the terminology `complete bornological sheaf', as we can identify $\mathrm{Shv}(X, \h{\B}c_K)$ with the full subcategory of $\mathrm{Shv}(X, \mathrm{Ind}(\mathrm{Ban}_K))$ consisting of sheaves $\F$ such that for each admissible open $U$, $\F(U)$ is contained in $\h{\B}c_K$ (i.e. is an essentially monomorphic system).
	
	We have also noted that the left heart $LH(\h{\B}c_K)$ is an elementary abelian, closed symmetric monoidal category which is equivalent to $LH(\mathrm{Ind}(\mathrm{Ban}_K))$. In particular, if $\R$ is a monoid in $\mathrm{Shv}(X, \mathrm{Ind}(\mathrm{Ban}_K))$, then
	\begin{equation*}
		\mathrm{D}(\R)\cong \mathrm{D}(\mathrm{Mod}_{\mathrm{Shv}(X, LH(\h{\B}c_K))}(\widetilde{L}(\R))).
	\end{equation*} 
	
	A monoid $\R\in \mathrm{Shv}(X, \mathrm{Ind}(\mathrm{Ban}_K))$ which is a complete bornological sheaf will be called a \textbf{sheaf of complete bornological $K$-algebras}. This terminology might appear circumspect, since we have not defined a symmetric monoidal structure on $\mathrm{Shv}(X, \h{\B}c_K)$, but rather use the one on $\mathrm{Shv}(X, \mathrm{Ind}(\mathrm{Ban}_K))$ instead -- it is once again justified by the fact that $\mathrm{Shv}(X, LH(\h{\B}c_K))$ and $\mathrm{Shv}(X, LH(\mathrm{Ind}(\mathrm{Ban}_K)))$ are equivalent even as closed symmetric monoidal categories, i.e. we could equivalently define sheaves of complete bornological $K$-algebras as those monoids in $\mathrm{Shv}(X, LH(\h{\B}c_K))$ which only take values in $\h{\B}c_K$.
	
	The terminology is further justified by the following Proposition.
	
	\begin{prop}
		\label{hBcextension}
		Let $X$ be a rigid analytic $K$-variety and let $\B$ be a basis which is locally a site (e.g. the collection of all affinoid subdomains of $X$). 
		
		If $\R\in \mathrm{Shv}(\B, \h{\B}c_K)$ is a monoid in $\mathrm{Preshv}(\B, \h{\B}c_K)$ (i.e. with respect to the presheaf tensor product $\h{\otimes}_K$), then $\R$ extends uniquely to a sheaf of complete bornological $K$-algebras $\R^{\mathrm{ext}}\in \mathrm{Shv}(X, \mathrm{Ind}(\mathrm{Ban}_K))$.
		
		There is an exact functor
		\begin{equation*}
			(-)^{a, \mathrm{ext}}: \mathrm{Mod}_{\mathrm{Preshv}(\B, \h{\B}c_K)}(\R)\to \mathrm{Mod}_{\mathrm{Shv}(X, \mathrm{Ind}(\mathrm{Ban}_K))}(\R^{\mathrm{ext}})
		\end{equation*}
		given by sending $\M$ to $((\mathrm{diss}^{\mathrm{pre}}(\M))^a)^{\mathrm{ext}}$.
		
		In particular, if $\F\in \mathrm{Shv}(X, \mathrm{Ind}(\mathrm{Ban}_K))$ is a complete bornological sheaf, then in order to give $\F$ an $\R^{\mathrm{ext}}$-module structure, it suffices to specify a presheaf $\R$-module structure on $\F|_\B\in \mathrm{Shv}(\B, \h{\B}c_K)$. 
	\end{prop}
	\begin{proof}
		The functor $\mathrm{diss}^{\mathrm{pre}}$ is lax symmetric monoidal (Proposition \ref{dissandL}.(iv)), the sheafification functor $(-)^a: \mathrm{Preshv}(\B, \mathrm{Ind}(\mathrm{Ban}_K))\to \mathrm{Shv}(\B, \mathrm{Ind}(\mathrm{Ban}_K))$ is strong symmetric monoidal by definition, and the functor $(-)^{\mathrm{ext}}$ from Lemma \ref{extfrombasis} is strong symmetric monoidal by Lemma \ref{extstrongmonoidal}. Each of the three functors is exact by Proposition \ref{dissandL}.(i), Lemma \ref{Presheaflimits}.(iii), and Lemma \ref{extfrombasis}.
		
		Hence everything follows directly from Proposition \ref{laxmodulechange} and Lemma \ref{basisandsheafification}.
	\end{proof}
	
	In practice, we will usually suppress notational decorations like $(-)^{\mathrm{ext}}$ and simply note that monoid and module structures in $\mathrm{Preshv}(\B, \h{\B}c_K)$ can be used to define monoid and module structures for $\mathrm{Ind}(\mathrm{Ban}_K)$-sheaves on $X$ if they take values in $\h{\B}c_K$.
	
	We also remark that if $\F\in \mathrm{Shv}(X, \mathrm{Ind}(\mathrm{Ban}_K))$ such that $\F|_{\B}\in \mathrm{Shv}(\B, \h{\B}c_K)$, then $\F\in \mathrm{Shv}(X, \h{\B}c_K)$ by Proposition \ref{dissandL}.(iii). 
	
	Moreover, we can regard $\F$ in this case also as a sheaf of vector spaces on $X$, since the forgetful functor $\h{\B}c_K\to \mathrm{Vect}_K$ commutes with kernels and products. Then by the uniqueness of extension, $\mathrm{forget}(\F)$ is the unique extension of $\mathrm{forget}(\F|_{\B})$ to a $\mathrm{Vect}_K$-sheaf on $X$. 
	
	\subsection{The sheaf $\w{\D}$ and coadmissible modules}
	Let $X$ be a rigid analytic $K$-variety. In this subsection, we describe the structure sheaf on $X$ as a sheaf of complete bornological algebras, before we do the same for the sheaves $\w{\mathscr{U}(\mathscr{L})}$ introduced in \cite{DcapOne} for any Lie algebroid on $X$. If $X$ is smooth and $\mathscr{L}$ is the tangent sheaf of $X$, this produces the sheaf $\w{\D}_X$.
	
	Let $X_w$ denote the collection of all affinoid subdomains of $X$, which is a basis for $X$ and locally a site.

	Note that for any $U$ in $X_w$, the sections $\mathcal{O}_X(U)$ of the structure sheaf carry a natural Banach algebra structure such that the restriction maps are continuous. Moreover, for any finite affinoid covering $\mathfrak{U}=(U_i)$ of $U$, the corresponding Cech complex $\check{\mathrm{C}}(\mathfrak{U}, \mathcal{O}_X)$ is a strictly exact complex of Banach spaces (strictness follows from exactness in combination with the Open Mapping Theorem). Applying Lemma \ref{borniexact}, we see that the assignment
	\begin{equation*}
		U\mapsto \mathcal{O}_X(U)^b
	\end{equation*}
	for each $U$ of $X_w$ defines an object of $\mathrm{Shv}(X_w, \h{\B} c_K)$ with vanishing higher Cech cohomology on each affinoid, endowed with a presheaf-monoid structure. Applying Proposition \ref{hBcextension} yields a sheaf of complete bornological $K$-algebras $\O_X\in \mathrm{Shv}(X, \mathrm{Ind}(\mathrm{Ban}_K))$.
	
	Let $A$ be an affinoid $K$-algebra. Recall that we have seen in subsection 4.3 that $\mathrm{diss}\circ (-)^b$ yields an exact, fully faithful functor
	\begin{equation*}
		\{\text{finitely generated $A$-modules}\}\to \mathrm{Mod}_{\h{\B}c_K}(A^b)\to \mathrm{Mod}_{\mathrm{Ind}(\mathrm{Ban}_K)}(\mathrm{diss}(A^b))
	\end{equation*}
	by endowing a finitely generated $A$-module with its canonical Banach structure.
	
	Repeating the same procedure as above for coherent $\mathcal{O}_X$-modules then yields an exact, fully faithful monoidal functor $\mathrm{diss}\circ (-)^b$ from the category of coherent $\mathcal{O}_X$-modules to $\mathrm{Mod}_{\mathrm{Shv}(X, \mathrm{Ind}(\mathrm{Ban}_K))}(\O_X)$. In the following, we will often cease to make a distinction between an abstract coherent module $\mathcal{M}$ and $\mathrm{diss}(\mathcal{M}^b)$, in the understanding that we work with complete bornological sheaves from now on.
	
	In order to introduce $\w{\D}_X$ in a similar vein, we recall from \cite{DcapOne} (for $K$ discretely valued) and \cite{Ardakov} (in the general case) the sheaf analogues of the completed enveloping algebras $\w{U_A(L)}$ mentioned in section 2. Let $\T_X$ denote the tangent sheaf of $X$.
	\begin{defn}
		A \textbf{Lie algebroid} on $X$ is a pair $(\mathscr{L}, \rho)$, where $\mathscr{L}$ is a locally free coherent $\O_X$-module which is also a sheaf of $K$-Lie algebras, and $\rho: \mathscr{L}\to \T_X$ is an $\O_X$-linear morphism of sheaves of $K$-Lie algebras satisfying
		\begin{equation*}
			[x, ay]=a[x, y]+\rho(x)(a)y
		\end{equation*}
		for all $x, y\in \mathscr{L}(U)$, $a\in\O_X(U)$ for $U\subseteq X$ any admissible open subset of $X$. 
	\end{defn}
	Note that whenever $U=\mathrm{Sp} A$ is an affinoid subdomain and $\mathscr{L}$ is a Lie algebroid on $X$, then $\mathscr{L}(U)$ is a smooth $(K, A)$-Lie--Rinehart algebra. It is also immediate that $\T_X$ is a Lie algebroid as soon as $X$ is smooth, with the anchor map $\rho$ being the identity map in that case.
	
	We now assume for simplicity that $X=\Sp A$ is affinoid. Let $\mathscr{L}$ be a Lie algebroid on $X$ and write $L=\mathscr{L}(X)$. 
	
	Let $\mathcal{A}$ be an admissible affine formal model for $A$ and let $\L\subseteq L$ be an $(R, \A)$-Lie lattice. Arguing locally if necessary, we can suppose that $L$ admits a free Lie lattice, so that we will assume that $\L$ is a finitely generated free $\A$-module. 
	
	While we will not recall the full definition here, we note that \cite[section 4.2]{Ardakov} yields a Grothendieck topology $X_n=X(\pi^n\mathcal{L})$ of $\pi^n\mathcal{L}$-\textbf{accessible subdomains} for $n\geq 0$ with the following properties:
	\begin{enumerate}[(i)]
		\item If $Y$ is in $X_n$, then it is in $X_m$ for $m\geq n$. Any covering in $X_n$ also defines a covering in $X_m$ for $m\geq n$.
		\item Let $Y$ be an affinoid subdomain of $X$. Then $Y$ is in $X_n$ for sufficiently large $n$, and any finite affinoid covering of $Y$ is a covering in $X_n$ for sufficiently large $n$.
		\item If $Y=\Sp B$ is an affinoid subdomain in $X_n$, then $B$ contains an affine formal model $\mathcal{B}$, containing the image $\mathcal{A}$ under restriction, such that $\B\otimes_\A \pi^n\L$ is an $(R, \B)$-Lie lattice in $B\otimes_A L=\T(Y)$ and $\mathcal{U}_n(\mathscr{L})(Y):=\h{U_{\mathcal{B}}(\mathcal{B}\otimes_\mathcal{A} \pi^n\mathcal{L})}_K$ defines a Noetherian Banach $K$-algebra, which is independent of the choice of $\mathcal{B}$.
		\item The assignment
		\begin{equation*}
			Y\mapsto \mathcal{U}_n(\mathscr{L})(Y)
		\end{equation*}
		defines a sheaf of $K$-algebras on $X_n$ which has vanishing higher Cech cohomology with respect to any $X_n$-covering (see \cite[Theorem 4.2.3]{Ardakov}).
	\end{enumerate}
	By the same arguments as above, this defines a monoid $\mathcal{U}_n(\mathscr{L})$ in $\mathrm{Shv}(X_n, \h{\B} c_K)$ which has vanishing higher Cech cohomology, and an exact, fully faithful functor 
	\begin{equation*}
		\mathrm{diss}\circ(-)^b: \{\text{coherent $\mathcal{U}_n(\mathscr{L})$-modules}\}\to \mathrm{Mod}_{\mathrm{Shv}(X_n, \mathrm{Ind}(\mathrm{Ban}_K))}(\mathcal{U}_n(\mathscr{L})).
	\end{equation*} 
	If $Y$ is any affinoid subdomain of $X$, we can set
	\begin{equation*}
		\wideparen{\mathscr{U}(\mathscr{L})}(Y)=\varprojlim \mathcal{U}_n(\mathscr{L})(Y)\cong \w{U_{\O_X(Y)}(\mathscr{L}(Y))},
	\end{equation*}
	which defines a complete bornological $K$-algebra for each $Y$. By \cite[Lemma 6.2, Proposition 6.2]{DcapOne}, this does not depend on the choices made. 
	
	If $\mathfrak{U}$ is a finite affinoid covering of $X$, then $\check{\mathrm{C}}^\bullet(\mathfrak{U}, 
	\wideparen{\mathscr{U}(\mathscr{L})})=\varprojlim \check{\mathrm{C}}^\bullet(\mathfrak{U}, \mathcal{U}_n(\mathscr{L}))$, so the Cech complex of $\wideparen{\mathscr{U}(\mathscr{L})}$ is strictly exact by Theorem \ref{MLBan}, and $\wideparen{\mathscr{U}(\mathscr{L})}$ is a monoid in $\mathrm{Shv}(X, \mathrm{Ind}(\mathrm{Ban}_K))$ by Lemma \ref{FSborno} and Proposition \ref{hBcextension}. 
	
	Now let $X$ be an arbitrary rigid analytic $K$-variety and let $\mathscr{L}$ be a Lie algebroid. Let $X(\mathscr{L})$ denote the collection of affinoid subdomains $U$ such that $\mathscr{L}(U)$ is a free $\O_X(U)$-module. In particular, if $\A\subseteq \O_X(U)$ is any admissible affine formal model, we can find a free $(R, \A)$-Lie lattice. Since $\mathscr{L}$ is locally free, $X(\mathscr{L})$ is a basis of $X$ which is locally a site -- in fact, if $U\in X(\mathscr{L})$, then so is any affinoid subdomain of $U$.
	
	We can thus apply Proposition \ref{hBcextension} to obtain a sheaf $\w{\mathscr{U}(\mathscr{L})}$ of complete bornological $K$-algebras on $X$.
	
	We also note that, as previously remarked, the constructions above recover the sheaf $\wideparen{\mathscr{U}(\mathscr{L})}$ from \cite{DcapOne} and \cite{Ardakov} in the sense that for any admissible (not necessarily affinoid) $Y$, the underlying vector space of $\wideparen{\mathscr{U}(\mathscr{L})}(Y)$ is the corresponding space of sections in \cite{DcapOne}.
	
	We now discuss modules over $\w{\mathscr{U}(\mathscr{L})}$.
	\begin{defn}[{see \cite[section 8]{DcapOne}}]
		Let $X$ be a rigid analytic $K$-variety. Let $\mathscr{L}$ be a Lie algebroid on $X$, and write $\mathscr{U}=\w{\mathscr{U}(\mathscr{L})}$. An (abstract) $\mathscr{U}$-module $\M$ is called \textbf{coadmissible} if there exists an $X(\mathscr{L})$-covering $(U_i)$ of $X$ by affinoid subdomains such that
		\begin{enumerate}[(i)]
			\item $\M(U_i)$ is a coadmissible module over the Fr\'echet--Stein algebra $\mathscr{U}(U_i)$.
			\item for any affinoid subdomain $Y\subseteq U_i$, the natural morphism
			\begin{equation*}
				\mathscr{U}(Y)\w{\otimes}_{\mathscr{U}(U_i)}\M(U_i)\to \M(Y)
			\end{equation*}
			is an isomorphism.
		\end{enumerate}
	\end{defn}
	Analogously to Kiehl's Theorem for coherent $\O$-modules, \cite[Theorem 8.4]{DcapOne} shows that if a module $\M$ is coadmissible with respect to one  $X(\mathscr{L})$-covering, then it is coadmissible with respect to any such covering. While \cite{DcapOne} assumes $K$ to be discretely valued, \cite[Theorem 4.2.11]{Ardakov} ensures that the same argument works for general $K$.
	
	Suppose now once more that $X$ is affinoid, and $\mathscr{L}$ is a Lie algebroid such that $\mathscr{L}(X)$ is a free $\O(X)$-module. If $\M$ is a coadmissible $\wideparen{\mathcal{U}(\mathscr{L})}$-module, then 
	\begin{equation*}
		\M_n:=\mathcal{U}_n(\mathscr{L})\widetilde{\otimes}_{\wideparen{\mathscr{U}(\mathscr{L})}|_{X_n}}\M|_{X_n}
	\end{equation*}
	is a coherent $\mathcal{U}_n(\mathscr{L})$-module with vanishing higher Cech cohomology by Corollary \ref{Anacyclic} (we will often omit the restriction symbols in a slight abuse of notation). Applying Theorem \ref{MLBan} and Proposition \ref{hBcextension} again, this shows that equipping $\M(Y)$ with its canonical Fr\'echet structure for each affinoid $Y$ gives $\M$ the structure of a complete bornological $\wideparen{\mathscr{U}(\mathscr{L})}$-module with vanishing higher Cech cohomology.
	
	Finally invoking Corollary \ref{coadIBembedding} and glueing, we have thus proved the following:
	\begin{thm}
		\label{Dcapembedding}
		Let $X$ be a rigid analytic $K$-variety and let $\mathscr{L}$ be a Lie algebroid on $X$. Then $\mathcal{O}_X$ and $\wideparen{\mathscr{U}(\mathscr{L})}$ are sheaves of complete bornological $K$-algebras, and $(-)^b$ induces exact and fully faithful functors
		\begin{align*}
			\{\text{coherent $\mathcal{O}_X$-modules}\}\to \mathrm{Mod}_{\mathrm{Shv}(X, LH(\h{\B}c_K))}(\O_X)\\
			\{\text{coadmissible $\wideparen{\mathscr{U}(\mathscr{L})}$-modules}\}\to \mathrm{Mod}_{\mathrm{Shv}(X, LH(\h{\B}c_K))}(\w{\mathscr{U}(\mathscr{L})}),
		\end{align*} 
		taking values in complete bornological sheaves.
	\end{thm}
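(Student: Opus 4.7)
The plan is to essentially assemble the pieces that have been developed over the course of section 3 and the preceding pages of section 4, proceeding affinoid-locally and then extending to $X$ via Lemma \ref{comparison}.

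For the structure sheaf: on an affinoid $U \in X_w$, $\O_X(U)$ is a Banach $K$-algebra; Lemma \ref{Bantensor} ensures the multiplication map passes to a bounded multiplication on $\O_X(U)^b\h{\otimes}\O_X(U)^b$, so $U\mapsto \O_X(U)^b$ is a presheaf of complete bornological algebras on $X_w$. Strict exactness of Cech complexes of coherent $\O_X$-modules over an affinoid (classical, via Tate acyclicity and the Open Mapping Theorem) combined with Lemma \ref{borniexact} (which says $(-)^b$ is exact on strict morphisms of semi-normed spaces) gives the sheaf condition on $X_w$, and Lemma \ref{comparison} produces the sheaf on $X$. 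For a coherent $\O_X$-module $\M$, the same recipe turns $\M(U)^b$ (with its canonical Banach structure) into a bornological sheaf of $\O_X^b$-modules; exactness and fully faithfulness of $(-)^b$ on this subcategory follow section-wise from the Banach case discussed in subsection 3.2, and glue.

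For the sheaf $\w{\U(\mathscr{L})}$: work affinoid-locally and pick an admissible open $U = \Sp A$ on which $\mathscr{L}(U)$ admits a smooth Lie lattice $\L$. Write $\U_n = \U_n(\mathscr{L})$ on the Grothendieck topology $X_n$. Each $\U_n$ is by construction a sheaf of Noetherian Banach $K$-algebras on $X_n$ with vanishing higher Cech cohomology, so by Lemma \ref{Bantensor} again it is a monoid in $\mathrm{Shv}(X_n, \h\B c_K)$. Now take the bornological inverse limit $\w{\U(\mathscr{L})} = \varprojlim \U_n$ on $X_w$ (which, since $(-)^b$ commutes with limits and since each affinoid refinement is eventually $\pi^n\L$-accessible, agrees section-wise with the abstract sheaf of \cite{DcapOne}). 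Strictness of its Cech complex on any finite affinoid covering $\mathfrak{U}$ follows from Corollary \ref{MLcomplexes} applied to the inverse system of Cech complexes $\check C^\bullet(\mathfrak{U}, \U_n)$, whose terms and cohomologies form pre-nuclear systems with dense transition maps by Lemma \ref{nuclearimpliespre}. The resulting sheaf on $X_w$ extends to the monoid $\w{\U(\mathscr{L})}$ on $X$ via Lemma \ref{comparison}.

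For coadmissible $\w{\U(\mathscr{L})}$-modules $\M$: working again locally with a smooth Lie lattice, Corollary \ref{Anacyclic} identifies $\M_n = \U_n \h{\otimes}_{\w{\U(\mathscr{L})}|_{X_n}} \M|_{X_n}$ sectionwise with the Banach $\U_n$-coherent module described in \cite[\S 8]{DcapOne}, which is a sheaf on $X_n$ with vanishing higher Cech cohomology. Another application of Corollary \ref{MLcomplexes} to the inverse system of Cech complexes of the $\M_n$ shows that assigning to each affinoid $Y$ the canonical Fr\'echet topology $\M(Y) \cong \varprojlim \M_n(Y)$ and then applying $(-)^b$ yields a complete bornological $\w{\U(\mathscr{L})}$-module on $X_w$ with vanishing higher Cech cohomology, which extends uniquely via Lemma \ref{comparison}. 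Fully faithfulness and exactness of $(-)^b$ on coadmissible modules are checked on sections, where they reduce to Corollary \ref{embedding}; since the local construction of $(-)^b$ is functorial and independent of all choices (smooth Lie lattice, Grothendieck topology $X_n$, affinoid covering), the local embeddings glue. The main obstacle throughout is ensuring that bornological strictness survives the inverse-limit procedure, and this is precisely what the pre-nuclearity results of subsection 3.5 (Theorem \ref{MLBan} and Corollary \ref{MLcomplexes}) are designed to provide.
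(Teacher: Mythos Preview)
Your proposal is correct and follows essentially the same route as the paper: reduce to affinoids, build $\mathcal{O}_X^b$ and $\wideparen{\mathscr{U}(\mathscr{L})}^b$ as sheaves on $X_w$ via the Banach/pre-nuclear machinery of section 3, verify the sheaf condition by controlling Cech complexes under inverse limits, and extend via Lemma \ref{comparison}, with exactness and fully faithfulness coming from Corollary \ref{embedding}. The only cosmetic difference is that the paper cites Theorem \ref{MLBan} directly for the strict exactness of the inverse-limit Cech complex, whereas you invoke its consequence Corollary \ref{MLcomplexes}; your citation is arguably the more precise one here.
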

	
	\begin{rmks}
		\leavevmode
		\begin{enumerate}[(i)]
			\item Let $X$ be affinoid such that $\mathscr{L}(X)$ is free. As $(-)^b$ and $\mathrm{diss}$ commute with limits, we have $\wideparen{\mathscr{U}(\mathscr{L})}(X)\cong \varprojlim \mathcal{U}_n(\mathscr{L})(X)$ in $\mathrm{Ind}(\mathrm{Ban}_K)$, as well as $\M(X)\cong \varprojlim \M_n(X)$ for any coadmissible $\wideparen{\mathscr{U}(\mathscr{L})}$-module with 
			\begin{equation*}
				\M_n:=\mathcal{U}_n(\mathscr{L})\widetilde{\otimes}_{\wideparen{\mathscr{U}(\mathscr{L})}}\M.
			\end{equation*} 
			\item Let $\M$ be a coadmissible $\wideparen{\mathscr{U}(\mathscr{L})}$-module on some affinoid $X$, and let $Y$ be an affinoid subdomain. Assume that $\mathscr{L}(X)$ is free. By Proposition \ref{coadandtor} and its analogue in $\mathrm{Ind}(\mathrm{Ban}_K)$, we have an isomorphism
			\begin{equation*}
				\M(Y)\cong \wideparen{\mathscr{U}(\mathscr{L})}(Y)\widetilde{\otimes}_{\wideparen{\mathscr{U}(\mathscr{L})}(X)}\M(X).
			\end{equation*}
			In other words, the description of coadmissible modules as localisations of coadmissible $\wideparen{\mathscr{U}(\mathscr{L})}(X)$-modules in \cite{DcapOne} could also be carried out in the complete bornological resp. $\mathrm{Ind}(\mathrm{Ban}_K)$-setting. We content ourselves with taking the results in \cite{DcapOne} as given and applying $\mathrm{diss}\circ(-)^b$, but the inclined reader may check that \cite{DcapOne} could actually be rephrased entirely in bornological language. 
		\end{enumerate}
	\end{rmks}
	
	\begin{lem}
		\label{flatoverO}
		The sheaf $\w{\mathscr{U}(\mathscr{L})}$ is a strongly flat object in $\mathrm{Mod}_{\mathrm{Shv}(X, \mathrm{Ind}(\mathrm{Ban}_K))}(\O_X)$.
	\end{lem}
	\begin{proof}
		As a left $\O_X$-module, $\w{\mathscr{U}(\mathscr{L})}$ is locally of the form 
		\begin{align*}
			\varprojlim(\O_X\widetilde{\otimes}_K K\langle \pi^nx_1, \hdots, \pi^nx_r\rangle)&\cong \O_X\widetilde{\otimes}_K (\varprojlim_n K\langle \pi^nx_1, \hdots \pi^nx_r\rangle)\\
			&\cong \O_X\widetilde{\otimes}_K K\{x_1\}\widetilde{\otimes}_K \hdots \widetilde{\otimes}_K K\{x_r\},
		\end{align*}
		using Corollary \ref{commutewinvIB}. We can thus apply Corollary \ref{stronglyflat}.
	\end{proof}
	
	One difficulty of the theory lies in the fact that $\mathcal{U}_n(\mathscr{L})$ is only defined as a monoid on $X_n$. We remark however that if $X=\Sp A$ is an affinoid and $\mathscr{L}$ is free as an $\O_X$-module, then the right $\w{\mathscr{U}(\mathscr{L})}$-module $\F_n:=\mathcal{U}_n(\mathscr{L})(X)\widetilde{\otimes}_A \O_X$ satisfies $\F_n|_{X_n}\cong \mathcal{U}_n(\mathscr{L})$ (by \cite[Proposition 2.3]{DcapOne}). In particular, if $\M$ is a coadmissible $\w{\mathscr{U}(\mathscr{L})}$-module, then $\F_n\widetilde{\otimes}_{\w{\mathscr{U}(\mathscr{L})}}\M$ is an object of $\mathrm{Shv}(X, LH(\h{\B}c_K))$ such that
	\begin{equation*}
		(\F_n\widetilde{\otimes}_{\w{\mathscr{U}(\mathscr{L})}}\M)|_{X_n}\cong \mathcal{U}_n(\mathscr{L})\widetilde{\otimes}_{\w{\mathscr{U}(\mathscr{L})}|_{X_n}}\M|_{X_n}.
	\end{equation*} 
	Furthermore, if we are given for each $n$ a $\mathcal{U}_n(\mathscr{L})$-module $\mathcal{S}_n$ together with compatible $\mathcal{U}_{n+1}(\mathscr{L})|_{X_n}$-module morphisms $\mathcal{S}_{n+1}|_{X_n}\to \mathcal{S}_n$, we can still construct a $\w{\mathscr{U}(\mathscr{L})}$-module $\varprojlim \mathcal{S}_n$ by associating to an affinoid subdomain $Y$ the module $\varprojlim (\mathcal{S}_n(Y))$, which makes sense by the properties (i) and (ii) of $X_n$.
	
	For example, if $\M$ is a coadmissible $\w{\mathscr{U}(\mathscr{L})}$-module and we set 
	\begin{equation*}
		\M_n:=\mathcal{U}_n(\mathscr{L})\widetilde{\otimes}_{\w{\mathscr{U}(\mathscr{L})}}\M,
	\end{equation*}
	then $\varprojlim \M_n$ is a $\w{\mathscr{U}(\mathscr{L})}$-module which is naturally isomorphic to $\varprojlim (\F_n\widetilde{\otimes}_{\w{\mathscr{U}(\mathscr{L})}}\M)$ as an object of $\mathrm{Shv}(X, LH(\h{\B}c_K))$.
	\begin{lem}
		Let $X$ be an affinoid $K$-variety, let $\mathscr{L}$ be a free Lie algebroid on $X$, and let $\M$ be a complete bornological $\w{\mathscr{U}(\mathscr{L})}$-module. Then the following are equivalent:
		\begin{enumerate}[(i)]
			\item $\M$ is coadmissible.
			\item $\M_n:=\mathcal{U}_n(\mathscr{L})\widetilde{\otimes}_{\w{\mathscr{U}(\mathscr{L})}}\M$ is a coherent $\mathcal{U}_n(\mathscr{L})$-module for each $n$, and 
			\begin{equation*}
				\M\cong \varprojlim \M_n
			\end{equation*}
			via the natural morphism.
		\end{enumerate}
	\end{lem}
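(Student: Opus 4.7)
The plan is to prove both implications, reducing at the outset to the situation where $\mathscr{L}(X)$ admits a smooth Lie lattice so that the results of section 3 apply directly; this is harmless because both conditions are local on $X$ and such affinoids form a basis. Throughout I write $M := \M(X)$ and $M_n := \M_n(X)$, and I use the auxiliary sheaves $\F_n = \mathcal{U}_n(\mathscr{L})(X)\h{\otimes}_A \O_X$ from the discussion after Lemma \ref{flatoverO} so that $\mathcal{U}_n(\mathscr{L})\h{\otimes}_{\w{\mathcal{U}(\mathscr{L})}}\M$ makes sense globally on $X$.

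For (i) $\Rightarrow$ (ii), suppose $\M$ is coadmissible. Then $M$ is a coadmissible $\w{\mathcal{U}(\mathscr{L})}(X)$-module, and by Corollary \ref{Anacyclic} one has $\mathcal{U}_n(\mathscr{L})(X)^b\h{\otimes}_{\w{\mathcal{U}(\mathscr{L})}(X)^b} M^b \cong M_n^b$, a finitely generated $\mathcal{U}_n(\mathscr{L})(X)$-module. Combining this with the localization formula $\M(Y)^b \cong \w{\mathcal{U}(\mathscr{L})}(Y)\h{\otimes}_{\w{\mathcal{U}(\mathscr{L})}(X)}M^b$ (first remark after Theorem \ref{Dcapembedding}) and Proposition \ref{flatbasechange}, one gets sectionwise isomorphisms $(\mathcal{U}_n(\mathscr{L})\h{\otimes}_{\w{\mathcal{U}(\mathscr{L})}}\M)(Y) \cong \M_n(Y)^b$ for $Y \in X_n$, so $\M_n$ is coherent. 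Finally, $\M\cong\varprojlim\M_n$ is verified on sections using that $\M(Y)\cong\varprojlim\M_n(Y)$ by Theorem \ref{MLBan} applied to the pre-nuclear system $(M_n(Y)^b)$.

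For (ii) $\Rightarrow$ (i), assume $\M_n$ is a coherent $\mathcal{U}_n(\mathscr{L})$-module and $\M \cong \varprojlim\M_n$. Taking global sections, $M_n$ is a finitely generated $\mathcal{U}_n(\mathscr{L})(X)$-module and $M\cong\varprojlim M_n$. I would first verify condition (i) of the Fr\'echet--Stein definition: the natural morphism $\mathcal{U}_n(\mathscr{L})(X)\otimes_{\mathcal{U}_{n+1}(\mathscr{L})(X)}M_{n+1}\to M_n$ is an isomorphism. By associativity of $\h{\otimes}$ applied to $\M_n = \mathcal{U}_n(\mathscr{L})\h{\otimes}_{\w{\mathcal{U}(\mathscr{L})}}\M$, there is a sheaf-level isomorphism $\mathcal{U}_n(\mathscr{L})\h{\otimes}_{\mathcal{U}_{n+1}(\mathscr{L})|_{X_n}}\M_{n+1}|_{X_n}\cong \M_n$; taking sections over $X$ and using that both sides are coherent (so their completed tensor products on sections agree with the abstract ones, by the lemma preceding Proposition \ref{coadandtor}) gives the required isomorphism. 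This shows $M$ is a coadmissible $\w{\mathcal{U}(\mathscr{L})}(X)$-module. Then, for any affinoid $Y\subset X$, picking $n$ large enough that $Y\in X_n$, coherence of $\M_n$ gives $\M_n(Y)\cong \mathcal{U}_n(\mathscr{L})(Y)\otimes_{\mathcal{U}_n(\mathscr{L})(X)}M_n$, and passing to $\varprojlim$ together with $\M(Y)\cong\varprojlim\M_n(Y)$ yields $\M(Y)\cong\w{\mathcal{U}(\mathscr{L})}(Y)\w{\otimes}_{\w{\mathcal{U}(\mathscr{L})}(X)}M$, verifying the localization axiom.

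The main obstacle will be the bookkeeping around (a) the passage between the sheaf-level completed tensor product $\h{\otimes}_{\w{\mathcal{U}(\mathscr{L})}}$ (which is sensitive to bornological strictness) and the abstract/Fr\'echet-Stein tensor product on sections, and (b) the fact that $\mathcal{U}_n(\mathscr{L})$ a priori lives only on the refined topology $X_n$, which forces one to work with the extended sheaves $\F_n$ and to check compatibility of the two constructions. Both issues are resolved by invoking Corollary \ref{Anacyclic}, Proposition \ref{coadandtor}, and the strict acyclicity results of subsection 3.5, which guarantee that completed tensor products of coherent Banach modules, and inverse limits of pre-nuclear systems, behave as expected on sections.
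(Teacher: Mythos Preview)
Your proposal is correct and follows essentially the same approach as the paper: both directions are verified sectionwise, using Corollary~\ref{Anacyclic} to identify $M_n$ in the forward direction and associativity of the completed tensor product to obtain the transition isomorphisms $M_n\cong\mathcal{U}_n(\mathscr{L})(X)\otimes_{\mathcal{U}_{n+1}(\mathscr{L})(X)}M_{n+1}$ in the converse, before checking the localization axiom via $\M(Y)\cong\varprojlim\M_n(Y)$. Your invocation of the $\F_n$ sheaves and the explicit naming of Proposition~\ref{flatbasechange} and Theorem~\ref{MLBan} are slightly more detailed than the paper's treatment, but the underlying argument is the same.
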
 
	\begin{proof}
		Suppose that $\M$ is coadmissible. Then by Corollary \ref{Anacyclic}, 
		\begin{equation*}
			M_n:=\mathcal{U}_n(\mathscr{L})(X)\widetilde{\otimes}_{\w{\mathscr{U}(\mathscr{L})}(X)}\M(X)
		\end{equation*}
		is a finitely generated $\mathcal{U}_n(\mathscr{L})(X)$-module equipped with its canonical Banach structure.
		
		As the same holds for any affinoid subdomain $Y$ of $X$, we have
		\begin{align*}
			\mathcal{U}_n(\mathscr{L})(Y)\widetilde{\otimes}_{\w{\mathscr{U}(\mathscr{L})}(Y)}\M(Y)&\cong \mathcal{U}_n(\mathscr{L})(Y)\widetilde{\otimes}_{\w{\mathscr{U}(\mathscr{L})}(Y)}\left(\w{\mathscr{U}(\mathscr{L})}(Y)\widetilde{\otimes}_{\w{\mathscr{U}(\mathscr{L})}(X)}\M(X)\right)\\
			&\cong \mathcal{U}_n(\mathscr{L})(Y)\widetilde{\otimes}_{\w{\mathscr{U}(\mathscr{L})}(X)}\M(X)\\
			&\cong \mathcal{U}_n(\mathscr{L})(Y)\widetilde{\otimes}_{\mathcal{U}_n(\mathscr{L})(X)}(\mathcal{U}_n(\mathscr{L})(X)\widetilde{\otimes}_{\w{\mathscr{U}(\mathscr{L})}(X)} \M(X))\\
			&\cong\mathcal{U}_n(\mathscr{L})(Y)\otimes_{\mathcal{U}_n(\mathscr{L})(X)}M_n,
		\end{align*}
		whenever $U$ is $\pi^n\L$-accessible by Lemma \ref{tensorfgBanach}, and thus $\M_n$ is coherent, being the localization of $M_n$. In particular, $\M_n(X)=M_n$. 
		
		Since $\M(X)$ is a coadmissible $\w{\mathscr{U}(\mathscr{L})}(X)$-module, we have 
		\begin{equation*}
			\M(X)\cong \varprojlim M_n=\varprojlim \left((\mathcal{U}_n(\mathscr{L})\widetilde{\otimes}_{\w{\mathscr{U}(\mathscr{L})}}\M)(X)\right)
		\end{equation*} 
		via the natural morphism. As this is also true for an arbitrary affinoid subdomain $Y$, we have $\M\cong \varprojlim (\mathcal{U}_n(\mathscr{L})\widetilde{\otimes}_{\w{\mathscr{U}(\mathscr{L})}}\M)$.
		
		Conversely, suppose that (ii) holds, and let $M_n:=(\mathcal{U}_n(\mathscr{L})\widetilde{\otimes}_{\w{\mathscr{U}(\mathscr{L})}}\M)(X)$. Note that the $M_n$ satisfy $\M(X)\cong \varprojlim M_n$. By definition, $M_n$ is a finitely generated $\mathcal{U}_n(\mathscr{L})(X)$-module, equipped with its canonical Banach structure, and 
		\begin{equation*}
			M_n\cong\mathcal{U}_n(\mathscr{L})(X)\otimes_{\mathcal{U}_{n+1}(\mathscr{L})(X)}M_{n+1}.
		\end{equation*}
		Hence, $\M(X)$ is a coadmissible $\w{\mathscr{U}(\mathscr{L})}(X)$-module.
		
		If $Y$ is an affinoid subdomain of $X$, the isomorphism 
		\begin{align*}
			\M(Y)&\cong \varprojlim (\mathcal{U}_n(\mathscr{L})(Y)\otimes_{\mathcal{U}_n(\mathscr{L})(X)}M_n)\\
			& \cong\w{\mathscr{U}(\mathscr{L})}(Y)\w{\otimes}_{\w{\mathscr{U}(\mathscr{L})}(X)} \M(X)
		\end{align*} 
		allows us to conclude that $\M$ is a coadmissible $\w{\mathscr{U}(\mathscr{L})}$-module.
	\end{proof}
	The above lemma will be our point of departure for the discussion of $\C$-complexes in section 8.
	\begin{lem}
		\label{Mndefinescoad}
		Let $X$ be affinoid and let $\mathscr{L}$ be a free Lie algebroid on $X$ (with notation as before). For each $n$, let $\M_n$ be a coherent $\mathcal{U}_n(\mathscr{L})$-module with $\mathcal{U}_{n+1}(\mathscr{L})|_{X_n}$-module morphisms $\M_{n+1}|_{X_n}\to \M_n$ inducing isomorphisms
		\begin{equation*}
			\mathcal{U}_n(\mathscr{L})\widetilde{\otimes}_{\mathcal{U}_{n+1}(\mathscr{L})|_{X_n}}\M_{n+1}|_{X_n}\cong \M_n
		\end{equation*} 
		for each $n$. Then $\M:=\varprojlim \M_n$ is a coadmissible $\w{\mathscr{U}(\mathscr{L})}$-module, and
		\begin{equation*}
			\mathcal{U}_n(\mathscr{L})\widetilde{\otimes}_{\w{\mathscr{U}(\mathscr{L})}}\M\cong \M_n
		\end{equation*}
		for each $n$.
	\end{lem}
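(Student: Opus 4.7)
The strategy is to verify coadmissibility section by section on a basis of sufficiently small affinoid subdomains, and then to read off the identification of $\mathcal{U}_n(\mathscr{L})\h{\otimes}_{\w{\mathcal{U}(\mathscr{L})}}\M$ with $\M_n$ by invoking Corollary \ref{Anacyclic}.

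First I would fix an affinoid subdomain $Y\subseteq X$ and choose $N$ large enough that $Y\in X_m$ for all $m\geq N$ (such $N$ exists by property (ii) of the accessible topologies). For each $m\geq N$ the module $\M_m(Y)$ is finitely generated over the Noetherian Banach algebra $\mathcal{U}_m(\mathscr{L})(Y)$ with its canonical Banach structure. The given isomorphisms
\begin{equation*}
\mathcal{U}_n(\mathscr{L})\h{\otimes}_{\mathcal{U}_{n+1}(\mathscr{L})|_{X_n}}\M_{n+1}|_{X_n}\cong \M_n
\end{equation*}
evaluate, on such a $Y$, to $\mathcal{U}_m(\mathscr{L})(Y)\otimes_{\mathcal{U}_{m+1}(\mathscr{L})(Y)}\M_{m+1}(Y)\cong \M_m(Y)$ (using that the completed tensor product agrees with the algebraic one for finitely generated modules over Noetherian Banach algebras). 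Thus $(\M_m(Y))_{m\geq N}$ is a coadmissible system for the Fr\'echet--Stein algebra $\w{\mathcal{U}(\mathscr{L})}(Y)$, and $\M(Y)\cong \varprojlim_m \M_m(Y)$ is a coadmissible $\w{\mathcal{U}(\mathscr{L})}(Y)$-module (with its canonical Fr\'echet structure).

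Next I would check the compatibility with restriction needed for coadmissibility of the sheaf $\M$. Given affinoid subdomains $Y'\subseteq Y$, both coadmissible over their respective completed enveloping algebras, Proposition \ref{coadandtor} identifies
\begin{equation*}
\w{\mathcal{U}(\mathscr{L})}(Y')\w{\otimes}_{\w{\mathcal{U}(\mathscr{L})}(Y)}\M(Y)\cong \varprojlim_m \mathcal{U}_m(\mathscr{L})(Y')\otimes_{\mathcal{U}_m(\mathscr{L})(Y)}\M_m(Y),
\end{equation*}
and by coherence of each $\M_m$ as a $\mathcal{U}_m(\mathscr{L})$-module, the $m$th term on the right is canonically $\M_m(Y')$. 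Taking the inverse limit yields $\M(Y')$, so the coadmissibility compatibility condition holds. Together with the previous step, this shows that $\M$ is a coadmissible $\w{\mathcal{U}(\mathscr{L})}$-module.

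For the remaining identification, I would work on $X_n$ and check sectionwise on affinoids $Y\in X_n$. Applying Corollary \ref{Anacyclic} to the coadmissible module $\M(Y)$ with its coadmissible system $(\M_m(Y))_{m\geq N}$, we obtain
\begin{equation*}
\mathcal{U}_n(\mathscr{L})(Y)\h{\otimes}_{\w{\mathcal{U}(\mathscr{L})}(Y)}\M(Y)\cong \M_n(Y),
\end{equation*}
which is exactly the claim. Sheafifying and invoking Lemma \ref{comparison} gives the asserted isomorphism of sheaves on $X_n$.

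The main technical obstacle is really a bookkeeping one: $\mathcal{U}_n(\mathscr{L})$ lives naturally only on the restricted site $X_n$, so one must consistently work on an affinoid basis of $\pi^n\L$-accessible subdomains and use Lemma \ref{comparison} to descend the results back to sheaves. Aside from this, each step reduces to results already established (Corollary \ref{Anacyclic}, Proposition \ref{coadandtor}, the pre-nuclear Mittag-Leffler results of subsection 3.5), so no new estimates are needed.
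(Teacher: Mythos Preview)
Your proposal is correct and follows essentially the same approach as the paper: verify coadmissibility of $\M(Y)$ sectionwise using the given compatible system, invoke Proposition~\ref{coadandtor} for the restriction compatibility, and conclude the final isomorphism from Corollary~\ref{Anacyclic}. The only organizational difference is that the paper works with $X$ as the base affinoid (where $\M_n(X)$ is defined for all $n$) and checks compatibility for arbitrary $Y\subseteq X$ in one step, whereas you first establish coadmissibility of $\M(Y)$ for each $Y$ and then separately verify the restriction condition for $Y'\subseteq Y$; both routes are valid and rely on the same cited results.
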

	\begin{proof}
		By definition, $\M(X)$ is coadmissible, and for any admissible open affinoid subdomain $Y$,
		\begin{equation*}
			\M(Y)\cong \varprojlim (\mathcal{U}_n(\mathscr{L})(Y)\widetilde{\otimes}_{\mathcal{U}_n(\mathscr{L})(X)}\M_n(X))\cong \w{\mathscr{U}(\mathscr{L})}(Y)\w{\otimes}_{\w{\mathscr{U}(\mathscr{L})}(X)}\M(X)
		\end{equation*}
		by Proposition \ref{coadandtor}. Hence $\M$ is coadmissible, and the last isomorphism follows from Corollary \ref{Anacyclic}.
	\end{proof}
	We will from now on concentrate on the case when $X$ is smooth, $\mathscr{L}=\T_X$, and write $\w{\D}_X:=\w{\mathscr{U}(\T_X)}$. We write $\B=X(\T_X)$ for the basis of $X$ consisting of affinoid subdomains with free tangent sheaf.
	
	When $X$ is itself affinoid with a free tangent sheaf and suitable lattices have been chosen, we write $\D_{X_n}=\mathcal{U}_n(\T_X)$, or even $\D_n$ if the space is understood. The category of coadmissible $\w{\D}_X$-modules is denoted by $\C_X$.

	\subsection{Side-changing and the Spencer resolution}
	Let $X$ be a smooth rigid analytic $K$-variety. Consider the sheaf $\D_X$ of finite-order differential operators, which is given by
	\begin{equation*}
		\D_X(Y)=U_{\O(Y)}(\mathcal{T}(Y))
	\end{equation*}
	for any affinoid subdomain $Y$ of $X$.
	
	Note that we have seen in Corollary \ref{capcompletion} that for any $Y\in \B$, $\w{\D}_X(Y)$ is the bornological completion of $\D_X(Y)$, where we endow $\D_X(Y)$ with the bornology induced by the injection $\D_X(Y)\to \w{\D}_X(Y)$.
	
	Since $\D_X$ is a sheaf of $K$-vector spaces strictly embedding into $\w{\D}_X$ and $\w{\D}_X$ is a $\h{\B}c_K$-sheaf (a fortiori, a $\B c_K$-sheaf), we know that $\D_X$ satisfies the sheaf condition for ${\B}c_K$-sheaves on $\B$. 
	
	We remark that there is a natural completion functor
	\begin{equation*}
		\widehat{(-)}: \mathrm{Shv}(\B, \B c_K)\to \mathrm{Shv}(\B, \mathrm{Ind}(\mathrm{Ban}_K))
	\end{equation*}
	given by applying $\mathrm{diss}\circ \widehat{(-)}$ sectionwise and then sheafifying. Note that there is no straightforward way to define a completion functor which always takes values in $\h{\B}c_K$-sheaves, since we have not defined sheafification in this setting.
	
	Quite often, we will however be in a situation where the sheafification is not necessary, as the sectionwise operation already yields a sheaf. For instance, the above and Corollary \ref{capcompletion} then make it clear that $\w{\D}_X$ is the completion of $\D_X$. We will often exploit this when we describe $\w{\D}_X$-module structures.
	
	Locally, we can give an explicit description of $\D_X$ and $\w{\D}_X$. Since $X$ is smooth, there exists an admissible covering $(X_i)$ of $X$ by affinoid subdomains such that $X_i=\mathrm{Sp} A_i$ is \'etale over some polydisk, of dimension $m$, say (see \cite[Proposition 4.15]{ABB}). In particular, we have $x_1, \hdots, x_m\in A_i$ and an $A_i$-basis $\partial_1, \hdots, \partial_m\in \T(X_i)$ such that
	\begin{equation*}
		[\partial_i, \partial_j]=0, \ \partial_i(x_j)=\delta_{ij}
	\end{equation*} 
	for all $i, j$. We call this a \textbf{local coordinate system} on $X_i$.
	
	In this case,
	\begin{equation*}
		\D_X(X_i)=\left\{\sum_{\alpha\in \mathbb{N}^m} f_\alpha \partial^\alpha: \ f_\alpha\in A_i, f_\alpha=0 \ \text{for all but finitely many $\alpha$}\right\} 
	\end{equation*}
	and
	\begin{equation*}
		\w{\D}_X(X_i)=\left\{\sum f_\alpha \partial^\alpha: |\pi^{-n|\alpha|}f_\alpha|\to 0 \ \text{as} \ |\alpha|\to \infty \ \forall n\right\}.
	\end{equation*}
	In what follows, we will often define a bounded $\D_X$-module structure explicitly and then use completion to obtain a $\w{\D}_X$-module structure, by the following lemma.
	\begin{lem}
		\label{completeDmods}
		There is an exact functor
		\begin{equation*}
			\widehat{(-)}: \mathrm{Mod}_{\mathrm{Preshv}(\B, \B c_K)}(\D_X)\to \mathrm{Mod}_{\mathrm{Shv}(X, \mathrm{Ind}(\mathrm{Ban}_K))}(\w{\D}_X)
		\end{equation*}
		given by completing a presheaf $\M$ sectionwise and then applying the functor $(-)^{a, \mathrm{ext}}$ from Proposition \ref{hBcextension}.
		
		In particular, if $\F\in \mathrm{Shv}(X, \mathrm{Ind}(\mathrm{Ban}_K))$ is a complete bornological sheaf, then in order to give $\F$ a $\w{\D}_X$-module structure, it suffices to specify a presheaf $\D_X$-module structure on $\F|_\B\in \mathrm{Shv}(\B, \B c_K)$.
	\end{lem}
	\begin{proof}
		The completion functor $\widehat{(-)}: \B c_K\to \h{\B}c_K$ is strong symmetric monoidal and exact and sends $\D_X(Y)$ to $\w{\D}_X(Y)$ for any $Y\in \B$. The result thus follows from Proposition \ref{hBcextension}.
	\end{proof}

	If $\M\in \mathrm{Shv}(X, \h{\B}c_K)$, let $\mathcal{E}nd_{\mathrm{Shv}(X, \h{\B}c_K)}(\M)=\mathcal{H}om_{\mathrm{Shv}(X, \h{\B}c_K)}(\M, \M)$ be the inner Hom sheaf defined before Lemma \ref{shvclosed}. Explicitly, if $U$ is an admissible open subset of $X$, then $\mathcal{E}nd_{\mathrm{Shv}(X, \h{\B}c_K)}(\M)(U)$ is the vector space $\mathrm{End}_{\mathrm{Shv}(X, \h{\B}c_K)}(\M|_U)$, endowed with the subspace bornology induced from 
	\begin{equation*}
		\prod_{V\subseteq U} \mathrm{Hom}_{\h{\B}c_K}(\M(V), \M(V)),
	\end{equation*}
	where the product runs over all admissible open subsets $V\subseteq U$.
	
	We remark that $\mathrm{diss}(\mathcal{E}nd_{\mathrm{Shv}(X, \h{\B}c_K)}(\M))\cong \mathcal{E}nd_{\mathrm{Shv}(X, \mathrm{Ind}(\mathrm{Ban}_K))}(\mathrm{diss}(\M))$, since $\mathrm{diss}: \h{\B}c_K\to \mathrm{Ind}(\mathrm{Ban}_K)$ preserves limits and intertwines the internal Homs by \cite[Proposition 1.139]{Meyer}.
	
	\begin{prop}
		Let $\M\in \mathrm{Mod}_{\mathrm{Shv}(X, \mathrm{Ind}(\mathrm{Ban}_K))}(\O_X)$ be a complete bornological $\O_X$-module. To give $\M$ the structure of a complete bornological $\w{\D}_X$-module extending the $\O_X$-module structure is equivalent to providing a  morphism $\nabla: \T_X\to \mathcal{E}nd_{\mathrm{Shv}(X, \h{\B} c_K)}(\M)$ in $\mathrm{Shv}(X, \mathrm{Vect}_K)$ satisfying 
		\begin{enumerate}[(i)]
			\item $f\nabla(\partial)(m)=\nabla(f\partial)(m)$ for all $\partial\in \T_X$, $f\in \O_X$, $m\in \M$
			\item $\nabla(\partial)( fm)=\partial(f)m+f\nabla(\partial)( m)$ for all $\partial\in \T_X$, $f\in \O_X$, $m\in \M$
			\item $\nabla([\partial_1, \partial_2])(m)=(\nabla(\partial_1)\nabla(\partial_2)-\nabla(\partial_2)\nabla(\partial_1))(m)$ for all $\partial_1, \partial_2\in \T_X$, $m\in \M$
		\end{enumerate} 
		such that the resulting morphism
		\begin{equation*}
			\D_X\otimes^{\mathrm{pre}}_K \M \to \M
		\end{equation*} 
		is bounded.
	\end{prop}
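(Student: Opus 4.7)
The proof is essentially an application of the universal property of the enveloping algebra of the Lie--Rinehart algebra $(\O_X,\T_X)$, combined with the completion functor discussed in Lemma \ref{completeDmods}.

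For the forward direction, suppose $\M$ is a complete bornological $\w{\D}_X$-module. Using the natural map $\D_X \to \w{\D}_X$, restrict the action to obtain a bounded $\D_X$-module structure on $\M$, and then define $\nabla(\partial)(m) := \partial \cdot m$ for $\partial \in \T_X$, $m\in \M$. Conditions (i)--(iii) are simply the defining relations of $\D_X = U_{\O_X}(\T_X)$ viewed through the action, namely $\O_X$-linearity on the left factor, the Leibniz rule coming from $\partial\cdot a-a\cdot\partial=\partial(a)$ in $\D_X$, and the Lie bracket relation in $\T_X\subset \D_X$. Each $\nabla(\partial)$ is a bounded $K$-linear endomorphism of $\M$ because the action map $\D_X\otimes_K\M\to\M$ is bounded.

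For the converse, suppose $\nabla$ is given. The plan is first to construct a $\D_X$-action in the category of presheaves of bornological $K$-vector spaces, then to complete. Working on affinoid subdomains $Y\subset X$ where sections make sense, the universal property of the Rinehart enveloping algebra (applied to the Lie--Rinehart algebra $(\O_X(Y),\T_X(Y))$) says that conditions (i)--(iii) produce a unique $K$-algebra homomorphism $\D_X(Y)\to \End_K(\M(Y))$ extending the $\O_X(Y)$-action and sending $\partial\in\T_X(Y)$ to $\nabla(\partial)_Y$. By functoriality in $Y$, these assemble into a morphism of sheaves of $K$-algebras $\D_X\to \mathcal{E}nd_K(\M)$, equivalently into a $\D_X$-module structure on $\M$ extending the $\O_X$-action. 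By hypothesis, the induced action map $\D_X\otimes_K\M\to\M$ is bounded, so $\M$ becomes a bornological $\D_X$-module. Since $\M$ is complete, Lemma \ref{completeDmods} applied to the identity completion (using that $\D_X\h{\otimes}_K \M\cong \w{\D}_X\h{\otimes}_K\h{\M}=\w{\D}_X\h{\otimes}_K\M$) shows that this extends uniquely to a bounded action $\w{\D}_X\h{\otimes}_K\M\to\M$ satisfying the module axioms, as required.

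Finally, one checks that the two constructions are mutually inverse: starting from a $\w{\D}_X$-module structure, the $\nabla$ produced by the forward direction recovers the original action through the canonical map $\T_X\hookrightarrow\D_X\to\w{\D}_X$; starting from $\nabla$, the forward direction applied to the constructed $\w{\D}_X$-action recovers $\nabla$ by definition. Both assignments are visibly natural. The only real subtlety, and the step requiring care, is the passage from the $K$-linear datum $\nabla$ to a global sheaf-theoretic $\D_X$-module structure: one must verify that the local $\D_X(Y)$-actions obtained via Rinehart's universal property are compatible with restriction, but this follows from uniqueness in the universal property together with the fact that $\nabla$ itself is a morphism of sheaves.
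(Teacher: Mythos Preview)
Your proof is correct and follows essentially the same approach as the paper: the defining relations (i)--(iii) of the Rinehart enveloping algebra $\D_X=U_{\O_X}(\T_X)$ produce an abstract $\D_X$-action, the boundedness hypothesis makes $\M$ a bornological $\D_X$-module, and Lemma~\ref{completeDmods} together with completeness of $\M$ upgrades this to a $\w{\D}_X$-module structure. The paper's proof is terser and only spells out the converse direction, whereas you also write out the forward direction and the mutual-inverse check; this added detail is fine and does not represent a different method.
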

	
	\begin{proof}
		The map $\T_X\otimes \M \to \M$ gives rise to an abstract $\D_X$-action, as the conditions (i), (ii), (iii) correspond to defining relations for $\D_X$. If the resulting action map is bounded, $\M\in \mathrm{Mod}_{\mathrm{Preshv}(\B, \B c_K)}(\D_X)$. But as $\M$ is already complete, it follows that $\h{\M}=\M\in \mathrm{Mod}_{\mathrm{Shv}(X, \mathrm{Ind}(\mathrm{Ban}_K))}(\w{\D}_X)$ by Lemma \ref{completeDmods}.
		
		Conversely, if $\M$ is endowed with a $\w{\D}_X$-module structure, then for every $U\in \B$, we have an action map
		\begin{equation*}
			\w{\D}_X(U)\widetilde{\otimes}_K \M(U)\to \M(U).
		\end{equation*}
		Since $L: \mathrm{Ind}(\mathrm{Ban}_K)\to \h{\B}c_K$ is strong symmetric monoidal and $\w{\D}_X(U)$, $\M(U)$ are in the essential image of $\mathrm{diss}$, applying $L$ yields a bounded action map
		\begin{equation*}
			\w{\D}_X(U)\h{\otimes}_K \M(U)\to \M(U)
		\end{equation*}
		in $\h{\B}c_K$. Composing with the natural morphism $\D_X(U)\otimes_K\M(U)\to \w{\D}_X(U)\h{\otimes}_K \M(U)$ in $\B c_K$, this makes $\M$ an object in $\mathrm{Mod}_{\mathrm{Preshv}(\B, \B c_K)}(\D_X)$, yielding the desired morphism $\nabla$.
	\end{proof}
	We note that if $\nabla$ is a morphism as above, then $\nabla$ is $\O_X$-linear by property (i). Hence $\nabla$ is bounded by Lemma \ref{propsofcompl}, since $\T_X$ is coherent. Applying $\mathrm{diss}$, we can regard $\nabla$ as a morphism $\nabla: \T_X\to \mathcal{E}nd_{\mathrm{Shv}(X,\mathrm{Ind}(\mathrm{Ban}_K))}(\M)$ in $\mathrm{Mod}_{\mathrm{Shv}(X, \mathrm{Ind}(\mathrm{Ban}_K))}(\O_X)$.
	
	As usual, tensor-hom adjunction then allows us to write $\nabla$ as an $\O_X$-linear morphism $\T_X\widetilde{\otimes}_{K}\M\to \M$, and then as a morphism $\M\to \Omega_X^1\widetilde{\otimes}_{\O_X} \M$, where $\Omega_X^1$ is the cotangent bundle on $X$.
	
	We now extend the results in \cite[section 3]{DcapTwo} from coadmissible modules to all $\w{\D}_X$-modules.
	
	Let $\Omega_X=(\Omega_X^1)^{\wedge \mathrm{dim}X}$ be the sheaf of top differentials. By \cite[Corollary 3.5]{DcapTwo}, it is a right coadmissible $\w{\D}_X$-module. 
	\begin{prop}
		The functor $\Omega_X\widetilde{\otimes}_{\O_X}-$ gives rise to a functor
		\begin{equation*}
			\mathrm{Mod}_{\mathrm{Shv}(X, \mathrm{Ind}(\mathrm{Ban}_K))}(\w{\D}_X)\to \mathrm{Mod}_{\mathrm{Shv}(X, \mathrm{Ind}(\mathrm{Ban}_K))}(\w{\D}_X^{\mathrm{op}}).
		\end{equation*}
	\end{prop}
	\begin{proof}
		Let $Y=\Sp A\in \B$ and let $M\in \mathrm{Mod}_{\h{\B}c_K}(\w{\D}_X(Y))$. Then the formula
		\begin{equation*}
			(d\otimes m)\cdot \partial=(d\cdot \partial)\otimes m-d\otimes (\partial m), \ \partial\in \T_X(Y), \ d\in \Omega_X(Y), \ m\in M
		\end{equation*}
		defines an abstract right $\D_X(Y)$-module structure on $\Omega_X(Y)\otimes_A M$. We now show that this action is bounded.
		
		Let $\A\subseteq A$ be an admissible affine formal model and let $\L\subseteq \T_X(Y)$ be an $(R, \A)$-Lie lattice which is free as an $\A$-module, generated by elements $\partial_1, \hdots, \partial_r$. We now consider the $\D_X(Y)$-action on $\Omega_X(Y)\otimes_A M$ defined as above. Let $B\subseteq \D_X(Y)$ be a bounded subset. Without loss of generality, we can suppose that there exist integers $r_n\in \mathbb{Z}$ such that
		\begin{equation*}
			B=\underset{n}{\cap}\  \pi^{r_n}U_\A(\pi^n\L),
		\end{equation*}
		where we view each $U_\A(\pi\L)$ as a subset of $\D_X(Y)$.
		
		By definition, the bornology on the tensor product is generated by subsets of the form $B_1\otimes B_2$, where $B_1\subseteq \Omega_X(Y)$, $B_2\subseteq M$ are bounded $R$-submodules. 
		
		Let $P\in B$, which by the PBW theorem can be written as
		\begin{equation*}
			P=\sum_{\alpha\in \mathbb{N}^r} \partial^\alpha f_\alpha, \ f_\alpha\in A,
		\end{equation*}
		with all but finitely many $f_\alpha$ being equal to zero. The description of $B$ above then translates to
		\begin{equation*}
			f_\alpha\in \pi^{r_n+n|\alpha|}\A 
		\end{equation*}
		for each $n$ and each $\alpha$, where $|\alpha|=\alpha_1+\hdots+\alpha_r$.
		
		We write $\beta\leq \alpha$ if for all $i$, $\beta_i\leq \alpha_i$.
		
		It then follows that 
		\begin{equation*}
			(B_1\otimes B_2)\cdot P\subseteq \sum_\alpha f_\alpha\left( \sum_{\beta\leq \alpha} B_1\cdot \partial^\beta\otimes \partial^{\alpha-\beta}\cdot B_2\right).
		\end{equation*}
		For each $\alpha$ with $f_{\alpha}\neq 0$, pick $s_\alpha\in \mathbb{Z}$ such that
		\begin{equation*}
			f_\alpha\in \pi^{2s_\alpha}\A, \ f_\alpha\notin \pi^{2s_\alpha+2}\A,
		\end{equation*}
		which we can do as $\A$ is $\pi$-adically separated by \cite[Corollary 7.3/9]{Boschlectures}.
		
		Now choose $r'_n\in \mathbb{Z}$ with
		\begin{equation*}
			r'_n\leq\frac{r_{2n}}{2}-1
		\end{equation*}
		for each $n$.
		Since $f_\alpha\in \pi^{r_{2n}+2n|\alpha|}\A$, but $f_\alpha\notin\pi^{2s_\alpha+2}\A$, we must have 
		\begin{equation*}
			2s_\alpha+2>r_{2n}+2n|\alpha|
		\end{equation*}
		and hence
		\begin{equation*}
			r'_n+n|\alpha|<s_\alpha.
		\end{equation*}
		
		Thus we have firstly $\pi^{s_\alpha}\in \pi^{r'_n+n|\alpha|}\A$ for each $n$, and secondly 
		\begin{equation*}
			\pi^{-s_\alpha}f_\alpha\in \pi^{s_\alpha}\A\subseteq \pi^{r'_n+n|\alpha|}\A.
		\end{equation*}
		A fortiori, this yields
		\begin{equation*}
			\pi^{s_\alpha}\in \pi^{r'_n+n|\beta|}\A
		\end{equation*}
		and
		\begin{equation*}
			\pi^{-s_\alpha}f_\alpha\in \pi^{r'_n+n|\beta|}\A
		\end{equation*}
		for each $n$ and any $\beta$ with $\beta\leq \alpha$.
		
		Writing $f_\alpha=\pi^{s_\alpha}\pi^{-s_\alpha}f_\alpha$, this shows that
		\begin{equation*}
			(B_1\otimes B_2)\cdot B\subseteq (B_1\cdot B')\otimes (B'\cdot B_2),
		\end{equation*}
		where $B'\subseteq \D_X(Y)$ is the bounded subset
		\begin{equation*}
			B'=\underset{n}\cap\  \pi^{r'_n}U_\A(\pi^n\L).
		\end{equation*}
		Since the actions on $\Omega_X(Y)$ and $M$ are bounded, this shows that $(B_1\otimes B_2)\cdot B$ is a bounded set, as required. Thus $\Omega(Y)\otimes_AM\in \mathrm{Mod}_{\B c_K}(\D_X(Y)^{\mathrm{op}})$.
		
		Since $\widehat{(-)}: \B c_K\to \h{\B}c_K$ is strong symmetric monoidal, this shows that $\Omega_X(Y)\h{\otimes}_A M$ is in $\mathrm{Mod}_{\h{\B}c_K}(\w{\D}_X(Y)^{\mathrm{op}})$, and we have produced an exact functor
		\begin{equation*}
			\Omega_X(Y)\h{\otimes}_A-: \mathrm{Mod}_{\h{\B}c_K}(\w{\D}_X(Y))\to \mathrm{Mod}_{\h{\B}c_K}(\w{\D}_X(Y)^{\mathrm{op}}).
		\end{equation*}
		By exactness, this produces the functor
		\begin{equation*}
			\Omega_X(Y)\widetilde{\otimes}_A-: LH(\mathrm{Mod}_{\h{\B}c_K}(\w{\D}_X(Y)))\to LH(\mathrm{Mod}_{\h{\B}c_K}(\w{\D}_X(Y)^{\mathrm{op}})).
		\end{equation*}
		
		By Proposition \ref{IBFrechetmod}, we now have the identification $LH(\mathrm{Mod}_{\h{\B}c_K}(\w{\D}_X(Y)))\cong LH(\mathrm{Mod}_{\mathrm{Ind}(\mathrm{Ban}_K)}(\w{\D}_X(Y)))$, i.e. we have a functor
		\begin{equation*}
			\Omega_X(Y)\widetilde{\otimes}_A-: LH(\mathrm{Mod}_{\mathrm{Ind}(\mathrm{Ban}_K)}(\w{\D}_X(Y)))\to LH(\mathrm{Mod}_{\mathrm{Ind}(\mathrm{Ban}_K)}(\w{\D}_X(Y)^{\mathrm{op}})).
		\end{equation*}
		Since $\Omega_X(Y)$ is a finitely generated projective $A$-module, this functor preserves objects in the essential image of $I$, yielding the desired functor $\mathrm{Mod}_{\mathrm{Ind}(\mathrm{Ban}_K)}(\w{\D}_X(Y))\to \mathrm{Mod}_{\mathrm{Ind}(\mathrm{Ban}_K)}(\w{\D}_X(Y)^{\mathrm{op}})$.
		
		By functoriality of the construction, this produces a corresponding functor on the level of presheaves and then on the level of sheaves, as required.
	\end{proof}
	
	Similarly, suppose that $M\in \mathrm{Mod}_{\h{\B}c_K}(\w{\D}_X(Y))$ for some $Y\in \B$. As usual, the adjunction gives a morphism $\Omega_X^{\otimes-1}(Y)\h{\otimes}_A M\to \mathrm{Hom}_A(\Omega_X(Y), M)$ in $\mathrm{Mod}_{\h{\B}c_K}(A)$, which is an isomorphism as $\Omega_X$ is a line bundle. Now $\mathrm{Hom}_A(\Omega_X(Y), M)$ is a left $\D_X(Y)$-module via
	\begin{equation*}
		(\partial\cdot \phi)(d)=\phi(d\cdot \partial)-\phi(d)\cdot\partial, \ \partial\in\T_X(Y), \ d\in\Omega_X(Y), \ m\in M
	\end{equation*}
	where it is again easy to check that the action is bounded, following a similar strategy to the above. We thus have an exact functor $\Omega_X(Y)^{\otimes-1}\h{\otimes}_{\O_X}-$ from $\mathrm{Mod}_{\h{\B}c_K}(\w{\D}_X(Y)^{\mathrm{op}})$ to $\mathrm{Mod}_{\h{\B}c_K}(\w{\D}_X(Y))$, and the same argument as above allows us to derive and glue to obtain a functor
	\begin{equation*}
		\Omega_X^{\otimes-1} \widetilde{\otimes}_{\O_X}-: \mathrm{Mod}_{\mathrm{Shv}(X, \mathrm{Ind}(\mathrm{Ban}_K))}(\w{\D}_X^{\mathrm{op}})\to \mathrm{Mod}_{\mathrm{Shv}(X, \mathrm{Ind}(\mathrm{Ban}_K))}(\w{\D}_X).
	\end{equation*}
	
	\begin{thm}
		\label{sidechanging}
		The functor $\Omega_X\h{\otimes}_{\O_X}-$ is an equivalence of categories between $\mathrm{Mod}_{\mathrm{Shv}(X, \mathrm{Ind}(\mathrm{Ban}_K))}(\w{\D}_X)$ and $\mathrm{Mod}_{\mathrm{Shv}(X, \mathrm{Ind}(\mathrm{Ban}_K))}(\w{\D}_X^{\mathrm{op}})$, with quasi-inverse $\Omega_X^{\otimes -1}\h{\otimes}_{\O_X}-$.
	\end{thm}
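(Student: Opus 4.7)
The plan is to exhibit mutually inverse natural isomorphisms between $\M$ and $\Omega_X^{\otimes-1}\h{\otimes}_{\O_X}(\Omega_X\h{\otimes}_{\O_X}\M)$ for $\M\in\h{\B}c_K(\w{\D}_X)$, and similarly for $\mathcal{N}\in\h{\B}c_K(\w{\D}_X^\mathrm{op})$. On the level of $\O_X$-modules, both composites are canonically isomorphic to the identity via the evaluation/coevaluation maps for the line bundle $\Omega_X$. Indeed, since $\Omega_X$ is an invertible $\O_X$-module, the natural morphism $\Omega_X^{\otimes-1}\h{\otimes}_{\O_X}\Omega_X\to\O_X$ is an isomorphism in $\h{\B}c_K(\O_X)$: this can be checked locally where $\Omega_X$ is free of rank one, in which case it reduces to the evident isomorphism $\O_X\h{\otimes}_{\O_X}\O_X\cong\O_X$ in $\h{\B}c_K$.

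The substantive content is then to verify that these $\O_X$-linear isomorphisms are also $\w{\D}_X$-linear. By Lemma \ref{completeDmods} and the density of $\D_X$ in $\w{\D}_X$, it suffices to check $\D_X$-linearity, and since the formulas defining the action are local and only involve $\O_X$ and vector fields, it further suffices to check this sectionwise on an affinoid $Y$ admitting a coordinate system $x_1,\ldots,x_m$, $\partial_1,\ldots,\partial_m$, where $\Omega_X|_Y$ is free generated by $\mathrm{d}x_1\wedge\cdots\wedge\mathrm{d}x_m$. In this situation one computes directly that, writing $\omega$ for a local generator of $\Omega_X$ and $\omega^{\vee}$ for the dual generator of $\Omega_X^{\otimes-1}$, the composition
\begin{equation*}
\M\to \Omega_X^{\otimes-1}\h{\otimes}_{\O_X}(\Omega_X\h{\otimes}_{\O_X}\M),\qquad m\mapsto \omega^{\vee}\otimes(\omega\otimes m)
\end{equation*}
intertwines the given $\w{\D}_X$-action on $\M$ with the one obtained by applying the two side-changing operations in succession: the two sign contributions from the formulas
\begin{equation*}
(d\otimes m)\cdot\partial=d\cdot\partial\otimes m-d\otimes\partial m,\qquad (\partial\cdot\phi)(d)=\phi(d\cdot\partial)-\phi(d)\cdot\partial
\end{equation*}
cancel, leaving precisely $\partial m$.

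Having established these module isomorphisms sectionwise on a basis of affinoids admitting coordinate systems, one checks that they are natural in $\M$ (resp. $\mathcal{N}$) and compatible with the restriction maps, hence glue to natural isomorphisms of sheaves of complete bornological $\w{\D}_X$-modules. Together with Theorem \ref{Dcapembedding} and the fact already recorded that both $\Omega_X\h{\otimes}_{\O_X}-$ and $\Omega_X^{\otimes-1}\h{\otimes}_{\O_X}-$ are well-defined functors between the two module categories, this yields the claimed equivalence.

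I expect no real obstacle beyond bookkeeping: the entire argument is the bornological analogue of the classical side-changing isomorphism, and the line bundle case of $\h{\otimes}_{\O_X}$ is well-behaved enough that all the standard manipulations go through unchanged. The only mild subtlety is that we are working with the completed tensor product, but since $\Omega_X$ is locally free of finite rank (in fact of rank one) and $\h{\otimes}_{\O_X}$ restricted to finite locally free modules agrees with the usual tensor product, this causes no issue.
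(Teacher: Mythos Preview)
Your proposal is correct and follows essentially the same approach as the paper: establish the $\O_X$-module isomorphisms from the line bundle property, verify $\D_X$-linearity by the classical local computation (the paper simply cites \cite[Proposition 1.2.12]{Hotta} here), and then pass to $\w{\D}_X$-linearity by density. The only minor remark is that the relevant reference for the last step is Lemma~\ref{propsofcompl}.(ii) rather than Lemma~\ref{completeDmods}, and the appeal to Theorem~\ref{Dcapembedding} at the end is unnecessary.
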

	\begin{proof}
		Let $Y=\Sp A\in \B$, $M\in \mathrm{Mod}_{\mathrm{Ind}(\mathrm{Ban}_K)}(\w{\D}_X(Y))$. By adjunction in $\mathrm{Mod}_{\mathrm{Ind}(\mathrm{Ban}_K)}(A)$, we have the unit morphism
		\begin{equation*}
			M\to \Omega_X^{\otimes-1}(Y)\widetilde{\otimes}_A \Omega_X(Y)\widetilde{\otimes}_A M,
		\end{equation*}
		which is an isomorphism of $A$-modules. We claim that it also $\w{\D}_X(Y)$-linear. Since $M$ is the cokernel of a strict morphism between some complete bornological $\w{\D}_X(Y)$-modules (by considering e.g. $\w{\D}_X(Y)\widetilde{\otimes}_K (\oplus_i c_0(X_i))\cong \mathrm{diss}(\w{\D}_X(Y)\h{\otimes}_K (\oplus_i c_0(X_i)))$ for some sets $X_i$), it suffices to consider the case where $M\in \mathrm{Mod}_{\h{\B}c_K}(\w{\D}_X(Y))$. 
		
		By writing out the action, the morphisms above are then morphisms of $\D_X(Y)$-modules (see \cite[Proposition 1.2.12]{Hotta} for the classical result), and hence of $\w{\D}_X$-modules by Lemma \ref{propsofcompl}.(ii). 
		
		Thus if $\M\in \mathrm{Mod}_{\mathrm{Shv}(X, \mathrm{Ind}(\mathrm{Ban}_K))}(\w{\D}_X)$, then the natural morphism
		\begin{equation*}
			\M\to \Omega_X^{\otimes -1}\widetilde{\otimes}_{\O_X}\Omega_X \widetilde{\otimes}_{\O_X} \M
		\end{equation*}
		is an isomorphism in $\mathrm{Mod}_{\mathrm{Shv}(X, \mathrm{Ind}(\mathrm{Ban}_K))}(\w{\D}_X)$. The same argument applies mutatis mutandis for the counit morphism.
		
		Thus $\Omega_X\widetilde{\otimes}-$ and $\Omega_X^{\otimes -1}\widetilde{\otimes}-$ are quasi-inverse equivalences.
	\end{proof}
	
	Let $X$ be a smooth rigid analytic $K$-space of dimension $n$. We now give an analogue of the Spencer resolution as a locally free resolution of $\O_X$ in $\mathrm{Mod}_{\mathrm{Shv}(\mathrm{Ind}(\mathrm{Ban}_K))}(\w{\D}_X)$.
	
	Let $\T_X$ be the tangent sheaf of $X$. As a coherent $\O_X$-module, the module $\wedge^i \T_X$ is naturally a complete bornological $\O_X$-module for any $i=0, \hdots, n$ (see Theorem \ref{Dcapembedding}). Then $S^{-i}:=\w{\D}_X\widetilde{\otimes}_{\O_X}\wedge^i \T_X$ is a complete bornological $\w{\D}_X$-module (with action only on the first factor) which is locally isomorphic to $\w{\D}_X^{\oplus r_i}$, where $r_i=\binom{n}{i}$. In particular, it is coadmissible.\\
	Note that $S^{-i}$ is the completion of the bornological $\D_X$-module $\D_X\otimes_{\O_X}\wedge^i\T_X$.
	
	For $i=1, \hdots, n$, we have a $\D_X$-module morphism 
	\begin{align*}
		\D_X\otimes \wedge^i\T_X&\to \D_X\otimes \wedge^{i-1}\T_X\\
		P\otimes (\wedge \partial_j)&\mapsto \sum_j (-1)^{j+1}P\partial_j\otimes \partial_1\wedge\hdots \wedge \h{\partial_j}\hdots \wedge \partial_i\\
		&+\sum_{j<j'} (-1)^{j+j'}P[\partial_j, \partial_{j'}]\otimes \partial_1\wedge \hdots \wedge \h{\partial_j}\hdots \wedge \h{\partial_{j'}}\hdots \wedge \partial_i
	\end{align*}
	as in \cite[Lemma 1.5.27]{Hotta}. Tensoring with $\w{\D}_X$, we have a morphism between coadmissible $\w{\D}_X$-modules $S^{-i}\to S^{-i+1}$, which is then automatically bounded by Theorem \ref{Dcapembedding}.
	
	We also have the natural bounded map $S^0\cong \w{\D}_X\to \O_X$ sending any differential operator $P$ to $P(1)$.
	\begin{thm}
		\label{spencer}
		With the morphisms as above, the map $S^\bullet\to \O_X$ is an isomorphism in $\mathrm{D}(\w{\D}_X)$.
	\end{thm}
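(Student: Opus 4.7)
The plan is to reduce the assertion to a Koszul-type computation at each level of the Fr\'echet--Stein structure. Since the statement is local and strict quasi-isomorphisms can be verified on an admissible cover, I would first pass to the case $X = \Sp A$ affinoid with an \'etale coordinate system and commuting vector fields $\partial_1, \ldots, \partial_m$ generating a free $(R, \mathcal{A})$-Lie lattice $\mathcal{L} = \bigoplus \mathcal{A}\partial_i$. In this situation $\wedge^i \T_X$ is free over $\O_X$ on the basis $\partial_{j_1} \wedge \cdots \wedge \partial_{j_i}$, so $S^{-i} \cong \w{\D}_X^{\binom{m}{i}}$, all commutator terms in the Spencer differential vanish, and what remains is the Koszul differential for right multiplication by $(\partial_1, \ldots, \partial_m)$, augmented by $\w{\D}_X \to \O_X,\ P \mapsto P(1)$.

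Next I would establish strict exactness at each finite level of the Fr\'echet--Stein presentation. For each $n \geq 0$, consider the Koszul complex $K_n^\bullet := \D_{X,n} \otimes_{\O_X} \wedge^\bullet \T_X$ on the site $X_n$, equipped with the same differential and augmentation. Each term is a free, hence coherent Banach, $\D_{X,n}$-module, so by the exactness of $(-)^b$ on coherent modules over the Noetherian Banach algebra $\D_{X,n}$ (Corollary \ref{embedding} applied to $\D_{X,n}$ viewed as a constant Fr\'echet--Stein), strict exactness of $K_n^\bullet$ in $\h{\B}c_K$ is equivalent to ordinary exactness of the underlying complex. To prove the latter I would pass to the order filtration on $\D_{X,n}^\circ$: by Rinehart's PBW theorem (\cite{Rinehart}) the associated graded is $\h{\Sym_\mathcal{A}(\pi^n\mathcal{L})}$, a Tate-type polynomial algebra on the commuting variables $\pi^n\xi_i$. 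The associated graded complex is then the standard Koszul complex on the regular sequence of variables in this completed polynomial algebra, which resolves $\mathcal{A}$, and a filtered argument transfers this to exactness of $K_n^\bullet$ after inverting $\pi$.

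Finally, I would pass to the inverse limit. By construction $S^\bullet|_{X_n}$ coincides with $K_n^\bullet$, and the transition morphisms $K_{n+1}^\bullet|_{X_n} \to K_n^\bullet$ are compatible with the Koszul differentials. On every affinoid subdomain, the systems of sections of $K_n^{-i}$ are pre-nuclear with dense images (by Lemma \ref{nuclearimpliespre}, or equivalently because $\w{\D}_X$ is nuclear over $\O_X$), while the cohomology systems are trivially pre-nuclear ($\O_X$ in degree $0$, zero otherwise). Corollary \ref{MLcomplexes} then yields that $\varprojlim K_n^\bullet = S^\bullet$ is a strict complex with $\mathrm{H}^0(S^\bullet) \cong \O_X$ and vanishing negative cohomology. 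The local strict quasi-isomorphisms glue into the required isomorphism in $\mathrm{D}(\w{\D}_X)$ via Lemma \ref{comparison}.

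The principal obstacle will be the filtered argument at each finite level: one must verify carefully that the $\pi$-adic completions involved in forming $\D_{X,n}^\circ$ are compatible with the order filtration and the Koszul construction, so that the classical Koszul resolution of $\mathcal{A}$ in the polynomial ring $\mathcal{A}[\xi_1, \ldots, \xi_m]$ lifts through completion to a genuine resolution by $\h{U_\mathcal{A}(\pi^n\mathcal{L})}$. Once that step is secured, the passage to the Fr\'echet--Stein limit and the globalization are formal consequences of the machinery developed in Section 3.
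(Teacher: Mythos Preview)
Your proposal is correct, but it takes a substantially different route from the paper's proof.

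The paper argues in one step at the $\w{\D}_X$ level rather than level-by-level: it invokes the classical Spencer resolution for the uncompleted $\D_X$ (\cite[Lemma 1.5.27]{Hotta}), then applies $\w{\D}_X \otimes_{\D_X} -$. Since $\w{\D}_X$ is flat as an abstract $\D_X$-module (\cite[Lemma 4.14]{Bode1}) and $\w{\D}_X \otimes_{\D_X} \O_X \cong \O_X$, this immediately yields an exact sequence of coadmissible $\w{\D}_X$-modules. Strictness then comes for free from the exactness of the embedding of coadmissible modules into $\h{\B}c_K(\w{\D}_X)$ (Theorem~\ref{Dcapembedding}). No Banach-level Koszul computation, no filtered argument, and no Mittag--Leffler step are needed.

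Your approach instead establishes exactness at each Banach level $\D_{X,n}$ via a PBW/associated-graded Koszul argument, then assembles these using Corollary~\ref{MLcomplexes}. This works, and it has the minor advantage of exhibiting the Spencer resolution at every finite level explicitly. However, the ``principal obstacle'' you flag---compatibility of $\pi$-adic completion with the order filtration---is itself avoidable: the same reference \cite[Lemma 4.14]{Bode1} gives flatness of each $\D_{X,n}$ over $\D_X$, so exactness at level $n$ follows by tensoring the classical Spencer complex, without any filtered argument. With that shortcut your proof becomes correct and routine; without it, the completion/filtration interaction would indeed require care. Either way, the paper's direct flatness argument at the $\w{\D}_X$ level is considerably more economical.
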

	\begin{proof}
		By the same argument as in \cite[Lemma 1.5.27]{Hotta}, the uncompleted complex $\D_X\otimes \wedge^\bullet \T_X$ is a resolution of $\O_X$ in the category of abstract $\D_X$-modules. From this, we can obtain $S^\bullet$ by applying $\w{\D}_X\otimes_{\D_X}-$ (note that $\w{\D}_X{\otimes}_{\D_X}\O_X\cong \O_X$ by \cite[Proposition 7.2]{DcapTwo}). As $\w{\D}_X$ is a flat $\D_X$-module in the category of abstract $\D_X$-modules by Theorem \ref{flatcompletion}, we obtain an exact complex of coadmissible $\w{\D}_X$-modules. Viewed as a complex of complete bornological $\w{\D}_X$-modules, it is then strictly exact by Theorem \ref{Dcapembedding}.
	\end{proof}
	Applying side-changing, we immediately obtain the following.
	\begin{cor}
		The complex
		\begin{equation*}
			0\to \wedge^0\Omega_X^1\widetilde{\otimes}_{\O_X} \w{\D}_X\to \wedge^1 \Omega^1_X\widetilde{\otimes}_{\O_X}\w{\D}_X\to \hdots \to \Omega_X\widetilde{\otimes}_{\O_X}\w{\D}_X\to \Omega_X\to 0
		\end{equation*}
		is strictly exact in $\mathrm{Mod}_{\mathrm{Shv}(X, \mathrm{Ind}(\mathrm{Ban}_K))}(\w{\D}_X^{\mathrm{op}})$, giving a locally free resolution of $\Omega_X$.
	\end{cor}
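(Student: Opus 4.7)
The plan is to apply the side-changing equivalence $\Omega_X \h{\otimes}_{\O_X} -$ from Theorem \ref{sidechanging} to the Spencer resolution $S^\bullet \to \O_X$ of Theorem \ref{spencer}. Since this functor is a quasi-inverse equivalence of quasi-abelian categories between $\h{\B}c_K(\w{\D}_X)$ and $\h{\B}c_K(\w{\D}_X^\mathrm{op})$, it is in particular strongly exact, and hence sends strictly exact complexes to strictly exact complexes. Applied to $S^\bullet \to \O_X$, this yields a strictly exact complex of right $\w{\D}_X$-modules terminating in $\Omega_X \h{\otimes}_{\O_X} \O_X \cong \Omega_X$.

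The key step is to identify the image of each Spencer term $S^{-i} = \w{\D}_X \h{\otimes}_{\O_X} \wedge^i \T_X$ under the side-changing functor with the corresponding term $\wedge^{n-i} \Omega^1_X \h{\otimes}_{\O_X} \w{\D}_X$ of the de Rham-Spencer complex, where $n = \dim X$. Using the natural $\O_X$-linear contraction pairing, one has a canonical isomorphism of coherent $\O_X$-modules $\Omega_X \otimes_{\O_X} \wedge^i \T_X \cong \wedge^{n-i} \Omega^1_X$, and this should promote to an isomorphism of right $\w{\D}_X$-modules
\begin{equation*}
\Omega_X \h{\otimes}_{\O_X} \bigl(\w{\D}_X \h{\otimes}_{\O_X} \wedge^i \T_X\bigr) \cong \wedge^{n-i} \Omega^1_X \h{\otimes}_{\O_X} \w{\D}_X,
\end{equation*}
where the right-hand side carries the right $\w{\D}_X$-action by right multiplication on the second factor. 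Moreover this identification must intertwine the Spencer differentials, transported through the side-changing functor, with the de Rham-type differentials on the right-hand complex, after the reindexing $i \mapsto n-i$.

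The main obstacle, and the only genuinely substantive work, is the verification of this identification together with the matching of differentials. Since both sides are obtained by applying $\w{\D}_X \otimes_{\D_X} -$ to the corresponding abstract $\D_X$-modules, and since all $\w{\D}_X$-structures are controlled by their restrictions to $\D_X$-actions (by Lemma \ref{completeDmods} and the fully faithful embedding of coadmissible modules from Theorem \ref{Dcapembedding}), it suffices to establish the identification in the classical setting of abstract $\D_X$-modules. This is a standard local computation in coordinates (cf.\ \cite[Proposition 1.2.12]{Hotta}), and it transfers to the bornological setting by tensoring with $\w{\D}_X$ and applying Theorem \ref{Dcapembedding}. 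Local freeness of each $\wedge^{n-i} \Omega^1_X \h{\otimes}_{\O_X} \w{\D}_X$ is immediate from the local freeness of $\wedge^{n-i} \Omega^1_X$ as an $\O_X$-module, which completes the proof.
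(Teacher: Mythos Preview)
Your proposal is correct and follows the same approach as the paper: apply the side-changing equivalence of Theorem \ref{sidechanging} to the Spencer resolution of Theorem \ref{spencer}. The paper's own proof is the single phrase ``Applying side-changing, we immediately obtain the following''; your write-up simply spells out the implicit term-by-term identification and reduction to the classical $\D_X$-module computation, which is exactly what the paper takes for granted (and then records the resulting differentials in the sentence after the Corollary).
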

	Under this transformation, the maps in the resolution are now locally given as the completion of
	\begin{align*}
		\wedge^i \Omega^1\otimes \D_X&\to \wedge^{i+1}\Omega^1\otimes \D_X\\
		\omega\otimes P&\mapsto \mathrm{d}\omega\otimes P+\sum_j \mathrm{d}x_j\wedge \omega\otimes \partial_jP,
	\end{align*}
	where $\{x_j, \partial_j\}_{1\leq j\leq n}$ is a local coordinate system on $X$.
	\begin{cor}
		Let $X$ be a smooth rigid analytic $K$-variety of dimension $n$. In $\mathrm{D}(\mathrm{Shv}(X, LH(\h{\B} c_K)))$, there is a canonical isomorphism $\Omega_X\widetilde{\otimes}^\mathbb{L}_{\w{\D}_X}\O_X\cong \Omega_{X/K}^\bullet[n]$.
	\end{cor}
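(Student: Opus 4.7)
The plan is to exploit the Spencer resolution from Theorem \ref{spencer}. Since each $S^{-i} = \w{\D}_X \h{\otimes}_{\O_X} \wedge^i \T_X$ is locally isomorphic to $\w{\D}_X^{\oplus r_i}$, it is a flat $\w{\D}_X$-module (for instance via Lemma \ref{tensorwithfree} combined with flatness of $\w{\D}_X$ over itself), so $S^\bullet$ is a flat resolution of $\O_X$ in $\h{\B}c_K(\w{\D}_X)$. Hence
\begin{equation*}
\Omega_X \h{\otimes}^{\mathbb{L}}_{\w{\D}_X} \O_X \cong \Omega_X \h{\otimes}_{\w{\D}_X} S^\bullet
\end{equation*}
in $\mathrm{D}(\mathrm{Shv}(X, \h{\B} c_K))$.

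Next, for each $i$ I would use Lemma \ref{tensorwithfree} to compute
\begin{equation*}
\Omega_X \h{\otimes}_{\w{\D}_X} S^{-i} \cong \Omega_X \h{\otimes}_{\w{\D}_X}(\w{\D}_X \h{\otimes}_{\O_X} \wedge^i \T_X) \cong \Omega_X \h{\otimes}_{\O_X} \wedge^i \T_X.
\end{equation*}
The contraction with the top form provides a canonical $\O_X$-linear isomorphism $\Omega_X \h{\otimes}_{\O_X} \wedge^i \T_X \cong \Omega_X^{n-i}$, so the complex $\Omega_X \h{\otimes}_{\w{\D}_X} S^\bullet$ has $\Omega_X^{n-i}$ placed in cohomological degree $-i$, which is exactly the graded structure of $\Omega_{X/K}^\bullet[n]$.

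What remains is to identify the differentials. I would work locally on an affinoid $Y \subset X$ equipped with a coordinate system $\{x_j, \partial_j\}$, so that everything reduces to a direct computation of the induced map
\begin{equation*}
\Omega_Y \otimes_{\O_Y} \wedge^i \T_Y \to \Omega_Y \otimes_{\O_Y} \wedge^{i-1} \T_Y
\end{equation*}
obtained from the Spencer differential by applying $\Omega_Y \otimes_{\w{\D}_Y}-$. Here the term $P\partial_j$ becomes, after passing to the right $\w{\D}_Y$-module structure on $\Omega_Y$, a combination of a multiplication by $\partial_j$ (which contributes the de Rham differential under the contraction isomorphism) and correction terms involving the brackets $[\partial_j, \partial_{j'}]$, which vanish for coordinate derivations. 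Consequently the Spencer differentials translate precisely (up to an overall sign depending on conventions) to the exterior derivatives, identifying the complex with $\Omega_{X/K}^\bullet[n]$.

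The main obstacle is the bookkeeping for this last step: one must verify that all the identifications are bounded morphisms in $\h{\B}c_K$ (this follows from Theorem \ref{Dcapembedding} since all sheaves involved are coherent and the morphisms exist already at the abstract level) and that the canonical nature of the contraction $\Omega_X \otimes \wedge^i \T_X \cong \Omega_X^{n-i}$ makes the resulting isomorphism global and coordinate-independent, even though the check of the differential is carried out in local coordinates.
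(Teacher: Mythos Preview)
Your proposal is correct and follows the same underlying idea as the paper. The only difference is which factor you resolve: you resolve $\O_X$ by the Spencer complex $S^\bullet$ and then tensor with $\Omega_X$, whereas the paper first applies side-changing to obtain the locally free resolution $\wedge^\bullet\Omega^1_X\h{\otimes}_{\O_X}\w{\D}_X\to \Omega_X$ of $\Omega_X$ as a right $\w{\D}_X$-module, and then tensors with $\O_X$. The advantage of the paper's route is that the differential in the side-changed resolution has already been written down explicitly as $\omega\otimes P\mapsto \mathrm{d}\omega\otimes P+\sum_j \mathrm{d}x_j\wedge\omega\otimes\partial_jP$, so after applying $-\h{\otimes}_{\w{\D}_X}\O_X$ (sending $P\mapsto P(1)$, hence $\partial_j\mapsto 0$) one reads off the de Rham differential immediately, with no need for the contraction isomorphism or the local bookkeeping you describe.
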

	\section{The six operations}
	\subsection{Tensor product}
	
	Let $X$ be a smooth rigid analytic $K$-variety. If $\M, \M'\in \mathrm{Mod}_{\mathrm{Shv}(X, \mathrm{Ind}(\mathrm{Ban}_K))}(\w{\D}_X)$, we would like to endow $\M\widetilde{\otimes}_{\O_X} \M'$ with a $\w{\D}_X$-module structure as well. For this, we follow the same strategy as for side-changing: we define section-wise a $\D_X$-module structure in the bornological case, which we complete to obtain a $\w{\D}_X$-module, and then we derive in order to pass to $LH(\h{\B}c_K)\cong LH(\mathrm{Ind}(\mathrm{Ban}_K))$.
	
	\begin{lem}
		Let $X=\Sp A$ be a smooth affinoid with free tangent sheaf. If $M, M'\in \mathrm{Mod}_{LH(\h{\B}c_K)}(\w{\D}_X(X))$, then $M\widetilde{\otimes}_A M'$ carries a natural $\w{\D}_X(X)$-module structure extending the $A$-module structure.
	\end{lem}
	\begin{proof}
		First suppose that $M, M'\in \mathrm{Mod}_{\h{\B}c_K}(\w{\D}_X(X))$. Then the action
		\begin{equation*}
			\partial\cdot(m\otimes m')=(\partial m)\otimes m'+m\otimes(\partial m')
		\end{equation*}
		makes $M \otimes_A M'$ an abstract $\D_X(X)$-module. The same argument as in the previous subsection ensures that the action is bounded, so that $M\otimes_A M'\in \mathrm{Mod}_{\B c_K}(\D_X(X))$. 
		
		Since the completion functor $\widehat{(-)}: \B c_K\to \h{\B}c_K$ is strong symmetric monoidal and strongly right exact, this makes $M\h{\otimes}_A M'\cong \h{M\otimes_AM'}$ an object of $\mathrm{Mod}_{\h{\B}c_K}(\w{\D}_X(X))$.
		
		As the construction is functorial, we now have a bifunctor
		\begin{equation*}
			-\h{\otimes}_A -: \mathrm{Mod}_{\h{\B}c_K}(\w{\D}_X(X))\times \mathrm{Mod}_{\h{\B}c_K}(\w{\D}_X(X))\to \mathrm{Mod}_{\h{\B}c_K}(\w{\D}_X(X)).
		\end{equation*}
		
		Since $\mathrm{Mod}_{\h{\B}c_K}(\w{\D}_X(X))$ contains enough objects which are flat over $A$ (indeed, $\h{\B}c_K$ has enough flat projectives, and $\w{\D}_X(X)\cong A\h{\otimes}_K K\{d_1\}\h{\otimes}_K\hdots \h{\otimes}_K K\{d_n\}$ is a flat $A$-module by Corollary \ref{stronglyflat}), we can derive this functor to
		\begin{equation*}
			-\widetilde{\otimes}_A-: LH(\mathrm{Mod}_{\h{\B}c_K}(\w{\D}_X(X)))\times LH(\mathrm{Mod}_{\h{\B}c_K}(\w{\D}_X(X)))\to LH(\mathrm{Mod}_{\h{\B}c_K}(\w{\D}_X(X))).
		\end{equation*}
		Using the equivalence $LH(\mathrm{Mod}_{\h{\B}c_K)}(\w{\D}_X(X))\cong \mathrm{Mod}_{LH(\h{\B}c_K)}(\w{\D}_X(X))$, this yields the result.
	\end{proof}
	
	\begin{prop}
		Let $X$ be a smooth rigid analytic $K$-variety.
		
		There is a bifunctor
		\begin{equation*}
			-\widetilde{\otimes}^{\mathbb{L}}_{\O_X} -: \mathrm{D}(\w{\D}_X)\times \mathrm{D}(\w{\D}_X)\to \mathrm{D}(\w{\D}_X)
		\end{equation*}
		which under the forgetful functor $\mathrm{D}(\w{\D}_X)\to \mathrm{D}(\O_X)$ agrees with the derived tensor product $\widetilde{\otimes}_{\O_X}$ on $\mathrm{D}(\O_X)$.
	\end{prop}
	\begin{proof}
		Using the identification $LH(\h{\B}c_K)\cong LH(\mathrm{Ind}(\mathrm{Ban}_K))$, the functor above yields
		\begin{align*}
			-\widetilde{\otimes}^{\mathrm{pre}}_{\O_X}-: \mathrm{Mod}_{\mathrm{Shv}(X, LH(\mathrm{Ind}(\mathrm{Ban}_K)))}(\w{\D}_X)\times \mathrm{Mod}_{\mathrm{Shv}(X, LH(\mathrm{Ind}(\mathrm{Ban}_K)))}(\w{\D}_X)\\
			\to \mathrm{Mod}_{\mathrm{Preshv}(X, LH(\mathrm{Ind}(\mathrm{Ban}_K)))}(\w{\D}_X).
		\end{align*}
		Composing this with sheafification yields the desired bifunctor for sheaves.
		
		Since $\mathrm{Mod}_{\mathrm{Shv}(X, LH(\mathrm{Ind}(\mathrm{Ban}_K)))}(\w{\D}_X)\cong \mathrm{Mod}_{LH(\mathrm{Shv}(X, \mathrm{Ind}(\mathrm{Ban}_K)))}(\w{\D}_X)$ has enough $\O_X$-flat objects (again, consider objects of the form $\w{\D}_X\widetilde{\otimes}_K I(\F)$), we can invoke \cite[Corollary 14.4.9]{KS} to obtain the desired derived functor between the unbounded derived categories
		\begin{equation*}
			-\widetilde{\otimes}^{\mathbb{L}}_{\O_X}-: \mathrm{D}(\w{\D}_X)\times \mathrm{D}(\w{\D}_X)\to \mathrm{D}(\w{\D}_X).\qedhere
		\end{equation*}
	\end{proof}

	\subsection{Duality}
	By Proposition \ref{deriverelthom}, $\mathrm{R}\mathcal{H}om_{\w{\D}_X}(-, \w{\D}_X)$ is a well-defined functor from $\mathrm{D}(\w{\D}_X)$ to $\mathrm{D}(\w{\D}_X^{\mathrm{op}})^{\mathrm{op}}$. We define the \textbf{duality functor}
	\begin{equation*}
		\mathbb{D}=\mathrm{R}\mathcal{H}om_{\w{\D}_X}(-, \w{\D}_X)\widetilde{\otimes}_{\O_X} \Omega_X^{\otimes-1}[\dim X]: \mathrm{D}(\w{\D}_X)\to \mathrm{D}(\w{\D}_X)^{\mathrm{op}},
	\end{equation*}
	using the side-changing results from subsection 6.3.
	\begin{prop}
		Let $\M$ be a vector bundle with integrable connection on $X$ (i.e. a coadmissible $\w{\D}_X$-module which is also a locally free coherent $\O_X$-module). Then 
		\begin{equation*}
			\mathbb{D}\M\cong \mathcal{H}om_{\O_X}(\M, \O_X).
		\end{equation*} 
	\end{prop}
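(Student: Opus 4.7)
The plan is to produce a locally free resolution of $\M$ by $\w{\D}_X$-modules extending the Spencer resolution of $\O_X$ (Theorem \ref{spencer}), use it to compute $\mathrm{R}\mathcal{H}om_{\w{\D}_X}(\M,\w{\D}_X)$, and then identify the answer with $\Omega_X\h{\otimes}_{\O_X}\M^\vee[-n]$ via the dual Spencer resolution; the final formula then drops out of Theorem \ref{sidechanging}. Write $n=\dim X$ and equip $\M^\vee:=\mathcal{H}om_{\O_X}(\M,\O_X)$ with the dual connection $(\partial\cdot\phi)(m)=\partial(\phi(m))-\phi(\partial m)$, which makes it a complete bornological $\w{\D}_X$-module (locally free of finite rank over $\O_X$, hence coadmissible).

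First I would construct the $\M$-twisted Spencer complex $\mathcal{K}^\bullet(\M)$ concentrated in degrees $-n,\dots,0$, with terms
\begin{equation*}
\mathcal{K}^{-i}(\M):=\w{\D}_X\h{\otimes}_{\O_X}\bigl(\wedge^i\T_X\h{\otimes}_{\O_X}\M\bigr),
\end{equation*}
with $\w{\D}_X$ acting by left multiplication on the first factor, augmentation $\w{\D}_X\h{\otimes}_{\O_X}\M\to\M$ given by the action map, and differentials defined locally by the usual Spencer formula from Theorem \ref{spencer} plus an extra term $-\sum_k(-1)^{k+1}P\otimes(\partial_{j_1}\wedge\cdots\widehat{\partial_{j_k}}\cdots\wedge\partial_{j_i})\otimes\partial_{j_k}m$ coming from the connection on $\M$. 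The analogous complex over $\D_X$ is the classical locally free $\D_X$-resolution of $\M$; applying $\w{\D}_X\otimes_{\D_X}(-)$ (flat by \cite[Lemma 4.14]{Bode1}, exactly as in the proof of Theorem \ref{spencer}) and passing to the bornological category via Theorem \ref{Dcapembedding} gives $\mathcal{K}^\bullet(\M)\to\M$ as a strictly exact resolution in $\h{\B}c_K(\w{\D}_X)$ by finite-rank locally free modules.

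Next I would apply $\mathrm{R}\mathcal{H}om_{\w{\D}_X}(-,\w{\D}_X)$, computable on $\mathcal{K}^\bullet(\M)$ because its terms lie in the class singled out by Proposition \ref{deriverelthom}. Tensor-hom adjunction and local freeness yield
\begin{equation*}
\mathcal{H}om_{\w{\D}_X}\bigl(\mathcal{K}^{-i}(\M),\w{\D}_X\bigr)\cong\wedge^i\Omega^1_X\h{\otimes}_{\O_X}\M^\vee\h{\otimes}_{\O_X}\w{\D}_X
\end{equation*}
in $\h{\B}c_K(\w{\D}_X^{\mathrm{op}})$; a direct inspection shows the induced differentials coincide with those of the dual Spencer resolution (the corollary after Theorem \ref{spencer}) tensored over $\O_X$ with $\M^\vee$. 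As $\M^\vee$ is $\O_X$-locally free and hence $\O_X$-flat, this tensored complex remains strictly exact and resolves $\Omega_X\h{\otimes}_{\O_X}\M^\vee$ in degrees $-n,\dots,0$. Therefore $\mathrm{R}\mathcal{H}om_{\w{\D}_X}(\M,\w{\D}_X)\cong\Omega_X\h{\otimes}_{\O_X}\M^\vee[-n]$, and Theorem \ref{sidechanging} finishes the proof:
\begin{equation*}
\mathbb{D}\M=\mathrm{R}\mathcal{H}om_{\w{\D}_X}(\M,\w{\D}_X)\h{\otimes}_{\O_X}\Omega_X^{\otimes-1}[n]\cong\M^\vee.
\end{equation*}

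The main obstacle is the bookkeeping of differentials in the middle step — writing down the twisted Spencer differentials and their $\mathcal{H}om_{\w{\D}_X}(-,\w{\D}_X)$-duals cleanly enough to make the identification with the $\M^\vee$-twist of the dual Spencer resolution manifest. Strict exactness itself is essentially automatic, since every term of $\mathcal{K}^\bullet(\M)$ and of its dual is coadmissible, so Theorem \ref{Dcapembedding} converts strict exactness in the bornological category into plain abstract exactness, and the latter is a standard fact from the algebraic $\D$-module theory mimicked locally on affinoids.
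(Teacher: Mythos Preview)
Your proposal is correct and is essentially the same approach as the paper's, which simply defers to \cite[Example 2.6.10]{Hotta} after noting that the Spencer resolution (Theorem \ref{spencer}) is in place; you have spelled out precisely that argument, including the $\M$-twisted Spencer resolution, its $\mathcal{H}om_{\w{\D}_X}(-,\w{\D}_X)$-dual identified with the $\M^\vee$-twist of the de Rham resolution of $\Omega_X$, and the final side-changing via Theorem \ref{sidechanging}. One minor cosmetic point: the $\mathcal{H}om$-dual of $\mathcal{K}^\bullet(\M)$ naturally sits in degrees $0,\dots,n$ (not $-n,\dots,0$), with cohomology concentrated in degree $n$, but your final identification $\mathrm{R}\mathcal{H}om_{\w{\D}_X}(\M,\w{\D}_X)\cong\Omega_X\h{\otimes}_{\O_X}\M^\vee[-n]$ is correct.
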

	\begin{proof}
		Having already established the Spencer resolution (Theorem \ref{spencer}), the argument in \cite[Example 2.6.10]{Hotta} goes through unchanged.
	\end{proof}
	\subsection{Inverse image}
	Let $f: X\to Y$ be a morphism of smooth rigid analytic $K$-varieties. Let $\w{\D}_{X\to Y}:= \O_X\widetilde{\otimes}_{f^{-1}\O_Y} f^{-1} \w{\D}_Y$ denote the \textbf{transfer bimodule}. 
	\begin{lem}
		$\w{\D}_{X\to Y}$ is a $(\w{\D}_X, f^{-1}\w{\D}_Y)$-bimodule in $\mathrm{Shv}(X, \mathrm{Ind}(\mathrm{Ban}_K))$.
	\end{lem}
	\begin{proof}
		As in the classical situation, the morphism $\theta: \T_X\to \O_X\otimes_{f^{-1}\O_Y}f^{-1}\T_Y$ induces a left $\D_X$-module structure on $\O_X\otimes_{f^{-1}\O_Y}f^{-1}\D_Y$ such that the action commutes with the right $f^{-1}\D_Y$-action. We now claim that this action is bounded.
		
		Let $\B'$ be the collection of affinoid subdomains $U$ of $X$ which have free tangent sheaf and such that $f(U)$ is contained in an affinoid subdomain of $Y$ with free tangent sheaf. This is a basis of $X$.
		
		Let $U=\Sp B\in \B'$ and let $V=\Sp A\subseteq Y$ be an affinoid with free tangent sheaf such that $f(U)\subseteq V$. Let $\A\subseteq A$ and $\B\subseteq B$ be admissible affine formal models such that $\B$ contains the image of $\A$. Let $\L_Y\subseteq \T_Y(V)$ be a smooth $(R, \A)$-Lie lattice. Let $S$ be a bounded subset of $\D_Y(V)$, i.e. for each $i\geq 0$ there exist $n_i\in \mathbb{Z}$ such that $S\subseteq \pi^{n_i}U_{\A}(\pi^i\L)$. 
		
		Let $\L'\subseteq \T_X(X)$ be an $(R, \B)$-Lie lattice with the property that $\theta(\L')\subseteq \B\otimes \L$. Then
		\begin{equation*}
			\pi^i\L'\cdot (\B\otimes S)\subseteq \B\otimes \pi^{n_i}U_{\A}(\pi^i\L)
		\end{equation*} 
		by definition of the action, since the $\L'$-action on $B$ preserves $\B$. But then it is clear that
		\begin{equation*}
			U_{\B}(\pi^i\L')\cdot (\B\otimes S)\subseteq \B\otimes \pi^{n_i}U_{\A}(\pi^i\L),
		\end{equation*}
		showing that the action is bounded.
		
		Hence $B\otimes_A \D_Y(V)\in \mathrm{Mod}_{\B c_K}(\D_X(U)\otimes_K \D_Y(V)^{\mathrm{op}})$, and therefore
		\begin{equation*}
			B\h{\otimes}_A \w{\D}_Y(V)\in \mathrm{Mod}_{\h{\B}c_K}(\w{\D}_X(U)\h{\otimes}_K \w{\D}_Y(V)^{\mathrm{op}})
		\end{equation*}
		by applying the completion functor.
		
		Now
		\begin{align*}
			\mathrm{diss}(B\h{\otimes}_A \w{\D}_Y(V))&\cong \mathrm{diss}(B\h{\otimes}_K K\{d_1, \hdots, d_n\})\\&\cong \mathrm{diss}(B)\widetilde{\otimes}_K \mathrm{diss}(K\{d_1, \hdots, d_n\})\\
			&\cong B\widetilde{\otimes}_A \w{\D}_Y(V)
		\end{align*}
		by Proposition \ref{dissandtensor}. Since $\mathrm{diss}$ is lax symmetric monoidal, we thus have 
		\begin{equation*}
			B\widetilde{\otimes}_A \w{\D}_Y(V)\in \mathrm{Mod}_{\mathrm{Ind}(\mathrm{Ban}_K)}(\w{\D}_X(U)\widetilde{\otimes}_K \w{\D}_Y(V)^{\mathrm{op}}).
		\end{equation*}

		By functoriality of the construction, this makes the presheaf $\O_X\widetilde{\otimes}^{\mathrm{pre}}_{f^{-1}\O_X}f^{-1}\w{\D}_Y$ a $(\w{\D}_X, f^{-1}\w{\D}_Y)$-bimodule, and sheafification yields the result.
	\end{proof}
	
	We can now define the \textbf{extraordinary inverse image functor}
	\begin{align*}
		f^!: \ \mathrm{D}(\w{\D}_Y)&\to \mathrm{D}(\w{\D}_X)\\
		\M^\bullet\ \ \ &\mapsto\w{\D}_{X\to Y} \widetilde{\otimes}^{\mathbb{L}}_{f^{-1}\w{\D}_Y} f^{-1}\M^\bullet [\mathrm{dim}X-\mathrm{dim}Y].
	\end{align*}
	By associativity of the tensor product, $f^!\M^\bullet\cong \O_X\widetilde{\otimes}^{\mathbb{L}}_{f^{-1}\O_Y} f^{-1}\M^\bullet [\mathrm{dim}X-\mathrm{dim}Y]$ in $\mathrm{D}(\O_X)$.
	
	\begin{prop}
		\label{invimcomposition}
		Let $f: X\to Y$ and $g: Y\to Z$ be morphisms of smooth rigid analytic $K$-spaces. Then there is a natural isomorphism
		\begin{equation*}
			(gf)^!\cong f^!g^!: \mathrm{D}(\w{\D}_Z)\to \mathrm{D}(\w{\D}_X). 
		\end{equation*}
	\end{prop}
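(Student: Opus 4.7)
The plan is to unfold the definitions of $(gf)^!$ and $f^!g^!$ and reduce the statement to a derived isomorphism of transfer bimodules, following the classical pattern of algebraic $\mathcal{D}$-module theory. Noting that the shifts add correctly as $\mathrm{dim}X - \mathrm{dim}Y + \mathrm{dim}Y - \mathrm{dim}Z = \mathrm{dim}X - \mathrm{dim}Z$, for $\M \in \mathrm{D}(\w{\D}_Z)$ one has
\begin{equation*}
f^!g^!\M = \w{\D}_{X\to Y} \h{\otimes}^{\mathbb{L}}_{f^{-1}\w{\D}_Y} f^{-1}\!\left(\w{\D}_{Y\to Z} \h{\otimes}^{\mathbb{L}}_{g^{-1}\w{\D}_Z} g^{-1}\M\right)[\mathrm{dim}X - \mathrm{dim}Z].
\end{equation*}
I would push $f^{-1}$ past the inner derived tensor, use Lemma \ref{catcomposition} to replace $f^{-1}g^{-1}$ by $(gf)^{-1}$, and reassociate the resulting triple derived tensor to isolate the factor $\w{\D}_{X\to Y} \h{\otimes}^{\mathbb{L}}_{f^{-1}\w{\D}_Y} f^{-1}\w{\D}_{Y\to Z}$. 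This reduces the proposition to establishing the transfer bimodule identity
\begin{equation*}
\w{\D}_{X\to Z} \cong \w{\D}_{X\to Y} \h{\otimes}^{\mathbb{L}}_{f^{-1}\w{\D}_Y} f^{-1}\w{\D}_{Y\to Z}
\end{equation*}
as $(\w{\D}_X,\,(gf)^{-1}\w{\D}_Z)$-bimodules in the derived category.

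To establish this identity, I would first verify its underived counterpart by expanding both transfer bimodules via their definitions: a chain of associativity isomorphisms, together with the collapse $f^{-1}\w{\D}_Y \h{\otimes}_{f^{-1}\w{\D}_Y} f^{-1}\O_Y \cong f^{-1}\O_Y$ and the distributivity of $f^{-1}$ over tensor products (Lemma \ref{invimdistributes}), yields
\begin{equation*}
\w{\D}_{X\to Y} \h{\otimes}_{f^{-1}\w{\D}_Y} f^{-1}\w{\D}_{Y\to Z} \cong \O_X \h{\otimes}_{(gf)^{-1}\O_Z} (gf)^{-1}\w{\D}_Z = \w{\D}_{X\to Z}.
\end{equation*}
To lift this to the derived setting, the key input is Lemma \ref{flatoverO}: $\w{\D}$ is strongly flat over $\O$ on both $Y$ and $Z$, and this strong flatness is preserved by the strongly exact functor $f^{-1}$. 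Consequently each of the underived tensor products appearing in the chain already coincides with its derived counterpart, so every step lifts verbatim to a derived isomorphism.

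The main obstacle will be making precise the commutation of $f^{-1}$ with the derived tensor $\h{\otimes}^{\mathbb{L}}_{g^{-1}\w{\D}_Z}$: Lemma \ref{invimdistributes} handles the underived compatibility, but the derived statement additionally requires $f^{-1}$ to preserve flat resolutions of $g^{-1}\w{\D}_Z$-modules. The cleanest route is to work with explicit Bar-type resolutions, whose terms have the form $g^{-1}\w{\D}_Z \h{\otimes}_K V$ for $V \in \h{\B}c_K$ (compare the Bar complex used in the proof of Proposition \ref{coadandtor}); applying $f^{-1}$ produces terms of the form $(gf)^{-1}\w{\D}_Z \h{\otimes}_K V$, which are again flat over $(gf)^{-1}\w{\D}_Z$, while the differentials remain strict by strong exactness of $f^{-1}$. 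Once this derived compatibility is secured, assembling the three steps above yields the desired natural isomorphism $(gf)^! \cong f^!g^!$.
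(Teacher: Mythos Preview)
Your proposal is correct and follows the same overall strategy as the paper, but the paper's execution is more compressed. Rather than isolating and proving the transfer bimodule chain rule $\w{\D}_{X\to Z}\cong \w{\D}_{X\to Y}\h{\otimes}^{\mathbb{L}}_{f^{-1}\w{\D}_Y}f^{-1}\w{\D}_{Y\to Z}$ as a separate step, the paper uses the remark established immediately before the proposition, namely that $f^!\M\cong \O_X\h{\otimes}^{\mathbb{L}}_{f^{-1}\O_Y}f^{-1}\M[\dim X-\dim Y]$ in $\mathrm{D}(\O_X)$, to rewrite $f^!g^!\M^\bullet$ directly as an iterated $\O$-module tensor; the middle factor $f^{-1}\O_Y$ then cancels by associativity in one line, and Lemma~\ref{catcomposition} finishes. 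In effect your chain of underived collapses and your appeal to Lemma~\ref{flatoverO} are exactly what underlies the paper's single associativity step---you have simply unpacked it. The one place where you are more careful than the paper is the derived compatibility of $f^{-1}$ with $\h{\otimes}^{\mathbb{L}}$, which the paper passes over in silence; your Bar-resolution argument is a correct way to justify it.
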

	\begin{proof}
		For any $\M^\bullet\in \mathrm{D}(\w{\D}_Z)$, we have
		\begin{align*}
			f^!g^!\M^\bullet[\mathrm{dim}Z-\mathrm{dim}X]&\cong\O_X\widetilde{\otimes}^{\mathbb{L}}_{f^{-1}\O_Y}f^{-1}(\O_Y\widetilde{\otimes}^{\mathbb{L}}_{g^{-1}\O_Z}g^{-1}\w{\D}_Z\widetilde{\otimes}^{\mathbb{L}}_{g^{-1}\w{\D}_Z}g^{-1}\M^\bullet)\\
			&\cong \O_X\widetilde{\otimes}^{\mathbb{L}}_{f^{-1}g^{-1}\O_Z}f^{-1}g^{-1}\w{\D}_Z\widetilde{\otimes}^{\mathbb{L}}_{f^{-1}g^{-1}\w{\D}_Z}f^{-1}g^{-1}\M^\bullet,
		\end{align*}
		and the result follows from Lemma \ref{catcomposition}.
	\end{proof}
	We also define $f^+=\mathbb{D}_Xf^{!}\mathbb{D}_Y$, the \textbf{inverse image functor}.
	\subsection{Direct image}
	As before, let $f: X\to Y$ be a morphism of smooth rigid analytic $K$-spaces. We now define the direct image functor for right $\w{\D}$-modules,
	\begin{equation*}
		f^r_+: \mathrm{D}(\w{\D}_X^{\mathrm{op}})\to \mathrm{D}(\w{\D}_Y^{\mathrm{op}}),
	\end{equation*} 
	by setting
	\begin{equation*}
		f^r_+(\M^\bullet)=\mathrm{R}f_*(\M^\bullet\widetilde{\otimes}^{\mathbb{L}}_{\w{\D}_X} \w{\D}_{X\to Y}).
	\end{equation*}
	Using Theorem \ref{sidechanging}, we can define the \textbf{direct image functor} for left $\w{\D}$-modules as
	\begin{equation*}
		f_+(\M^\bullet)=f^r_+(\Omega_X \widetilde{\otimes}_{\O_X} \M^\bullet)\widetilde{\otimes}_{\O_Y}\Omega_Y^{\otimes -1}.
	\end{equation*}
	\begin{lem}
		Let $\w{\D}_{Y\leftarrow X}=\Omega_X\widetilde{\otimes}_{\O_X} \w{\D}_{X\to Y}\widetilde{\otimes}_{f^{-1}\O_Y}f^{-1}\Omega^{\otimes -1}_Y$. Then there is a natural isomorphism
		\begin{equation*}
			f_+(\M^\bullet)\cong \mathrm{R}f_*(\w{\D}_{Y\leftarrow X}\widetilde{\otimes}^{\mathbb{L}}_{\w{\D}_X} \M^\bullet)
		\end{equation*}
		for $\M^\bullet\in \mathrm{D}(\w{\D}_X)$.
	\end{lem}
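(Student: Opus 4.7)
The plan is to unwind the definition of $f_+$ and then perform two moves: a projection formula to bring the tensor with $\Omega_Y^{\otimes -1}$ inside $\mathrm{R}f_*$, and a "swap" identity that exchanges $\mathcal{M}$ and $\w{\D}_{X\to Y}$ across the $\Omega_X \h{\otimes}_{\O_X}(-)$ side-changing functor.

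Concretely, by definition
\begin{equation*}
f_+(\M^\bullet) = \mathrm{R}f_*\bigl((\Omega_X \h{\otimes}_{\O_X} \M^\bullet) \h{\otimes}^{\mathbb{L}}_{\w{\D}_X} \w{\D}_{X\to Y}\bigr) \h{\otimes}_{\O_Y} \Omega_Y^{\otimes -1}.
\end{equation*}
The first step is a projection formula: since $\Omega_Y^{\otimes -1}$ is a line bundle on $Y$, the functor $-\h{\otimes}_{\O_Y}\Omega_Y^{\otimes -1}$ is exact, and locally (where $\Omega_Y^{\otimes -1}\cong \O_Y$) trivially commutes with $\mathrm{R}f_*$, so one obtains a natural isomorphism
\begin{equation*}
\mathrm{R}f_*(\mathcal{N}^\bullet)\h{\otimes}_{\O_Y}\Omega_Y^{\otimes -1} \cong \mathrm{R}f_*\bigl(\mathcal{N}^\bullet \h{\otimes}_{f^{-1}\O_Y} f^{-1}\Omega_Y^{\otimes -1}\bigr)
\end{equation*}
for any $\mathcal{N}^\bullet \in \mathrm{D}(f^{-1}\w{\D}_Y^\mathrm{op})$. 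Applying this and then pulling $-\h{\otimes}_{f^{-1}\O_Y}f^{-1}\Omega_Y^{\otimes -1}$ past the $\widehat{\otimes}^{\mathbb{L}}_{\widehat{\mathcal{D}}_X}$ (which is legal because the right $f^{-1}\O_Y$-action on $\w{\D}_{X\to Y}$ commutes with the left $\w{\D}_X$-action and the tensor is again exact), reduces the claim to producing a natural isomorphism
\begin{equation*}
(\Omega_X \h{\otimes}_{\O_X} \M^\bullet) \h{\otimes}^{\mathbb{L}}_{\w{\D}_X} \w{\D}_{X\to Y} \cong (\Omega_X \h{\otimes}_{\O_X} \w{\D}_{X\to Y}) \h{\otimes}^{\mathbb{L}}_{\w{\D}_X} \M^\bullet
\end{equation*}
in $\mathrm{D}(f^{-1}\w{\D}_Y^\mathrm{op})$, after which an associativity rearrangement yields $\w{\D}_{Y\leftarrow X}\h{\otimes}^{\mathbb{L}}_{\w{\D}_X}\M^\bullet$ on the right.

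The heart of the argument is therefore this swap identity. At the level of ordinary tensor products, for any two left $\w{\D}_X$-modules $\mathcal{L}, \mathcal{N}$ one has a well-defined map of complete bornological sheaves
\begin{equation*}
(\Omega_X \h{\otimes}_{\O_X} \mathcal{L}) \h{\otimes}_{\w{\D}_X} \mathcal{N} \longrightarrow (\Omega_X \h{\otimes}_{\O_X} \mathcal{N}) \h{\otimes}_{\w{\D}_X} \mathcal{L}, \qquad (\omega\otimes \ell)\otimes n \longmapsto (\omega\otimes n)\otimes \ell,
\end{equation*}
and a direct check on the relations defining the right $\w{\D}_X$-action of Theorem~\ref{sidechanging} shows that this morphism is well defined and that its mirror image is an inverse. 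To derive this, resolve $\M^\bullet$ by a complex $\mathcal{P}^\bullet \to \M^\bullet$ of flat $\w{\D}_X$-modules (as in our construction of $\h{\otimes}^{\mathbb{L}}_{\w{\D}_X}$); then $\Omega_X\h{\otimes}_{\O_X}\mathcal{P}^\bullet$ is a complex of flat right $\w{\D}_X$-modules (since $\Omega_X\h{\otimes}_{\O_X}-$ is an equivalence by Theorem~\ref{sidechanging}), so both sides of the swap formula compute by $\mathcal{P}^\bullet$, and the underived identity upgrades to the derived one. The $f^{-1}\w{\D}_Y$-action on both sides is visibly matched since it commutes with everything in sight.

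I expect the main obstacle to be the careful bookkeeping of bimodule structures ensuring that the swap isomorphism is $f^{-1}\w{\D}_Y$-linear, and that the exchanges of tensor products are legitimate at the derived level in our quasi-abelian framework; in particular one must check that the flat resolutions chosen to derive $\h{\otimes}^{\mathbb{L}}_{\w{\D}_X}$ remain flat after side-changing, which follows from the fact that $\Omega_X$ is a line bundle but is where the argument must be spelled out.
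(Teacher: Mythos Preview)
Your proof is correct and follows the same approach as the paper, only spelled out in considerably more detail. The paper's proof records the projection formula for the line bundle $\Omega_Y^{\otimes -1}$ exactly as you do, and then disposes of the remaining swap identity with the single phrase ``comparing the actions yields the result''; your explicit construction of the swap map $(\omega\otimes\ell)\otimes n\mapsto(\omega\otimes n)\otimes\ell$ and its derived upgrade via flat resolutions is precisely what that phrase is hiding (and is the standard argument, cf.\ \cite[Lemma~1.3.4]{Hotta}).
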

	\begin{proof}
		As $\Omega_Y^{\otimes -1}$ is a line bundle, we clearly have 
		\begin{equation*}
			\mathrm{R}f_*(\F)\widetilde{\otimes}_{\O_Y}\Omega_Y^{\otimes -1}\cong \mathrm{R}f_*(\F\widetilde{\otimes}_{f^{-1}\O_Y}f^{-1}\Omega_Y^{\otimes -1})
		\end{equation*}
		for any $\F\in \mathrm{Mod}_{\mathrm{Shv}(Y, LH(\h{\B}c_K))}(f^{-1}\O_Y)$. Comparing the actions yields the result.
	\end{proof}
	We also note that for closed embeddings $i: X\to Y$, we recover the functor $i_+$ described in \cite{DcapTwo} -- see section 9.1 for details.
	
	Lastly, we set $f_!=\mathbb{D}_Yf_+\mathbb{D}_X$, the \textbf{extraordinary (shriek) direct image functor}.

	\subsection{Composition results for direct image functors}	
	The composition of direct image functors is far more subtle than for inverse images.
	\begin{lem}
		\label{ifprojthendirectimcomp}
		Let $f: X\to Y$ and $g: Y\to Z$ be morphisms of smooth rigid analytic $K$-spaces. Let $\M^\bullet\in \mathrm{D}(\w{\D}_X)$. If 
		\begin{equation*}
			\w{\D}_{Z\leftarrow Y}\widetilde{\otimes}^\mathbb{L}_{\w{\D}_Y}f_+\M^\bullet\cong \mathrm{R}f_*\left(\w{\D}_{Z\leftarrow X}\widetilde{\otimes}^\mathbb{L}_{\w{\D}_X} \M^\bullet\right) 
		\end{equation*}
		via tha natural morphism, then
		\begin{equation*}
			(gf)_+(\M^\bullet)\cong g_+f_+(\M^\bullet)
		\end{equation*}
		via the natural morphism.
	\end{lem}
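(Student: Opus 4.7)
The plan is to unfold both sides of the desired isomorphism in terms of their defining formulas and then apply the hypothesis together with Lemma \ref{catcomposition} (composition of derived direct images).

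First I would write out the right-hand side directly: by definition of $g_+$ applied to $f_+\M^\bullet$ we have
\begin{equation*}
g_+f_+\M^\bullet \;=\; \mathrm{R}g_*\!\left(\w{\D}_{Z\leftarrow Y}\h{\otimes}^{\mathbb L}_{\w{\D}_Y} f_+\M^\bullet\right).
\end{equation*}
The hypothesis asserts that the natural morphism
\begin{equation*}
\w{\D}_{Z\leftarrow Y}\h{\otimes}^{\mathbb L}_{\w{\D}_Y} f_+\M^\bullet \;\xrightarrow{\sim}\; \mathrm{R}f_*\!\left(\w{\D}_{Z\leftarrow X}\h{\otimes}^{\mathbb L}_{\w{\D}_X}\M^\bullet\right)
\end{equation*}
is an isomorphism. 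Applying the exact functor $\mathrm{R}g_*$ to this isomorphism gives
\begin{equation*}
g_+f_+\M^\bullet \;\cong\; \mathrm{R}g_*\,\mathrm{R}f_*\!\left(\w{\D}_{Z\leftarrow X}\h{\otimes}^{\mathbb L}_{\w{\D}_X}\M^\bullet\right).
\end{equation*}

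Next, I would invoke Lemma \ref{catcomposition} applied to the monoid $\w{\D}_Z$, which produces a natural isomorphism $\mathrm{R}g_*\mathrm{R}f_* \cong \mathrm{R}(gf)_*$ of functors on $\mathrm{D}(\mathrm{Mod}((gf)^{-1}\w{\D}_Z))$. Composing with the previous identification, we obtain
\begin{equation*}
g_+f_+\M^\bullet \;\cong\; \mathrm{R}(gf)_*\!\left(\w{\D}_{Z\leftarrow X}\h{\otimes}^{\mathbb L}_{\w{\D}_X}\M^\bullet\right) \;=\; (gf)_+\M^\bullet,
\end{equation*}
where the last equality is the definition of $(gf)_+$.

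The only real content beyond unfolding definitions is the bookkeeping that the composite isomorphism agrees with the natural morphism $g_+f_+\M^\bullet \to (gf)_+\M^\bullet$ obtained from adjunction (unit $\mathrm{id}\to \mathrm{R}f_*f^{-1}$ on the bimodule side). I expect this to be the only mildly delicate point: one has to check that the projection-type map used in the hypothesis is compatible with the canonical bimodule identification $\w{\D}_{Z\leftarrow X}\cong f^{-1}\w{\D}_{Z\leftarrow Y}\h{\otimes}_{f^{-1}\w{\D}_Y}\w{\D}_{Y\leftarrow X}$ (which is an isomorphism of $((gf)^{-1}\w{\D}_Z,\w{\D}_X)$-bimodules, visible by expanding both sides using $\w{\D}_{X\to Y}=\O_X\h{\otimes}_{f^{-1}\O_Y}f^{-1}\w{\D}_Y$ and rearranging with side-changing from Theorem \ref{sidechanging}). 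Once this is noted, the chain of natural isomorphisms above is manifestly the natural composition, so the lemma follows.
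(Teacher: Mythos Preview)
Your argument is correct and is essentially the paper's own proof, which simply says the result follows from the definitions and Lemma \ref{catcomposition} (citing \cite[Proposition 1.5.21]{Hotta} for the classical analogue). You have merely spelled out the same chain of isomorphisms in detail, including the bimodule identification implicit in the reference to Hotta.
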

	\begin{proof}
		This follows directly from the definitions and the composition result for sheaf theoretic direct images (Lemma \ref{catcomposition}). In the classical case of $\D_X$-modules on quasi-projective complex varieties, where the corresponding projection formula is always satisfied, the argument is given in \cite[Proposition 1.5.21]{Hotta}.
	\end{proof}
	The right module version of the above holds mutatis mutandis.
	\begin{lem}
		\label{compforsmooth}
		Let $f: X\to Y$ and $g: Y\to Z$ be morphisms of smooth rigid analytic $K$-spaces. Suppose that $g$ is smooth. Then
		\begin{equation*}
			(gf)_+(\M^\bullet)\cong g_+f_+(\M^\bullet)
		\end{equation*}
		via the natural morphism for all $\M^\bullet\in \mathrm{D}(\w{\D}_X)$.
	\end{lem}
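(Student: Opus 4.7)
By Lemma \ref{ifprojthendirectimcomp}, it suffices to prove the isomorphism
\[\w{\D}_{Z\leftarrow Y}\h{\otimes}^\mathbb{L}_{\w{\D}_Y}f_+\M^\bullet\;\cong\;\mathrm{R}f_*\!\left(\w{\D}_{Z\leftarrow X}\h{\otimes}^\mathbb{L}_{\w{\D}_X}\M^\bullet\right)\]
via the natural map. Unfolding $f_+\M^\bullet=\mathrm{R}f_*(\w{\D}_{Y\leftarrow X}\h{\otimes}^\mathbb{L}_{\w{\D}_X}\M^\bullet)$ and applying the composition of transfer bimodules
\[\w{\D}_{Z\leftarrow X}\;\cong\;f^{-1}\w{\D}_{Z\leftarrow Y}\h{\otimes}^\mathbb{L}_{f^{-1}\w{\D}_Y}\w{\D}_{Y\leftarrow X}\]
(the right-module counterpart of Proposition \ref{invimcomposition}, obtained by applying that result to $\w{\D}_Z$ and then side-changing via Theorem \ref{sidechanging}), the question reduces to establishing the projection formula
\[\mathcal{P}\h{\otimes}^\mathbb{L}_{\w{\D}_Y}\mathrm{R}f_*\mathcal{N}\;\cong\;\mathrm{R}f_*\!\left(f^{-1}\mathcal{P}\h{\otimes}^\mathbb{L}_{f^{-1}\w{\D}_Y}\mathcal{N}\right)\]
for $\mathcal{P}=\w{\D}_{Z\leftarrow Y}$ and $\mathcal{N}:=\w{\D}_{Y\leftarrow X}\h{\otimes}^\mathbb{L}_{\w{\D}_X}\M^\bullet$, via the natural morphism.

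The claim is local on $Y$, so shrinking to a small enough affinoid one may assume the relative tangent sheaf $\T_{Y/Z}$ is free of rank $d=\dim Y-\dim Z$ (using smoothness of $g$). The key geometric input is a relative analogue of Theorem \ref{spencer}: a finite resolution of $\w{\D}_{Z\leftarrow Y}$ in $\h{\B}c_K(\w{\D}_Y^{\mathrm{op}})$ of the form
\[0\to \wedge^0\Omega^1_{Y/Z}\h{\otimes}_{\O_Y}\w{\D}_Y\to\cdots\to\Omega^d_{Y/Z}\h{\otimes}_{\O_Y}\w{\D}_Y\to\w{\D}_{Z\leftarrow Y}\to 0.\]
The abstract version of this complex over $\D_Y$ is the classical relative Spencer resolution for smooth morphisms and is exact by a standard PBW-type computation (compare \cite[Lemma 1.5.27]{Hotta} in the relative setting); strict exactness of the completed version then follows as in the proof of Theorem \ref{spencer} by tensoring with the flat $\D_Y$-module $\w{\D}_Y$ (cf.\ \cite[Lemma 4.14]{Bode1}) and invoking Theorem \ref{Dcapembedding}.

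By dévissage along this bounded resolution, the projection formula for $\w{\D}_{Z\leftarrow Y}$ reduces to the projection formula for each term $\mathcal{P}=\mathcal{E}\h{\otimes}_{\O_Y}\w{\D}_Y$, where $\mathcal{E}$ is a locally free $\O_Y$-module of finite rank. In this case $\mathcal{P}\h{\otimes}^\mathbb{L}_{\w{\D}_Y}\mathrm{R}f_*\mathcal{N}$ and $f^{-1}\mathcal{P}\h{\otimes}^\mathbb{L}_{f^{-1}\w{\D}_Y}\mathcal{N}$ simplify to $\mathcal{E}\h{\otimes}_{\O_Y}\mathrm{R}f_*\mathcal{N}$ and $f^{-1}\mathcal{E}\h{\otimes}_{f^{-1}\O_Y}\mathcal{N}$ respectively (the $\w{\D}_Y$-factor cancels against its action and no higher Tor appears since $\mathcal{E}$ is locally free), and the desired isomorphism $\mathcal{E}\h{\otimes}_{\O_Y}\mathrm{R}f_*\mathcal{N}\cong \mathrm{R}f_*(f^{-1}\mathcal{E}\h{\otimes}_{f^{-1}\O_Y}\mathcal{N})$ holds because locally $\mathcal{E}\cong \O_Y^{\oplus r}$, and both $\mathrm{R}f_*$ and the completed tensor product commute with finite direct sums.

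The main obstacle is the construction and strict exactness of the relative Spencer resolution in the complete bornological framework; the rest is routine dévissage. A secondary subtle point is the identification of $\w{\D}_{Z\leftarrow X}$ with $f^{-1}\w{\D}_{Z\leftarrow Y}\h{\otimes}^\mathbb{L}_{f^{-1}\w{\D}_Y}\w{\D}_{Y\leftarrow X}$ as a derived bimodule, which requires carefully tracing the $\Omega_X$, $\Omega_Y$, $\Omega_Z^{\otimes -1}$ twists through the side-changing isomorphisms coming from Proposition \ref{invimcomposition} applied to $\w{\D}_Z$.
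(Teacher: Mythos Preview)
Your proposal is correct and follows essentially the same approach as the paper: both reduce via Lemma~\ref{ifprojthendirectimcomp} to a projection formula, and both establish that formula by using the smoothness of $g$ to resolve the relevant transfer bimodule locally by a finite complex of finite free $\w{\D}_Y$-modules (the relative Spencer resolution). The only cosmetic difference is that the paper works on the right-module side with $\w{\D}_{Y\to Z}$ and its quotient description $\w{\D}_Y/\sum \w{\D}_Y\partial_i$, while you work on the left-module side with $\w{\D}_{Z\leftarrow Y}$ and spell out the $\Omega^\bullet_{Y/Z}$-resolution explicitly; these are related by side-changing and yield the same argument.
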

	\begin{proof}
		As the direct image functors intertwine side-changing, it suffices to show the claim for right $\w{\D}$-modules.
		
		As $g$ is smooth, the transfer bimodule $\w{\D}_{Y\to Z}$ is locally given by 
		\begin{equation*}
			\w{\D}_{Y\to Z}\cong\w{\D}_Y/(\sum_{i=d+1}^r\w{\D}_Y\partial_i) 
		\end{equation*}
		(as a left $\w{\D}_Y$-module) for suitable coordinates on $Y$. In particular, the same arguments as for the Spencer resolution in subsection 6.3 show that locally on $Y$, $\w{\D}_{Y\to Z}$ can be represented by a finite complex of finite free $\w{\D}_Y$-modules. We therefore have the projection formula
		\begin{equation*}
			\mathrm{R}f_*(\N^\bullet)\widetilde{\otimes}^\mathbb{L}_{\w{\D}_Y}\w{\D}_{Y\to Z} \cong \mathrm{R}f_*(\N^\bullet\widetilde{\otimes}^\mathbb{L}_{f^{-1}\w{\D}_Y}f^{-1}\w{\D}_{Y\to Z})
		\end{equation*} 
		for any $\N^\bullet\in \mathrm{D}(f^{-1}\w{\D}_Y^\mathrm{op})$.
		
		Setting $\N^\bullet=\M^\bullet\widetilde{\otimes}^\mathbb{L}_{\w{\D}_X}\w{\D}_{X\to Y}$ for $\M^\bullet\in \mathrm{D}(\w{\D}_X^\mathrm{op})$, this becomes the isomorphism required in Lemma \ref{ifprojthendirectimcomp}.
	\end{proof}
	It remains to verify an analogous statement in the case when $g$ is a closed embedding, ideally resting on a similar projection formula as above. We are currently not aware of any argument which can treat this case for arbitrary $\M^\bullet$, but we will provide a proof in the case when $\M^\bullet$ is a $\C$-complex in subsection 9.1.
	
	Alternatively, we can restrict the class of spaces under consideration, as in the lemma below.
	\begin{lem}
		\label{clsmimcomp}
		Let $f:X\to Y$ and $g:Y\to Z$ be morphisms of smooth rigid analytic $K$-spaces. Assume that $f_*$ has finite cohomological dimension and that for any admissible open affinoid $V\subseteq Y$, the subset $f^{-1}V$ is quasi-compact. Then
		\begin{equation*}
			(gf)_+\M^\bullet\cong g_+f_+(\M^\bullet)
		\end{equation*}
		via the natural morphism for all $\M^\bullet\in \mathrm{D}(\w{\D}_X)$.
	\end{lem}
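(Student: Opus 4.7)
The plan is, via Lemma~\ref{ifprojthendirectimcomp}, to reduce to the projection formula
\[
\w{\D}_{Z\leftarrow Y}\h{\otimes}^\mathbb{L}_{\w{\D}_Y} f_+\M^\bullet \;\xrightarrow{\sim}\; \mathrm{R}f_*\bigl(\w{\D}_{Z\leftarrow X}\h{\otimes}^\mathbb{L}_{\w{\D}_X}\M^\bullet\bigr),
\]
or equivalently, after side-changing via Theorem~\ref{sidechanging}, to the analogous right-module formula
\[
\mathrm{R}f_*(\N^\bullet)\h{\otimes}^\mathbb{L}_{\w{\D}_Y}\w{\D}_{Y\to Z}\;\xrightarrow{\sim}\;\mathrm{R}f_*\bigl(\N^\bullet\h{\otimes}^\mathbb{L}_{f^{-1}\w{\D}_Y}f^{-1}\w{\D}_{Y\to Z}\bigr)
\]
for $\N^\bullet\in\mathrm{D}(\w{\D}_X^\mathrm{op})$. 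The assertion is local on $Z$, so I fix $Z$ affinoid and test the natural morphism on sections over each admissible affinoid $V\subset Y$. By hypothesis $f^{-1}V$ is quasi-compact and hence admits a finite affinoid cover $\mathfrak{U}=\{U_j\}$; quasi-separatedness of $X$ ensures that all iterated intersections $U_{j_0\cdots j_r}$ are themselves quasi-compact, so $\mathrm{R}\Gamma(f^{-1}V,-)$ is computable through a finite Cech-type double complex.

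Next, I pick a strict flat resolution $\mathcal{P}^\bullet\to\w{\D}_{Y\to Z}$ in $\h{\B}c_K(\w{\D}_Y)$ in which each $\mathcal{P}^i$ is a (possibly infinite) direct sum of modules of the form $(\w{\D}_Y)_W$ for $W\subset Y$ affinoid; this is guaranteed by Theorem~\ref{Shvnice}(v), and the summands $(\w{\D}_Y)_W$ are strongly flat by Lemma~\ref{flatoverO} together with the local description of $\w{\D}_Y$ as an iterated completed tensor product with $K\{x_i\}$. For a single summand $(\w{\D}_Y)_W$ the desired projection formula is local on $Y$ and follows by the same argument as in the proof of Lemma~\ref{compforsmooth}, since one copy of $(\w{\D}_Y)_W$ behaves on $W$ exactly as the finite-free case. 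Because the Cech cover of $f^{-1}V$ may be taken \emph{finite}, the finite Cech complex commutes with arbitrary direct sums of complete bornological sheaves — using strong exactness of direct sums in the quasi-elementary category $\h{\B}c_K$ of Theorem~\ref{Shvnice} — so the termwise projection formula extends from individual summands to each $\mathcal{P}^i$. Totalizing the resulting quasi-isomorphism of double complexes gives the projection formula for $\w{\D}_{Y\to Z}$ itself, and hence the lemma.

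The main obstacle will be the commutation in the third step: verifying carefully that $\mathrm{R}f_*$ commutes with the infinite direct sums appearing in the flat resolution of $\w{\D}_{Y\to Z}$ in the complete bornological framework. This is precisely what the quasi-compactness hypothesis on fibres is designed to supply, since it replaces arbitrary covers by finite ones and makes the Cech computation of $\mathrm{R}f_*\mathcal{F}$ on each affinoid of $Y$ commute with direct sums. A secondary technical check concerns the totalization of an unbounded double complex in $\mathrm{D}(\w{\D}_Z)$, which behaves as expected thanks to the exactness of products in $\mathrm{Shv}(Z,\h{\B}c_K)$ recorded in Theorem~\ref{Shvnice} and its proof.
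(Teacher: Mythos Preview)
Your approach diverges substantially from the paper's, and the step handling a single summand $(\w{\D}_Y)_W$ is a genuine gap.

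The paper does \emph{not} resolve $\w{\D}_{Y\to Z}$ generically. Instead it first factors $g$ as a closed immersion followed by a smooth morphism and invokes Lemma~\ref{compforsmooth} to dispose of the smooth part, reducing to $g$ a closed immersion. Locally one then has $\w{\D}_{Y\to Z}\cong \w{\D}_Y\h{\otimes}_K K\{\partial_{r+1},\dots,\partial_d\}$ as a left $\w{\D}_Y$-module, so the required projection formula becomes
\[
\mathrm{R}f_*(\N^\bullet)\widetilde{\otimes}_K I(K\{\underline{\partial}\})\;\xrightarrow{\sim}\;\mathrm{R}f_*\bigl(\N^\bullet\widetilde{\otimes}_K I(K\{\underline{\partial}\})\bigr).
\]
The crucial input is Corollary~\ref{stronglyflat}: $K\{\underline{\partial}\}$ is \emph{strongly} flat, so $-\widetilde{\otimes}_K I(K\{\underline{\partial}\})$ is exact on the left heart. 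For an injective sheaf $\N$ and a finite affinoid cover $\mathfrak{V}$ of a quasi-compact $V$, exactness of $\check{C}^{\mathrm{aug}}(\mathfrak{V},\N)\widetilde{\otimes} I(K\{\underline{\partial}\})$ shows that sheafification of $\N\widetilde{\otimes}^{\mathrm{pre}} I(K\{\underline{\partial}\})$ does not change sections over quasi-compact opens; this gives both $f_*\N\widetilde{\otimes} I(K\{\underline{\partial}\})\cong f_*(\N\widetilde{\otimes} I(K\{\underline{\partial}\}))$ and the $f_*$-acyclicity of $\N\widetilde{\otimes} I(K\{\underline{\partial}\})$. One then passes to arbitrary complexes via injective resolutions and homotopy limits. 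The quasi-compactness hypothesis is used exactly to refine admissible covers of $f^{-1}V$ to finite ones.

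Your argument keeps $g$ arbitrary and claims the projection formula for each $(\w{\D}_Y)_W$ ``by the same argument as in Lemma~\ref{compforsmooth}''. But that lemma works because the transfer bimodule is locally a finite complex of \emph{finite free} $\w{\D}_Y$-modules, so tensoring commutes with $\mathrm{R}f_*$ termwise for trivial reasons. The module $(\w{\D}_Y)_W=j_!j^{-1}\w{\D}_Y$ is not of this form on $Y$: tensoring with it over $\w{\D}_Y$ amounts to applying $j_!j^{-1}$, and the projection formula you need becomes
\[
j_!\,\mathrm{R}(f|_{f^{-1}W})_*\bigl(\N^\bullet|_{f^{-1}W}\bigr)\;\cong\;\mathrm{R}f_*\bigl(j'_!(\N^\bullet|_{f^{-1}W})\bigr),
\]
with $j:W\hookrightarrow Y$ and $j':f^{-1}W\hookrightarrow X$. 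This is a base-change statement of proper-base-change type for an open immersion, and it does \emph{not} follow from quasi-compactness of $f^{-1}V$ for affinoid $V\subset Y$. Your remark that ``on $W$ it behaves exactly as the finite-free case'' checks the isomorphism only over $W$; on affinoids $V$ meeting but not contained in $W$ the sections of $j_!(-)$ have no simple description, and the two sides need not agree. Without the reduction to a closed immersion and the specific strong flatness of $K\{\underline{\partial}\}$, the projection formula is not established. The subsequent direct-sum and totalization steps therefore rest on an unproven base case.
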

	\begin{proof}
		Factoring $g$ as a closed immersion followed by a smooth morphism, Lemma \ref{compforsmooth} allows us to reduce to the case where $g$ is a closed immersion. We now verify that the (right module version of the) condition in Lemma \ref{ifprojthendirectimcomp} is satisfied. As this is a local question on $Y$, we assume that $Z$ is affinoid admitting a local coordinate system $x_1, \hdots, x_d$, $\partial_1, \hdots, \partial_d$ such that $Y$ is cut out by 
		\begin{equation*}
			x_{r+1}=\hdots=x_d=0.
		\end{equation*}
		In this case, 
		\begin{equation*}
			\w{\D}_{Y\to Z}\cong \w{\D}_Y\widetilde{\otimes}_K K\{\partial_{r+1}\}\widetilde{\otimes}_K \hdots \widetilde{\otimes}_K K\{\partial_d\}=: \w{\D}_Y\widetilde{\otimes}_K K\{\underline{\partial}\}
		\end{equation*}
		as left $\w{\D}_Y$-modules.
		
		It thus suffices to verify that
		\begin{equation*}
			\mathrm{R}f_*(\N^\bullet)\widetilde{\otimes}_K K\{\underline{\partial}\}\to \mathrm{R}f_*(\N^\bullet\widetilde{\otimes}_K K\{\underline{\partial}\})
		\end{equation*} 
		is an isomorphism for $\N^\bullet\in \mathrm{D}(\mathrm{Shv}(X, LH(\h{\B} c_K)))$.
		
		By Corollary \ref{stronglyflat}, $K\{\underline{\partial}\}$ is strongly flat. Let $\N\in \mathrm{Shv}(X, LH(\h{\B} c_K))$ be an injective sheaf. Then by Lemma \ref{tensoronqcompact}, 
		\begin{equation*}
			(\N\widetilde{\otimes}_K K\{\underline{\partial}\})(U)\cong \N(U)\widetilde{\otimes}_K K\{\underline{\partial}\} 
		\end{equation*}
		for any quasi-compact admissible open subset $U\subseteq X$. 
		
		In particular, $\N\widetilde{\otimes}_K K\{\underline{\partial}\}$ is still $f_*$-acyclic and
		\begin{equation*}
			f_*(\N)\widetilde{\otimes}_K K\{\underline{\partial}\}\cong f_*(\N\widetilde{\otimes}_K K\{\underline{\partial}\})
		\end{equation*}
		via the natural morphism.
		
		Thus we have shown the claim in the case when $\N$ is an injective sheaf in the left heart.
		
		If $\N^\bullet$ is bounded below, we can represent it by a complex consisting of injective sheaves, and the desired isomorphism follows directly from the above (noting as in the above that if $\N$ is injective, $\N\widetilde{\otimes} K\{\underline{\partial}\}$ is still $f_*$-acyclic). More generally, if $\N^\bullet$ is arbitrary, we use the assumption that $f_*$ has finite cohomological dimension to reduce to the case of a bounded below complex. This proves the projection formula and hence the Lemma.
	\end{proof}
	This covers a large class of cases (in particular, when $f$ and $g$ are closed embeddings), but not all morphisms in which we might be interested (e.g. quasi-Stein spaces).
	
	We remark that the additional quasi-compactness assumption was only used to appeal to Lemma \ref{tensoronqcompact}, to ensure that the tensor product commuted with the products involved in a Cech complex. It is thus not surprising that we can drop this assumption whenever we can invoke Corollary \ref{commutewprod} instead.
	
	\begin{lem}
		Let $X$ be a separated rigid analytic $K$-variety. Let $\M\in \mathrm{Shv}(X, \h{\B}c_K)$ and $\B$ be a basis of the topology on $X$, consisting of admissible open affinoid subdomains such that for any $U\in \B$, the following properties hold:
		\begin{enumerate}[(i)]
			\item Any affinoid subdomain of $U$ is also in $\B$. In particular, $\B$ is closed under finite intersections.
			\item $\M|_U$ has vanishing higher Cech cohomology with respect to any finite affinoid covering.
			\item $\M(U)$ is a pseudo-nuclear space of countable type.
		\end{enumerate}
		Let $V\subseteq X$ be an admissible open subspace admitting a countable admissible covering by elements of $\B$. Then 
		\begin{equation*}
			(\M\h{\otimes}_K K\{x_1, \hdots, x_n\})(V)\cong \M(V)\h{\otimes}_K K\{x_1, \hdots, x_n\} 
		\end{equation*}
		for any $n$. 
		
		More generally, 
		\begin{equation*}
			\mathrm{H}^j(V, \M\h{\otimes}_K K\{\underline{x}\})\cong \mathrm{H}^j(V, \M)\widetilde{\otimes}_K K\{\underline{x}\}
		\end{equation*}
		for any $j$.
	\end{lem}
	\begin{proof}
		By Lemma \ref{tensoronqcompact}, 
		\begin{equation*}
			(\M\widetilde{\otimes}_K K\{\underline{x}\})(U)\cong \M(U)\widetilde{\otimes}_K K\{\underline{x}\}
		\end{equation*}
		for any affinoid $U$. In particular, $\M\widetilde{\otimes}_K K\{\underline{x}\}$ has vanishing higher Cech cohomology on affinoids in $\B$ relative to finite affinoid coverings.
		
		Let $V$ be an admissible open subspace of $X$ and let $\mathfrak{U}$ be a countable $\B$-covering of $V$. By the above, we can deduce as in \cite[tags 01EW, 01ET]{stacksproj} that
		\begin{equation*}
			\mathrm{H}^j(V, \M)\cong\check{\mathrm{H}}^j(\mathfrak{U}, \M)
		\end{equation*}
		and
		\begin{equation*}
			\mathrm{H}^j(V, \M\widetilde{\otimes}_K K\{\underline{x}\})\cong \check{\mathrm{H}}^j(\mathfrak{U}, \M\widetilde{\otimes}_K K\{\underline{x}\})
		\end{equation*}
		for any $j$.
		
		Note that by assumption, $\M(U)$ is pseudo-nuclear of countable type for any $U\in \mathfrak{U}$, so that Corollary \ref{commutewinvIB} implies that 
		\begin{align*}
			\check{C}^j(\mathfrak{U}, \M\widetilde{\otimes}_K K\{\underline{x}\})&\cong \prod \left((\M\widetilde{\otimes}_K K\{\underline{x}\})(U_{i_0\hdots  i_j})\right)\\
			&\cong \prod \left( \M(U_{i_0\hdots i_j})\widetilde{\otimes}_K K\{\underline{x}\}\right)\\
			&\cong (\prod \M(U_{i_0\hdots i_j}))\widetilde{\otimes}_K K\{\underline{x}\}\\
			&\cong \check{C}^j(\mathfrak{U}, \M)\h{\otimes}_K K\{\underline{x}\}.
		\end{align*}
		Thus the strong flatness of $K\{\underline{x}\}$ implies that
		\begin{equation*}
			\check{\mathrm{H}}^j(\mathfrak{U}, \M\widetilde{\otimes}_K K\{\underline{x}\})\cong \check{\mathrm{H}}^j(\mathfrak{U}, \M)\widetilde{\otimes}_K K\{\underline{x}\},
		\end{equation*}
		finishing the proof.
	\end{proof}
	
	\begin{lem}
		Let $X$ be separated rigid analytic $K$-variety with countably many connected components, and let $f: X\to Y$ be a morphism of rigid analytic $K$-varieties. Let $\M$ and $\B$ be as in the previous lemma.
		Then the morphism
		\begin{equation*}
			\mathrm{R}f_*(\M)\widetilde{\otimes}_K K\{x_1, \hdots,x_n\}\to \mathrm{R}f_*(\M\widetilde{\otimes}_K K\{x_1, \hdots, x_n\}) 
		\end{equation*}
		is an isomorphism for any $n$.
	\end{lem}
	\begin{proof}
		Let $W$ be an affinoid subdomain of $Y$. In the light of the previous lemma, we only need to show that $f^{-1}W$ admits a countable $\B$-covering, since $\mathrm{H}^j(\mathrm{R}f_*(\M))$ is the sheafification of $W\mapsto \mathrm{H}^j(f^{-1}W, \M)$ for any $j$ due to \cite[tag 0D5X]{stacksproj}.
		
		Since $X$ is quasi-paracompact with countably many connected components, it admits a countable admissible covering by affinoid subspaces $U_i$ (see \cite[p. 212]{Boschlectures}). As $Y$ is quasi-separated, the graph morphism 
		\begin{equation*}
			X\cong X\times_ Y Y\to X\times Y
		\end{equation*} 
		is quasi-compact, since it is obtained via base change from the diagonal map for $Y$. In particular, $U_i\cap f^{-1}W$ is a quasi-compact admissible open subspace of $X$ for each $i$, i.e. $f^{-1}W$ admits a countable admissible covering by quasi-compact spaces. Since $\B$ is a basis, we can now cover each $U_i\cap f^{-1}W$ by finitely many affinoids in $\B$, yielding the desired countable admissible covering of $f^{-1}W$. 
	\end{proof}
	
	\begin{cor}
		\label{projforCcomplexes}
		Let $X$ be a separated rigid analytic $K$-variety with countably many connected components, and let $f: X\to Y$ be a morphism of rigid analytic spaces. Let $\M^\bullet$ be a bounded below chain complex of objects in $\mathrm{Shv}(X, \h{\B}c_K)$, and $\B$ be a basis of $X$ consisting of admissible open affinoid subspaces such that for any $U\in \B$, the following properties hold:
		\begin{enumerate}[(i)]
			\item Any affinoid subdomain of $U$ is also in $\B$.
			\item For each $j$, $\M^j|_U$ has vanishing higher Cech cohomology with respect to any finite affinoid covering.
			\item For each $j$, $\M^j(U)$ is a pseudo-nuclear space of countable type.
		\end{enumerate}
		Then the morphism
		\begin{equation*}
			\mathrm{R}f_*(\M^\bullet)\widetilde{\otimes}_K K\{x_1, \hdots, x_n\}\to \mathrm{R}f_*(\M^\bullet\widetilde{\otimes}_K K\{x_1, \hdots, x_n\})
		\end{equation*}
		is an isomorphism in $\mathrm{D}(\mathrm{Shv}(Y, LH(\h{\B}c_K)))$ for any $n$.
	\end{cor}
	\begin{proof}
		Let $W\subseteq Y$ be an affinoid subdomain, and let $\mathfrak{U}$ be a countable admissible $\B$-covering of $f^{-1}W$, as obtained in the previous lemma.
		
		By \cite[tag 0FLH]{stacksproj}, $\mathrm{R}\Gamma(f^{-1}W, \M^\bullet)$ can be computed as the total complex 
		\begin{equation*}
			\mathrm{Tot}(\check{C}^\bullet(\mathfrak{U}, \M^\bullet)).
		\end{equation*}
		It now follows as above that
		\begin{equation*}
			\mathrm{Tot}(\check{C}^\bullet(\mathfrak{U}, \M^\bullet))\widetilde{\otimes}_K K\{\underline{x}\}\cong \mathrm{Tot}(\check{C}^\bullet(\mathfrak{U}, \M^\bullet\widetilde{\otimes}_K K\{\underline{x}\})).
		\end{equation*}
		By the lemma above, \cite[tag 0FLH]{stacksproj} can also be applied to $\M^\bullet \widetilde{\otimes}_K K\{\underline{x}\}$, so the right-hand side is isomorphic to $\mathrm{R}\Gamma(f^{-1}W, \M^\bullet\widetilde{\otimes}_K K\{\underline{x}\})$. 
		
		Since \cite[tag 0D5X]{stacksproj} implies that $\mathrm{H}^i(\mathrm{R}f_*(\M^\bullet))$ is the sheafification of
		\begin{equation*}
			W\mapsto \mathrm{H}^i(f^{-1}W, \M^\bullet),
		\end{equation*}
		analogously for $\M^\bullet\widetilde{\otimes}_K K\{\underline{x}\}$, this finishes the proof.
	\end{proof}
	\section{$\C$-complexes}
	\subsection{Definition and basic properties}
	In this section we define a full triangulated subcategory $\mathrm{D}_{\C}\subseteq \mathrm{D}(\w{\D}_X)$ which plays an analogous role to $\mathrm{D}^b_{\mathrm{coh}}(\D_X)$ in the algebraic theory. The construction relies on a bounded coherence condition on the level of each $\D_n$, but this time in a derived sense.
	
	We first define $\mathrm{D}_\C$ on a smooth affinoid $X=\Sp A$ where $\T(X)$ admits a smooth Lie lattice. Fixing an affine formal model $\A$ and a smooth $(R, \A)$-Lie lattice $\L$ inside $\T(X)$, we have at our disposal the sheaves $\D_n$ on the site $X_n=X(\pi^n\L)$ for $n\geq 0$. 
	
	Denote by $\mathrm{D}^b_\mathrm{coh}(\D_n)$ the full subcategory of $\mathrm{D}(\D_n)$ consisting of bounded complexes with coherent cohomology. The dual version of \cite[Theorem 13.2.8]{KS} implies that one can identify $\mathrm{D}^b_\mathrm{coh}(\D_n)$ with the bounded derived category of $\mathrm{Coh}(\D_n)$, the abelian category of coherent $\D_n$-modules.
	
	Recall from subsection 6.2  that if $\M_n$ is a complete bornological $\D_n$-module, together with connecting morphisms $\M_{n+1}|_{X_n}\to \M_n$, we can form the module $\varprojlim \M_n\in \mathrm{Mod}_{\mathrm{Shv}(X, LH(\h{\B}c_K))}(\w{\D}_X)$, a complete bornological $\w{\D}_X$-module.
	\begin{defn}
		\label{defnConaffinoid}
		An object $\M^\bullet\in \mathrm{D}(\w{\D}_X)$ is called a \textbf{$\C$-complex} if the following two properties are satisfied:
		\begin{enumerate}[(i)]
			\item for every $n$, $\D_n\widetilde{\otimes}^{\mathbb{L}}_{\w{\D}_X}\M^\bullet\in \mathrm{D}^b_{\mathrm{coh}}(\D_n)$.
			\item for every $i$, the natural morphism $\mathrm{H}^i(\M^\bullet)\to \varprojlim \mathrm{H}^i(\D_n\widetilde{\otimes}^{\mathbb{L}}_{\w{\D}_X}\M^\bullet)$ is an isomorphism.
		\end{enumerate}
		We denote the full subcategory of $\mathrm{D}(\w{\D}_X)$ consisting of $\C$-complexes by $\mathrm{D}_\C$, or by $\mathrm{D}_{\C}(\wideparen{\mathcal{D}}_X)$ if we need to stress the underlying space.
	\end{defn}
	Note that a priori this definition depends on the choices of $\A$ and $\L$. We will verify below that a different choice yields the same subcategory.

	\begin{prop}
		\label{coadcohomology}
		If $\M^\bullet\in \mathrm{D}_\C$, then $\mathrm{H}^i(\M^\bullet)\in \C_X$ for each $i$. In particular, $\M^\bullet$ is a strict complex.
		
		Moreover, 
		\begin{equation*}
			\mathrm{H}^i(\D_n\widetilde{\otimes}^\mathbb{L}\M^\bullet)\cong \D_n\widetilde{\otimes}\mathrm{H}^i(\M^\bullet)
		\end{equation*}
		in $\mathrm{Mod}_{\mathrm{Shv}(X_n, LH(\h{\B}c_K))}(\D_n)$ for each $i$ and each $n$. 
	\end{prop}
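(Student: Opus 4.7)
The plan is to leverage the two defining conditions of a $\C$-complex to identify each $\mathrm{H}^i(\M^\bullet)$ as a coadmissible $\w{\D}_X$-module, and then deduce both strictness and the base-change isomorphism from the general theory developed in sections 3 and 4.

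First I would verify that the coherent pieces $\M^i_n := \mathrm{H}^i(\D_n \h{\otimes}^\mathbb{L}_{\w{\D}_X} \M^\bullet)$ assemble into a coadmissible system in the sense of Lemma \ref{Mndefinescoad}. Concretely, I want the natural map
\begin{equation*}
\D_n \otimes_{\D_{n+1}|_{X_n}} \M^i_{n+1}\bigr|_{X_n} \to \M^i_n
\end{equation*}
to be an isomorphism. Since $\D_n \h{\otimes}^\mathbb{L}_{\w{\D}_X} \M^\bullet \cong \D_n \h{\otimes}^\mathbb{L}_{\D_{n+1}|_{X_n}} (\D_{n+1} \h{\otimes}^\mathbb{L}_{\w{\D}_X} \M^\bullet)|_{X_n}$ by associativity, and since the Fr\'echet--Stein transition $\D_{n+1}|_{X_n} \to \D_n$ is flat, the derived base change reduces to ordinary tensor product on the level of cohomology, giving the required compatibility.

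With the compatibility in hand, Lemma \ref{Mndefinescoad} shows that $\varprojlim_n \M^i_n$ is a coadmissible $\w{\D}_X$-module, and that the natural localization map recovers each $\M^i_n$. Condition (ii) in the definition of a $\C$-complex then identifies $\mathrm{H}^i(\M^\bullet)$ with this inverse limit, so $\mathrm{H}^i(\M^\bullet) \in \C_X$. Coadmissibility forces $\mathrm{H}^i(\M^\bullet)$ to lie in the essential image of $I: \h{\B}c_K(\w{\D}_X) \to LH(\h{\B}c_K(\w{\D}_X))$; by the criterion recalled after Corollary 1.2.27 of \cite{Schneiders} (stated in Section 2.1: $\mathrm{H}^j(\M^\bullet)$ lies in $\h{\B}c_K(\w{\D}_X)$ iff the representing differential is strict), this means that any complex representing $\M^\bullet$ is strict, establishing the second assertion.

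For the final isomorphism, the identification $\D_n \h{\otimes}_{\w{\D}_X} \mathrm{H}^i(\M^\bullet) \cong \M^i_n$ is the content of Lemma \ref{Mndefinescoad} applied to the coadmissible module $\mathrm{H}^i(\M^\bullet)$. That $\D_n \h{\otimes}^\mathbb{L}_{\w{\D}_X} \mathrm{H}^i(\M^\bullet)$ collapses to its $\mathrm{H}^0$ follows from Corollary \ref{Anacyclic} (higher Tors against coadmissible modules vanish), so $\D_n \h{\otimes}_{\w{\D}_X} \mathrm{H}^i(\M^\bullet) \cong \mathrm{H}^i(\D_n \h{\otimes}^\mathbb{L}_{\w{\D}_X} \M^\bullet)$ as claimed. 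The main technical hurdle I anticipate is the first step: making rigorous the interaction between the (unbounded) derived tensor $\D_n \h{\otimes}^\mathbb{L}$ and taking cohomology when passing between different levels $n$, since one must work in the derived category of sheaves over $X_n$ and check that the flatness of the Fr\'echet--Stein transitions indeed propagates through the derived tensor of section 2.4 at the sheaf level; all the needed bookkeeping is, however, packaged in Proposition \ref{flatbasechange} and the compatibility between $\h{\otimes}^\mathbb{L}_{\w{\D}_X}$ and the localized derived tensor on $X_n$.
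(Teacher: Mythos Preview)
Your proposal is correct and follows essentially the same route as the paper: verify the transition compatibility $\D_n\h{\otimes}_{\D_{n+1}}\mathrm{H}^i(\D_{n+1}\h{\otimes}^{\mathbb{L}}\M^\bullet)\cong \mathrm{H}^i(\D_n\h{\otimes}^{\mathbb{L}}\M^\bullet)$ via flatness of the Fr\'echet--Stein transitions on coherent modules, then invoke Lemma~\ref{Mndefinescoad} to obtain both coadmissibility of $\mathrm{H}^i(\M^\bullet)\cong\varprojlim\mathrm{H}^i(\D_n\h{\otimes}^{\mathbb{L}}\M^\bullet)$ and the identification $\D_n\h{\otimes}\mathrm{H}^i(\M^\bullet)\cong\mathrm{H}^i(\D_n\h{\otimes}^{\mathbb{L}}\M^\bullet)$. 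Your explicit mention of strictness and of Corollary~\ref{Anacyclic} merely spells out points the paper leaves implicit.
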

	
	\begin{proof}
		As $\D_n\widetilde{\otimes}_{\D_{n+1}}-$ is exact on coherent $\D_{n+1}$-modules, we have 
		\begin{equation*}
			\mathrm{H}^i(\D_n\widetilde{\otimes}^{\mathbb{L}}_{\w{\D}_X}\M^\bullet)\cong \D_n\widetilde{\otimes}_{\D_{n+1}}\mathrm{H}^i(\D_{n+1}\widetilde{\otimes}^{\mathbb{L}}_{\w{\D}_X}\M^\bullet),
		\end{equation*}
		so $\mathrm{H}^i(\M^\bullet)\cong \varprojlim \mathrm{H}^i(\D_n\widetilde{\otimes}^{\mathbb{L}}\M^\bullet)$ is a coadmissible $\w{\D}_X$-module by Lemma \ref{Mndefinescoad}, with
		\begin{equation*}
			\mathrm{H}^i(\D_n\widetilde{\otimes}^{\mathbb{L}}\M^\bullet)\cong \D_n\widetilde{\otimes}\mathrm{H}^i(\M^\bullet),
		\end{equation*}
		also following from Lemma \ref{Mndefinescoad}.
	\end{proof}
	\begin{prop}
		If $\M$ is an object in $\mathrm{Mod}_{\mathrm{Shv}(X, LH(\h{\B}c_K))}(\w{\D}_X)$, viewed as a complex concentrated in degree zero, then $\M\in \mathrm{D}_\C$ if and only if $\M\in \C_X$.
	\end{prop}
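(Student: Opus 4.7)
The plan is to establish the two directions separately, with the forward direction being a direct corollary of work already done and the backward direction requiring the section-wise tensor calculations from section 3 to be promoted to a sheaf-theoretic statement.

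For the forward direction, suppose $\M \in \h{\B}c(\w{\D}_X)$, viewed as a complex concentrated in degree zero, lies in $\mathrm{D}_\C$. Then Proposition \ref{coadcohomology} tells us that every cohomology group of $\M$ is coadmissible. Since $\M$ is concentrated in degree zero, the only non-trivial cohomology is $\mathrm{H}^0(\M) \cong I(\M) \in LH(\h{\B}c(\w{\D}_X))$. Because the embedding $(-)^b : \C_X \to \h{\B}c(\w{\D}_X)$ of Theorem \ref{Dcapembedding} is fully faithful, and $I$ is fully faithful, the assertion that $I(\M)$ is isomorphic to $I(N^b)$ for some coadmissible $N$ forces $\M \cong N^b$. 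Thus $\M$ is coadmissible.

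For the backward direction, assume $\M \in \C_X$. I need to verify both conditions of Definition \ref{defnConaffinoid}, which is a local question, so I reduce to $X = \Sp A$ smooth affinoid with a chosen affine formal model $\A$ and smooth Lie lattice $\L \subset \T(X)$. Condition (ii) is essentially immediate: since $\M$ is concentrated in degree zero, only $i=0$ needs checking, and the required isomorphism $\M \cong \varprojlim \M_n$ where $\M_n = \D_n \h{\otimes}_{\w{\D}_X} \M$ is precisely the characterization of coadmissibility given in the lemma preceding Lemma \ref{Mndefinescoad}.

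For condition (i), the assertion is that $\D_n \h{\otimes}^\mathbb{L}_{\w{\D}_X} \M \cong \M_n$ concentrated in degree zero, with $\M_n$ coherent over $\D_n$. On sections over an affinoid $Y \in X_n$, Corollary \ref{Anacyclic} (together with Proposition \ref{flatbasechange} applied to $V = \D_n(Y)$ acting on $U = \w{\D}_X(Y)$) gives precisely the vanishing of higher Tors and identifies the zeroth cohomology with $\M_n(Y)$, which is coherent as a $\D_n(Y)$-module. To pass from section-wise to sheaf-theoretic vanishing, I would compute $\D_n \h{\otimes}^\mathbb{L}_{\w{\D}_X} \M$ using the Bar resolution of $\M$ as a $\w{\D}_X$-module, observe that each term of the Bar complex satisfies the acyclicity hypotheses of Corollary \ref{MLcomplexes} and Lemma \ref{reltensorandinv} (using strong flatness of $\w{\D}_X$ over $\O_X$ from Lemma \ref{flatoverO} and Corollary \ref{stronglyflat} applied to the $K\{x\}$ factors), and conclude that taking inverse limits of the section-wise strict complexes yields a strict complex of sheaves with the expected cohomology in degree zero.

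The main obstacle is precisely this last step: lifting the algebraic Tor-vanishing of Corollary \ref{Anacyclic} to a sheaf-theoretic statement on the site $X_n$, since $\D_n$ is defined only on $X_n$ while $\M$ and $\w{\D}_X$ live on the full affinoid $X$. The Mittag--Leffler machinery developed in subsection 3.5, particularly the pre-nuclearity of the relevant inverse systems arising from the Bar resolution, is what makes this promotion possible without losing control of strictness.
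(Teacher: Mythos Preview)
Your forward direction matches the paper exactly. For the backward direction, the paper is far more terse: it simply invokes Corollary~\ref{Anacyclic} to assert $\D_n\h{\otimes}^{\mathbb{L}}\M\cong \D_n\h{\otimes}\M$, notes this is a coherent $\D_n$-module, and observes that $\M\cong\varprojlim(\D_n\h{\otimes}\M)$ holds by the very definition of coadmissibility.

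You are right that Corollary~\ref{Anacyclic} is literally a statement about bornological modules over a Fr\'echet--Stein algebra rather than about the sheaf-theoretic derived tensor over $\w{\D}_X|_{X_n}$, so a passage from module level to sheaf level is implicit in the paper's citation. However, the tools you reach for---Corollary~\ref{MLcomplexes} and Lemma~\ref{reltensorandinv}---govern inverse systems indexed by $n$, while here $n$ is fixed; that Mittag--Leffler input has already been spent inside the proof of Corollary~\ref{Anacyclic} and does nothing further to pass from sections to sheaves. The actual bridge is more elementary: the sheaf Bar resolution of $\M$ is a resolution by flat $\w{\D}_X$-modules in the sense of Proposition~\ref{deriverelqabtensor}, its sections on each affinoid $Y\in X_n$ reproduce the module Bar complex, Corollary~\ref{Anacyclic} shows that complex is strictly acyclic off degree zero with $\M_n(Y)$ there, and strict exactness of a complex of $\h{\B}c_K$-sheaves is a local condition testable on the affinoid basis. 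Your overall plan is sound; the ``main obstacle'' you identify dissolves once you replace the Mittag--Leffler detour with this local argument.
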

	\begin{proof}
		If $\M$ is coadmissible, then $\D_n\widetilde{\otimes}^{\mathbb{L}}\M\cong \D_n\widetilde{\otimes}\M$ by Corollary \ref{Anacyclic}, which is a coherent $\D_n$-module by Corollary \ref{Anacyclic}, and $\M\to \varprojlim \D_n\widetilde{\otimes}\M$ is an isomorphism by assumption. Hence $\M$ is a $\C$-complex.
		
		The other directon is clear from Proposition \ref{coadcohomology}.
	\end{proof}
	
	\begin{prop}
		\label{Ccomplextriangulated}
		The category $\mathrm{D}_\C$ is a triangulated subcategory of $\mathrm{D}(\w{\D}_X)$.
	\end{prop}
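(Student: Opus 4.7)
The plan is to verify closure of $\mathrm{D}_\C$ under the shift functor $[1]$ and under cones of morphisms in $\mathrm{D}(\w{\D}_X)$. Closure under shifts is immediate, as shifting $\M^\bullet$ by one affects neither the boundedness nor the coherence of $\D_n\h{\otimes}^\mathbb{L}_{\w{\D}_X}\M^\bullet$, while condition (ii) simply gets reindexed. For cones, let $\M^\bullet \to \N^\bullet \to \P^\bullet \to \M^\bullet[1]$ be a distinguished triangle with $\M^\bullet,\N^\bullet \in \mathrm{D}_\C$; applying the triangulated functor $\D_n\h{\otimes}^\mathbb{L}_{\w{\D}_X}-$ yields a distinguished triangle in $\mathrm{D}(\D_n)$, and since $\mathrm{D}^b_\mathrm{coh}(\D_n)$ is a triangulated subcategory containing two of the three vertices, it contains the third. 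This establishes condition (i) for $\P^\bullet$.

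For condition (ii), the key point is that the inverse system $\{\mathrm{H}^i(\D_n\h{\otimes}^\mathbb{L}_{\w{\D}_X}\P^\bullet)\}_n$ is a coadmissible system in the sense of Lemma \ref{Mndefinescoad}. Each term is a coherent $\D_n$-module by condition (i), and for $m \leq n$, flatness of $\D_m$ over $\D_n|_{X_m}$ (from the Fr\'echet--Stein structure) combined with transitivity of the derived tensor product yields natural isomorphisms
\begin{equation*}
\D_m\h{\otimes}_{\D_n|_{X_m}}\mathrm{H}^i(\D_n\h{\otimes}^\mathbb{L}_{\w{\D}_X}\P^\bullet)|_{X_m} \cong \mathrm{H}^i(\D_m\h{\otimes}^\mathbb{L}_{\w{\D}_X}\P^\bullet),
\end{equation*}
so Lemma \ref{Mndefinescoad} applies and $\varprojlim\mathrm{H}^i(\D_n\h{\otimes}^\mathbb{L}_{\w{\D}_X}\P^\bullet)$ is a coadmissible $\w{\D}_X$-module. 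By Corollary \ref{MLcoad} the system is moreover $\varprojlim$-acyclic in $\h{\B}c_K$; the analogous acyclicity statements for the systems attached to $\M^\bullet$ and $\N^\bullet$ follow from Proposition \ref{coadcohomology} applied to the hypothesis that they are $\C$-complexes.

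Now apply $\varprojlim$ degree by degree to the long exact cohomology sequence of the distinguished triangle $\D_n\h{\otimes}^\mathbb{L}_{\w{\D}_X}\M^\bullet \to \D_n\h{\otimes}^\mathbb{L}_{\w{\D}_X}\N^\bullet \to \D_n\h{\otimes}^\mathbb{L}_{\w{\D}_X}\P^\bullet$: the $\varprojlim$-acyclicity just established ensures that the resulting sequence in the left heart remains exact. The natural comparison maps to the cohomology long exact sequence of $\M^\bullet \to \N^\bullet \to \P^\bullet$ produce a ladder of long exact sequences in which the vertical arrows at every position corresponding to $\M^\bullet$ or $\N^\bullet$ are isomorphisms by assumption. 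The five lemma, applied in the abelian left heart $LH(\h{\B}c_K(\w{\D}_X))$, then forces the vertical arrow at $\mathrm{H}^i(\P^\bullet)$ to be an isomorphism in every degree, giving condition (ii). The main obstacle is precisely the coadmissibility and resulting $\varprojlim$-acyclicity of the cohomology system for $\P^\bullet$: without this, inverse limits would fail to preserve exactness and the ladder argument would break down. This is what the flatness-of-transition-maps input in the second paragraph secures.
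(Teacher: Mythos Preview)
Your proof is correct and follows essentially the same approach as the paper: establish condition (i) via the triangulated functor $\D_n\h{\otimes}^\mathbb{L}-$, use the flatness of the transition maps to recognize each system $(\mathrm{H}^i(\D_n\h{\otimes}^\mathbb{L}\P^\bullet))_n$ as describing a coadmissible module (hence pre-nuclear and $\varprojlim$-acyclic), then apply the Mittag--Leffler result to pass the long exact cohomology sequence through the inverse limit and conclude with the five lemma. The only cosmetic difference is that the paper works with sections over affinoid subdomains and invokes Theorem \ref{MLBan} directly, whereas you phrase things at the sheaf level and cite Corollary \ref{MLcoad}.
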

	
	\begin{proof}
		It suffices to show that for any distinguished triangle 
		\begin{equation*}
			\M_1^\bullet\to \M_2^\bullet\to \M_3^\bullet\to \M_1^\bullet[1]
		\end{equation*} in $\mathrm{D}(\w{\D}_X)$ with $\M_1^\bullet, \M_2^\bullet\in \mathrm{D}_\C$, we have $\M_3^\bullet\in \mathrm{D}_\C$.
		
		As $\D_n\widetilde{\otimes}^{\mathbb{L}}-$ preserves distinguished triangles and $\mathrm{D}^b_{\mathrm{coh}}(\D_n)$ is triangulated, we know that $\D_n\widetilde{\otimes}^{\mathbb{L}}\M_3^\bullet\in \mathrm{D}^b_{\mathrm{coh}}(\D_n)$. Moreover, taking sections on the long exact sequence of cohomology, we have strictly exact sequences
		\begin{equation*}
			C_n: \hdots \to  \mathrm{H}^{i-1}(\D_n\widetilde{\otimes}^\mathbb{L}\M_3^\bullet)(U)\to  \mathrm{H}^i(\D_n\widetilde{\otimes}^{\mathbb{L}}\M_1^\bullet)(U)\to \mathrm{H}^i(\D_n\widetilde{\otimes}^{\mathbb{L}}\M_2^\bullet)(U)\to \hdots
		\end{equation*}
		of Banach $\D_n(U)$-modules for any admissible affinoid $U$ in $X_n$ (note that the functor $\Gamma(U, -)$ is exact on coherent $\D_n$-modules). As
		\begin{equation*}
			\D_n(U)\widetilde{\otimes}_{\D_{n+1}(U)}\mathrm{H}^i(\D_{n+1}\widetilde{\otimes}^{\mathbb{L}}\M_j^\bullet)(U)\cong \mathrm{H}^i(\D_n\widetilde{\otimes}^{\mathbb{L}}\M_j^\bullet)(U)
		\end{equation*}
		for any $j=1, 2,3 $, any $i\in \mathbb{Z}$, $n\geq 0$, we see that $\varprojlim \mathrm{H}^i(\D_n\widetilde{\otimes}^\mathbb{L}\M_j^\bullet)(U)$ is a coadmissible $\w{\D}(U)$-module, so the system $(\mathrm{H}^i(\D_n\widetilde{\otimes}^\mathbb{L}\M_j^\bullet)(U))$ is pre-nuclear. Hence the Mittag-Leffler property of Theorem \ref{MLBan} is satisfied, so that $\varprojlim C_n$ is still a strictly exact complex. 
		
		Assuming that $\mathrm{H}^i(\M_j)(U)\cong \varprojlim \mathrm{H}^i(\D_n\widetilde{\otimes}^\mathbb{L}\M_j)(U)$ for $j=1, 2$ and for all $i$, the 5-Lemma then forces $\mathrm{H}^i(\M_3)(U)\cong \varprojlim \mathrm{H}^i(\D_n\widetilde{\otimes}^{\mathbb{L}}\M_3)(U)$, proving that $\M_3\in \mathrm{D}_\C$, as required.
	\end{proof}

	\begin{prop}
		\label{Ccomplexequiv}
		Let $\M^\bullet\in \mathrm{D}(\wideparen{\mathcal{D}}_X)$. The following are equivalent:
		\begin{enumerate}[(i)]
			\item $\M^\bullet$ is a $\C$-complex.
			\item $\mathrm{H}^i(\M^\bullet)\in \C_X$ for each $i$, and for each $n$
			\begin{equation*}
				\D_n\widetilde{\otimes}\mathrm{H}^i(\M^\bullet)=0
			\end{equation*}
			for all but finitely many $i$.
		\end{enumerate}
	\end{prop}
	\begin{proof}
		(i) implies (ii) is immediate from Proposition \ref{coadcohomology}. 
		
		Suppose that $\M^\bullet$ satisfies (ii). Then the dual version of Lemma \ref{exactcohom} implies that
		\begin{equation*}
			\mathrm{H}^i(\D_n\widetilde{\otimes}^\mathbb{L}\M^\bullet)\cong \D_n\widetilde{\otimes}\mathrm{H}^i(\M^\bullet)
		\end{equation*}
		for each $i$ and each $n$. Thus $\D_n\widetilde{\otimes}^\mathbb{L}\M^\bullet\in \mathrm{D}^b_{\mathrm{coh}}(\D_n)$ for each $n$. By coadmissibililty,
		\begin{equation*}
			\mathrm{H}^i(\M^\bullet)\cong \varprojlim \D_n\widetilde{\otimes}\mathrm{H}^i(\M^\bullet)\cong \varprojlim \mathrm{H}^i(\D_n\widetilde{\otimes}^\mathbb{L}\M^\bullet).
		\end{equation*}
		Hence (ii) implies (i).
	\end{proof}
	Characterizing $\C$-complexes as those complexes satisfying (ii) makes it immediate that our definition does not depend on the choices of $\A$ and $\L$. We also record the following corollaries.
	\begin{cor}
		\label{truncatedCcompl}
		If $\M^\bullet\in \mathrm{D}_\C$, then so are $\tau^{\geq n}\M^\bullet$ and $\tau^{\leq n}\M^\bullet$ for each $n$.
	\end{cor}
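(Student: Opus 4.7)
The plan is to deduce the corollary as a direct consequence of Proposition \ref{Ccomplexequiv}, which reformulates the $\C$-complex condition purely in terms of cohomology. By part (ii) of that proposition, membership in $\mathrm{D}_\C$ is equivalent to requiring that each $\mathrm{H}^i(\M)$ be coadmissible and that for each fixed $n$, $\D_n \h{\otimes} \mathrm{H}^i(\M)$ vanishes for all but finitely many~$i$.

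First I would note that because $\M \in \mathrm{D}_\C$ has coadmissible cohomology by Proposition \ref{coadcohomology}, $\M$ is a strict complex, so the truncation functors $\tau^{\geq n}$ and $\tau^{\leq n}$ (taken with respect to the left t-structure on $\mathrm{D}(\w{\D}_X)$, equivalently on $\mathrm{D}(LH(\h{\B}c_K(\w{\D}_X)))$) produce complexes whose cohomology groups coincide with those of $\M$ in the retained degrees and vanish elsewhere. Explicitly,
\begin{equation*}
\mathrm{H}^i(\tau^{\geq n}\M) = \begin{cases} \mathrm{H}^i(\M) & i \geq n \\ 0 & i < n, \end{cases} \qquad \mathrm{H}^i(\tau^{\leq n}\M) = \begin{cases} \mathrm{H}^i(\M) & i \leq n \\ 0 & i > n. \end{cases}
\end{equation*}

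Then the two conditions from Proposition \ref{Ccomplexequiv}.(ii) are immediate for the truncations: each non-zero cohomology group is coadmissible (being one of the coadmissible $\mathrm{H}^i(\M)$), and the finiteness condition for $\D_n \h{\otimes}(-)$ holds because the truncations only retain a subset of the cohomology groups of $\M$, for which the property already holds. Applying Proposition \ref{Ccomplexequiv} in the reverse direction yields $\tau^{\geq n}\M, \tau^{\leq n}\M \in \mathrm{D}_\C$.

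There is no real obstacle here; the only subtlety worth flagging is that the truncations must be understood with respect to the left t-structure, but since $\M$ is strict this agrees with the naive stupid-truncation description up to quasi-isomorphism, so the cohomology computation above is unambiguous.
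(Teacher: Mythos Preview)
Your proposal is correct and takes essentially the same approach as the paper: the corollary is placed immediately after Proposition \ref{Ccomplexequiv} precisely because characterization (ii) there reduces the $\C$-complex property to a cohomology-wise condition, which is trivially inherited by truncations. Your remark about strictness is harmless but unnecessary, since the formulas for $\mathrm{H}^i(\tau^{\geq n}\M)$ and $\mathrm{H}^i(\tau^{\leq n}\M)$ hold for the truncations of any t-structure regardless of strictness.
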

	
	\begin{cor}
		\label{boundedCcomplex}
		If $\M^\bullet\in \mathrm{D}(\wideparen{\mathcal{D}}_X)$ is bounded, then $\M^\bullet$ is a $\C$-complex if and only if all its cohomologies are coadmissible.
	\end{cor}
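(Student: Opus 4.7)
The plan is to deduce this directly from the characterization in Proposition \ref{Ccomplexequiv}, specifically the equivalence of conditions (i) and (ii) there.

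For the ``only if'' direction, suppose $\M^\bullet$ is a bounded $\C$-complex. Then Proposition \ref{coadcohomology} gives immediately that $\mathrm{H}^i(\M^\bullet) \in \C_X$ for every $i$, regardless of boundedness, so there is nothing more to check.

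For the ``if'' direction, suppose $\M^\bullet$ is a bounded complex with $\mathrm{H}^i(\M^\bullet) \in \C_X$ for every $i$. I will verify condition (ii) of Proposition \ref{Ccomplexequiv}: the first requirement that all cohomologies be coadmissible holds by hypothesis, while the second requirement that $\D_n \h{\otimes} \mathrm{H}^i(\M^\bullet) = 0$ for all but finitely many $i$ (and each fixed $n$) is automatic, because boundedness of $\M^\bullet$ forces $\mathrm{H}^i(\M^\bullet) = 0$ outside a finite range of degrees, so a fortiori its tensor product with $\D_n$ vanishes there.

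There is essentially no obstacle here: the entire content of the statement has already been packaged into Proposition \ref{Ccomplexequiv}, and boundedness is exactly the condition needed to upgrade coadmissibility of the cohomologies to the stronger uniform vanishing required by clause (ii) of that proposition. The proof will therefore be a one-paragraph verification invoking Proposition \ref{Ccomplexequiv} and Proposition \ref{coadcohomology}.
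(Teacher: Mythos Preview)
Your proposal is correct and matches the paper's approach exactly: the paper states this corollary without proof immediately after Proposition \ref{Ccomplexequiv}, and the implicit argument is precisely the verification of condition (ii) that you spell out.
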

	As there are usually many coadmissible $\w{\D}_X$-modules $\M$ such that $\D_n\widetilde{\otimes}\M=0$ for some $n$, we point out that there are examples of unbounded $\C$-complexes. 
	
	Finally, we can now give the general definition of $\C$-complexes on arbitrary smooth rigid analytic $K$-varieties.
	
	\begin{defn}
		\label{defnCgeneral}
		Let $X$ be an arbitrary smooth rigid analytic $K$-variety. An object $\M^\bullet\in \mathrm{D}(\w{\D}_X)$ is called a \textbf{$\C$-complex} if there exists an admissible $\B$-covering $\mathfrak{U}=(U_i)$ of $X$ such that $\M^\bullet|_{U_i}\in \mathrm{D}_\C(\wideparen{\mathcal{D}}_{U_i})$ for each $i$.
	\end{defn}
	As before we use $\mathrm{D}_\C$ to denote the full subcategory of $\C$-complexes on $X$.
	
	By exactness of the restriction functor and the definition of coherence being local, this definition is consistent with the earlier definition in the affinoid case. In fact, if one admissible affinoid covering $\mathfrak{U}$ satisfies the condition in Definition \ref{defnCgeneral}, so does any suitable affinoid covering.
	
	Arguing locally, our previous work implies immediately the following.
	
	\begin{thm}
		Let $X$ be a smooth rigid analytic $K$-variety. 
		\begin{enumerate}[(i)]
			\item A $\C$-complex has coadmissible cohomology groups.
			\item A bounded complex $\M^\bullet\in \mathrm{D}(\w{\D}_X)$ is a $\C$-complex if and only if all its cohomologies are coadmissible.
			\item The category $\mathrm{D}_\C$ is a triangulated subcategory of $\mathrm{D}(\w{\D}_X)$, stable under $\tau^\geq$ and $\tau^\leq$.
		\end{enumerate}
	\end{thm}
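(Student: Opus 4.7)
The plan is to reduce every assertion to the affinoid case already treated earlier in the section, using the fact that Definition \ref{defnCgeneral} and the ambient notions (coadmissibility, cohomology, triangulated structure) are all local. Fix once and for all an admissible affinoid covering $\mathfrak{U}=(U_i)$ of $X$ such that each $\T(U_i)$ admits a smooth Lie lattice; such a cover exists by smoothness. If $\M^\bullet\in \mathrm{D}_\C$, we may refine $\mathfrak{U}$ and the witnessing cover from Definition \ref{defnCgeneral} to a common one, and since the affinoid characterisation in Proposition \ref{Ccomplexequiv} is manifestly stable under further restriction to affinoid subdomains, we obtain $\M^\bullet|_{U_i}\in \mathrm{D}_\C(\w{\D}_{U_i})$ for every $i$.

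For (i), restriction is strongly exact and hence commutes with $\mathrm{H}^j$ in the left heart, so $\mathrm{H}^j(\M^\bullet)|_{U_i}\cong \mathrm{H}^j(\M^\bullet|_{U_i})$. By Proposition \ref{coadcohomology} applied on each $U_i$, the right hand side lies in the essential image of $I$ and is a coadmissible $\w{\D}_{U_i}$-module. Strictness of the monomorphism representing an object of the left heart is a local property, so $\mathrm{H}^j(\M^\bullet)$ lies in the essential image of $I$ on all of $X$, i.e.\ it belongs to $\h{\B}c_K(\w{\D}_X)$; coadmissibility then glues by \cite[Theorem 8.4]{DcapOne}.

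For (ii), the ``only if'' direction is (i). Conversely, assume $\M^\bullet$ is bounded with coadmissible cohomology. Each $\mathrm{H}^j(\M^\bullet)|_{U_i}$ is again coadmissible, since coadmissibility localises to affinoid subdomains; the affinoid corollary following Proposition \ref{Ccomplexequiv} then forces $\M^\bullet|_{U_i}\in \mathrm{D}_\C(\w{\D}_{U_i})$, and by Definition \ref{defnCgeneral} we conclude $\M^\bullet\in \mathrm{D}_\C$.

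For (iii), given a distinguished triangle $\M_1\to \M_2\to \M_3\to \M_1[1]$ with $\M_1,\M_2\in \mathrm{D}_\C$, restriction to $U_i$ yields a distinguished triangle in $\mathrm{D}(\w{\D}_{U_i})$ whose first two terms lie in $\mathrm{D}_\C(\w{\D}_{U_i})$; the affinoid triangulated-subcategory statement gives $\M_3|_{U_i}\in \mathrm{D}_\C(\w{\D}_{U_i})$, hence $\M_3\in \mathrm{D}_\C$. Since the truncation functors $\tau^{\geq n}$ and $\tau^{\leq n}$ commute with restriction to admissible open subsets, stability of $\mathrm{D}_\C$ under them is likewise inherited from the affinoid corollary recorded after Proposition \ref{Ccomplexequiv}. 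The only genuinely delicate point is in (i), namely that ``belongs to the essential image of $I$'' descends properly when glueing the cohomology objects, which reduces to the local nature of strictness of the representing morphism; everything else is mechanical glueing.
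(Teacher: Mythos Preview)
Your proposal is correct and follows exactly the approach the paper has in mind: the paper simply records that ``arguing locally, our previous work implies immediately'' the theorem, and your write-up spells out precisely this localisation to the affinoid results (Proposition~\ref{coadcohomology}, Proposition~\ref{Ccomplexequiv} and its corollaries, and the affinoid triangulated-subcategory proposition). Your explicit remark that strictness of the representing monomorphism is local, hence that $\mathrm{H}^j(\M^\bullet)$ actually lies in $\h{\B}c_K(\w{\D}_X)$ before invoking \cite[Theorem 8.4]{DcapOne}, is a careful point the paper leaves implicit.
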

	
	We remark that we can define in the same way $\C$-complexes for $\w{\mathscr{U}(\mathscr{L})}$-modules, where $\mathscr{L}$ is any Lie algebroid on a rigid analytic space $X$. 
	
	\subsection{$\C$-complexes via localization and homotopy limits}
	In this subsection we provide two further descriptions of $\C$-complexes: one in terms of derived localization, the other in terms of homotopy limits.
	
	Let $A=\varprojlim A_n$ be a Fr\'echet--Stein $K$-algebra with the property that any coadmissible $A$-module is pseudo-nuclear. If $K$ is not spherically complete, we assume additionally that $A$ is of countable type. Analogously to the previous subsection, we can make the following definition.
	
	\begin{defn}
		An object $M^\bullet\in \mathrm{D}(\mathrm{Mod}_{LH(\h{\B}c_K)}(A))$ is called a \textbf{$\C$-complex} over $A$ if 
		\begin{enumerate}[(i)]
			\item $M_n^\bullet:=A_n\widetilde{\otimes}^\mathbb{L}_A M^\bullet\in \mathrm{D}^b_{\mathrm{f.g.}}(A_n)$ for each $n$, and
			\item for each $i$, the natural morphism $\mathrm{H}^i(M^\bullet)\to \varprojlim \mathrm{H}^i(M_n^\bullet)$ is an isomorphism.
		\end{enumerate} 
	\end{defn}
	We denote by $\mathrm{D}_\C(A)$ the full subcategory consisting of $\C$-complexes over $A$.
	
	It is straightforward to obtain analogues of the results in the previous subsection for $\C$-complex over $A$, where we require pseudo-nuclearity to have the Mittag-Leffler results (Theorem \ref{MLBan}) available, and countable type for Corollary \ref{Anacyclic}.
	
	Moreover, we have the following alternative description.
	\begin{lem}
		\label{AChomotopy}
		An object $M^\bullet\in \mathrm{D}(\mathrm{Mod}_{LH(\h{\B}c_K)}(A))$ is a $\C$-complex over $A$ if and only if the following holds:
		
		For each $n$, $M_n^\bullet\in \mathrm{D}^b_{\mathrm{f.g.}}(A_n)$, and the natural map $M^\bullet\to \prod M_n^\bullet$ exhibits $M^\bullet$ as the homotopy limit of the $M_n^\bullet$ in $ \mathrm{D}(\mathrm{Mod}_{LH(\h{\B}c_K)}(A))$.
	\end{lem}
	
	\begin{proof}
		Note that $\mathrm{Mod}_{LH(\h{\B}c_K}(A))$ is a Grothendieck abelian category with exact countable products, so that we can work within the framework discussed in subsection 3.2.
		
		Let $M^\bullet\in \mathrm{D}(\mathrm{Mod}_{LH(\h{\B}c_K)}(A))$, and suppose that $M_n^\bullet\in \mathrm{D}^b_{\mathrm{f.g.}}(A_n)$ for each $n$. By Lemma \ref{homotopyproductexact}, $\prod M_n^\bullet\in \mathrm{D}(\mathrm{Mod}_{LH(\h{\B}c_K)}(A))$ exists and satisfies
		\begin{equation*}
			\mathrm{H}^i(\prod M_n^\bullet)\cong \prod \mathrm{H}^i(M_n^\bullet)
		\end{equation*}
		for each $i$. 
		By construction, we have
		\begin{equation*}
			A_{n-1}\widetilde{\otimes}_{A_n}\mathrm{H}^i(M_n^\bullet)\cong \mathrm{H}^i(M_{n-1}^\bullet),
		\end{equation*}
		so that $\varprojlim \mathrm{H}^i(M_n^\bullet)$ is a coadmissible $A$-module. In particular, since we assume that coadmissible $A$-modules are pseudo-nuclear, Lemma \ref{homotopyproductexact} and Theorem \ref{MLBan} imply that
		\begin{equation*}
			\mathrm{H}^i(\mathrm{holim}M_n^\bullet)\cong \varprojlim \mathrm{H}^i(M_n^\bullet).
		\end{equation*}
		Thus the natural morphism $M^\bullet\to \prod M_n^\bullet$ yields an isomorphism $M^\bullet\cong \mathrm{holim}M_n^\bullet$ in $\mathrm{D}(\mathrm{Mod}_{LH(\h{\B}c_K)}(A))$ if and only if it induces an isomorphism $\mathrm{H}^i(M^\bullet)\cong \varprojlim \mathrm{H}^i(M_n^\bullet)$ for each $i$.
	\end{proof}
	
	Now let $A$ be an affinoid $K$-algebra, and suppose that $X=\Sp A$ is smooth. Let $\A$ be an admissible affine formal model, $\L$ an $(R, \A)$-Lie lattice in $\mathrm{Der}_K(A)$ which is assumed to be a free $\A$-module, and let $\D_n$ be the sheaf on $X_n:=X(\pi^n\L)$ given by 
	\begin{equation*}
		\D_n(U)=\h{U_\B(\B\otimes_\A \pi^n\L)}_K
	\end{equation*}
	for $\B$ a suitable affine formal model of $U=\Sp B\in X_n$.
	
	Recall that the functor $\Gamma(X, -)$ induces an equivalence of categories between coadmissible $\w{\D}_X$-modules and coadmissible $\w{\D}_X(X)$-modules, see \cite[Corollary 1.4]{DcapOne} (in the discretely valued case) and \cite[Theorem 3.6.11]{Ardakov} (with $G=1$, for the general case). The quasi-inverse is the localisation functor $\mathrm{Loc}$, which we can now define more generally as follows:
	
	\begin{align*}
		\mathrm{Loc}: \mathrm{Mod}_{LH(\h{\B}c_K)}(\w{\D}_X(X))&\to \mathrm{Mod}_{\mathrm{Shv}(X, LH(\h{\B}c_K))}(\w{\D}_X)\\
		M&\mapsto \w{\D}_X\widetilde{\otimes}_{\w{\D}_X(X)}M, 
	\end{align*}
	i.e. $\mathrm{Loc}M$ is the sheafification of 
	\begin{equation*}
		U\mapsto \w{\D}_X(U)\widetilde{\otimes}_{\w{\D}_X(X)}M.
	\end{equation*}
	It is immediate that $\mathrm{Loc}$ is the left adjoint of $\Gamma(X, -)$, and we have an adjoint pair of functors $(\mathbb{L}\mathrm{Loc}, \mathrm{R}\Gamma(X, -))$ between the unbounded derived categories.
	
	In the same way, we have localisation functors from $\D_n(X)$-modules to $\D_n$-modules, which we likewise denote by $\mathrm{Loc}$.
	
	We will now show that the localisation equivalence for coadmissible modules extends naturally to an equivalence for $\C$-complexes.

	\begin{thm}
		\label{affinoidloc}
		The functors $\mathrm{R}\Gamma(X, -)$ and $\mathbb{L}\mathrm{Loc}$ yield an equivalence between $\mathrm{D}_\C(\w{\D}_X)$ and $\mathrm{D}_\C(\w{\D}_X(X))$.
	\end{thm}
	
	First note that for bounded $\C$-complexes, this is actually immediate, as coadmissible modules are acyclic for both functors, so that we can reduce to the corresponding abelian statement. Our task is thus to carefully check the behaviour of unbounded complexes.

	\begin{lem}
		\label{tensorloc}
		Let $M^\bullet$ be a $\C$-complex of $\w{\D}_X(X)$-modules. Then $\mathbb{L}\mathrm{Loc}(M^\bullet)\in \mathrm{D}_\C(\w{\D}_X)$, and
		\begin{equation*}
			\mathrm{H}^i(\mathbb{L}\mathrm{Loc}(M^\bullet))\cong \mathrm{Loc}(\mathrm{H}^i(M^\bullet))
		\end{equation*}
		for each $i$.
	\end{lem}
	\begin{proof}
		The description of cohomology is immediate for bounded above complexes, as coadmissible modules are $\mathrm{Loc}$-acyclic. As direct sums are exact in the category $\mathrm{Mod}_{LH(\h{\B}c_K)}(\w{\D}_X(X))$, we have the same isomorphism in the general case by writing $M^\bullet$ as a homotopy colimit of its truncations, i.e. the dual version of Lemma \ref{exactcohom}.
		
		We deduce that $\mathrm{H}^i(\mathbb{L}\mathrm{Loc}M^\bullet)$ is a coadmissible $\w{\D}_X$-module, and for each $n$, 
		\begin{equation*}
			\D_n\widetilde{\otimes}_{\w{\D}_X} \mathrm{H}^i(\mathbb{L}\mathrm{Loc}M^\bullet)\cong \mathrm{Loc}(\D_n(X)\widetilde{\otimes}_{\w{\D}_X(X)} \mathrm{H}^i(M^\bullet)),
		\end{equation*}
		which is zero for almost all $i$ by assumption. Thus $\mathbb{L}\mathrm{Loc}(M^\bullet)$ is a $\C$-complex by Proposition \ref{Ccomplexequiv}.
	\end{proof}
	
	\begin{lem}
		The functors $\mathrm{R}\Gamma(X, -)$ and $\mathbb{L}\mathrm{Loc}$ yield an equivalence between bounded below $\C$-complexes of $\w{\D}_X$-modules and bounded below $\C$-complexes of $\w{\D}_X(X)$-modules.
	\end{lem}
	
	\begin{proof}
		Let $\M^\bullet\in \mathrm{D}_\C(\w{\D}_X)$ be bounded below. Since coadmissible $\w{\D}_X$-modules are $\Gamma(X, -)$-acyclic, we see immediately that
		\begin{equation*}
			\mathrm{H}^i(\mathrm{R}\Gamma(X, \M^\bullet))=\Gamma(X, \mathrm{H}^i(\M^\bullet))
		\end{equation*}
		for all $i$.
		
		Moreover, for any coadmissible $\w{\D}_X$-module $\M$, we have 
		\begin{equation*}
			\D_n(X)\widetilde{\otimes}_{\w{\D}_X(X)} \M(X)\cong (\D_n\widetilde{\otimes}_{\w{\D}_X}\M)(X).
		\end{equation*}
		This makes $\mathrm{R}\Gamma(X, \M^\bullet)$ a bounded below $\C$-complex of $\w{\D}_X(X)$-modules.
		
		The lemma above then shows that the counit morphism $\mathbb{L}\mathrm{Loc}\ \mathrm{R}\Gamma(X, \M^\bullet)\to \M^\bullet$ is a quasi-isomorphism.
		
		Conversely, if $M^\bullet$ is a bounded below $\C$-complex of $\w{\D}_X(X)$-modules, the lemma above shows that $\mathbb{L}\mathrm{Loc}M^\bullet$ is a bounded below $\C$-complex, and the unit morphism $M^\bullet\to \mathrm{R}\Gamma(X, \mathbb{L}\mathrm{Loc}M^\bullet)$ is again a quasi-isomorphism due to the description of the cohomology groups given above. 
	\end{proof}
	To extend from bounded below $\C$-complex to arbitrary $\C$-complexes, we wish to invoke Lemma \ref{holimtruncation}.
	
	Let $\M^\bullet$ be a $\C$-complex of $\w{\D}_X$-modules. Let $\tau^{\geq -n}\M^\bullet \to \mathcal{I}_n^\bullet$ be a quasi-isomorphism with $\mathcal{I}_n^j$ an injective object in $\mathrm{Mod}_{\mathrm{Shv}(X, LH(\h{\B}c_K))}(\w{\D}_X)$ for all $j$, satisfying the same conditions as in \cite[Lemma 070F]{stacksproj}, also given before Lemma \ref{holimtruncation}. Then by \cite[Lemma 070M]{stacksproj} and its proof, $\mathcal{I}^\bullet:=\varprojlim \mathcal{I}_n^\bullet$ exists in $\mathrm{D}(\w{\D}_X)$ and is a homotopy limit of $\tau^{\geq -n}\M^\bullet$.
	\begin{lem}
		\label{truncatedCcomplex}
		The natural morphism $\M^\bullet \to \mathcal{I}^\bullet$ is a quasi-isomorphism. In particular, $\M^\bullet \cong \mathrm{holim} \tau^{\geq -n}\M^\bullet$.
	\end{lem}
	
	\begin{proof}
		Let $U$ be an affinoid subdomain of $X$. As $\mathrm{R}\Gamma(U, -)$ preserves homotopy limits (by \cite[Lemma 0D60]{stacksproj}), we have
		\begin{equation*}
			\mathrm{H}^i(\mathrm{R}\Gamma(U, \mathcal{I}^\bullet))\cong \mathrm{H}^i(\mathrm{holim} \ \mathrm{R}\Gamma(U, \tau^{\geq -n}\M^\bullet)).
		\end{equation*}
		Recall from Corollary \ref{truncatedCcompl} that $\tau^{\geq -n}\M^\bullet$ is a bounded below $\C$-complex for all $n$. Thus for any fixed $i$, 
		\begin{equation*}
			\mathrm{H}^i(\mathrm{R}\Gamma(U, \tau^{\geq -n}\M^\bullet))\cong \Gamma(U, \mathrm{H}^i(\tau^{\geq -n}\M^\bullet))\cong\Gamma(U, \mathrm{H}^i(\M^\bullet))
		\end{equation*}
		for all sufficiently large $n$, so that
		\begin{equation*}
			\mathrm{R}^1\varprojlim \mathrm{H}^i(\mathrm{R}\Gamma(U, \tau^{\geq -n}\M^\bullet))=0.
		\end{equation*}
		Hence Lemma \ref{homotopyproductexact} implies that 
		\begin{equation*}
			\mathrm{H}^i(\mathrm{R}\Gamma(U, \mathcal{I}^\bullet))\cong \varprojlim \mathrm{H}^i(\mathrm{R}\Gamma(U, \tau^{\geq -n}\M^\bullet))\cong\Gamma(U, \mathrm{H}^i(\M^\bullet)).
		\end{equation*}
		
		By \cite[Lemma 0BKJ]{stacksproj}, $\mathrm{H}^i(\mathcal{I}^\bullet)$ is thus the sheafification of
		\begin{equation*}
			U\mapsto \Gamma(U, \mathrm{H}^i(\M^\bullet)),
		\end{equation*} 
		which of course requires no sheafification, and the natural morphism $\M^\bullet\to \mathcal{I}^\bullet$ is a quasi-isomorphism.
		
		Therefore $\M^\bullet\cong \mathrm{holim}\tau^{\geq -n}\M^\bullet$ by Lemma \ref{holimtruncation}.
	\end{proof}
	
	\begin{proof}[Proof of Theorem \ref{affinoidloc}]
		Let $\M^\bullet\in \mathrm{D}_\C(\w{\D}_X)$. As $\mathrm{R}\Gamma(X, -)$ respects homotopy limits, it follows from the bounded below case and Lemma \ref{truncatedCcomplex} that $\mathrm{H}^i(\mathrm{R}\Gamma(X, \M^\bullet))=\Gamma(X, \mathrm{H}^i(\M^\bullet))$. In particular, $\mathrm{R}\Gamma(X, \M^\bullet)$ is a $\C$-complex. Moreover, Lemma \ref{tensorloc} yields $\mathrm{H}^i(\mathbb{L}\mathrm{Loc}(M^\bullet))=\mathrm{Loc}(\mathrm{H}^i(M^\bullet))$ for any $M^\bullet\in \mathrm{D}_\C(\w{\D}_X(X))$. Thus the unit and co-unit morphisms yield quasi-isomorphisms, since $\mathrm{Loc}$ and $\Gamma(X, -)$ are quasi-inverses on coadmissible modules.
	\end{proof}
	
	We conclude this subsection by proving the analogue of Lemma \ref{AChomotopy} for $\C$-complexes of $\w{\D}_X$-modules. We first record the following lemma.
	
	\begin{lem}
		Let $\M^\bullet\in \mathrm{D}(\w{\D}_X)$, and assume that $\M_n^\bullet=\D_n\widetilde{\otimes}^\mathbb{L}_{\w{\D}_X}\M^\bullet\in \mathrm{D}^b_\mathrm{coh}(\D_n)$ for each $n$. Then for each $m$, 
		\begin{equation*}
			\mathrm{H}^i(\prod_{n\geq m} \M_n^\bullet)=\prod_{n\geq m} \mathrm{H}^i(\M_n^\bullet)
		\end{equation*}
		as $\w{\D}_X$-modules on $X_m$.
	\end{lem}
	
	\begin{proof}
		For any affinoid subdomain $U\in X_m$, we have 
		\begin{align*}
			\mathrm{H}^i(\mathrm{R}\Gamma(U, \prod_{n\geq m}\M_n^\bullet))&\cong \mathrm{H}^i(\prod_{n\geq m} \mathrm{R}\Gamma(U, \M_n^\bullet))\\
			&\cong \prod_{n\geq m} \mathrm{H}^i(\mathrm{R}\Gamma(U, \M_n^\bullet))\\
			&\cong \prod \Gamma(U, \mathrm{H}^i(\M_n^\bullet)),
		\end{align*}
		since products in $LH(\h{\B}c_K)$ are exact and coherent $\D_n$-modules are acyclic on affinoids. \\
		Finally by \cite[Lemma 0BKJ]{stacksproj}, $\mathrm{H}^i(\prod \M_n^\bullet)$ is the sheafification of 
		\begin{equation*}
			U\mapsto \mathrm{H}^i(\mathrm{R}\Gamma(U, \prod \M_n^\bullet))\cong \prod \mathrm{H}^i(\M_n^\bullet)(U).
		\end{equation*}
		But this already satisfies the sheaf condition, and
		\begin{equation*}
			\mathrm{H}^i(\prod \M_n^\bullet)\cong \prod \mathrm{H}^i(\M_n^\bullet). \qedhere
		\end{equation*}
	\end{proof}
	
	\begin{cor}
		\label{Ccomplexholim}
		An object $\M^\bullet\in \mathrm{D}(\w{\D}_X)$ is a $\C$-complex if and only if the following holds:
		
		For each $n$, $\M_n^\bullet\in \mathrm{D}^b_{\mathrm{coh}}(\D_n)$, and the natural map $\M^\bullet|_{X_m}\to \prod_{n\geq m} \M_n^\bullet|_{X_m}$ exhibits $\M^\bullet|_{X_m}$ as the homotopy limit of the $\M_n^\bullet|_{X_m}$ in $\mathrm{D}(\w{\D}_X|_{X_m})$ for all $m$.	
	\end{cor}
	\begin{proof}	
		If $\M$ is a coadmissible $\w{\D}_X$-module, then we have a (strict) short exact sequence 
		\begin{equation*}
			0\to \M(U)\to \prod_{n\geq m} \M_n(U)\to \prod_{n\geq m} \M_n(U)\to 0
		\end{equation*}
		for any $U\in X_m$, thanks to Corollary \ref{MLcoad}. Thus $\M|_{X_m}=\varprojlim \M_n|_{X_m}\cong \mathrm{R}\varprojlim \M_n|_{X_m}$. In particular, $\mathrm{R}^1 \varprojlim \M_n|_{X_m}=0$.
		
		Now let $\M^\bullet\in \mathrm{D}(\w{\D}_X)$, and suppose that $\M_n^\bullet\in \mathrm{D}^b_{\mathrm{coh}}(\D_n)$ for each $n$. We fix an $m$ and work throughout on $X_m$, omitting the restriction symbols. The defining triangle 
		\begin{equation*}
			\mathrm{holim} \M_n^\bullet \to \prod_{n\geq m} \M_n^\bullet\to \prod_{n\geq m} \M_n^\bullet
		\end{equation*}
		induces a long exact sequence of cohomology, which by the lemma above reads
		\begin{equation*}
			\hdots \to \mathrm{H}^i(\mathrm{holim}\M_n^\bullet)\to \prod \mathrm{H}^i(\M_n^\bullet)\to \prod \mathrm{H}^i(\M_n^\bullet)\to\dots,
		\end{equation*}
		yielding short exact sequences
		\begin{equation*}
			0\to \mathrm{R}^1\varprojlim \mathrm{H}^{i-1}(\M_n^\bullet)\to \mathrm{H}^i(\mathrm{holim}\M_n^\bullet)\to \varprojlim \mathrm{H}^i(\M_n^\bullet)\to 0.
		\end{equation*}
		
		Since $\varprojlim \mathrm{H}^{i-1}(\M_n^\bullet)$ is coadmissible, we have $\mathrm{R}^1\varprojlim \mathrm{H}^{i-1}(\M_n^\bullet)=0$ by the above, and
		\begin{equation*}
			\mathrm{H}^i(\mathrm{holim} \M_n^\bullet)\cong \varprojlim \mathrm{H}^i(\M_n^\bullet).
		\end{equation*}
		Thus we obtain $\M^\bullet\cong\mathrm{holim} \M_n^\bullet$ if and only if the natural morphism $\mathrm{H}^i(\M^\bullet)\to \varprojlim \mathrm{H}^i(\M_n^\bullet)$ is an isomorphism for each $i$. 
	\end{proof}
	
	We remark that the restriction to $X_m$ for various $m$ is only necessary as $\D_n$ is only defined on the subsite $X_n$. Alternatively, recall from subsection 6.2 the right $\w{\D}_X$-modules $\F_n=\D_n(X)\widetilde{\otimes}_A \O_X$, satisfying $\F_n|_{X_n}\cong\D_n$. It follows immediately that $\C$-complexes can also be characterized as those $\M^\bullet\in \mathrm{D}(\w{\D}_X)$ such that $\M_n^\bullet\in \mathrm{D}^b_\mathrm{coh}(\D_n)$ for each $n$, and $\M^\bullet\cong \mathrm{holim} \F_n\widetilde{\otimes}^\mathbb{L}_{\w{\D}_X} \M^\bullet$ in $\mathrm{D}(\mathrm{Shv}(X, LH(\h{\B}c_K)))$ via the natural morphism, as this description is valid on $X_m$ for each $m$ by the above.

	\section{Properties of $\C$-complexes}
	\subsection{Kashiwara's equivalence}
	Let $Y$ be a smooth rigid analytic $K$-variety and let $i: X\to Y$ be a closed subvariety. We say that a sheaf $\F\in \mathrm{Shv}(Y, LH(\h{\B}c_K))$ is \textbf{supported on $X$} if $\F(U)=0$ for any $U$ satisfying $U\cap X=\emptyset$. 
	\begin{prop}
		\label{supportequivalence}
		The functors $i_*$ and $i^{-1}$ establish an equivalence of categories between $\mathrm{Shv}(X, LH(\h{\B}c_K))$ and the full subcategory of $\mathrm{Shv}(Y, LH(\h{\B}c_K))$ consisting of sheaves which are supported on $X$.
	\end{prop}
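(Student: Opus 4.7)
The plan is to verify that the adjunction $(i^{-1}, i_*)$ established in subsection 2.4 restricts to the claimed equivalence. First I would note the immediate fact that $i_*$ takes values in the full subcategory of sheaves supported on $X$: for any admissible open $V\subset Y$ disjoint from $X$ one has $i^{-1}V=\emptyset$, so $(i_*\F)(V)=\F(\emptyset)=0$ by the sheaf axiom applied to the empty covering. It then remains to show that (i) the counit $\epsilon\colon i^{-1}i_*\F\to \F$ is an isomorphism for every $\F\in \mathrm{Shv}(X,\h{\B}c_K)$, and (ii) the unit $\eta\colon \G\to i_*i^{-1}\G$ is an isomorphism whenever $\G\in \mathrm{Shv}(Y,\h{\B}c_K)$ is supported on $X$.

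For the counit, I would compute the presheaf inverse image directly: for an admissible open $U\subset X$,
\begin{equation*}
(i^{-1}_\mathrm{pre}i_*\F)(U)=\underset{V\supset U}{\mathrm{colim}}\,\F(V\cap X),
\end{equation*}
the filtered colimit running over all admissible opens $V\subset Y$ with $V\supset i(U)$. The subsystem of those $V$ with $V\cap X=U$ is cofinal: given any $V_0$ with $V_0\cap X=U$ (the admissible opens of $X$ are obtained as such traces), any larger $V$ can be replaced by $V\cap V_0$, which still contains $U$ and meets $X$ in $U$. On this cofinal subsystem each term is canonically $\F(U)$, so the presheaf already equals $\F(U)$ and sheafification does not alter the result.

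For the unit, the key identity $(i_*i^{-1}\G)(V)=(i^{-1}\G)(V\cap X)$ reduces the claim to showing that the map $\G(V)\to (i^{-1}_\mathrm{pre}\G)(V\cap X)=\mathrm{colim}_{W\supset V\cap X}\G(W)$ is an isomorphism and that the resulting presheaf is already a sheaf on $X$. The central observation is that for any admissible open $W\subset V$ containing a sufficiently small tubular neighbourhood of $V\cap X$ (such $W$'s being cofinal among all admissible neighbourhoods), the pair $\{W,V\setminus X\}$ is an admissible covering of $V$. Applying the sheaf axiom to this covering, combined with the vanishing $\G(V\setminus X)=0=\G(W\cap(V\setminus X))$ forced by the support condition, identifies $\G(V)$ with the equalizer of a diagram whose second term is zero, namely with $\G(W)$ itself; in the quasi-abelian category $\h{\B}c_K$ this is a strict isomorphism. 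Hence every transition map in the colimit above is an isomorphism, and the colimit equals $\G(V)$; the same argument shows $i^{-1}_\mathrm{pre}\G$ already satisfies the sheaf condition on $X$.

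The main technical obstacle is establishing admissibility of the covering $\{W,V\setminus X\}$ of $V$ in the strong G-topology: admissibility requires, on every affinoid subset $V_i\subset V$, a refinement by a finite affinoid covering. This is supplied by Berthelot's construction of tubular neighbourhoods around a closed analytic subvariety: if $X\cap V_i$ is cut out by $f_1,\ldots,f_m\in \O(V_i)$, then the complement of the tube $\{|f_j|<\epsilon\}$ inside $V_i$ is the finite union of rational subdomains $\{|f_j|\geq \epsilon\}$, which together with a tubular $W\cap V_i$ yield the required finite affinoid refinement. Once this admissibility point is settled, the bornological aspects present no further difficulty, since all the identifications arise as equalizers involving zero terms and are therefore automatically strict in $\h{\B}c_K$.
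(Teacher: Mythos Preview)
Your argument is correct and is essentially the standard reconstruction of the proof the paper cites from \cite[Appendix A]{DcapTwo}: the paper itself gives no details beyond that reference, and your treatment of the counit via a cofinality argument and of the unit via the admissible covering $\{W, V\setminus X\}$ (using tubular neighbourhoods to supply the required finite affinoid refinements) is precisely the content of that appendix, adapted to $\h{\B}c_K$-valued sheaves. Your final remark that the bornological aspects are harmless because every identification arises as an equalizer with a zero term (or a constant filtered colimit) is the right observation for why the argument transfers verbatim to the quasi-abelian setting.
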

	\begin{proof}
		This is the same argument as in \cite[Appendix A]{DcapTwo}.
	\end{proof}
	For any smooth rigid analytic $K$-variety $X$, denote the category of coadmissible $\w{\D}_X$-modules by $\C_X$, and let $\C_X^r$ be the category of coadmissible right $\w{\D}_X$-modules.
	
	In \cite{DcapTwo}, Ardakov--Wadsley proved the following analogue of Kashiwara's equivalence (with the condition of discrete valuation being removed in \cite{Ardakovinduction}):
	\begin{thm}[{\cite[Theorem A]{DcapTwo}, \cite[Theorem B]{Ardakovinduction}}]
		\label{KashiwaraDcapTwo}
		Let $i:X\to Y$ be a closed embedding of smooth rigid analytic $K$-varieties, defined by the vanishing of the ideal sheaf $\I$. The functor 
		\begin{align*}
			i_+^0:&\C_X^r\to \C_Y^r\\
			& \M\mapsto i_*(\M\w{\otimes}_{\w{\D}_X}i^*\w{\D}_Y)
		\end{align*}
		gives an equivalence of categories between coadmissible right $\w{\D}_X$-modules and coadmissible right $\w{\D}_Y$-modules supported on $X$.
		
		The quasi-inverse is given by $i^{\natural}: \M\mapsto i^{-1}(\M[\I])$. 
	\end{thm}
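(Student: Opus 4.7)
The plan is to reduce the statement to an explicit local calculation on affinoids and assemble via the embedding of coadmissible modules into complete bornological modules (Theorem \ref{Dcapembedding}). Since the claim is local on $Y$, I would assume $Y=\Sp B$ is affinoid equipped with a coordinate system $(x_1,\dots,x_d;\partial_1,\dots,\partial_d)$ such that the ideal sheaf $\I$ is cut out by $x_1,\dots,x_r$, and factor $i$ as a composition of codimension-one closed immersions to reduce to the case $r=1$. Writing $t=x_1$, $\partial=\partial_1$, choosing an $(R,\mathcal{B})$-Lie lattice compatible with the coordinates, and applying the PBW theorem of Rinehart, one obtains a $K$-linear isomorphism
\begin{equation*}
\w{\D}_Y \;\cong\; i_*\w{\D}_X \h{\otimes}_K K\{t\} \h{\otimes}_K K\{\partial\},
\end{equation*}
compatible at every stage of the Fr\'echet--Stein presentation via $\D_{Y,n}\cong \D_{X,n}\h{\otimes}_K K\langle \pi^n t\rangle \h{\otimes}_K K\langle \pi^n \partial\rangle$.

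Granted this local decomposition, the direct image $i_+^0(\M) = i_*(\M \h{\otimes}_{\w{\D}_X}i^*\w{\D}_Y)$ is identified with $i_*\M \h{\otimes}_K K\{\partial\}$, with $t$ acting nilpotently via the commutation relation $[\partial,t]=1$. To check coadmissibility, I would write $\M=\varprojlim \M_n$ as in Lemma \ref{Mndefinescoad}: each $\M_n \h{\otimes}_K K\langle \pi^n\partial\rangle$ is a finitely generated Banach $\D_{Y,n}$-module with compatible transitions, and Corollary \ref{commutewinv} ensures that the inverse limit of complete bornological modules is preserved. Supportedness on $X$ is immediate from the local form.

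The quasi-inverse $i^\natural$ is the substantive direction. For $\N\in\C_Y^r$ supported on $X$, I need to show that the subsheaf $\N[\I]$ of sections annihilated by $t$ is coadmissible as a right $\w{\D}_X$-module and that the evident counit is an isomorphism. The central claim is that the support hypothesis forces a Fr\'echet--Stein-compatible decomposition
\begin{equation*}
\N \;\cong\; \bigoplus_{k\geq 0} \N[t]\cdot\partial^k
\end{equation*}
(in the completed bornological sense). Concretely, each $\N_n=\D_{Y,n}\h{\otimes}_{\w{\D}_Y}\N$ is a coherent $\D_{Y,n}$-module whose support is contained in $V(t)$; Noetherianity of $\D_{Y,n}$ together with the Nullstellensatz yields $t^{m_n}\N_n=0$ for some $m_n$, and the PBW decomposition of $\D_{Y,n}$ then produces the $\partial$-gradation of $\N_n$. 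The main obstacle is propagating this decomposition through the inverse limit without collapse: I would invoke the pre-nuclear Mittag--Leffler machinery (Theorem \ref{MLBan} and Corollary \ref{MLcoad}) to show that the $\partial$-grading survives $\varprojlim$ and that $\N[t]\cong\varprojlim\N_n[t]$ is coadmissible over $\w{\D}_X$.

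With both functors known to land in the expected categories, the unit $\M\to i^\natural i_+^0\M$, $m\mapsto m\otimes 1$, and the counit $i_+^0 i^\natural\N\to\N$, $n\otimes\partial^k\mapsto n\partial^k$, are morphisms of coadmissible modules by Corollary \ref{embedding}, and the explicit local descriptions established above show they are isomorphisms. Globalisation via an adapted affinoid covering of $Y$ then completes the proof. I expect the bulk of the technical difficulty to sit in the analysis of $\N[\I]$ and its $\partial$-gradation; the direct image half is essentially bookkeeping once the local structure of $\w{\D}_Y$ is in hand.
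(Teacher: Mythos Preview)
The paper does not prove this theorem at all: it is quoted verbatim from \cite[Theorem A]{DcapTwo} and used as input for the derived generalisation in subsection 7.1. So there is no ``paper's own proof'' to compare against; your proposal is an attempt to reprove the cited result.

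Your overall strategy---reduce to codimension one with a coordinate system, identify $i^*\w{\D}_Y$ explicitly, and at each Banach level run the classical Kashiwara argument (nilpotence of $t$ on $\N_n$ from the support condition, then the $\partial$-gradation), finally passing to the limit via the pre-nuclear Mittag--Leffler results---is essentially the approach taken in \cite{DcapTwo}, and is sound in outline. One point to correct: your displayed decomposition $\w{\D}_Y \cong i_*\w{\D}_X \h{\otimes}_K K\{t\}\h{\otimes}_K K\{\partial\}$ is false in general, since a coordinate system on $Y$ does not make $\O_Y$ a Tate algebra in $t$ over $\O_X$. What is true, and what you actually need, is the isomorphism $i^*\w{\D}_Y\cong \w{\D}_X\h{\otimes}_K K\{\partial\}$ of left $\w{\D}_X$-modules (this is noted in the paper immediately after the statement of the theorem). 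The direct image computation and the analysis of $i^\natural$ go through with this corrected input; the substantive work---showing $\N[t]$ is coadmissible over $\w{\D}_X$ and that the $\partial$-expansion is exhaustive and convergent in the limit---is exactly what \cite[\S4--\S6]{DcapTwo} carries out.
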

	Here, $\M[\I]$ denotes the subsheaf of sections annihilated by $\I$. Of course, side-changing implies the corresponding result for left modules.
	
	In this subsection, we generalize Kashiwara's equivalence to our derived setting. We say that $\M^\bullet\in \mathrm{D}_\C(\w{\D}_Y)$ is a \textbf{$\C^X$-complex} if each cohomology is supported on $X$ and write $\mathrm{D}_{\C^X}(\w{\D}_Y)$ for the full subcategory of $\C^X$-complexes. 
	
	We will first concentrate on the following local picture: Suppose that $X$ and $Y$ are affinoid, $Y$ admits local coordinates $(y_1, \hdots, y_m, \partial_1, \hdots, \partial_m)$, and $X=V(y_m)$ is defined by the vanishing of $y_m$. Note that in this case
	\begin{equation*}
		i^*\w{\D}_Y\cong \w{\D}_X\widetilde{\otimes}_K K\{\partial_m\}
	\end{equation*}
	as left $\w{\D}_X$-modules, and
	\begin{equation*}
		i^\natural(\M)=\mathrm{ker}(y_m: i^{-1}\M\to i^{-1}\M)\cong\mathrm{H}^0(i_r^!\M)
	\end{equation*}
	for any coadmissible right $\w{\D}_Y$-module $\M$ supported on $X$, where $i^!_r$ is the extraordinary inverse image for right $\w{\D}$-modules obtained via side-changing.
	
	We choose compatible affine formal models for $X$ and $Y$ and choose Lie lattices such that $\D_{X_n}$ is generated by $\pi^n\partial_i$, $i=1, \hdots, m-1$, and $\D_{Y_n}$ is generated by $\pi^n\partial_i$, $i=1, \hdots, m$.  
	\begin{lem}
		\label{Kashiwaratermwise}
		\leavevmode
		\begin{enumerate}[(i)]
			\item Let $\M^\bullet\in \mathrm{D}_\C(\w{\D}_X^\mathrm{op})$. Then 
			\begin{equation*}
				\mathrm{H}^j(i^r_+\M^\bullet)\cong i_+^0(\mathrm{H}^j(\M^\bullet)).
			\end{equation*}
			In particular, $i_+^r\M^\bullet$ is a $\C^X$-complex of right $\w{\D}_Y$-modules.
			\item Let $\N$ be a coadmissible right $\w{\D}_Y$-module supported on $X$. Then $\mathrm{H}^j(i_r^!\N)=0$ whenever $j\neq 0$. In particular, if $\N^\bullet$ is a $\C^X$-complex on $Y$, then
			\begin{equation*}
				\mathrm{H}^j(i_r^!\N^\bullet)\cong i^\natural(\mathrm{H}^j(\N^\bullet)),
			\end{equation*}
			and $i_r^!\N^\bullet$ is a $\C$-complex of right $\w{\D}_X$-modules.
		\end{enumerate}
	\end{lem}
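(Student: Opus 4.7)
The plan is to exploit the explicit local description of the transfer bimodule $i^{*}\w{\D}_Y \cong \w{\D}_X \h{\otimes}_K K\{\partial_m\}$ (as a left $\w{\D}_X$-module) together with the compatible description of $\w{\D}_{Y\leftarrow X}$, so that both statements reduce to manipulations of the $K\{\partial_m\}$-factor. The three main tools are Corollary \ref{stronglyflat} (strong flatness of $K\{x\}$), the exactness of $i_*$ on sheaves supported on $X$ (Proposition \ref{supportequivalence}), and the Ardakov--Wadsley Kashiwara equivalence (Theorem \ref{KashiwaraDcapTwo}).

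For part (i), the above description of $i^{*}\w{\D}_Y$ combined with the sheaf analogue of Lemma \ref{tensorwithfree} yields
\begin{equation*}
\M^\bullet \h{\otimes}^{\mathbb{L}}_{\w{\D}_X} i^{*}\w{\D}_Y \simeq \M^\bullet \h{\otimes}_K K\{\partial_m\},
\end{equation*}
and Corollary \ref{stronglyflat} makes this exact in $\M^\bullet$, while Proposition \ref{supportequivalence} makes $i_*$ exact. Combining these two exactness statements gives the termwise formula $\mathrm{H}^j(i^r_+\M^\bullet)\cong i^0_+(\mathrm{H}^j(\M^\bullet))$. Membership in $\mathrm{D}_{\C^X}(\w{\D}_Y^{\mathrm{op}})$ then follows since $i^0_+$ lands in coadmissible right $\w{\D}_Y$-modules supported on $X$ by Theorem \ref{KashiwaraDcapTwo}; the $\D_n^Y$-tensor condition required for being a $\C$-complex (Proposition \ref{Ccomplexequiv}) is checked by choosing Lie lattices compatibly across $i$ so that tensoring with $\D_n^Y$ commutes with $i^0_+$ applied to the $\D_n^X$-reduction on $X$.

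For part (ii), I would first reduce to the case of a single coadmissible $\N$, and then compute $i^!_r\N$ via the Koszul resolution
\begin{equation*}
0\to i^{-1}\O_Y \xrightarrow{\,y_m\,} i^{-1}\O_Y \to \O_X \to 0.
\end{equation*}
This is strictly exact (since $y_m$ is a non-zero-divisor with closed image), so after the shift $[\dim X-\dim Y]=[-1]$ we get $i^!_r\N \simeq [i^{-1}\N \xrightarrow{y_m} i^{-1}\N]$ in cohomological degrees $[0,1]$, with $\mathrm{H}^0=\ker(y_m)=i^{\natural}\N$ and $\mathrm{H}^1=\mathrm{coker}(y_m)$. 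Vanishing of $\mathrm{H}^1$ thus reduces to showing that right multiplication by $y_m$ is a strict epimorphism on $i^{-1}\N$. Using Kashiwara's equivalence to write $\N=i^0_+\M$ for a unique coadmissible $\M$, part (i) gives $i^{-1}\N\cong \M\h{\otimes}_K K\{\partial_m\}$, and a direct PBW computation shows that right multiplication by $y_m$ acts on the $K\{\partial_m\}$-factor as $\partial_m^n\mapsto n\partial_m^{n-1}$. So the question becomes whether the ``formal $\partial_m$-derivation'' on $K\{\partial_m\}$ is a strict surjection in $\h{\B}c_K$. Once this is established, extending to an arbitrary $\C^X$-complex $\N^\bullet$ is immediate from Lemma \ref{exactcohom}, and coadmissibility of $i^{\natural}\mathrm{H}^j(\N^\bullet)$ from Theorem \ref{KashiwaraDcapTwo} shows $i^!_r\N^\bullet$ is a $\C$-complex.

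The main obstacle I expect is the verification of strict surjectivity of the $\partial_m$-derivation on $K\{\partial_m\}$: set-theoretic surjectivity amounts to constructing formal antiderivatives $\sum b_n\partial_m^n \mapsto -\sum (b_{n-1}/n)\partial_m^n$ and checking they lie in $K\{\partial_m\}$, while strictness requires that bounded subsets have bounded preimages. Both are delicate because of the potentially small $p$-adic size of the divisor $n$; however, $|n|_p$ decays at most polynomially in $n$, whereas the defining condition $|b_n\pi^{-kn}|\to 0$ for every $k$ forces $|b_n|$ to decay faster than any geometric rate, so dividing by $n$ cannot destroy rapid decay, and explicit estimates then give uniform bornological bounds. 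After tensoring over $K$ with $\M$ (and invoking Corollary \ref{stronglyflat} to preserve strictness under $-\h{\otimes}_K K\{\partial_m\}$), this key technical fact transports from $K\{\partial_m\}$ to $\M\h{\otimes}_K K\{\partial_m\}$, and the rest of the proof is formal.
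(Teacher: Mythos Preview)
Your treatment of part (i) is essentially the paper's argument: strong flatness of $i^*\w{\D}_Y$ (via the $K\{\partial_m\}$-factor) and exactness of $i_*$ give the termwise formula, and the $\D_n$-compatibility yields the $\C$-complex condition.

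For part (ii) you take a genuinely different route from the paper. The paper never invokes Theorem~\ref{KashiwaraDcapTwo} to rewrite $\N$ as $i^0_+\M$; instead it works level by level with the Banach modules $N_n=(\N\h{\otimes}_{\w{\D}_Y}\D_{Y_n})(Y)$, chooses generators annihilated by $y_m$ (from \cite[Theorem 4.9]{DcapTwo}), and uses the identity $\partial_m^j=\frac{1}{j+1}(y_m\partial_m^{j+1}-\partial_m^j y_m)$ to show that any element of $N_n$ lands in $N_{n-l}\cdot y_m$ after passing to a slightly coarser level $n-l$. Strictness is then deduced from Lemma~\ref{MLforimages}. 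Your argument is more conceptual: writing $i^{-1}\N\cong \M\h{\otimes}_K K\{\partial_m\}$, the map $\cdot y_m$ becomes $\mathrm{id}_\M\otimes D$ with $D(\partial_m^n)=n\partial_m^{n-1}$, and the whole question reduces to strict surjectivity of $D$ on the single nuclear Fr\'echet space $K\{\partial_m\}$. Your growth estimate is correct (polynomial loss from $|1/n|$ is dominated by the super-geometric decay), and one can alternatively appeal directly to Proposition~\ref{strictepis} once set-theoretic surjectivity of $D$ is checked. What the paper's approach buys is that it does not presuppose the full Kashiwara equivalence, only the existence of $y_m$-annihilated generators; what yours buys is a cleaner one-variable computation.

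Two small points. First, your final citation is slightly off: to transport the strict short exact sequence $0\to K\to K\{\partial_m\}\xrightarrow{D}K\{\partial_m\}\to 0$ to $\M$, you need exactness of $\M\h{\otimes}_K-$ (Theorem~\ref{ptisclosedmonoidal}.(iii)), not Corollary~\ref{stronglyflat} (which concerns $-\h{\otimes}_K K\{\partial_m\}$). Second, your last sentence does not quite close the argument that $i^!_r\N^\bullet$ is a $\C$-complex: coadmissibility of the cohomology groups alone is insufficient when $\N^\bullet$ is unbounded; you must also check that $\D_{X_n}\h{\otimes}\mathrm{H}^j(i^!_r\N^\bullet)$ vanishes for almost all $j$ for each fixed $n$. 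This follows from the same $\D_n$-compatibility you invoked in part (i), together with $i^0_+i^\natural\cong\mathrm{id}$, exactly as the paper does at the end of its proof of (ii).
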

	\begin{proof}
		\begin{enumerate}[(i)]
			\item By the description above, $i^*\w{\D}_Y$ is a strongly flat $\w{\D}_X$-module, and $i_*$ is exact. Hence
			\begin{align*}
				\mathrm{H}^j(\mathrm{R}i_*(\M^\bullet\widetilde{\otimes}^\mathbb{L}_{\w{\D}_X}i^*\w{\D}_Y))&\cong i_*(\mathrm{H}^j(\M^\bullet\widetilde{\otimes}_{\w{\D}_X}i^*\w{\D}_Y))\\
				&\cong i_*(\mathrm{H}^j(\M^\bullet)\widetilde{\otimes}_{\w{\D}_X}i^*\w{\D}_Y)\\
				&\cong i_+^0(\mathrm{H}^j(\M^\bullet))
			\end{align*} 
			for any $\C$-complex $\M^\bullet$, where the last isomorphism is a consequence of Proposition \ref{coadandtor}. Note that in particular, each cohomology group of $i^r_+\M^\bullet$ is coadmissible and supported on $X$ by Ardakov--Wadsley's version of Kashiwara's equivalence.
			
			Since
			\begin{equation*}
				i_+^0(\M)\widetilde{\otimes}_{\w{\D}_Y} \D_{Y_n}
			\end{equation*}
			is the coherent $\D_{Y_n}$-module associated to the global sections
			\begin{equation*}
				\M(X)\widetilde{\otimes}_{\w{\D}_X(X)}\frac{\w{\D}_Y(Y)}{y_m\w{\D}_Y(Y)}\widetilde{\otimes}_{\w{\D}_Y(Y)}\D_{Y_n}(Y)\cong \M(X)\widetilde{\otimes}_{\w{\D}_X(X)}\frac{{\D}_{Y_n}(Y)}{y_m\D_{Y_n}(Y)},
			\end{equation*}
			it follows that 
			\begin{equation*}
				i_+^0(\M)\widetilde{\otimes}_{\w{\D}_Y}\D_{Y_n}\cong i_*(\M\widetilde{\otimes}_{\w{\D}_X}i^*\D_{Y_n})\cong i_*(\M\widetilde{\otimes}_{\w{\D}_X}\D_{X_n}\widetilde{\otimes}_{\D_{X_n}}i^*\D_{Y_n})
			\end{equation*}
			for any coadmissible right $\w{\D}_X$-module $\M$. Thus if $\M^\bullet$ is a $\C$-complex, then $i^r_+\M^\bullet$ is a $\C$-complex by Proposition \ref{Ccomplexequiv}, since $\M^\bullet\widetilde{\otimes}\D_{X_n}$ is bounded for each $n$.
			\item Let $\N$ be a coadmissible right $\w{\D}_Y$-module supported on $X$. We wish to show that multiplication by $y_m$ gives a strict epimorphism $i^{-1}\N\to i^{-1}\N$.
			
			Write $N_n:=(\N\widetilde{\otimes}_{\w{\D}_Y}\D_{Y_n})(Y)$, a finitely generated right $\D_n(Y)$-module. We have a commutative diagram of right $\O(Y)$-modules
			\begin{equation*}
				\begin{xy}
					\xymatrix{
						\D_n(Y)^r\ar[d]\ar[r]^{\cdot y_m}& \D_n(Y)^r\ar[d]\\
						N_n\ar[r]^{\cdot y_m}& N_n
					}
				\end{xy}
			\end{equation*}
			where the vertical maps correspond to the choice of $r$ generators of $N_n$ which are all annihilated by $y_m$ (which is possible by \cite[Theorem 4.9]{DcapTwo}, which also works for non-discretely valued $K$). 
			
			We will now verify the conditions in Lemma \ref{MLforimages}. Let $x\in N_n$, and let $(P_i)\in \D_n(Y)^r$ be a preimage. Let $D'_n$ be the Banach subalgebra of $\D_n(Y)$ generated by $\O(Y)$ and $\partial_1, \hdots, \partial_{m-1}$, and note that $y_m$ commutes with every element in $D'_n$. Writing
			\begin{equation*}
				P_i=\sum_{j=0}^\infty Q_{ij}\pi^{nj}\partial_m^j, \ Q_{ij}\in D'_n, \ |Q_{ij}|\to 0\ \text{ as } j\to \infty,
			\end{equation*}   
			we use the relation
			\begin{equation*}
				\partial_m^j=\frac{1}{j+1}(\partial_m^{j+1}y_m-y_m\partial_m^{j+1})
			\end{equation*} 
			to write
			\begin{equation*}
				P_i=\left(\sum_{j=0}^\infty \frac{1}{j+1}Q_{ij}\pi^{nj}\partial_m^{j+1}\right)y_m-y_m\left(\sum_{j=0}^\infty \frac{1}{j+1}Q_{ij}\pi^{nj}\partial_m^{+1}j\right),
			\end{equation*}
			whenever the expressions in brackets actually converge. Since $|\frac{p^j}{j+1}|\leq 1$ for each $j$, there exists an $l$ such that for any $n\geq l$, the expressions converge in $\D_{n-l}(Y)$ -- simply choose $l$ such that $|\pi^l|\leq |p|$. Thus the image of $P_i$ in $\D_{n-l}(Y)$ is contained in $y_m\D_{n-l}(Y)+\D_{n-l}(Y)y_m$. As the distinguished generators of $N_n$ (and compatibly, of $N_{n-l}$) are annihilated by $y_m$, it follows that the image of $x$ in $N_{n-l}$ is contained in $N_{n-l}y_m$.
			
			Since the kernels of the `multiplication by $y_m$' map form a pre-nuclear system (the system describes the coadmissible $\w{\D}_X(X)$-module $i^\natural(\N)(X)$, by \cite[Corollary 5.7]{DcapTwo} resp. \cite[Theorem 3.3.13]{Ardakovinduction} with $G=1$), we can apply Lemma \ref{MLforimages} to obtain that $y_m: \N(Y)\to \N(Y)$ is a strict epimorphism (both in $\h{\B}c_K$ and in $\mathrm{Ind}(\mathrm{Ban}_K)$), similarly for all other affinoid subdomains. We thus obtain a strict exact sequence
			\begin{equation*}
				0\to i^{\natural}(\N)\to i^{-1}\N\to i^{-1}\N\to 0,
			\end{equation*}  
			in $\mathrm{Shv}(X, \mathrm{Ind}(\mathrm{Ban}_K))$as required.
			
			As $i^!_r$ has bounded cohomogical dimension on $\C^X$-complexes, we can invoke Proposition \ref{exactcohom}.(ii) to see that 
			\begin{equation*}
				\mathrm{H}^j(i_r^!\N^\bullet)\cong i^{\natural}\mathrm{H}^j(\N^\bullet)
			\end{equation*}
			for any $j$ and any $\C^X$-complex $\N^\bullet$ of right $\w{\D}_Y$-modules.
			
			It remains to verify that $i_r^!$ sends $\C^X$-complexes to $\C$-complexes, i.e. that for every given $n$, $\mathrm{H}^j_n(i_r^!\N^\bullet):=\mathrm{H}^j(i_r^!\N^\bullet)\widetilde{\otimes}_{\w{\D}_X}\D_{X_n}$ is zero for almost all $j$. For this, it suffices to note that
			\begin{equation*}
				i_*(\mathrm{H}^j_n(i_r^!\N^\bullet)\widetilde{\otimes}_{\D_{X_n}}i^*\D_{Y_n})\cong i_+^0(\mathrm{H}^j(i_r^!\N^\bullet))\widetilde{\otimes}_{\w{\D}_Y}\D_{Y_n},
			\end{equation*}
			and
			\begin{equation*}
				i_+^0(\mathrm{H}^j(i_r^!\N^\bullet))\cong i_+^0i^\natural(\mathrm{H}^j(\N^\bullet))\cong \mathrm{H}^j(\N^\bullet)
			\end{equation*}
			by the above and Theorem \ref{KashiwaraDcapTwo}. Therefore by \cite[Theorem 4.10]{DcapTwo} resp. \cite[Theorem 3.2.1]{Ardakovinduction}, $\mathrm{H}^j_n(\N^\bullet)$ vanishes whenever $\mathrm{H}^j(\N^\bullet)\widetilde{\otimes}_{\w{\D}_Y}\D_{Y_n}$ vanishes, and hence $i_r^!\N^\bullet$ is a $\C$-complex.
		\end{enumerate}
	\end{proof}
	
	In particular, if $i:X\to Y$ is an arbitrary closed embedding of smooth rigid analytic $K$-varieties, $i^!$ sends $\C^X$-complexes to $\C$-complexes, and $i_+$ sends $\C$-complexes to $\C^X$-complexes, since we can argue locally and use Proposition \ref{invimcomposition} to reduce to the case considered above.\\
	As before, we can obtain the corresponding result for left modules by using side-changing.
	\begin{lem}
		\label{invashom}
		Let $i: X\to Y$ be a closed embedding of smooth rigid analytic $K$-varieties. For any $\N^\bullet\in \mathrm{D}_{\C^X}(\w{\D}_Y)$, there is a natural isomorphism
		\begin{equation*}
			i^!\N^\bullet\cong \mathrm{R}\mathcal{H}om_{i^{-1}\w{\D}_Y}(\w{\D}_{Y\leftarrow X}, i^{-1}\N^\bullet).
		\end{equation*}
	\end{lem}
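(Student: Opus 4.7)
The plan is to reduce to a local computation and then use an explicit Koszul-type resolution of the transfer bimodule. Since both sides are functors $\mathrm{D}_{\C^X}(\w{\D}_Y)\to \mathrm{D}(\w{\D}_X)$ and the claim is local on $Y$, one may assume that $Y=\Sp B$ is affinoid with a global coordinate system $(y_1,\dots,y_m,\partial_1,\dots,\partial_m)$ and that $X$ is cut out by the regular sequence $y_{d+1},\dots,y_m$. In this setup the line bundles $\Omega_X$ and $i^{-1}\Omega_Y^{-1}$ are trivial, so up to the side-changing twist one has a cyclic description of $\w{\D}_{Y\leftarrow X}$ as a left $i^{-1}\w{\D}_Y$-module, generated by a single element whose right annihilator contains $y_{d+1},\dots,y_m$.

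The next step is to construct a Koszul-type resolution $K^\bullet\to \w{\D}_{Y\leftarrow X}$ of length $m-d$ by finite free left $i^{-1}\w{\D}_Y$-modules, obtained by taking the exterior algebra on $m-d$ generators mapping to $y_{d+1},\dots,y_m$ acting on the right. The strict exactness of this Koszul complex in the quasi-abelian category of complete bornological $i^{-1}\w{\D}_Y$-modules is essentially the higher-codimensional analogue of the strict surjectivity result proven in Lemma \ref{Kashiwaratermwise}.(ii), now applied iteratively along the sequence $y_{d+1},\dots,y_m$ and combined with the Mittag--Leffler statements from Section 3 to control the passage to $\varprojlim$.

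Once the resolution is in hand, one computes both sides as follows. Applying $\mathrm{R}\mathcal{H}om_{i^{-1}\w{\D}_Y}(-,i^{-1}\N^\bullet)$ to $K^\bullet$ and using that each $K^j$ is finite free yields
\begin{equation*}
\mathrm{R}\mathcal{H}om_{i^{-1}\w{\D}_Y}(\w{\D}_{Y\leftarrow X},i^{-1}\N^\bullet)\cong \mathrm{R}\mathcal{H}om_{i^{-1}\w{\D}_Y}(\w{\D}_{Y\leftarrow X},i^{-1}\w{\D}_Y)\h{\otimes}^\mathbb{L}_{i^{-1}\w{\D}_Y}i^{-1}\N^\bullet.
\end{equation*}
The self-duality of the Koszul complex identifies $\mathrm{R}\mathcal{H}om_{i^{-1}\w{\D}_Y}(\w{\D}_{Y\leftarrow X},i^{-1}\w{\D}_Y)$ with $\w{\D}_{X\to Y}[\dim X-\dim Y]$ as a complex of $(\w{\D}_X,i^{-1}\w{\D}_Y)$-bimodules, so the right-hand side becomes $\w{\D}_{X\to Y}\h{\otimes}^\mathbb{L}_{i^{-1}\w{\D}_Y}i^{-1}\N^\bullet[\dim X-\dim Y]=i^!\N^\bullet$. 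The naturality of all the identifications (Koszul resolution, tensor--hom adjunction, side-changing) ensures the resulting isomorphism is the natural one.

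The main obstacle will be the second step: verifying that the Koszul resolution is strictly exact in our bornological framework, together with the tensor--hom shuffle in the third step which requires the $\mathrm{R}\mathcal{H}om$ to interact well with $\h{\otimes}^\mathbb{L}$. The finite length of the resolution together with the freeness of each term should make the tensor--hom adjunction manageable, but the strict exactness of the Koszul complex itself rests on a careful non-archimedean analysis of the commutators $[\partial_i,y_j]$ analogous to the argument given in the proof of Lemma \ref{Kashiwaratermwise}.(ii), and one must additionally confirm that the $\mathrm{H}^0$ identification on the nose recovers $\w{\D}_{Y\leftarrow X}$ with the correct bimodule structure after applying the side-changing twist.
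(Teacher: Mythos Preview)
Your overall strategy---Koszul resolution, self-duality, and the tensor--hom shuffle for perfect complexes---is the same as the paper's, and the argument goes through. The paper organizes it slightly differently: rather than resolving $\w{\D}_{Y\leftarrow X}$ directly over $i^{-1}\w{\D}_Y$, it first applies tensor--hom adjunction to rewrite $\mathrm{R}\mathcal{H}om_{i^{-1}\w{\D}_Y}(\w{\D}_{X\to Y},-)$ as $\mathrm{R}\mathcal{H}om_{i^{-1}\O_Y}(\O_X,-)$, and then uses the ordinary Koszul resolution of $\O_X$ as an $i^{-1}\O_Y$-module. This reduces everything to finitely generated modules over a commutative Noetherian Banach algebra, where strict exactness is automatic; the identification $\mathrm{R}\mathcal{H}om_{i^{-1}\O_Y}(\O_X,i^{-1}\O_Y)\cong i^{-1}\Omega_Y^{-1}\h{\otimes}\Omega_X[-d]$ then gives the desired bimodule isomorphism after tensoring back up with $i^{-1}\w{\D}_Y$ and side-changing.

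The one point where your proposal goes astray is the justification for strict exactness of the Koszul complex over $i^{-1}\w{\D}_Y$. Your appeal to Lemma~\ref{Kashiwaratermwise}.(ii) is misplaced: that lemma establishes strict surjectivity of $y_m$ acting on a coadmissible module \emph{supported on $X$}, which is a genuinely analytic fact, whereas what you need here is the regularity of $y_{d+1},\dots,y_m$ acting on $i^{-1}\w{\D}_Y$ itself. The latter is much easier and requires no commutator estimates: since $\w{\D}_Y$ is strongly flat over $\O_Y$ (Lemma~\ref{flatoverO}), tensoring the strictly exact $i^{-1}\O_Y$-Koszul resolution of $\O_X$ with $i^{-1}\w{\D}_Y$ immediately yields the strictly exact resolution you want. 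With this correction your argument is complete and coincides with the paper's.
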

	\begin{proof}
		This follows \cite[Proposition 1.5.14]{Hotta}.\\
		First, we claim that the natural morphism
		\begin{equation*}
			i^{-1}\M^\bullet\widetilde{\otimes}^\mathbb{L}_{i^{-1}\w{\D}_Y}\mathrm{R}\mathcal{H}om_{i^{-1}\w{\D}_Y}(\w{\D}_{X\to Y}, i^{-1}\w{\D}_Y)\to \mathrm{R}\mathcal{H}om_{i^{-1}\w{\D}_Y}(\w{\D}_{X\to Y}, i^{-1}\M^\bullet)
		\end{equation*}
		is an isomorphism for any right $\C^X$-complex $\M^\bullet$. For this, note that tensor-hom adjunction turns the left hand side into
		\begin{equation*}
			i^{-1}\M^\bullet\widetilde{\otimes}^\mathbb{L}_{i^{-1}\w{\D}_Y}\mathrm{R}\mathcal{H}om_{i^{-1}\O_Y}(\O_X, i^{-1}\w{\D}_Y),
		\end{equation*} 
		and the right hand side into
		\begin{equation*}
			\mathrm{R}\mathcal{H}om_{i^{-1}\O_Y}(\O_X, i^{-1}\M^\bullet).
		\end{equation*}
		Locally, the Koszul resolution yields a resolution $\mathcal{K}^\bullet$ of $\O_X$ by locally free $i^{-1}\O_Y$-modules of finite rank. So the claimed isomorphism follows from the natural isomorphism
		\begin{align*}
			i^{-1}\M^\bullet\widetilde{\otimes}^\mathbb{L}_{i^{-1}\w{\D}_Y}\mathrm{R}\mathcal{H}om_{i^{-1}\O_Y}(i^{-1}\O_Y, i^{-1}\w{\D}_Y)&\cong i^{-1}\M^\bullet\\&\cong \mathrm{R}\mathcal{H}om_{i^{-1}\O_Y}(i^{-1}\O_Y, i^{-1}\M^\bullet).
		\end{align*}
		We thus wish to show that there is an isomorphism
		\begin{equation*}
			\mathrm{R}\mathcal{H}om_{i^{-1}\w{\D}_Y}(\w{\D}_{X\to Y}, i^{-1}\w{\D}_Y)\cong \w{\D}_{Y\leftarrow X}[-d],
		\end{equation*}
		where $d$ denotes the codimension of $X$ in $Y$, as the result will then follow from side-changing.
		
		By tensor-hom adjunction and the same arguments as above, the left hand side is isomorphic to
		\begin{equation*}
			\mathrm{R}\mathcal{H}om_{i^{-1}\O_Y}(\O_X, i^{-1}\O_Y)\widetilde{\otimes}^\mathbb{L}_{i^{-1}\O_Y} i^{-1}\w{\D}_Y,
		\end{equation*}
		and using the Koszul resolution again as in \cite[Proposition 1.5.14]{Hotta} yields the desired isomorphism
		\begin{equation*}
			\mathrm{R}\mathcal{H}om_{i^{-1}\O_Y}(\O_X, i^{-1}\O_Y)\cong i^{-1}\Omega_Y^{\otimes-1}\widetilde{\otimes}_{i^{-1}\O_Y} \Omega_X[-d].\qedhere
		\end{equation*}
	\end{proof}

	We now lift the adjunction from \cite[Theorem 6.10.(b)]{DcapTwo} to the derived level.
	\begin{prop}
		Let $i: X\to Y$ be a closed embedding of smooth rigid analytic $K$-varieties. Then there is a natural isomorphism
		\begin{equation*}
			\mathrm{RHom}_{\w{\D}_Y}(i_+\M^\bullet, \N^\bullet)\cong \mathrm{RHom}_{\w{\D}_X}(\M^\bullet, i^!\N^\bullet)
		\end{equation*}
		for $\M^\bullet\in \mathrm{D}_{\C}(\w{\D}_X)$, $\N^\bullet\in \mathrm{D}_{\C^X}(\w{\D}_Y)$.\\
		In particular, $i^!$ and $i_+$ are adjoint functors between $\mathrm{D}_{\C^X}(\w{\D}_Y)$ and $\mathrm{D}_\C(\w{\D}_X)$.
	\end{prop}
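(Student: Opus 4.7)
The strategy is to compose three adjunctions: the sheaf-theoretic adjunction between $i_*$ and $i^{-1}$ (plus fully faithfulness of $i_*$ onto sheaves supported on $X$ from Proposition \ref{supportequivalence}), the tensor-hom adjunction for the transfer bimodule $\w{\D}_{Y\leftarrow X}$, and finally the identification of $i^!$ as a derived Hom provided by Lemma \ref{invashom}.

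First, I would reduce the direct image to an undervied pushforward. Since $i$ is a closed embedding and the cohomologies of $i_+\M^\bullet$ are supported on $X$ (Lemma \ref{Kashiwaratermwise}.(i)), the natural morphism $i_*(\w{\D}_{Y\leftarrow X}\h{\otimes}^\mathbb{L}_{\w{\D}_X}\M^\bullet)\to \mathrm{R}i_*(\w{\D}_{Y\leftarrow X}\h{\otimes}^\mathbb{L}_{\w{\D}_X}\M^\bullet)=i_+\M^\bullet$ is an isomorphism (indeed, via Proposition \ref{supportequivalence} applied cohomology by cohomology, $i_*$ agrees with $\mathrm{R}i_*$ on complexes whose cohomologies are supported on $X$). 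Likewise, $\N^\bullet$ is quasi-isomorphic to $i_*i^{-1}\N^\bullet$.

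Second, using the $(i^{-1},i_*)$-adjunction for modules (subsection 2.5) together with the fact that $i_*$ is fully faithful on complexes supported on $X$, I would deduce
\begin{equation*}
\mathrm{RHom}_{\w{\D}_Y}(i_+\M^\bullet,\N^\bullet)\cong \mathrm{RHom}_{i^{-1}\w{\D}_Y}\bigl(\w{\D}_{Y\leftarrow X}\h{\otimes}^\mathbb{L}_{\w{\D}_X}\M^\bullet,\ i^{-1}\N^\bullet\bigr).
\end{equation*}
Then, by the derived tensor-hom adjunction for the $(i^{-1}\w{\D}_Y,\w{\D}_X)$-bimodule $\w{\D}_{Y\leftarrow X}$ (Proposition \ref{deriverelthom} and Proposition \ref{deriverelqabtensor}), the right-hand side rewrites as
\begin{equation*}
\mathrm{RHom}_{\w{\D}_X}\bigl(\M^\bullet,\ \mathrm{R}\mathcal{H}om_{i^{-1}\w{\D}_Y}(\w{\D}_{Y\leftarrow X},i^{-1}\N^\bullet)\bigr),
\end{equation*}
and Lemma \ref{invashom} identifies the inner sheaf-Hom with $i^!\N^\bullet$, completing the chain.

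The main subtlety I expect is in the second step: one must verify that the adjunction between $i^{-1}$ and $i_*$ from section 2 lifts cleanly to $\mathrm{RHom}$-adjunction on the unbounded derived categories of bornological $\w{\D}$-modules, and that $i_*$ preserves the homotopically injective (or equivalent) resolutions needed to compute $\mathrm{RHom}$ on both sides. Since $i_*$ is exact and, being a right adjoint to the exact functor $i^{-1}$, preserves injectives in the left heart, this should follow from the formalism of Proposition \ref{LHIR} combined with Lemma \ref{catcomposition}, but care is needed because the transfer bimodule involves completed tensor products; one should work with flat resolutions of $\M^\bullet$ by modules of the form $\w{\D}_X\h{\otimes}_K(-)$ so that both sides of the tensor-hom adjunction are represented by honest complexes and the application of Lemma \ref{invashom} is termwise. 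Once that bookkeeping is done, the isomorphism is natural in both arguments, yielding the adjunction $i_+\dashv i^!$ on the $\C$- and $\C^X$-subcategories.
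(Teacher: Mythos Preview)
Your proposal is correct and follows essentially the same route as the paper's proof: both use Proposition \ref{supportequivalence} to replace $\mathrm{R}i_*$ by $i_*$ and $\N^\bullet$ by $i_*i^{-1}\N^\bullet$, invoke the fully faithfulness of $i_*$ on complexes supported on $X$ to pass to $\mathrm{RHom}_{i^{-1}\w{\D}_Y}$, apply the derived tensor--hom adjunction for the transfer bimodule, and conclude via Lemma \ref{invashom}. The extra bookkeeping you flag (lifting the adjunctions to the derived level in the bornological setting) is not spelled out in the paper either, so your caution there is well placed but not a divergence from the paper's argument.
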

	\begin{proof}
		There are natural isomorphisms
		\begin{align*}
			\mathrm{RHom}_{\w{\D}_{Y}}(i_+\M^\bullet, \N^\bullet)&\cong \mathrm{RHom}_{\w{\D}_Y}(i_*(\w{\D}_{Y\leftarrow X}\widetilde{\otimes}^\mathbb{L}_{\w{\D}_X}\M^\bullet), i_*i^{-1}\N^\bullet)\\
			&\cong \mathrm{RHom}_{i^{-1}\w{\D}_Y}(i^{-1}i_*(\w{\D}_{Y\leftarrow X}\widetilde{\otimes}^\mathbb{L}_{\w{\D}_X}\M^\bullet), i^{-1}\N^\bullet)\\
			&\cong \mathrm{RHom}_{i^{-1}\w{\D}_Y}(\w{\D}_{Y\leftarrow X}\widetilde{\otimes}^\mathbb{L}_{\w{\D}_X}\M^\bullet, i^{-1}\N^\bullet)\\
			&\cong \mathrm{RHom}_{\w{\D}_X}(\M^\bullet, \mathrm{R}\mathcal{H}om_{i^{-1}\w{\D}_Y}(\w{\D}_{Y\leftarrow X}, i^{-1}\N^\bullet)),
		\end{align*}
		where the first and third isomorphism follow from Proposition \ref{supportequivalence}.
		
		Thus the result follows from Lemma \ref{invashom}.
	\end{proof}
	
	\begin{thm}
		\label{KashiwaraCcomplex}
		The functor $i_+$ provides an equivalence between $\mathrm{D}_\C(\w{\D}_X)$ and $\mathrm{D}_{\C^X}(\w{\D}_Y)$, with quasi-inverse $i^!$.
	\end{thm}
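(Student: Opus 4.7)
The strategy is to show that the unit $\eta: \mathrm{id} \to i^! i_+$ and the counit $\epsilon: i_+ i^! \to \mathrm{id}$ of the adjunction established in the preceding proposition are isomorphisms on $\mathrm{D}_\C(\w{\D}_X)$ and $\mathrm{D}_{\C^X}(\w{\D}_Y)$, respectively. Note first that both $i_+$ and $i^!$ preserve $\C$-complexes (by the discussion immediately after Lemma \ref{Kashiwaratermwise}), so the compositions $i^! i_+ \M^\bullet$ and $i_+ i^! \N^\bullet$ remain $\C$-complexes. By Proposition \ref{coadcohomology}, every $\C$-complex has coadmissible cohomology and is in particular a strict complex, meaning that its cohomology groups lie in the essential image of $I$. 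Consequently a morphism of $\C$-complexes is an isomorphism in $\mathrm{D}(\w{\D}_X)$ if and only if it induces isomorphisms on cohomology in all degrees, and the problem reduces to verifying that $\mathrm{H}^j(\eta)$ and $\mathrm{H}^j(\epsilon)$ are isomorphisms.

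For the cohomology computation, I combine the isomorphisms $\mathrm{H}^j(i_+ \M^\bullet) \cong i^0_+ \mathrm{H}^j(\M^\bullet)$ and $\mathrm{H}^j(i^! \N^\bullet) \cong i^\natural \mathrm{H}^j(\N^\bullet)$ from Lemma \ref{Kashiwaratermwise} (which extends to arbitrary closed embeddings by the local reduction noted after its proof together with Proposition \ref{invimcomposition}, and passes to left modules via the side-changing equivalence of Theorem \ref{sidechanging}) with Ardakov--Wadsley's classical Kashiwara equivalence (Theorem \ref{KashiwaraDcapTwo}), which asserts $i^\natural i^0_+ \cong \mathrm{id}$ and $i^0_+ i^\natural \cong \mathrm{id}$ on the relevant abelian subcategories. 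This yields
\begin{equation*}
\mathrm{H}^j(i^! i_+ \M^\bullet) \cong i^\natural i^0_+ \mathrm{H}^j(\M^\bullet) \cong \mathrm{H}^j(\M^\bullet),
\end{equation*}
and symmetrically $\mathrm{H}^j(i_+ i^! \N^\bullet) \cong \mathrm{H}^j(\N^\bullet)$, so at least abstractly the cohomology groups agree.

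The remaining task, and in my view the main obstacle, is to check that these abstract isomorphisms are precisely those induced by the derived unit $\mathrm{H}^j(\eta_{\M^\bullet})$ and counit $\mathrm{H}^j(\epsilon_{\N^\bullet})$, rather than some a priori different isomorphisms. This is essentially a naturality argument: the isomorphisms of Lemma \ref{Kashiwaratermwise} arise from degree-zero natural transformations via Lemma \ref{exactcohom}, and by the uniqueness of adjoints the derived adjunction $(i_+, i^!)$ must restrict on the hearts $I(\C_X) \subset \mathrm{D}_\C(\w{\D}_X)$ and $I(\C^X) \subset \mathrm{D}_{\C^X}(\w{\D}_Y)$ (the $\C$-complexes concentrated in degree zero) to an adjunction between $i^0_+$ and $i^\natural$, and hence by uniqueness must coincide with the classical adjunction of Theorem \ref{KashiwaraDcapTwo}. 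Once this compatibility is spelled out — tracing through the construction of the isomorphisms in Lemma \ref{Kashiwaratermwise} in terms of the natural map $\M^\bullet \to \tau^{\leq 0}\M^\bullet$ and its dual — the classical Kashiwara equivalence immediately guarantees that $\mathrm{H}^j(\eta)$ and $\mathrm{H}^j(\epsilon)$ are isomorphisms, completing the proof.
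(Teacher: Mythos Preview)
Your proposal is correct and follows essentially the same approach as the paper: use the unit and counit of the derived adjunction, reduce to cohomology via Lemma \ref{Kashiwaratermwise}, and identify the resulting maps with the unit and counit of the classical Ardakov--Wadsley adjunction, which are isomorphisms. The paper's proof is terser about the compatibility step you single out as the main obstacle, but your more careful naturality discussion is exactly the content behind the paper's phrase ``these can in turn be interpreted as the unit and counit of the adjunction from \cite[Theorem 6.10.(b)]{DcapTwo}''.
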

	\begin{proof}
		The unit and counit of the adjunction yield the desired natural transformations, inducing natural morphisms between the corresponding cohomology groups. By Lemma \ref{Kashiwaratermwise}, these can in turn be interpreted as the unit and counit of the adjunction from \cite[Theorem 6.10.(b)]{DcapTwo}, which are isomorphisms by \cite[Theorem 6.10.(c)]{DcapTwo} resp. \cite[Theorem 3.2.1]{Ardakovinduction}. 
	\end{proof}
	
	\begin{prop}
		\label{imcompKashiwara}
		Let $f: X\to Y$ and $g:Y\to Z$ be morphisms of smooth rigid analytic $K$-varieties. Assume that the following is satisfied:
		\begin{enumerate}[(i)]
			\item $f$ is smooth and separated.
			\item $g$ is a closed embedding.
			\item For any affinoid $U\subseteq Y$, the preimage $f^{-1}U$ has countably many connected components.
		\end{enumerate} 
		Let $\M^\bullet\in \mathrm{D}^+_\C(\w{\D}_X)$. Then
		\begin{equation*}
			(gf)_+(\M^\bullet)\cong g_+f_+\M^\bullet
		\end{equation*}
		via the natural morphism.
		
		The same holds for arbitrary $\M^\bullet\in \mathrm{D}_\C(\w{\D}_X)$ provided that $f_*$ has finite cohomological dimension.
	\end{prop}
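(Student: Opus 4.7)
The plan is to apply Lemma \ref{ifprojthendirectimcomp}, which reduces the statement to verifying the projection formula
\begin{equation*}
\w{\D}_{Z\leftarrow Y}\h{\otimes}^\mathbb{L}_{\w{\D}_Y}f_+\M^\bullet\cong \mathrm{R}f_*\left(\w{\D}_{Z\leftarrow X}\h{\otimes}^\mathbb{L}_{\w{\D}_X} \M^\bullet\right)
\end{equation*}
via the natural morphism. Since $g$ is a closed embedding, this is a local question on $Z$, so I first reduce to the case where $Z$ is affinoid admitting a local coordinate system $x_1,\dots,x_d,\partial_1,\dots,\partial_d$ such that $Y$ is cut out by $x_{r+1}=\cdots=x_d=0$, exactly as in the proof of Lemma \ref{clsmimcomp}. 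In this setting, $\w{\D}_{Y\to Z}$ is a free left $\w{\D}_Y$-module isomorphic to $\w{\D}_Y\h{\otimes}_K K\{\underline{\partial}\}$, and side-changing (subsection 4.2) exhibits $\w{\D}_{Z\leftarrow Y}$ in the same form as a right $\w{\D}_Y$-module, up to twisting by line bundles which can be handled separately. Using that $\w{\D}_{Z\leftarrow X}\cong \w{\D}_{Y\leftarrow X}\h{\otimes}^\mathbb{L}_{f^{-1}\w{\D}_Y} f^{-1}\w{\D}_{Z\leftarrow Y}$, the projection formula then reduces to the statement
\begin{equation*}
\mathrm{R}f_*(\N^\bullet)\h{\otimes}_K K\{\underline{\partial}\}\cong \mathrm{R}f_*\left(\N^\bullet\h{\otimes}_K K\{\underline{\partial}\}\right),
\end{equation*}
where $\N^\bullet=\w{\D}_{Y\leftarrow X}\h{\otimes}^\mathbb{L}_{\w{\D}_X}\M^\bullet$, which is precisely the assertion of Lemma \ref{directimofpn}.

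Next, I verify the hypotheses of Lemma \ref{directimofpn} for $\N^\bullet$. The key input is that $f$ is smooth, so working locally on $X$ I can pick relative coordinates and obtain a finite resolution of $\w{\D}_{Y\leftarrow X}$ by locally free right $\w{\D}_X$-modules via a relative Spencer complex analogous to Theorem \ref{spencer}. Tensoring this finite resolution with $\M^\bullet$ yields a representative of $\N^\bullet$ whose terms are, as complexes of $\O_X$-modules, finite direct sums of shifted copies of $\M^\bullet$. Since $\M^\bullet$ is a bounded $\C$-complex, its cohomology sheaves are coadmissible $\w{\D}_X$-modules by Proposition \ref{coadcohomology}, and hence by Theorem \ref{Dcapembedding} carry natural Fr\'echet (in particular pseudo-nuclear) sectionwise structures with vanishing higher cohomology on any sufficiently small affinoid. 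The pre-nuclearity and Mittag--Leffler machinery of Corollary \ref{MLcomplexes} and Lemma \ref{reltensorandinv} then lets me pass from the terms of the resolution to the cohomology sheaves $\mathrm{H}^i(\N^\bullet)$: using the strict exactness statements for pre-nuclear systems, the boundedness of $\M^\bullet$, and the fact that each term of the resolution shares the relevant Fr\'echet/Cech-acyclicity properties of $\M^\bullet$, I conclude that $\mathrm{H}^i(\N^\bullet)(U)$ is pseudo-nuclear and $\mathrm{H}^j(U,\mathrm{H}^i(\N^\bullet))=0$ for $j>0$ whenever $U$ lies in a suitable affinoid covering of $X$. This verifies the hypotheses of Lemma \ref{directimofpn} and establishes the projection formula for $\M^\bullet\in\mathrm{D}^b_\C$.

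For arbitrary $\M^\bullet\in\mathrm{D}_\C$ under the additional assumption that $f_*$ has finite cohomological dimension, I write $\M^\bullet$ as a homotopy limit of its truncations $\tau^{\geq n}\M^\bullet$ (each a bounded $\C$-complex by the stability of $\mathrm{D}_\C$ under truncations). The finite cohomological dimension of $f_*$, together with the commutation of $\h{\otimes}_K K\{\underline{\partial}\}$ with products granted by Corollary \ref{commutewprod}, ensures that both sides of the projection formula commute with the corresponding homotopy limit, so the bounded case propagates to the unbounded one.

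The principal obstacle is the verification in the middle paragraph: the cohomology sheaves $\mathrm{H}^i(\N^\bullet)$ are not themselves coadmissible $\w{\D}_X$-modules in any obvious sense, so one cannot appeal directly to Theorem \ref{Dcapembedding}. The argument instead has to be routed through the finite resolution of $\w{\D}_{Y\leftarrow X}$ and the pre-nuclearity results of subsection 3.5, which control the interaction between the complex structure of $\N^\bullet$, the inverse limits appearing in the Fr\'echet--Stein picture, and the Cech cohomology required by Lemma \ref{directimofpn}. Once this local verification is in place, Lemma \ref{ifprojthendirectimcomp} closes out the proof of both parts of the proposition.
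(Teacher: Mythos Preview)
Your strategy matches the paper's: reduce via Lemma~\ref{ifprojthendirectimcomp} to the projection formula, localize so that $\w{\D}_{Y\to Z}\cong\w{\D}_Y\h{\otimes}_K K\{\underline{\partial}\}$, use the relative Spencer resolution for the transfer bimodule (available because $f$ is smooth), and invoke Lemma~\ref{directimofpn}. The paper also works on the right-module side and side-changes at the end.

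The one substantive difference is the order in which you bridge from Lemma~\ref{directimofpn} (a statement about a single sheaf) to the complex $\N^\bullet$. The paper first treats a \emph{single} coadmissible module $\M$; then $\N^\bullet=\M\h{\otimes}^\mathbb{L}_{\w{\D}_X}\w{\D}_{X\to Y}$ is represented by a finite complex whose terms are locally isomorphic to $\M^r$, so each term already satisfies the hypotheses of Lemma~\ref{directimofpn}. A stupid-truncation induction then gives the projection formula for $\N^\bullet$, and a separate induction on cohomological length extends this to bounded $\C$-complexes. You instead attack a bounded $\C$-complex $\M^\bullet$ at once and try to verify the hypotheses of Lemma~\ref{directimofpn} on the cohomology sheaves $\mathrm{H}^i(\N^\bullet)$. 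As you yourself flag, these are not coadmissible in any obvious sense, and even granting pseudo-nuclearity and affinoid acyclicity of the $\mathrm{H}^i(\N^\bullet)$, you would still need a hypercohomology argument on top of Lemma~\ref{directimofpn} to conclude for $\N^\bullet$ itself. The paper's ordering sidesteps this obstacle entirely: by starting with a module, the terms of the representing complex are themselves sheaves to which Lemma~\ref{directimofpn} applies directly.

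For the unbounded case, the paper simply notes that $-\h{\otimes}^\mathbb{L}_{\w{\D}_X}\w{\D}_{X\to Y}$ already has finite cohomological dimension (again by the relative Spencer resolution), so once $f_*$ does too, one can invoke Lemma~\ref{ifprojthendirectimcomp} for unbounded complexes as well; your homotopy-limit argument is a valid alternative but slightly heavier.
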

	\begin{proof}
		We prove the statement for right modules and invoke side-changing.
		
		Let $\M$ be a coadmissible right $\w{\D}_X$-module. In the light of Lemma \ref{ifprojthendirectimcomp}, we wish to show that
		\begin{equation*}
			\mathrm{R}f_*(\M\widetilde{\otimes}^\mathbb{L}_{\w{\D}_X} \w{\D}_{X\to Y})\widetilde{\otimes}^\mathbb{L}_{\w{\D}_Y}\w{\D}_{Y\to Z}\to \mathrm{R}f_*(\M\widetilde{\otimes}^\mathbb{L}_{\w{\D}_X}\w{\D}_{X\to Y}\widetilde{\otimes}^{\mathbb{L}}_{f^{-1}\w{\D}_Y}f^{-1}\w{\D}_{Y\to Z})
		\end{equation*}
		is an isomorphism. Arguing locally on $Y$, we can assume that $Y$ is a smooth affinoid with a local coordinate system, and the above morphism reduces to
		\begin{equation*}
			\mathrm{R}f_*(\N^\bullet)\widetilde{\otimes}_K K\{\underline{\partial}\}\to \mathrm{R}f_*(\N^\bullet\widetilde{\otimes}_K K\{\underline{\partial}\})
		\end{equation*}
		as in Lemma \ref{clsmimcomp}, where $\N^\bullet\cong \M\widetilde{\otimes}^\mathbb{L}_{\w{\D}_X}\w{\D}_{X\to Y}$. Since $f$ is smooth, $\wideparen{\mathcal{D}}_{X\to Y}$ admits a finite resolution by locally free $\wideparen{\mathcal{D}}_X$-modules of finite rank, analogously to the Spencer resolution in subsection 6.3. In particular, $\mathcal{N}^\bullet$ can be represented by a bounded below complex of complete bornological sheaves $\mathcal{F}^\bullet$ such that there exists an affinoid covering $(U_i)$ of $X$ with $\mathcal{F}^j|_{U_i}$ isomorphic to some finite direct sum of copies of $\M|_{U_i}$, for each $i$ and each $j$.
		
		As $Y$ is assumed to be affinoid, $X=f^{-1}Y$ is now separated.
		Let $\B$ be the collection of admissible open affinoid subspaces of $X$ which are contained in some $U_i$. This is a basis of the topology. Moreover, since $\M$ has vanishing higher Cech cohomology on affinoids and $\M(U)$ is a pseudo-nuclear space of countable type for any affinoid $U\in \B$ by Proposition \ref{coadisnuc} and the remark before Lemma \ref{prodofpn}, it follows that $\mathcal{F}^\bullet$ and $\B$ satisfy the conditions from Corollary \ref{projforCcomplexes}.
		
		Thus, 
		\begin{align*}
			\mathrm{R}f_*(\mathcal{N}^\bullet)\widetilde{\otimes}_K K\{\underline{\partial}\}&\cong \mathrm{R}f_*(\mathcal{F}^\bullet)\widetilde{\otimes}_K K\{\underline{\partial}\}\\
			&\cong\mathrm{R}f_*(\mathcal{F}^\bullet\widetilde{\otimes}_K K\{\underline{\partial}\})\\
			&\cong \mathrm{R}f_*(\mathcal{N}^\bullet\widetilde{\otimes}_K K\{\underline{\partial}\}),
		\end{align*}
		as required.
		
		Invoking Lemma \ref{ifprojthendirectimcomp}, we have
		\begin{equation*}
			(gf)^r_+\M\cong g^r_+f^r_+\M.
		\end{equation*}
		An induction argument on cohomological length now proves the same for arbitrary $\M^\bullet\in \mathrm{D}^b_\C(\w{\D}_X^\mathrm{op})$. Since $-\widetilde{\otimes}_{\w{\D}_X} \w{\D}_{X\to Y}$ has finite cohomological dimension and $f_*$ is left exact, we can invoke Lemma \ref{ifprojthendirectimcomp} even for bounded below $\C$-complexes, and for arbitrary $\C$-complexes as soon as $f_*$ has finite cohomological dimension (see e.g. \cite[Tag 07K7]{stacksproj}).
	\end{proof}
	\begin{cor}
		Let $f:X\to Y$, $g:Y\to Z$ be morphisms of smooth rigid analytic $K$-spaces. Assume that $f$ is separated and that for any affinoid $U\subseteq Y$, the preimage $f^{-1}U$ has countably many connected components. Let $\M^\bullet\in \mathrm{D}^+_\C(\w{\D}_X)$. Then
		\begin{equation*}
			(gf)_+\M^\bullet\cong g_+f_+\M^\bullet
		\end{equation*}
		via the natural morphism.
		
		The same holds for $\M^\bullet\in \mathrm{D}_\C(\w{\D}_X)$ provided that $\mathrm{pr}_*$ has finite cohomological dimension for the projection morphism $\mathrm{pr}: X\times Y\to Y$.
	\end{cor}
	\begin{proof}
		We factor $f$ as the compositon $f=\mathrm{pr}\circ i_1$, where $i_1: X\to X\times Y$ is the graph embedding of $f$ (which is a closed embedding by assumption), and $\mathrm{pr}: X\times Y\to Y$ the (smooth) projection morphism.
		
		We can also write $g=g^{\mathrm{sm}}i_2$, where $i_2: Y\to Z'$ is a closed embedding and $g^\mathrm{sm}: Z'\to Z$ is smooth (e.g. by considering the graph embedding, which is locally closed -- in particular, $Z'$ can be chosen to be smooth).
		
		By Lemma \ref{compforsmooth}, we have
		\begin{equation*}
			g_+f_+\M^\bullet\cong g^\mathrm{sm}_+{i_2}_+f_+\M^\bullet
		\end{equation*}	
		and
		\begin{equation*}
			(gf)_+\M^\bullet\cong g^\mathrm{sm}_+(i_2f)_+\M^\bullet.
		\end{equation*}
		It thus suffices to prove that
		\begin{equation*}
			(i_2f)_+\M^\bullet\to {i_2}_+f_+\M^\bullet
		\end{equation*}
		is an isomorphism. We can thus argue locally and assume that $Z'$ and $Y$ are affinoid. 
		
		In this case, $X$ is separated, so that the projection $\mathrm{pr}$ is both smooth and separated. Moreover, $\mathrm{pr}^{-1}U=X\times U$ has countably many components for any affinoid $U\subseteq Y$ by assumption.
		
		By Theorem \ref{KashiwaraCcomplex}, ${i_1}_+$ sends $\mathrm{D}^+_{\C}(\w{\D}_X)$ to $\mathrm{D}^+_{\C}(\w{\D}_{X\times Y})$.
		
		We can thus invoke Lemma \ref{compforsmooth} and Proposition \ref{imcompKashiwara} to obtain
		\begin{align*}
			{i_2}_+f_+\M^\bullet&\cong {i_2}_+\mathrm{pr}_+{i_1}_+\M^\bullet\\
			&\cong (i_2\mathrm{pr})_+{i_1}_+\M^\bullet
		\end{align*}
		for any $\M^\bullet\in \mathrm{D}^+_{\C}(\w{\D}_X)$. If $\mathrm{pr}_*$ has finite cohomological dimension, the same holds for arbitrary $\M^\bullet\in \mathrm{D}_{\C}(\w{\D}_X)$.
		
		Now write $i_2\mathrm{pr}$ again as a composition of a closed embedding $i_3$ and a smooth morphism $h$. By Lemma \ref{clsmimcomp} and Lemma \ref{compforsmooth}, the above is isomorphic to
		\begin{equation*}
			h_+{i_3}_+{i_1}_+(\M^\bullet)\cong h_+(i_3i_1)_+(\M^\bullet)\cong (hi_3i_1)_+(\M^\bullet)=(i_2f)_+(\M^\bullet).
		\end{equation*}
		This finishes the proof.
	\end{proof}
	\subsection{Extraordinary inverse image along smooth morphisms}
	Let $f:X\to Y$ be a smooth morphism of smooth rigid analytic $K$-varieties. In this subsection, we are going to show that $f^!$ preserves $\C$-complexes.
	
	Suppose that $X=\Sp B$, $Y=\Sp A$ are affinoid, with $X$ resp. $Y$ admitting local coordinates $(x_1, \hdots, x_m, \partial_1, \hdots, \partial_m)$ resp. $(y_1, \hdots, y_r, \theta_1, \hdots, \theta_r)$ such that the map $\T_X\to f^*\T_Y$ sends $\partial_i$ to $1\otimes \theta_i$ for $i\leq r$ and to zero for $i>r$. Assume further that $A$ and $B$ admit admissible affine formal models $\A$ and $\B$ respectively, such that $f$ is obtained from a \emph{flat} morphism $\A\to \B$. We choose Lie lattices such that $\D_{X_n}$ is generated by $\pi^n\partial_i$, $i=1, \hdots, m$, and $\D_{Y_n}$ is generated by $\pi^n\theta_i$, $i=1, \hdots, r$.
	
	\begin{lem}
		\label{smoothtransfer}
		The $\w{\D}_X$-module $\w{\D}_{X\to Y}$ is coadmissible.
	\end{lem}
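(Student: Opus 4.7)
The strategy is to apply Lemma \ref{Mndefinescoad}, i.e.\ to exhibit $\w{\D}_{X\to Y}$ as an inverse limit of coherent $\D_{X_n}$-modules with the expected compatibility, and then recognize this limit as $\w{\D}_{X\to Y}$ itself.

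First, for each $n\geq 0$ I would define the complete bornological sheaf
\begin{equation*}
\M_n := \O_X \h{\otimes}_{f^{-1}\O_Y} f^{-1}\D_{Y_n}
\end{equation*}
on the site $X_n$ (localizing $Y_n$-accessible subdomains as needed). Using the anchor map $\theta\colon \T_X\to f^*\T_Y$, which under our hypotheses sends the Lie lattice $\pi^n\L_X$ into $\B\otimes_\A \pi^n\L_Y$, and the Leibniz rule as in Section~5, one checks that $\M_n$ is naturally a left $\D_{X_n}$-module on $X_n$. The key local observation is that on an affinoid subdomain $U$ of $X$ (with suitable formal models), $\M_n(U)$ is cyclic as a $\D_{X_n}(U)$-module, generated by $1\otimes 1$, with annihilator the left ideal generated by $\partial_{r+1},\dots,\partial_m$ (since $\theta(\partial_i)=0$ for $i>r$). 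Thus there is a surjection of Banach $\D_{X_n}(U)$-modules
\begin{equation*}
\D_{X_n}(U)\,/\,\textstyle\sum_{i>r}\D_{X_n}(U)\partial_i \longrightarrow \M_n(U)=\O(U)\h{\otimes}_{\O(V)}\D_{Y_n}(V).
\end{equation*}
Using the PBW decomposition of $\h{U_\B(\pi^n\L_X)}$ together with the flatness of $\A\to\B$, I would show this is an isomorphism: every element admits a unique (convergent) expression as $\sum_{\alpha'}b_{\alpha'}\pi^{n|\alpha'|}\partial^{\alpha'}$ modulo the ideal, where $\alpha'$ ranges only over the first $r$ coordinates, which matches the description of $\O(U)\langle \pi^n\theta_1,\dots,\pi^n\theta_r\rangle$. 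This gives $\M_n$ the structure of a coherent $\D_{X_n}$-module. A short argument also gives vanishing higher Cech cohomology on affinoids.

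Next, the compatibility
\begin{equation*}
\D_{X_n}\h{\otimes}_{\D_{X_{n+1}}|_{X_n}}\M_{n+1}|_{X_n}\cong \M_n
\end{equation*}
is immediate from the local description, since both sides are cyclic $\D_{X_n}$-modules with the same generator and same annihilator ideal. By Lemma~\ref{Mndefinescoad}, $\varprojlim \M_n$ is therefore a coadmissible $\w{\D}_X$-module. Finally, to identify $\w{\D}_{X\to Y}\cong \varprojlim \M_n$, one uses $\w{\D}_Y\cong \varprojlim \D_{Y_n}$ and Corollary~\ref{commutewinv} (together with the pre-nuclearity established in Section~3) to interchange the $\O_X\h{\otimes}_{f^{-1}\O_Y}(-)$ functor with the inverse limit on sections over affinoids.

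The main technical obstacle is Step~2, i.e.\ the identification of the quotient Banach $\D_{X_n}(U)$-module $\D_{X_n}(U)/\sum_{i>r}\D_{X_n}(U)\partial_i$ with $\O(U)\langle \pi^n\theta_1,\dots,\pi^n\theta_r\rangle$. A slick treatment should invoke the PBW-type isomorphism $\h{U_\B(\pi^n\L_X)}\cong \h{\Sym_\B \pi^n\L_X}$ (as $\B$-modules), together with the flatness hypothesis on $\A\to\B$, to reduce the quotient to a completed symmetric algebra in only the horizontal variables $\pi^n\partial_1,\dots,\pi^n\partial_r$ with coefficients in $\B$, and then identify the latter with $\B\langle\pi^n\theta_1,\dots,\pi^n\theta_r\rangle$ via $\partial_i\mapsto\theta_i$. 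Once the Banach structures are seen to agree, everything else follows formally.
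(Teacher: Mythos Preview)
Your approach is correct but considerably more elaborate than needed. The paper's proof is a single line: under the standing coordinate assumptions, the transfer bimodule is identified with the quotient
\[
\w{\D}_{X\to Y}\;\cong\;\w{\D}_X\big/\textstyle\sum_{i>r}\w{\D}_X\partial_i,
\]
i.e.\ the cokernel of a morphism $\w{\D}_X^{\,m-r}\to \w{\D}_X$ between coadmissible modules. Since coadmissible modules form an abelian category (\cite[Corollary~3.4, Corollary~3.5]{ST}, as already used in Corollary~\ref{coadstrict}), the cokernel is coadmissible and you are done.

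Your route via Lemma~\ref{Mndefinescoad} reconstructs this from the inside, building the system $\M_n$ and taking the inverse limit. That works, and in fact your local identification $\D_{X_n}(U)/\sum_{i>r}\D_{X_n}(U)\partial_i\cong \O(U)\langle\pi^n\theta_1,\dots,\pi^n\theta_r\rangle$ is exactly the level-$n$ shadow of the isomorphism the paper invokes; the paper simply applies the same PBW observation once at the $\w{\D}_X$-level rather than at each $n$. What your approach buys is an explicit description of $\D_{X_n}\h{\otimes}_{\w{\D}_X}\w{\D}_{X\to Y}$ along the way, but the paper extracts that separately in the remarks following the lemma. The cost is that you reprove, in disguise, closure of coadmissible modules under cokernels.
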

	\begin{proof}
		Note that by assumption, $\w{\D}_{X\to Y}\cong \w{\D}_X/\sum_{i>r} \w{\D}_X\partial_i$ is the cokernel of a morphism $\w{\D}_X^{m-r}\to \w{\D}_X$ of coadmissible $\w{\D}_X$-modules. 
	\end{proof}
	It follows from \cite[Lemma 7.3]{DcapOne}  and Proposition \ref{coadandtor} that 
	\begin{equation*}
		\mathrm{H}^{r-m}(f^!\M)=\w{\D}_{X\to Y}\widetilde{\otimes}_{f^{-1}\w{\D}_Y}f^{-1}\M
	\end{equation*}
	is a coadmissible $\w{\D}_X$-module for any coadmissible $\w{\D}_Y$-module $\M$, and furthermore 
	\begin{equation*}
		\D_{X_n}\widetilde{\otimes}_{\w{\D}_X}\mathrm{H}^{r-m}(f^!\M)\cong f^*(\D_{Y_n}\widetilde{\otimes}_{\w{\D}_Y}\M).
	\end{equation*}
	
	\begin{prop}
		\label{flatbasechangeaffinoid}
		For any coadmissible $\w{\D}_Y$-module $\M$, we have
		\begin{equation*}
			\w{\D}_{X\to Y}\widetilde{\otimes}^\mathbb{L}_{f^{-1}\w{\D}_Y}f^{-1}\M\cong \w{\D}_{X\to Y}\widetilde{\otimes}_{f^{-1}\w{\D}_Y} f^{-1}\M.
		\end{equation*}
	\end{prop}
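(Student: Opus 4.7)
The approach is to apply Proposition \ref{flatbasechange} to the $\w{\D}_X(X)$-coadmissible $(\w{\D}_X(X), \w{\D}_Y(Y))$-bimodule $\w{\D}_{X \to Y}(X)$, combined with the local-to-global structure of coadmissible modules. Since the desired isomorphism is at the sheaf level, I would first reduce to the corresponding statement on global sections: for any admissible affinoid subdomain $U \subseteq X$, the restriction of $f$ to $U$ (together with an appropriate affinoid containing $f(U)$) preserves the smoothness and flatness assumptions, so it suffices to verify the isomorphism of complete bornological $\w{\D}_X(X)$-modules after taking global sections.

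Next, I would apply Proposition \ref{flatbasechange} with $V = \w{\D}_X(X)$, $U = \w{\D}_Y(Y)$, and $M = \w{\D}_{X \to Y}(X)$, which is $V$-coadmissible by Lemma \ref{smoothtransfer}. This reduces to showing that for each $n$, the module
\begin{equation*}
M_n := \D_{X_n}(X) \h{\otimes}_{\w{\D}_X(X)} \w{\D}_{X \to Y}(X)
\end{equation*}
is flat as an abstract right $\D_{Y_n}(Y)$-module. Using the presentation $\w{\D}_{X \to Y}(X) = \w{\D}_X(X) / \sum_{i > r} \w{\D}_X(X) \partial_i$ and the coordinate description of $\D_{X_n}(X)$, PBW gives $M_n \cong B \h{\otimes}_A \D_{Y_n}(Y)$ as a $(B, \D_{Y_n}(Y))$-bimodule, with the right $\D_{Y_n}(Y)$-action purely on the second factor.

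For the flatness verification, the Noetherianity of $\D_{Y_n}(Y)$ reduces the question to finitely presented right modules $P$. For such $P$, the abstract tensor product $P \otimes_{\D_{Y_n}(Y)} M_n$ coincides with the completed one (being a cokernel of a morphism between finite free $M_n$-modules), and one has the natural identification $P \otimes_{\D_{Y_n}(Y)} (B \h{\otimes}_A \D_{Y_n}(Y)) \cong P \h{\otimes}_A B$. The flatness of $B$ over $A$ (coming from flatness of $\A \to \B$ together with the inversion of $\pi$) then yields that injections in $P$ remain injections after tensoring with $M_n$, giving abstract flatness.

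The main obstacle will be the third step: rigorously identifying the abstract tensor product $P \otimes_{\D_{Y_n}(Y)} M_n$ with $P \h{\otimes}_A B$ and deducing flatness from flatness of $B/A$. This involves a careful analysis of the mixed abstract/completed tensor products and will likely require an auxiliary lemma along the lines of Lemma \ref{Banmoduletensor} (or a Bar resolution argument as in the proof of Proposition \ref{coadandtor}), in order to justify commuting tensor products across the inclusion $A \hookrightarrow \D_{Y_n}(Y)$ at the Banach level while maintaining control over abstract-module exactness.
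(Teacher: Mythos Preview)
Your proposal is correct and follows essentially the same route as the paper: apply Proposition \ref{flatbasechange} with $V=\w{\D}_X(X)$, $U=\w{\D}_Y(Y)$ and $M=\w{\D}_{X\to Y}(X)$, then verify that $M_n\cong B\h{\otimes}_A \D_{Y_n}(Y)$ is abstractly flat over $\D_{Y_n}(Y)$ using flatness of $\B$ over $\A$. The only difference is in how the flatness step is handled: the paper dispatches it by citing \cite[Remark after Lemma 2.13]{Bode1} (which shows that $B\h{\otimes}_A-$ preserves strictly exact sequences of Banach $A$-modules when $\B/\A$ is flat, precisely the mixed completed/abstract tensor issue you flag as the main obstacle), whereas you outline the argument directly. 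One small slip: to check that $M_n$ is flat as a \emph{right} $U_n$-module you should be tensoring with finitely generated \emph{left} $U_n$-modules $N$, not right modules $P$; otherwise your sketch is on target.
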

	\begin{proof}
		Write $U_n=\D_{Y_n}(Y)$, so that 
		\begin{equation*}
			\w{\D}_{X\to Y}(X)\cong B\h{\otimes}_A \varprojlim U_n\cong \varprojlim (B\h{\otimes}_A U_n)
		\end{equation*} 
		by Lemma \ref{reltensorandinv}.
		
		In the light of Proposition \ref{flatbasechange}, it thus suffices to show that $B\h{\otimes}_A U_n$ is (algebraically) flat over $U_n$. For this, it is enough to consider short exact sequences of finitely generated $U_n$-modules, which carry canonical Banach structures. Note that such a sequence is then automatically a strictly exact sequence of Banach $A$-modules, and we can use the flatness of $\B$ over $\A$ to invoke \cite[Remark after Lemma 2.13]{Bode1} and finish the proof.
	\end{proof}

	\begin{thm}
		\label{smoothinvimage}
		Let $f: X\to Y$ be an arbitrary smooth morphism of smooth rigid analytic $K$-varieties. If $\M$ is a coadmissible $\w{\D}_Y$-module, then
		\begin{equation*}
			f^!\M\cong \w{\D}_{X\to Y}\widetilde{\otimes}_{f^{-1}\w{\D}_Y}f^{-1}\M[\dim X-\dim Y].
		\end{equation*}
		Moreover, $f^!$ preserves $\C$-complexes.
	\end{thm}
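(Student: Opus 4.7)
The first assertion is local on $X$ and $Y$, so it suffices to reduce to the setup of Proposition \ref{flatbasechangeaffinoid}. Any smooth morphism of smooth rigid analytic $K$-varieties admits an admissible cover by affinoid maps $X_{ij}\to Y_i$ of that form: compatible local coordinates can be found via the local structure theorem factoring smooth maps through an \'etale map to a relative polydisc, and flat formal models can be arranged because smooth maps of affinoid $K$-algebras are flat. Proposition \ref{flatbasechangeaffinoid} then delivers the isomorphism
\begin{equation*}
f^!\M\cong \w{\D}_{X\to Y}\h{\otimes}_{f^{-1}\w{\D}_Y}f^{-1}\M[\dim X-\dim Y]
\end{equation*}
for coadmissible $\M$ locally, and since both sides are canonical this glues to the global statement.

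For the preservation claim I first treat the coadmissible case. The formula above together with Lemma \ref{smoothtransfer} shows that $f^!\M$ is a bounded complex with a single non-zero coadmissible cohomology concentrated in degree $\dim X-\dim Y$. The identity
\begin{equation*}
\D_{X_n}\h{\otimes}_{\w{\D}_X}\mathrm{H}^{\dim X-\dim Y}(f^!\M)\cong f^*(\D_{Y_n}\h{\otimes}_{\w{\D}_Y}\M)
\end{equation*}
noted after Lemma \ref{smoothtransfer} verifies the vanishing condition of Proposition \ref{Ccomplexequiv}.(ii), so $f^!\M$ is a $\C$-complex. To pass to a bounded $\M^\bullet \in \mathrm{D}^b_\C(\w{\D}_Y)$, I induct on its cohomological length: the truncation triangles $\tau^{<k}\M^\bullet \to \M^\bullet \to \tau^{\geq k}\M^\bullet \to$ remain exact after applying the triangulated functor $f^!$, and $\mathrm{D}_\C$ being a triangulated subcategory of $\mathrm{D}(\w{\D}_X)$ reduces the induction to the length-one case, which is a shift of a coadmissible module and is thus handled by the above.

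For unbounded $\C$-complexes I would invoke the characterization in Proposition \ref{Ccomplexequiv}.(iii): $\M^\bullet$ is the homotopy limit in $\mathrm{D}(\mathrm{Shv}(Y,\h{\B}c_K))$ of the bounded complexes $\F^Y_n\h{\otimes}^\mathbb{L}\M^\bullet$. Since $f$ is smooth, the short exact sequence $0\to \T_{X/Y}\to \T_X\to f^*\T_Y\to 0$ yields a relative Spencer-type resolution of $\w{\D}_{X\to Y}$ of finite length by locally free right $f^{-1}\w{\D}_Y$-modules, so $f^!$ has finite cohomological amplitude; combined with the bounded case this yields that each $f^!(\F^Y_n\h{\otimes}^\mathbb{L}\M^\bullet)$ is a $\C$-complex and identifies $f^!\M^\bullet$ as a homotopy limit of $\C$-complexes. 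The main obstacle here is controlling the commutation of $f^!$ with this homotopy limit in a manner compatible with the bornological structure, so that both defining conditions of $\mathrm{D}_\C$ in Definition \ref{defnConaffinoid} are preserved; this should reduce, via the derived analogue of the tensor-product identity above, to the kind of pre-nuclearity and Mittag-Leffler arguments assembled in Theorem \ref{MLBan} and Corollary \ref{MLcomplexes}.
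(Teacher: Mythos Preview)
Your reduction for the first assertion and your treatment of a single coadmissible module are correct and match the paper. The induction on cohomological length for bounded $\C$-complexes is also fine, though the paper does not proceed this way.

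The genuine gap is in the unbounded case: what you wrote there is an outline rather than a proof, and you yourself flag the obstacle (commuting $f^!$ past the homotopy limit while retaining bornological control). This is not the route the paper takes, and it would require nontrivial additional work to complete.

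The paper's argument is much more direct and handles all $\C$-complexes (bounded or not) in one stroke. The point you are missing is the dual of Lemma~\ref{exactcohom}.(i): the functor $F=\w{\D}_{X\to Y}\h{\otimes}_{f^{-1}\w{\D}_Y}f^{-1}(-)$ is a left derived tensor product and commutes with direct sums, and by Proposition~\ref{flatbasechangeaffinoid} every coadmissible $\w{\D}_Y$-module is $F$-acyclic. Since any $\C$-complex $\M^\bullet$ has coadmissible cohomology, the dual of Lemma~\ref{exactcohom}.(i) gives immediately
\[
\mathrm{H}^j(f^!\M^\bullet)\cong f^*\mathrm{H}^{j+\dim X-\dim Y}(\M^\bullet)
\]
for every $j$, with no need for any homotopy-limit argument. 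Each of these is coadmissible by the remark after Lemma~\ref{smoothtransfer}, and the same remark gives $\D_{X_n}\h{\otimes}_{\w{\D}_X}\mathrm{H}^j(f^!\M^\bullet)\cong f^*(\D_{Y_n}\h{\otimes}_{\w{\D}_Y}\mathrm{H}^{j+\dim X-\dim Y}(\M^\bullet))$, which vanishes for almost all $j$ because $\M^\bullet$ is a $\C$-complex. Criterion (ii) of Proposition~\ref{Ccomplexequiv} then finishes the proof. Your elaborate machinery of relative Spencer resolutions and pre-nuclear Mittag--Leffler arguments is unnecessary here.
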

	\begin{proof}
		Due to the existence of flat formal models (\cite[Theorem 9.4/1]{Boschlectures}), we can reduce to the case discussed above. Now by Proposition \ref{flatbasechangeaffinoid}, 
		\begin{equation*}
			\w{\D}_{X\to Y}\widetilde{\otimes}^\mathbb{L}_{f^{-1}\w{\D}_Y}f^{-1}\M\cong \w{\D}_{X\to Y}\widetilde{\otimes}_{f^{-1}\w{\D}_Y}f^{-1}\M.
		\end{equation*}
		Thus Proposition \ref{exactcohom}.(i) implies that for any $\C$-complex $\M^\bullet$ on $Y$, we have
		\begin{equation*}
			\mathrm{H}^j(f^!\M^\bullet)\cong f^*\mathrm{H}^{j+\dim X-\dim Y}(\M^\bullet).
		\end{equation*}
		Hence $f^!\M^\bullet$ is a $\C$-complex on $X$ thanks to the remarks after Lemma \ref{smoothtransfer}.
	\end{proof}
	
	\subsection{Relative de Rham cohomology}
	We slightly generalize the results concerning the Spencer resolution (subsection 6.3) in order to obtain the analogue of \cite[p. 45]{Hotta}.
	Let $Y$, $Z$ be smooth rigid analytic $K$-spaces, and let $X=Y\times Z$. Consider the natural projection maps $p_1: X=Y\times Z\to Y$, $p_2: X\to Z$. Note that 
	\begin{equation*}
		\w{\D}_{X\to Y}\cong \frac{\w{\D}_X}{\w{\D}_Xp_2^*\T_Z},
	\end{equation*}
	where we view $p_2^*\T_X$ as a subsheaf of $\T_X$ via the natural isomorphism $\T_X\cong p_1^*\T_Y\oplus p_2^*\T_Z$.
	
	Using the same argument as in subsection 6.3, we can find a finite resolution of $\w{\D}_{X\to Y}$ by locally free $\w{\D}_X$-modules of finite rank. Applying side-changing then yields the locally free resolution
	\begin{equation*}
		\hdots \to\Omega_{X/Y}^{\dim Z+k}\widetilde{\otimes}_{\O_X}\w{\D}_X\to \hdots \to \Omega_{X/Y}^{\dim Z}\widetilde{\otimes}_{\O_X}\w{\D}_X\to \w{\D}_{Y\leftarrow X}\to 0.
	\end{equation*}
	In particular, if $\M\in \mathrm{Mod}_{\mathrm{Shv}(X, LH(\h{\B}c_K))}(\w{\D}_X)$, then
	\begin{equation*}
		\w{\D}_{Y\leftarrow X}\widetilde{\otimes}^{\mathbb{L}}_{\w{\D}_X} \M\cong DR_{X/Y}(\M),
	\end{equation*}
	where 
	\begin{equation*}
		DR_{X/Y}(\M)^k=\Omega_{X/Y}^{\dim Z+k}\widetilde{\otimes}_{\O_X} \M
	\end{equation*}
	for $-\dim Z\leq k\leq 0$, and we immediately obtain the following:
	\begin{thm}
		There is a natural isomorphism $p_{1+}\M\cong \mathrm{R}p_{1*}(DR_{X/Y}(\M))$ for any $\M\in \mathrm{Mod}_{\mathrm{Shv}(X, LH(\h{\B}c_K))}(\w{\D}_X)$. 
	\end{thm}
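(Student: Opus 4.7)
The proof is essentially a direct consequence of the locally free resolution of $\w{\D}_{Y\leftarrow X}$ constructed immediately before the theorem. My plan is as follows.

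First, I would unwind the definition: by construction, $p_+\M = \mathrm{R}p_*(\w{\D}_{Y\leftarrow X}\h{\otimes}^{\mathbb{L}}_{\w{\D}_X}\M)$. Hence the claim amounts to identifying the derived tensor product $\w{\D}_{Y\leftarrow X}\h{\otimes}^{\mathbb{L}}_{\w{\D}_X}\M$ with the complex $DR_{X/Y}(\M)$. The preceding paragraph already exhibits a finite resolution
\begin{equation*}
\mathcal{S}^\bullet := \bigl( \cdots \to \Omega_{X/Y}^{\dim Z + k}\h{\otimes}_{\O_X}\w{\D}_X \to \cdots \to \Omega_{X/Y}^{\dim Z}\h{\otimes}_{\O_X}\w{\D}_X\bigr) \longrightarrow \w{\D}_{Y\leftarrow X}
\end{equation*}
of the right $\w{\D}_X$-module $\w{\D}_{Y\leftarrow X}$, whose terms are locally free of finite rank as right $\w{\D}_X$-modules.

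The second step is to check that $\mathcal{S}^\bullet$ is a resolution by objects suitable for computing $-\h{\otimes}^{\mathbb{L}}_{\w{\D}_X}\M$. Locally, each term is a finite direct sum of copies of $\w{\D}_X$; in particular it lies in the class of flat objects identified in Proposition \ref{deriverelqabtensor} (since $\w{\D}_X\h{\otimes}_{\w{\D}_X}\M \cong \M$ and flatness is local). Thus we can use $\mathcal{S}^\bullet$ to compute the derived tensor product, obtaining
\begin{equation*}
\w{\D}_{Y\leftarrow X}\h{\otimes}^{\mathbb{L}}_{\w{\D}_X}\M \;\cong\; \mathcal{S}^\bullet\h{\otimes}_{\w{\D}_X}\M.
\end{equation*}

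Third, for each degree the projection formula for locally free modules, together with the associativity $\bigl(\Omega^k_{X/Y}\h{\otimes}_{\O_X}\w{\D}_X\bigr)\h{\otimes}_{\w{\D}_X}\M \cong \Omega^k_{X/Y}\h{\otimes}_{\O_X}\M$, identifies $\mathcal{S}^\bullet\h{\otimes}_{\w{\D}_X}\M$ termwise with $DR_{X/Y}(\M)$; one then checks that the differentials match those of the relative de Rham complex, which is immediate from the explicit formula for the Spencer-type differentials in subsection 4.2 after side-changing. Applying $\mathrm{R}p_*$ yields the desired natural isomorphism.

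The only conceivable obstacle is verifying that the resolution $\mathcal{S}^\bullet$ is admissible for the derived tensor product in the quasi-abelian/bornological setting; but since local freeness of finite rank places us squarely in the situation handled by Proposition \ref{deriverelqabtensor} (and the tensor product with a finite free module is strongly exact), this is routine rather than substantive. Naturality in $\M$ follows automatically from naturality of the resolution and of $\mathrm{R}p_*$.
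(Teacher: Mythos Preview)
Your proposal is correct and follows exactly the approach of the paper: the theorem is stated there as an immediate consequence of the preceding isomorphism $\w{\D}_{Y\leftarrow X}\h{\otimes}^{\mathbb{L}}_{\w{\D}_X}\M\cong DR_{X/Y}(\M)$, which is obtained precisely by tensoring the locally free Spencer-type resolution of $\w{\D}_{Y\leftarrow X}$ with $\M$. Your write-up simply spells out the details (flatness of the resolution terms, termwise identification, compatibility of differentials) that the paper leaves implicit.
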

	In particular, if $Y=\Sp K$ is a point, we recover the de Rham cohomology of $X$ via $p_{1+}\O_X$. The relationship between $p_{1+}\O_X$ and the individual de Rham cohomology groups is however a bit subtle: $p_{1+}\O_X$ is generally not a strict complex, so $\mathrm{H}^i(p_{1+}\O_X)$ is only an object of $LH(\h{\B}c_K)$. It is tempting then to apply the cokernel functor $C: LH(\h{\B}c_K)\to \h{\B}c_K$, but this takes the completed cokernel in $\h{\B}c_K$. For example, if $X=\Sp K\langle x\rangle$, then we have
	\begin{equation*}
		p_{1+}\O_X=\mathrm{R}p_{1*}\Omega^\bullet_{X/K}[1]\cong (0\to \O(X)\to \Omega_X(X)\to 0)
	\end{equation*}
	with $\Omega_X(X)$ in degree zero as expected, but
	\begin{equation*}
		C\mathrm{H}^0(p_{1+}\O_X)=0,
	\end{equation*}
	as the morphism has dense image.
	
	A more useful formulation is thus the following.
	\begin{prop}
		Let $X$ be a smooth rigid analytic $K$-variety of dimension $d$ with structure map $p: X\to \Sp K$. Let $J: LH(\h{\B}c_K)\to LH(\B c_K)$ be the functor induced by the exact embedding $\h{\B}c_K\to \B c_K$ (see \cite[Proposition 5.14]{ProsmansSchneiders}). Then
		\begin{equation*}
			\mathrm{H}^i_{\mathrm{dR}}(X)\cong CJ\mathrm{H}^{i-d}(p_+\O_X)
		\end{equation*}
		as $K$-vector spaces.
	\end{prop}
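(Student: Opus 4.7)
The plan is to exploit the explicit description of $CJ$ at the level of underlying vector spaces. Since the inclusion $\iota:\h{\B}c\hookrightarrow\B c$ is exact, the induced functor $J:LH(\h{\B}c)\to LH(\B c)$ commutes with taking cohomology: for any complex $E^\bullet$ in $\h{\B}c$ with differentials $d^i$, one obtains a natural isomorphism
\begin{equation*}
J\mathrm{H}^i(E^\bullet)\cong \mathrm{H}^i(\iota E^\bullet)=\bigl(0\to \coim_{\B c}(d^{i-1})\to \ker(d^i)\to 0\bigr)
\end{equation*}
in $LH(\B c)$, since the left-heart description of cohomology is computed from kernels and coimages in the ambient quasi-abelian category. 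Applying $C$ returns the cokernel of this representing monomorphism in $\B c$, whose underlying $K$-vector space is $\ker(d^i)/\coim_{\B c}(d^{i-1})$. In a quasi-abelian category the canonical map $\coim f\to \im f$ is both monic and epic, so $\coim_{\B c}(f)$ and the set-theoretic image $\im(f)$ coincide as subspaces of the target; in particular, $CJ\mathrm{H}^i(E^\bullet)$ has underlying $K$-vector space equal to the abstract cohomology $\ker(d^i)/\im(d^{i-1})$ of the forgotten complex.

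It therefore suffices to produce a representative of $p_+\O_X$ by a complex in $\h{\B}c$ whose underlying complex of $K$-vector spaces computes $\mathrm{H}^\bullet_{\mathrm{dR}}(X)$. Using quasi-separatedness and quasi-paracompactness, choose a countable admissible affinoid covering $\mathfrak{U}=(U_\alpha)$ of $X$ with all finite intersections again affinoid. Each $\Omega^j_X$ is a coherent $\O_X$-module; the exact, fully faithful embedding $(-)^b$ of Theorem \ref{Dcapembedding}, combined with Tate's acyclicity theorem and Lemma \ref{borniexact}, shows that the Čech complex of $\Omega^j_X$ for $\mathfrak{U}$ is strictly exact in $\h{\B}c$, so $\Omega^j_X$ is Čech-acyclic in $\mathrm{Shv}(X,\h{\B}c_K)$. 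By Lemma \ref{cechacyc}.(iv) each $\Omega^j_X$ is then $p_*$-acyclic, so $p_+\O_X\cong \mathrm{R}p_*(\Omega^\bullet_X[\dim X])$ is represented in $\h{\B}c$ by the total complex $\mathrm{Tot}\,\check C(\mathfrak{U},\Omega^\bullet_X[\dim X])$.

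The underlying complex of $K$-vector spaces of this total Čech complex is, tautologically, the standard Čech--de Rham bicomplex whose cohomology computes the classical hypercohomology $\mathbb{H}^\bullet(X,\Omega^\bullet_X)=\mathrm{H}^\bullet_{\mathrm{dR}}(X)$ (with the shift by $\dim X$ absorbed into the indexing convention of $p_+$). Combined with the first paragraph, this produces the claimed vector space isomorphism. The only delicate point is confirming Čech acyclicity of a coherent $\O_X$-module in the \emph{bornological} sense, but this reduces to checking that the classically exact Čech complex of finitely generated Banach modules stays strictly exact upon applying $(-)^b$, which is immediate from the exactness of $(-)^b$ on Banach spaces; no new analytic input is required.
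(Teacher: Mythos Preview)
Your first paragraph is sound: since the inclusion $\iota:\h{\B}c\hookrightarrow\B c$ is exact and strongly left exact (limits in $\h{\B}c$ agree with those in $\B c$), the induced $J$ commutes with cohomology, and in $\B c$ coimage and set-theoretic image have the same underlying subspace, so $CJ\mathrm{H}^i(E^\bullet)$ really does compute the abstract cohomology of the underlying complex of $K$-vector spaces.

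The second paragraph, however, contains a genuine error. You claim that the \v{C}ech complex of $\Omega^j_X$ for $\mathfrak{U}$ is strictly exact, hence that $\Omega^j_X$ is \v{C}ech-acyclic and therefore $p_*$-acyclic. This is false for a general smooth rigid space: already for $X=\mathbb{P}^1_K$ with its standard affinoid cover one has $\check{\mathrm{H}}^1(\mathfrak{U},\Omega^1_X)\cong\mathrm{H}^1(\mathbb{P}^1_K,\Omega^1)\cong K\neq 0$, so $\Omega^1_X$ is neither \v{C}ech-acyclic nor $p_*$-acyclic. Your appeal to Tate acyclicity and Lemma~\ref{borniexact} only gives vanishing of higher \v{C}ech cohomology \emph{on affinoids}, not on $X$. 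Moreover, even granting $p_*$-acyclicity, the conclusion you draw would be $\mathrm{R}p_*(\Omega^\bullet_X)\cong\Gamma(X,\Omega^\bullet_X)$, not a total \v{C}ech complex; the logical chain does not produce the object you then use.

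The fix is standard and keeps your overall strategy intact. What Tate acyclicity actually yields (bornologically, via Lemma~\ref{borniexact}) is that $\mathrm{H}^i(U,\Omega^j_X)=0$ for $i>0$ whenever $U$ is affinoid. Hence for your covering $\mathfrak{U}$ with affinoid finite intersections (which requires $X$ to be separated; for quasi-separated $X$ refine the argument using quasi-compactness of intersections), the sheafified \v{C}ech resolution $\Omega^j_X\to\check{\mathcal{C}}^\bullet(\mathfrak{U},\Omega^j_X)$ is a resolution by sheaves that \emph{are} $p_*$-acyclic (each term is a product of pushforwards of coherent sheaves from affinoids). Consequently $\mathrm{R}p_*(\Omega^\bullet_X)$ is represented by the total complex $\mathrm{Tot}\,\check{C}(\mathfrak{U},\Omega^\bullet_X)$, and your third paragraph applies. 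The paper itself does not spell out a proof of this proposition, but the argument it implicitly relies on is precisely this \v{C}ech--de Rham hypercohomology computation, together with the observation in your first paragraph.
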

	It looks like this is endowing $\mathrm{H}^i_\mathrm{dR}(X)$ with a bornological structure, but this is rarely interesting: if the space is finite-dimensional, it is the usual Banach structure, and in pathological cases like $\mathrm{H}^1_\mathrm{dR}(\Sp K\langle x\rangle)$ the bornology is the trivial one, i.e. every subset is bounded.
	
	Crucially the discussion above also shows that if $X$ is a smooth rigid analytic $K$-space with infinite-dimensional de Rham cohomology (e.g. the closed unit disk), and $p: X\to \Sp K$, then $p_+\O_X$ need not be an object of $\mathrm{D}_{\C}(\w{\D}_{\Sp K})=\mathrm{D}^b_{\mathrm{findim}}(\h{\B} c_K)$. This is in contrast to the complex theory, where stability of holonomicity ensures that the direct image of the trivial connection has $\D$-coherent cohomology for \emph{any} morphism.
	\subsection{Direct image along projective morphisms} 
	Suppose that $K$ is discretely valued, so that we have the results from \cite{Bodeproper} at our disposal.
	
	Let $Y$ be a smooth affinoid and consider the projection morphism $f: X=\mathbb{P}^m\times Y\to Y$.
	
	Note that in this case, $\T_X$ is globally generated, and we can define $\D_{X_n}$ globally on a suitable site $X_n$ (see \cite[Proposition 3.21]{Bodeproper}). Choosing lattices as usual, we have at our disposal the sheaves $\w{\D}_{X\to Y}=f^*\w{\D}_Y$ and $f^*\D_{Y_n}$ (the latter defined on a suitable site). Note that in this special setting, $f^*\T_Y$ is a Lie algebroid on $X$ (a direct summand of $\T_X$), and $\w{\D}_{X\to Y}\cong \w{\mathscr{U}(f^*\T_Y)}$, $f^*\D_{Y_n}\cong \mathcal{U}_n(f^*\T_Y)$.
	
	We write $\mathscr{U}=\w{\mathscr{U}(f^*\T_Y)}$ and $\mathcal{U}_n:=\mathscr{U}_n(f^*\T_Y)$.
	
	\begin{prop}
		Let $\M^\bullet\in \mathrm{D}_\C(\mathscr{U}^\mathrm{op})$. Then $\mathrm{R}f_*(\M^\bullet)\in \mathrm{D}_\C(\w{\D}_Y^\mathrm{op})$.
	\end{prop}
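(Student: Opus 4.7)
The plan is to verify the two conditions of Definition \ref{defnConaffinoid} for $\mathrm{R}f_*(\M^\bullet)$. The central intermediate statement I would establish is a base-change / projection formula
\[
\mathrm{R}f_*(\M^\bullet) \h{\otimes}^{\mathbb{L}}_{\w{\D}_Y} \D_{Y_n} \;\cong\; \mathrm{R}f_*\bigl(\M^\bullet \h{\otimes}^{\mathbb{L}}_{\mathscr{U}} \mathscr{U}_n\bigr).
\]
Since $\mathscr{U} = f^*\w{\D}_Y = \w{\D}_{X\to Y}$ and $\mathscr{U}_n \cong \mathscr{U} \h{\otimes}^{\mathbb{L}}_{f^{-1}\w{\D}_Y} f^{-1}\D_{Y_n}$ via the PBW-type description of the enveloping algebra of $f^*\T_Y$, this reduces to showing
\[
\mathrm{R}f_*(\N^\bullet) \h{\otimes}^{\mathbb{L}}_{\w{\D}_Y} \D_{Y_n} \;\cong\; \mathrm{R}f_*\bigl(\N^\bullet \h{\otimes}^{\mathbb{L}}_{f^{-1}\w{\D}_Y} f^{-1}\D_{Y_n}\bigr).
\]
I would prove this by the same strategy as in Lemma \ref{clsmimcomp}, reducing locally to a statement of the form $\mathrm{R}f_*(-) \h{\otimes}_K K\langle \pi^n\underline{\theta}\rangle \cong \mathrm{R}f_*(- \h{\otimes}_K K\langle \pi^n\underline{\theta}\rangle)$ on quasi-compact opens, and invoking Lemma \ref{directimofpn} together with the pseudo-nuclear structure on the sections of $\M^\bullet\h{\otimes}^{\mathbb{L}}\mathscr{U}_n$ supplied by coadmissibility.

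Next I would verify property (i). By the $\C$-complex hypothesis, each $\M^\bullet \h{\otimes}^{\mathbb{L}}_\mathscr{U} \mathscr{U}_n$ is a bounded complex of right $\mathscr{U}_n$-modules with coherent cohomology. The key external input is the analogue of Kiehl's properness theorem proved in \cite{Bodeproper}: for the projection $f : \mathbb{P}^m \times Y \to Y$, the functor $\mathrm{R}f_*$ carries $\mathrm{D}^b_{\mathrm{coh}}(\mathscr{U}_n^{\mathrm{op}})$ into $\mathrm{D}^b_{\mathrm{coh}}(\D_{Y_n}^{\mathrm{op}})$. Combined with the base change formula above, this shows $\D_{Y_n} \h{\otimes}^{\mathbb{L}}_{\w{\D}_Y} \mathrm{R}f_*(\M^\bullet) \in \mathrm{D}^b_{\mathrm{coh}}(\D_{Y_n}^{\mathrm{op}})$ for every $n$.

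For property (ii), the base change and Kiehl-type coherence show that for each $n$ and each $i$, the bornological sheaf $\mathrm{H}^i(\mathrm{R}f_*(\M^\bullet \h{\otimes}^{\mathbb{L}}_\mathscr{U} \mathscr{U}_n))$ is (the canonical Banach structure on) a coherent $\D_{Y_n}$-module, and the natural transition morphisms make $\bigl(\mathrm{H}^i(\mathrm{R}f_*(\M^\bullet \h{\otimes}^{\mathbb{L}}_\mathscr{U} \mathscr{U}_n))\bigr)_n$ into a pre-nuclear inverse system with dense images, using the second $\C$-complex property of $\M^\bullet$ applied sectionwise. Representing $\mathrm{R}f_*$ by a finite Čech complex associated to an affinoid covering of $\mathbb{P}^m \times Y$ (the $\M^\bullet \h{\otimes}^{\mathbb{L}} \mathscr{U}_n$ are bounded with coherent cohomology, hence Čech-acyclic on affinoids), Corollary \ref{MLcomplexes} together with Theorem \ref{MLBan} lets me interchange $\varprojlim_n$ with the Čech differential and conclude
\[
\varprojlim_n \mathrm{H}^i\bigl(\mathrm{R}f_*(\M^\bullet \h{\otimes}^{\mathbb{L}}_\mathscr{U} \mathscr{U}_n)\bigr) \;\cong\; \mathrm{H}^i\bigl(\mathrm{R}f_*(\varprojlim_n \M^\bullet \h{\otimes}^{\mathbb{L}}_\mathscr{U} \mathscr{U}_n)\bigr) \;\cong\; \mathrm{H}^i(\mathrm{R}f_*(\M^\bullet)),
\]
the last isomorphism using Proposition \ref{Ccomplexequiv}.(iii).

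The main obstacle will be a clean derivation of the projection/base change formula in this derived bornological framework: one needs to justify the interchange of $\mathrm{R}f_*$ with the completed tensor product by $\D_{Y_n}$ without having a resolution of $\D_{Y_n}$ by finite free $\w{\D}_Y$-modules available. Once this is in place, the coherence of $\mathrm{R}f_*$ is the content of \cite{Bodeproper}, and the inverse limit interchange is essentially formal given the pre-nuclearity machinery of Subsection 3.5.
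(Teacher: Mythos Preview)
Your instinct to use \cite{Bodeproper} for coherence and the pre-nuclearity machinery for the limit is correct, but you have inverted the logic in a way that creates the very obstacle you flag. The reduction of the projection formula to $\mathrm{R}f_*(-)\h{\otimes}_K K\langle \pi^n\underline{\theta}\rangle \cong \mathrm{R}f_*(-\h{\otimes}_K K\langle\pi^n\underline{\theta}\rangle)$ does not work: as a left $\w{\D}_Y$-module, $\D_{Y_n}$ is \emph{not} of the form $\w{\D}_Y\h{\otimes}_K V$ for a Banach space $V$ (in contrast to the transfer bimodule along a closed embedding), so neither Lemma~\ref{clsmimcomp} nor Lemma~\ref{directimofpn} adapts in the way you suggest.

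The paper bypasses the projection formula entirely by verifying characterization (ii) of Proposition~\ref{Ccomplexequiv} rather than Definition~\ref{defnConaffinoid} directly. Set $\M_n := \M^\bullet\h{\otimes}^\mathbb{L}_\mathscr{U}\mathscr{U}_n$. By \cite{Bodeproper} and induction on cohomological length one gets $\mathrm{R}f_*(\M_n)\in \mathrm{D}^b_\mathrm{coh}(\D_{Y_n})$ with $\mathrm{H}^i(\mathrm{R}f_*(\M_n))\h{\otimes}_{\D_{Y_n}}\D_{Y_{n-1}}\cong \mathrm{H}^i(\mathrm{R}f_*(\M_{n-1}))$, so each system $(\mathrm{H}^i(\mathrm{R}f_*(\M_n)))_n$ is $\varprojlim$-acyclic with coadmissible limit. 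Now $\M^\bullet\cong \mathrm{holim}\,\M_n$ by (iii) of Proposition~\ref{Ccomplexequiv}, and since $f_*$ commutes with products, $\mathrm{R}f_*$ commutes with homotopy limits; hence $\mathrm{R}f_*(\M^\bullet)\cong \mathrm{holim}\,\mathrm{R}f_*(\M_n)$ and $\mathrm{H}^i(\mathrm{R}f_*(\M^\bullet))\cong\varprojlim \mathrm{H}^i(\mathrm{R}f_*(\M_n))$, coadmissible by Lemma~\ref{Mndefinescoad}. The base-change isomorphism you were after then comes out as a \emph{consequence}, and the one-line observation that $f_*$ commutes with products replaces your \v{C}ech-complex manoeuvre for the inverse-limit interchange.
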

	\begin{proof}
		Write $\M_n:=\M^\bullet\widetilde{\otimes}^\mathbb{L}_\mathscr{U}\mathcal{U}_n$. It follows from \cite[Lemma 4.1, Proposition 5.7]{Bodeproper} and an easy induction argument on the cohomological length that $\mathrm{R}f_*(\M_n)\in \mathrm{D}^b_{\mathrm{coh}}(\D_{Y_n})$, and
		\begin{equation*}
			\mathrm{H}^i(\mathrm{R}f_*(\M_n))\h{\otimes}_{\D_{Y_n}}\D_{Y_{n-1}}\cong \mathrm{H}^i(\mathrm{R}f_*(\M_{n-1}))
		\end{equation*} 
		for all $i$ and all $n$. In particular, $\varprojlim \mathrm{H}^i(\mathrm{R}f_*(\M_n))$ is coadmissible, and the inverse system $(\mathrm{H}^i(\mathrm{R}f_*(\M_n)))_n$ is $\varprojlim$-acyclic.
		
		As $f_*$ commutes with products, we thus get 
		\begin{equation*}
			\mathrm{H}^i(\mathrm{R}f_*(\M^\bullet))\cong \mathrm{H}^i(\mathrm{holim} \mathrm{R}f_*(\M_n))\cong \varprojlim \mathrm{H}^i(\mathrm{R}f_*(\M_n))
		\end{equation*} 
		by the $\mathscr{U}$-analogue of Corollary \ref{Ccomplexholim}.
		Hence $\mathrm{R}f_*(\M^\bullet)$ is a $\C$-complex by Lemma \ref{Mndefinescoad}.
	\end{proof}
	
	\begin{thm}
		\label{elementaryKiehl}
		Let $\M^\bullet\in \mathrm{D}_\C(\w{\D}_X^\mathrm{op})$. Then $f_+^r(\M^\bullet)\in \mathrm{D}_\C(\w{\D}_Y^\mathrm{op})$.
	\end{thm}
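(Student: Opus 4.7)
The plan is to reduce to the preceding proposition, which already establishes that $\mathrm{R}f_*$ sends $\mathrm{D}_\C(\mathscr{U}^\mathrm{op})$ into $\mathrm{D}_\C(\w{\D}_Y^\mathrm{op})$. Since $f_+^r(\M^\bullet) = \mathrm{R}f_*(\M^\bullet \h{\otimes}^{\mathbb{L}}_{\w{\D}_X} \w{\D}_{X\to Y})$ and the isomorphism $\w{\D}_{X\to Y} \cong \mathscr{U}$ endows the derived tensor product with a natural right $\mathscr{U}$-module structure, it suffices to prove the intermediate statement that $\M^\bullet \h{\otimes}^{\mathbb{L}}_{\w{\D}_X} \w{\D}_{X\to Y}$ lies in $\mathrm{D}_\C(\mathscr{U}^\mathrm{op})$.

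To verify this I would argue locally on $X$ via a relative Spencer resolution. In the product setting, the inclusion $f^*\T_Y \hookrightarrow \T_X$ admits the canonical complement $\mathcal{V} := p_2^*\T_{\mathbb{P}^m}$, which is a Lie subalgebroid of $\T_X$. Repeating the argument of Theorem \ref{spencer} with $\mathcal{V}$ in place of $\T_X$ produces a finite locally free resolution
\begin{equation*}
0 \to \w{\D}_X \h{\otimes}_{\O_X} \wedge^m \mathcal{V} \to \cdots \to \w{\D}_X \h{\otimes}_{\O_X} \mathcal{V} \to \w{\D}_X \to \w{\D}_{X\to Y} \to 0
\end{equation*}
of left $\w{\D}_X$-modules. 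Hence locally $\M^\bullet \h{\otimes}^{\mathbb{L}}_{\w{\D}_X} \w{\D}_{X\to Y}$ is computed by a bounded complex whose terms are of the form $\M^\bullet \h{\otimes}_{\O_X} \wedge^i \mathcal{V}$. Since each $\wedge^i \mathcal{V}$ is a locally free coherent $\O_X$-module, these terms, and hence the cohomology groups of the resulting complex, are coadmissible right $\w{\D}_X$-modules, and in particular coadmissible right $\mathscr{U}$-modules.

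To confirm that we obtain a $\C$-complex over $\mathscr{U}^\mathrm{op}$, I would choose Lie lattices on $\T_X$ refining a fixed smooth Lie lattice on $f^*\T_Y$, so that by associativity and Corollary \ref{Anacyclic} (applied to $\w{\D}_{X\to Y} \cong \mathscr{U}$ as a coadmissible $\mathscr{U}$-module) we have
\begin{equation*}
\big(\M^\bullet \h{\otimes}^{\mathbb{L}}_{\w{\D}_X} \w{\D}_{X\to Y}\big) \h{\otimes}^{\mathbb{L}}_{\mathscr{U}} \mathscr{U}_n \cong \M^\bullet \h{\otimes}^{\mathbb{L}}_{\w{\D}_X} \mathscr{U}_n
\end{equation*}
on the appropriate site. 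Applying the Spencer resolution once more and combining with Proposition \ref{flatbasechange} and Proposition \ref{coadandtor} places the right-hand side in $\mathrm{D}^b_{\mathrm{coh}}(\mathscr{U}_n)$, while the homotopy-limit criterion of Proposition \ref{Ccomplexequiv} follows from Corollary \ref{MLcomplexes} applied to the pre-nuclear systems of cohomology groups arising here. The main obstacle I foresee is the coordination of the two distinct Fr\'echet--Stein structures on $X$ --- the one via $\pi^n$-scaled lattices in $\T_X$ defining $\C$-complexes over $\w{\D}_X$, and the one via lattices in $f^*\T_Y$ defining $\C$-complexes over $\mathscr{U}$ --- ensuring in particular that base-change identifications and coadmissibility properties match up across the two filtered structures; this is where Proposition \ref{coadandtor} and the pre-nuclearity machinery of subsection 3.5 will do the bulk of the work.
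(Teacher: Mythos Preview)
Your overall strategy matches the paper's: reduce to the preceding proposition by showing that $\M^\bullet\h{\otimes}^\mathbb{L}_{\w{\D}_X}\mathscr{U}$ is a $\C$-complex over $\mathscr{U}^\mathrm{op}$, and do so using the relative Spencer resolution together with the pre-nuclear/Mittag--Leffler machinery. The associativity identification $(\M^\bullet\h{\otimes}^\mathbb{L}_{\w{\D}_X}\mathscr{U})\h{\otimes}^\mathbb{L}_{\mathscr{U}}\mathscr{U}_n\cong \M^\bullet\h{\otimes}^\mathbb{L}_{\w{\D}_X}\mathscr{U}_n$ is exactly what the paper uses, and you correctly flag the coordination of the two Fr\'echet--Stein structures as the crux.

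There is, however, a genuine slip in your intermediate step. The terms $\M\h{\otimes}_{\O_X}\wedge^i\mathcal{V}$ do \emph{not} carry a right $\w{\D}_X$-module structure (the Spencer differentials are only left $\w{\D}_X$-linear), and more importantly, a coadmissible right $\w{\D}_X$-module restricted along $\mathscr{U}\to\w{\D}_X$ is typically \emph{not} coadmissible over $\mathscr{U}$: already $\w{\D}_X$ itself is locally $\mathscr{U}\h{\otimes}_K K\{\partial_{r+1},\dots,\partial_m\}$, which is not finitely generated over any $\mathscr{U}_n$. So one cannot deduce coadmissibility of the cohomology over $\mathscr{U}$ from termwise considerations.

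The paper sidesteps this by never claiming anything about the Spencer terms as $\mathscr{U}$-modules. It first reduces to $\M\in\C_X$, then works entirely at level~$n$: the Spencer resolution at level~$n$ computes $\M_n\h{\otimes}^\mathbb{L}_{\D_{X_n}}\mathscr{U}_n$ via the complex $\F_n^j=\M_n\h{\otimes}_{\O_X}\wedge^{m+j}\T_{X/Y}$, which lies in $\mathrm{D}^b_{\mathrm{coh}}(\mathscr{U}_n)$ because $\M_n$ is coherent over $\D_{X_n}$. The systems $(\F_n^j(U))_n$ and their cohomologies are pre-nuclear, so Theorem~\ref{MLBan} (rather than Corollary~\ref{MLcomplexes} directly) shows that $\varprojlim\F_n^\bullet$ is strict with cohomology $\varprojlim\mathrm{H}^j(\M\h{\otimes}^\mathbb{L}\mathscr{U}_n)$. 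Lemma~\ref{reltensorandinv} identifies $\varprojlim\F_n^j\cong\M\h{\otimes}_{\O_X}\wedge^{m+j}\T_{X/Y}$, which is the Spencer complex representing $\M\h{\otimes}^\mathbb{L}\mathscr{U}$. Coadmissibility over $\mathscr{U}$ then drops out of Lemma~\ref{Mndefinescoad}. Finally one passes to bounded and then unbounded $\C$-complexes using that $-\h{\otimes}^\mathbb{L}\mathscr{U}$ has finite cohomological dimension. In short: the Spencer resolution should be applied at each Banach level and the limit taken afterwards, not the other way round.
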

	
	\begin{proof}
		By the above, it remains to show that $\N^\bullet:=\M^\bullet\widetilde{\otimes}^\mathbb{L}_{\w{\D}_X}\mathscr{U}$ is a right $\C$-complex over $\mathscr{U}$.
		
		First suppose that $\M^\bullet=\M\in \C$. Write $\M_n\cong \M\widetilde{\otimes}_{\w{\D}_X} \D_{X_n}$, so that $\M\cong \varprojlim \M_n$. Now $\mathcal{U}_n$ is a coherent $\D_{X_n}$-module, which by the discussion in subsection 6.3 admits a finite resolution by locally free $\D_{X_n}$-modules of finite rank. Thus
		\begin{equation*}
			\M\widetilde{\otimes}^\mathbb{L}_{\w{\D}_X} \mathcal{U}_n\cong \M_n\widetilde{\otimes}^\mathbb{L}_{\D_{X_n}} \mathcal{U}_n\in \mathrm{D}^b(\mathcal{U}_n),
		\end{equation*}
		and since $\M_n$ can locally be represented by a (bounded above) complex of free $\D_{X_n}$-modules of finite rank, we have
		\begin{equation*}
			\M_n\widetilde{\otimes}^\mathbb{L}_{\D_{X_n}}\mathcal{U}_n\in \mathrm{D}^b_{\mathrm{coh}}(\mathcal{U}_n).
		\end{equation*}
		Moreover,
		\begin{equation*}
			\mathrm{H}^j(\M\widetilde{\otimes}^\mathbb{L}_{\w{\D}_X} \mathcal{U}_n)\cong \mathrm{H}^j(\M\widetilde{\otimes}^\mathbb{L}_{\w{\D}_X} \mathcal{U}_{n+1})\widetilde{\otimes}_{\mathcal{U}_{n+1}} \mathcal{U}_n
		\end{equation*}
		for all $j$ and all $n$, due to the flatness properties of $\mathcal{U}_{n+1}\to \mathcal{U}_n$.
		
		In particular, $\varprojlim \mathrm{H}^j(\M\widetilde{\otimes}^\mathbb{L}_{\w{\D}_X} \mathcal{U}_n)$ is a coadmissible $\mathscr{U}$-module.
		
		It thus suffices now to pass to the limit with a Mittag-Leffler argument, as follows.
		
		By the discussion in subsection 9.3, 
		\begin{equation*}
			\M\widetilde{\otimes}^\mathbb{L}_{\w{\D}_X}\mathcal{U}_n\cong \M_n\widetilde{\otimes}^\mathbb{L}_{\D_{X_n}} \mathcal{U}_n
		\end{equation*}
		can be represented by a finite complex with terms
		\begin{equation*}
			\F_n^{-j}j=\M_n\widetilde{\otimes}_{\O_X} \wedge^{j} \T_{X/Y}
		\end{equation*}
		for $0\leq j\leq m$.
		
		These are sheaves whose sections on each affinoid subdomain $U$ in $X_n$ form a Banach space such that $(\F_m^j(U))_{m\geq n}$ is a pre-nuclear system (since $\M_m(U)$ is a pre-nuclear system).
		
		Since $\M\widetilde{\otimes}^\mathbb{L}\mathcal{U}_n\in \mathrm{D}^b_\mathrm{coh}(\mathcal{U}_n)$, the complex is strict, and the corresponding sections of the cohomology groups also form pre-nuclear systems due to coadmissibility.
		
		Applying Theorem \ref{MLBan}, the complex formed by $\varprojlim_n \F_n ^{-j}$ is strict with cohomology groups
		\begin{equation*}
			\varprojlim \mathrm{H}^{-j}(\M\widetilde{\otimes}^\mathbb{L}\mathcal{U}_n).
		\end{equation*}
		But now by Lemma \ref{reltensorandinv}, we have 
		\begin{equation*}
			\varprojlim \F_n^{-j}\cong \M\widetilde{\otimes}_{\O_X}\wedge^{j}\T_{X/Y},
		\end{equation*}
		which represents the complex $\M\widetilde{\otimes}^\mathbb{L}\mathscr{U}$.
		
		Hence $\M\widetilde{\otimes}^\mathbb{L} \mathcal{U}$ is a $\C$-complex over $\mathscr{U}$.
		
		If $\M^\bullet$ is a bounded $\C$-complex, an induction argument implies now that $\M^\bullet\widetilde{\otimes}^\mathbb{L}\mathscr{U}$ is a $\C$-complex over $\mathscr{U}$, and the result follows in full generality from $-\widetilde{\otimes}^\mathbb{L}\mathscr{U}$ having finite cohomological dimension.
	\end{proof}

	We call a morphism $f: X\to Y$ \textbf{projective} if there exists a closed embedding $i: X\to \mathbb{P}^d\times Y$ such that $f$ factors as $i$ followed by the natural projection.
	\begin{cor}
		\label{KiehlCcomplex}
		Let $f:X\to Y$ be a projective morphism between smooth rigid analytic $K$-varieties. Then $f_+$ preserves $\C$-complexes.
	\end{cor}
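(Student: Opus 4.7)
The plan is to factor $f = p\circ i$, where $i\colon X\to \mathbb{P}^d\times Y$ is a closed embedding and $p\colon \mathbb{P}^d\times Y\to Y$ is the natural projection, and then combine the two pieces of machinery already established: Kashiwara's equivalence for $\C$-complexes (Theorem \ref{KashiwaraCcomplex}) and the projection-case result (Theorem \ref{elementaryKiehl}). First I would invoke the composition corollary after Proposition \ref{imcompKashiwara} to write $f_+ \cong p_+\circ i_+$ on all of $\mathrm{D}_\C(\w{\D}_X)$. The corollary applies in bounded generality automatically, and for unbounded $\C$-complexes we need $i_*$ to have finite cohomological dimension; but since $i$ is a closed embedding, Proposition \ref{supportequivalence} shows $i_*$ is exact (being part of the equivalence with sheaves supported on $X$), so its cohomological dimension is zero and the isomorphism holds unconditionally.

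Next, Theorem \ref{KashiwaraCcomplex} gives that $i_+$ sends $\mathrm{D}_\C(\w{\D}_X)$ into $\mathrm{D}_{\C^X}(\w{\D}_{\mathbb{P}^d\times Y})\subset \mathrm{D}_\C(\w{\D}_{\mathbb{P}^d\times Y})$, so it suffices to show that $p_+$ preserves $\C$-complexes. This is where Theorem \ref{elementaryKiehl} is applied, but two technical reductions are needed. First, that theorem is for right modules and the projection morphism on a smooth affinoid base $Y$; for left modules we compose with the side-changing equivalence $\Omega_X\h{\otimes}_{\O_X}(-)$ of Theorem \ref{sidechanging}, noting that tensoring with the line bundle $\Omega_X$ preserves $\C$-complexes termwise (the conditions of Definition \ref{defnConaffinoid} are local and invariant under tensoring with a locally free $\O_X$-module of rank one, and analogously on $Y$). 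Second, for a general smooth $Y$ I would work locally: pick an admissible affinoid cover $(V_\alpha)$ of $Y$ such that each $V_\alpha$ admits a smooth Lie lattice, observe that $\C$-complexes are defined locally (Definition \ref{defnCgeneral}), and use that for the open immersions $j_\alpha\colon V_\alpha\hookrightarrow Y$ one has $j_\alpha^{-1}p_+ \cong (p|_{p^{-1}V_\alpha})_+ j_\alpha'^{-1}$ where $j_\alpha'\colon \mathbb{P}^d\times V_\alpha\hookrightarrow \mathbb{P}^d\times Y$, because pushforward along $p$ and restriction to an admissible open commute.

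The main obstacle I expect is the base-change step in the last paragraph: I need that restricting $p_+\N^\bullet$ to an affinoid $V\subset Y$ agrees with forming $p_+$ of the restriction $\N^\bullet|_{\mathbb{P}^d\times V}$. At the level of underlying derived direct images of sheaves this is standard (restriction to open subsets commutes with $\mathrm{R}f_*$), but here one must check compatibility with the derived tensor product against the transfer bimodule $\w{\D}_{Y\leftarrow \mathbb{P}^d\times Y}$; this follows because the transfer bimodule itself restricts correctly and because the derived tensor and open-subset restriction commute by the strong exactness of $j^{-1}$. Once this routine compatibility is in place, the problem reduces directly to the affinoid case handled by Theorem \ref{elementaryKiehl}, and the chain $f_+ \cong p_+ i_+$ then shows that $f_+$ preserves $\mathrm{D}_\C$, completing the proof.
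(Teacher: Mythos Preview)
Your proposal is correct and follows essentially the same route as the paper: factor $f=p\circ i$, handle the closed embedding via Kashiwara (Lemma~\ref{Kashiwaratermwise} or Theorem~\ref{KashiwaraCcomplex}), and handle the projection via Theorem~\ref{elementaryKiehl}, with side-changing and localization to affinoid $Y$ to pass between left/right modules and general base. The only minor difference is the composition step: the paper invokes Lemma~\ref{compforsmooth} directly (which applies to all of $\mathrm{D}(\w{\D}_X)$ because $p$ is smooth), whereas you go through the Corollary after Proposition~\ref{imcompKashiwara} and then argue that $i_*$ has finite cohomological dimension; both work, but Lemma~\ref{compforsmooth} is the cleaner choice here since it avoids that extra verification.
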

	\begin{proof}
		By Lemma \ref{compforsmooth} and arguing locally, it suffices to consider separately the cases of a closed embedding (Lemma \ref{Kashiwaratermwise}) and the projection to an affinoid (Theorem \ref{elementaryKiehl}). Side-changing implies that the result holds both for left and right $\C$-complexes.
	\end{proof}

	\subsection{Duality}
	As in the previous subsection, suppose that $K$ is discretely valued.
	Recall that in \cite[Proposition 5.3]{DcapThree}, the authors associate to any coadmissible $\w{\D}_X$-module $\M$ right coadmissible modules $\mathcal{E}xt^i_{\w{\D}_X}(\M, \w{\D}_X)$, whose sections over affinoids are given by the corresponding algebraic Ext groups. To avoid confusion with our functor $\mathrm{R}\mathcal{H}om$ and its cohomology groups, we denote the sheaves from \cite{DcapThree} by $E^i(\M, \w{\D}_X)$. We use the notation $E^i_n$ for the analogous construction for coherent $\D_n$-modules.
	
	In this subsection, we prove that the duality functor $\mathbb{D}$ preserves $\C$-complexes and recovers the sheaves $E^i(\M, \w{\D}_X)$. 
	\begin{thm}
		\label{dualCcomplex}
		Let $X$ be a smooth rigid analytic $K$-space. Then $\mathbb{D}_X$ sends $\mathrm{D}_{\C}(\w{\D}_X)$ to $\mathrm{D}_{\C}(\w{\D}_X)^{\mathrm{op}}$. Moreover, there is a natural isomorphism $\mathbb{D}^2\cong \mathrm{id}$.
	\end{thm}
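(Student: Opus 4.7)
The plan is to reduce the statement to the corresponding fact for $\D_n$-modules, where classical Auslander-regular/Noetherian-ring biduality applies, and then glue the pieces together with the Mittag--Leffler machinery of Section~3.5. The problem is entirely local, so I would fix a smooth affinoid $X=\Sp A$ with $\T(X)$ admitting a smooth Lie lattice, so that the sheaves $\D_n=\D_{X_n}$ are at our disposal on the sites $X_n$.

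The first and main technical step is a base-change formula: for any $\C$-complex $\M^\bullet$ and each $n$,
\begin{equation*}
\D_n \h{\otimes}^{\mathbb{L}}_{\w{\D}_X} \mathrm{R}\mathcal{H}om_{\w{\D}_X}(\M^\bullet,\w{\D}_X)\;\cong\; \mathrm{R}\mathcal{H}om_{\D_n}\!\bigl(\D_n\h{\otimes}^{\mathbb{L}}_{\w{\D}_X}\M^\bullet,\;\D_n\bigr).
\end{equation*}
To establish this I would first reduce to the case $\M^\bullet=\M\in \C_X$ by truncation and induction on cohomological length (Proposition~\ref{Ccomplexequiv}), and then represent $\M$ locally by the pre-nuclear system $\M_n=\D_n\h{\otimes}_{\w{\D}_X}\M$. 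A finite length locally free resolution $\mathcal{P}_n^\bullet \to \M_n$ by $\D_n$-modules (which exists by the Auslander-regularity of $\D_n$, see the setup of \cite{DcapThree}) can be chosen compatibly in $n$, yielding a complex of $\w{\D}_X$-modules $\mathcal{P}^\bullet$ with $\mathcal{P}^\bullet|_{X_n}\cong \mathcal{P}_n^\bullet$. The Mittag--Leffler results (Theorem~\ref{MLBan}, Corollary~\ref{MLcomplexes}) together with Lemma~\ref{Mndefinescoad} show that $\mathcal{P}^\bullet$ is a resolution of $\M$ by $\w{\D}_X$-modules which are direct summands of finite sums of $\w{\D}_X$, hence adapted to computing $\mathrm{R}\mathcal{H}om_{\w{\D}_X}(-,\w{\D}_X)$. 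Applying $\mathcal{H}om$ termwise and then $\D_n\h{\otimes}_{\w{\D}_X}-$, the base-change formula reduces to the termwise identity $\D_n\h{\otimes}_{\w{\D}_X}\mathcal{H}om_{\w{\D}_X}(\w{\D}_X^r,\w{\D}_X)\cong \mathcal{H}om_{\D_n}(\D_n^r,\D_n)$. This identification hinges on pseudo-nuclearity and Corollary~\ref{commutewinv} to commute completed tensor products with the inverse limit defining $\w{\D}_X$, which is the main obstacle.

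Granting the base-change formula, the cohomology sheaves of $\D_n\h{\otimes}^{\mathbb{L}}\mathbb{D}\M^\bullet$ are (after side-changing with the line bundle $\Omega^{\otimes -1}$) precisely the classical Ext sheaves $E_n^i(\M_n,\D_n)$ considered in \cite{DcapThree}. By loc.~cit.\ these are coherent $\D_n$-modules, they vanish outside a bounded range (independent of $n$) since $\D_n$ has finite global dimension, and the natural transition maps
\begin{equation*}
\D_n\h{\otimes}_{\D_{n+1}} E_{n+1}^i(\M_{n+1},\D_{n+1})\;\longrightarrow\; E_n^i(\M_n,\D_n)
\end{equation*}
are isomorphisms (the key result \cite[Proposition~5.3]{DcapThree}). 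Thus the inverse system is a pre-nuclear system of coherent $\D_n$-modules with coadmissible inverse limit, and Lemma~\ref{Mndefinescoad} together with the characterization in Proposition~\ref{Ccomplexequiv}.(ii) shows that $\mathbb{D}\M^\bullet$ is a $\C$-complex whose cohomology agrees, up to side-changing and shift, with the sheaves $E^i(\M^\bullet,\w{\D}_X)$ of \cite{DcapThree}.

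For biduality, I would construct the natural transformation $\mathrm{id}\to \mathbb{D}^2$ via the standard double-dual map at the level of $\mathrm{R}\mathcal{H}om$, and it then suffices to check that the induced morphism is an isomorphism after applying $\D_n\h{\otimes}^{\mathbb{L}}_{\w{\D}_X}-$ for every $n$, using condition~(ii) in the definition of a $\C$-complex. By the base-change formula established above, this reduces to the classical biduality statement $\M_n\cong \mathbb{D}_n^2(\M_n)$ in $\mathrm{D}^b_\mathrm{coh}(\D_n)$, which holds because $\D_n$ is Auslander regular of finite global dimension (this is used in \cite[Section~5]{DcapThree} and goes back to standard non-commutative biduality). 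Passing to the limit using Mittag--Leffler for the resulting pre-nuclear systems then gives $\M^\bullet\cong \mathbb{D}^2(\M^\bullet)$ on $\C$-complexes, completing the proof.
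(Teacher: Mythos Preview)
Your overall plan---reduce to the $\D_n$-level, invoke coherence and biduality there, and glue back via the Mittag--Leffler results of Section~3.5---matches the paper's strategy. The execution, however, diverges in a way that creates a real difficulty for you and which the paper avoids entirely.

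You attack the comparison by resolving the \emph{first} argument: you want compatible finite locally free resolutions $\mathcal{P}_n^\bullet\to\M_n$ whose inverse limit $\mathcal{P}^\bullet$ is a complex of finitely generated projective $\w{\D}_X$-modules. The existence of such a $\mathcal{P}^\bullet$ is not established anywhere in the paper, and it is not clear. If one produces the resolution in the natural way, by iterating surjections $\w{\D}_X^{r_i}\twoheadrightarrow K^i$ from finite free modules onto successive coadmissible syzygies, then after $d=\mathrm{gl.dim}\,\D_n$ steps the last kernel $K^d$ satisfies only that each $K^d_n=\D_n\h{\otimes}_{\w{\D}_X}K^d$ is projective over $\D_n$; it does not follow that $K^d$ itself is a direct summand of a finite free $\w{\D}_X$-module, nor that it is acyclic for $\mathcal{H}om_{\w{\D}_X}(-,\w{\D}_X)$ in the bornological sense. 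Your base-change identity therefore rests on an unproven assertion. (It may be true, but proving it seems to require an argument of roughly the same strength as the theorem itself.)

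The paper sidesteps this by manipulating the \emph{second} argument instead. One has $\w{\D}_X\cong\mathrm{R}\varprojlim\F_n$ in $\mathrm{D}(\w{\D}_X)$ (Theorem~\ref{MLBan} applied sectionwise), and $\mathrm{R}\mathcal{H}om_{\w{\D}_X}(\M^\bullet,-)$, being a right adjoint, commutes with homotopy limits. Combined with the tensor--hom adjunction
\[
\mathrm{R}\mathcal{H}om_{\w{\D}_X}(\M^\bullet,\F_n)|_{X_n}\;\cong\;\mathrm{R}\mathcal{H}om_{\D_n}\bigl(\D_n\h{\otimes}^{\mathbb{L}}_{\w{\D}_X}\M^\bullet,\;\D_n\bigr),
\]
this gives $\mathrm{R}\mathcal{H}om_{\w{\D}_X}(\M^\bullet,\w{\D}_X)$ directly as a homotopy limit of objects whose cohomologies form a pre-nuclear system of coherent $\D_n$-modules, and Corollary~\ref{MLcoad} finishes the identification. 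No resolution of $\M^\bullet$ is needed, and the biduality statement then follows exactly as you describe. I would recommend reorganising your argument along these lines: keep your reduction and your endgame, but replace the resolution step by the second-variable homotopy-limit argument.
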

	\begin{proof}
		Suppose that $X$ is affinoid with free tangent sheaf. By \cite[Theorem A.(i)]{DcapThree}, we can assume that $\D_n(X)$ has finite self-injective dimension. Note that this result is currently only available if $K$ is discretely valued. 
		
		If $\M$ is a coherent $\D_n$-module, then any $\D_n$-linear map $\M\to \D_n$ is bounded, so $\mathcal{H}om_{\D_n}(\M, \D_n)$ is the coherent right $\D_n$-module $E_n^0(\M, \D_n)$. Considering a short exact sequence
		\begin{equation*}
			0\to \N\to \D_n^r\to \M\to 0
		\end{equation*}
		and the corresponding long exact sequence of cohomology, we find inductively that $\mathrm{H}^j(\mathrm{R}\mathcal{H}om_{\D_n}(\M, \D_n))$ is the coherent right $\D_n$-module $E_n^j(\M, \D_n)$. In particular, $\mathrm{R}\mathcal{H}om_{\D_n}(-, \D_n)\widetilde{\otimes}_{\O_X}\Omega_X^{\otimes -1}$ sends $\mathrm{D}^b_{\mathrm{coh}}(\D_n)$ to itself, where the boundedness follows from the finite self-injective dimension of $\D_n(X)$.
		
		Note that this implies that if $\M\cong \varprojlim \M_n$ is a coadmisible $\w{\D}_X$-module, then 
		\begin{equation*}
			E^i(\M, \w{\D}_X)\cong \varprojlim \mathrm{H}^i(\mathrm{R}\mathcal{H}om_{\D_n}(\M_n, \D_n))\cong \varprojlim \mathrm{H}^i(\mathrm{R}\mathcal{H}om_{\w{\D}_X}(\M, \D_n)).
		\end{equation*}
		Now let $\M^\bullet$ be a $\C$-complex. Note that by the above, the module
		\begin{equation*}
			\varprojlim \mathrm{H}^j(\mathrm{R}\mathcal{H}om_{\w{\D}_X}(\M^\bullet, \D_n))\cong \varprojlim \mathrm{H}^j(\mathrm{R}\mathcal{H}om_{\D_n}(\D_n\widetilde{\otimes}^\mathbb{L}\M^\bullet, \D_n))
		\end{equation*}
		is a coadmissible right module.
		
		Let $\F_n=\O_X\widetilde{\otimes}_A\D_n(X)$, a complete bornological $\w{\D}$-module such that 
		\begin{equation*}
			\F_n|_{X_n}\cong\D_n.
		\end{equation*}
		As before, 
		\begin{equation*}
			\varprojlim \mathrm{H}^j(\mathrm{R}\mathcal{H}om_{\w{\D}_X}(\M^\bullet, \F_n))\cong \varprojlim \mathrm{H}^j(\mathrm{R}\mathcal{H}om_{\w{\D}_X}(\M^\bullet, \D_n))
		\end{equation*}
		as complete bornological sheaves, as the sections naturally agree on each affinoid subdomain. We have $\w{\D}_X(U)\cong \mathrm{R}\varprojlim \D_n(U)$ for each sufficiently small affinoid subdomain $U$ by Theorem \ref{MLBan}, and hence $\w{\D}_X\cong \mathrm{R}\varprojlim \F_n$ by exactness of the corresponding Roos complex. 
		
		Thus $\mathrm{R}\mathcal{H}om_{\w{\D}_X}(\M^\bullet, \w{\D}_X)$ is the homotopy limit of $\mathrm{R}\mathcal{H}om_{\w{\D}_X}(\M^\bullet, \F_n))$, and by Corollary \ref{MLcoad}, we thus obtain
		\begin{equation*}
			\mathrm{H}^j(\mathrm{R}\mathcal{H}om_{\w{\D}_X}(\M^\bullet, \w{\D}_X))\cong \varprojlim \mathrm{H}^j(\mathrm{R}\mathcal{H}om_{\D_n}(\D_n\widetilde{\otimes}^\mathbb{L}\M^\bullet, \D_n)).
		\end{equation*}  
		Applying side-changing, this shows that $\mathbb{D}$ preserves $\C$-complexes.
		
		Lastly, the identity $\mathbb{D}^2\cong \mathrm{id}$ follows immediately from the corresponding statement for $\mathrm{D}^b_{\mathrm{coh}}(\D_n)$.
	\end{proof}
	
	\begin{cor}
		Let $X$ be a smooth rigid analytic $K$-space of dimension $d$ and let $\M$ be a coadmissible $\w{\D}_X$-module. Then $\Omega_X\widetilde{\otimes}_{\mathcal{O}_X}\mathrm{H}^j(\mathbb{D}\M)$ is isomorphic to $E^{j+d}(\M, \w{\D}_X)$.
	\end{cor}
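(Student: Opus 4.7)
The plan is to extract this as an essentially immediate consequence of Theorem \ref{dualCcomplex} combined with the side-changing equivalence of Theorem \ref{sidechanging}. By definition,
\begin{equation*}
\mathbb{D}\M = \mathrm{R}\mathcal{H}om_{\w{\D}_X}(\M, \w{\D}_X)\h{\otimes}_{\O_X}\Omega_X^{\otimes -1}[d],
\end{equation*}
where $d=\dim X$, and since $\Omega_X^{\otimes -1}$ is a line bundle over $\O_X$ the functor $-\h{\otimes}_{\O_X}\Omega_X^{\otimes -1}$ is exact; therefore it commutes with taking cohomology, yielding
\begin{equation*}
\mathrm{H}^j(\mathbb{D}\M)\cong \mathrm{H}^{j+d}\bigl(\mathrm{R}\mathcal{H}om_{\w{\D}_X}(\M, \w{\D}_X)\bigr)\h{\otimes}_{\O_X}\Omega_X^{\otimes -1},
\end{equation*}
viewed as left $\w{\D}_X$-modules via the side-changing construction $\Omega_X^{\otimes -1}\h{\otimes}_{\O_X}-$ applied to the natural right $\w{\D}_X$-structure on the $\mathrm{R}\mathcal{H}om$.

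Next, I would invoke the identification already worked out inside the proof of Theorem \ref{dualCcomplex}: using $\w{\D}_X\cong \mathrm{R}\varprojlim \F_n$, the Mittag--Leffler acyclicity from Corollary \ref{MLcoad}, and the formula $E^i(\M,\w{\D}_X)\cong \varprojlim_n \mathrm{H}^i(\mathrm{R}\mathcal{H}om_{\D_n}(\D_n\h{\otimes}^{\mathbb{L}}\M,\D_n))$ recorded there, we obtain
\begin{equation*}
\mathrm{H}^{j+d}\bigl(\mathrm{R}\mathcal{H}om_{\w{\D}_X}(\M,\w{\D}_X)\bigr)\cong E^{j+d}(\M,\w{\D}_X)
\end{equation*}
as right coadmissible $\w{\D}_X$-modules. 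Combining with the previous step,
\begin{equation*}
\mathrm{H}^j(\mathbb{D}\M)\cong E^{j+d}(\M,\w{\D}_X)\h{\otimes}_{\O_X}\Omega_X^{\otimes -1}.
\end{equation*}

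Finally, I would apply the side-changing functor $\Omega_X\h{\otimes}_{\O_X}-$ to both sides. By Theorem \ref{sidechanging} this is a quasi-inverse to $\Omega_X^{\otimes -1}\h{\otimes}_{\O_X}-$, so
\begin{equation*}
\Omega_X\h{\otimes}_{\O_X}\mathrm{H}^j(\mathbb{D}\M)\cong \Omega_X\h{\otimes}_{\O_X}\Omega_X^{\otimes -1}\h{\otimes}_{\O_X}E^{j+d}(\M,\w{\D}_X)\cong E^{j+d}(\M,\w{\D}_X)
\end{equation*}
as right $\w{\D}_X$-modules. As this argument is a purely formal consequence of the previous theorem and the side-changing equivalence, there is no genuine obstacle here; the only point requiring mild care is verifying that the $\w{\D}_X$-module structures on the two sides agree under the natural isomorphism, which follows because all the isomorphisms involved are manifestly compatible with the right $\w{\D}_X$-action coming from the second argument of $\mathrm{R}\mathcal{H}om_{\w{\D}_X}(-,\w{\D}_X)$.
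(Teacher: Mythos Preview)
Your proposal is correct and follows exactly the approach implicit in the paper: the corollary is stated without proof immediately after Theorem \ref{dualCcomplex}, and your argument simply unpacks the definition of $\mathbb{D}$, invokes the identification $\mathrm{H}^i(\mathrm{R}\mathcal{H}om_{\w{\D}_X}(\M,\w{\D}_X))\cong E^i(\M,\w{\D}_X)$ established in that proof, and applies side-changing. There is nothing to add.
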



\begin{thebibliography}{12}
		\bibitem{Abbes} A. Abbes. \emph{\'El\'ements de g\'eom\'etrie rigide. Volume I}, volume 286 of \emph{Progress in Mathematics}. Birkh\"{a}user/Springer Basel AG, Basel 2010. Construction et \'etude g\'eom\'etrique des espaces rigides. With a preface by M. Raynaud.
		\bibitem{Ardakov} K. Ardakov. Equivariant $\D$-modules on rigid analytic spaces. \emph{Ast\'{e}risque} 423, 2021.
		\bibitem{Ardakovinduction} K. Ardakov. Induction equivalence for equivariant D-modules on rigid analytic spaces. \emph{Repr. Theory} 27 (2023), 177--244.
		\bibitem{ABB} K. Ardakov, O. Ben-Bassat. Bounded linear endomorphisms of rigid analytic functions. \emph{Proc. Lond. Math. Soc.} 117 (2018), no. 5, 881--900.
		\bibitem{AW} K. Ardakov, S. Wadsley. On irreducible representations of compact $p$-adic groups. \emph{Annals of Mathematics} 178 (2013), 453--557.
		\bibitem{DcapOne} K. Ardakov, S. Wadsley. $\w{\D}$-modules on rigid analytic spaces I. \emph{J. Reine Angew. Math.} 747 (2019), 221--276.
		\bibitem{DcapTwo} K. Ardakov, S. Wadsley. $\w{\D}$-modules on rigid analytic spaces II: Kashiwara's equivalence. \emph{J. Algebraic Geom.} 27 (2018), 647--701.
		\bibitem{DcapThree} K. Ardakov, A. Bode, S. Wadsley. $\w{\D}$-modules on rigid analytic spaces III: weak holonomicity and operations. \emph{Compos. Math.} 157 (2021), no. 12, 2553--2584.
		\bibitem{BambozziCGT} F. Bambozzi. Closed graph theorems for bornological spaces. \emph{Khayyam J. Math.} 2 (2016), no. 1, 81--111.
		\bibitem{Bamdagger} F. Bambozzi, O. Ben-Bassat. Dagger geometry as Banach algebraic geometry. \emph{J. Number Theory} 162 (2016), 391--462.
		\bibitem{BamStein} F. Bambozzi, O. Ben-Bassat, K. Kremnizer. Stein domains in Banach algebraic geometry. \emph{J. Funct. Anal.} 274 (2018), no. 7, 1865--1927.
		\bibitem{BB} A. Beilinson, J. Bernstein. Localisation de $\mathfrak{g}$-modules. \emph{Comp. Rend. Acad. Sci. Paris.} 292 (1981), 15--18.
		\bibitem{BBCass} A. Beilinson, J. Bernstein. A generalization of Casselman's submodule theorem. In \emph{Progress in Math.} 40 (Birkhäuser, 1983), 35--52.
		\bibitem{BBK} O. Ben-Bassat, K. Kremnizer. Non-Archimedean analytic geometry as relative algebraic geometry. \emph{Ann. Fac. Sci. Toulouse Math. (6)} 26 (2017), no. 1, 49--126.
		\bibitem{Berthelot} P. Berthelot. $\D$-modules arithm\'etiques I: Op\'erateurs diff\'erentiels de niveau fini. \emph{Annales Scient. E.N.S.} 29 (1996), 185--272.
		\bibitem{Bitoun} T. Bitoun, A. Bode. Extending meromorphic connections to coadmissible $\w{\D}$-modules. \emph{J. Reine Angew. Math.} 2021 (2021), no. 778, 97--118.
		\bibitem{Bode1} A. Bode. Completed tensor products and a global approach to $p$-adic analytic differential operators. \emph{Math. Proc. Camb. Philos. Soc.} 167 (2019), no. 2, 389--416.
		\bibitem{Bodeproper} A. Bode. A proper mapping theorem for coadmissible $\w{\D}$-modules. \emph{Münster J. of Math.} 12 (2019), 163--214.
		\bibitem{Bodegl} A. Bode. Auslander regularity of $p$-adic Banach algebras via almost mathematics. \emph{Preprint}.
		\bibitem{BN} M. B\"okstedt, A. Neeman. Homotopy limits in triangulated categories. \emph{Compos. Math.} 86 (1993), no. 2, 209--234.
		\bibitem{Handbook1} F. Borceux. \emph{Handbook of categorical algebra. 1. Basic category theory}. Encyclopedia of Mathematics and its Applications, 50. CUP, Cambridge, 1994.
		\bibitem{Boschlectures} S. Bosch. \emph{Lectures on formal and rigid geometry}. Lecture Notes in Mathematics, 2105. Springer, Cham, 2014.
		\bibitem{BGR} S. Bosch, U. Güntzer, R. Remmert. \emph{Non-Archimedean Analysis}. Grundlehren der Mathematischen Wissenschaften, 261. Springer-Verlag, Berlin, 1984.
		\bibitem{CDNStein} P. Colmez, G. Dospinescu, W. Niziol. Cohomology of $p$-adic Stein spaces. \emph{Invent. Math.} 219 (2020), 873--985.
		\bibitem{AST16} A. Douady, R. Douady, J. Hubbard, J.-L. Verdier. \emph{S\'eminaire de g\'eom\'etrie analytique}. Ast\'erisque 16, 1974.
		\bibitem{Emerton} M. Emerton. Locally analytic vectors in representations of locally $p$-adic analytic groups. \emph{Mem. Amer. Math. Soc.} 248 (2017), no. 1175. 
		\bibitem{Gruson} L. Gruson. Theorie de Fredholm $p$-adique. \emph{Bulletin de la S.M.F.} 94 (1966), 67--95.
		\bibitem{Hotta} R. Hotta, K. Takeuchi, T. Tanisaki. \emph{$\D$-modules, perverse sheaves, and representation theory}. Progress in Mathematics, 236. Birkhäuser, 2008.
		\bibitem{Houzel} C. Houzel (ed.). \emph{S\'eminaire Banach}. Lecture Notes in Mathematics 277. Springer-Verlag, 1972.
		\bibitem{KS} M. Kashiwara, P. Schapira. \emph{Categories and Sheaves}. Grundlehren der Mathematischen Wissenschaften 332. Springer-Verlag, Berlin, 2006.
		\bibitem{Kiehl} R. Kiehl. Der Endlichkeitssatz für eigentliche Abbildungen in der nichtarchimedischen Funktionentheorie. \emph{Invent. Math.} 2 (1967), 191--214.
		\bibitem{Zariskian} H. Li, F. van Oystaeyen. \emph{Zariskian filtrations}. $K$-Monogr. Math., 2. Kluwer Academic Publishers, Dordrecht, 1996.
		\bibitem{Meyer} R. Meyer. \emph{Local and analytic cyclic homology}. EMS tracts in mathematics, vol. 3, EMS, Z\"urich, 2007.
		\bibitem{Schikhof} C. Perez-Garcia, W.H. Schikhof. Locally convex spaces over non-Archimedean fields. \emph{Cambridge Studies in Advanced Mathematics}, 119. CUP, Cambridge, 2010.
		\bibitem{ProsmansSchneiders} F. Prosmans, J.-P. Schneiders. A homological study of bornological spaces. Laboratoire analyse, g\'eom\'etrie et applications, Unit\'e mixte de recherche, Institut Galilee, Universit\'e Paris, CNRS, 2000.
		\bibitem{Quillen} D. Quillen. Higher algebraic K-theory. I. \emph{Algebraic K-theory, I: Higher K-Theories} (H. Bass, ed.), pp. 85--147. Lecture Notes in Math., Vol. 341, Springer, Berlin, 1973. 
		\bibitem{Rinehart} G. S. Rinehart. Differential forms on general commutative algebras. \emph{Trans. Amer. Math.} 108 (1963), 195--222.
		\bibitem{SchneiderNFA} P. Schneider. \emph{Nonarchimedean functional analysis}. Springer Monographs in Mathematics, Springer-Verlag, Berlin, 2002.
		\bibitem{ST} P. Schneider, J. Teitelbaum. Algebras of $p$-adic distributions and admissible representations. \emph{Invent. Math.} 153 (2003), no. 1, 145--196.
		\bibitem{SchneidersCosheaves} J.-P. Schneiders. Cosheaves homology. \emph{Bull. Soc. Math. Belg. S\'er. B} 39 (1987), no. 1, 1--31.
		\bibitem{Schneiders} J.-P. Schneiders. Quasi-abelian categories and sheaves. \emph{M\'{e}m. Soc. Math. Fr. (N.S.)}, no. 76, 1999. 
		\bibitem{Condensed} P. Scholze. \emph{Condensed Mathematics}. Lecture notes based on joint work with D. Clausen. Available at Scholze's webpage.
		\bibitem{Spalt} N. Spaltenstein. Resolutions for unbounded complexes. \emph{Compos. Math.} 65 (1988), no. 2, 121--154.
		\bibitem{stacksproj} The Stacks Project Authors. \emph{Stacks project}. \url{https://stacks.math.columbia.edu}, 2021.
		\bibitem{Zabradi} G. Zabradi. Generalized Robba rings. \emph{Israel J. Math.} 191 (2012), no. 2, 817--887.
	\end{thebibliography}
\end{document}